\documentclass[a4paper,12pt,reqno]{amsart}

\usepackage[utf8]{inputenc}
\usepackage[top=25truemm,bottom=25truemm,left=20truemm,right=20truemm]{geometry}

\usepackage{amsmath}
\usepackage{amsthm,amsfonts}
\usepackage{amssymb}
\usepackage{stmaryrd}
\usepackage{mathtools}
\usepackage{mathrsfs} 
\usepackage[bbgreekl]{mathbbol} 
\usepackage{upgreek}
\usepackage{tikz-cd}
\usetikzlibrary{decorations.markings,decorations.pathmorphing, shapes.multipart}
\usepackage{thmtools} 
\usepackage{enumitem}
\usepackage[nolist,smaller]{acronym}
\usepackage{mathpartir}

\DeclareSymbolFontAlphabet{\mathbb}{AMSb}
\DeclareSymbolFontAlphabet{\mathbbl}{bbold}
\DeclareEmphSequence{\bfseries\itshape}

\renewcommand{\epsilon}{\varepsilon}
\renewcommand{\phi}{\varphi}

\setlist{nosep}
\setlist*[enumerate,1]{label*=(\roman*)}

\makeatletter
\newcommand{\labeleditem}[1]{
\item[\text{#1}]\protected@edef\@currentlabel{\text{#1}}\phantomsection
}
\makeatother

\usepackage[%
    backend=biber,%
    style=alphabetic,%
    sorting=nyt,%
    giveninits=true,
    backref=true,%
    isbn=false,%
    url=true,%
]{biblatex}
\makeatletter
\renewrobustcmd*{\mkbibemph}{\mkbibitalic}
\protected\long\def\blx@imc@mkbibemph#1{\blx@imc@mkbibitalic{#1}}
\makeatother

\definecolor{darkblue}{rgb}{0.2,0,0.6}
\definecolor{darkgreen}{rgb}{0.2,0.5,0.2}
\usepackage[%
    colorlinks=true,%
    linkcolor=darkblue,%
    citecolor=green,%
    urlcolor=darkgreen,%
    hypertexnames=false,%
    naturalnames=true,
    pdfusetitle=true,
]{hyperref}
\usepackage[%
    nameinlink,%
]{cleveref}

\crefname{equation}{}{}
\crefname{enumi}{}{}
\crefname{section}{Section}{Sections}
\crefname{appendix}{Appendix}{Appendices}
\crefname{figure}{Figure}{Figures}

\numberwithin{equation}{section}

\declaretheoremstyle[
    spaceabove=0pt,%
    spacebelow=5pt,%
    headfont={\normalfont\bfseries},
    notefont={\normalfont},%
    bodyfont={\normalfont},%
    postheadspace=.5em,%
    headpunct={.},%
]{claim}
\declaretheoremstyle[
    spaceabove=-15pt,%
    spacebelow=5pt,%
    headfont={\normalfont},%
    notefont={\normalfont},%
    bodyfont={\addtolength{\leftskip}{.8em}\normalfont},%
    postheadspace=.5em,%
    headpunct={},%
    postfoothook={\ignorespacesafterend\noindent},%
]{since}

\declaretheorem[%
    style=definition,%
    numberwithin=section,%
    qed=$\lrcorner$,%
]{definition}
\declaretheorem[%
    style=definition,%
    numberwithin=section,%
    sibling=definition,%
    qed=$\lrcorner$,%
]{remark,example,notation,fact,construction}
\declaretheorem[%
    style=definition,%
    numberwithin=section,%
    sibling=definition,%
    postfoothook={\setcounter{claim}{0}},
]{theorem,lemma,corollary,proposition}
\declaretheorem[%
    style=claim,%
]{claim}
\declaretheorem[%
    style=since,%
    numbered=no,%
    qed=$\lozenge$,%
    name=$\because$,%
]{since}

\crefname{theorem}{Theorem}{Theorems}
\crefname{definition}{Definition}{Definitions}
\crefname{lemma}{Lemma}{Lemmas}
\crefname{corollary}{Corollary}{Corollaries}
\crefname{proposition}{Proposition}{Propositions}
\crefname{remark}{Remark}{Remarks}
\crefname{example}{Example}{Examples}
\crefname{notation}{Notation}{Notations}
\crefname{construction}{Construction}{Construction}
\crefname{claim}{Claim}{Claims}
\crefname{fact}{Fact}{Facts}

\usepackage{xpatch}
\makeatletter   
\xpatchcmd{\@tocline}
{\hfil\hbox to\@pnumwidth{\@tocpagenum{#7}}\par}
{\ifnum#1<2\hfill\else\dotfill\fi\hbox to\@pnumwidth{\@tocpagenum{#7}}\par}
{}{}
\def\l@subsection{\@tocline{2}{0pt}{2pc}{6pc}{}}
\makeatother    

\tikzcdset{equal/.append style={double distance=1.0pt,-}}

\tikzcdset{%
    arrow style=tikz,
    diagrams={>={Straight Barb[scale=0.8]}}
}

\tikzstyle{tri} = [row sep={1.6em}, column sep={0.8em}]
\tikzstyle{tinytri} = [row sep={0.8em}, column sep={0.1em}]

\tikzstyle{huge} = [row sep={3.6em}, column sep={3.6em}]
\tikzstyle{large} = [row sep={2.7em}, column sep={2.7em}]
\tikzstyle{normal} = [row sep={1.8em}, column sep={1.8em}]
\tikzstyle{scriptsize} = [row sep={1.35em}, column sep={1.35em}]
\tikzstyle{small} = [row sep={0.9em}, column sep={0.9em}]
\tikzstyle{tiny} = [row sep={0.45em}, column sep={0.45em}]

\tikzstyle{hugecolumn} = [column sep={3.6em}]
\tikzstyle{largecolumn} = [column sep={2.7em}]
\tikzstyle{normalcolumn} = [column sep={1.8em}]
\tikzstyle{scriptsizecolumn} = [column sep={1.35em}]
\tikzstyle{smallcolumn} = [column sep={0.9em}]
\tikzstyle{tinycolumn} = [column sep={0.45em}]

\tikzstyle{hugerow} = [row sep={3.6em}]
\tikzstyle{largerow} = [row sep={2.7em}]
\tikzstyle{normalrow} = [row sep={1.8em}]
\tikzstyle{scriptsizerow} = [row sep={1.35em}]
\tikzstyle{smallrow} = [row sep={0.9em}]
\tikzstyle{tinyrow} = [row sep={0.45em}]
\renewcommand{\-}{\mathchar`-}

\newcommand{\A}{\mathscr{A}}

\newcommand{\C}{\mathscr{C}}
\newcommand{\D}{\mathscr{D}}
\newcommand{\E}{\mathscr{E}}

\newcommand{\J}{\mathscr{J}}

\newcommand{\X}{\mathscr{X}}

\newcommand{\bN}{\mathbb{N}}

\newcommand{\bT}{\mathbb{T}}

\newcommand{\bfLambda}{\mathbf{\Lambda}}
\newcommand{\bfDelta}{\mathbf{\Delta}}
\newcommand{\bfE}{\mathbf{E}}
\newcommand{\bfM}{\mathbf{M}}

\newcommand{\Set}{\mathbf{Set}}
\newcommand{\Cat}{\mathbf{Cat}}
\newcommand{\CAT}{\mathbf{CAT}}
\newcommand{\Cattwo}{\mathcal{C}\mathrm{at}}
\newcommand{\CATtwo}{\mathcal{C}\mathrm{AT}}
\newcommand{\Top}{\mathbf{Top}}
\newcommand{\Pos}{\mathbf{Pos}}
\newcommand{\omegaCat}{\omega\-\mathbf{Cat}}
\newcommand{\nCat}{n\-\mathbf{Cat}}
\newcommand{\fourCat}{4\-\mathbf{Cat}}
\newcommand{\threeCat}{3\-\mathbf{Cat}}
\newcommand{\twoCat}{2\-\mathbf{Cat}}
\newcommand{\twoCATtwo}{2\-\mathcal{C}\mathrm{AT}}
\newcommand{\Field}{\mathbf{Field}}
\newcommand{\Grp}{\mathbf{Grp}}

\newcommand{\initialcat}{\mathbf{0}}
\newcommand{\terminalcat}{\mathbf{1}}
\newcommand{\walkmorcat}{\mathbf{2}}

\newcommand{\op}{\mathrm{op}}

\newcommand{\Ob}{\mathrm{Ob}}

\newcommand{\Mor}{\mathrm{Mor}}

\newcommand{\id}{\mathsf{id}}

\newcommand{\inv}{^{-1}}

\newcommand{\Colim}{\mathop{\mathrm{Colim}}}
\newcommand{\El}{{\textstyle\int}}

\newcommand{\tup}[1]{\vec{#1}}

\newcommand{\abs}[1]{{|#1|}}
\newcommand{\const}[1]{\ulcorner #1\urcorner}

\newcommand{\incat}[1]{\hspace{1em}\text{in }{#1}}
\NewDocumentCommand{\proofdirection}{O{\implies} m m}{%
    \noindent\textbf{[#2$#1$#3]}%
}

\newcommand{\parr}{\rightrightarrows}
\newcommand{\pto}{\rightharpoonup} 

\renewcommand{\emptyset}{\varnothing}

\NewDocumentCommand{\arr}{D(){} O{} O{1}}{\mathrel{%
    \begin{tikzcd}[column sep={#3em}, ampersand replacement=\&]
        \hspace{-2ex} \& \hspace{-2ex}
        \arrow[from=1-1, to=1-2, "{%
            \IfValueTF{#1}{%
                \raisebox{0pt}[3pt][\depth]{$\scriptstyle #1$}%
            }{%
                \scriptstyle #1%
            }%
        }", #2]
    \end{tikzcd}
}}
\NewDocumentCommand{\rra}{D(){} O{} O{1}}{\mathrel{%
    \begin{tikzcd}[column sep={#3em}, ampersand replacement=\&]
        \hspace{-2ex} \& \hspace{-2ex}
        \arrow[from=1-2, to=1-1, "{%
            \IfValueTF{#1}{%
                \raisebox{0pt}[3pt][\depth]{$\scriptstyle #1$}%
            }{%
                \scriptstyle #1%
            }%
        }"', #2]
    \end{tikzcd}
}}

\NewDocumentCommand{\adjoint}{D(){\perp} O{1} m m m m }{
    \begin{tikzcd}[column sep={#2em},ampersand replacement=\&]
        {#3}\ar[rr,"{#5}",shift left=4pt,bend left=10] \& {#1} \& {#4}\ar[ll,"{#6}",shift left=4pt,bend left=10]
    \end{tikzcd}
}

\NewDocumentCommand{\cellsymb}{D(){} O{} m m}{
    \arrow[from=#3,to=#4,phantom,"{\scriptstyle #1}"{#2}]
}

\NewDocumentCommand{\pullbackcorner}{O{rd}}{%
    \ar[
        "\lrcorner"{description, near start, inner sep=0mm},
        phantom, #1,
        start anchor={[xshift=0ex, yshift=0ex]center},
        end anchor={[xshift=0ex, yshift=0ex]center},
        ]
}
\NewDocumentCommand{\pushoutcorner}{O{lu}}{%
    \ar[
        "\ulcorner"{description, near start, inner sep=0mm},
        phantom, #1,
        start anchor={[xshift=0ex, yshift=0ex]center},
        end anchor={[xshift=0ex, yshift=0ex]center},
        ]
}

\newcommand{\veq}{%
    \mathord{
        \rotatebox{90}{$\scriptstyle =$}
    }
}

\newcommand{\vcong}{\rotatebox{90}{$\scriptstyle\cong$}}

\newcommand{\colonequiv}{%
    \mathrel{\vcentcolon\equiv}
}

\NewDocumentCommand{\utwocell}{D(){} O{right=0pt} m m O{}}{
    \arrow[from=#3, to=#4, phantom, "{\scriptstyle #1}"{#2}, "\rotatebox{90}{$\Rightarrow$}", #5]
}
\NewDocumentCommand{\dtwocell}{D(){} O{right=0pt} m m O{}}{
    \arrow[from=#3, to=#4, phantom, "{\scriptstyle #1}"{#2}, "\rotatebox{90}{$\Leftarrow$}", #5]
}
\NewDocumentCommand{\ltwocell}{D(){} O{above=2pt} m m O{}}{
    \arrow[from=#3, to=#4, phantom, "{\scriptstyle #1}"{#2}, "\Leftarrow", #5]
}
\NewDocumentCommand{\rtwocell}{D(){} O{above=2pt} m m O{}}{
    \arrow[from=#3, to=#4, phantom, "{\scriptstyle #1}"{#2}, "\Rightarrow", #5]
}
\NewDocumentCommand{\urtwocell}{D(){} O{above left=-1pt} m m O{}}{
    \arrow[from=#3, to=#4, phantom, "{\scriptstyle #1}"{#2}, "\rotatebox{45}{$\Rightarrow$}", #5]
}
\NewDocumentCommand{\rutwocell}{D(){} O{above left=-1pt} m m O{}}{
    \arrow[from=#3, to=#4, phantom, "{\scriptstyle #1}"{#2}, "\rotatebox{45}{$\Rightarrow$}", #5]
}
\NewDocumentCommand{\drtwocell}{D(){} O{above right=0pt} m m O{}}{
    \arrow[from=#3, to=#4, phantom, "{\scriptstyle #1}"{#2}, "\rotatebox{-45}{$\Rightarrow$}", #5]
}
\NewDocumentCommand{\rdtwocell}{D(){} O{above right=0pt} m m O{}}{
    \arrow[from=#3, to=#4, phantom, "{\scriptstyle #1}"{#2}, "\rotatebox{-45}{$\Rightarrow$}", #5]
}
\NewDocumentCommand{\ultwocell}{D(){} O{above right=-1pt} m m O{}}{
    \arrow[from=#3, to=#4, phantom, "{\scriptstyle #1}"{#2}, "\rotatebox{-45}{$\Leftarrow$}", #5]
}
\NewDocumentCommand{\lutwocell}{D(){} O{above right=-1pt} m m O{}}{
    \arrow[from=#3, to=#4, phantom, "{\scriptstyle #1}"{#2}, "\rotatebox{-45}{$\Leftarrow$}", #5]
}
\NewDocumentCommand{\dltwocell}{D(){} O{above left=0pt} m m O{}}{
    \arrow[from=#3, to=#4, phantom, "{\scriptstyle #1}"{#2}, "\rotatebox{45}{$\Leftarrow$}", #5]
}
\NewDocumentCommand{\ldtwocell}{D(){} O{above left=0pt} m m O{}}{
    \arrow[from=#3, to=#4, phantom, "{\scriptstyle #1}"{#2}, "\rotatebox{45}{$\Leftarrow$}", #5]
}

\newcommand{\vx}{\syn{x}}
\newcommand{\vy}{\syn{y}}
\newcommand{\vz}{\syn{z}}

\newcommand{\ttau}{\syn{\tau}}
\newcommand{\tsigma}{\syn{\sigma}}
\newcommand{\trho}{\syn{\rho}}
\newcommand{\tnu}{\syn{\nu}}
\newcommand{\fphi}{\syn{\phi}}
\newcommand{\fpsi}{\syn{\psi}}

\newcommand{\defined}{\mathord{\downarrow}}
\newcommand{\ofsort}{\mathord{:}}
\newcommand{\subst}[2]{\lbrack {#1}/{#2} \rbrack}
\newcommand{\intpn}[2]{\mathord{\left\llbracket #1 \right\rrbracket_{#2}}}

\newcommand{\seq}[1]{\mathrel{
\tikz\draw[|-] (0,0) -- node[above=1.8pt,inner sep=0pt] {\small$#1$} (1,0);
}}
\newcommand{\biseq}[1]{\mathrel{
\tikz\draw[|-|] (0,0) -- node[above=1.8pt,inner sep=0pt] {\small$#1$} (1,0);
}}
\newcommand{\longseq}[1]{\mathrel{
\tikz\draw[|-] (0,0) -- node[above=1.8pt,inner sep=0pt] {\small$#1$} (1.5,0);
}}

\newcommand{\PStr}{\mathop{\mathbf{PStr}}}
\newcommand{\PMod}{\mathop{\mathbf{PMod}}}
\newcommand{\repn}[1]{\mathord{\left\langle #1 \right\rangle}}
\newcommand{\Term}{\mathrm{Term}}

\NewDocumentCommand{\termeq}{m O{}}{%
    \mathrel{%
        {
            \tikz\draw[draw=none] (0,0) -- node[above=-2pt,inner sep=0pt] {$\approx$} node[above=4.5pt,inner sep=0pt] {\tiny$#1$} (0.4,0);
        }_{#2}
    }%
}
\NewDocumentCommand{\termle}{m O{}}{%
    \mathrel{%
        {
            \tikz\draw[draw=none] (0,0) -- node[inner sep=0pt] {$\preccurlyeq$} node[above=3pt,inner sep=0pt] {\tiny$#1$} (0.4,0);
        }_{#2}
    }%
}
\NewDocumentCommand{\termge}{m O{}}{%
    \mathrel{%
        {
            \tikz\draw[draw=none] (0,0) -- node[inner sep=0pt] {$\succcurlyeq$} node[above=3pt,inner sep=0pt] {\tiny$#1$} (0.4,0);
        }_{#2}
    }%
}

\newcommand{\rorth}{\mathbf{r}}
\newcommand{\lorth}{\mathbf{l}}

\NewDocumentCommand{\orth}{O{}}{\mathrel{
    \bot^{\mkern-4mu{#1}}
}}

\newcommand{\epclass}{\mathbf{\Lambda}}
\newcommand{\Reg}[1][]{\mathord{\mathbf{Reg}_{#1}}}
\newcommand{\Stg}[1][]{\mathord{\mathbf{Stg}_{#1}}}

\newcommand{\Epi}[1][]{\mathord{\mathbf{Epi}_{#1}}}

\NewDocumentCommand{\kercond}{O{\epclass} O{q}}{%
    {%
        \hyperref[kernel_condition]{%
            (\mathrm{K}_{#1,#2})%
        }%
    }%
}

\newcommand{\Met}{\mathbf{Met}}

\newcommand{\inacc}{\upsilon}

\NewDocumentCommand{\decnum}{O{} m}{\mathord{\delta_{{#1}}(#2)}}
\NewDocumentCommand{\cdecnum}{O{} m}{\mathord{\sigma_{{#1}}(#2)}}
\NewDocumentCommand{\Predec}{O{}}{\mathord{\mathbf{PDec}_{{#1}}}}

\newcommand{\CATradj}{\mathbf{CAT}_\mathrm{radj}}
\newcommand{\ladj}[1]{{{#1}^*}}
\newcommand{\unit}[1]{{\eta^{#1}}}
\newcommand{\counit}[1]{{\epsilon^{#1}}}

\newcommand{\lelm}[1]{\overline{#1}}
\newcommand{\relm}[1]{\widetilde{#1}}
\newcommand{\bottom}{\mathord{\perp}}
\tikzset{%
    monotoneheader/.style={%
        label={[%
            rectangle, fill=white, node font=\ttfamily, anchor=base,name=\tikzlastnode-header]north:{#1}
            }
    }
}
\tikzstyle{monotone-example} = [row sep={-0.3em}, column sep={-0.5em}]

\newcommand{\height}{\sharp}

\newcommand{\pos}{\mathrm{pos}}
\newcommand{\phtpos}{{\theory{T}_\pos}}

\newcommand{\ncat}{{n\mathrm{cat}}}

\newcommand{\ccomp}[1][]{\syn{\circ}_{#1}}
\newcommand{\bdry}{\syn{\partial}}

\tikzcdset{%
    dom/.code = {\tikzcdset{dashed,no head,start anchor=west,end anchor=east}},%
    cod/.code = {\tikzcdset{no head,start anchor=west,end anchor=east};},%
}
\tikzstyle{globular} = [row sep={-0.1em}]

\newcommand{\ann}[1]{\textsf{#1}}
\newcommand{\type}{\ \ann{type}}
\newcommand{\ctx}{\ \ann{ctx}}

\newcommand{\sort}{_{\mathrm{so}}}
\newcommand{\oper}{_{\mathrm{op}}}

\newcommand{\drv}{{\,\triangleright\ }}

\newcommand{\Fix}[2]{#1{[#2]}}

\tikzcdset{%
    disp/.style = {%
        > = {Triangle[open]}
    }
}
\NewDocumentCommand{\darr}{D(){} O{} O{1}}{\mathrel{
    \begin{tikzcd}[column sep={#3em}, ampersand replacement=\&, every label/.append style={font=\small}]
        \hspace{-2ex}\ar[r, "{#1}", #2, disp] \& \hspace{-2ex}
    \end{tikzcd}
}}
\NewDocumentCommand{\Disp}{o}{\IfValueTF{#1}{\mathop{\mathbf{Disp}}_{#1}}{\mathbf{\Delta}}}
\newcommand{\Clan}{\mathcal{C}\mathrm{lan}}
\newcommand{\CLAN}{\mathcal{C}\mathrm{LAN}}
\newcommand{\Cland}{\mathcal{C}\mathrm{lan}_{\mathrm{d}}}
\newcommand{\CLANd}{\mathcal{C}\mathrm{LAN}_{\mathrm{d}}}
\newcommand{\ClanMod}{\mathop{\mathbf{Mod}}}
\newcommand{\Dopfib}{\mathbf{DOpFib}}
\newcommand{\ClanDopfib}{\mathbf{ClanDOpFib}}
\newcommand{\discard}[1]{|_{\le #1}}

\newcommand{\StrMonCat}{\mathbf{StrMonCat}}
\newcommand{\MultiCat}{\mathbf{MultiCat}}
\newcommand{\DblCat}{\mathbf{DblCat}}

\newcommand{\depran}[1]{\mathord{\mathrm{dr}}(#1)}

\newcommand{\clfy}[1]{\mathbf{Cl}({#1})}

\newcommand{\isocom}{\mathbin{\downarrow^{\cong}}}

\newcommand{\pocomp}[1]{c_{#1}}

\newcommand{\setg}{{\mathrm{set}}}
\newcommand{\catg}{{\mathrm{cat}}}
\newcommand{\twocatg}{{2\mathrm{cat}}}
\newcommand{\ncatg}{{n\mathrm{cat}}}
\newcommand{\moncatg}{{\mathrm{moncat}}}
\newcommand{\multicatg}{{\mathrm{multicat}}}
\newcommand{\dblcatg}{{\mathrm{dblcat}}}

\newcommand{\MCat}{\mathcal{M}\mathcal{C}\mathrm{at}}
\newcommand{\MCatIso}{\mathcal{M}\mathcal{C}{\mathrm{at}}_{\mathrm{iso}}}
\newcommand{\MCatPbTer}{\mathcal{M}\mathcal{C}{\mathrm{at}}_{\mathrm{mpb,1}}}
\newcommand{\PreClan}{\mathcal{P}\mkern-1mu\mathrm{re}\mkern1mu\mathcal{C}\mathrm{lan}}
\newcommand{\arrowtwocat}{\mathcal{A}\mathrm{rr}}
\newcommand{\Psd}{\mathop{\mathcal{P}\mathrm{sd}}}
\newcommand{\Cosp}{\mathcal{C}\mathrm{osp}}
\newcommand{\Comp}{\mathcal{C}\mathrm{omp}}
\newcommand{\freeAdj}{\mathcal{A}\mathrm{dj}}
\newcommand{\WLP}{\mathcal{W}\mathrm{LP}}
\newcommand{\chosen}{^{\mathrm{*}\mkern-2mu}}

\newcommand{\mcosp}{\mathbf{MCosp}}

\newcommand{\arrowcat}[1]{{#1}^\to}
\newcommand{\marrowcat}[1]{{#1}^\rightarrowtriangle}

\tikzcdset{%
    efs/.style={row sep=0.6em, column sep=0.8em}
}
\NewDocumentCommand{\psq}{D(){1-1} D(){3-3} m O{}}{%
    \arrow[from = #1, to = #2, "\rotatebox{-90}{$\Longrightarrow$}"{description}, phantom, "\rotatebox{90}{\small$\cong$}"{left}, "#3"{right = 1pt, #4}]
}
\NewDocumentCommand{\lsq}{D(){1-1} D(){3-3} m O{}}{%
    \arrow[from = #1, to = #2, "\rotatebox{-90}{$\Longrightarrow$}"{description}, phantom, "#3"{right = 1pt, #4}]
}
\NewDocumentCommand{\olsq}{D(){1-1} D(){3-3} m O{}}{%
    \arrow[from = #1, to = #2, "\rotatebox{90}{$\Longrightarrow$}"{description}, phantom, "#3"{right = 1pt, #4}]%
}
\NewDocumentCommand{\eqsq}{D(){1-1} D(){3-3}}{%
    \arrow[from = #1, to = #2, "\rotatebox{-90}{$=$}"{description}, phantom]
}
\NewDocumentCommand{\pefs}{D(){1-1} D(){3-3} D(){2-2} O{} O{} s}{%
    \arrow[from = #1, to = #3, "\rotatebox{-90}{$\IfBooleanTF{#6}{=}{\Rightarrow}$}"{description}, phantom, "\rotatebox{90}{$\scriptstyle \IfBooleanF{#6}{\cong}$}"{left}, "\scriptstyle #4"{right = 2pt, #5}]%
    \arrow[from = #3, to = #2, "\rotatebox{90}{\small$=$}"{description}, phantom]
}

\newcommand{\theory}[1]{\mathbb{#1}}
\newcommand{\Var}{\mathrm{Var}}
\newcommand{\fv}{\mathrm{fv}}

\makeatletter

\ExplSyntaxOn
\NewDocumentCommand{\strip@backslash}{m}{
    \cs_to_str:N #1
}
\ExplSyntaxOff

\NewDocumentCommand{\syn}{m}{%
    \@ifundefined{syn@\strip@backslash{#1}}%
    {\mathsf{#1}}
    {\csname syn@\strip@backslash{#1}\endcsname}
}

\newcounter{derivation}

\crefname{derivation}{Rule}{Rules}
\Crefname{derivation}{Rule}{Rules}

\newcommand{\derule}[3]{%
  \refstepcounter{derivation}%
  \label{#1}%
  \inferrule*[right={\cref*{#1}}]{#2}{#3}%
}

\def\syn@alpha{\upalpha}
\def\syn@beta{\upbeta}
\def\syn@gamma{\upgamma}
\def\syn@delta{\updelta}
\def\syn@epsilon{\upepsilon}
\def\syn@zeta{\upzeta}
\def\syn@eta{\upeta}
\def\syn@theta{\uptheta}
\def\syn@iota{\upiota}
\def\syn@kappa{\upkappa}
\def\syn@lambda{\uplambda}
\def\syn@mu{\upmu}
\def\syn@nu{\upnu}
\def\syn@xi{\upxi}
\def\syn@pi{\uppi}
\def\syn@rho{\uprho}
\def\syn@sigma{\upsigma}
\def\syn@tau{\uptau}
\def\syn@upsilon{\upupsilon}
\def\syn@phi{\upphi}
\def\syn@chi{\upchi}
\def\syn@psi{\uppsi}
\def\syn@omega{\upomega}

\makeatother

\newcommand*{\lon}{%
    \nobreak
    \mskip6mu plus1mu
    \mathpunct{}%
    \nonscript
    \mkern-\thinmuskip
    {:}%
    \mskip2mu
    \relax
}

\newcommand*{\adjl}{%
    \hspace{-1.5truemm}
    \begin{tikzcd}[column sep=small, ampersand replacement=\&]\!%
        \ar[r, phantom, "\mbox{\tiny $\bot$}"{marking, yshift=0.1em}]\ar[r, no head, white, shift left=3.0pt, line width=1.5pt]\ar[r, shift left]%
        \&\!\ar[l, shift left=3.1pt]%
    \end{tikzcd}
    \hspace{-1.5truemm}
}

\newcommand*{\universe}{\mathsf{U}}

\tikzset{%
    header/.style={%
        label={[%
            rectangle, fill=white, draw, node font=\ttfamily, anchor=center,name=\tikzlastnode-header]north west:{#1}
            }
    }
}

\acrodef{PHT}{partial Horn theory}
\acrodefplural{PHT}[PHTs]{partial Horn theories}

\acrodef{PHL}{partial Horn logic}

\acrodef{GAT}{generalized algebraic theory}
\acrodefplural{GAT}[GATs]{generalized algebraic theories}

\acrodef{ZFC}{Zermelo--Fraenkel set theory with the axiom of choice}

\title{On the decomposition of a strong epimorphism into regular epimorphisms}

\author{Yuto Kawase}
\address{Research Institute for Mathematical Sciences, Kyoto University, Kyoto 606-8502, Japan}
\email{ykawase@kurims.kyoto-u.ac.jp}

\author{Hayato Nasu}
\address{Department of Mathematics and Statistics, Dalhousie University, Halifax NS\,B3H\,4R2, Canada}
\email{hnasu@dal.ca}

\date{\today}
\keywords{%
    regular epimorphism, %
    strong epimorphism, %
    monadic decomposition, %
    locally orthogonal factorization, %
    partial Horn theory, %
    generalized algebraic theory%
}
\subjclass[2020]{%
    18A20
    , %
    18A32
    , %
    18C10
    , %
    18C35
}

\begin{document}
\begin{abstract}
    Strong epimorphisms and regular epimorphisms are two important classes of morphisms, and they do not coincide in general.
    Yet, in a locally presentable category, it is known that any strong epimorphism can be decomposed into a transfinite composite of regular epimorphisms.
    In this paper, we provide two syntactic methods to determine how many regular epimorphisms are needed in such a decomposition, using partial Horn theory and generalized algebraic theory.
    We start by discussing a general problem of decomposing a morphism into a transfinite composite of morphisms in a given class, which also covers the decomposition of an adjoint functor into monadic functors.
\end{abstract}

\maketitle
\tableofcontents
\section{Introduction}
The fundamental theorem of homomorphisms plays a central role in abstract algebra.
It states that a homomorphism induces a canonical isomorphism between its image and the quotient of its domain by its kernel.
Universal algebra generalizes this by extending the notion of kernels using congruence (equivalence) relations.
In category theory, the fundamental theorem of homomorphisms can be reformulated as a categorical property called regularity, which, in particular, implies that every morphism can be decomposed into a regular epimorphism followed by a monomorphism.\footnote{Regular categories in the modern sense usually require more than just the existence of such factorizations.}
Algebraic categories, i.e., categories of models of finite product theories (equational theories, or Lawvere theories), are known to be regular, which ensures that the fundamental theorem of homomorphisms holds in a broad range of algebraic structures.

As the notion of algebras is generalized, regularity of the corresponding categories tends to break down.
For instance, several categories of generalized algebras, such as ordered algebras, metric algebras, and partial algebras, are not regular.
One categorical framework for dealing with such generalized algebras is locally presentable categories, which are known to be equivalent to the categories of models of finite limit theories, as opposed to finite product theories for algebraic categories.
In this paper, we will figure out how close a given category of generalized algebras is to being regular in a quantitative way.

The \textit{decomposition number} or \textit{regular length} studied in \cite{GabrielUlmer1971lokal,MacdonaldStone1982tower,Borger1991making} is one such measure.
Intuitively, the decomposition number of a morphism measures how many regular epimorphisms are required in order to reach its image from its domain.
Here, by the image of the morphism, we mean the smallest subobject that the morphism factors through. 
In a fortunate case, for instance in algebraic categories, the decomposition number of a morphism therein is $1$ or $0$, and the fundamental theorem of homomorphisms holds exactly.
However, in general, a category may have a morphism that collapses too much of the ``algebra structure'' for its kernel (pair) to retain enough information of how the structure is collapsed.
If so, the quotient by the kernel is still uncompressed, indicating that taking the quotient just once is not enough to reach the image.
For example, consider the following morphism $\Phi$ in $\Cat$, the category of small categories and functors:
\begin{equation*}
    \begin{tikzpicture}
        \node[draw, rounded corners, header=$\C_0$] (Cat1) at (0,3) {
            \begin{tikzcd}[row sep=tiny, column sep=small]
                &
                B
                &
                B'
                \ar[rd,"g"]
                \\
                A
                \ar[ru,"f"]
                \ar[rrr,"h"']
                &
                &
                &
                C
            \end{tikzcd}
        };
        \node[draw, rounded corners, header=$\C_1$] (Cat2) at (5.5,1) {
            \begin{tikzcd}[row sep=tiny, column sep=small]
                &
                {B}
                \ar[rd,"g"]
                \\
                A
                \ar[ru,"{f}"]
                \ar[rr,"{h}"']
                \ar[rr, shift left=3, "\rotatebox{90}{$\neq$}", phantom]
                &
                &
                C
            \end{tikzcd}
        };
        \node[draw, rounded corners, header=$\C_2$] (Cat3) at (10,3)
        {
            \begin{tikzcd}[row sep=tiny, column sep=small]
                &
                {B}
                \ar[rd,"g"]
                \\
                A
                \ar[ru,"{f}"]
                \ar[rr,"{h}"']
                \ar[rr, shift left=3, "\rotatebox{90}{$=$}", phantom]
                &
                &
                C
            \end{tikzcd}
        };
        \draw [->,shorten <=5pt, shorten >=5pt] (Cat1) to node[above] {$\Phi$} node[below] {($B,B'\,\mapsto\,B$)} (Cat3);
        \draw [->,shorten <=5pt, shorten >=15pt] (Cat1) to node[below left] {quotient by kernel pair} (Cat2);
        \draw [->,shorten <=5pt, shorten >=5pt] (Cat2) to node[below right] {quotient by kernel pair} (Cat3);
    \end{tikzpicture}
\end{equation*}
The kernel pair of the morphism identifies $B$ and $B'$, but there is no way of identifying $g\circ f$ and $h$ inside the domain category $\C_0$ since $g\circ f$ does not even make sense there.
In short, the morphism collapses beyond what the domain category can observe.
Once $B$ and $B'$ get identified in $\C_1$, $g\circ f$ makes sense, and taking another quotient by the equation $g\circ f = h$ realizes its image $\C_2$.
Thus, the decomposition number of $\Phi$ is $2$, as we will formally define later.
Furthermore, we are interested in how large this number can be in a given category, which leads to the notion of the \textit{global decomposition number} that we will introduce.
In the case of $\Cat$, any morphism is decomposed into two regular epimorphisms and a monomorphism, as observed in \cite{BednarczykBorzyszkowskiPawlowski1999epi}.

Although the basic properties of decomposition numbers have been studied, a method for determining the global decomposition numbers of categories has yet to be established, even for some familiar classes of categories.
The idea of factorizing strong epimorphisms into infinite sequences of regular epimorphisms by taking quotients appeared in \cite{Kelly1969mono}, and the paper \cite{GabrielUlmer1971lokal} showed that every locally presentable category has a small global decomposition number.
Subsequent papers \cite{MacdonaldStone1982tower,MacdonaldTholen1982decomposition,Borger1991making} studied the problem in greater generality, but the question of how to compute them has remained open.

We will present two concrete methods to determine the global decomposition numbers of locally presentable categories using their syntactic presentations.
When a given category is presented as a category of models of some theory, the process of taking quotients is understood as imposing new equations on the models, which should reflect some characteristic features of the theory.
It is then natural to expect that the decomposition number can be computed by its syntactic presentation.
The two proposed methods will focus on partial Horn theories \cite{PalmgrenVickers2007partial} and generalized algebraic theories \cite{Cartmell1986generalised}, respectively, both of which are known to present all locally finitely presentable categories.
Infinitary versions of the former have been considered by several authors \cite{Parker2022covariant,TsukadaAsada2022linear,Kawase2026relativized} to cover locally presentable categories in general.

The core ideas of the two approaches can be illustrated with the above example in $\Cat$.
The first approach, using partial Horn theories, is quite direct given the intuition explained above.
One way to present small categories via partial Horn theories is to prepare two sorts, $\syn{Obj}$ and $\syn{Mor}$, for objects and morphisms, respectively, together with partial operations for domain, codomain, identity morphisms, and composition of morphisms.\footnote{In \cref{sec:the_partial-Horn-theoretic_approach}, we will use a different but equivalent presentation that has only one sort.}
Let us look at the functor $\Phi$ again, as in \cref{fig:the_partial-Horn-theoretic_approach}.
It forces the equations $B = B'$ and $g\circ f = h$, but the definedness of $g\circ f$ is preceded by $B = B'$, which is why it takes two steps of quotients.
This observation leads us to the first method of finding the global decomposition number by stratifying all terms according to the order of their definedness.
To formalize this, we will introduce the notion of \textit{gauge} for a partial Horn theory, which assigns an ordinal number to each term.
For instance, we have a gauge $\height$ for the partial Horn theory for small categories, where $\height B = \height B' = 0$ and $\height(g\circ f)=1$.
We will discuss this approach in \cref{sec:the_partial-Horn-theoretic_approach} in more detail.
\begin{figure}[htbp]
    \tikzcdset{every label/.append style={font=\scriptsize}}
    \begin{tikzpicture}
        \node[draw, rounded corners, minimum width=2cm, header=$\C_0$] (Cat1) at (0,0) {\small
            \begin{tabular}{c}
                $B\neq B'$ \\
                $g\circ f\,\uparrow$ \\
            \end{tabular}
        };
        \node[draw, rounded corners, header=$\C_1$] (Cat2) at (5,0) 
        {\small
            \begin{tabular}{c}
                $B= B'$ \\
                $g\circ f\,\downarrow$ \\
                $g\circ f \neq h$ \\
            \end{tabular}
        };
        \node[draw, rounded corners, header=$\C_2$] (Cat3) at (10,0)
        {\small
            \begin{tabular}{c}
                $B= B'$ \\
                $g\circ f\,\downarrow$ \\
                $g\circ f = h$ \\
            \end{tabular}
        };
        \draw[->,shorten <=5pt, shorten >=5pt] (Cat1) to node[below] {\small $B=B'$} (Cat2);
        \draw[->,shorten <=5pt, shorten >=5pt]  
        (Cat2) to node[below] {\small $g\circ f = h$} (Cat3);
        \draw[->,shorten <=15pt, shorten >=15pt, bend left=25] (Cat1) to node[above] {\small $\Phi$} (Cat3);
    \end{tikzpicture}
    \caption{Illustration of the approach via partial Horn logic}    
    \label{fig:the_partial-Horn-theoretic_approach}
\end{figure}
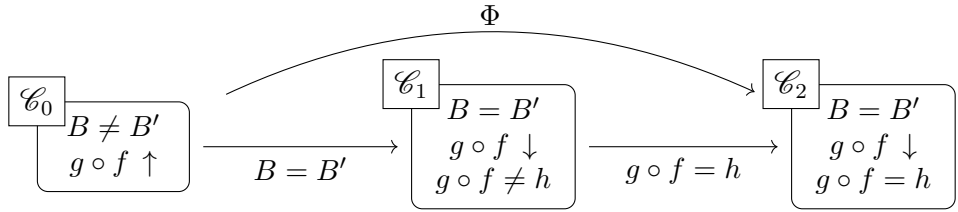

The second approach, using generalized algebraic theories, focuses on how sorts depend on each other. 
In the generalized algebraic theory of small categories, the sort of morphisms $\syn{Mor}(\syn{x},\syn{y})$ depends on the sort of objects $\syn{Obj}$.
As in \cref{fig:the_generalized_algebraic-theoretic_approach}, the first step of taking quotients is to collapse the sort of objects, and then that of morphisms.
Observe that $g\circ f$ shows up after the sort of objects is collapsed, enabling the next step to identify $g\circ f$ and $h$.
This example might lead the reader to expect that identifying elements of sorts from the less dependent to the more dependent would provide the regular decomposition in general.
However, this method is only applicable to a limited case where the output of every function symbol and the terms appearing in every equational axiom should not be of less dependent sorts than their contexts; otherwise, a step involving the more dependent sorts may influence the less dependent ones that have already been taken care of, requiring some additional steps to reach the image.
When this condition is satisfied, we can imitate the above example to give an upper bound of the decomposition number based on the dependency of the sorts.
We will delve into this approach in \cref{sec:for_models_of_dependent_algebraic_theories} using the framework of clans \cite{Joyal2017notes}.
\begin{figure}[htbp]
    \tikzcdset{every label/.append style={font=\scriptsize}}
    \begin{tikzpicture}
        \node[rectangle split, rectangle split parts=2, rounded corners]
        (Cat0) at (-2.5,0) {\small
            $\mathsf{Mor}$
            \nodepart{second}
            $\mathsf{Obj}$
        };
        \node[rectangle split, rectangle split parts=2, draw, rounded corners, minimum width=2cm, header=$\C_0$] (Cat1) at (0,0) {\small
            $f, g, h$
            \nodepart{second}
            $A, B, B', C$
        };
        \node[rectangle split, rectangle split parts=2, draw, rounded corners, minimum width=3cm,header=$\C_1$] (Cat2) at (5,0) 
        {\small
            $f, g, g\circ f, h$
            \nodepart{second}
            $A, B=B', C$
        };
        \node[rectangle split, rectangle split parts=2, draw, rounded corners, minimum width=3.5cm, header=$\C_2$] (Cat3) at (11,0)
        {\small
            $f, g, g\circ f = h$
            \nodepart{second}
            $A, B=B', C$
        };
        \draw[->,shorten <=5pt, shorten >=5pt] (Cat1) to node[below] {\small $B=B'$} (Cat2);
        \draw[->,shorten <=5pt, shorten >=5pt]  
        (Cat2) to node[below] {\small $g\circ f = h$} (Cat3);
        \draw[->,shorten <=25pt, shorten >=25pt, bend left=20] (Cat1) to node[above] {\small $\Phi$} (Cat3);
    \end{tikzpicture}
    \caption{Illustration of the approach via generalized algebraic theory}
    \label{fig:the_generalized_algebraic-theoretic_approach}
\end{figure}
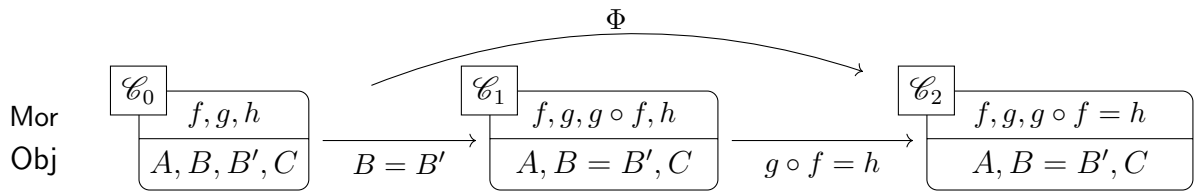

The two methods have their own merits and drawbacks. 
The first approach is quite general and can be applied to any partial Horn theory, even for infinitary ones, while it requires some effort to stratify all terms.
The second approach has a limited range of application because of the restriction we mentioned earlier, but once the assumption is met, it instantly yields the upper bound on the global decomposition number by simply looking at the dependency of sorts.

The notion of decomposition number can further be generalized from regular epimorphisms to other classes of morphisms that we think of as one-step collapses of structures.
This treatment captures several known concepts, such as strict localizations for categories and monotone quotient maps for topological spaces.
A key observation is that a morphism is a regular epimorphism if and only if it is universal among the morphisms that coequalize every pair coequalized by it.
In a similar manner, we will define generalized regular epimorphisms to encompass more notions of collapsing beyond coequalizing, as explained in \cref{sec:generalized_regularity}.

We also prove that, while a strong epimorphism can be decomposed into regular epimorphisms in different ways, taking quotients by the kernel (pair) at each step gives the shortest possible decomposition.
This means that for any morphism, the number of quotients required to reach its image (the \textit{canonical decomposition number}) coincides with the minimum number of regular epimorphisms that are composed to give its image (the \textit{minimum decomposition number}).
The former notion appears in the literature \cite{GabrielUlmer1971lokal,MacdonaldStone1982tower,Borger1991making}, but in a slightly different fashion: see \cref{rem:why_strict_supremum} for more details.
Furthermore, we will show the coincidence of the two notions of decomposition numbers for generalized regular epimorphisms mentioned above, and even more generally for a class of morphisms that admits a suitable notion of canonical decomposition based on the framework of \textit{locally orthogonal factorizations} introduced in \cite{Tholen1979semitopological}.
This generalization covers the decomposition of adjoint functors into monadic functors, which are studied in \cite{ApplegateTierney1970iterated,MacdonaldStone1982tower,MacdonaldTholen1982decomposition,AdamekHerrlichTholen1989monadic,Yanovski2022monadic}.
Applying our result to this case, we will show that repeatedly taking the Eilenberg--Moore categories is the most efficient way to present a given adjoint functor as a composition of monadic functors under suitable assumptions.

\vspace{1em}
\paragraph{\textbf{Organization of the paper}}
In \cref{sec:decomposition_number}, we will study basic properties of decomposition numbers, in particular comparing the minimum decomposition with the canonical decomposition.
In \cref{sec:generalized_regularity}, we will introduce the generalized notion of regular epimorphisms and study the decomposition numbers arising from them.
In \cref{sec:the_partial-Horn-theoretic_approach}, we will present a syntactic method for computing the global decomposition number of the category of models of a partial Horn theory.
In \cref{sec:for_models_of_dependent_algebraic_theories}, we will show that the global decomposition number can be bounded above by the maximal depth of dependencies among sorts for certain generalized algebraic theories.

It is worth noting that each section gets increasingly specialized in scope, starting from categories admitting locally orthogonal factorizations for a class $\mathbf{E}$ of morphisms in \cref{sec:decomposition_number}, those with $\mathbf{E}$ contained in the class of epimorphisms in \cref{sec:generalized_regularity}, locally presentable categories in \cref{sec:the_partial-Horn-theoretic_approach}, and locally finitely presentable categories in \cref{sec:for_models_of_dependent_algebraic_theories}.

\vspace{1em}
\paragraph{\textbf{Conventions and notations}}
\begin{remark}[Set-theoretic foundation]
    We fix three universes $\universe_1\in\universe_2\in\universe_3$ in this paper, whatever set-theoretic foundation we take.
    We will call elements of $\universe_1$ \emph{small sets}, those of $\universe_2$ \emph{large sets}, those of $\universe_3$ \emph{very large sets}, and \emph{classes} refer to collections with no restriction on size.
    Accordingly, we will call mathematical objects such as ordinals, cardinals, and categories in $\universe_1$ \emph{small}, those in $\universe_2$ \emph{large}, and those in $\universe_3$ \emph{very large}.
    Unless otherwise specified, categories are assumed to be large by default, and 2-categories are assumed to be very large.
\end{remark}

\begin{notation}[The smallest large ordinal]\label{note:smallest_large_ordinal}
    We define $\inacc$ (upsilon) to be the smallest ordinal not in $\universe_1$, which is automatically a cardinal.
    This cardinal coincides with the cardinality of the first universe $\universe_1$ in some set-theoretic foundations, for instance, when we take \acs{ZFC} as our foundation and $\universe_1$ to be a Grothendieck universe within it; see \cite[Theorem 2]{Williams1969grothendieck}.
\end{notation}

\begin{notation}[The class of epimorphisms]
    For a category $\C$, we write $\Epi[\C]$ for the class of all epimorphisms in $\C$, or simply $\Epi$ when $\C$ is clear from context.
\end{notation}

\begin{definition}
    A class $\bfE$ of morphisms in a category is called \emph{iso-closed} if it contains all isomorphisms and is closed under composition with isomorphisms.
\end{definition}

\begin{notation}
    For ordinal numbers $\alpha$ and $\beta$, we write $[\alpha,\beta]$ for the class of all ordinal numbers $\gamma$ such that $\alpha\le\gamma\le\beta$, $(\alpha,\beta)$ for the class of all ordinal numbers $\gamma$ such that $\alpha<\gamma<\beta$.
    The classes $(\alpha,\beta]$ and $[\alpha,\beta)$ are also defined similarly.
\end{notation}

\begin{notation}
    For an ordinal number $\alpha$, we will use the blackboard form $\bbalpha$ for the poset $\bbalpha=\{\beta\mid\beta<\alpha\}$ with the usual order.
    For instance, for $\beta,\gamma,\upsilon$, the corresponding posets are written $\bbbeta,\bbgamma,\bbupsilon$.
    The posets $\bbalpha,\bbbeta,\bbgamma,\bbupsilon$ are often regarded as categories.
\end{notation}

\begin{notation}
    Let $\alpha$ be an ordinal.
    For a functor $A_\bullet\colon\bbalpha\arr\C$ from $\bbalpha$ to a category $\C$ and ordinals $\beta\le\beta'<\alpha$, we write $A_{\beta,\beta'}$ for the image under $A_\bullet$ of the unique morphism $\beta\arr\beta'$ in $\bbalpha$.
    \begin{equation*}
        A_0\arr(A_{0,1})[][2]A_1\arr(A_{1,2})[][2]A_2\arr\cdots~ A_\beta\arr(A_{\beta,\beta'})[][2]A_{\beta'}\arr\cdots
        \incat{\C}
    \end{equation*}
\end{notation}
\section{The decomposition number}\label{sec:decomposition_number}
\subsection{The minimum length of decompositions of a morphism}\label{subsec:minimum_length}
We will introduce the notion of transfinite decompositions of a morphism, and will investigate the minimum length of them.
We will use the concept of orthogonality for morphisms; for notation and conventions, see \cref{sec:local_orthogonality}.
\begin{definition}[Transfinite sequences]
    Let $\alpha$ be an ordinal.
    A \emph{transfinite ($\alpha$-)sequence} in a category $\C$ is a functor $A_\bullet\colon\bbalpha\arr\C$ such that for every non-zero limit ordinal $\gamma\in(0,\alpha)$, $A_\gamma\cong\Colim_{\beta\in\bbgamma}A_\beta$ holds.
    For a transfinite $\alpha$-sequence $A_\bullet$ with $\alpha=\alpha'+1$, the morphism $A_{0,\alpha'}$ is called the \emph{(transfinite) composite} of $A_\bullet$.
\end{definition}

\begin{definition}[$\bfE$-transfinite sequences]
    Let $\bfE$ be a class of morphisms in a category $\C$.
    Let $\alpha$ be an ordinal.
    An \emph{$\bfE$-transfinite ($\alpha$-)sequence} in $\C$ is a transfinite $\alpha$-sequence such that for every ordinal $\beta$ with $\beta+1<\alpha$, $A_{\beta,\beta+1}\in\bfE$ holds.
\end{definition}

\begin{remark}
    For any $\bfE$-transfinite $\alpha$-sequence $A_\bullet$, the morphism $A_{\beta,\beta'}$ always belongs to $\lorth\rorth(\bfE)$ for any $\beta\le\beta'<\alpha$, because the class $\lorth\rorth(\bfE)$ is closed under composition and colimit.
\end{remark}

\begin{definition}[Predecompositions and decompositions]
    Let $\bfE$ be a class of morphisms in a category $\C$.
    Let $\gamma\in (0,\inacc]$ be an ordinal.
    \begin{enumerate}
        \item
            An \emph{$(\bfE,\gamma)$-predecomposition} $(A_\bullet,f_\bullet,X)$ in $\C$ consists of a cocone $f_\bullet = (f_\alpha\colon A_\alpha\arr X)_\alpha$ over an $\bfE$-transfinite $\gamma$-sequence $A_\bullet\colon\bbgamma\arr \C$.
            When $\gamma=\inacc$, we simply refer to it as an \emph{$\bfE$-predecomposition}.
            When $A_\bullet$ and $X$ are clear from context, the predecomposition $(A_\bullet,f_\bullet,X)$ is simply denoted by $f_\bullet$.
        \item
            We say that an $(\bfE,\gamma)$-predecomposition $(A_\bullet,f_\bullet,X)$ \emph{stabilizes} at an ordinal $\alpha<\gamma$ if $\alpha$ satisfies the equivalent conditions in \cref{lem:termination_predecomposition}.
            An $(\bfE,\gamma)$-predecomposition $f_\bullet$ is called an \emph{$(\bfE,\gamma)$-decomposition} if it stabilizes at some ordinal less than $\gamma$; the smallest such ordinal is called the \emph{length} of $f_\bullet$ and is denoted by $\abs{f_\bullet}$.
            \qedhere
    \end{enumerate}
\end{definition}

\begin{lemma}\label{lem:termination_predecomposition}
    Let $\bfE$ be a class of morphisms in a category $\C$.
    Let $(A_\bullet,f_\bullet,X)$ be an $(\bfE,\gamma)$-predecomposition.
    Then, the following conditions are equivalent for an ordinal $\alpha<\gamma$:
    \begin{enumerate}
        \item\label{lem:termination_predecomposition-1}
            For every $\beta\in [\alpha,\gamma)$, $\bfE\orth f_\beta$ holds.
        \item\label{lem:termination_predecomposition-2}
            $\bfE\orth f_\alpha$ holds, and
            for $\alpha\le\beta\le\beta'<\gamma$, the morphisms $A_{\beta,\beta'}$ are isomorphisms.
    \end{enumerate}
\end{lemma}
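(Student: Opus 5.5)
The implication (ii)$\Rightarrow$(i) is formal; the implication (i)$\Rightarrow$(ii) reduces to showing that each successor step $A_{\beta,\beta+1}$ with $\beta\ge\alpha$ is invertible, followed by a transfinite induction. Throughout I read $\bfE\orth f$ as ``$e\orth f$ for every $e\in\bfE$'', i.e.\ every commutative square from $e$ to $f$ has a unique diagonal filler, as in \cref{sec:local_orthogonality}.

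\proofdirection{(ii)}{(i)} Let $\beta\in[\alpha,\gamma)$. The cocone condition gives $f_\beta\circ A_{\alpha,\beta}=f_\alpha$. Since $A_{\alpha,\beta}$ is an isomorphism, $f_\beta=f_\alpha\circ A_{\alpha,\beta}\inv$. Orthogonality $e\orth f$ is invariant under precomposing $f$ with an isomorphism: squares and their fillers correspond bijectively via that isomorphism. Hence $\bfE\orth f_\beta$.

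\proofdirection{(i)}{(ii)} The first half, $\bfE\orth f_\alpha$, is the case $\beta=\alpha$. For the second half, I first treat a successor step. Fix $\beta$ with $\alpha\le\beta$ and $\beta+1<\gamma$, and put $e\coloneqq A_{\beta,\beta+1}\in\bfE$.
\begin{itemize}
    \item The square with top $\id_{A_\beta}$, left $e$, right $f_\beta$ and bottom $f_{\beta+1}$ commutes, since $f_{\beta+1}\circ e=f_\beta$.
    \item Because $e\orth f_\beta$, this square has a filler $d\colon A_{\beta+1}\arr A_\beta$ with $d\circ e=\id$ and $f_\beta\circ d=f_{\beta+1}$.
    \item Now consider the square with top $e$, left $e$, right $f_{\beta+1}$ and bottom $f_{\beta+1}$. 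Both $\id_{A_{\beta+1}}$ and $e\circ d$ fill it: indeed $e d e=e$ and $f_{\beta+1}ed=f_\beta d=f_{\beta+1}$.
    \item Since $e\orth f_{\beta+1}$, the filler is unique, so $e\circ d=\id$. Thus $e$ is an isomorphism.
\end{itemize}

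Next I show that $A_{\beta,\beta'}$ is an isomorphism for all $\alpha\le\beta\le\beta'<\gamma$. Fix $\beta$ and induct on $\beta'$.
\begin{itemize}
    \item The case $\beta'=\beta$ is trivial.
    \item In the successor case, $A_{\beta,\beta'+1}=A_{\beta',\beta'+1}\circ A_{\beta,\beta'}$ is a composite of two isomorphisms, using the step above.
    \item In the limit case $\beta'$, the chain $(A_\delta)_{\beta\le\delta<\beta'}$ is cofinal in $\bbbeta'$. By the induction hypothesis it consists of isomorphisms, so its colimit is $A_\beta$ with legs $A_{\beta,\delta}\inv$.
    \item Since $A_{\beta'}\cong\Colim_{\delta<\beta'}A_\delta$ via the canonical comparison from the cocone $(A_{\delta,\beta'})_\delta$, the leg $A_{\beta,\beta'}$ is an isomorphism.
\end{itemize}

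The only point that needs care is this limit step. One must read the transfinite-sequence condition as saying that the canonical cocone $(A_{\delta,\beta'})_{\delta<\beta'}$ is a colimit, not merely that some abstract isomorphism exists. I would make that reading explicit; otherwise the argument is routine.
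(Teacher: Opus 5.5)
Your proof is correct, but it takes a more hands-on route than the paper. The paper treats arbitrary $\alpha\le\beta\le\beta'<\gamma$ in one step, using the two orthogonality classes:
\begin{itemize}
    \item Since $\lorth\rorth(\bfE)$ is closed under transfinite composition, both $A_{0,\beta}$ and $A_{0,\beta'}=A_{\beta,\beta'}\circ A_{0,\beta}$ lie in $\lorth\rorth(\bfE)$. Cancellation then gives $A_{\beta,\beta'}\in\lorth\rorth(\bfE)$.
    \item Dually, $f_\beta=f_{\beta'}\circ A_{\beta,\beta'}$ with $f_\beta,f_{\beta'}\in\rorth(\bfE)$, so cancellation in the right class gives $A_{\beta,\beta'}\in\rorth(\bfE)$.
    \item A morphism in both classes is orthogonal to itself, hence invertible.
\end{itemize}

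Your two-filler argument for the successor step is exactly this ``left $\cap$ right consists of isomorphisms'' argument, unfolded for $e=A_{\beta,\beta+1}\in\bfE$. It is the same computation the paper uses later in \cref{lem:termination_canonical_predecomp}. Your transfinite induction then replaces the appeal to closure of $\lorth\rorth(\bfE)$ under transfinite composites. In its place you use the elementary fact that a colimit of a chain of isomorphisms is attained at its first stage.

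The paper's version is shorter and uniform, with no split into successor and limit cases. Yours is self-contained: it never needs the closure and cancellation properties of $\lorth\rorth(\bfE)$ and $\rorth(\bfE)$.

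Both versions need the limit stages to be colimits via the canonical cocone. The paper uses this implicitly in its remark that $\lorth\rorth(\bfE)$ is closed under colimits, and it reads the definition the same way elsewhere. So the interpretation you make explicit is the intended one.
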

\begin{proof}
    \proofdirection{\cref{lem:termination_predecomposition-2}}{\cref{lem:termination_predecomposition-1}}
    Trivial.

    \proofdirection{\cref{lem:termination_predecomposition-1}}{\cref{lem:termination_predecomposition-2}}
    By $A_{0,\beta},A_{0,\beta'}\in\lorth\rorth(\bfE)$ and the cancellation property of $\lorth\rorth(\bfE)$, we have $A_{\beta,\beta'}\in\lorth\rorth(\bfE)$.
    On the other hand, by $f_\beta,f_{\beta'}\in\rorth(\bfE)$ and the cancellation property of $\rorth(\bfE)$, we have $A_{\beta,\beta'}\in\rorth(\bfE)$.
    Thus, $A_{\beta,\beta'}$ becomes an isomorphism.
\end{proof}

\begin{remark}
    By \cref{lem:termination_predecomposition}\cref{lem:termination_predecomposition-2}, an $(\bfE,\gamma)$-decomposition can naturally extend to an $(\bfE,\gamma')$-decomposition for any $\gamma'\in [\gamma,\inacc]$.
    Therefore, the particular choice of $\gamma$ is immaterial when considering decompositions.
\end{remark}

\begin{definition}[The category of predecompositions]
    Let $\bfE$ be a class of morphisms in a category $\C$, and let $\gamma\in (0,\inacc]$ be an ordinal.
    Let $\Predec[\bfE,\gamma]$ denote the category of $(\bfE,\gamma)$-predecompositions in $\C$, whose morphism $\phi\colon (A_\bullet,f_\bullet,X)\arr[][1] (B_\bullet,g_\bullet,Y)$ consists of a natural transformation $(\phi_\alpha)_\alpha \colon A_\bullet\arr[Rightarrow] B_\bullet$ and a morphism $\phi_\ast\colon X\arr Y$ such that the following diagram commutes for every $\alpha<\gamma$:
    \begin{equation*}
        \begin{tikzcd}
            A_\alpha\ar[d,"\phi_\alpha"']\ar[r,"f_\alpha"] & X\ar[d,"\phi_\ast"] \\
            B_\alpha\ar[r,"g_\alpha"'] & Y
        \end{tikzcd}\incat{\C}.
    \end{equation*}
    When $\gamma=\inacc$, we simply write $\Predec[\bfE]\coloneq\Predec[\bfE,\inacc]$.
\end{definition}

\begin{remark}
    We will refer to an $(\bfE,\gamma)$-(pre)decomposition $(A_\bullet,f_\bullet,X)$
    \begin{equation*}
        \begin{tikzcd}
            A_0\ar[dr,"A_{0,1}"']\ar[rrr,"f~(=f_0)"] &[-30pt] & &[30pt] X \\
            & A_1\ar[r,"A_{1,2}"']\ar[rru,"f_1"] & A_2\ar[ru,"f_2"']\ar[r,phantom,"\cdots"{pos=0.2}] & {}
        \end{tikzcd}
    \end{equation*}
    as an \emph{$(\bfE,\gamma)$-(pre)decomposition of $f$} where $f=f_0$.
\end{remark}

\begin{definition}[Minimum decomposition number]
    For a class $\bfE$ of morphisms in $\C$,
    \begin{enumerate}
        \item
            The \emph{minimum $\bfE$-decomposition number} of a morphism $f$ in $\C$, denoted by $\decnum[\bfE]{f}$, is the smallest ordinal $\alpha$ such that $f$ has an $\bfE$-decomposition of length $\alpha$.
            If $f$ has no $\bfE$-decompositions, its minimum $\bfE$-decomposition number is undefined.
        \item
            Suppose that every morphism $f$ in $\C$ has an $\bfE$-decomposition.
            Then, the \emph{global minimum $\bfE$-decomposition number} of the category $\C$, denoted by $\decnum[\bfE]{\C}$, is the smallest ordinal $\alpha$ such that $\decnum[\bfE]{f}<\alpha$ holds for every morphism $f$ in $\C$.\qedhere
    \end{enumerate}
\end{definition}

\begin{remark}\label{rem:why_strict_supremum}
    In the definition of the global minimum decomposition number $\decnum[\bfE]{\C}$, we adopt the least strict upper bound rather than the supremum.
    This choice allows us to distinguish between the case in which there is a morphism $f$ with $\decnum[\bfE]{f}=\alpha$ and it is the supremum, and the case in which there are morphisms having arbitrarily large minimum $\bfE$-decomposition number less than $\alpha$ but no morphism has decomposition number $\alpha$, where $\alpha$ is a limit ordinal.
    An example with $\decnum[\bfE]{\C}=\omega$ will be given in \cref{eg:global_decnum_is_omega}, while examples with $\decnum[\bfE]{\C}=\omega+1$ will appear in \cref{eg:global_minimum_decnum}\cref{eg:global_minimum_decnum-localizations,eg:global_minimum_decnum-antiLipschitz}, \cref{eg:global_decnum_is_omegaplusone}, and \cref{rem:global_decnum_of_omegaCat}.
\end{remark}

Our motivating example is the case where the class $\bfE$ is taken to be that of all regular epimorphisms.
However, there are different definitions of regular (and strong) epimorphisms in the literature (e.g., \cite{Kelly1969mono}), which are not necessarily equivalent in general, so we clarify below which one we adopt:
\begin{definition}\quad
    \begin{enumerate}
        \item
            A morphism in a category is called a \emph{regular epimorphism} if it is the coequalizer of some parallel pair of morphisms.
        \item
            A morphism $f$ in a category is called a \emph{strong epimorphism}\footnote{Strong epimorphisms may fail to be epic unless the category has binary powers.} if $f\orth m$ holds for every monomorphism $m$.\qedhere
    \end{enumerate} 
\end{definition}

\begin{definition}[Regular-decomposition number]
    When $\bfE$ is the class of all regular epimorphisms, we simply write $\decnum{f}\coloneq\decnum[\bfE]{f}$ and $\decnum{\C}\coloneq\decnum[\bfE]{\C}$ and called them the \emph{(global) minimum regular-decomposition number}.
    Similarly, we also use the terms \emph{regular transfinite sequences} and \emph{regular (pre)decompositions}.
\end{definition}

The following is well-known:
\begin{lemma}
    Let $\bfE$ be the class of all regular epimorphisms in a category $\C$.
    Then, every monomorphism in $\C$ belongs to $\rorth(\bfE)$.
    The converse also holds whenever $\C$ has binary copowers.
\end{lemma}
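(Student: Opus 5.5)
The plan has two parts: first show that every monomorphism is right orthogonal to every regular epimorphism, then use binary copowers to show that a morphism in $\rorth(\bfE)$ must be monic. Throughout, $\bfE\orth m$ means that every $e\in\bfE$ is orthogonal to $m$: each commutative square with $e$ on the left and $m$ on the right admits a unique diagonal filler.

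For the first part, let $e\colon A\arr B$ be the coequalizer of a pair $u,v\colon K\parr A$, and let $m\colon X\arr Y$ be a monomorphism. Take a commutative square $m\circ a=b\circ e$ with $a\colon A\arr X$ and $b\colon B\arr Y$. Then
\begin{equation*}
    m\circ a\circ u=b\circ e\circ u=b\circ e\circ v=m\circ a\circ v .
\end{equation*}
Since $m$ is monic, this gives $a\circ u=a\circ v$. The coequalizer property then yields a unique $d\colon B\arr X$ with $d\circ e=a$. Moreover, $m\circ d\circ e=m\circ a=b\circ e$, and $e$ is epic (as every coequalizer is), so $m\circ d=b$. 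Uniqueness of the filler follows from $e$ being epic. Hence $\bfE\orth m$.

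For the converse, assume $\C$ has binary copowers and let $m\colon X\arr Y$ satisfy $\bfE\orth m$. To show $m$ is monic, take $x,y\colon Z\arr X$ with $m\circ x=m\circ y$. Form the copower $Z+Z$ with coprojections $i_1,i_2$, and let $\nabla\colon Z+Z\arr Z$ be the codiagonal.
\begin{itemize}
    \item $\nabla$ is a regular epimorphism: it is the coequalizer of $i_1,i_2\colon Z\parr Z+Z$. Indeed, for any $h\colon Z+Z\arr W$ with $h\circ i_1=h\circ i_2$, the map $h$ factors through $\nabla$ via $h\circ i_1$, and the factorization is unique because $\nabla$ is split epic (by $i_1$).
    \item Let $[x,y]\colon Z+Z\arr X$ be the copairing. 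Then $m\circ[x,y]=[m\circ x,m\circ y]=(m\circ x)\circ\nabla$, so we have a commutative square with $\nabla$ on the left and $m$ on the right.
    \item Orthogonality gives $d\colon Z\arr X$ with $d\circ\nabla=[x,y]$. Precomposing with $i_1$ and $i_2$ gives $x=d=y$.
\end{itemize}
Hence $m$ is monic.

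Neither part is difficult. The one point needing care is verifying that the codiagonal is genuinely a coequalizer, which the first bullet handles.
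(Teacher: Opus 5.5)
Your proof is correct, including the check that the codiagonal $\nabla\colon Z+Z\to Z$ is the coequalizer of the two coprojections. The paper gives no proof here, citing the lemma as well known; your argument is the standard one and uses the same codiagonal device the paper relies on elsewhere: the class of codiagonals in its example on ordinary regular epimorphisms, and dually the diagonals $\Delta_X\colon X\to X\times X$ in its lemma on when left classes consist of epimorphisms.
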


Therefore, whenever we consider a category with binary copowers, a regular decomposition of a morphism is precisely a decomposition into a transfinite composite of regular epimorphisms followed by a monomorphism.
\begin{example}\label{eg:global_minimum_decnum_for_regular_epi}
    We present several examples of the global minimum regular-decomposition number of categories:
    \begin{enumerate}
        \item
            The empty category $\initialcat$ is the only example with $\decnum{\initialcat} = 0$.
        \item
            For a non-empty category in which every morphism is monic, its global minimum regular-decomposition number is $1$.
            Thus, we have $\decnum{\terminalcat}=1$ and $\decnum{\Field}=1$, where $\terminalcat$ denotes the terminal category, and where $\Field$ denotes the category of (small) fields.
        \item
            For the categories of (small) sets, groups, and copresheaves, we have $\decnum{\Set}=2$, $\decnum{\Grp}=2$, and $\decnum{\Set^\D}\le 2$, since all of these are regular categories.
        \item
            More generally, even without assuming that the category is regular, for any category in which every morphism factors as a regular epimorphism followed by a monomorphism, its global minimum regular-decomposition number is less than or equal to $2$.
            For instance, although the category $\Pos$ of posets is not regular, it satisfies $\decnum{\Pos}=2$.
        \item
            For the category $\Cat$ of (small) categories, we have $\decnum{\Cat}=3$, which means that every functor factors as the composite of two regular epifunctors followed by a subcategory inclusion \cite[4.5.\ Proposition]{BednarczykBorzyszkowskiPawlowski1999epi}, and that there exists a functor that requires two regular epifunctors to decompose itself.
            In fact, for the category $\nCat$ of (small) strict $n$-categories and strict $n$-functors, we have $\decnum{\nCat}=n+2$ in general, as we see later in \cref{thm:global_decnum_of_nCat} and \cref{eg:global_decnum_by_clanapproach}\cref{eg:global_decnum_by_clanapproach-nCat}.\qedhere
    \end{enumerate}
\end{example}

\begin{example}\label{eg:global_minimum_decnum}
    The following are examples of the global minimum decomposition number in cases where $\bfE$ is not the class of regular epimorphisms:
    \begin{enumerate}
        \item\label{eg:global_minimum_decnum-surjections}
            Let $\bfE$ be the class of all surjections in $\Pos$, the category of posets.
            Then, the class $\rorth(\bfE)$ coincides with the class of order-reflecting injections in $\Pos$, and we have $\decnum[\bfE]{\Pos}=2$.
            That is, every morphism in $\Pos$ can be decomposed into a surjection followed by an order-reflecting subposet.
            This can immediately be generalized to an arbitrary category with an orthogonal factorization system $(\bfE,\rorth(\bfE))$.
        \item\label{eg:global_minimum_decnum-localizations}
            Let $\bfE$ be the class of all \textit{strict localizations}, functors that arise as strict localizations of some class of morphisms in the domain category.
            Then, the category $\CAT$ of (large) categories admits an orthogonal factorization system $(\lorth\rorth(\bfE),\rorth(\bfE))$, where $\rorth(\bfE)$ coincides with the class of all conservative functors.
            In fact, the left class $\lorth\rorth(\bfE)$ coincides with the class of \textit{iterated strict localizations}, functors obtained by the colimit of a sequence of strict localizations, and we have $\decnum[\bfE]{\CAT}=\omega+1$.
            See \cref{eg:iterated_localization_decnum_in_Cat} for the proof.
        \item\label{eg:global_minimum_decnum-antiLipschitz}
            Let $\Met$ denote the category of (usual) metric spaces and non-expansive maps.
            For $0<k<1$, we refer to a non-expansive map $f\colon X\arr Y$ between metric spaces as an \emph{anti-$k$-Lipschitz quotient map} if it is bijective and $k\cdot d_X(x,x')\le d_Y(f(x),f(x'))$ holds for any $x,x'\in X$, and let $\bfE$ be the class of all such morphisms in $\Met$.
            Then, the class $\rorth(\bfE)$ coincides with that of all isometries.
            Since every non-expansive map can be factorized as a surjective one followed by an isometry, and since every surjective non-expansive map can be presented as a colimit of a countable sequence of anti-$k$-Lipschitz quotient maps, we have $\decnum[\bfE]{\Met}=\omega+1$.
        \item\label{eg:global_minimum_decnum-monotone}
            Let $\Top$ be the category of topological spaces and continuous maps.
            A quotient map such that every fiber is connected is called a \textit{monotone quotient map}, and a continuous map such that every fiber is totally disconnected is called a \textit{light map} in the literature.
            Let $\bfE$ be the class of all monotone quotient maps.
            Then, the class $\rorth(\bfE)$ coincides with that of all light maps, and we have $\decnum[\bfE]{\Top}=\inacc$, the smallest large ordinal (cf.\ \cref{note:smallest_large_ordinal}).
            See \cref{eg:monotone_example_by_koizumi} for the proof.\qedhere
    \end{enumerate}
\end{example}

We now explore fundamental properties for the minimum decomposition numbers.
\begin{lemma}\label{lem:decomposition_of_leftclass}
    Let $\bfE$ be a class of morphisms in a category $\C$.
    Let $(A_\bullet,f_\bullet,X)$ be an $\bfE$-decomposition of a morphism $f\in\lorth\rorth(\bfE)$.
    Then, for every $\alpha\in [\abs{f_\bullet},\inacc)$, the morphism $f_\alpha$ is an isomorphism.
\end{lemma}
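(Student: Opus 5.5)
Fix $\alpha\in[\abs{f_\bullet},\inacc)$. The plan is to show that $f_\alpha$ lies in both $\lorth\rorth(\bfE)$ and $\rorth(\bfE)$. Once that is done, $f_\alpha$ is orthogonal to itself, and a morphism orthogonal to itself is an isomorphism: lift in the square with $\id$ on the two free sides to get an inverse. This is the same argument the paper used for $A_{\beta,\beta'}$ in the proof of \cref{lem:termination_predecomposition}.

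\emph{Right class.} Since $f_\bullet$ stabilizes at $\abs{f_\bullet}$ and $\alpha\ge\abs{f_\bullet}$, condition \cref{lem:termination_predecomposition-1} of \cref{lem:termination_predecomposition} gives $\bfE\orth f_\alpha$. That is, $f_\alpha\in\rorth(\bfE)$.

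\emph{Left class.} We have the factorization $f=f_0=f_\alpha\circ A_{0,\alpha}$. By the remark after the definition of $\bfE$-transfinite sequences, $A_{0,\alpha}\in\lorth\rorth(\bfE)$, and by hypothesis $f\in\lorth\rorth(\bfE)$. We now use the standard cancellation property of a left orthogonality class: if $g\circ h$ and $h$ both lie in $\lorth(\mathbf{M})$, then so does $g$. Applied here, it gives $f_\alpha\in\lorth\rorth(\bfE)$.

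For completeness, here is the cancellation argument. Take a square $u\circ g=m\circ v$ with $m\in\rorth(\bfE)$. Precomposing with $h$ gives a square for $g\circ h$, which has a diagonal $d$. Then $d$ and $u$ both solve the lifting problem of $h$ against $m$ for the square $(u\circ h,\ d\circ g)$ versus $(v, \ldots)$, so uniqueness of lifts against $h$ forces $d\circ g=u$. The remaining identity $m\circ d=v$ is part of the lift for $g\circ h$, and uniqueness of $d$ follows from uniqueness for $g\circ h$. This is the step most likely to need care. The paper already invokes this cancellation property in the proof of \cref{lem:termination_predecomposition}, so I would simply cite it as part of the orthogonality conventions in \cref{sec:local_orthogonality}.

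Combining the two parts, $f_\alpha\in\lorth\rorth(\bfE)\cap\rorth(\bfE)$, so $f_\alpha\orth f_\alpha$ and $f_\alpha$ is an isomorphism.
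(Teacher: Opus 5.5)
Your proof is correct and follows the paper's argument exactly: stabilization gives $f_\alpha\in\rorth(\bfE)$, and $A_{0,\alpha}\in\lorth\rorth(\bfE)$ together with left cancellation gives $f_\alpha\in\lorth\rorth(\bfE)$, so $f_\alpha$ is an isomorphism. Your cancellation sketch needs two small corrections: the square should read $m\circ u=v\circ g$, and the two lifts that uniqueness for $h$ identifies are $u$ and $d\circ g$, in the square with top $u\circ h$ and bottom $v\circ g$.
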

\begin{proof}
    Let $\alpha\in [\abs{f_\bullet},\inacc)$ be an ordinal.
    Then, we have $f_\alpha\in\rorth(\bfE)$ by the definition of the length $\abs{f_\bullet}$.
    Since the class $\lorth\rorth(\bfE)$ is closed under transfinite composition, the morphism $A_{0,\alpha}$ belongs to $\lorth\rorth(\bfE)$.
    By the cancellation property of $\lorth\rorth(\bfE)$, we have $f_\alpha\in\lorth\rorth(\bfE)$, which implies that $f_\alpha$ is an isomorphism.
\end{proof}

The following proposition is useful when one tries to determine the global minimum decomposition numbers.
It shows that it often suffices to consider morphisms in the class $\lorth\rorth(\bfE)$, because postcomposing a morphism in $\rorth(\bfE)$ does not affect the minimum decomposition number so this class does not contribute to the global minimum decomposition number.
\begin{proposition}\label{prop:decnum_of_composite}
    Let $\bfE$ be a class of morphisms in a category $\C$.
    Consider a composable pair of morphisms
    \begin{equation*}
        \begin{tikzcd}
            X\ar[r,"f"] & Y\ar[r,"g"] & Z
        \end{tikzcd}\incat{\C}.
    \end{equation*}
    \begin{enumerate}
        \item\label{prop:decnum_of_composite-precompose}
            Suppose $f\in\lorth\rorth(\bfE)$.
            If $\decnum[\bfE]{f}$ and $\decnum[\bfE]{g}$ are defined, then $\decnum[\bfE]{g\circ f}$ is defined and $\decnum[\bfE]{g\circ f}\le \decnum[\bfE]{f}+\decnum[\bfE]{g}$ holds.
            Here, the symbol $+$ denotes the addition of ordinals.
        \item\label{prop:decnum_of_composite-postcompose}
            Suppose $g\in\rorth(\bfE)$.
            Then, $\decnum[\bfE]{f}$ is defined if and only if $\decnum[\bfE]{g\circ f}$ is defined, and whenever they are defined, $\decnum[\bfE]{f}=\decnum[\bfE]{g\circ f}$ holds.
    \end{enumerate}
\end{proposition}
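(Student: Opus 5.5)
For \cref{prop:decnum_of_composite-precompose}, I will splice two decompositions together. Take an $\bfE$-decomposition $(A_\bullet,f_\bullet,Y)$ of $f$ of length $\alpha=\decnum[\bfE]{f}$ and an $\bfE$-decomposition $(B_\bullet,g_\bullet,Z)$ of $g$ of length $\beta=\decnum[\bfE]{g}$. Since $f\in\lorth\rorth(\bfE)$, \cref{lem:decomposition_of_leftclass} shows that $f_\alpha\colon A_\alpha\arr Y$ is an isomorphism.

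Define a new sequence $C_\bullet$ by
\begin{itemize}
    \item $C_\xi=A_\xi$ for $\xi\le\alpha$;
    \item $C_{\alpha+\eta}=B_\eta$ for all $\eta$, glued along $f_\alpha\colon A_\alpha\cong Y=B_0$.
\end{itemize}
The cocone is $h_\xi=g\circ f_\xi$ for $\xi\le\alpha$ and $h_{\alpha+\eta}=g_\eta$. Compatibility of the cocone follows from $g_0=g$ and $f_\alpha\circ A_{\xi,\alpha}=f_\xi$.

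\begin{itemize}
    \item \emph{Successor steps.} Every successor step is either a step of $A_\bullet$ below $\alpha$ or a step of $B_\bullet$. At the junction, $C_{\alpha,\alpha+1}=B_{0,1}\circ f_\alpha$, and this lies in $\bfE$ provided $\bfE$ is closed under composition with isomorphisms. To avoid needing that, I can instead replace $A_\alpha$ by $Y$ at stage $\alpha$; then the limit condition at $\alpha$ survives, because a colimit is only determined up to isomorphism.
    \item \emph{Limit stages.} Limit ordinals $\le\alpha$ are limits in $A_\bullet$. A limit ordinal $\alpha+\lambda$ with $\lambda$ a limit is the colimit of $B_\eta$ over $\eta<\lambda$, which is cofinal in $\alpha+\lambda$.
    \item \emph{Length.} The sequence stabilizes at $\alpha+\beta$, since $h_{\alpha+\eta}=g_\eta$ satisfies $\bfE\orth h_{\alpha+\eta}$ for all $\eta\ge\beta$. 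The sequence must be indexed up to $\inacc$; this is fine because $\alpha+\eta<\inacc$ exactly when $\eta<\inacc$.
\end{itemize}
Hence $\decnum[\bfE]{g\circ f}\le\alpha+\beta$.

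For \cref{prop:decnum_of_composite-postcompose}, the key observation is that $(A_\bullet,f_\bullet,Y)$ is an $\bfE$-predecomposition of $f$ if and only if $(A_\bullet,g\circ f_\bullet,Z)$ is one of $g\circ f$. Given a predecomposition of $f$, postcomposing the cocone with $g$ gives one of $g\circ f$. Conversely, given a decomposition $(A_\bullet,h_\bullet,Z)$ of $g\circ f$, I must factor each $h_\xi$ through $g$, and this is where $g\in\rorth(\bfE)$ is used. By the remark preceding the definition of predecompositions, $A_{0,\xi}\in\lorth\rorth(\bfE)$. So the square formed by $A_{0,\xi}$, $f$, $h_\xi$ and $g$ has a unique diagonal $f_\xi\colon A_\xi\arr Y$ with $f_\xi\circ A_{0,\xi}=f$ and $g\circ f_\xi=h_\xi$. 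Uniqueness of these lifts gives naturality: both $f_{\xi'}\circ A_{\xi,\xi'}$ and $f_\xi$ solve the same lifting problem for $A_{0,\xi}$.

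It remains to check that stabilization at $\alpha$ transfers in both directions, which I expect to be the main subtle point:
\[
    \bfE\orth f_\xi \iff \bfE\orth g\circ f_\xi .
\]
The forward direction holds because $\rorth(\bfE)$ is closed under composition. The backward direction holds by the cancellation property of $\rorth(\bfE)$: if $g\circ f_\xi\in\rorth(\bfE)$ and $g\in\rorth(\bfE)$, then $f_\xi\in\rorth(\bfE)$, by the standard argument using uniqueness of lifts against $g$.

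Thus decompositions of $f$ and of $g\circ f$ correspond bijectively with the same lengths, so definedness and the values of $\decnum[\bfE]{f}$ and $\decnum[\bfE]{g\circ f}$ coincide.
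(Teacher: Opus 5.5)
Your argument is the paper's own. In (1) you splice $f_\bullet$ and $g_\bullet$ once the tail of $f_\bullet$ has become invertible (\cref{lem:decomposition_of_leftclass}). In (2) you lift $A_{0,\xi}\in\lorth\rorth(\bfE)$ against $g$, use uniqueness for naturality, and use closure and cancellation of $\rorth(\bfE)$ to transfer stabilization; part (2) is fine as written.

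The step that fails is your device for avoiding closure of $\bfE$ under composition with isomorphisms. Replacing $A_\alpha$ by $Y$ only helps when $\alpha=\abs{f_\bullet}$ is a limit ordinal. If $\alpha=\alpha'+1$, the step $A_{\alpha',\alpha}$ gets replaced by $f_\alpha\circ A_{\alpha',\alpha}=f_{\alpha'}$. This is again a member of $\bfE$ composed with an isomorphism, and it need not lie in $\bfE$. If $\alpha=0$, you cannot replace $C_0=X$ at all, so the first step is forced to be $B_{0,1}\circ f$ with $f$ invertible.

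This cannot be repaired, because (1) is false without some such closure hypothesis. Take the category generated by three distinct objects $X,Y,W$, an isomorphism $f\colon X\to Y$ and a morphism $e\colon Y\to W$, so that nothing leaves $W$ except $\id_W$. Let $\bfE=\{e,\id_X,\id_Y,\id_W\}$.
\begin{itemize}
    \item We have $e\notin\rorth(\bfE)$ and $e\circ f\notin\rorth(\bfE)$, since a diagonal filler would be a morphism $W\to Y$, resp.\ $W\to X$.
    \item $\decnum[\bfE]{f}=0$ because $f$ is invertible, and $\decnum[\bfE]{e}=1$.
    \item However, $\id_X$ is the only member of $\bfE$ with domain $X$. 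So every $\bfE$-transfinite sequence starting at $X$ is constant at $X$ at all finite stages, and $e$ can only be used after some limit stage has been realized as $Y$ via $f$.
    \item Hence $\decnum[\bfE]{e\circ f}=\omega+1>0+1$.
\end{itemize}
The paper's proof tacitly uses the same closure when it says the $f_\alpha$ with $\alpha\ge\abs{f_\bullet}$ may be taken to be identities.

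So state the assumption explicitly, for example that $\bfE$ is closed under precomposition with isomorphisms. This holds for every iso-closed class, such as the classes of (generalized) regular epimorphisms used later. With it, your original splice with $C_{\alpha,\alpha+1}=B_{0,1}\circ f_\alpha\in\bfE$ works without any replacement.
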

\begin{proof}\quad
    \begin{enumerate}
        \item
            Let $(A_\bullet, f_\bullet, Y)$ and $(B_\bullet, g_\bullet, Z)$ be arbitrary $\bfE$-decompositions of $f$ and $g$, respectively.
            Since the morphisms $f_\alpha$ for $\alpha\in [\abs{f_\bullet},\inacc)$ are isomorphisms by \cref{lem:decomposition_of_leftclass}, they can be supposed to be identities without loss of generality.
            Then, the following yields an $\bfE$-decomposition $(C_\bullet, h_\bullet, Z)$ of the composite $h\coloneq gf$:
            \begin{equation*}
                C_\alpha\coloneq
                \begin{cases}
                    A_\alpha & \text{if $\alpha < \abs{f_\bullet}$} \\
                    B_{\alpha - \abs{f_\bullet}} & \text{if $\alpha \ge \abs{f_\bullet}$}
                \end{cases}
                \qquad
                h_\alpha\coloneq
                \begin{cases}
                    gf_\alpha & \text{if $\alpha < \abs{f_\bullet}$} \\
                    g_{\alpha - \abs{f_\bullet}} & \text{if $\alpha \ge \abs{f_\bullet}$}
                \end{cases}
            \end{equation*}
            Here, $\alpha - \abs{f_\bullet}$ denotes a unique ordinal $\gamma$ satisfying $\abs{f_\bullet}+\gamma = \alpha$.
            Then, $\decnum[\bfE]{h} \le \abs{h_\bullet} = \abs{f_\bullet}+\abs{g_\bullet}$ follows, which finishes the proof.
        \item
            Suppose that $\decnum[\bfE]{f}$ is defined, and take an $\bfE$-decomposition $(A_\bullet, f_\bullet, Y)$ of $f$.
            Then, it is easy to see that $(A_\bullet, gf_\bullet, Z)$ is still an $\bfE$-decomposition of $h\coloneq gf$.

            For converse, suppose that $\decnum[\bfE]{gf}$ is defined, and take an $\bfE$-decomposition $(C_\bullet, h_\bullet, Z)$ of $h=gf$.
            Since $\bfE\orth g$, there exists a unique morphism $k_\alpha$ making the following diagram commute for each $\alpha<\inacc$:
            \begin{equation*}
                \begin{tikzcd}
                    X\ar[d,"C_{0,\alpha}"']\ar[r,"f"] & Y\ar[d,"g"] \\
                    C_\alpha\ar[r,"h_\alpha"']\ar[ru,dotted,"k_\alpha"] & Z
                \end{tikzcd}
            \end{equation*}
            Then, $(C_\bullet, k_\bullet, Y)$ becomes an $\bfE$-decomposition of $f$, which finishes the proof.\qedhere
    \end{enumerate}
\end{proof}

The following two lemmas assert that the minimum decomposition number of a given morphism can be, to some extent, inferred from that of another morphism:
\begin{lemma}\label{lem:descending_decnum}
    Let $\bfE$ be a class of morphisms in a category $\C$ such that $\bfE\subseteq\Epi[\C]$.
    Let $\phi\colon (A_\bullet,f_\bullet,X) \arr[] (B_\bullet,g_\bullet,Y)$ be a morphism of $(\bfE,\gamma)$-predecompositions such that $\phi_0\in \lorth\rorth(\bfE)$ and $\phi_\ast\in\rorth(\bfE)$.
    If $f_\bullet$ stabilizes, then so does $g_\bullet$, and $\abs{f_\bullet}\ge\abs{g_\bullet}$ holds.
\end{lemma}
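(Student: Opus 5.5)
The plan is to show that $g_\bullet$ stabilizes at $\alpha\coloneq\abs{f_\bullet}$. By condition \cref{lem:termination_predecomposition-1} of \cref{lem:termination_predecomposition}, this means checking $\bfE\orth g_\beta$ for every $\beta\in[\alpha,\gamma)$. Stabilization of $g_\bullet$ and the inequality $\abs{g_\bullet}\le\abs{f_\bullet}$ then follow at once. So fix $\beta\ge\alpha$; we know $f_\beta\in\rorth(\bfE)$.

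\textbf{Step 1: $\phi_\beta\in\lorth\rorth(\bfE)$.} Naturality of $\phi$ gives $\phi_\beta\circ A_{0,\beta}=B_{0,\beta}\circ\phi_0$. The right-hand side lies in $\lorth\rorth(\bfE)$, since $\phi_0\in\lorth\rorth(\bfE)$ by hypothesis and $B_{0,\beta}\in\lorth\rorth(\bfE)$ as a transfinite composite of morphisms of $\bfE$. Also $A_{0,\beta}\in\lorth\rorth(\bfE)$. The cancellation property of $\lorth\rorth(\bfE)$, used already in \cref{lem:termination_predecomposition}, then yields $\phi_\beta\in\lorth\rorth(\bfE)$.

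\textbf{Step 2: a section of $\phi_\beta$.} The composite $\phi_\ast f_\beta$ lies in $\rorth(\bfE)$, since $\rorth(\bfE)$ is closed under composition. The cocone condition gives $\phi_\ast f_\beta=g_\beta\phi_\beta$, so there is a commutative square with $\phi_\beta$ on the left, $\phi_\ast f_\beta$ on the right, $\id_{A_\beta}$ on top and $g_\beta$ on the bottom. Orthogonality $\phi_\beta\orth\phi_\ast f_\beta$ provides $d\colon B_\beta\arr A_\beta$ with $d\phi_\beta=\id$ and $\phi_\ast f_\beta d=g_\beta$. I then want to show $\phi_\beta d=\id_{B_\beta}$, so that $\phi_\beta$ is an isomorphism. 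If so, $g_\beta\cong\phi_\ast f_\beta$ lies in $\rorth(\bfE)$, and Step 3 is immediate.

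\textbf{Step 3: orthogonality for $g_\beta$.} Concretely, given $e\colon C\arr D$ in $\bfE$ and a square $g_\beta u=ve$, we lift $du$ and $v$ against $\phi_\ast f_\beta$ to get $l\colon D\arr A_\beta$ with $le=du$. The candidate lift is $\phi_\beta l$. The lower triangle $g_\beta\phi_\beta l=\phi_\ast f_\beta l=v$ holds automatically. The upper triangle needs $\phi_\beta d u=u$. Uniqueness of lifts is free because $e$ is epic, since $\bfE\subseteq\Epi[\C]$; this is the only place the hypothesis $\bfE\subseteq\Epi$ should enter.

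\textbf{Main obstacle: proving $\phi_\beta d=\id$.} We have $\phi_\beta d\phi_\beta=\phi_\beta$, so it suffices to know that $\phi_\beta$ is epic. Since $\phi_\beta A_{0,\beta}=B_{0,\beta}\phi_0$, it is enough that $B_{0,\beta}$ and $\phi_0$ are epic. The morphism $B_{0,\beta}$ is epic because each step is in $\bfE\subseteq\Epi$, and colimits of chains of epimorphisms are epimorphisms. For $\phi_0$, I would first try to show $\lorth\rorth(\bfE)\subseteq\Epi$ under the assumption $\bfE\subseteq\Epi$. Failing that, I would redo the argument at the level of the square $\phi_\beta d u$ versus $u$: precompose with $e$ and compare both maps after $g_\beta$, using the lifting property of $\phi_\beta$ against $\phi_\ast f_\beta$ to force $\phi_\beta d u=u$ directly.
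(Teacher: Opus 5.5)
\textbf{There is a genuine gap at the step you yourself flag.} Steps 1 and 2 are sound. One terminology point: $d$ with $d\phi_\beta=\id$ is a \emph{retraction} of $\phi_\beta$, not a section. The difficulty is that the proof stalls exactly where the hypothesis $\bfE\subseteq\Epi[\C]$ has to be used, and neither of your proposed ways of showing $\phi_\beta d=\id$ works.

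\begin{itemize}
\item Showing $\phi_\beta$ epic via $\phi_0$ would need $\lorth\rorth(\bfE)\subseteq\Epi[\C]$. This does not follow from $\bfE\subseteq\Epi[\C]$: by \cref{lem:leftclass_epic} it is recovered only when $\C$ has binary powers, and $\phi_0$ is an arbitrary member of $\lorth\rorth(\bfE)$.
\item The fallback does not make sense as stated. The equation $\phi_\beta du=u$ lives on the \emph{domain} of $e$, so there is nothing to precompose with $e$, and nothing in Step 3 forces it.
\end{itemize}

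Your remark that $\bfE\subseteq\Epi$ enters only in Step 3 is also off. Once $\phi_\beta$ is an isomorphism, $g_\beta=\phi_\ast f_\beta\phi_\beta^{-1}\in\rorth(\bfE)$ with no further use of the hypothesis, so Step 3 is superfluous. The hypothesis is needed precisely to make $\phi_\beta$ invertible.

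\textbf{The missing idea.} Because every morphism of $\bfE$ is epic, $\rorth(\bfE)$ has the strong cancellation property ($gf\in\rorth(\bfE)\Rightarrow f\in\rorth(\bfE)$), equivalently it contains all split monomorphisms (\cref{lem:leftclass_epic}). Applied to $g_\beta\phi_\beta=\phi_\ast f_\beta\in\rorth(\bfE)$, this gives $\phi_\beta\in\rorth(\bfE)$.

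You can also check this directly with your retraction $d$. Given $e\in\bfE$ and $\phi_\beta u=ve$, the map $dv$ satisfies $dve=d\phi_\beta u=u$. Moreover $\phi_\beta dve=\phi_\beta d\phi_\beta u=\phi_\beta u=ve$, so $\phi_\beta dv=v$ because $e$ is epic. Uniqueness of the lift also follows from $e$ being epic.

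Combining this with Step 1 gives $\phi_\beta\orth\phi_\beta$. Lifting the square with $\id_{A_\beta}$ on top and $\id_{B_\beta}$ on the bottom then shows $\phi_\beta$ is an isomorphism. This is exactly the paper's argument.
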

\begin{proof}
    Note that the assumption $\bfE\subseteq\Epi$ leads to the strong cancellation property of $\rorth(\bfE)$ (cf.\ \cref{lem:leftclass_epic}).
    Suppose that $f_\bullet$ is an $\bfE$-decomposition, and let $\alpha_0<\gamma$ be the length of $f_\bullet$.
    Take $\beta\in [\alpha_0,\gamma)$ arbitrarily.
    By $g_{\beta}\circ \phi_\beta = \phi_\ast \circ f_{\beta}\in\rorth(\bfE)$ and the strong cancellation property of $\rorth(\bfE)$, we have $\phi_\beta\in\rorth(\bfE)$.
    On the other hand, we can easily show $\phi_\alpha\in\lorth\rorth(\bfE)$ for every $\alpha<\gamma$ by transfinite induction using the cancellation property of $\lorth\rorth(\bfE)$.
    In particular, $\phi_\beta$ is an isomorphism.
    Thus, we have $g_{\beta}=\phi_\ast\circ f_{\beta}\circ \phi_\beta^{-1}\in\rorth(\bfE)$, which finishes the proof.
\end{proof}

\begin{lemma}\label{lem:pushing_of_decomposition}
    Let $\C$ be a category with pushouts.
    Let $\bfE$ be a class of morphisms in $\C$ with $\bfE\subseteq\Epi[\C]$ that is stable under pushouts.
    Suppose that we are given the following commutative square with $e\in\lorth\rorth(\bfE)$ and $m\in\rorth(\bfE)$:
    \begin{equation*}
        \begin{tikzcd}[scriptsize]
            A\ar[d,"e"']\ar[r,"f"] & X\ar[d,"m"] \\
            B\ar[r,"g"'] & Y
        \end{tikzcd}\incat{\C}.
    \end{equation*}
    If $\decnum[\bfE]{f}$ is defined, then $\decnum[\bfE]{g}$ is also defined and $\decnum[\bfE]{f}\ge\decnum[\bfE]{g}$ holds.
\end{lemma}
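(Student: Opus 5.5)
Take an $\bfE$-decomposition $(A_\bullet,f_\bullet,X)$ of $f$ with $\abs{f_\bullet}=\decnum[\bfE]{f}$. The idea is to push this decomposition forward along $e$ to obtain an $\bfE$-predecomposition of $g$, then compare the two via \cref{lem:descending_decnum}.

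First, define an $\bfE$-transfinite sequence $B_\bullet$ by transfinite recursion, with $B_0\coloneq B$ and $\phi_0\coloneq e$.
\begin{itemize}
\item At a successor stage, $B_{\alpha+1}$ is the pushout of $A_{\alpha,\alpha+1}\in\bfE$ along $\phi_\alpha\colon A_\alpha\arr B_\alpha$. Then $B_{\alpha,\alpha+1}\in\bfE$ by pushout-stability, and $\phi_{\alpha+1}$ is the induced map.
\item At a limit stage $\lambda$, put $B_\lambda\coloneq\Colim_{\beta<\lambda}B_\beta$ with the induced $\phi_\lambda$.
\end{itemize}
Only the stages below $\inacc$ matter. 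Strictly, \cref{lem:descending_decnum} works at a fixed $\gamma$, so one can take $\gamma=\abs{f_\bullet}+2$ and then extend. The needed colimits are small chain colimits; if $\C$ lacks them, stop the construction at $\gamma$, since all stages past $\abs{f_\bullet}$ are pushouts along isomorphisms and hence isomorphisms. Simultaneously build the cocone $g_\alpha\colon B_\alpha\arr Y$ with $g_0=g$. At successor stages use the pushout property applied to $m f_{\alpha+1}$ and $g_\alpha$, which agree on $A_\alpha$ because $m f_\alpha = g_\alpha\phi_\alpha$. At limit stages use the colimit property. By construction, $\phi=(\phi_\bullet,m)$ is a morphism of predecompositions $(A_\bullet,f_\bullet,X)\arr(B_\bullet,g_\bullet,Y)$ with $\phi_0=e\in\lorth\rorth(\bfE)$ and $\phi_\ast=m\in\rorth(\bfE)$.

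Second, apply \cref{lem:descending_decnum}. Since $f_\bullet$ stabilizes, so does $g_\bullet$, and $\abs{g_\bullet}\le\abs{f_\bullet}$. Hence $g_\bullet$ is an $\bfE$-decomposition of $g$, so $\decnum[\bfE]{g}$ is defined and $\decnum[\bfE]{g}\le\abs{g_\bullet}\le\abs{f_\bullet}=\decnum[\bfE]{f}$.

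The main obstacle I anticipate is ensuring that $B_\bullet$ is a genuine transfinite sequence, i.e.\ that the required colimits exist. The hypothesis only gives pushouts. My plan is to use the truncation just described, since beyond the stabilization point of $f_\bullet$ all later pushouts are along isomorphisms. For limit ordinals up to $\abs{f_\bullet}$, I would check whether one can simply use the pushout of $A_{0,\lambda}$ along $e$. Indeed, $B_\lambda$ should equal the pushout of $A_\lambda$ along $A_0\arr B$, by pasting of pushouts and commutation of colimits. So the cleanest approach is to define $B_\alpha\coloneq A_\alpha\sqcup_{A_0}B$ directly, for all $\alpha$. This needs only pushouts. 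The colimit condition at limit stages then follows because pushout along a fixed map preserves the colimit $A_\lambda\cong\Colim A_\beta$. The successor maps lie in $\bfE$ by the pasting lemma for pushouts.
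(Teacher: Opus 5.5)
Your proposal is correct, and it takes a more direct route than the paper.

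The sequence you finally settle on is the same one the paper uses. You define $B_\alpha\coloneq A_\alpha\sqcup_{A_0}B$, which needs only pushouts. The successor maps lie in $\bfE$ by pasting of pushouts, and the limit condition is checked via the universal property along the connected chain. Your preliminary recursive construction with truncation at $\gamma$ is therefore unnecessary.

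The difference lies in the cocone you attach and the morphism you decompose.

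\textbf{The paper's route.}
\begin{enumerate}
\item It uses $e\orth m$ to obtain the diagonal $h\colon B\arr X$.
\item It also pushes out $f$ along $e$, giving a cocone into $Z\coloneq X\sqcup_A B$.
\item It shows the comparison $\phi_\ast\colon X\arr Z$ is an isomorphism. This map is split monic (via the retraction induced by $\id_X$ and $h$), hence in $\rorth(\bfE)$ by \cref{lem:leftclass_epic}. It is also a pushout of $e$, hence in $\lorth\rorth(\bfE)$.
\item It applies \cref{lem:descending_decnum} to get $\decnum[\bfE]{h}\le\decnum[\bfE]{f}$.
\item It transfers this to $g=mh$ via \cref{prop:decnum_of_composite}\cref{prop:decnum_of_composite-postcompose}.
\end{enumerate}

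\textbf{Your route.} You build the cocone $g_\alpha\colon B_\alpha\arr Y$ from $mf_\alpha$ and $g$; these agree on $A_0$ because $mf=ge$. You then take $\phi_\ast\coloneq m$. \cref{lem:descending_decnum} only requires $\phi_0\in\lorth\rorth(\bfE)$ and $\phi_\ast\in\rorth(\bfE)$, not that $\phi_\ast$ be invertible. So it applies immediately and yields a decomposition of $g$ itself, with length at most $\abs{f_\bullet}$.

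This skips the diagonal filler, the pushout of $f$, the isomorphism argument for $\phi_\ast$, and the postcomposition proposition. It also never invokes $e\orth m$ explicitly. The paper's detour gains nothing further for this statement.
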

\begin{proof}
    By $e\orth m$, we can take a unique morphism $h\colon B\arr X$ such that $he=f$ and $mh=g$ hold.
    By \cref{prop:decnum_of_composite}\cref{prop:decnum_of_composite-postcompose}, it suffices to show that $\decnum[\bfE]{h}$ is defined and $\decnum[\bfE]{f}\ge\decnum[\bfE]{h}$ holds.

    Let $(A_\bullet,f_\bullet,X)$ be an $\bfE$-decomposition of $f$ such that $\abs{f_\bullet}=\decnum[\bfE]{f}$.
    Taking pushouts of $f$ and morphisms in the sequence $A_\bullet$ along $e$, we obtain an $\bfE$-predecomposition $(B_\bullet,g_\bullet,Z)$ and a morphism $\phi\colon f_\bullet\arr g_\bullet$ in $\Predec[\bfE]$ as follows:
    \begin{equation*}
        \begin{tikzcd}
            (A=) &[-35] A_0\ar[d,"(e=)~\phi_0"']\ar[rr,"f",bend left=20]\ar[r,"A_{0,\alpha}"'] & A_\alpha\ar[d,"\phi_\alpha"]\ar[r,"f_\alpha"'] & X\ar[d,"\phi_\ast"']\ar[ddr,equal,bend left=15] & \\
            (B=) & B_0\ar[r,"B_{0,\alpha}"]\ar[drrr,"h"',bend right=15] & B_\alpha\ar[r,"g_\alpha"]\pushoutcorner & Z\ar[rd,dotted,"r"']\pushoutcorner & \\
            &&&& X
        \end{tikzcd}\incat{\C}\quad (0\le\alpha<\inacc).
    \end{equation*}
    Here, $r$ denotes a unique morphism making the diagram commute.
    Then, $\phi_\ast$ belongs to $\rorth(\bfE)$ as it is a split monomorphism (see \cref{lem:leftclass_epic}).
    On the other hand, since $e=\phi_0$ belongs to $\lorth\rorth(\bfE)$, so does its pushout $\phi_\ast$.
    This implies that $\phi_\ast$ and $r$ are mutually inverses.
    Thus, without loss of generality, we can assume that both $\phi_\ast$ and $r$ are the identities, and hence $X=Z$.
    Then, $g_\bullet$ becomes an $\bfE$-predecomposition of $h$.
    Applying \cref{lem:descending_decnum} for $\phi$, we can show that $g_\bullet$ stabilizes and $\decnum[\bfE]{f}=\abs{f_\bullet}\ge\abs{g_\bullet}\ge\decnum[\bfE]{h}$ holds, which finishes the proof.
\end{proof}

\subsection{The canonical decomposition by locally orthogonal factorizations}\label{subsec:The_canonical_decomposition_by_locally_orthogonal_factorizations}
In this subsection, we will focus on a ``greedy method'' for decomposing a morphism, and compare the length of the resulting decomposition with the minimum decomposition number.
We will use the notion of \textit{locally orthogonal factorizations} studied in \cite{Tholen1979semitopological,MacdonaldTholen1982decomposition, Tholen1983factorizations}.
For notation and conventions, we refer the reader to the minimal summary in \cref{sec:local_orthogonality}.
\begin{definition}[Canonical predecompositions]
    Let $\bfE$ be a class of morphisms in a category $\C$, and let $\gamma\in (0,\inacc]$ be an ordinal.
    An $(\bfE,\gamma)$-predecomposition $(A_\bullet,f_\bullet,X)$ is called \emph{canonical} if for every $\alpha$ such that $\alpha+1<\gamma$, $\bfE\orth[A_{\alpha,\alpha+1}] f_{\alpha+1}$ holds (cf.\ \cref{note:local_orthogonality}).
\end{definition}

\begin{example}[Coequalizers of kernel pairs]\label{eg:canonical_predecomposition_for_regular_epis}
    A regular predecomposition $(A_\bullet,f_\bullet,X)$ in a category with kernel pairs is canonical if and only if for every small ordinal $\alpha$, the morphism $A_{\alpha,\alpha+1}$ exhibits the coequalizer of the kernel pair of $f_\alpha$.
    This equivalence will later be generalized, see \cref{rem:canonical_predecomp_for_GRegEpi}.
\end{example}

\begin{example}[Monotone quotient--light factorizations]\label{eg:canonical_predecomposition_for_monotone_maps}
    Let $\bfE$ be the class of all monotone quotient maps between topological spaces as in \cref{eg:global_minimum_decnum}\cref{eg:global_minimum_decnum-monotone}.
    According to \cite[{}2.2]{MacdonaldTholen1982decomposition}, an $\bfE$-predecomposition $(A_\bullet,f_\bullet,X)$ in $\Top$ is canonical if and only if for every small ordinal $\alpha$, the morphism $A_{\alpha,\alpha+1}$ is the quotient map under the equivalence relation $\sim$ on $A_\alpha$ defined as follows: $x\sim y$ in $A_\alpha$ if $f_\alpha(x)=f_\alpha(y)$ holds and $x,y$ are contained in a common connected subspace $K\subseteq A_\alpha$ such that $f_\alpha(K)$ is a constant.
\end{example}

\begin{remark}
    Canonical $(\bfE,\gamma)$-predecompositions are already studied in \cite{MacdonaldTholen1982decomposition} under the name \textit{locally orthogonal $(\bfE,\gamma)$-factorizations}.
\end{remark}

A canonical predecomposition has the following universal property, which also appears in \cite[Section~3]{MacdonaldTholen1982decomposition}.
\begin{proposition}[The universal property of a canonical predecomposition]\label{prop:canonical_decomp_adjointprop}
    Let $\bfE$ be a class of morphisms in a category $\C$, and let $(A_\bullet,f_\bullet,X)$ and $(B_\bullet,g_\bullet,Y)$ be $(\bfE,\gamma)$-predecompositions in $\C$.
    If $g_\bullet$ is canonical, the following data correspond bijectively:
    \begin{enumerate}
        \item
            A morphism $f_0\arr g_0$ in the arrow category $\C^\to$.
        \item
            A morphism $f_\bullet\arr g_\bullet$ in $\Predec[\bfE,\gamma]$.
    \end{enumerate}
\end{proposition}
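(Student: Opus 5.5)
The plan is to construct the inverse of the evident restriction map, which sends a morphism $\phi\colon f_\bullet\arr g_\bullet$ in $\Predec[\bfE,\gamma]$ to the square $(\phi_0,\phi_\ast)\colon f_0\arr g_0$ in $\C^\to$. Given a square $(u,v)$ with $g_0 u = v f_0$, we extend $u$ by transfinite recursion to a family $\phi_\alpha\colon A_\alpha\arr B_\alpha$, keeping $\phi_\ast\coloneq v$ fixed. Throughout we maintain the invariants that $\phi_\bullet$ is natural on $[0,\alpha]$ and that $g_\alpha\phi_\alpha = v f_\alpha$. We then prove uniqueness by the same recursion.

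At stage $0$ we set $\phi_0\coloneq u$. At a successor stage $\alpha+1<\gamma$ we consider the square whose left side is $A_{\alpha,\alpha+1}\in\bfE$, top is $B_{\alpha,\alpha+1}\circ\phi_\alpha\colon A_\alpha\arr B_{\alpha+1}$, right side is $g_{\alpha+1}$, and bottom is $v\circ f_{\alpha+1}\colon A_{\alpha+1}\arr Y$. It commutes because
\[
g_{\alpha+1}B_{\alpha,\alpha+1}\phi_\alpha=g_\alpha\phi_\alpha=vf_\alpha=vf_{\alpha+1}A_{\alpha,\alpha+1}.
\]
Canonicity of $g_\bullet$ gives $\bfE\orth[B_{\alpha,\alpha+1}] g_{\alpha+1}$. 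By the definition of local orthogonality in \cref{note:local_orthogonality}, this says that every commutative square from a morphism of $\bfE$ to $g_{\alpha+1}$ whose top factors through $B_{\alpha,\alpha+1}$ in the prescribed way has a unique diagonal. That diagonal is $\phi_{\alpha+1}\colon A_{\alpha+1}\arr B_{\alpha+1}$, and it satisfies $\phi_{\alpha+1}A_{\alpha,\alpha+1}=B_{\alpha,\alpha+1}\phi_\alpha$ and $g_{\alpha+1}\phi_{\alpha+1}=vf_{\alpha+1}$. Naturality with respect to earlier indices $\beta<\alpha+1$ then follows from naturality at $\alpha$. At a limit $\lambda<\gamma$ we have $A_\lambda\cong\Colim_{\beta<\lambda}A_\beta$, so $\phi_\lambda$ is induced by the cocone $B_{\beta,\lambda}\phi_\beta$. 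The equation $g_\lambda\phi_\lambda=vf_\lambda$ is checked on the colimit injections, where it reduces to the invariant at each $\beta<\lambda$.

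For uniqueness, suppose $\psi$ is any morphism $f_\bullet\arr g_\bullet$ with $\psi_0=u$ and $\psi_\ast=v$. Then $\psi_{\alpha+1}$ is also a diagonal of the same successor square, so $\psi_{\alpha+1}=\phi_{\alpha+1}$ by the uniqueness part of local orthogonality. At limits the two agree by the uniqueness of maps out of a colimit. Hence the two assignments are mutually inverse bijections.

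The main obstacle is matching the successor square precisely to the convention for $\bfE\orth[e] m$ in \cref{note:local_orthogonality}, since it must be clear which morphism plays the role of the ``local'' parameter $B_{\alpha,\alpha+1}$. If the convention instead quantifies over squares whose top map is of the form $h\circ B_{\alpha,\alpha+1}$ starting at $B_\alpha$, the top $B_{\alpha,\alpha+1}\phi_\alpha$ must be recast in that form. The fallback is to unwind the note's definition and verify that both existence and uniqueness of the diagonal are what it provides. Everything else is routine transfinite recursion.
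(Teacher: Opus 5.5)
Your proof is correct and follows the same route as the paper: a transfinite recursion where the successor step is fixed by the local orthogonality $A_{\alpha,\alpha+1}\orth[B_{\alpha,\alpha+1}] g_{\alpha+1}$ and the limit step by the universal property of the colimit, with uniqueness coming from these same two facts. Your concern about the convention does not arise, because the appendix defines $p\orth[e]m$ using squares whose top is $e\circ u$ with $u$ mapping into the domain of $e$, which is exactly the square you built with $u=\phi_\alpha$.
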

\begin{proof}
    For any morphism $\phi\colon f_\bullet\arr g_\bullet$, each $\phi_\alpha$ is uniquely determined by $\phi_\beta$ for $\beta<\alpha$ and $\phi_\ast$.
    Indeed, if $\alpha=\beta+1$ is a successor, the local orthogonality $A_{\beta,\beta+1}\orth[B_{\beta,\beta+1}] g_{\beta+1}$ uniquely determines $\phi_\alpha=\phi_{\beta+1}$; if $\alpha$ is a limit ordinal, the universal property of $A_\alpha$ as $\Colim_{\beta\in\bbalpha}A_\beta$ uniquely determines $\phi_\alpha$.
    This shows that every morphism $(\phi_0,\phi_\ast)\colon f_0\arr g_0$ in $\C^\to$ uniquely extends to a morphism $f_\bullet\arr g_\bullet$ in $\Predec[\bfE,\gamma]$.
\end{proof}

The proposition above says that canonical decompositions, if they exist, yield a right adjoint of the composition functor $\Predec[\bfE,\gamma]\arr \C^\to$ as in \cref{cor:cdec_is_right_adjoint}.
With that in mind, the following corollary can also be considered as a consequence of the uniqueness of right adjoints:
\begin{corollary}
    Let $\bfE$ be a class of morphisms in a category $\C$.
    Then, for every morphism in $\C$, its canonical $(\bfE,\gamma)$-predecomposition is unique up to isomorphism if exists.
\end{corollary}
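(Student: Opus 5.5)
Suppose $(A_\bullet,f_\bullet,X)$ and $(B_\bullet,g_\bullet,Y)$ are two canonical $(\bfE,\gamma)$-predecompositions of the same morphism $f\colon A\arr X$. Then $A_0=B_0=A$, $X=Y$ and $f_0=g_0=f$. We want an isomorphism $f_\bullet\cong g_\bullet$ in $\Predec[\bfE,\gamma]$ lying over the identity of $f$ in $\C^\to$. The plan is the usual Yoneda-style argument, applied to the universal property in \cref{prop:canonical_decomp_adjointprop}.

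First, because $g_\bullet$ is canonical, \cref{prop:canonical_decomp_adjointprop} extends the identity morphism $(\id_A,\id_X)\colon f_0\arr g_0$ in $\C^\to$ uniquely to a morphism $\phi\colon f_\bullet\arr g_\bullet$ in $\Predec[\bfE,\gamma]$. Since $f_\bullet$ is also canonical, the same identity extends uniquely to a morphism $\psi\colon g_\bullet\arr f_\bullet$.

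Next, consider the composite $\psi\circ\phi\colon f_\bullet\arr f_\bullet$. Its component at $0$ is $\id_A$ and its component $(\psi\circ\phi)_\ast$ is $\id_X$, so it restricts to the identity of $f_0$ in $\C^\to$. The identity morphism of $f_\bullet$ restricts to the same thing. Applying the bijection of \cref{prop:canonical_decomp_adjointprop} once more (with both predecompositions equal to the canonical $f_\bullet$), we conclude $\psi\circ\phi=\id_{f_\bullet}$. By symmetry $\phi\circ\psi=\id_{g_\bullet}$. Hence $\phi$ is an isomorphism, with each $\phi_\alpha$ an isomorphism and $\phi_\ast=\id$.

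The only point needing care is that the bijection in \cref{prop:canonical_decomp_adjointprop} is given by restriction to stage $0$ together with the $\ast$-component. Composition in $\Predec[\bfE,\gamma]$ is componentwise, so this restriction is functorial, and that is what makes the uniqueness argument valid. I do not expect any real obstacle. If we want uniqueness up to isomorphism in $\C^\to$ rather than over the identity, the same argument goes through for an isomorphism $f\cong f'$ in $\C^\to$, extending it and its inverse.
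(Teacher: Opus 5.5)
Your proposal is correct and follows the paper's approach. The paper's proof is the single observation that, by \cref{prop:canonical_decomp_adjointprop}, the identity on $f$ in $\C^\to$ extends uniquely to an isomorphism between any two canonical predecompositions of $f$; your argument spells this out by extending the identity in both directions and using uniqueness of extensions to conclude that $\psi\circ\phi$ and $\phi\circ\psi$ are identities.
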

\begin{proof}
    By \cref{prop:canonical_decomp_adjointprop}, the identity on $f$ in $\C^\to$ uniquely extends an isomorphism between canonical predecompositions of $f$.
\end{proof}

\begin{definition}[Canonical decomposition number]
    Let $\bfE$ be a class of morphisms in a category $\C$.
    \begin{enumerate}
        \item
            The \emph{canonical $\bfE$-decomposition number} of a morphism $f$ in $\C$, denoted by $\cdecnum[\bfE]{f}$, is the length of the canonical $\bfE$-decomposition of $f$ if it exists.
        \item
            Suppose that every morphism $f$ in $\C$ has the canonical $\bfE$-decomposition.
            Then, the \emph{global canonical $\bfE$-decomposition number} of the category $\C$, denoted by $\cdecnum[\bfE]{\C}$, is the smallest ordinal $\alpha$ such that $\cdecnum[\bfE]{f}<\alpha$ holds for every morphism $f$ in $\C$.\qedhere
    \end{enumerate}
\end{definition}

\begin{remark}
    If $\cdecnum[\bfE]{f}$ is defined, then $\decnum[\bfE]{f}$ is defined and $\decnum[\bfE]{f}\le\cdecnum[\bfE]{f}$ holds, because $\decnum[\bfE]{f}$ is defined as the smallest length of $\bfE$-decompositions.
    In fact, if every morphism in $\bfE$ is epic, the two ordinals $\decnum[\bfE]{f}$ and $\cdecnum[\bfE]{f}$ coincide (\cref{thm:correspondence_decnum_and_cdecnum}).
    In such a case, we simply refer to these common ordinals $\decnum[\bfE]{f}$ and $\cdecnum[\bfE]{f}$ as the \emph{$\bfE$-decomposition number} of $f$.
\end{remark}

\begin{remark}\label{rem:decnum_in_literature}
    The canonical $\bfE$-decomposition numbers in the case where $\bfE$ is the class of regular epimorphisms have been studied under the name \textit{decomposition number} or \textit{regular length} in the literature (e.g., \cite{GabrielUlmer1971lokal,MacdonaldStone1982tower,Borger1991making}).
    However, the definitions adopted there differ among themselves, and we also slightly modify them in our definition.
    One of the modifications lies in the definition of the global canonical decomposition number.
    Whereas the existing definitions take $\cdecnum[\bfE]{\C}\coloneq\sup_f \cdecnum[\bfE]{f}$ with the usual supremum, we define the global canonical decomposition number so that $\cdecnum[\bfE]{\C}=\sup_f ( \cdecnum[\bfE]{f}+1)$ holds, for the same reason as what is mentioned in \cref{rem:why_strict_supremum}.
\end{remark}

\begin{definition}[$\bfE$-admissible categories]
    Let $\bfE$ be an iso-closed class of morphisms in a category $\C$.
    The category $\C$ is called \emph{$\bfE$-admissible} if:
    \begin{itemize}
        \item
            $\C$ admits locally orthogonal $\bfE$-factorizations (cf.\ \cref{def:locally_orthogonal_factorization}).
        \item
            Every $\bfE$-transfinite sequence $\bbalpha\arr\C$ admits a colimit for any small ordinal $\alpha$.\qedhere
    \end{itemize}
\end{definition}

\begin{proposition}[{\cite[Theorem~3.3]{MacdonaldTholen1982decomposition}}]\label{prop:canonical_predecomposition}
    Let $\C$ be an $\bfE$-admissible category for an iso-closed class $\bfE$ of morphisms in $\C$.
    Then, every morphism in $\C$ has the canonical $\bfE$-predecomposition.
\end{proposition}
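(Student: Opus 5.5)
We build the canonical predecomposition $(A_\bullet,f_\bullet,X)$ of a given $f\colon A\arr X$ by transfinite recursion on $\alpha<\inacc$, constructing at each stage $A_\alpha$, $f_\alpha$ and the connecting maps $A_{\beta,\alpha}$ for $\beta<\alpha$, and keeping the invariants that the partial data form an $\bfE$-transfinite sequence with a cocone to $X$ and that the canonicity condition holds at every successor stage already built.

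\emph{Base.} Put $A_0\coloneq A$ and $f_0\coloneq f$.

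\emph{Successor step.} Suppose $f_\alpha\colon A_\alpha\arr X$ is given. Since $\C$ admits locally orthogonal $\bfE$-factorizations (\cref{def:locally_orthogonal_factorization}), factor $f_\alpha = f_{\alpha+1}\circ e$ with $e\colon A_\alpha\arr A_{\alpha+1}$ in $\bfE$ and $\bfE\orth[e] f_{\alpha+1}$. Set $A_{\alpha,\alpha+1}\coloneq e$, and obtain the maps $A_{\beta,\alpha+1}$ by composition. This step gives both $A_{\alpha,\alpha+1}\in\bfE$ and the canonicity condition at $\alpha$ directly. Factorizations are determined only up to isomorphism, so choose one for each stage (global choice over the class of stages); because $\bfE$ is iso-closed, any such choice stays in $\bfE$.

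\emph{Limit step.} For a nonzero limit $\gamma<\inacc$, the restriction $A_\bullet|_{\bbgamma}$ is an $\bfE$-transfinite $\gamma$-sequence. The second clause of $\bfE$-admissibility then provides $A_\gamma\coloneq\Colim_{\beta\in\bbgamma}A_\beta$, with colimit injections as the $A_{\beta,\gamma}$. Let $f_\gamma$ be the map induced by the cocone $(f_\beta)_{\beta<\gamma}$. Functoriality and the colimit condition at $\gamma$ hold by construction, and there is no canonicity condition to check at limit stages.

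The only point needing care is the limit step: it must be fed by the admissibility hypothesis, which applies exactly to $\bfE$-transfinite sequences of small length. This is why the recursion runs only over $\alpha<\inacc$, so that every stage is a small ordinal. The recursion yields a functor $A_\bullet\colon\bbupsilon\arr\C$ with cocone $f_\bullet$ over $X$, that is, an $\bfE$-predecomposition, and it is canonical by the successor construction. Equivalently, one could follow \cite[Theorem~3.3]{MacdonaldTholen1982decomposition}.
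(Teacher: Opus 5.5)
Your proof is correct and is the paper's argument: transfinite recursion that uses the locally orthogonal $\bfE$-factorization of $f_\alpha$ at successor stages, and at limit stages the colimit supplied by $\bfE$-admissibility together with the induced map to $X$. Your appeal to iso-closedness in the successor step is unnecessary, since the chosen factorization already has its first factor in $\bfE$.
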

\begin{proof}
    The desired predecomposition can be constructed by transfinite recursion.
    Take an arbitrary morphism in $\C$ and write it as $f_0\colon A_0\arr X$.
    In each successor step, we can take $(A_{\alpha+1},A_{\alpha,\alpha+1},f_{\alpha+1})$ as the locally orthogonal $\bfE$-factorization of $f_\alpha$:
    \begin{equation*}
        \begin{tikzcd}[small]
            A_\alpha\ar[dr,"A_{\alpha,\alpha+1}"']\ar[rr,"f_\alpha"] && X \\
            & A_{\alpha+1}\ar[ru,"f_{\alpha+1}"'] &
        \end{tikzcd}\incat{\C},
        \quad
        A_{\alpha,\alpha+1}\in\bfE,
        \quad
        \bfE\orth[A_{\alpha,\alpha+1}]f_{\alpha+1}.
    \end{equation*}
    In each limit step, the desired data is given by the universal property of the colimit.
\end{proof}

\begin{corollary}\label{cor:cdec_is_right_adjoint}
    Let $\C$ be an $\bfE$-admissible category for an iso-closed class $\bfE$ of morphisms in $\C$.
    Then, there is an adjunction:
    \begin{equation*}
        \begin{tikzcd}
            \Predec[\bfE]\ar[rr,shift left=3,"\text{composition}"] & \perp & \C^\to\ar[ll,shift left=3,"\text{canonical predecomposition}"]
        \end{tikzcd}
    \end{equation*}
    Here, the left adjoint sends an $\bfE$-predecomposition $(A_\bullet,f_\bullet,X)$ to the morphism $f_0$; the right adjoint sends a morphism in $\C$ to its canonical $\bfE$-predecomposition.
\end{corollary}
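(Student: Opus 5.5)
The plan is to assemble the adjunction from the two preceding results: \cref{prop:canonical_predecomposition} gives the object part of the right adjoint, and \cref{prop:canonical_decomp_adjointprop} gives the universal property. First I make the composition functor $L\colon\Predec[\bfE]\arr\C^\to$ explicit. It sends $(A_\bullet,f_\bullet,X)$ to $f_0\colon A_0\arr X$, and a morphism $\phi$ to the square $(\phi_0,\phi_\ast)$. This square commutes because $g_0\phi_0=\phi_\ast f_0$ is the $\alpha=0$ case of the defining condition of morphisms in $\Predec[\bfE]$, and functoriality is immediate.

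Next, for each morphism $g\colon B\arr Y$ in $\C$, I use \cref{prop:canonical_predecomposition} (valid since $\C$ is $\bfE$-admissible and $\bfE$ is iso-closed) to choose a canonical $\bfE$-predecomposition $R(g)=(B_\bullet,g_\bullet,Y)$ with $g_0=g$. To arrange literally $B_0=B$ and $g_0=g$, I start the transfinite recursion in that proof with $A_0\coloneq B$ and $f_0\coloneq g$. The counit $\epsilon_g\colon LR(g)\arr g$ is then the identity square.

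For the universal property, take any predecomposition $f_\bullet$ and any morphism $(u,v)\colon f_0\arr g$ in $\C^\to$. By \cref{prop:canonical_decomp_adjointprop}, applied with $\gamma=\inacc$ and using that $R(g)$ is canonical, there is a unique $\phi\colon f_\bullet\arr R(g)$ in $\Predec[\bfE]$ with $(\phi_0,\phi_\ast)=(u,v)$, that is, with $\epsilon_g\circ L\phi=(u,v)$. This is exactly the statement that $\epsilon_g$ is a universal arrow from $L$ to $g$. By the standard pointwise construction of adjoints from universal arrows, $R$ then extends uniquely to a functor $R\colon\C^\to\arr\Predec[\bfE]$ right adjoint to $L$, with $\epsilon$ as counit.

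I expect no serious obstacle. The only points needing care are that \cref{prop:canonical_decomp_adjointprop} is a bijection of hom-sets natural in $f_\bullet$ and $g$, and that the right adjoint is well defined despite the choices made for $R(g)$. Both follow from the universal-arrow formulation: choices of $R(g)$ are unique up to unique isomorphism by the preceding corollary, and naturality reduces to uniqueness in \cref{prop:canonical_decomp_adjointprop}.
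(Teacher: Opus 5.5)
Your proposal is correct and follows the paper's route: the paper obtains this corollary directly by combining the existence of canonical $\bfE$-predecompositions (\cref{prop:canonical_predecomposition}) with their universal property (\cref{prop:canonical_decomp_adjointprop}). That universal property makes the identity square $LR(g)\to g$ a universal arrow from the composition functor to $g$. Your remarks on naturality and on independence from the choice of $R(g)$ are the routine points the paper leaves implicit.
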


The following lemma provides a simple criterion (\cref{cor:simple_criterion_of_cdec_stabilizing}) for when a canonical predecomposition stabilizes:
\begin{lemma}\label{lem:termination_canonical_predecomp}
    Let $\bfE$ be a class of morphisms in a category $\C$, and let $\gamma\in (0,\inacc]$ be an ordinal.
    Let $(A_\bullet,f_\bullet,X)$ be a canonical $(\bfE,\gamma)$-predecomposition.
    \begin{enumerate}
        \item\label{lem:termination_canonical_predecomp-1}
            Let $\alpha$ be an ordinal such that $\alpha+1<\gamma$.
            Then, the morphism $A_{\alpha,\alpha+1}$ is an isomorphism if and only if $\bfE\orth f_\alpha$ holds.
        \item\label{lem:termination_canonical_predecomp-2}
            Let $\alpha\le\beta<\gamma$ be ordinals.
            Then, $\bfE\orth f_\alpha$ implies $\bfE\orth f_\beta$.\qedhere
    \end{enumerate}
\end{lemma}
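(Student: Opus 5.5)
We have to recall what local orthogonality means. Presumably $\bfE\orth[e] m$ for $e\colon A\to B$ and $m\colon B\to X$ means: for every $e'\colon A\to B'$ in $\bfE$ and every $m'\colon B'\to X$ with $m'e' = me$, there is a unique $t\colon B'\to B$ with $te'=e$ and $mt=m'$. In other words, $(e,m)$ is terminal among $\bfE$-factorizations of $me$ under $A$ over $X$. I would work directly from this description and from the global orthogonality $\bfE\orth f$, which means that every square with an $\bfE$-morphism on the left and $f$ on the right has a unique diagonal filler.

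For \cref{lem:termination_canonical_predecomp-1}, the direction ($\Leftarrow$) goes as follows. If $\bfE\orth f_\alpha$, apply local orthogonality $\bfE\orth[A_{\alpha,\alpha+1}] f_{\alpha+1}$ to the trivial factorization $(\id_{A_\alpha}, f_\alpha)$. This needs $\id\in\bfE$, which holds if $\bfE$ is iso-closed; otherwise I would use the diagonal from $\bfE\orth f_\alpha$ instead. Concretely, the square with left side $A_{\alpha,\alpha+1}\in\bfE$, top $\id_{A_\alpha}$, right side $f_\alpha$, and bottom $f_{\alpha+1}$ has a diagonal $r\colon A_{\alpha+1}\to A_\alpha$ with $rA_{\alpha,\alpha+1}=\id$ and $f_\alpha r=f_{\alpha+1}$. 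Then $A_{\alpha,\alpha+1}r$ is an endomorphism of the factorization $(A_{\alpha,\alpha+1},f_{\alpha+1})$. The uniqueness part of local orthogonality, applied to that same factorization, forces $A_{\alpha,\alpha+1}r=\id$, so $A_{\alpha,\alpha+1}$ is an isomorphism. For ($\Rightarrow$), suppose $A_{\alpha,\alpha+1}$ is an isomorphism and take a square $e\colon P\to Q$ in $\bfE$, $u\colon P\to A_\alpha$, $v\colon Q\to X$ with $f_\alpha u=ve$. Here local orthogonality is only relative to $A_\alpha$, so I need to push $e$ along $u$. I expect this to be the main obstacle, since no pushouts are assumed. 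I would first check the appendix definition: the paper's \cref{note:local_orthogonality} is probably phrased exactly as "$e\orth[d] m$ for every square whose top factors through the domain of $d$". That is, for $e\in\bfE$ and a square $(u,v)$ from $e$ to $m$ in which $u$ factors through $d$ compatibly, there is a unique filler. Under that reading, with $d=A_{\alpha,\alpha+1}$ an isomorphism, every $u$ factors through $d$. Hence every square against $f_{\alpha+1}$ has a unique filler, so $\bfE\orth f_{\alpha+1}$ holds, and $\bfE\orth f_\alpha$ follows since $f_\alpha=f_{\alpha+1}A_{\alpha,\alpha+1}$ with $A_{\alpha,\alpha+1}$ an isomorphism. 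If the definition is the terminal-factorization one instead, I would reformulate it in this square form first; I expect the appendix to provide such an equivalence.

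For \cref{lem:termination_canonical_predecomp-2}, I would use transfinite induction on $\beta\ge\alpha$. At a successor step, $\bfE\orth f_\beta$ gives, by \cref{lem:termination_canonical_predecomp-1}, that $A_{\beta,\beta+1}$ is an isomorphism. Then $f_{\beta+1}=f_\beta A_{\beta,\beta+1}^{-1}$, and orthogonality is stable under composing with isomorphisms, so $\bfE\orth f_{\beta+1}$. At a limit $\lambda$, the induction hypothesis together with part \cref{lem:termination_canonical_predecomp-1} shows that all $A_{\beta,\beta'}$ for $\alpha\le\beta\le\beta'<\lambda$ are isomorphisms. Hence the colimit cocone gives an isomorphism $A_\alpha\cong A_\lambda$ through which $f_\lambda$ corresponds to $f_\alpha$, so $\bfE\orth f_\lambda$.
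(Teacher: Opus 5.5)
Your proof is correct and is essentially the paper's argument. In (i), you get the diagonal $r$ with $rA_{\alpha,\alpha+1}=\id$ from $\bfE\orth f_\alpha$ and use the uniqueness clause of $\bfE\orth[A_{\alpha,\alpha+1}]f_{\alpha+1}$ to force $A_{\alpha,\alpha+1}r=\id$; for the converse you use that every top map factors through the invertible $A_{\alpha,\alpha+1}$. That converse step is valid because the appendix defines $p\orth[e]m$ in exactly the square form with an arbitrary $u$; your first ``terminal factorization'' reading, with only $u=\id$, would not suffice for this direction, and the equivalence you hoped the appendix would supply does not hold in general. In (ii), your limit step asserts that all $A_{\beta,\beta'}$ with $\alpha\le\beta\le\beta'<\lambda$ are invertible, which also requires an argument at intermediate limit stages. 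It is cleaner to induct, as the paper does, on the statement that $A_{\alpha,\beta}$ is an isomorphism, from which $f_\beta=f_\alpha\circ A_{\alpha,\beta}^{-1}\in\rorth(\bfE)$ follows.
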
\pagebreak
\begin{proof}\quad
    \begin{enumerate}
        \item
            Suppose $A_{\alpha,\alpha+1}$ is an isomorphism.
            Then, $\bfE\orth f_\alpha$ directly follows from $\bfE\orth[A_{\alpha,\alpha+1}]f_{\alpha+1}$.
            Conversely, let us suppose $\bfE\orth f_\alpha$.
            By $A_{\alpha,\alpha+1}\orth f_\alpha$, there is a unique morphism $h$ such that the diagram on the left below commutes.
            Then, by $A_{\alpha,\alpha+1}\orth[A_{\alpha,\alpha+1}] f_{\alpha+1}$ and the commutative diagram on the right below, we have $A_{\alpha,\alpha+1}\circ h=\id$, which shows that $A_{\alpha,\alpha+1}$ and $h$ are mutually inverse.\vspace{-1.2em}
            \begin{equation*}
                \begin{tikzcd}[large]
                    A_\alpha\ar[d,"A_{\alpha,\alpha+1}"']\ar[r,equal] & A_\alpha\ar[d,"f_\alpha"] \\
                    A_{\alpha+1}\ar[ru,dotted,"h"]\ar[r,"f_{\alpha+1}"'] & X
                \end{tikzcd}
                \quad
                \begin{tikzcd}
                    A_\alpha\ar[dd,"A_{\alpha,\alpha+1}"']\ar[rd,equal]\ar[rr,"A_{\alpha,\alpha+1}"] && A_{\alpha+1}\ar[dd,"f_{\alpha+1}"] \\
                    & A_\alpha\ar[ru,"A_{\alpha,\alpha+1}"{description}]\ar[rd,"f_\alpha"{description}] & \\
                    A_{\alpha+1}\ar[ru,"h"]\ar[rr,"f_{\alpha+1}"'] && X
                \end{tikzcd}
                \incat{\C}
            \end{equation*}
        \item
            Suppose $\bfE\orth f_\alpha$.
            It suffices to show that $A_{\alpha,\beta}$ is an isomorphism for every $\beta\in [\alpha,\gamma)$.
            Indeed, if $A_{\alpha,\beta}$ is an isomorphism, then $f_\beta =f_\alpha\circ A_{\alpha,\beta}^{-1}\in\rorth(\bfE)$.
            This follows from transfinite induction on $\beta$.
            The successor step follows from \cref{lem:termination_canonical_predecomp-1} straightforwardly, and the limit step follows from the universal property of the colimit $A_\beta\cong\Colim_{\beta'\in\bbbeta}A_{\beta'}$.\qedhere
    \end{enumerate}
\end{proof}

\begin{corollary}[Criterion for stabilization]\label{cor:simple_criterion_of_cdec_stabilizing}
    Let $\bfE$ be a class of morphisms in a category, and let $\gamma\in (0,\inacc]$ be an ordinal.
    The following are equivalent for a canonical $(\bfE,\gamma)$-predecomposition $(A_\bullet,f_\bullet,X)$ and an ordinal $\alpha$ such that $\alpha+1<\gamma$:
    \begin{enumerate}
        \item
            $f_\bullet$ stabilizes at $\alpha$.
        \item
            The morphism $A_{\alpha,\alpha+1}$ is an isomorphism.
        \item
            The orthogonality $\bfE\orth f_\alpha$ holds.
    \end{enumerate}
\end{corollary}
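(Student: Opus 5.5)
We prove the three conditions equivalent by routing everything through condition (iii), combining \cref{lem:termination_canonical_predecomp} with the definition of stabilization via \cref{lem:termination_predecomposition}.

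\textbf{(ii)$\Leftrightarrow$(iii).} This is exactly \cref{lem:termination_canonical_predecomp}\cref{lem:termination_canonical_predecomp-1}, applied to the given $\alpha$ with $\alpha+1<\gamma$.

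\textbf{(i)$\Rightarrow$(iii).} By definition, $f_\bullet$ stabilizing at $\alpha$ means condition \cref{lem:termination_predecomposition-1} of \cref{lem:termination_predecomposition} holds at $\alpha$, namely $\bfE\orth f_\beta$ for every $\beta\in[\alpha,\gamma)$. Taking $\beta=\alpha$ gives (iii).

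\textbf{(iii)$\Rightarrow$(i).} Assume $\bfE\orth f_\alpha$. Then \cref{lem:termination_canonical_predecomp}\cref{lem:termination_canonical_predecomp-2} yields $\bfE\orth f_\beta$ for all $\beta\in[\alpha,\gamma)$. This is condition \cref{lem:termination_predecomposition-1} of \cref{lem:termination_predecomposition}, so $f_\bullet$ stabilizes at $\alpha$.

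The only point needing care is the hypothesis $\alpha+1<\gamma$. It is used solely so that $A_{\alpha,\alpha+1}$ exists and part (i) of \cref{lem:termination_canonical_predecomp} applies; the other implications hold for any $\alpha<\gamma$. No step is a real obstacle: the work was already done in \cref{lem:termination_canonical_predecomp}, and the corollary is bookkeeping.
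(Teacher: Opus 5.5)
Your proof is correct and is essentially the paper's argument: the paper gives no separate proof of this corollary, treating it as an immediate consequence of \cref{lem:termination_canonical_predecomp}. Part (i) of that lemma gives (ii)$\Leftrightarrow$(iii), and part (ii) upgrades $\bfE\orth f_\alpha$ to condition (i) of \cref{lem:termination_predecomposition}, exactly as you do; your remark that $\alpha+1<\gamma$ is needed only for the equivalence involving $A_{\alpha,\alpha+1}$ is also accurate.
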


The following theorem guarantees that the canonical predecomposition is the most efficient way to decompose a morphism into an $\bfE$-transfinite sequence:\vspace{-0.4em}
\begin{theorem}[Minimum vs canonical I]\label{thm:correspondence_decnum_and_cdecnum}
    Let $\bfE$ be a class of morphisms in a category with $\bfE\subseteq\Epi$.
    Let $f$ be a morphism whose canonical $\bfE$-predecomposition exists.
    \begin{enumerate}
        \item
            $\decnum[\bfE]{f}$ is defined if and only if $\cdecnum[\bfE]{f}$ is defined.
        \item
            $\decnum[\bfE]{f}=\cdecnum[\bfE]{f}$ holds whenever they are defined.
    \end{enumerate}
\end{theorem}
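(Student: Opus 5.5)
Let $(B_\bullet,g_\bullet,X)$ denote the canonical $\bfE$-predecomposition of $f$, which exists by assumption. The plan is to compare an arbitrary $\bfE$-decomposition of $f$ with this canonical predecomposition using \cref{prop:canonical_decomp_adjointprop} and \cref{lem:descending_decnum}.

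One direction is immediate. If $\cdecnum[\bfE]{f}$ is defined, then $g_\bullet$ is itself an $\bfE$-decomposition of $f$. Hence $\decnum[\bfE]{f}$ is defined and $\decnum[\bfE]{f}\le\cdecnum[\bfE]{f}$. This only uses the definition of $\decnum[\bfE]{f}$ as a minimum.

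For the converse, suppose $\decnum[\bfE]{f}$ is defined, and take an $\bfE$-decomposition $(A_\bullet,f_\bullet,X)$ of $f$ with $\abs{f_\bullet}=\decnum[\bfE]{f}$. By the remark after \cref{lem:termination_predecomposition}, we may regard both $f_\bullet$ and $g_\bullet$ as $(\bfE,\inacc)$-predecompositions. Since $f_0=g_0=f$, the identity $(\id,\id)\colon f_0\arr g_0$ in $\C^\to$ is defined. Because $g_\bullet$ is canonical, \cref{prop:canonical_decomp_adjointprop} extends it uniquely to a morphism $\phi\colon f_\bullet\arr g_\bullet$ in $\Predec[\bfE]$ with $\phi_0=\id$ and $\phi_\ast=\id$. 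Identities lie in $\lorth\rorth(\bfE)$ and in $\rorth(\bfE)$, and $\bfE\subseteq\Epi$ by hypothesis. So \cref{lem:descending_decnum} applies: since $f_\bullet$ stabilizes, so does $g_\bullet$, and $\abs{f_\bullet}\ge\abs{g_\bullet}$. Thus $\cdecnum[\bfE]{f}$ is defined and $\cdecnum[\bfE]{f}=\abs{g_\bullet}\le\abs{f_\bullet}=\decnum[\bfE]{f}$. Together with the first paragraph, this gives both (i) and (ii).

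The only step needing care is the passage between ordinal ranges $\gamma$. The canonical predecomposition may be given only as an $(\bfE,\gamma)$-predecomposition, while the decomposition $f_\bullet$ is of length $\gamma=\inacc$. I would fix $\gamma=\inacc$ throughout, since decompositions extend to length $\inacc$ and the canonical predecomposition is by definition an $\bfE$-predecomposition. Once both live in $\Predec[\bfE]$, \cref{lem:descending_decnum} does the real work. Its key input, the strong cancellation property of $\rorth(\bfE)$ coming from $\bfE\subseteq\Epi$, is exactly where the epi hypothesis is used.
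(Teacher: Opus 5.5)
Your proposal is correct and follows the paper's proof: you extend the identity on $f$ to a morphism $\phi\colon f_\bullet\arr g_\bullet$ via \cref{prop:canonical_decomp_adjointprop}, then apply \cref{lem:descending_decnum} with $\phi_0=\phi_\ast=\id$. The remaining points (the easy inequality $\decnum[\bfE]{f}\le\cdecnum[\bfE]{f}$ and working throughout with $\gamma=\inacc$) are exactly what the paper treats as immediate from the definitions.
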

\begin{proof}
    Let $(A_\bullet,f_\bullet,X)$ be an $\bfE$-decomposition of $f$, and let $(B_\bullet,g_\bullet,X)$ be the canonical $\bfE$-predecomposition of $f$.
    By \cref{prop:canonical_decomp_adjointprop}, we have a unique morphism $\phi\colon f_\bullet\arr g_\bullet$ such that $\phi_0$ and $\phi_\ast$ are the identities.
    Then, \cref{lem:descending_decnum} shows that $\cdecnum[\bfE]{f}$ is defined and $\cdecnum[\bfE]{f}\le\abs{f_\bullet}$.
    The decomposition $f_\bullet$ is arbitrary, so we find that $\cdecnum[\bfE]{f}=\decnum[\bfE]{f}$.
\end{proof}

\begin{example}[The global decomposition number for monotone quotient maps]\label{eg:monotone_example_by_koizumi}
    Consider the class $\bfE$ of all monotone quotient maps as in \cref{eg:global_minimum_decnum}\cref{eg:global_minimum_decnum-monotone}.
    Then, the category $\Top$ of topological spaces is $\bfE$-admissible, because it is cocomplete and locally orthogonal $\bfE$-factorizations are given by taking quotient under the equivalence relation described in \cref{eg:canonical_predecomposition_for_monotone_maps}.
    Hence, \cref{prop:canonical_predecomposition} implies that every morphism in $\Top$ has the canonical $\bfE$-predecomposition, which in fact stabilizes at some small ordinal because of the co-wellpoweredness of $\Top$.
    Since every monotone quotient map is epic in $\Top$, \cref{thm:correspondence_decnum_and_cdecnum} can be applied, and there is no difference between the minimum $\bfE$-decomposition number and the canonical one in this case.

    In what follows, we construct a continuous map $f$ with $\decnum[\bfE]{f}=\cdecnum[\bfE]{f}=2\lambda$ for an arbitrary small ordinal $\lambda$, which shows that the global $\bfE$-decomposition number of $\Top$ is $\inacc$, the smallest large ordinal (cf.\ \cref{note:smallest_large_ordinal}).
    The construction given here was communicated to us by Junnosuke Koizumi during the preparation of this paper.
    
    We extend the set of ordinals by adding a new element $-1$ that is smaller than every ordinal just for here, and use the interval notation such as $[-1,\lambda)$ to denote sets of ordinals possibly containing $-1$.
    Let $\{a,b\}$ be the two-element set, and let $X\coloneq [-1,\lambda)\times \{a,b\}$ be the product set, in which we write $\lelm{\alpha}$ for the ordered pair of $\alpha$ and $a$, and $\relm{\alpha}$ for that of $\alpha$ and $b$.
    We can equip $X$ with a topology in the following way: a subset $U\subseteq X$ is open if and only if the following conditions are satisfied.
    \begin{itemize}
        \item
            The condition $\lelm{-1}\in U$ is equivalent to $\relm{-1}\in U$.
        \item
            If $\lelm{\alpha}\in U$ for some $\alpha\in [0,\lambda)$, then there exists $\beta\in [-1,\alpha)$ such that $[\beta,\alpha]\times \{b\}\subseteq U$ holds.
    \end{itemize}
    Let $Y\coloneq\{a,b\}$ be the two-element indiscrete space, and let $(X_\bullet,f_\bullet,Y)$ be the canonical $\bfE$-decomposition of the second projection $f\colon X\arr Y$.
    
    Let $\alpha\le 2\lambda$ be an ordinal, and let $\beta$ and $k$ be the unique pair of ordinals satisfying $\alpha=2\beta + k$ with $k=0,1$.
    Let $\sim_\alpha$ be the kernel equivalence relation of the quotient map $X_{0,\alpha}\colon X\arr X_\alpha$, i.e., $x\sim_\alpha x'$ if and only if $X_{0,\alpha}(x)=X_{0,\alpha}(x')$.
    This relation can completely be determined as follows:
    \begin{claim}
        Both $[-1,\beta + k)\times\{a\}$ and $[-1,\beta)\times\{b\}$ are equivalence classes under $\sim_\alpha$, and the other equivalence classes are singletons.
    \end{claim}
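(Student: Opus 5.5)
The plan is to prove the claim by transfinite induction on $\alpha\le 2\lambda$. At each stage I will describe the fibres of $f_\alpha\colon X_\alpha\arr Y$ as subspaces of the quotient $X_\alpha=X/{\sim_\alpha}$. I will use the description of the canonical step from \cref{eg:canonical_predecomposition_for_monotone_maps}: $X_{\alpha,\alpha+1}$ identifies two points exactly when they lie in the same fibre and in a common connected subset of that fibre.

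\emph{Base and limit cases.} For $\alpha=0$ we have $\beta=k=0$, so the claim says that $\{\lelm{-1}\}$ and the empty set are classes and everything else is a singleton; this is the trivial relation. For a limit $\alpha$ (so $\alpha=2\beta$ with $\beta$ a limit and $k=0$), the maps $X_{0,\alpha'}$ are surjective quotient maps. Hence the colimit in $\Top$ is the quotient of $X$ by the union of the increasing chain of relations $\sim_{\alpha'}$ for $\alpha'<\alpha$. Because $\beta=\sup_{\beta'<\beta}\beta'$, the union of the intervals described by the induction hypothesis is exactly $[-1,\beta)\times\{a\}$ and $[-1,\beta)\times\{b\}$, as required.

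\emph{Key observation about neighbourhoods.} A set in $X_\alpha$ is open iff its saturated preimage satisfies the two defining conditions. Write $A$ and $B$ for the two nontrivial classes. Two chains of forcing will drive the argument:
\begin{itemize}
\item any open set containing $\lelm{\gamma}$ must contain $\relm{\beta'},\dots,\relm{\gamma}$ for some $\beta'<\gamma$;
\item any open set containing $\relm{-1}$ must contain $\lelm{-1}$.
\end{itemize}
Conversely, for an isolated-looking point one can pick the witnessing interval $[\beta',\gamma]$ with $\beta'$ close to $\gamma$ so that it avoids the collapsed classes. For $\gamma$ a successor take $\beta'=\gamma-1$; for $\gamma$ a limit take any $\beta'$ above the relevant bound. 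Likewise $\{\relm{\gamma}\}$ is open whenever $\gamma\ge0$ and it is a singleton class.

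\emph{Successor step $2\beta\to2\beta+1$.} Here $A=[-1,\beta)\times\{a\}$ and $B=[-1,\beta)\times\{b\}$ (with the convention that $A=\{\lelm{-1}\}$ when $\beta=0$). Any open set containing $\lelm{\beta}$ contains some $\relm{\beta'}$ with $\beta'<\beta$, so it contains the point $B$. Saturation then gives $\relm{-1}$, hence $\lelm{-1}$, hence the point $A$. When $\beta=0$ the only choice is $\beta'=-1$, which directly forces $\lelm{-1}$. So in the $a$-fibre, $\lelm{\beta}$ lies in the closure of the point $A$ and $\{A,\lelm{\beta}\}$ is a Sierpiński space, which is connected. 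Every other point $\lelm{\gamma}$ with $\gamma>\beta$ has a neighbourhood, meeting the $a$-fibre only in $\{\lelm{\gamma}\}$, obtained by choosing $\beta'\ge\beta$. The $b$-fibre contributes nothing new: the singletons $\relm{\gamma}$ for $\gamma\ge\beta$ are open, and $B$ is open in the fibre.

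\emph{Successor step $2\beta+1\to2\beta+2$.} Now $A=[-1,\beta+1)\times\{a\}\ni\lelm{\beta}$ and $B=[-1,\beta)\times\{b\}$. Any open set containing the point $B$ contains $\relm{-1}$, hence $\lelm{-1}$, hence all of $A$, including $\lelm{\beta}$; this in turn forces $\relm{\beta}$. So $\{B,\relm{\beta}\}$ is a connected Sierpiński pair in the $b$-fibre, while the points $\relm{\gamma}$ with $\gamma>\beta$ remain open singletons. In the $a$-fibre every $\lelm{\gamma}$ with $\gamma>\beta$ is still separated from $A$ by choosing $\beta'\ge\beta+1>\beta$, which is legitimate since $\gamma\ge\beta+1$.

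\emph{Main obstacle.} The delicate part is the ``no more'' direction: showing that no connected subset of a fibre contains two distinct classes other than the pairs found above. I would handle it by exhibiting, for each such pair, a clopen partition of the fibre. Each remaining point has an open neighbourhood that is a singleton in its fibre. Its complement in the fibre is also open, since it is the intersection with the fibre of the open set $(A\cup\cdots)$ together with all of the other fibre, which satisfies both conditions. So every such point is clopen in its fibre and the components are exactly as claimed. Verifying openness of these saturated sets, especially the limit-$\gamma$ choice of $\beta'$ and the special role of the index $-1$, is where care is needed.
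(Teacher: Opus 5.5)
Your route is the paper's own argument. You do transfinite induction on $\alpha=2\beta+k$ and treat the limit case as the union of the relations $\sim_{\alpha'}$. In each successor step you show the new pair is connected through the chain of forced neighbourhoods, and that every other point is clopen in its fibre. Three of your detailed verifications are wrong as written, though each is easily repaired.

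(i) $[-1,0)\times\{b\}=\{\relm{-1}\}$, not $\emptyset$. Likewise no convention is needed for $A$ at $\beta=0$; there $A=\{\lelm{-1}\}$ and $B=\{\relm{-1}\}$.

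(ii) In the step $2\beta+1\to 2\beta+2$ you cannot choose $\beta'\ge\beta+1$ when $\gamma=\beta+1$, because the witness must satisfy $\beta'<\gamma$. What you actually need is that $[\beta',\gamma]\times\{b\}$ avoid $B=[-1,\beta)\times\{b\}$. So take $\beta'=\beta$: the set $\{\lelm{\gamma}\}\cup[\beta,\gamma]\times\{b\}$ is open and saturated, since $\relm{\beta}$ is still a singleton class at stage $2\beta+1$.

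(iii) The complement argument in your last paragraph fails for points $\relm{\gamma}$ of the $b$-fibre. The preimage of ``complement in the fibre together with the whole $a$-fibre'' is $X\setminus\{\relm{\gamma}\}$. This set contains $\lelm{\gamma}$, and every witness interval $[\beta',\gamma]\times\{b\}$ contains $\relm{\gamma}$, so the set is not open.

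At stage $2\beta$ this does no harm, because you already observed that $B$ and all $\relm{\gamma}$ with $\gamma\ge\beta$ are open in the $b$-fibre, so that fibre is discrete. At stage $2\beta+1$, however, you need closedness of each $\relm{\gamma}$ with $\gamma>\beta$ in its fibre. Argue instead that the pieces $\{B,\relm{\beta}\}$ and $\{\relm{\gamma}\}$ for $\gamma>\beta$ are all open in the fibre. The first is open via the saturated open set $A\cup B\cup\{\relm{\beta}\}$. These pieces therefore partition the fibre into clopen sets, which gives the ``no more'' direction.
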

    \begin{since}
        We show this by transfinite induction on $\alpha=2\beta + k$.
        The initial case is immediate since $\beta=k=0$.
        If $\alpha$ is a limit ordinal, $k$ must be $0$ and $\sim_\alpha$ is the union of $(\sim_{\alpha'})_{\alpha'<\alpha}$ as a subset of $X\times X$, which shows the limit case.

        We now show the successor case.
        Suppose that the claim holds on $\alpha=2\beta + k$.
        In the following, the equivalence classes $[-1,\beta + k)\times\{a\}$ and $[-1,\beta)\times\{b\}$ under $\sim_\alpha$ are denoted by $\lelm{\bottom}$ and $\relm{\bottom}$ respectively, and the other one-element equivalence classes are identified with their unique element in terms of notation.
        We proceed by cases on the value of $k$.

        \noindent\textbf{Case I: $k=0$.}
        We first show that the subset $\{\lelm{\bottom},\lelm{\beta}\}\subseteq X_\alpha$ is connected.
        Let $V\subseteq X_\alpha$ be an open set containing $\lelm{\beta}$.
        Then, the open set $X_{0,\alpha}^{-1}(V)\subseteq X$ must contain $\relm{\gamma}$ for some $\gamma\in [-1,\beta)$, and $V$ contains $X_{0,\alpha}(\relm{\gamma})=\relm{\bottom}$.
        Then, the open set $X_{0,\alpha}^{-1}(V)$ must contain $\lelm{-1}$, and $\lelm{\bottom}\in V$ holds, which implies the desired connectedness.
        Therefore, the quotient map $X_{\alpha,\alpha+1}$ collapses $\lelm{\bottom}$ and $\lelm{\beta}\in X_\alpha$ to the same point.
        On the other hand, one can see that every $x\in X_\alpha \backslash \{\lelm{\bottom},\lelm{\beta}\}$ is a clopen point in the fiber to which $x$ belongs, that is, the singleton $\{x\}$ is closed and open in the subspace $f_\alpha^{-1}(f_\alpha(x))\subseteq X_\alpha$.
        This shows that the map $X_{\alpha,\alpha+1}$ does not collapse any other points.

        \noindent\textbf{Case II: $k=1$.}
        In this case, the subset $\{\relm{\bottom},\relm{\beta}\}\subseteq X_\alpha$ is connected.
        Indeed, we can similarly show that in $X_\alpha$, every open set containing $\relm{\bottom}$ must contain $\relm{\beta}$.
        Moreover, one can see that every other point $x\in X_\alpha \backslash \{\relm{\bottom},\relm{\beta}\}$ is a clopen point in the fiber containing it.
        This shows that the quotient map $X_{\alpha,\alpha+1}$ only collapses $\relm{\bottom}$ and $\relm{\beta}$.
    \end{since}
    The claim shows that the canonical $\bfE$-decomposition $f_\bullet$ stabilizes exactly at $2\lambda$.
    Since $\lambda$ is arbitrary, we conclude $\decnum[\bfE]{\Top}=\cdecnum[\bfE]{\Top}=\inacc$.\vspace{-0.6em}
    \begin{equation*}
        \begin{tikzpicture}
            \node[draw, rounded corners, monotoneheader=$X_0$] (X0) at (0,0) {
                \begin{tikzcd}[monotone-example]
                    \vdots & \vdots \\
                    \lelm{1} & \relm{1} \\
                    \lelm{0} & \relm{0} \\
                    \lelm{\bottom} & \relm{\bottom}
                \end{tikzcd}
            };
            \node[draw, rounded corners, monotoneheader=$X_1$] (X1) at (2.5,0) {
                \begin{tikzcd}[monotone-example]
                    \vdots & \vdots \\
                    \lelm{1} & \relm{1} \\
                    & \relm{0} \\
                    \lelm{\bottom} & \relm{\bottom}
                \end{tikzcd}
            };
            \node (X2) at (4.5,0) {
                $\cdots$
            };
            \node[draw, rounded corners, monotoneheader=$X_{2\beta}$] (Xbeta) at (6.5,0) {
                \begin{tikzcd}[monotone-example]
                    \vdots & \vdots \\
                    \lelm{\beta+1} & \relm{\beta+1} \\
                    \lelm{\beta} & \relm{\beta} \\
                    \lelm{\bottom} & \relm{\bottom}
                \end{tikzcd}
            };
            \node[draw, rounded corners, monotoneheader=$X_{2\beta+1}$] (Xbetaone) at (10.8,0) {
                \begin{tikzcd}[monotone-example]
                    \vdots & \vdots \\
                    \lelm{\beta+1} & \relm{\beta+1} \\
                    & \relm{\beta} \\
                    \lelm{\bottom} & \relm{\bottom}
                \end{tikzcd}
            };
            \node (Xbetatwo) at (13.5,0) {
                $\cdots$
            };
            \node[draw, rounded corners, monotoneheader=$X_{2\lambda}$] (Xlambda) at (14.8,0) {
                \begin{tikzcd}[monotone-example]
                    \lelm{\bottom} & \relm{\bottom}
                \end{tikzcd}
            };
            \draw[->,shorten <=5pt,shorten >=5pt] (X0) to node[above] {$X_{0,1}$} (X1);
            \draw[->,shorten <=5pt,shorten >=0pt] (X1) to (X2);
            \draw[->,shorten <=9pt,shorten >=9pt] (Xbeta) to node[above] {$X_{2\beta,2\beta+1}$} (Xbetaone);
            \draw[->,shorten <=5pt,shorten >=0pt] (Xbetaone) to (Xbetatwo);
            \node[draw, rounded corners, monotoneheader=$Y$] (Y) at (7.4,-4) {
                \begin{tikzcd}[row sep=-0.3em, column sep=0em]
                    a & b
                \end{tikzcd}
            };
            \draw[->,shorten <=5pt,shorten >=10pt] (X0.south) to[bend right=15] node[below left] {$f=f_0$} (Y.north west);
            \draw[->,shorten <=5pt,shorten >=10pt] (X1.south) to node[above right=-2] {$f_1$} (Y.north west);
            \draw[->,shorten <=5pt,shorten >=15pt] (Xbeta.south) to[bend right=0] node[left=-1] {$f_{2\beta}$} (Y.north);
            \draw[->,shorten <=5pt,shorten >=15pt] (Xbetaone.south) to[bend left=0] node[above left=-3] {$f_{2\beta+1}$} (Y.north);
            \draw[->,shorten <=5pt,shorten >=10pt] (Xlambda.south) to[bend left=15] node[below right=-2] {$f_{2\lambda}$} node[above=-1,sloped] {$\cong$} (Y.north east);
        \end{tikzpicture}\vspace{-1.3em}
    \end{equation*}
\end{example}

\begin{remark}
    We do not know whether the assumption $\bfE\subseteq\Epi$ can be omitted from \cref{thm:correspondence_decnum_and_cdecnum}.
    In particular, we do not know whether $\decnum[\bfE]{f}=\cdecnum[\bfE]{f}$ holds in the category described in \cref{eg:monadic_decomposition} below, but a partial answer will be given in \cref{cor:new_result_for_monadic_decomposition}.
\end{remark}

\begin{example}[Monadic towers]\label{eg:monadic_decomposition}
    The \textit{monadic tower}, originally introduced in \cite{ApplegateTierney1970iterated}, is a decomposition of a right adjoint functor into co-transfinite composite of strictly monadic functors which is given by iterated application of the Eilenberg--Moore construction.
    Although the monadic tower of a right adjoint functor does not always exist, several sufficient conditions for existence are studied in \cite{AdamekHerrlichTholen1989monadic}.

    Let $\CATradj$ denote the (very large) category whose objects are (large) categories and whose morphisms are right adjoint functors between them.
    Consider the opposite category and let $\bfE\subseteq\Mor(\CATradj^\op)$ be the class consisting of all strictly monadic functors.
    Note that there is no reason for morphisms in $\bfE$ being epic in $\CATradj^\op$.
    MacDonald and Tholen showed in \cite[{}2.3]{MacdonaldTholen1982decomposition} that an $(\bfE,\omega)$-predecomposition $(A_\bullet,f_\bullet,X)$ in $\CATradj^\op$ is canonical if it exhibits a monadic tower (up to isomorphism) and for every $n\in (0,\omega)$, the category $A_n$ has reflexive coequalizers.\footnote{%
        This need not hold for $\gamma>\omega$ because the limit of a chain in $\CAT$, the category of (large) categories and functors, is not necessarily a limit in $\CATradj$ (a colimit in $\CATradj^\op$).%
    }
    Here, the existence of reflexive coequalizers is required to show that the diagonal fillers arising in local orthogonality are right adjoints.
    Then, the canonical $\bfE$-decomposition number coincides with what is called the \textit{monadic length} in the literature whenever it is finite.
\end{example}

The following lemma allows us to compare the two ordinals $\decnum[\bfE]{f}$ and $\cdecnum[\bfE]{f}$ in the general case where morphisms in $\bfE$ are not necessarily epic:
\begin{lemma}\label{lem:comparison_decnum_and_cdecnumII}
    Let $\bfE$ be a class of morphisms in a category $\C$, and let $\gamma\in (0,\inacc]$ be a limit ordinal.
    Let $(A_\bullet,f_\bullet,X)$ be an $(\bfE,\gamma)$-decomposition, and let $(B_\bullet,g_\bullet,Y)$ be a canonical $(\bfE,\gamma)$-predecomposition.
    Let $(\phi_0,\phi_\ast)\colon f_0\arr g_0$ be a morphism in $\C^\to$ such that $\phi_0\in\bfE$ and $\phi_\ast\in\rorth(\bfE)$.
    Then,
    \begin{enumerate}
        \item\label{lem:comparison_decnum_and_cdecnumII-1}
            $g_\bullet$ stabilizes.
        \item
            If $\abs{f_\bullet}$ is a successor, $\abs{f_\bullet}\ge\abs{g_\bullet}$ holds.
        \item
            If $\abs{f_\bullet}$ is not a successor, $\abs{f_\bullet}+1\ge\abs{g_\bullet}$ holds.\qedhere
    \end{enumerate}
\end{lemma}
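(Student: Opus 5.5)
The plan is to compare $f_\bullet$ with a \emph{shifted} copy of the canonical predecomposition $g_\bullet$, using the one extra step available to absorb the failure of $\bfE\subseteq\Epi$. Since $\gamma$ is a limit ordinal, $B_{1+\bullet}$ together with $g_{1+\bullet}$ is again a canonical $(\bfE,\gamma)$-predecomposition, of the morphism $g_1$. The pair $(B_{0,1}\circ\phi_0,\phi_\ast)$ is a morphism $f_0\arr g_1$ in $\C^\to$, because $g_1B_{0,1}\phi_0=g_0\phi_0=\phi_\ast f_0$. By \cref{prop:canonical_decomp_adjointprop} it therefore extends uniquely to a morphism $\psi\colon f_\bullet\arr g_{1+\bullet}$ in $\Predec[\bfE,\gamma]$. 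Concretely, we get $\psi_\beta\colon A_\beta\arr B_{1+\beta}$ with $g_{1+\beta}\psi_\beta=\phi_\ast f_\beta$ and $\psi_\beta A_{\beta',\beta}=B_{1+\beta',1+\beta}\psi_{\beta'}$.

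The core step is a transfinite induction proving $\psi_\beta\in\lorth\rorth(\bfE)$ for all $\beta<\gamma$. Under the epi assumption this was automatic; here it must be built by hand.
\begin{itemize}
\item For $\beta=0$, $\psi_0=B_{0,1}\phi_0$ is a composite of two members of $\bfE$, hence lies in $\lorth\rorth(\bfE)$.
\item For a successor, I will use the pushout-like characterization coming from the local orthogonality $\bfE\orth[B_{1+\beta,2+\beta}]g_{2+\beta}$. Since $A_{\beta,\beta+1}\in\bfE$, this should show that $\psi_{\beta+1}$ lies in $\lorth\rorth(\bfE)$ whenever $\psi_\beta$ does. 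Where cancellation in $\lorth\rorth(\bfE)$ is not available, I will instead argue directly that $\psi_{\beta+1}\orth m$ for every $m\in\rorth(\bfE)$, by chasing lifting squares through $A_\beta$ and $B_{1+\beta}$.
\item At limits, closure of $\lorth\rorth(\bfE)$ under colimits of chains applies.
\end{itemize}

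Next comes stabilization. Let $\alpha=\abs{f_\bullet}$, so $f_\beta\in\rorth(\bfE)$ for $\beta\ge\alpha$, and $\phi_\ast f_\beta\in\rorth(\bfE)$ since $\rorth(\bfE)$ is closed under composition. Then $g_{1+\beta}\psi_\beta=\phi_\ast f_\beta$ lies in $\rorth(\bfE)$. To conclude $\bfE\orth g_{1+\beta}$ I will test against a square $(u,v)$ from some $e\in\bfE$ into $g_{1+\beta}$ and use canonicity. The local orthogonality $\bfE\orth[B_{1+\beta,2+\beta}]g_{2+\beta}$ only requires liftings of squares whose top map factors through $B_{1+\beta}$. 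Together with $\psi_\beta\in\lorth\rorth(\bfE)$ and $g_{1+\beta}\psi_\beta\in\rorth(\bfE)$, this should show that $B_{1+\beta,2+\beta}$ is an isomorphism. By \cref{cor:simple_criterion_of_cdec_stabilizing}, $g_\bullet$ then stabilizes at $1+\beta$, which proves (i).

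For the bounds, take $\beta=\alpha$. If $\alpha$ is infinite then $1+\alpha=\alpha$, and this already gives $\abs{g_\bullet}\le\alpha$. That settles (ii) for infinite successors, and (iii) trivially in the infinite limit case. The remaining care is for finite $\alpha$ and for the successor case $\alpha=\alpha'+1$. There I will refine the argument one step earlier: since $A_{\alpha',\alpha}\in\bfE$ and $f_\alpha\in\rorth(\bfE)$, the pair $(A_{\alpha',\alpha},f_\alpha)$ is itself an $\bfE$–$\rorth(\bfE)$ factorization of $f_{\alpha'}$. Comparing it with the locally orthogonal factorization defining $B_{\alpha}\arr B_{\alpha+1}$ should show that $g_\alpha$ already lies in $\rorth(\bfE)$, saving the shift. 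In the non-successor case, $\alpha$ is $0$ or a limit, so no such predecessor exists and only $\abs{g_\bullet}\le\alpha+1$ remains; this matches (iii).

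I expect the main obstacle to be the two orthogonality transfers. The first is showing $\psi_\beta\in\lorth\rorth(\bfE)$ at successor steps. The second is deducing $\bfE\orth g_{1+\beta}$ from the fact that $g_{1+\beta}\psi_\beta\in\rorth(\bfE)$. Without epimorphicity, the strong cancellation of \cref{lem:leftclass_epic} is unavailable, so both must be done by explicit diagram chases using the uniqueness clauses of local orthogonality. I would first try to make $\psi_\beta$ split through the unit of the local factorization, which is how the extra $+1$ should enter.
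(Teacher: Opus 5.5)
Your plan has a genuine gap at the step you leave as ``this should show'': deducing $\bfE\orth g_{1+\beta}$ (equivalently, that $B_{1+\beta,2+\beta}$ is an isomorphism) from $\psi_\beta\in\lorth\rorth(\bfE)$ and $g_{1+\beta}\psi_\beta\in\rorth(\bfE)$. That deduction is the entire content of the lemma. Testing an arbitrary square $(u,v)$ against $g_{1+\beta}$ cannot work, since $u$ need not factor through $\psi_\beta$.

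The missing mechanism is as follows. Let $\phi\colon f_\bullet\to g_\bullet$ be the unshifted extension of $(\phi_0,\phi_\ast)$ from \cref{prop:canonical_decomp_adjointprop}. Lifting $\phi_{\alpha+1}\in\lorth\rorth(\bfE)$ against $\phi_\ast f_{\alpha+1}\in\rorth(\bfE)$, with top map $\id$ and bottom map $g_{\alpha+1}$, produces $r$ with $r\phi_{\alpha+1}=\id$ and $\phi_\ast f_{\alpha+1}r=g_{\alpha+1}$. One then proves $\phi_{\alpha+1}rB_{\beta,\alpha+1}=B_{\beta,\alpha+1}$ for all $\beta\le\alpha+1$ by transfinite induction. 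At the base (left map $\phi_0$) and at successors (left map $B_{\beta,\beta+1}$), both sides are diagonal fillers of a square against $g_{\alpha+1}$ whose top factors through $B_{\alpha,\alpha+1}$. They therefore agree by the uniqueness in $\bfE\orth[B_{\alpha,\alpha+1}]g_{\alpha+1}$, and limit stages follow from the colimit property. Taking $\beta=\alpha+1$ gives $\phi_{\alpha+1}r=\id$, hence $g_{\alpha+1}=\phi_\ast f_{\alpha+1}\phi_{\alpha+1}^{-1}\in\rorth(\bfE)$.

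The base case genuinely needs $\phi_0\in\bfE$, not just $\phi_0\in\lorth\rorth(\bfE)$. Your plan never uses more than the latter, which signals that the key argument is absent. Separately, your first ``core step'' needs no induction. Right cancellation in $\lorth\rorth(\bfE)$ holds for any class $\bfE$; only the strong cancellation of $\rorth(\bfE)$ requires epimorphisms.

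The same mechanism shows why your treatment of the infinite limit case is unsupported. The induction needs a \emph{successor} target $\alpha+1$, because every $B_{\beta,\alpha+1}$ with $\beta\le\alpha$ must factor through $B_{\alpha,\alpha+1}$, and canonicity supplies local orthogonality only at successor stages. By uniqueness, your $\psi_\beta$ equals $B_{\beta,1+\beta}\phi_\beta$, so for infinite $\beta$ it is just $\phi_\beta$ and the shift buys nothing. For an infinite limit $\alpha=\abs{f_\bullet}$ you would need $\bfE\orth g_\alpha$ with no predecessor stage to exploit.

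Your claimed bound $\abs{g_\bullet}\le\abs{f_\bullet}$ in that case is strictly stronger than (iii). With $\phi_0,\phi_\ast$ identities, it would settle the one case (a nonzero limit value of $\decnum[\bfE]{f}$) that \cref{thm:correspondence_decnum_and_cdecnumII} leaves open and that the paper explicitly says it cannot decide.

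The fix is to set $\alpha:=\alpha'$ when $\abs{f_\bullet}=\alpha'+1$ and $\alpha:=\abs{f_\bullet}$ otherwise. In both cases $f_{\alpha+1}\in\rorth(\bfE)$, so you can run the argument above at $\alpha+1$. This yields (i)--(iii) at once, with the $+1$ in (iii) being exactly the cost of stepping off a limit stage, and it makes your separate successor ``refinement'' unnecessary.
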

\begin{proof}
    Let $\phi\colon f_\bullet\arr g_\bullet$ in $\Predec[\bfE,\gamma]$ be the unique extension of $(\phi_0,\phi_\ast)$ as in \cref{prop:canonical_decomp_adjointprop}.
    Let us define an ordinal $\alpha$ as follows:
    \begin{equation*}
        \alpha\coloneq
        \begin{cases}
            \alpha' & \text{if $\abs{f_\bullet}=\alpha'+1$} \\
            \abs{f_\bullet} & \text{otherwise}
        \end{cases}
    \end{equation*}
    In what follows, we show the statement \cref{lem:comparison_decnum_and_cdecnumII-1} and $\alpha+1\ge\abs{g_\bullet}$.
    
    By $\phi_{\alpha+1}\circ A_{0,\alpha+1}=B_{0,\alpha+1}\circ\phi_0\in\lorth\rorth(\bfE)$, $A_{0,\alpha+1}\in\lorth\rorth(\bfE)$, and the cancellation property, we have $\phi_{\alpha+1}\in\lorth\rorth(\bfE)$.
    Then, by $\phi_\ast\circ f_{\alpha+1}\in\rorth(\bfE)$, there is a unique morphism $r$ making the following commute:
    \begin{equation*}
        \begin{tikzcd}
            A_{\alpha+1}\ar[d,"\phi_{\alpha+1}"']\ar[r,equal] & A_{\alpha+1}\ar[r,"f_{\alpha+1}"] & X\ar[d,"\phi_\ast"] \\
            B_{\alpha+1}\ar[ur,dotted,"r"]\ar[rr,"g_{\alpha+1}"'] & & Y
        \end{tikzcd}\incat{\C}.
    \end{equation*}
    To prove that $r$ is the inverse of $\phi_{\alpha+1}$, we show the following by transfinite induction on $\beta\le\alpha+1$:
    \begin{claim}\label{claim:r_is_inverse}
        The following diagram commutes for every $\beta\le\alpha+1$:
        \begin{equation*}
            \begin{tikzcd}
                B_\beta\ar[d,"B_{\beta,\alpha+1}"']\ar[r,"B_{\beta,\alpha+1}"] & B_{\alpha+1} \\
                B_{\alpha+1}\ar[r,"r"'] & A_{\alpha+1}\ar[u,"\phi_{\alpha+1}"']
            \end{tikzcd}\incat{\C}.
        \end{equation*}
    \end{claim}
    \begin{since}
        \noindent\textbf{Initial step:}
        By $\phi_0\in\bfE$ and $\bfE\orth[B_{\alpha,\alpha+1}] g_{\alpha+1}$, we have $\phi_0\orth[B_{\alpha,\alpha+1}] g_{\alpha+1}$, hence two diagonal fillers below must coincide.
        This completes the initial step.
        \begin{equation*}
            \begin{tikzcd}[scriptsizecolumn]
                A_0\ar[ddd,"\phi_0"']\ar[r,"A_{0,\alpha}"] & A_\alpha\ar[d,"A_{\alpha,\alpha+1}"]\ar[r,"\phi_\alpha"] & B_\alpha\ar[rr,"B_{\alpha,\alpha+1}"] & & B_{\alpha+1}\ar[ddd,"g_{\alpha+1}"] \\
                & A_{\alpha+1}\ar[d,"\phi_{\alpha+1}"']\ar[r,equal] & A_{\alpha+1}\ar[rd,"f_{\alpha+1}"]\ar[rru,"\phi_{\alpha+1}"'] & & \\
                & B_{\alpha+1}\ar[drrr,"g_{\alpha+1}"{description}]\ar[ru,"r"'] & & X\ar[dr,"\phi_\ast"] & \\
                B_0\ar[rrrr,"g_0"']\ar[ru,"B_{0,\alpha+1}"{description}] & & & & Y
            \end{tikzcd}
            \quad
            \begin{tikzcd}[hugerow]
                A_0\ar[d,"\phi_0"']\ar[r,"A_{0,\alpha}"] & A_\alpha\ar[r,"\phi_\alpha"] & B_\alpha\ar[r,"B_{\alpha,\alpha+1}"] & B_{\alpha+1}\ar[d,"g_{\alpha+1}"] \\
                B_0\ar[urr,"B_{0,\alpha}"]\ar[urrr,"B_{0,\alpha+1}"']\ar[rrr,"g_0"'] &&& Y
            \end{tikzcd}
        \end{equation*}

        \noindent\textbf{Successor step:}
        Suppose that the statement follows for an ordinal $\beta<\alpha+1$.
        Then, we have two commutative diagrams below.
        Note that the upper triangle on the left below commutes by the induction hypothesis.
        Then, by $B_{\beta,\beta+1}\in\bfE$ and $\bfE\orth[B_{\alpha,\alpha+1}] g_{\alpha+1}$, two diagonal fillers below must coincide.
        This completes the successor step.
        \begin{equation*}
            \begin{tikzcd}[scriptsizecolumn]
                B_\beta\ar[ddd,"B_{\beta,\beta+1}"']\ar[r,rrr,"B_{\beta,\alpha}"] & & & B_\alpha\ar[r,"B_{\alpha,\alpha+1}"] & B_{\alpha+1}\ar[ddd,"g_{\alpha+1}"] \\
                & & A_{\alpha+1}\ar[rd,"f_{\alpha+1}"]\ar[rru,"\phi_{\alpha+1}"'] & & \\
                & B_{\alpha+1}\ar[drrr,"g_{\alpha+1}"{description}]\ar[ru,"r"'] & & X\ar[dr,"\phi_\ast"] & \\
                B_{\beta+1}\ar[rrrr,"g_{\beta+1}"']\ar[ru,"B_{\beta+1,\alpha+1}"{description}] & & & & Y
            \end{tikzcd}
            \quad
            \begin{tikzcd}
                B_\beta\ar[d,"B_{\beta,\beta+1}"']\ar[r,"B_{\beta,\alpha}"] & B_\alpha\ar[r,"B_{\alpha,\alpha+1}"] & B_{\alpha+1}\ar[d,"g_{\alpha+1}"] \\
                B_{\beta+1}\ar[urr,"B_{\beta+1,\alpha+1}"{description}]\ar[rr,"g_{\beta+1}"'] & & Y
            \end{tikzcd}
        \end{equation*}

        \noindent\textbf{Limit step:}
        This is trivial since $B_\beta=\Colim_{\beta'\in\bbbeta}B_{\beta'}$ for every limit ordinal $\beta$.
    \end{since}
    By taking $\beta\coloneq\alpha+1$ in \cref{claim:r_is_inverse}, it follows that $\phi_{\alpha+1}$ is an isomorphism.
    Consequently, we have $g_{\alpha+1}=\phi_\ast\circ f_{\alpha+1}\circ \phi^{-1}_{\alpha+1}\in\rorth(\bfE)$, hence $\abs{g_\bullet}\le\alpha+1$.
    This finishes the proof.
\end{proof}

\begin{theorem}[Minimum vs canonical II]\label{thm:correspondence_decnum_and_cdecnumII}
    Let $\bfE$ be a class of morphisms in a category $\C$.
    Let $f$ be a morphism in $\C$ that has the canonical $\bfE$-predecomposition.
    \begin{enumerate}
        \item
            $\decnum[\bfE]{f}$ is defined if and only if $\cdecnum[\bfE]{f}$ is defined.
        \item
            $\decnum[\bfE]{f}\le\cdecnum[\bfE]{f}\le\decnum[\bfE]{f}+1$ holds whenever they are defined.
        \item
            If $\decnum[\bfE]{f}$ is a successor, $\decnum[\bfE]{f}=\cdecnum[\bfE]{f}$ holds.
    \end{enumerate}
\end{theorem}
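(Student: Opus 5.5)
The plan is to derive everything from \cref{lem:comparison_decnum_and_cdecnumII}, applied with $\gamma\coloneq\inacc$, which is a limit ordinal. Throughout, let $(B_\bullet,g_\bullet,Y)$ be the canonical $\bfE$-predecomposition of $f$, so that $g_0=f$.

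\textbf{Defined-ness.} If $\cdecnum[\bfE]{f}$ is defined, then $g_\bullet$ is itself an $\bfE$-decomposition of $f$. Hence $\decnum[\bfE]{f}$ is defined and $\decnum[\bfE]{f}\le\cdecnum[\bfE]{f}$, which also gives the left inequality of (ii).

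For the converse, suppose $\decnum[\bfE]{f}$ is defined. Choose an $\bfE$-decomposition $(A_\bullet,f_\bullet,X)$ of $f$ with $\abs{f_\bullet}=\decnum[\bfE]{f}$. Both $f_0$ and $g_0$ equal $f$, so take $(\phi_0,\phi_\ast)\coloneq(\id,\id)\colon f_0\arr g_0$ in $\C^\to$.

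To invoke the lemma we need $\phi_0\in\bfE$ and $\phi_\ast\in\rorth(\bfE)$. Identities lie in $\rorth(\bfE)$ automatically. Identities need not lie in $\bfE$ in general, and this is the only point needing care. Two routes are available:
\begin{itemize}
    \item Inspect the proof of \cref{lem:comparison_decnum_and_cdecnumII}. The hypothesis $\phi_0\in\bfE$ is used only in the initial step of \cref{claim:r_is_inverse}, to get the local orthogonality $\phi_0\orth[B_{\alpha,\alpha+1}] g_{\alpha+1}$. When $\phi_0$ is an identity, this holds trivially, since an identity is orthogonal to everything.
    \item Alternatively, prepend an $\bfE$-step: when $f_\bullet$ has positive length, $\phi_0=A_{0,1}$ could be used. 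This is messier, so the first route is preferred.
\end{itemize}

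Having justified the lemma with $\phi_0=\id$, part (i) of the lemma says that $g_\bullet$ stabilizes, so $\cdecnum[\bfE]{f}$ is defined. This proves (i).

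\textbf{Bounds.} Parts (ii) and (iii) of the lemma give
\begin{itemize}
    \item $\cdecnum[\bfE]{f}=\abs{g_\bullet}\le\abs{f_\bullet}=\decnum[\bfE]{f}$ when $\decnum[\bfE]{f}$ is a successor;
    \item $\cdecnum[\bfE]{f}\le\decnum[\bfE]{f}+1$ otherwise.
\end{itemize}
Either way, the right inequality of (ii) follows.

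In the successor case, combining $\cdecnum[\bfE]{f}\le\decnum[\bfE]{f}$ with $\decnum[\bfE]{f}\le\cdecnum[\bfE]{f}$ gives equality, which is (iii).

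The expected obstacle is exactly the mismatch between the hypothesis $\phi_0\in\bfE$ of the lemma and the identity we need to use. The plan is to handle it by the remark above: the lemma's proof only uses that $\phi_0$ is locally orthogonal to $g_{\alpha+1}$ relative to $B_{\alpha,\alpha+1}$, and that $\phi_0\in\lorth\rorth(\bfE)$. Both hold for identities, since identities are isomorphisms and $\lorth\rorth(\bfE)$ contains all isomorphisms.
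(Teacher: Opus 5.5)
Your proof is correct and follows the paper's argument: the paper also derives all three items directly from \cref{lem:comparison_decnum_and_cdecnumII}, taking $\phi_0$ and $\phi_\ast$ to be identities. You rightly point out that $\phi_0=\id$ need not lie in $\bfE$, which the paper's one-line proof passes over, and your fix is sound: the lemma's proof uses $\phi_0$ only through $\phi_0\in\lorth\rorth(\bfE)$ and $\phi_0\orth[B_{\alpha,\alpha+1}]g_{\alpha+1}$, both automatic for identities, so you can drop the ``prepend an $\bfE$-step'' alternative.
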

\begin{proof}
    This directly follows from \cref{lem:comparison_decnum_and_cdecnumII} by taking $\phi_0$ and $\phi_\ast$ as the identities.
\end{proof}

\begin{definition}
    A functor is called a \emph{coreflection} if it has a fully faithful left adjoint.
\end{definition}

\begin{lemma}\label{lem:coreflection_orthogonal_to_monadic}
    Let $R$ be a coreflection and $U$ a strictly monadic functor between large categories.
    Then, the orthogonality $R\orth U$ holds in the category $\CATradj$ as in \cref{eg:monadic_decomposition}.
\end{lemma}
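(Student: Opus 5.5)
We unfold the orthogonality in $\CATradj$. Suppose we are given right adjoints $F\colon\A\arr\C$ and $G\colon\mathscr{B}\arr\D$ such that $U\circ F=G\circ R$, where $R\colon\A\arr\mathscr{B}$ and $U\colon\C\arr\D$. We must produce a unique right adjoint $H\colon\mathscr{B}\arr\C$ with $H\circ R=F$ and $U\circ H=G$, both holding strictly.

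\textbf{Setup.} Let $L\dashv R$ with $L$ fully faithful, unit $\eta\colon\id\arr[Rightarrow]RL$ and counit $\epsilon\colon LR\arr[Rightarrow]\id$. Full faithfulness of $L$ makes $\eta$ invertible. The triangle identity $R\epsilon\circ\eta R=\id$ then gives $\eta R=(R\epsilon)^{-1}$. Let $\mathbb{T}=(T,\mu,\iota)$ be the monad of $U$. Strict monadicity means that the comparison $\C\arr\D^{\mathbb{T}}$ is an isomorphism over $\D$. Consequently, for any category $\mathscr{K}$, functors $K\colon\mathscr{K}\arr\C$ with $UK=G'$ correspond bijectively to $\mathbb{T}$-algebra structures on $G'$, i.e.\ natural transformations $TG'\arr[Rightarrow]G'$ satisfying the unit and associativity axioms pointwise. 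Under this correspondence, $UF=GR$ comes with a structure $a\colon TGR\arr[Rightarrow]GR$.

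\textbf{Existence.} Transport $a$ along the isomorphism $G\eta\colon G\cong GRL$ by setting
\[ b\coloneq (G\eta)^{-1}\circ aL\circ TG\eta\colon TG\arr[Rightarrow]G. \]
This is a $\mathbb{T}$-algebra structure on $G$: conjugating the structure $aL$ on $GRL$ by a natural isomorphism preserves the algebra axioms, using naturality of $\mu$ and $\iota$. Let $H$ be the corresponding functor, so that $UH=G$ holds on the nose. To get $HR=F$ it suffices to check $bR=a$. We have
\[ bR=(G\eta R)^{-1}\circ a_{LR}\circ TG\eta R=GR\epsilon\circ a_{LR}\circ (TGR\epsilon)^{-1}. \]
Naturality of $a$ with respect to $\epsilon$ gives $GR\epsilon\circ a_{LR}=a\circ TGR\epsilon$, so $bR=a$. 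Finally, $H$ is a right adjoint. Indeed $H\cong FL$: both lie over $G$ up to the isomorphism $G\eta$, and the algebra structure of $FL$ is $aL$, so $G\eta$ lifts to an isomorphism of algebras. Since $FL$ is a composite of right adjoints and right adjoints are closed under isomorphism, $H$ is a right adjoint.

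\textbf{Uniqueness.} Let $H'$ be another right adjoint with $UH'=G$ and $H'R=F$, and let $b'$ be its algebra structure. Then $b'R=a$, and hence $b'_{RL}=aL$. Naturality of $b'$ with respect to $\eta$ gives $G\eta\circ b'=b'_{RL}\circ TG\eta=aL\circ TG\eta$, which forces $b'=b$ because $G\eta$ is invertible. Therefore $H'=H$.

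\textbf{Main obstacle.} The delicate point is that all the required equalities are strict rather than up to isomorphism. This is why we need strict monadicity, to turn algebra structures into strict lifts, and the triangle identity, to identify $G\eta R$ with $(GR\epsilon)^{-1}$. The rest is routine naturality bookkeeping.
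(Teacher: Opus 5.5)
Your strict lifting and uniqueness arguments are correct. Conjugating $aL$ by $G\eta$ gives an algebra structure $b$ on $G$. The triangle identity together with naturality of $a$ gives $bR=a$. Naturality of any competing structure $b'$ along $\eta$ forces $b'=b$, which is essentially the paper's pasting argument for uniqueness.

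The gap is in the last step of existence, where you show that $H$ is a right adjoint. You assert that $FL$ is a composite of right adjoints, but $L$ is the \emph{left} adjoint of $R$, and a fully faithful left adjoint need not be a right adjoint. For instance, take $\mathscr{B}=\terminalcat$ and $\A=\Set$. Then $L\dashv R$ forces $L\colon\terminalcat\to\Set$ to pick out the initial object $\emptyset$. Since $\emptyset$ is not terminal, $L$ has no left adjoint. So your argument does not show that $H$ is a morphism of $\CATradj$. This is the substantive part of the lemma, not the strictness bookkeeping you identify as the main obstacle. It needs monadicity of $U$ in more than its strict-lifting aspect.

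Your reduction $H\cong FL$ is still useful, and the gap closes as in the paper. Write $F^*\dashv F$, $G^*\dashv G$, $U^*\dashv U$, and let $\A_0$ be the essential image of $L$.
\begin{itemize}
\item If $F^*C\cong LB_C$ for each $C\in\C$, then full faithfulness of $L$ gives $\C(C,FL-)\cong\A(F^*C,L-)\cong\A(LB_C,L-)\cong\mathscr{B}(B_C,-)$. So $FL$, and hence $H$, has a left adjoint.
\item To see $F^*C\in\A_0$, present $C$ as the coequalizer of the canonical pair $U^*UU^*UC\rightrightarrows U^*UC$ between free algebras. The left adjoint $F^*$ preserves this coequalizer.
\item We have $F^*U^*\cong LG^*$, since both are left adjoints of $UF=GR$. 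Hence $F^*C$ is a colimit in $\A$ of objects of $\A_0$.
\item The essential image of a fully faithful left adjoint is closed under all colimits that exist in $\A$, so $F^*C\in\A_0$.
\end{itemize}
This is exactly the paper's argument, phrased there for $W^*$ with $W=KR$.
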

\begin{proof}
    For a morphism $G$ in $\CATradj$, we write $\ladj{G}$ for the left adjoint, $\unit{G}$ and $\counit{G}$ for the unit and counit of the adjunction $\ladj{G}\dashv G$.
    Let $R\colon\D\arr \C$ be a coreflection.
    Let $T=(T,\eta^T,\mu^T)$ be a monad on a (large) category $\A$, and let $U\colon\A^T\arr \A$ and $\xi\colon TU\arr[Rightarrow] U$ be the forgetful functor and the natural transformation associated with the Eilenberg--Moore category $\A^T$.
    Let $V$ and $W$ be right adjoints making the outermost square of the following diagram commute.
    \begin{equation*}
        \begin{tikzcd}
            \D\ar[d,"R"']\ar[r,"W"]
                &
                \A^T\ar[d,"U"]
            \\
            \C\ar[r,"V"']\ar[ru,dotted,"K"]
                &
                \A
        \end{tikzcd}\incat{\CATradj}
    \end{equation*}
    If a right adjoint $K$ makes the diagram above commute, then the following equality shows that $K$ must be induced by the 2-cell exhibited by the rightmost pasting diagram.
    \begin{equation*}
        \begin{tikzcd}[tri,ampersand replacement=\&]
                \&
                \C\ar[d,"K"]
                    \&
            \\
                \&
                \A^T\ar[dl,"U"']\ar[dr,"U"]
                    \&
            \\
            \A\ar[rr,"T"']
                \&
                {}
                    \&
                    \A
            \urtwocell(\xi){2-2}{3-2}
        \end{tikzcd}
        =
        \begin{tikzcd}[tri,ampersand replacement=\&]
                \&
                \C\ar[dd,bend right=70,shift right=2,equal,""{name=LEQ}]\ar[d,"\ladj{R}"{description}]\ar[dd,bend left=70,shift left=2,equal,""{name=REQ}]
                    \&
            \\
                \&
                \D\ar[d,"R"{description}]
                    \&
            \\
                \&
                \C\ar[d,"K"]
                    \&
            \\
                \&
                \A^T\ar[dl,"U"']\ar[dr,"U"]
                    \&
            \\
            \A\ar[rr,"T"']
                \&
                {}
                    \&
                    \A
            \rtwocell(\unit{R}){LEQ}{2-2}
            \rtwocell((\unit{R})^{-1})[above=2pt,xshift=-4pt]{2-2}{REQ}
            \urtwocell(\xi){4-2}{5-2}
        \end{tikzcd}
        =
        \begin{tikzcd}[tri,ampersand replacement=\&]
                \&
                \C\ar[ddl,equal,bend right=50,""{below,name=LEQ}]\ar[d,"\ladj{R}"{description}]\ar[ddr,equal,bend left=50,""{below,name=REQ}]
                    \&
            \\
                \&
                \D\ar[dl,"R"{description}]\ar[d,"W"{description}]\ar[dr,"R"{description}]
                    \&
            \\
            \C\ar[d,"V"']
                \&
                \A^T\ar[dl,"U"{description}]\ar[dr,"U"{description}]
                    \&
                    \C\ar[d,"V"]
            \\
            \A\ar[rr,"T"']
                \&
                {}
                    \&
                    \A
            \rtwocell(\unit{R}){LEQ}{2-2}
            \rtwocell((\unit{R})^{-1})[above=2pt,xshift=-3pt]{REQ}{2-2}
            \urtwocell(\xi){3-2}{4-2}
            \cellsymb(=){3-1}{3-2}
            \cellsymb(=){3-3}{3-2}
        \end{tikzcd}
    \end{equation*}
    Conversely, the universal property of $\A^T$ yields a functor $K$ such that $KR=W$ and $UK=V$.

    What remains to show is $K$ being a right adjoint.
    Take an arbitrary object $X\in\A^T$.
    Since $\ladj{R}$ is fully faithful, it suffices to show that $\ladj{W}(X)$ lies in $\D_0$, the essential image of $\ladj{R}$.
    Now, the $T$-algebra $X$ can be expressed as a coequalizer of a parallel pair of morphisms between free $T$-algebras:
    \begin{equation*}
        \begin{tikzcd}[scriptsize]
            \ladj{U}(A_0)\ar[r,shift left=1]\ar[r,shift right=1]
                &
                \ladj{U}(A_1)\ar[r]
                    &
                    X
        \end{tikzcd}\incat{\A^T}.
    \end{equation*}
    This coequalizer is preserved by $\ladj{W}$ because it is a left adjoint.
    Then, the objects $\ladj{W}\ladj{U}(A_i)$ $(i=0,1)$ belong to $\D_0$ by $UW=VR$.
    Since any coreflective full subcategory is closed under colimits, the coequalizer $\ladj{W}(X)$ also belongs to $\D_0$, hence $K$ has a left adjoint.
\end{proof}

The following corollary seems a new result:
\begin{corollary}\label{cor:new_result_for_monadic_decomposition}
    Let $G\colon\X\arr \A$ be a right adjoint functor whose monadic tower exists for any finite step and all the Eilenberg--Moore categories appearing in the tower have reflexive coequalizers.
    If $G$ can be decomposed into a coreflection followed by a composite of an $n$-tuple of strictly monadic functors, then the monadic tower of $G$ stabilizes at (or before) $n$.
\end{corollary}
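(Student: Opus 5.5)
We work in $\CATradj^\op$, with $\bfE$ the class of strictly monadic functors, as in \cref{eg:monadic_decomposition}, and aim to apply \cref{lem:comparison_decnum_and_cdecnumII}.

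Suppose $G = U_1\circ\cdots\circ U_n\circ R$, where $R\colon\X\arr\X'$ is a coreflection and each $U_i$ is strictly monadic. We read this in $\CATradj^\op$ as an $(\bfE,\omega)$-decomposition of a suitable morphism. Let $f_0\colon A_0\arr X$ correspond to the functor $U_1\circ\cdots\circ U_n\colon\X'\arr\A$, viewed in the opposite category. The factorization through the $U_i$ gives an $\bfE$-transfinite $\omega$-sequence whose successive steps $A_{i,i+1}$ are the $U_{i+1}$ (read in reverse), and which is constant after stage $n$. The cocone $f_\bullet$ ends with the identity at stage $n$. Identities are orthogonal to everything, so $f_\bullet$ is an $(\bfE,\omega)$-decomposition with $\abs{f_\bullet}\le n$.

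Next, let $g_\bullet$ be the $(\bfE,\omega)$-predecomposition of $G$ given by its monadic tower. By the hypotheses (the finite stages exist and have reflexive coequalizers) together with \cite[2.3]{MacdonaldTholen1982decomposition}, as recalled in \cref{eg:monadic_decomposition}, this predecomposition is canonical. The two predecompositions share the base object $\A$, which is $A_0=B_0$ in the opposite category. Take $\phi_0=\id$, which lies in $\bfE$ since identities are strictly monadic. Take $\phi_\ast$ to be the morphism given by $R$. Both $f_0$ and $g_0$ then correspond to functors into $\A$ with $G=f_0\circ R$, so the square commutes.

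The remaining hypothesis of \cref{lem:comparison_decnum_and_cdecnumII} is $\phi_\ast\in\rorth(\bfE)$ in $\CATradj^\op$. This is equivalent to $R\orth U$ in $\CATradj$ for every strictly monadic $U$, which is exactly \cref{lem:coreflection_orthogonal_to_monadic}. One must also check that $\bfE$ is iso-closed, so that isomorphisms in $\CATradj$ cause no trouble; this is standard. The lemma requires $\gamma$ to be a limit ordinal, and we take $\gamma=\omega$.

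Applying \cref{lem:comparison_decnum_and_cdecnumII}, $g_\bullet$ stabilizes. When $\abs{f_\bullet}$ is a successor, $\abs{g_\bullet}\le\abs{f_\bullet}\le n$. When $\abs{f_\bullet}=0$, the bound obtained is only $\abs{g_\bullet}\le 1$, so this case needs separate treatment.

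If $\abs{f_\bullet}=0$, then $f_0$ itself lies in $\rorth(\bfE)$, so $G$ lies in $\rorth(\bfE)$ as the composite of $f_0$ with $\phi_\ast\in\rorth(\bfE)$. By \cref{cor:simple_criterion_of_cdec_stabilizing}, the canonical $g_\bullet$ then stabilizes at $0$. In either case the monadic tower stabilizes at or before $n$.

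The main obstacle is the bookkeeping of directions between $\CATradj$ and $\CATradj^\op$. In particular one must confirm that the decomposition of $G$ really yields a morphism $(\phi_0,\phi_\ast)$ in the arrow category with $\phi_\ast$ in the right orthogonal class. It must also be confirmed that "stabilizes at $n$" for the canonical predecomposition matches the tower becoming constant, which follows from \cref{cor:simple_criterion_of_cdec_stabilizing}.
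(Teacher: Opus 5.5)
Your proof is correct and takes the same approach as the paper: both work in $\CATradj^\op$ with $\bfE$ the strictly monadic functors and combine \cref{lem:coreflection_orthogonal_to_monadic} with \cref{lem:comparison_decnum_and_cdecnumII}. The only difference is bookkeeping (the paper treats the factorization as an $\bfE$-decomposition of $G$ itself whose last leg $R$ lies in $\rorth(\bfE)$, taking $\phi_0,\phi_\ast$ as identities, whereas you decompose $U_1\circ\cdots\circ U_n$ and put $R$ into $\phi_\ast$), and your separate treatment of $\abs{f_\bullet}=0$ covers the edge case $n=0$, where the lemma alone only gives the bound $1$.
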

\begin{proof}
    Consider the category $\CATradj^\op$ and the class $\bfE$ as in \cref{eg:monadic_decomposition}.
    By assumption, the morphism $G$ in $\CATradj^\op$ has the canonical $(\bfE,\omega)$-predecomposition.
    Suppose that $G$ can be decomposed into a coreflection followed by a composite of $n$ strictly monadic functors, which yields an $\bfE$-decomposition of $G$ by \cref{lem:coreflection_orthogonal_to_monadic}.
    Then, the statement follows from \cref{lem:comparison_decnum_and_cdecnumII} by taking $\phi_0$ and $\phi_\ast$ as the identities.
\end{proof}
\section{Generalized regularity and the decomposition numbers}\label{sec:generalized_regularity}
We will extend the notions of regular epimorphisms and strong epimorphisms, and investigate how the decomposition numbers behave with respect to this generalized notion of regular epimorphisms.
\subsection{A generalization of regular and strong epimorphisms}\label{subsec:generalized_regular_and_strong_epi}
\begin{definition}[$\epclass$-regular epimorphisms]
    Let $\epclass$ be a class of epimorphisms in a category $\C$.
    A morphism $q\colon A\arr B$ in $\C$ is called \emph{$\epclass$-regularly epic} if for any morphism $f\colon A\arr X$ satisfying the following condition $\kercond$, there uniquely exists a morphism $k\colon B\arr X$ such that $f=kq$.
    \begin{itemize}
        \labeleditem{($\mathrm{K}_{\epclass,q}$)}\label{kernel_condition}
            If we are given morphisms $\Gamma\arr(e)\Delta$ in $\epclass$ and $\Gamma\arr(u) A$ such that $qu$ factors through $e$, then $fu$ also factors through $e$.
            \begin{equation*}
                \begin{tikzcd}
                    \Gamma\ar[d,"e"']\ar[r,"u"] & A\ar[d,"q"']\ar[rdd,"f"] & \\
                    \Delta\ar[r,dotted]\ar[rrd,dotted] & B & \\
                    && X
                \end{tikzcd}
            \end{equation*}
    \end{itemize}
    The above condition $\kercond$ is called the \emph{$\epclass$-kernel condition (for $f$) associated with $q$}.
    We write $\Reg[\C](\epclass)$ for the class of all $\epclass$-regular epimorphisms in $\C$.
    When $\C$ is clear from context, we simply write $\Reg(\epclass)$.
\end{definition}

\begin{definition}[$\epclass$-strong epimorphisms]
    Let $\epclass$ be a class of epimorphisms in a category $\C$.
    Morphisms that belong to $\lorth\rorth(\epclass)$ are called \emph{$\epclass$-strong epimorphisms}.
    We write $\Stg[\C](\epclass)\coloneq\lorth\rorth(\epclass)$ for the class of all $\epclass$-strong epimorphisms in $\C$.
    When $\C$ is clear from context, we simply write $\Stg(\epclass)$.
\end{definition}

\begin{remark}
    It will be proved in \cref{prop:fundamental_prop_of_GREpi}\cref{prop:fundamental_prop_of_GREpi-epimorphicity} that $\epclass$-regular epimorphisms are always epic.
    In particular, there is no difference between the minimum decomposition numbers $\decnum[\Reg(\epclass)]{f}$ and the canonical decomposition numbers $\cdecnum[\Reg(\epclass)]{f}$ by \cref{thm:correspondence_decnum_and_cdecnum}.
    On the other hand, $\epclass$-strong epimorphisms are not epic in general unless the category has binary powers.
\end{remark}

\begin{example}[Ordinary regular epimorphisms]\label{eg:regular_epi}
    Let $\C$ be a category with binary copowers.
    Let $\epclass$ be the class of all codiagonals:
    \begin{equation*}
        \epclass\coloneq\{ \nabla_X\colon X+X\arr((\id,\id))[][3] X \}_{X\in\C}.
    \end{equation*}
    Then, a morphism $f$ in $\C$ satisfies the $\epclass$-kernel condition $\kercond$ associated with a morphism $q$, whose domain is the same as $f$, if and only if every parallel pair of morphisms coequalized by $q$ is also coequalized by $f$.
    Therefore, if the kernel pair of $q$ exists, the morphism $q$ is a $\epclass$-regular epimorphism if and only if it is a regular epimorphism in the usual sense.
    Moreover, the $\epclass$-strong epimorphisms coincide with the usual strong epimorphisms.
\end{example}

One can easily verify that the codiagonals in $\epclass$ can reduce to ones on objects from a generator:
\begin{proposition}\label{eg:regular_epi_with_generator}
    Let $\C$ be a category with binary copowers, kernel pairs, and a generator $G\subseteq\Ob\C$.
    Let $\epclass$ be the class of all codiagonals on objects from $G$:
    \begin{equation*}
        \epclass\coloneq\{ \nabla_X\colon X+X\arr((\id,\id))[][3] X \}_{X\in G}.
    \end{equation*}
    Then, a morphism is a $\epclass$-regular epimorphism if and only if it is a regular epimorphism; a morphism is a $\epclass$-strong epimorphism if and only if it is a strong epimorphism.
\end{proposition}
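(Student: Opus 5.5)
The plan is to compare the $\epclass$-kernel condition for the small class $\epclass_G=\{\nabla_X\}_{X\in G}$ with the one for the full class $\epclass_{\mathrm{all}}=\{\nabla_X\}_{X\in\C}$ from \cref{eg:regular_epi}. That example already identifies $\epclass_{\mathrm{all}}$-regular epimorphisms with regular epimorphisms (kernel pairs exist) and $\epclass_{\mathrm{all}}$-strong epimorphisms with strong epimorphisms. So it suffices to prove two things: first, that for any $q$ and $f$ with common domain, condition $\kercond[\epclass_G][q]$ is equivalent to $\kercond[\epclass_{\mathrm{all}}][q]$; second, that $\rorth(\epclass_G)=\rorth(\epclass_{\mathrm{all}})$.

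For the kernel conditions, one direction is trivial since $\epclass_G\subseteq\epclass_{\mathrm{all}}$. For the other, I use the translation in \cref{eg:regular_epi}: a morphism $u\colon X+X\arr A$ is a pair $u_1,u_2\colon X\arr A$, and $qu$ factors through $\nabla_X$ exactly when $qu_1=qu_2$. So the $\epclass_G$-condition says that every pair $u_1,u_2\colon X\arr A$ with $X\in G$ and $qu_1=qu_2$ satisfies $fu_1=fu_2$. Now take an arbitrary pair $a,b\colon Y\arr A$ with $qa=qb$. It factors through the kernel pair $p_1,p_2\colon K\arr A$ of $q$, so it is enough to show $fp_1=fp_2$. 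For any $x\colon X\arr K$ with $X\in G$, the pair $(p_1x,p_2x)$ is coequalized by $q$, hence by $f$. Thus $fp_1x=fp_2x$ for all such $x$, and since $G$ is a generator, $fp_1=fp_2$. This gives the equivalence, and hence $\Reg(\epclass_G)=\Reg(\epclass_{\mathrm{all}})$.

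For the strong epimorphisms, the condition $\nabla_X\orth m$ says the following: whenever $mu_1=mu_2$ for $u_1,u_2\colon X\arr A$, we have $u_1=u_2$. The diagonal is then forced to be $u_1$, and uniqueness of the diagonal holds because $\nabla_X$ is split epic. So $m\in\rorth(\epclass_{\mathrm{all}})$ iff $m$ is monic. I claim that $m\in\rorth(\epclass_G)$, meaning $m$ is injective on maps out of objects of $G$, already forces $m$ monic. Given $a,b\colon Y\arr A$ with $ma=mb$, for every $x\colon X\arr Y$ with $X\in G$ we get $max=mbx$, hence $ax=bx$, hence $a=b$ by the generator property. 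Therefore $\rorth(\epclass_G)=\rorth(\epclass_{\mathrm{all}})$, and applying $\lorth$ gives $\Stg(\epclass_G)=\Stg(\epclass_{\mathrm{all}})$, which is the class of strong epimorphisms.

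The only step needing care is the first one. There the generator cannot be applied directly to an arbitrary coequalized pair $(a,b)$; one must first pass through the kernel pair of $q$, which is exactly why kernel pairs are assumed. The rest is bookkeeping against \cref{eg:regular_epi}.
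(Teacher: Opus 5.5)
Your argument is correct and is the reduction the paper has in mind; the paper gives no written proof and only remarks that this is easily verified. Routing an arbitrary coequalized pair through the kernel pair of $q$ and then using the generator shows that the kernel conditions for $\epclass_G$ and for all codiagonals coincide, and \cref{eg:regular_epi} finishes, while for the strong half your direct check that $\rorth(\epclass_G)$ consists of the monomorphisms is fine (though it also follows at once from the regular half via $\rorth(\Reg(\epclass))=\rorth(\epclass)$ in \cref{cor:criterion_local_orth_for_GEEpi}).
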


\begin{example}[Surjections in $\Pos$]\label{eg:surjections_in_Pos}
    Let $\Pos$ be the category of posets.
    Let $\epclass\coloneq\{\iota\}$ be the class consisting of only the following surjective inclusion:
    \begin{equation*}
        \{0,1\}\arr(\iota)[hook][2]\{0<1\}\incat{\Pos}.
    \end{equation*}
    Then, the following are equivalent for a morphism in $\Pos$: it is $\epclass$-regularly epic; it is $\epclass$-strongly epic; it is epic; it is surjective.
\end{example}

\begin{example}[Strict localizations in $\Cat$]\label{eg:strict_localizations_in_Cat}
    Let $\Cat$ be the category of small categories.
    Let $\epclass\coloneq\{L\}$ be the class consisting of only the following identity-on-object functor:
    \begin{equation*}
        \{0\to 1\} \arr(L)[][2] \{0\cong 1\} \incat{\Cat}.
    \end{equation*}
    Then, a functor between small categories is $\epclass$-regularly epic if and only if it is a strict localization of some class of morphisms.
    Moreover, the $\epclass$-strong epimorphisms are precisely the iterated strict localizations, cf.\ \cref{eg:global_minimum_decnum}\cref{eg:global_minimum_decnum-localizations}.
\end{example}

\begin{example}[Anti-$k$-Lipschitz quotient maps in $\Met$]\label{eg:antiLipschitz_in_Met}
    Let $\Met$ be the category of metric spaces and non-expansive maps, and let $0<k<1$.
    For a positive real number $r$, let $\Omega_r\coloneq\{0,1\}$ be the two-element metric space with $d(0,1)\coloneq r$.
    Let $\epclass\coloneq\{ e_p \mid 0<p\in\mathbb{Q} \}$ be the class of the morphisms $e_p\colon \Omega_p\arr \Omega_{kp}$ in $\Met$ with $e_p(0)=0$ and $e_p(1)=1$.
    Then, a morphism in $\Met$ is $\epclass$-regularly epic if and only if it is an anti-$k$-Lipschitz quotient map in the sense of \cref{eg:global_minimum_decnum}\cref{eg:global_minimum_decnum-antiLipschitz}.
    The $\epclass$-strong epimorphisms are precisely the surjective non-expansive maps.
\end{example}

\begin{example}[Monotone quotient maps in $\Top$]
    Let $\Top$ be the category of topological spaces and continuous maps.
    Let $\epclass$ be the class of all unique continuous maps $K\arr 1$ from a connected space $K$ to the terminal space.
    Then, a morphism in $\Top$ is $\epclass$-regularly epic if and only if it is a quotient map such that every fiber is connected, i.e., a monotone quotient map.
\end{example}


Here, we summarize a few basic properties of the $\epclass$-kernel condition that one can easily observe.
\begin{lemma}
    Let $\epclass$ be a class of epimorphisms in a category $\C$.
    Consider morphisms $f,g,q,r,a$ of the following form:
    \begin{equation*}
        \begin{tikzcd}[small]
            A'\ar[r,"a"] & A\ar[d,"q"']\ar[r,"f"] & X\ar[r,"g"] & Y \\
            & B\ar[d,"r"'] && \\
            & C &&
        \end{tikzcd}\incat{\C}.
    \end{equation*}
    Then, the following statements hold:
    \begin{enumerate}
        \labeleditem{(A0)}\label{axiom:kercond-self}
            $q$ always satisfies $\kercond[\epclass][q]$.
        \labeleditem{(A1)}\label{axiom:kercond-f}
            If $f$ satisfies $\kercond[\epclass][q]$, then $gf$ satisfies $\kercond[\epclass][q]$.
            The converse also holds whenever $g\in\rorth(\epclass)$.
        \labeleditem{(A2)}\label{axiom:kercond-q}
            If $f$ satisfies $\kercond[\epclass][rq]$, then $f$ satisfies $\kercond[\epclass][q]$.
            The converse also holds whenever $r\in\rorth(\epclass)$.
        \labeleditem{(A3)}\label{axiom:kercond-a}
            If $f$ satisfies $\kercond[\epclass][q]$, then $fa$ satisfies $\kercond[\epclass][qa]$.
    \end{enumerate}
\end{lemma}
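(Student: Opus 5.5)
Each of the four statements is checked directly from the definition of the $\epclass$-kernel condition $\kercond[\epclass][q]$, using only that members of $\epclass$ are epic and the definition of $\rorth(\epclass)$. Throughout, fix $e\colon\Gamma\arr\Delta$ in $\epclass$ and $u\colon\Gamma\arr A$ (or $\Gamma\arr A'$ in \cref{axiom:kercond-a}).

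For \cref{axiom:kercond-self} there is nothing to do: the hypothesis ``$qu$ factors through $e$'' is literally the conclusion for $f=q$.

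For \cref{axiom:kercond-f}, suppose $qu$ factors through $e$. If $f$ satisfies $\kercond[\epclass][q]$, then $fu=ve$ for some $v$, and so $gfu=(gv)e$; this gives the forward direction. For the converse, assume $g\in\rorth(\epclass)$ and that $gfu=we$ for some $w$. Then the square with top $fu$, left $e$, right $g$ and bottom $w$ commutes. Since $e\orth g$, it has a diagonal $d\colon\Delta\arr X$ with $de=fu$, so $fu$ factors through $e$.

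For \cref{axiom:kercond-q}, the forward direction holds because if $qu=ve$ then $rqu=(rv)e$; thus the hypothesis of $\kercond[\epclass][q]$ implies that of $\kercond[\epclass][rq]$, and we apply the assumption on $f$. For the converse, assume $r\in\rorth(\epclass)$ and $rqu=we$. Orthogonality of $e$ against $r$ applied to the square ($qu$, $e$, $r$, $w$) produces $d$ with $de=qu$. Hence $qu$ factors through $e$, and $\kercond[\epclass][q]$ for $f$ gives that $fu$ factors through $e$, as required for $\kercond[\epclass][rq]$.

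For \cref{axiom:kercond-a}, take $u\colon\Gamma\arr A'$ with $qau$ factoring through $e$. Apply $\kercond[\epclass][q]$ to the morphism $au\colon\Gamma\arr A$; this shows that $fau$ factors through $e$.

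None of these steps is a real obstacle. The only point needing care is the converses in \cref{axiom:kercond-f} and \cref{axiom:kercond-q}. There we should note that orthogonality is used only to produce a diagonal filler, which is all that is needed. Uniqueness of the filler is automatic anyway, since $e$ is epic.
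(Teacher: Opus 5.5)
Your proposal is correct and matches the paper, which gives no written proof and simply records these as properties ``one can easily observe'' directly from the definition of the $\epclass$-kernel condition. Your direct unwinding of the definition is exactly the intended verification, with orthogonality against $g$ or $r$ used only to produce a diagonal filler in the converses of (A1) and (A2).
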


\begin{definition}\label{def:multiple_and_wide_pushout}
    A \emph{multiple pushout} in a category is a colimit for the diagram $(A, \Gamma_i, \Delta_i, p_i,u_i)_{i\in I}$ consisting of a (not necessarily small) family of spans $(p_i,u_i)_{i\in I}$ with a common codomain of the morphisms $u_i$.
    Note that if the index class $I$ is empty, the diagram still contains the object\nolinebreak{} $A$.
    \begin{equation*}
        \begin{tikzcd}[small]
            \Gamma_{i'}\ar[dd,"p_{i'}"']\ar[rrr,"u_{i'}"] &&& A \\
            {} & \Gamma_i\ar[dd,"p_i"]\ar[rru,"u_i"'] && \\
            \Delta_{i'} & {} && \\
            & \Delta_i &&
            \arrow[from=1-1,to=2-2,phantom,"\ddots"]
            \arrow[from=2-1,to=3-2,phantom,"\ddots"{above=-4}]
            \arrow[from=3-1,to=4-2,phantom,"\ddots"]
            \arrow[from=1-1,to=1-4,phantom,"\ddots"{pos=0.6,below=-6}]
        \end{tikzcd}
        \quad
        (i,i'\in I)
    \end{equation*}
    As a special case, when all the $u_i$ are identities, the corresponding multiple pushout is called a \emph{wide pushout}.
\end{definition}

We now show several elementary properties of generalized regular epimorphisms.
\begin{proposition}\label{prop:fundamental_prop_of_GREpi}
    Let $\epclass$ be a class of epimorphisms in a category $\C$.
    \begin{enumerate}
        \item\label{prop:fundamental_prop_of_GREpi-inclusions}
            $\epclass\subseteq\Reg[\C](\epclass)\subseteq\Stg[\C](\epclass)$ holds.
        \item\label{prop:fundamental_prop_of_GREpi-epimorphicity}
            $\Reg[\C](\epclass)\subseteq\Epi[\C]$ holds.
            If $\C$ has binary powers, $\Stg[\C](\epclass)\subseteq\Epi[\C]$ also holds.
        \item\label{prop:fundamental_prop_of_GREpi-multiple_pushout}
            The class of $\epclass$-regular epimorphisms is stable under existing (not necessarily small) multiple pushout.
            That is, if we are given a multiple pushout of the following diagram $(A,A_i,B_i,q_i,a_i)_i$, $q_i\in\Reg[\C](\epclass)$ for all $i$ implies $q\in\Reg[\C](\epclass)$.
            \begin{equation*}
                \begin{tikzcd}
                    A_i\ar[d,"q_i"']\ar[r,"a_i"] & A\ar[d,"q"] \\
                    B_i\ar[r,"b_i"'] & B
                \end{tikzcd}\incat{\C}
            \end{equation*}
        \item\label{prop:fundamental_prop_of_GREpi-closure}
           $\Reg[\C](\epclass)$ is the smallest class of morphisms satisfying the following:
            \begin{itemize}
                \item
                    It contains $\epclass$;
                \item
                    It is stable under existing (not necessarily small) multiple pushout.
            \end{itemize}
    \end{enumerate}
\end{proposition}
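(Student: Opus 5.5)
I will prove the four items in order, each by direct unwinding of the $\epclass$-kernel condition $\kercond$, using only \cref{axiom:kercond-self}--\cref{axiom:kercond-a} and the fact that every member of $\epclass$ is epic.

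For \cref{prop:fundamental_prop_of_GREpi-epimorphicity}, let $q\colon A\arr B$ be $\epclass$-regular and suppose $k_1q=k_2q$. The morphism $f\coloneq k_1q$ satisfies $\kercond[\epclass][q]$ by \cref{axiom:kercond-self} and \cref{axiom:kercond-f}. By the uniqueness clause in the definition, the two factorizations $k_1$ and $k_2$ of $f$ through $q$ must coincide. Hence $q$ is epic. For the statement about $\Stg(\epclass)$ when $\C$ has binary powers, let $e\in\lorth\rorth(\epclass)$ and suppose $k_1e=k_2e$ with $k_i\colon B\arr Y$. Then $\langle k_1,k_2\rangle$ and the diagonal $\Delta_Y\colon Y\arr Y\times Y$ form a commutative square. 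The diagonal is a (split) monomorphism, and every monomorphism lies in $\rorth(\epclass)$ because members of $\epclass$ are epic: the filler is unique by epicity, and it exists because in a square with a monomorphic right side the top map already factors through the left one. So $e\orth\Delta_Y$ yields a diagonal filler, which forces $k_1=k_2$.

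For \cref{prop:fundamental_prop_of_GREpi-inclusions}, let $e\colon\Gamma\arr\Delta$ be in $\epclass$ and let $f$ satisfy $\kercond[\epclass][e]$. Testing the condition with $u=\id_\Gamma$ shows that $f$ factors through $e$, and the factorization is unique because $e$ is epic. So $\epclass\subseteq\Reg(\epclass)$. Next let $q$ be $\epclass$-regular and $m\in\rorth(\epclass)$, and take a square $mv=wq$ with $v\colon A\arr X$. By the converse half of \cref{axiom:kercond-f}, $v$ satisfies $\kercond[\epclass][q]$ because $wq=mv$ does. Therefore $v=kq$ for a unique $k$, and $mk=w$ follows since $q$ is epic. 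This $k$ is the unique filler, so $q\in\lorth\rorth(\epclass)$.

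For \cref{prop:fundamental_prop_of_GREpi-multiple_pushout}, let $f\colon A\arr X$ satisfy $\kercond[\epclass][q]$. By \cref{axiom:kercond-a}, $fa_i$ satisfies $\kercond[\epclass][qa_i]=\kercond[\epclass][b_iq_i]$. By the first half of \cref{axiom:kercond-q}, it then satisfies $\kercond[\epclass][q_i]$, so $fa_i=k_iq_i$ for a unique $k_i$. The family $(f,k_i)_i$ is a cocone over the multiple-pushout diagram, which induces $k\colon B\arr X$ with $kq=f$. Uniqueness of $k$ holds because the colimit injections $q$ and $b_i$ are jointly epic and each $q_i$ is epic.

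For \cref{prop:fundamental_prop_of_GREpi-closure}, the class $\Reg(\epclass)$ has the two listed properties by the previous items. For minimality, given an $\epclass$-regular $q\colon A\arr B$, I consider the multiple pushout of all spans $(e,u)$ with $e\in\epclass$ and $u\colon\Gamma\arr A$ such that $qu$ factors through $e$. This index class may be large, and I expect the main obstacle to be making sense of this colimit when it is not assumed to exist. My first attempt is to prove directly that $q$ together with the induced maps $\Delta\arr B$ is itself a colimit cocone. Given any cocone $(f,(g_{e,u}))$, the condition $\kercond[\epclass][q]$ holds for $f$ by construction, so $f=kq$ for a unique $k$. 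The legs $g_{e,u}$ are then forced to agree with $k$ composed with the factorizations of $qu$ through $e$, since $e$ is epic. Hence $q$ is a multiple pushout of members of $\epclass$, so it lies in every class with the two properties.
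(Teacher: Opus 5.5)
Apart from one step, your argument is the paper's. You establish (ii) first so that epicity of $q$ is available in (i). The converse half of \ref{axiom:kercond-f} packages exactly the orthogonality argument the paper writes out. Item (iii) uses \ref{axiom:kercond-a} and \ref{axiom:kercond-q} in the same way. In (iv) you show directly that an $\epclass$-regular $q$ is itself the colimit of all spans $(e,u)$ with $qu$ factoring through $e$, which also settles your worry about whether that multiple pushout exists.

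The faulty step is in the second half of (ii). The claim that every monomorphism lies in $\rorth(\epclass)$ is false, and the reason you give (``the top map already factors through the left one'') does not hold for a general monomorphism.

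For example, take the functor $L\colon\{0\to 1\}\to\{0\cong 1\}$ of \cref{eg:strict_localizations_in_Cat}. It is injective on objects and faithful, hence monic in $\Cat$. But for $\epclass=\{L\}$, the commutative square with $L$ as both vertical sides and identities as horizontal sides has no diagonal filler, since a filler would be an inverse of $L$.

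What is true, and all you need, is that split monomorphisms lie in $\rorth(\epclass)$. Suppose $rm=\id$ and $mv=we$ with $e\in\epclass$. Then $h\coloneq rw$ satisfies $he=rmv=v$, and $mhe=mv=we$ gives $mh=w$ because $e$ is epic. This is \cref{lem:leftclass_epic}, which the paper invokes at this point. Since $\Delta_Y$ is split by a product projection, your diagonal argument then goes through as written.
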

\begin{proof}\quad
    \begin{enumerate}
        \item
            We first show $\epclass\subseteq\Reg(\epclass)$.
            Let $q\colon A\arr B$ be a morphism in $\epclass$.
            Let $f\colon A\arr X$ be an arbitrary morphism satisfying the condition $\kercond$.
            Since $q$ clearly factors through itself, the condition $\kercond$ implies that the morphism $f$ also factors through $q$ by a morphism $k$ below:
            \begin{equation*}
                \begin{tikzcd}
                    A\ar[d,"q"']\ar[r,equal] & A\ar[d,"q"']\ar[rdd,"f"] & \\
                    B\ar[r,equal]\ar[rrd,dotted,"k"'] & B & \\
                    && X
                \end{tikzcd}\incat{\C}.
            \end{equation*}
            Since $q$ is epic, the morphism $k$ above is unique, which proves $q\in\Reg(\epclass)$.

            We next show the second inclusion $\Reg(\epclass)\subseteq\Stg(\epclass)$.
            Take $q\colon A\arr B$ from $\Reg(\epclass)$.
            Suppose that we are given the following commutative square with $m\in\rorth(\epclass)$:
            \begin{equation}\label{eq:commutative_square_q_m}
                \begin{tikzcd}
                    A\ar[d,"q"']\ar[r,"f"] & X\ar[d,"m"] \\
                    B\ar[r,"g"'] & Y
                \end{tikzcd}\incat{\C}.
            \end{equation}
            Take arbitrary morphisms $u\colon\Gamma\arr A$ and $e\colon\Gamma\arr\Delta$ such that $e\in\epclass$ and the composite $qu$ factors through $e$.
            Then, the orthogonal property of $m$ induces a (unique) diagonal filler of the following commutative square:
            \begin{equation*}
                \begin{tikzcd}
                    \Gamma\ar[d,"e"']\ar[r,"u"] & A\ar[d,"q"'{pos=0.2}]\ar[r,"f"] & X\ar[d,"m"] \\
                    \Delta\ar[r] & B\ar[r,"g"'] & Y
                    \arrow[from=2-1,to=1-3,dotted,crossing over]
                \end{tikzcd}\incat{\C},
            \end{equation*}
            which implies the morphism $f$ satisfies $\kercond$.
            Then, by $q\in\Reg(\epclass)$, there exists a (unique) morphism $k$ such that $f=kq$.
            Since $q$ is epic, $k$ becomes a unique diagonal filler for \cref{eq:commutative_square_q_m}, which proves $q\in\lorth\rorth(\epclass)=\Stg(\epclass)$.
        \item
            To show $\Reg(\epclass)\subseteq\Epi$, take a morphism $q\colon A\arr B$ from $\Reg(\epclass)$.
            Let $f,g\colon B\parr C$ be a parallel pair of morphisms that $q$ equalizes.
            Then, the composite $fq(=gq)$ satisfies $\kercond$ by \cref{axiom:kercond-self,axiom:kercond-f}.
            Since $q$ is $\epclass$-regularly epic, the composite $fq(=gq)$ uniquely factors through $q$, hence $f=g$.
            This proves that $q$ is epic.
            The latter statement follows from \cref{lem:leftclass_epic}.
        \item
            Suppose $q_i\in\Reg(\epclass)$ for all $i$.
            Let $f\colon A\arr X$ be a morphism satisfying the condition $\kercond[\epclass][q]$.
            By \cref{prop:fundamental_prop_of_GREpi-epimorphicity}, all $q_i$ are epic, hence so is $q$.
            Thus, a factorization of $f$ through $q$ is unique if it exists.
            By \ref{axiom:kercond-a}, each composite $fa_i$ satisfies $\kercond[\epclass][qa_i]$.
            Then, as follows from $qa_i=b_iq_i$ and \ref{axiom:kercond-q}, $fa_i$ satisfies $\kercond[\epclass][q_i]$. 
            Since $q_i\in\Reg(\epclass)$, each composite $fa_i$ (uniquely) factors through $q_i$.
            Then, the universal property of the multiple pushout implies that $f$ factors through $q$, which proves $q\in\Reg(\epclass)$.
        \item
            Let $q\colon A\arr B$ be a $\epclass$-regular epimorphism.
            Consider all data $(\Gamma_i,\Delta_i,p_i,u_i,v_i)$ of the following form such that $p_i$ belongs to $\epclass$ and $qu_i=v_ip_i$, and suppose that they are indexed by $i\in I$:
            \begin{equation*}
                \begin{tikzcd}
                    \Gamma_i\ar[d,"p_i"']\ar[r,"u_i"] & A\ar[d,"q"] \\
                    \Delta_i\ar[r,"v_i"'] & B
                \end{tikzcd}\incat{\C}.
            \end{equation*}
            Then, $q$ being $\epclass$-regularly epic implies that $B$ is the multiple pushout of the diagram above.
            Indeed, since $p_i$ is epic, a cocone over the diagram $(A,\Gamma_i,\Delta_i,p_i,u_i)_i$ is precisely a morphism from $A$ satisfying the condition $\kercond[\epclass][q]$, and hence $q$ exhibits $B$ as the multiple pushout if and only if $q$ is $\epclass$-regularly epic.
            Therefore, $q$ belongs to the closure of $\epclass$ under existing multiple pushouts, which finishes the proof.\qedhere
    \end{enumerate}
\end{proof}

\begin{remark}
    According to \cite{Tholen1983factorizations}, in an arbitrary co-wellpowered category with small colimits and products, an iso-closed class $\bfE$ of epimorphisms is stable under existing multiple pushout if and only if the category admits locally orthogonal $\bfE$-factorizations.
    In particular, such a class of morphisms can always be regarded as a class of generalized regular epimorphisms by \cref{prop:fundamental_prop_of_GREpi}\cref{prop:fundamental_prop_of_GREpi-closure}.
\end{remark}

\begin{corollary}\label{cor:criterion_local_orth_for_GEEpi}
    Let $\epclass$ be a class of epimorphisms in a category $\C$.
    Then, for a composable pair $(e,f)$ of morphisms in $\C$, the condition $\Reg[\C](\epclass)\orth[e] f$ is equivalent to $\epclass\orth[e] f$.
    In particular, $\rorth(\Reg[\C](\epclass))=\rorth(\epclass)$.
\end{corollary}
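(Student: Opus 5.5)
The forward implication is immediate. By \cref{prop:fundamental_prop_of_GREpi}\cref{prop:fundamental_prop_of_GREpi-inclusions} we have $\epclass\subseteq\Reg(\epclass)$, so $\Reg(\epclass)\orth[e] f$ restricts to $\epclass\orth[e] f$.

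For the converse, I read the definition of $\orth[e]$ from \cref{note:local_orthogonality} in the form it is used in the proofs of \cref{lem:termination_canonical_predecomp} and \cref{lem:comparison_decnum_and_cdecnumII}. For $e\colon A\arr B$ and $f\colon B\arr X$, the relation $d\orth[e] f$ with $d\colon\Gamma\arr\Delta$ means the following. Whenever $u\colon\Gamma\arr A$ and $v\colon\Delta\arr X$ satisfy $f\circ e\circ u=v\circ d$, there is a unique $k\colon\Delta\arr B$ with $k d=eu$ and $fk=v$. In words, the top edge of the square is required to factor through $e$.

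Assume $\epclass\orth[e] f$, and take $q\colon\Gamma\arr\Delta$ in $\Reg(\epclass)$ together with $u,v$ such that $feu=vq$. The plan is to show that the morphism $eu\colon\Gamma\arr B$ satisfies the kernel condition $\kercond[\epclass][q]$. So let $p\colon\Gamma'\arr\Delta'$ be in $\epclass$ and $w\colon\Gamma'\arr\Gamma$, and suppose $qw=tp$ for some $t$. Then $f\circ e\circ(uw)=vqw=(vt)\circ p$. This is a square from $p$ to $f$ whose top edge factors through $e$. By $\epclass\orth[e] f$ there is a diagonal $d'\colon\Delta'\arr B$ with $d'p=euw$, so $(eu)w$ factors through $p$.

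Since $q$ is $\epclass$-regularly epic, we get a unique $k\colon\Delta\arr B$ with $kq=eu$. Then $fkq=feu=vq$, and because $q$ is epic (\cref{prop:fundamental_prop_of_GREpi}\cref{prop:fundamental_prop_of_GREpi-epimorphicity}) this gives $fk=v$. Uniqueness of the diagonal also follows from $q$ being epic. This proves $\Reg(\epclass)\orth[e] f$. The only real point is noticing that local orthogonality against $\epclass$ is exactly what supplies the kernel condition; the rest is bookkeeping.

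For the final assertion, specialize to $e=\id_B$. Local orthogonality relative to an identity is ordinary orthogonality, so $\Reg(\epclass)\orth f$ if and only if $\epclass\orth f$. Hence $\rorth(\Reg(\epclass))=\rorth(\epclass)$.
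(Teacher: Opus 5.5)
Your proof is correct, but it takes a different route from the paper's.

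\textbf{The paper's route.} It argues by generation. The class of all $p$ with $p\orth[e] f$ is stable under (possibly large) multiple pushouts: the diagonals for the individual legs glue, by the universal property of the multiple pushout, to a diagonal for the pushed-out morphism. This class contains $\epclass$, so the closure characterization \cref{prop:fundamental_prop_of_GREpi}\cref{prop:fundamental_prop_of_GREpi-closure} forces it to contain $\Reg(\epclass)$.

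\textbf{Your route.} You work directly from the definition of $\epclass$-regular epimorphism. Local orthogonality against $\epclass$ is exactly what makes $eu$ satisfy $\kercond[\epclass][q]$. Epimorphicity of $q$ (\cref{prop:fundamental_prop_of_GREpi}\cref{prop:fundamental_prop_of_GREpi-epimorphicity}) then gives both $fk=v$ and uniqueness. This is the same mechanism the paper uses in \cref{prop:fundamental_prop_of_GREpi}\cref{prop:fundamental_prop_of_GREpi-inclusions} to prove $\Reg(\epclass)\subseteq\Stg(\epclass)$, with orthogonality replaced by local orthogonality.

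\textbf{What each buys.} The paper's argument is a one-line consequence of the closure property, and it applies verbatim to any condition stable under multiple pushouts. However, it rests on \cref{prop:fundamental_prop_of_GREpi}\cref{prop:fundamental_prop_of_GREpi-closure}, whose proof needs the members of $\epclass$ to be epic in order to present $q$ as a multiple pushout. Your argument is self-contained and avoids the multiple-pushout machinery. It needs only that $\epclass$-regular epimorphisms are epic, and it makes explicit why local orthogonality against $\epclass$ is enough.
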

\begin{proof}
    The class of all morphisms $p$ such that $p\orth[e] f$ is clearly stable under multiple pushout.
    Combining this with \cref{prop:fundamental_prop_of_GREpi}\cref{prop:fundamental_prop_of_GREpi-closure}, we can observe that $\epclass\orth[e] f$ implies $\Reg[\C](\epclass)\orth[e] f$. 
    The converse implication is trivial.
\end{proof}

The following proposition is a slight generalization from results in \cite[Section 2]{Kelly1969mono}:
\begin{proposition}\label{prop:composable_pair_GREpi}
    Let $\epclass$ be a class of epimorphisms in a category $\C$.
    Consider a composable pair of morphisms
    \begin{equation*}
        \begin{tikzcd}[small]
            A\ar[dr,"p"']\ar[rr,"r"] && C \\
            & B\ar[ru,"q"'] &
        \end{tikzcd}\incat{\C}.
    \end{equation*}
    \begin{enumerate}
        \item\label{prop:composable_pair_GREpi-cancellability}
            If $p$ is epic and $r$ is $\epclass$-regularly epic, then $q$ is $\epclass$-regularly epic.
        \item\label{prop:composable_pair_GREpi-composability}
            If $p$ is split epic and $\epclass$-regularly epic and $q$ is $\epclass$-regularly epic, then $r$ is $\epclass$-regularly epic.
    \end{enumerate}
\end{proposition}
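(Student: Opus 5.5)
For \cref{prop:composable_pair_GREpi-cancellability}, I will verify the definition of $\epclass$-regular epimorphism for $q$ directly, using the kernel-condition axioms. Let $g\colon B\arr X$ satisfy $\kercond[\epclass][q]$. By \ref{axiom:kercond-a} applied with $a\coloneq p$, the composite $gp$ satisfies $\kercond[\epclass][qp]$, that is, $\kercond[\epclass][r]$. Since $r$ is $\epclass$-regularly epic, there is a unique $k\colon C\arr X$ with $gp=kr=kqp$. Because $p$ is epic, this gives $g=kq$. Uniqueness of $k$ follows because $q$ is epic: it is a right factor of the epimorphism $r$ (\cref{prop:fundamental_prop_of_GREpi}\cref{prop:fundamental_prop_of_GREpi-epimorphicity}). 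So $q\in\Reg(\epclass)$.

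For \cref{prop:composable_pair_GREpi-composability}, let $s\colon B\arr A$ be a section with $ps=\id_B$, and let $f\colon A\arr X$ satisfy $\kercond[\epclass][r]$. The plan is to factor $f$ first through $p$ and then the result through $q$.

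\textbf{Step 1.} By \ref{axiom:kercond-q} with $r=qp$, $f$ satisfies $\kercond[\epclass][p]$. Since $p\in\Reg(\epclass)$, we get $f=hp$ for a unique $h\colon B\arr X$; in fact $h=fs$.

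\textbf{Step 2.} It remains to show that $h$ satisfies $\kercond[\epclass][q]$. Take $e\colon\Gamma\arr\Delta$ in $\epclass$ and $u\colon\Gamma\arr B$ such that $qu=ve$ for some $v$. Set $u'\coloneq su\colon\Gamma\arr A$. Then
\[
ru'=qpsu=qu=ve,
\]
so $ru'$ factors through $e$. By $\kercond[\epclass][r]$ for $f$, the composite $fu'=hpsu=hu$ factors through $e$, which is exactly what is needed.

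\textbf{Step 3.} Since $q\in\Reg(\epclass)$, we have $h=kq$ for some $k$, hence $f=kqp=kr$. Uniqueness of $k$ holds because $r$ is a composite of epimorphisms and hence epic.

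I expect the only real obstacle to be Step 2, transporting the kernel condition from $r$ to $q$. This is exactly where the splitting of $p$ is used: the section $s$ lifts test maps $u$ into $B$ back to $A$. Without the splitting there is no way to lift $u$, which is why general composability fails and is the source of nontrivial decomposition numbers.
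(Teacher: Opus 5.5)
Your proof is correct and follows the paper's argument essentially verbatim. Part (i) is the same use of (A3) with $a=p$ followed by cancelling the epimorphism $p$, and in part (ii) your Step~2 is just (A3) applied with $a=s$ and unfolded by hand, using $fs=h$ and $rs=q$ exactly as the paper does; your explicit uniqueness remarks (that $q$ and $r=qp$ are epic) only fill in a detail the paper leaves implicit.
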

\begin{proof}\quad
    \begin{enumerate}
        \item
            Suppose that $p$ is epic and $r$ is $\epclass$-regularly epic.
            Let $f\colon B\arr\cdot$ be a morphism satisfying the condition $\kercond[\epclass][q]$.
            Then, \ref{axiom:kercond-a} implies that the composite $fp$ satisfies $\kercond[\epclass][r]$.
            Since $r$ is $\epclass$-regularly epic, $fp$ uniquely factors through $r=qp$.
            Then, since $p$ is epic, $f$ factors through $q$, which proves that $q$ is $\epclass$-regularly epic.
        \item
            Let $s\colon B\arr A$ be a section of $p$, and suppose that both $p$ and $q$ are $\epclass$-regularly epic.
            Let $f\colon A\arr X$ be a morphism satisfying the condition $\kercond[\epclass][r]$.
            Then, by \ref{axiom:kercond-q}, $f$ also satisfies $\kercond[\epclass][p]$.
            Since $p$ is $\epclass$-regularly epic, there is a unique morphism $g\colon B\arr X$ such that $f=gp$.
            Since $f$ satisfies $\kercond[\epclass][r]$, \ref{axiom:kercond-a} now implies that $fs$ satisfies $\kercond[\epclass][rs]$.
            By $fs=gps=g$ and $rs=qps=q$, this is equivalent to $g$ satisfying $\kercond[\epclass][q]$.
            Since $q$ is $\epclass$-regularly epic, $g$ uniquely factors through $q$.
            Consequently, we have shown that $f$ uniquely factors through $r=qp$, and $r$ is $\epclass$-regularly epic.\qedhere
    \end{enumerate}
\end{proof}

\begin{theorem}\label{thm:GREpi_admissibility}
    Let $\epclass$ be a small class of epimorphisms in a locally small category $\C$ with small colimits.
    Then, the category $\C$ is $\Reg[\C](\epclass)$-admissible.
\end{theorem}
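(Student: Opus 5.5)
We need to check two things: that $\Reg(\epclass)$ is iso-closed and that $\C$ admits locally orthogonal $\Reg(\epclass)$-factorizations; the colimit condition on $\Reg(\epclass)$-transfinite sequences of small length is immediate from cocompleteness. Iso-closedness is easy. An isomorphism $q$ is $\epclass$-regularly epic, since any $f$ factors uniquely as $(fq^{-1})q$. Closure under composition with isomorphisms follows from \cref{prop:fundamental_prop_of_GREpi}\cref{prop:fundamental_prop_of_GREpi-multiple_pushout}, because a morphism isomorphic in $\C^\to$ to a pushout of $q$ along an isomorphism is again a pushout of $q$; alternatively one can apply \cref{prop:composable_pair_GREpi} directly.

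For the factorization, fix $f\colon A\arr X$. I would construct the canonical step directly as a multiple pushout. Consider all commutative squares $(u_i\colon\Gamma_i\arr A,\ p_i\colon\Gamma_i\arr\Delta_i,\ v_i\colon\Delta_i\arr X)$ with $p_i\in\epclass$ and $fu_i=v_ip_i$. Since $\epclass$ is small and $\C$ is locally small, these are indexed by a small set $I$. Let $q\colon A\arr B$ be the multiple pushout of $(A,\Gamma_i,\Delta_i,p_i,u_i)_{i\in I}$, which exists because $\C$ has small colimits. The $v_i$ together with $f$ form a cocone, so they induce $g\colon B\arr X$ with $gq=f$. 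By \cref{prop:fundamental_prop_of_GREpi}\cref{prop:fundamental_prop_of_GREpi-inclusions,prop:fundamental_prop_of_GREpi-multiple_pushout}, $q$ lies in $\Reg(\epclass)$, being a multiple pushout of members of $\epclass\subseteq\Reg(\epclass)$.

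It remains to check the local orthogonality $\Reg(\epclass)\orth[q] g$. By \cref{cor:criterion_local_orth_for_GEEpi}, it suffices to prove $\epclass\orth[q] g$. Following the conventions of \cref{note:local_orthogonality}, this means: for every $e\colon\Gamma\arr\Delta$ in $\epclass$ and every commutative square $g\circ w=v\circ e$ with $w\colon\Gamma\arr B$ of the form $w=qu$ for some $u\colon\Gamma\arr A$, there is a unique diagonal $\Delta\arr B$. (I would double-check the exact shape of the local orthogonality in the appendix, namely whether the top map is required to factor through $q$.) Given such data, $fu=gqu=ve$, so $(u,e,v)$ is one of the indexed squares, say index $i$. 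The coprojection $b_i\colon\Delta_i\arr B$ then satisfies $b_ie=qu$ and $gb_i=v$, because $g$ was induced from the $v_i$. Uniqueness follows since $e$ is epic.

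The main obstacle I expect is matching this construction with the precise definition of locally orthogonal factorization in \cref{def:locally_orthogonal_factorization}. If that definition requires the top edge to be an arbitrary morphism into $B$ rather than one factoring through $q$, the one-step construction above may fail, and I would then iterate it transfinitely, forming the chain of such multiple pushouts and taking its colimit. Stabilization would need a smallness argument, such as each $\Gamma$ being presentable or a co-wellpoweredness bound. So I would first verify that the definition only quantifies over squares whose top edge factors through $q$, which is the local version matching the ``$\bfE\orth[A_{\alpha,\alpha+1}] f_{\alpha+1}$'' usage and is what makes the one-step construction work.
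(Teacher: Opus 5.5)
Your proposal is correct and is essentially the paper's proof: take the multiple pushout $q$ of all $p_i\in\epclass$ along all $u_i$ with $fu_i$ factoring through $p_i$, get $q\in\Reg(\epclass)$ from \cref{prop:fundamental_prop_of_GREpi}, reduce $\Reg(\epclass)\orth[q] g$ to $\epclass\orth[q] g$ via \cref{cor:criterion_local_orth_for_GEEpi}, and use the coprojection as the filler, which is unique because $e$ is epic. Your reading of local orthogonality matches the definition in \cref{sec:local_orthogonality}, where $p\orth[e] m$ quantifies only over top edges of the form $eu$, so the one-step construction suffices and no transfinite iteration is needed.
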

\begin{proof}
    It suffices to show that the category $\C$ admits locally orthogonal $\Reg[\C](\epclass)$-factorizations.
    Let $f\colon A\arr X$ be an arbitrary morphism in $\C$.
    Consider all data $(\Gamma_i,\Delta_i,p_i,u_i)$ of the following form such that $p_i$ belongs to $\epclass$ and $fu_i$ factors through $p_i$, and suppose that they are indexed by $i\in I$.
    \begin{equation*}
        \begin{tikzcd}
            \Gamma_i\ar[d,"p_i"']\ar[r,"u_i"] & A\ar[r,"f"] & X \\
            \Delta_i\ar[rru,dotted]
        \end{tikzcd}\incat{\C}
    \end{equation*}
    Since $\epclass$ is small and $\C$ is locally small, the index class $I$ is small.
    Thus, we can take the multiple pushout $(B,q,\iota_i)_i$ of the diagram $(A,\Gamma_i,\Delta_i,p_i,u_i)_i$, and we obtain a unique morphism $g$ by the universal property.
    \begin{equation*}
        \begin{tikzcd}[scriptsize]
            \Gamma_{i'}\ar[dd,"p_{i'}"']\ar[rrr,"u_{i'}"] &&& A\ar[dd,"q"]\ar[rrrr,"f"] &&&& X \\
            {} & \Gamma_i\ar[rru,"u_i"'] &&&&&& \\
            \Delta_{i'}\ar[rrr,"\iota_{i'}"{pos=0.7}] & {} && B\ar[rrrruu,dotted,"g"'] &&&& \\
            & \Delta_i\ar[rru,"\iota_i"'] &&&&&&
            \arrow[from=2-2,to=4-2,crossing over,"p_i"{pos=0.2}]
            \arrow[from=1-1,to=2-2,phantom,"\ddots"]
            \arrow[from=2-1,to=3-2,phantom,"\ddots"{above=-2}]
            \arrow[from=3-1,to=4-2,phantom,"\ddots"]
            \arrow[from=1-1,to=1-4,phantom,"\ddots"{pos=0.6,below=-4}]
            \arrow[from=3-1,to=3-4,phantom,"\ddots"{pos=0.6,below=-4}]
        \end{tikzcd}\incat{\C}
    \end{equation*}
    We now have $p_i\in\epclass\subseteq\Reg(\epclass)$ by \cref{prop:fundamental_prop_of_GREpi}\cref{prop:fundamental_prop_of_GREpi-inclusions}.
    Then, \cref{prop:fundamental_prop_of_GREpi}\cref{prop:fundamental_prop_of_GREpi-multiple_pushout} implies $q\in\Reg(\epclass)$.

    It remains to show $\Reg(\epclass)\orth[q]g$, but this is equivalent to $\epclass\orth[q]g$ by \cref{cor:criterion_local_orth_for_GEEpi}, which we will prove below.
    Suppose that we are given the following data $(\Gamma,\Delta,p,u,v)$ with $p\in\epclass$ and $gqu=vp$:
    \begin{equation}\label{eq:local_orth_q_g}
        \begin{tikzcd}
            \Gamma\ar[d,"p"']\ar[r,"u"] & A\ar[r,"q"] & B\ar[d,"g"] \\
            \Delta\ar[rr,"v"'] && X
        \end{tikzcd}\incat{\C}.
    \end{equation}
    Since $gq=f$, the data $(\Gamma,\Delta,p,u)$ is already indexed as $(\Gamma_{i_0},\Delta_{i_0},p_{i_0},u_{i_0})$ for some $i_0\in I$.
    Then, since $p\in\epclass$ is epic, the copojection $\iota_{i_0}$ becomes a unique diagonal filler for the square \cref{eq:local_orth_q_g}, which shows $\epclass\orth[q]g$.
\end{proof}

\begin{remark}\label{rem:canonical_predecomp_for_GRegEpi}
    From the proof of \cref{thm:GREpi_admissibility} above, we obtain a concrete construction of the canonical predecomposition in the case of generalized regular epimorphisms, which particularly clarifies why the canonical regular predecomposition is given by iteratively taking the coequalizers of the kernel pairs.
\end{remark}

\begin{example}\label{eg:canonical_predecomposition_for_strict_localizations}
    Let $\bfE$ be the class of all strict localizations in the category $\Cat$ of small categories.
    An $\bfE$-predecomposition $(\A_\bullet,F_\bullet,\X)$ in $\Cat$ is canonical if and only if for every small ordinal $\alpha$, the functor $\A_{\alpha,\alpha+1}$ is the universal one from $\A_\alpha$ that inverts all morphisms $f$ in $\A_\alpha$ such that $F_\alpha(f)$ is an isomorphism in $\X$.
\end{example}

\subsection{The decomposition numbers and adjunctions}\label{subsec:decnum_and_adj}
In this subsection, we investigate the relationship between adjunctions and the decomposition numbers arising from the generalized regular epimorphisms.
Under certain conditions, we show that for a given right adjoint, the global decomposition number of the domain category can be bounded from above by that of the codomain up to some difference.
In particular, \cref{cor:adj_and_decnum_regularepi} will play an important role in \cref{sec:for_models_of_dependent_algebraic_theories} later.
\begin{lemma}\label{lem:counit_cotaut}
    Let $F\colon\C\adjl\D\lon G$ be an adjunction whose counit $\epsilon$ is $F\epclass$-regularly\footnote{Here, $F\epclass$ denotes the class of morphisms in $\D$ obtained by applying $F$ to all morphisms in $\epclass$.} epic componentwise for a class $\epclass$ of epimorphisms in $\C$.
    Let $f\colon X\arr Y$ be a morphism in $\D$ such that $GFGf$ is epic in $\C$.
    Then, the naturality square of the counit at $f$ is a pushout in $\D$.
    \begin{equation*}
        \begin{tikzcd}
            FGX\ar[d,"\epsilon_X"']\ar[r,"FGf"] & FGY\ar[d,"\epsilon_Y"] \\
            X\ar[r,"f"'] & Y\pushoutcorner
        \end{tikzcd}\incat{\D}
    \end{equation*}
\end{lemma}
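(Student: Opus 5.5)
The plan is to verify the universal property of the pushout directly. Let $a\colon FGY\arr Z$ and $b\colon X\arr Z$ satisfy $a\circ FGf=b\circ\epsilon_X$. We must produce a unique $k\colon Y\arr Z$ with $k\epsilon_Y=a$ and $kf=b$.

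\emph{Uniqueness.} Each $e\in\epclass$ is epic and $F$ is a left adjoint, so $F$ preserves epimorphisms. Hence $F\epclass$ is a class of epimorphisms, and \cref{prop:fundamental_prop_of_GREpi}\cref{prop:fundamental_prop_of_GREpi-epimorphicity} shows that $\epsilon_Y$ is epic. This gives uniqueness of $k$.

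\emph{Reduction of existence.} Since $\epsilon_Y\in\Reg[\D](F\epclass)$, it suffices to show that $a$ satisfies the kernel condition $\kercond[F\epclass][\epsilon_Y]$. Then $a=k\epsilon_Y$ for some $k$. The remaining equation $kf=b$ follows because $\epsilon_X$ is epic: indeed $kf\epsilon_X=k\epsilon_Y FGf=aFGf=b\epsilon_X$.

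\emph{Verifying the kernel condition.} Take $e\colon\Gamma\arr\Delta$ in $\epclass$, a morphism $u\colon F\Gamma\arr FGY$, and $v\colon F\Delta\arr Y$ with $\epsilon_Y u=v\circ Fe$. Everything is transposed across the adjunction. Write $\bar u\colon\Gamma\arr GFGY$ and $\bar v\colon\Delta\arr GY$ for the transposes. The hypothesis then becomes $G\epsilon_Y\circ\bar u=\bar v\circ e$, and the goal becomes the statement that $Ga\circ\bar u$ factors through $e$. The candidate factorization is $Ga\circ\eta_{GY}\circ\bar v$. Substituting the hypothesis, the required equation $Ga\circ\eta_{GY}\circ\bar v\circ e=Ga\circ\bar u$ reduces to the identity
\begin{equation*}
    Ga\circ\eta_{GY}\circ G\epsilon_Y=Ga\colon GFGY\arr GZ .
\end{equation*}

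\emph{The key identity (main obstacle).} This identity is where the hypothesis that $GFGf$ is epic enters: it suffices to check the identity after precomposing with $GFGf$. The computation uses naturality of $\epsilon$ and $\eta$, the given relation $a\circ FGf=b\circ\epsilon_X$, and the triangle identity $G\epsilon_X\circ\eta_{GX}=\id$:
\begin{align*}
    Ga\,\eta_{GY}\,G\epsilon_Y\,GFGf
    &=Ga\,\eta_{GY}\,Gf\,G\epsilon_X
    =Ga\,GFGf\,\eta_{GX}\,G\epsilon_X\\
    &=Gb\,G\epsilon_X\,\eta_{GX}\,G\epsilon_X
    =Gb\,G\epsilon_X
    =Ga\,GFGf.
\end{align*}
Here the first step is naturality of $\epsilon$, the second is naturality of $\eta$ at $Gf$, the third and fifth use the given relation, and the fourth is the triangle identity. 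Since $GFGf$ is epic, the identity holds, so $a$ satisfies the kernel condition and the square is a pushout. The only delicate point is locating this candidate factorization through $\eta_{GY}$; once it is found, the rest is the routine bookkeeping above.
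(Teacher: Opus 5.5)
Your proof is correct and follows essentially the same route as the paper: both transpose the kernel condition for $a$ (with respect to $F\epclass$ and $\epsilon_Y$) across the adjunction, and reduce it to the identity $Ga=(Ga\circ\eta_{GY})\circ G\epsilon_Y$, which is the paper's $Gv=\hat v\circ G\epsilon_Y$, obtained by cancelling the epimorphism $GFGf$. Your explicit factorization $Ga\circ\eta_{GY}\circ\bar v$ is just a hands-on unpacking of the paper's step that $Gv$ factors through $G\epsilon_Y$ and hence satisfies the transposed kernel condition.
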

\begin{proof}
    Let $Z$ be an arbitrary object in $\D$, and let $u\colon X\arr Z$ and $v\colon FGY\arr Z$ be morphisms in $\D$ satisfying $v\circ FGf = u\circ \epsilon_X$ as in the diagram on the left below.
    In what follows, we will show that there is a unique morphism $k\colon Y\arr Z$ such that $k\circ\epsilon_Y = v$, which is sufficient because $\epsilon_X$ is epic.

    For the morphism $\hat{v}$ corresponding to $v$ by the adjunction, the diagram on the right below commutes:
    \begin{equation*}
        \begin{tikzcd}
            FGX\ar[d,"\epsilon_X"']\ar[r,"FGf"] & FGY\ar[d,"v"] \\
            X\ar[r,"u"'] & Z
        \end{tikzcd}\incat{\D},
        \qquad\qquad
        \begin{tikzcd}
            GX\ar[dr,bend right=15,"Gu"']\ar[r,"Gf"] & GY\ar[d,"\hat{v}"] \\
            & GZ
        \end{tikzcd}\incat{\C}.
    \end{equation*}
    Then, both the left-hand square and the outer rectangle commute in the following diagram:
    \begin{equation}\label{eq:square_commute_by_epi}
        \begin{tikzcd}
            GFGX\ar[d,"G\epsilon_X"']\ar[r,"GFGf"']\ar[rr,shift left=3,"GFGu"]
                &
                GFGY\ar[d,"G\epsilon_Y"]\ar[r,"GF\hat{v}"']
                    &
                    GFGZ\ar[d,"G\epsilon_Z"]
            \\
            GX\ar[r,"Gf"]\ar[rr,shift right=3,"Gu"']
                &
                GY\ar[r,"\hat{v}"]
                    &
                    GZ
        \end{tikzcd}\incat{\C}.
    \end{equation}
    Since $GFGf$ is supposed to be epic, the right-hand square also commutes.

    It immediately follows from the adjunction that the morphism $v$ satisfies the condition $\kercond[F\epclass][\epsilon_Y]$ if and only if the morphism $Gv$ satisfies $\kercond[\epclass][G\epsilon_Y]$, and the latter holds because $Gv=(G\epsilon_Z)\circ GF\hat{v}$ factors through $G\epsilon_X$, as in \cref{eq:square_commute_by_epi}.
    Since $\epsilon_Y$ is $F\epclass$-regularly epic by assumption, there is a unique morphism $k\colon Y\arr Z$ such that $k\circ\epsilon_Y = v$, which finishes the proof.
\end{proof}

\begin{notation}\label{note:pushout_component_c_f}
    Let $F\colon\C\adjl\D\lon G$ be an adjunction with $\D$ having pushouts.
    For a morphism $f\colon X\arr Y$ in $\D$, we write $\pocomp{f}$ for the unique morphism from the pushout $C_f$ in $\D$ such that the following diagram commutes:
    \begin{equation*}
        \begin{tikzcd}
            FGX\ar[r,"FGf"]\ar[d,"\epsilon_X"']
                &
                FGY\ar[rd,"\epsilon_Y"]\ar[d]
                    &
            \\
            X\ar[r]\ar[rr,"f"',bend right=20]
                &
                C_f\ar[r,"{\pocomp{f}}",pos=0.3]\pushoutcorner
                    &
                    Y
        \end{tikzcd}\incat{\D},
    \end{equation*}
    where $\epsilon$ denotes the counit of the adjunction.
\end{notation}

\begin{proposition}\label{prop:adj_and_decnum}
    Let $F\colon\C\adjl\D\lon G$ be an adjunction with $\D$ having pushouts.
    Let $\epclass_0$ and $\epclass_1$ be classes of epimorphisms in $\C$ and $\D$, respectively.
    Let $f$ be a morphism in $\D$.
    Assume the following conditions:
    \begin{itemize}
        \item
            $F\epclass_0 \subseteq \Reg(\epclass_1)$ holds, i.e., every morphism in $\epclass_0$ is sent to a $\epclass_1$-regular epimorphism by $F$.
        \item
            $Gf$ is $\epclass_0$-strongly epic, and $\decnum[\Reg(\epclass_0)]{Gf}$ is defined.
    \end{itemize}
    Then, the following statements hold:
    \begin{enumerate}
        \item\label{prop:adj_and_decnum-definability}
            $\decnum[\Reg(\epclass_1)]{f}$ is defined if and only if $\decnum[\Reg(\epclass_1)]{\pocomp{f}}$ is defined.
        \item\label{prop:adj_and_decnum-general}
            Whenever $\decnum[\Reg(\epclass_1)]{f}$ and $\decnum[\Reg(\epclass_1)]{\pocomp{f}}$ are defined, we have
            \begin{equation*}
                \decnum[\Reg(\epclass_1)]{f} \le \decnum[\Reg(\epclass_0)]{Gf} + \decnum[\Reg(\epclass_1)]{\pocomp{f}}.
            \end{equation*}
        \item\label{prop:adj_and_decnum-plusone}
            If the counit of the adjunction is $\epclass_1$-regularly epic componentwise, then $\decnum[\Reg(\epclass_1)]{f}$ is defined, and we have
            \begin{equation*}
                \decnum[\Reg(\epclass_1)]{f} \le \decnum[\Reg(\epclass_0)]{Gf} + 1.
            \end{equation*}
        \item\label{prop:adj_and_decnum-pluszero}
            If the counit of the adjunction is $F\epclass_0$-regularly epic componentwise and $GFGf$ is epic, then $\decnum[\Reg(\epclass_1)]{f}$ is defined, and we have
            \begin{equation*}
                \decnum[\Reg(\epclass_1)]{f} \le \decnum[\Reg(\epclass_0)]{Gf}.
            \end{equation*}\qedhere
    \end{enumerate}
\end{proposition}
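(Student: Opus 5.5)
The plan is to factor $f$ through the pushout of \cref{note:pushout_component_c_f} and handle the two pieces separately. Write $j\colon X\arr C_f$ and $j'\colon FGY\arr C_f$ for the pushout injections, so that $f=\pocomp{f}\circ j$ and $\epsilon_Y=\pocomp{f}\circ j'$. The bound on $j$ will come from transporting a decomposition of $Gf$ through $F$ and then pushing it out along $\epsilon_X$. The morphism $\pocomp{f}$ will then be handled by \cref{prop:decnum_of_composite} and \cref{lem:pushing_of_decomposition}.

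\emph{Step 1: $F$ carries $\Reg(\epclass_0)$ into $\Reg(\epclass_1)$.} Consider the class of morphisms $p$ in $\C$ with $Fp\in\Reg(\epclass_1)$. It contains $\epclass_0$ by the first assumption. It is stable under existing multiple pushouts, because $F$, as a left adjoint, preserves them, and because of \cref{prop:fundamental_prop_of_GREpi}\cref{prop:fundamental_prop_of_GREpi-multiple_pushout}. By \cref{prop:fundamental_prop_of_GREpi}\cref{prop:fundamental_prop_of_GREpi-closure}, this class therefore contains $\Reg(\epclass_0)$.

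\emph{Step 2: $\decnum[\Reg(\epclass_1)]{j}\le\decnum[\Reg(\epclass_0)]{Gf}$ and $j\in\Stg(\epclass_1)$.} Take a $\Reg(\epclass_0)$-decomposition $(A_\bullet,g_\bullet,GY)$ of $Gf$ of minimal length $\lambda$. By \cref{cor:criterion_local_orth_for_GEEpi} we have $\lorth\rorth(\Reg(\epclass_0))=\Stg(\epclass_0)$, and $Gf\in\Stg(\epclass_0)$. So \cref{lem:decomposition_of_leftclass} shows that $g_\alpha$ is an isomorphism for $\alpha\ge\lambda$, and we may assume these are identities.
\begin{itemize}
\item Apply $F$ to the sequence. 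Since $F$ preserves the colimits at limit stages, Step 1 gives a $\Reg(\epclass_1)$-transfinite sequence $FA_\bullet$ whose composite up to stage $\lambda$ is $FGf$.
\item Push this sequence out stagewise along $\epsilon_X$. The limit stages are again colimits, since colimits commute with pushouts.
\item Each step stays in $\Reg(\epclass_1)$ by \cref{prop:fundamental_prop_of_GREpi}\cref{prop:fundamental_prop_of_GREpi-multiple_pushout}.
\end{itemize}
This yields a $\Reg(\epclass_1)$-transfinite sequence starting at $X$ whose stage-$\lambda$ object is $C_f$. Together with the cocone whose members from stage $\lambda$ on are identities into $C_f$, it is a decomposition of $j$ of length at most $\lambda$. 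It also shows that $j$ is a transfinite composite of $\Reg(\epclass_1)$-maps, hence lies in $\Stg(\epclass_1)$.

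The main point of care is this step: one must check that the pushed-out limit stages are genuine colimits in $\D$. This holds because each limit stage is the pushout of $F(\Colim A_\beta)=\Colim FA_\beta$, and pushouts commute with the colimit.

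\emph{Step 3: conclusions.}
\begin{enumerate}
\item[(ii)] Since $j\in\Stg(\epclass_1)=\lorth\rorth(\Reg(\epclass_1))$, \cref{prop:decnum_of_composite}\cref{prop:decnum_of_composite-precompose} gives $\decnum[\Reg(\epclass_1)]{f}\le\decnum[\Reg(\epclass_1)]{j}+\decnum[\Reg(\epclass_1)]{\pocomp{f}}$. Combined with Step 2, this is (ii).
\item[(i)] The "if" direction of (i) follows from the same composite estimate. For "only if", apply \cref{lem:pushing_of_decomposition} to the square $f=\pocomp{f}\circ j$, with $e=j\in\Stg(\epclass_1)$ and $m=\id_Y$. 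Its hypotheses hold: $\D$ has pushouts, $\Reg(\epclass_1)$ consists of epimorphisms and is pushout-stable. The lemma gives $\decnum[\Reg(\epclass_1)]{\pocomp{f}}\le\decnum[\Reg(\epclass_1)]{f}$.
\item[(iii)] The morphism $j'$ is a pushout of the epimorphism $\epsilon_X$, so it is epic. Since $\epsilon_Y=\pocomp{f}\circ j'\in\Reg(\epclass_1)$, \cref{prop:composable_pair_GREpi}\cref{prop:composable_pair_GREpi-cancellability} shows $\pocomp{f}\in\Reg(\epclass_1)$. A one-step decomposition then gives $\decnum[\Reg(\epclass_1)]{\pocomp{f}}\le1$, and (iii) follows from (ii).
\item[(iv)] By \cref{lem:counit_cotaut}, the naturality square of the counit at $f$ is itself a pushout. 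Hence $\pocomp{f}$ is an isomorphism, $\decnum[\Reg(\epclass_1)]{\pocomp{f}}=0$, and (iv) follows from (ii).
\end{enumerate}
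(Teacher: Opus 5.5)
Your proposal is correct and is essentially the paper's own proof. You transport a $\Reg(\epclass_0)$-decomposition of $Gf$ through $F$, using $F\Reg(\epclass_0)\subseteq\Reg(\epclass_1)$ from \cref{prop:fundamental_prop_of_GREpi}, and push it out along $\epsilon_X$ to bound the first factor $X\to C_f$. You then obtain (i)--(ii) from \cref{lem:pushing_of_decomposition} and \cref{prop:decnum_of_composite}, (iii) from \cref{prop:composable_pair_GREpi}, and (iv) from \cref{lem:counit_cotaut}. Your explicit checks that $\lorth\rorth(\Reg(\epclass_0))=\Stg(\epclass_0)$ and that the pushed-out limit stages are genuine colimits fill in details the paper leaves implicit.
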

\begin{proof}
    Let $f\colon X\arr Y$ be a morphism in $\D$.
    Let $(A_\bullet,h_\bullet,GY)$ be an arbitrary $\Reg(\epclass_0)$-decomposition of $Gf$, and let $\alpha\coloneq\abs{h_\bullet}$.
    By $Gf\in\Stg(\epclass_0)$ and \cref{lem:decomposition_of_leftclass}, we can assume that $h_\beta$ is the identity for every $\beta\in[\alpha,\inacc)$, and hence $A_\alpha=GY$, without loss of generality.
    Since $\Reg(\epclass_0)$ is the closure of $\epclass_0$ under multiple pushout (\cref{prop:fundamental_prop_of_GREpi}\cref{prop:fundamental_prop_of_GREpi-closure}) and the left adjoint $F$ preserves multiple pushouts, we have $F\Reg(\epclass_0) \subseteq \Reg(\epclass_1)$, i.e., every $\epclass_0$-regular epimorphism is sent to a $\epclass_1$-regular epimorphism by $F$.
    Since the left adjoint $F$ also preserves colimits of chains, the image of $(A_\bullet,h_\bullet,GY)$ under $F$, denoted by $(FA_\bullet,Fh_\bullet,FGY)$, becomes a $\Reg(\epclass_1)$-decomposition of $FGf$ with $FA_\alpha = FGY$.
    Since the class $\Reg(\epclass_1)$ is closed under pushout (\cref{prop:fundamental_prop_of_GREpi}\cref{prop:fundamental_prop_of_GREpi-multiple_pushout}), taking pushouts along $\epsilon_X$, where $\epsilon$ denotes the counit of the adjunction $F\dashv G$, we obtain a $\Reg(\epclass_1)$-decomposition $(B_\bullet,k_\bullet,B_\alpha)$ and a morphism $\phi\colon (FA_\bullet,Fh_\bullet,FGY) \arr[] (B_\bullet,k_\bullet,B_\alpha)$ below with $\abs{k_\bullet}\le\alpha$, $B_0=X$, and $\phi_0=\epsilon_X$:
    \begin{equation*}
        \begin{tikzcd}
            (FGX=) &[-35] FA_0\ar[rr,bend left=20,"FGf"]\ar[d,"(\epsilon_X=)~\phi_0"']\ar[r,"FA_{0,\beta}"'] & FA_\beta\ar[d,"\phi_\beta"]\ar[r,"FA_{\beta,\alpha}"'] & FA_\alpha\ar[d,"\phi_\alpha"]\ar[rrd,"\epsilon_Y"] &[-35] (=FGY) & \\
            (X=) & B_0\ar[rrrr,bend right=15,"f"']\ar[r,"B_{0,\beta}"]\ & B_\beta\ar[r,"B_{\beta,\alpha}"]\pushoutcorner & B_\alpha\ar[rr,"\pocomp{f}"]\pushoutcorner && Y
        \end{tikzcd}\incat{\D}\quad (\beta\le\alpha).
    \end{equation*}
    Suppose that $\decnum[\Reg(\epclass_1)]{f}$ is defined.
    Then, applying \cref{lem:pushing_of_decomposition} with $e\coloneq B_{0,\alpha}\in\lorth\rorth(\bfE)$ and $m\coloneq\id_Y$, we can show that $\decnum[\Reg(\epclass_1)]{\pocomp{f}}$ is defined.

    For converse, let us suppose that $\decnum[\Reg(\epclass_1)]{\pocomp{f}}$ is defined.
    Then, \cref{prop:decnum_of_composite}\cref{prop:decnum_of_composite-precompose} shows that $\decnum[\Reg(\epclass_1)]{f}$ is defined and
    \[
        \decnum[\Reg(\epclass_1)]{f}
        \le
        \decnum[\Reg(\epclass_1)]{B_{0,\alpha}} + \decnum[\Reg(\epclass_1)]{\pocomp{f}}
        \le
        \abs{k_\bullet} + \decnum[\Reg(\epclass_1)]{\pocomp{f}}
        \le
        \alpha + \decnum[\Reg(\epclass_1)]{\pocomp{f}}
    \]
    holds, which shows \cref{prop:adj_and_decnum-general}.

    To show \cref{prop:adj_and_decnum-plusone}, suppose that both $\epsilon_X$ and $\epsilon_Y$ are $\epclass_1$-regularly epic.
    Since $\epsilon_X$ is epic (\cref{prop:fundamental_prop_of_GREpi}\cref{prop:fundamental_prop_of_GREpi-epimorphicity}), the morphism $\phi_\alpha$, given by the pushout, is also epic.
    Then, since $\epsilon_Y=\pocomp{f}\circ \phi_\alpha$ is $\epclass_1$-regularly epic, \cref{prop:composable_pair_GREpi}\cref{prop:composable_pair_GREpi-cancellability} implies that $\pocomp{f}$ is $\epclass_1$-regularly epic.
    As a result, $\decnum[\Reg(\epclass_1)]{\pocomp{f}}$ is defined and less than or equal to $1$.
    Then, \cref{prop:adj_and_decnum-plusone} follows from \cref{prop:adj_and_decnum-general}.

    It remains to prove \cref{prop:adj_and_decnum-pluszero}.
    By assumption, \cref{lem:counit_cotaut} works, and hence the morphism $\pocomp{f}$ becomes an isomorphism.
    Therefore, $\decnum[\Reg(\epclass_1)]{\pocomp{f}}$ is defined and equal to $0$, and hence \cref{prop:adj_and_decnum-pluszero} follows from \cref{prop:adj_and_decnum-general}.
\end{proof}

Applying \cref{prop:adj_and_decnum} for the case of ordinary regular epimorphisms, we obtain the following:
\begin{corollary}\label{cor:adj_and_decnum_regularepi}
    Let $F\colon\C\adjl\D\lon G$ be an adjunction such that $\D$ has pushouts and both $\C$ and $\D$ have binary copowers and kernel pairs.
    Let $f$ be a morphism in $\D$ such that the regular-decomposition number $\decnum{Gf}$ is defined and $Gf$ is strongly epic.
    Then, the following hold:
    \begin{enumerate}
        \item\label{cor:adj_and_decnum_regularepi-definability}
            $\decnum{f}$ is defined if and only if $\decnum{\pocomp{f}}$ is defined.
        \item\label{cor:adj_and_decnum_regularepi-general}
            Whenever $\decnum{f}$ and $\decnum{\pocomp{f}}$ are defined, we have
            \begin{equation*}
                \decnum{f} \le \decnum{Gf} + \decnum{\pocomp{f}}.
            \end{equation*}
        \item\label{cor:adj_and_decnum_regularepi-plusone}
            Suppose that the adjunction $F\dashv G$ is of descent type, i.e., its counit is regularly epic componentwise.
            Then $\decnum{f}$ is defined, and we have
            \begin{equation*}
                \decnum{f} \le \decnum{Gf} + 1.
            \end{equation*}
        \item\label{cor:adj_and_decnum_regularepi-pluszero}
            Suppose that the adjunction $F\dashv G$ is of descent type and the morphism $GFGf$ is epic.
            Then $\decnum{f}$ is defined, and we have
            \begin{equation*}
                \decnum{f} \le \decnum{Gf}.
            \end{equation*}
    \end{enumerate}
\end{corollary}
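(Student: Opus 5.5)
The plan is to specialize \cref{prop:adj_and_decnum} to $\epclass_0\coloneq\{\nabla_A\colon A+A\arr A\}_{A\in\C}$ and $\epclass_1\coloneq\{\nabla_B\colon B+B\arr B\}_{B\in\D}$, the classes of codiagonals in $\C$ and $\D$. These exist because both categories have binary copowers, and codiagonals are split epic, hence epic.

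First I identify the classes involved. Since $\C$ and $\D$ have kernel pairs, \cref{eg:regular_epi} gives $\Reg(\epclass_0)$ and $\Reg(\epclass_1)$ as the ordinary regular epimorphisms of $\C$ and $\D$. It also gives $\Stg(\epclass_0)$ as the ordinary strong epimorphisms of $\C$. With these identifications, $\decnum[\Reg(\epclass_0)]{Gf}=\decnum{Gf}$ and $\decnum[\Reg(\epclass_1)]{-}=\decnum{-}$, and the hypothesis that $Gf$ is strongly epic becomes the hypothesis that $Gf$ is $\epclass_0$-strongly epic.

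Next I check the hypothesis $F\epclass_0\subseteq\Reg(\epclass_1)$. The left adjoint $F$ preserves binary copowers, so $F\nabla_A$ is, up to the canonical isomorphism $F(A+A)\cong FA+FA$, the codiagonal $\nabla_{FA}$. This lies in $\epclass_1$, and $\Reg(\epclass_1)$ is iso-closed, so $F\nabla_A\in\Reg(\epclass_1)$. With these checks, items \cref{cor:adj_and_decnum_regularepi-definability} and \cref{cor:adj_and_decnum_regularepi-general} are exactly \cref{prop:adj_and_decnum}\cref{prop:adj_and_decnum-definability,prop:adj_and_decnum-general}. Item \cref{cor:adj_and_decnum_regularepi-plusone} follows from \cref{prop:adj_and_decnum}\cref{prop:adj_and_decnum-plusone}: descent type means every counit component is regularly epic, i.e.\ $\epclass_1$-regularly epic.

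The only step needing care is \cref{cor:adj_and_decnum_regularepi-pluszero}. Here \cref{prop:adj_and_decnum}\cref{prop:adj_and_decnum-pluszero} asks that the counit be $F\epclass_0$-regularly epic, not merely $\epclass_1$-regularly epic. The $F\epclass_0$-kernel condition associated with $q$ quantifies only over codiagonals of objects of the form $FA$, so it is weaker than the $\epclass_1$-kernel condition; it is not a priori clear that a regular epimorphism is $F\epclass_0$-regular.

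To handle this, take a counit component $\epsilon_X\colon FGX\arr X$. Its domain is $FGX$, which is itself of the form $FA$. By \cref{eg:regular_epi}, a morphism $g$ satisfies the $\epclass_1$-kernel condition for $\epsilon_X$ exactly when $g$ coequalizes every parallel pair $a,b\colon B\parr FGX$ that $\epsilon_X$ coequalizes. Since $\epsilon_X$ is the coequalizer of its kernel pair $K\parr FGX$, it suffices to show that $g$ coequalizes this kernel pair.

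The kernel pair has domain $K$, which need not be of the form $FA$, so I cover it by a free object. The counit $\epsilon_K\colon FGK\arr K$ is epic, so $g$ coequalizes $K\parr FGX$ iff it coequalizes the composites $FGK\parr FGX$. These composites are a pair out of $FA$ with $A=GK$, so the $F\epclass_0$-kernel condition (taking $\Gamma=FA+FA$) covers them.

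Hence the $F\epclass_0$-kernel condition for $\epsilon_X$ implies the $\epclass_1$-kernel condition, and therefore $\epsilon_X$ is $F\epclass_0$-regularly epic. Then \cref{prop:adj_and_decnum}\cref{prop:adj_and_decnum-pluszero}, together with the assumption that $GFGf$ is epic, gives \cref{cor:adj_and_decnum_regularepi-pluszero}.
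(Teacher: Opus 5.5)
Your proposal is correct and follows the paper's proof. As the paper does, you specialize \cref{prop:adj_and_decnum} to the classes of codiagonals, use \cref{eg:regular_epi} to identify $\Reg(\epclass_0)$, $\Reg(\epclass_1)$ and $\Stg(\epclass_0)$, and use the fact that $F$ preserves codiagonals, which yields items (i)--(iii) at once. For item (iv), the paper argues that the epic counit makes $\{FX\}_{X\in\C}$ a generator and then cites \cref{eg:regular_epi_with_generator}; your argument, which covers the kernel pair by the single epimorphism $\epsilon_K\colon FGK\to K$, is just an inlined proof of that proposition in this case, and it in fact works for every regular epimorphism, not only the counit components.
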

\begin{proof}
    Let $\epclass_0$ and $\epclass_1$ be the classes of all codiagonals on all objects in $\C$ and $\D$, respectively.
    As we saw in \cref{eg:regular_epi}, the classes $\Reg(\epclass_0)$ and $\Reg(\epclass_1)$ are precisely the classes of all regular epimorphisms in $\C$ and $\D$, respectively.
    Since the left adjoint $F$ preserves colimits, the inclusions $F\epclass_0\subseteq\epclass_1\subseteq\Reg(\epclass_1)$ hold.
    Then, the statements \cref{cor:adj_and_decnum_regularepi-definability,cor:adj_and_decnum_regularepi-general,cor:adj_and_decnum_regularepi-plusone} directly follow from \cref{prop:adj_and_decnum}.

    We now show \cref{cor:adj_and_decnum_regularepi-pluszero}.
    By the characterization of adjunctions of descent type \cite[Theorem 2.4]{KellyPower1993adjunctions}, each component of the counit is regularly epic, in particular, epic.
    Since the latter is equivalent to the class $\{FX\}_{X\in\C}$ being a generator of $\D$, \cref{eg:regular_epi_with_generator} implies that $F\epclass_0$-regular epimorphisms are precisely regular epimorphisms in $\D$.
    Then, \cref{prop:adj_and_decnum}\cref{prop:adj_and_decnum-pluszero} works, and we obtain the desired inequality.
\end{proof}

\begin{corollary}\label{cor:adj_and_globaldecnum}
    Let $\C$ be a category equipped with a class of epimorphisms $\epclass_0$ such that $\decnum[\Reg(\epclass_0)]{\C}$ is defined.
    Let $\D$ be a category with pushouts that admits an orthogonal factorization system $(\Stg(\epclass_1),\rorth(\epclass_1))$ for a class of epimorphisms $\epclass_1$.
    Let $F\colon\C\adjl\D\lon G$ be an adjunction with $F\epclass_0\subseteq\Reg(\epclass_1)$.
    \begin{enumerate}
        \item\label{cor:adj_and_globaldecnum-plusone}
            Under the following assumptions:
            \begin{itemize}
                \item
                    the counit of the adjunction is $\epclass_1$-regularly epic componentwise,
                \item
                    for every $\epclass_1$-strong epimorphism $e$ in $\D$, $Ge$ is $\epclass_0$-strongly epic in $\C$
            \end{itemize}
            then $\decnum[\Reg(\epclass_1)]{\D}$ is defined, and we have
            \[
                \decnum[\Reg(\epclass_1)]{\D} \le \decnum[\Reg(\epclass_0)]{\C} + 1.
            \]
        \item\label{cor:adj_and_globaldecnum-pluszero}
            Under the following assumptions:
            \begin{itemize}
                \item
                    the counit of the adjunction is $F\epclass_0$-regularly epic componentwise,
                \item
                    for every $\epclass_1$-strong epimorphism $e$ in $\D$, $Ge$ is $\epclass_0$-strongly epic, and $GFGe$ is epic in $\C$
            \end{itemize}
            then $\decnum[\Reg(\epclass_1)]{\D}$ is defined, and we have
            \[
                \decnum[\Reg(\epclass_1)]{\D} \le \decnum[\Reg(\epclass_0)]{\C}.
            \]
    \end{enumerate}
\end{corollary}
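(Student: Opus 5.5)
The plan is to reduce to morphisms in the left class $\Stg(\epclass_1)$ and then apply \cref{prop:adj_and_decnum} componentwise. Let $f$ be an arbitrary morphism in $\D$. Using the orthogonal factorization system $(\Stg(\epclass_1),\rorth(\epclass_1))$, factor it as $f=m\circ e$ with $e\in\Stg(\epclass_1)$ and $m\in\rorth(\epclass_1)$. By \cref{cor:criterion_local_orth_for_GEEpi} we have $\rorth(\Reg(\epclass_1))=\rorth(\epclass_1)$, so $m\in\rorth(\Reg(\epclass_1))$. Then \cref{prop:decnum_of_composite}\cref{prop:decnum_of_composite-postcompose}, applied with $\bfE=\Reg(\epclass_1)$, says that $\decnum[\Reg(\epclass_1)]{f}$ is defined if and only if $\decnum[\Reg(\epclass_1)]{e}$ is, and that the two are equal. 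It therefore suffices to bound $\decnum[\Reg(\epclass_1)]{e}$ for $\epclass_1$-strong epimorphisms $e$.

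Now fix such an $e$. In both cases of the statement, the hypotheses give that $Ge$ is $\epclass_0$-strongly epic. Since $\decnum[\Reg(\epclass_0)]{\C}$ is defined, $\decnum[\Reg(\epclass_0)]{Ge}$ is defined, and by the definition of the global number (least strict upper bound) we get $\decnum[\Reg(\epclass_0)]{Ge}<\decnum[\Reg(\epclass_0)]{\C}$. The assumptions $F\epclass_0\subseteq\Reg(\epclass_1)$ and that $\D$ has pushouts are exactly what \cref{prop:adj_and_decnum} requires.

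For \cref{cor:adj_and_globaldecnum-plusone}, apply \cref{prop:adj_and_decnum}\cref{prop:adj_and_decnum-plusone}. It shows that $\decnum[\Reg(\epclass_1)]{e}$ is defined and $\le\decnum[\Reg(\epclass_0)]{Ge}+1$. Since $\delta<\decnum[\Reg(\epclass_0)]{\C}$ implies $\delta+1<\decnum[\Reg(\epclass_0)]{\C}+1$ (ordinal successor is strictly monotone), we get $\decnum[\Reg(\epclass_1)]{f}<\decnum[\Reg(\epclass_0)]{\C}+1$ for every $f$. This gives the claimed bound. For \cref{cor:adj_and_globaldecnum-pluszero}, the extra hypothesis that $GFGe$ is epic lets us use \cref{prop:adj_and_decnum}\cref{prop:adj_and_decnum-pluszero} instead, which yields $\decnum[\Reg(\epclass_1)]{e}\le\decnum[\Reg(\epclass_0)]{Ge}<\decnum[\Reg(\epclass_0)]{\C}$ directly.

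The only delicate point is the reduction step: we must make sure the hypotheses are applied only to the strong-epi part $e$ and not to $f$ itself, since the assumptions on $G$ are stated only for $\epclass_1$-strong epimorphisms. We also need the identification $\rorth(\Reg(\epclass_1))=\rorth(\epclass_1)$ so that \cref{prop:decnum_of_composite} applies to the class $\Reg(\epclass_1)$. Beyond that, the argument is bookkeeping with strict upper bounds.
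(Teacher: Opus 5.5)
Your proposal is correct and follows the paper's proof essentially step for step. You factor $f=me$ with $e\in\Stg(\epclass_1)$ and $m\in\rorth(\epclass_1)$, bound $\decnum[\Reg(\epclass_1)]{e}$ via \cref{prop:adj_and_decnum}\cref{prop:adj_and_decnum-plusone} (resp.\ \cref{prop:adj_and_decnum-pluszero}), and transfer the bound to $f$ by \cref{prop:decnum_of_composite}\cref{prop:decnum_of_composite-postcompose}. Your explicit appeal to $\rorth(\Reg(\epclass_1))=\rorth(\epclass_1)$ from \cref{cor:criterion_local_orth_for_GEEpi} only spells out a step the paper leaves implicit.
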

\begin{proof}
    The proof of \cref{cor:adj_and_globaldecnum-pluszero} is similar to that of \cref{cor:adj_and_globaldecnum-plusone}, so we prove only \cref{cor:adj_and_globaldecnum-plusone}.
    Let $f$ be an arbitrary morphism in $\D$, and take its factorization $f=me$ into a $\epclass_1$-strong epimorphism $e$ and a morphism $m\in\rorth(\epclass_1)$.
    By assumption, $\decnum[\Reg(\epclass_0)]{Ge}$ is defined, and $Ge$ is $\epclass_0$-strongly epic.
    Then, by \cref{prop:adj_and_decnum}\cref{prop:adj_and_decnum-plusone}, $\decnum[\Reg(\epclass_1)]{e}$ is defined, and we have
    \[
        \decnum[\Reg(\epclass_1)]{e} \le \decnum[\Reg(\epclass_0)]{Ge} + 1 < \decnum[\Reg(\epclass_0)]{\C} + 1.
    \]
    In fact, $\decnum[\Reg(\epclass_1)]{f}$ is also defined and equal to $\decnum[\Reg(\epclass_1)]{e}$ by \cref{prop:decnum_of_composite}\cref{prop:decnum_of_composite-postcompose}, which finishes the proof.
\end{proof}

\begin{corollary}\label{cor:adj_and_globaldecnum_regularepi}
    Let $\C$ and $\D$ be categories with binary copowers and kernel pairs.
    Suppose that $\decnum{\C}$ is defined, and that $\D$ has pushouts and admits the (strong epi, mono)-factorization system.
    Let $F\colon\C\adjl\D\lon G$ be an adjunction of descent type.
    \begin{enumerate}
        \item\label{cor:adj_and_globaldecnum_regularepi-plusone}
            Suppose that for every strong epimorphism $e$ in $\D$, $Ge$ is strongly epic in $\C$.
            Then $\decnum{\D}$ is defined, and we have
            \[
                \decnum{\D} \le \decnum{\C} + 1.
            \]
        \item\label{cor:adj_and_globaldecnum_regularepi-pluszero}
            Suppose that for every strong epimorphism $e$ in $\D$, $Ge$ is strongly epic and $GFGe$ is epic in $\C$.
            Then $\decnum{\D}$ is defined, and we have
            \[
                \decnum{\D} \le \decnum{\C}.
            \]
    \end{enumerate}
\end{corollary}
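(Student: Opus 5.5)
The plan is to specialize \cref{cor:adj_and_globaldecnum} to the classes of codiagonals, exactly as \cref{cor:adj_and_decnum_regularepi} specialized \cref{prop:adj_and_decnum}. Let $\epclass_0$ and $\epclass_1$ be the classes of all codiagonals $\nabla_X\colon X+X\arr X$ in $\C$ and $\D$, respectively. By \cref{eg:regular_epi}, and because both categories have binary copowers and kernel pairs, $\Reg(\epclass_0)$ and $\Reg(\epclass_1)$ are the classes of ordinary regular epimorphisms. Similarly, $\Stg(\epclass_0)$ and $\Stg(\epclass_1)$ are the classes of ordinary strong epimorphisms. Hence $\decnum[\Reg(\epclass_0)]{\C}=\decnum{\C}$, which is defined by hypothesis, and the same identification holds for $\D$.

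Next we check the hypotheses shared by both items of \cref{cor:adj_and_globaldecnum}. The $\rorth(\epclass_1)$ coincides with the class of monomorphisms in $\D$: a morphism is orthogonal to every codiagonal iff it is monic, using binary copowers (the well-known lemma above). So the assumed (strong epi, mono)-factorization system on $\D$ is precisely the orthogonal factorization system $(\Stg(\epclass_1),\rorth(\epclass_1))$. Since $F$ is a left adjoint, it preserves coproducts, so $F\nabla_X=\nabla_{FX}$. This gives $F\epclass_0\subseteq\epclass_1\subseteq\Reg(\epclass_1)$. $\D$ has pushouts by assumption.

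For \cref{cor:adj_and_globaldecnum_regularepi-plusone}: descent type means the counit is componentwise regular epic, i.e.\ $\epclass_1$-regularly epic. Together with the hypothesis that $G$ sends strong epis to strong epis, \cref{cor:adj_and_globaldecnum}\cref{cor:adj_and_globaldecnum-plusone} gives $\decnum{\D}\le\decnum{\C}+1$.

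For \cref{cor:adj_and_globaldecnum_regularepi-pluszero}, the only nontrivial point, and the expected obstacle, is that \cref{cor:adj_and_globaldecnum}\cref{cor:adj_and_globaldecnum-pluszero} needs the counit to be $F\epclass_0$-regularly epic rather than merely regularly epic. This is handled as in the proof of \cref{cor:adj_and_decnum_regularepi}\cref{cor:adj_and_decnum_regularepi-pluszero}. The counit components are regular, hence epic, and epic counit components are equivalent to $\{FX\}_{X\in\C}$ being a generator of $\D$. \cref{eg:regular_epi_with_generator} then shows that the $F\epclass_0$-regular epimorphisms, i.e.\ those for the codiagonals on objects of this generator, are exactly the regular epimorphisms. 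So the counit is $F\epclass_0$-regularly epic componentwise. With the remaining hypotheses ($Ge$ strongly epic and $GFGe$ epic for strong epis $e$), \cref{cor:adj_and_globaldecnum}\cref{cor:adj_and_globaldecnum-pluszero} gives $\decnum{\D}\le\decnum{\C}$.

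One size caveat: \cref{eg:regular_epi_with_generator} is stated for a generator $G\subseteq\Ob\C$, whereas $\{FX\}$ may be a large family. I expect the argument there to be insensitive to size, so it should still apply.
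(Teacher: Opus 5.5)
Your proposal is correct and is essentially the paper's argument. The paper reruns the factorization argument of \cref{cor:adj_and_globaldecnum} using the pointwise \cref{cor:adj_and_decnum_regularepi}; you instead instantiate \cref{cor:adj_and_globaldecnum} directly with the codiagonal classes and borrow the generator step from the proof of \cref{cor:adj_and_decnum_regularepi}\cref{cor:adj_and_decnum_regularepi-pluszero}, which is the same chain of lemmas (and your size caveat is harmless, since the paper applies \cref{eg:regular_epi_with_generator} to the possibly large family $\{FX\}_{X\in\C}$ in the same way).
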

\begin{proof}
    These statements follow from \cref{cor:adj_and_decnum_regularepi} by a similar argument to the proof of \cref{cor:adj_and_globaldecnum}.
\end{proof}

\begin{example}[Monadic categories over $\Set$]
    Let $F\colon\Set\adjl\C\lon U$ be a monadic adjunction, where $\Set$ denotes the category of (small) sets.
    Then, for every strong epimorphism $e$ in $\C$, the underlying map $Ue$ is surjective.
    In fact, $FUFe$ is also surjective since any surjection is a split epimorphism in $\Set$.
    Then, \cref{cor:adj_and_globaldecnum_regularepi}\cref{cor:adj_and_globaldecnum_regularepi-pluszero} implies $\decnum{\C}\le\decnum{\Set}=2$.
    This inequality can also be derived from the fact that every monadic category over $\Set$ is regular.
\end{example}

The following example shows that the condition that a right adjoint preserves strong epimorphisms cannot be removed from \cref{cor:adj_and_globaldecnum_regularepi}.
We use the framework of \textit{\acp{PHT}}, introduced in \cite{PalmgrenVickers2007partial}.
We refer the reader to \cref{sec:partial_horn_theories} for a self-contained summary of the notation and conventions used in the paper.
\begin{example}\label{eg:monadic_adj_plustwo}
    We give an example for a monadic adjunction that increases the global regular-decomposition number by $2$.
    Let $\Sigma\coloneq\{\syn{a},\syn{b},\syn{c}\}$ and $\Sigma'\coloneq\{\syn{a},\syn{b},\syn{c},\syn{d}\}$ be one-sorted signatures consisting of (partial) constants.
    Define finitary partial Horn theories $\theory{S}$ over $\Sigma$ and $\theory{T}$ over $\Sigma'$ as follows:
    \begin{gather*}
        \theory{S}\coloneq
        \left\{
            \top
                \seq{()}
                    (\syn{a}=\syn{a})
                    \wedge
                    (\syn{b}=\syn{b})
        \right\}
        \\
        \theory{T}\coloneq\theory{S}+
        \left\{
            \begin{gathered}
                \syn{a}=\syn{b}
                    \biseq{()}
                        \syn{c}=\syn{c},
                \\
                \syn{a}=\syn{c}
                    \biseq{()}
                        \syn{d}=\syn{d}
            \end{gathered}
        \right\}
    \end{gather*}
    Then, the theory extension $\theory{S}\subseteq\theory{T}$ induces the (finitary) monadic adjunction.
    \[
        \adjoint{\PMod\theory{S}}{\PMod\theory{T}}{F}{U}
    \]
    The monadicity directly follows from the framework of \textit{relative algebraic theories} introduced in \cite{Kawase2026relativized}.
    Indeed, the definability of the new constant $\syn{d}$ is characterized by the equation $\syn{a}=\syn{c}$ from $\theory{S}$, and hence $\theory{T}$ is an $\theory{S}$-relative algebraic theory.

    We also note that the category $\PMod\theory{S}$ of models of $\theory{S}$ is regular because $\theory{S}$ can be rewritten as a two-sorted equational theory by regarding the partial constant $\syn{c}$ as a unary function from a new sort $\syn{t}$ and by adding a new equation $\top\seq{\vx{,}\vy\ofsort\syn{t}} \vx=\vy$.
    Since the other constants $\syn{a},\syn{b}$ are already total, no modification to them is required.
    As a result, we have $\decnum{\PMod\theory{S}}=2$.

    However, if we consider a two-element model $M\coloneq\{a,b\}\in\PMod\theory{T}$ in which neither $c$ nor $d$ is undefined, we find $\decnum{!_M}=3$ for the unique morphism $!_M$ from $M$ to the terminal model $T\coloneq\{a=b=c=d\}$ in $\PMod\theory{T}$.
    Here, the letters $a,b,c,d$ are meant to be the interpretations of the constants $\syn{a},\syn{b},\syn{c},\syn{d}$.
    Indeed, the canonical regular decomposition of $!_M$ can be illustrated as follows:
    \begin{equation*}
        \begin{tikzcd}
            M= &[-35] \{a,b\}\ar[dr,two heads]\ar[rrr,"!_M"] &[-30] & &[-30] \{a=b=c=d\} &[-35] =T \\
            & & \{a=b,c\}\ar[r,two heads] & \{a=b=c,d\}\ar[ru,two heads] & &
        \end{tikzcd}\incat{\PMod\theory{T}}.
    \end{equation*}
    Therefore, we obtain $\decnum{\PMod\theory{T}} \ge 4$ (in fact, even the equality holds, as shown later in \cref{eg:toyexample_of_gauge}).
    Note that the morphism $!_M$ is strongly epic in $\PMod\theory{T}$, but the underlying morphism $U(!_M)$ is not.
    Indeed, in the category $\PMod\theory{S}$, the morphism $U(!_M)$ factors through a non-trivial submodel $\{a=b\}\subsetneq\{a=b=c\}=U(T)$.
\end{example}

\subsection{The decomposition numbers for locally presentable categories}\label{subsec:decnum_of_lp}
We now focus on a special case of locally presentable categories and provide a ``loose'' upper bound for the global $\Reg(\epclass)$-decomposition numbers under the assumption that $\epclass$ is small.
\begin{theorem}\label{thm:GREpi_decomposition_in_lp}
    Let $\kappa\le\mu$ be small regular cardinals.
    Let $\C$ be a locally $\kappa$-presentable category.
    Let $\epclass$ be a small class of epimorphisms in $\C$ whose domain and codomain are $\mu$-presentable in $\C$.
    Then, every morphism $f$ in $\C$ has a $\Reg[\C](\epclass)$-decomposition whose length is less than or equal to $\mu$, hence $\decnum[{\Reg[\C](\epclass)}]{f}\le\mu$ and $\decnum[{\Reg[\C](\epclass)}]{\C}\le \mu+1$.
\end{theorem}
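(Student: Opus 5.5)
We will use the canonical $\Reg(\epclass)$-predecomposition of $f$ and show directly that it stabilizes at $\mu$. A locally presentable category is locally small and cocomplete, so \cref{thm:GREpi_admissibility} makes $\C$ $\Reg(\epclass)$-admissible. Then \cref{prop:canonical_predecomposition} gives the canonical predecomposition $(A_\bullet,f_\bullet,X)$ of $f\colon A_0\arr X$. By \cref{cor:simple_criterion_of_cdec_stabilizing}, it suffices to show $\epclass\orth f_\mu$, using $\rorth(\Reg(\epclass))=\rorth(\epclass)$ from \cref{cor:criterion_local_orth_for_GEEpi}. Since $\Reg(\epclass)\subseteq\Epi$, \cref{thm:correspondence_decnum_and_cdecnum} then gives $\decnum[\Reg(\epclass)]{f}=\cdecnum[\Reg(\epclass)]{f}\le\mu$. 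The global bound $\decnum[\Reg(\epclass)]{\C}\le\mu+1$ follows immediately from the definition with a strict supremum.

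The key step is checking orthogonality of $f_\mu$ against each $p\colon\Gamma\arr\Delta$ in $\epclass$. Take a square with top $u\colon\Gamma\arr A_\mu$ and bottom $v\colon\Delta\arr X$, so that $f_\mu u=vp$. Because $p$ is epic, a diagonal filler is automatically unique, so only existence needs proof. We have $A_\mu\cong\Colim_{\beta<\mu}A_\beta$, a $\mu$-directed colimit since $\mu$ is a regular cardinal. As $\Gamma$ is $\mu$-presentable, $u$ factors as $u=A_{\beta,\mu}\circ u'$ for some $\beta<\mu$ and some $u'\colon\Gamma\arr A_\beta$. Then $f_\beta u'=f_\mu u=vp$, so the pair $(u',v)$ is exactly a square against $f_\beta$.

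Next we apply canonicity at $\beta$: $\Reg(\epclass)\orth[A_{\beta,\beta+1}] f_{\beta+1}$, and $\epclass\subseteq\Reg(\epclass)$. Concretely, by the construction in \cref{thm:GREpi_admissibility}, the data $(\Gamma,\Delta,p,u')$ is one of the indexed spans used to build $A_{\beta+1}$ as a multiple pushout. So there is a coprojection $\iota\colon\Delta\arr A_{\beta+1}$ with $\iota p=A_{\beta,\beta+1}u'$ and $f_{\beta+1}\iota=v$; alternatively, this follows from the local orthogonality at $\beta$. Setting $d\coloneq A_{\beta+1,\mu}\circ\iota$, we get $dp=A_{\beta,\mu}u'=u$ and $f_\mu d=f_{\beta+1}\iota=v$. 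Here $\beta+1<\mu$ because $\mu$ is an infinite cardinal, hence a limit ordinal. So $d$ is the required filler and $f_\mu\in\rorth(\epclass)$.

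The main obstacle is the factorization step: it needs the colimit $\Colim_{\beta<\mu}A_\beta$ to be $\mu$-filtered and $\hom(\Gamma,-)$ to preserve it. The chain $\bbmu$ is $\mu$-directed exactly because $\mu$ is regular, and $\Gamma$ is $\mu$-presentable by hypothesis, so this should hold. Note that only the presentability of the domains of morphisms in $\epclass$ is actually used, together with their epimorphicity for uniqueness. The hypotheses $\kappa\le\mu$ and local $\kappa$-presentability serve only to guarantee cocompleteness and local smallness, which \cref{thm:GREpi_admissibility} requires.
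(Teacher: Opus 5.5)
Your proof is correct and matches the paper's argument. Both take the canonical $\Reg(\epclass)$-predecomposition and reduce, via the stabilization criterion, to showing $f_\mu\in\rorth(\epclass)$. Both then factor the top map through some stage $A_\beta$ using $\mu$-presentability of the domain, build the filler from $\epclass\orth[A_{\beta,\beta+1}]f_{\beta+1}$ composed with $A_{\beta+1,\mu}$, and get uniqueness from epimorphicity.
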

\begin{proof}
    This follows from the well-known small object argument, but for completeness, we present a direct proof here.
    By \cref{thm:GREpi_admissibility}, the category $\C$ is $\Reg(\epclass)$-admissible.
    Thus, by \cref{prop:canonical_predecomposition}, every morphism $f$ in $\C$ has the canonical $\Reg(\epclass)$-predecomposition $(A_\bullet,f_\bullet,X)$.
    Since $\decnum[\Reg(\epclass)]{f}=\cdecnum[\Reg(\epclass)]{f}$ (\cref{thm:correspondence_decnum_and_cdecnum}), we will prove $\cdecnum[\Reg(\epclass)]{f}\le\mu$, that is, $f_\bullet$ stabilizes at $\mu$.

    By \cref{cor:simple_criterion_of_cdec_stabilizing}, it suffices to show $f_\mu\in\rorth(\epclass)$.
    Suppose that we are given a commutative square with $e\in\epclass$ below:
    \begin{equation*}
        \begin{tikzcd}
            \Gamma\ar[d,"e"']\ar[r,"u"] & A_\mu\ar[d,"f_\mu"] \\
            \Delta\ar[r,"v"'] & X
        \end{tikzcd}\incat{\C}.
    \end{equation*}
    Since $\Gamma$ is $\mu$-presentable and $A_\mu$ is the $\mu$-filtered colimit of its predecessors, the morphism $u$ factors through some coprojection $A_{\alpha,\mu}\colon A_\alpha\arr A_\mu$.
    Then, $e\orth[A_{\alpha,\alpha+1}] f_{\alpha+1}$ implies that there is a (unique) morphism $h$ making the following commute:
    \begin{equation*}
        \begin{tikzcd}[largerow]
            \Gamma\ar[rrr,"u",shift left=4]\ar[d,"e"']\ar[r] & A_\alpha\ar[r,"A_{\alpha,\alpha+1}"{below=-2}] & A_{\alpha+1}\ar[r,"A_{\alpha+1,\mu}"'] & A_\mu\ar[d,"f_\mu"] \\
            \Delta\ar[rru,dotted,"h"']\ar[rrr,"v"'] &&& X
        \end{tikzcd}\incat{\C}.
    \end{equation*}
    Since $e$ is epic, the existence of $h$ implies the desired property $e\orth f_\mu$, hence $f_\mu\in\rorth(\epclass)$.
\end{proof}

\begin{corollary}\label{cor:GSEpi_is_transfinite_composite_of_GREpi}
    Let $\kappa\le\mu$ be small regular cardinals.
    Let $\C$ be a locally $\kappa$-presentable category.
    Let $\epclass$ be a small class of epimorphisms in $\C$ whose domain and codomain are $\mu$-presentable in $\C$.
    Then, a morphism in $\C$ is $\epclass$-strongly epic if and only if it is a composite of a $\Reg[\C](\epclass)$-transfinite $(\mu+1)$-sequence.
\end{corollary}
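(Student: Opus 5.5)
We prove the two implications separately; both reduce to results already established.

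For the ``if'' direction, let $A_\bullet\colon\bbalpha\arr\C$ with $\alpha=\mu+1$ be a $\Reg(\epclass)$-transfinite sequence whose composite is $A_{0,\mu}$. By \cref{prop:fundamental_prop_of_GREpi}\cref{prop:fundamental_prop_of_GREpi-inclusions} every successor step $A_{\beta,\beta+1}$ lies in $\Stg(\epclass)=\lorth\rorth(\epclass)$. A left orthogonal class is closed under composition and under colimits of chains, as in the remark following the definition of $\bfE$-transfinite sequences. So transfinite induction on $\beta\le\mu$ gives $A_{0,\beta}\in\lorth\rorth(\epclass)$, and in particular $A_{0,\mu}$ is $\epclass$-strongly epic.

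For the ``only if'' direction, let $f\colon A\arr X$ be $\epclass$-strongly epic. By \cref{thm:GREpi_decomposition_in_lp}, $f$ has a $\Reg(\epclass)$-decomposition $(A_\bullet,f_\bullet,X)$ of length at most $\mu$; concretely, the proof there shows that the canonical predecomposition stabilizes at $\mu$. By the remark after \cref{lem:decomposition_of_leftclass}, we may regard this as a decomposition indexed by $\bbupsilon$. Since $\rorth(\Reg(\epclass))=\rorth(\epclass)$ by \cref{cor:criterion_local_orth_for_GEEpi}, we have $\lorth\rorth(\Reg(\epclass))=\Stg(\epclass)$, so $f$ lies in the left class relevant to \cref{lem:decomposition_of_leftclass}. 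That lemma then shows $f_\mu$ is an isomorphism, because $\mu\ge\abs{f_\bullet}$.

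Now restrict $A_\bullet$ to $\bbmu+1=\{\beta\le\mu\}$ and replace $A_\mu$ by $X$ via the isomorphism $f_\mu$. This modifies only the top object, so every step $A_{\beta,\beta+1}$ with $\beta+1<\mu$ is unchanged. Any step landing at $\mu$ is composed with an isomorphism, and this is harmless once we note that $\Reg(\epclass)$ is iso-closed. Iso-closure holds because $\Reg(\epclass)$ is defined by a universal property, or alternatively because composing with an isomorphism is a degenerate multiple pushout. If $\mu$ is a limit ordinal, there is no step landing at $\mu$ and the colimit condition at $\mu$ is preserved up to isomorphism. The result is a $\Reg(\epclass)$-transfinite $(\mu+1)$-sequence whose composite $A_{0,\mu}$ followed by $f_\mu$ equals $f$.

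The main obstacle is this bookkeeping: checking that the new sequence still satisfies the transfinite-sequence condition and that its composite is exactly $f$, and not merely $f$ up to isomorphism. With iso-closure of $\Reg(\epclass)$ in hand this is routine, and the rest is direct citation of \cref{thm:GREpi_decomposition_in_lp} and \cref{lem:decomposition_of_leftclass}.
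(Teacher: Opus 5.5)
Your proof is correct and follows the paper's argument. The ``if'' direction uses closure of $\lorth\rorth(\epclass)$ under composition and colimits of chains. The ``only if'' direction combines \cref{thm:GREpi_decomposition_in_lp} with \cref{lem:decomposition_of_leftclass} to see that $f_\mu$ is an isomorphism. You also spell out two points the paper leaves implicit: that $\lorth\rorth(\Reg(\epclass))=\Stg(\epclass)$ via \cref{cor:criterion_local_orth_for_GEEpi}, which is what lets \cref{lem:decomposition_of_leftclass} apply, and the bookkeeping of replacing $A_\mu$ by $X$. For the latter, since $\mu$ is an infinite cardinal and hence a limit ordinal, only the colimit condition at $\mu$ actually needs checking.
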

\begin{proof}
    Since the class $\lorth\rorth(\epclass)$ is closed under composition and wide pushout, a composite of a $\Reg(\epclass)$-transfinite $(\mu+1)$-sequence is $\epclass$-strongly epic.
    Suppose that we are given a $\epclass$-strong epimorphism $e$ conversely.
    By \cref{thm:GREpi_decomposition_in_lp}, $e$ has a $\Reg(\epclass)$-decomposition $(A_\bullet,e_\bullet,X)$ with $\abs{e_\bullet}\le\mu$.
    Then, \cref{lem:decomposition_of_leftclass} shows that the morphism $e_\mu$ is an isomorphism, which shows that $e$ is a composite of a $\Reg(\epclass)$-transfinite $(\mu+1)$-sequence.
\end{proof}

\begin{example}\label{eg:regular_epi_is_smallgenerating}
    Let $\C$ be a locally $\kappa$-presentable category.
    Let $G\subseteq\Ob\C$ be a skeleton of the class of all $\kappa$-presentable objects in $\C$, and let $\epclass$ be the class of the codiagonals on objects from $G$.
    Since $G$ becomes a generator of the category $\C$, $\epclass$-regular epimorphisms are precisely regular epimorphisms (\cref{eg:regular_epi_with_generator}).
    As already mentioned in \cref{eg:canonical_predecomposition_for_regular_epis}, the canonical regular (pre)decomposition of a morphism in $\C$ is obtained by iterated application of the coequalizer of the kernel pair.
\end{example}

If we consider the class $\epclass$ in the above example, each of \cref{thm:GREpi_decomposition_in_lp,cor:GSEpi_is_transfinite_composite_of_GREpi} gives rise to a corollary:
\begin{corollary}[{\cite[{}6.6]{GabrielUlmer1971lokal}}]\label{cor:upperbound_for_global_regdecnum_of_lp}
    Let $\C$ be a locally $\kappa$-presentable category.
    Then, for every morphism $f$ in $\C$, we have $\decnum{f}\le\kappa$, hence $\decnum{\C}\le\kappa+1$.
\end{corollary}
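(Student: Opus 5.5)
We apply \cref{thm:GREpi_decomposition_in_lp} with $\mu\coloneq\kappa$ and $\epclass$ the class of codiagonals on the objects of $G$, the skeleton of $\kappa$-presentable objects from \cref{eg:regular_epi_is_smallgenerating}.

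First we check the hypotheses of the theorem. The class $\epclass$ is small, since $\C$ has only a small set of $\kappa$-presentable objects up to isomorphism, so $G$ is small. Every member of $\epclass$ is epic: a codiagonal $\nabla_X\colon X+X\arr X$ is split epic, with the coprojection as a section. For $X\in G$, both $X$ and $X+X$ are $\kappa$-presentable, because $\kappa$-presentable objects are closed under finite, hence $\kappa$-small, colimits. Taking $\mu=\kappa$ therefore satisfies $\kappa\le\mu$ together with the presentability requirement.

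Next we identify the decomposition numbers. The category $\C$ is locally presentable, so it is cocomplete and complete. In particular it has binary copowers and kernel pairs, and $G$ is a generator. By \cref{eg:regular_epi_with_generator}, $\Reg[\C](\epclass)$ is exactly the class of regular epimorphisms. Hence $\decnum[{\Reg[\C](\epclass)}]{f}=\decnum{f}$ for every $f$, and $\decnum[{\Reg[\C](\epclass)}]{\C}=\decnum{\C}$.

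Finally, \cref{thm:GREpi_decomposition_in_lp} gives $\decnum{f}\le\kappa$ for every morphism $f$ in $\C$. Since the global number is the least strict upper bound, it follows that $\decnum{\C}\le\kappa+1$.

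None of these steps is substantial. The only point requiring care is the closure of $\kappa$-presentable objects under binary coproducts, which is the standard closure under $\kappa$-small colimits.
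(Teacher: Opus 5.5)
Your proposal is correct and is essentially the paper's argument. The paper also obtains this corollary by specializing \cref{thm:GREpi_decomposition_in_lp} to $\mu=\kappa$ and to the class $\epclass$ of codiagonals on a skeleton $G$ of the $\kappa$-presentable objects (\cref{eg:regular_epi_is_smallgenerating}), using \cref{eg:regular_epi_with_generator} to identify the $\epclass$-regular epimorphisms with the ordinary ones.
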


\begin{corollary}
    Let $\C$ be a locally $\kappa$-presentable category.
    Then, a morphism in $\C$ is strongly epic if and only if it is a composite of a regular transfinite $(\kappa+1)$-sequence.
\end{corollary}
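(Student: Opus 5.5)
The plan is to specialize \cref{cor:GSEpi_is_transfinite_composite_of_GREpi} to the class $\epclass$ of \cref{eg:regular_epi_is_smallgenerating}, taking $\mu\coloneq\kappa$. First, fix a skeleton $G$ of the $\kappa$-presentable objects of $\C$. Since $\C$ is locally $\kappa$-presentable, $G$ is a small set and a generator. Then set
\begin{equation*}
    \epclass\coloneq\{\nabla_X\colon X+X\arr X\}_{X\in G}.
\end{equation*}
Each codiagonal is a split epimorphism, hence epic, so $\epclass$ is a small class of epimorphisms.

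Next, check the presentability hypothesis of \cref{cor:GSEpi_is_transfinite_composite_of_GREpi} with $\mu=\kappa$. The codomain $X$ of each $\nabla_X$ is $\kappa$-presentable by construction. The domain $X+X$ is a finite colimit of $\kappa$-presentable objects, and $\kappa$-presentable objects are closed under $\kappa$-small colimits, so $X+X$ is also $\kappa$-presentable. The corollary then says: a morphism is $\epclass$-strongly epic if and only if it is a composite of a $\Reg(\epclass)$-transfinite $(\kappa+1)$-sequence.

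It remains to identify both classes with the ordinary ones via \cref{eg:regular_epi_with_generator}. This result applies because a locally presentable category is complete and cocomplete, so it has binary copowers and kernel pairs. It gives $\Reg(\epclass)=$ regular epimorphisms and $\Stg(\epclass)=$ strong epimorphisms. Therefore a $\Reg(\epclass)$-transfinite sequence is exactly a regular transfinite sequence, and the stated equivalence follows.

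The only point needing care is this identification of $\Stg(\epclass)$ with the usual strong epimorphisms. By \cref{eg:regular_epi}, this amounts to showing that $\rorth(\epclass)$ is exactly the class of monomorphisms. A morphism $m$ orthogonal to $\nabla_X$ for all $X\in G$ is injective on maps out of objects of $G$, and since $G$ is a generator, $m$ is monic. Conversely, every monomorphism is orthogonal to the codiagonals. Since \cref{eg:regular_epi_with_generator} is already stated, this step can simply be cited.
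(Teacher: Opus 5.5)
Your proposal is correct and is essentially the paper's own argument. The paper obtains this corollary by specializing \cref{cor:GSEpi_is_transfinite_composite_of_GREpi} with $\mu=\kappa$ to the codiagonal class of \cref{eg:regular_epi_is_smallgenerating}, and then identifies $\Reg(\epclass)$ and $\Stg(\epclass)$ with the ordinary regular and strong epimorphisms via \cref{eg:regular_epi_with_generator}; your two extra checks fill in details the paper leaves implicit, namely that $X+X$ is $\kappa$-presentable and that $\rorth(\epclass)$ consists exactly of the monomorphisms because $G$ is a generator.
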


\begin{example}[A category with $\decnum{\C}=\omega+1$]\label{eg:global_decnum_is_omegaplusone}
    We present an example for a locally finitely presentable category $\C$ such that $\decnum{\C}=\omega+1$, which is the largest possible number by \cref{cor:upperbound_for_global_regdecnum_of_lp}.
    We will define a finitary \ac{PHT}, and will consider the category of models of it as the category $\C$.
    Let $\Sigma\coloneq\{ \syn{b},\syn{c}_0,\syn{c}_1,\syn{c}_2,\dots \}$ be a one-sorted signature consisting of countably many (partial) constant symbols, and let $\theory{T}$ be a finitary \ac{PHT} over $\Sigma$ consisting of the following Horn sequents:
    \begin{gather*}
        \left(
        \syn{c}_{n} = \syn{b}
            \seq{()}
                \syn{c}_{n+1} = \syn{c}_{n+1}
        \right)_{n\ge 0}
    \end{gather*}
    For $n\ge 0$, we define a model $A_n$ of $\theory{T}$ as follows: the underlying set is the two-element set $\{0,1\}$ with $0\neq 1$; the partial constants $\syn{b}$ and $\syn{c}_i$ $(0\le i < n)$ are defined and interpreted as $0$; $\syn{c}_n$ is also defined and interpreted as $1$; the other constants $\syn{c}_i$ $(i>n)$ are undefined.
    Let $T$ denote the terminal model of $\theory{T}$, whose underlying set is the singleton, and in which all constants are defined.
    Then, for every $n\ge 0$, the coequalizer of the kernel pairs of the unique morphism $!_n\colon A_n\arr T$ in $\PMod\theory{T}$ is given by a morphism $A_n\arr A_{n+1}$ that sends all elements to $0\in A_{n+1}$.
    This means that the canonical regular decomposition of the morphism $!_0\colon A_0\arr T$ does not stabilize in any finite steps, so it stabilizes exactly at $\omega$ by \cref{cor:upperbound_for_global_regdecnum_of_lp}.
    Consequently, we obtain $\decnum{\PMod\theory{T}} = \omega+1$.
\end{example}

\begin{example}[A category with $\decnum{\C}=\omega$]\label{eg:global_decnum_is_omega}
    We now present an example for a locally finitely presentable category $\C$ such that $\decnum{\C}=\omega$.
    As well as the previous example, we will use the framework of \acp{PHT}.
    Let $\Sigma$ be the same signature as in \cref{eg:global_decnum_is_omegaplusone}, and let $\theory{T}$ be a finitary \ac{PHT} over $\Sigma$ consisting of the following Horn sequents:
    \begin{gather*}
        \top
            \seq{()}
                \syn{b} = \syn{b}
        \\
        \left(
        \syn{c}_n = \syn{c}_n
            \seq{()}
                \syn{c}_{n+1} = \syn{c}_{n+1}
        \right)_{n\ge 0}
        \\
        \left(
        \syn{c}_{n+1} = \syn{b}
            \seq{()}
                \syn{c}_n = \syn{c}_n
        \right)_{n\ge 0}
    \end{gather*}
    Then, a model of $\theory{T}$ is precisely the following data $(M,b,\alpha,c_\bullet)$:
    \begin{itemize}
        \item
            a set $M$;
        \item
            an element $b\in M$, which is regarded as $\intpn{\syn{b}}{M}$;
        \item
            an ordinal $\alpha\in [0,\omega]$ and a family $(c_n\in M)_{\alpha\le n <\omega}$ of elements, which are regarded as $\intpn{\syn{c}_n}{M}$
    \end{itemize}
    such that if $0<\alpha<\omega$, then $c_\alpha\neq b$ holds.
    Note that a morphism
    \[
        M=(M,b,\alpha,c_\bullet)\arr[] (M',b',\alpha',c'_\bullet)=M'
        \incat{\PMod\theory{T}}
    \]
    exists only if $\alpha\ge\alpha'$, and it is precisely a map $f\colon M\arr M'$ such that $f(b)=b'$ and $f(c_n)=c'_n$ hold for $n\ge\alpha$.
    \begin{claim}\label{claim:decnum_f_finite}
        For every morphism $f$ in $\PMod\theory{T}$, $\decnum{f}<\omega$ holds.
    \end{claim}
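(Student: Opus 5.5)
The plan is to reduce to strong epimorphisms, describe the canonical regular decomposition (iterated coequalizers of kernel pairs, \cref{eg:canonical_predecomposition_for_regular_epis}) explicitly in terms of the data $(M,b,\alpha,c_\bullet)$, and read off a finite bound on its length.

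First, $\PMod\theory{T}$ is locally finitely presentable. It therefore has the (strong epi, mono)-factorization system, and monomorphisms lie in $\rorth(\Reg)$. So by \cref{prop:decnum_of_composite}\cref{prop:decnum_of_composite-postcompose} it suffices to treat a strong epimorphism $f\colon M=(M,b,\alpha,c_\bullet)\arr M'=(M',b',\alpha',c'_\bullet)$. By \cref{thm:correspondence_decnum_and_cdecnum} we have $\decnum{f}=\cdecnum{f}$, so it is enough to show that the canonical regular decomposition stabilizes at a finite stage. If $\alpha=\alpha'$, I expect one step to suffice: take the quotient of $M$ by the kernel of $f$. No new constant can become defined, since definedness is upward closed in $n$ and $\alpha'=\alpha$ already. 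Hence this quotient is a model, the induced map to $M'$ is injective, and so $f_1$ is monic.

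The main step is the case $\alpha>\alpha'$. If $\alpha=\omega$, then no $\syn{c}_n$ is defined in $M$. In that case the quotient of $M$ by $\ker f$ is again a model and no sequent fires, so one step suffices again. Now let $\alpha$ be finite, say $\alpha'\le\alpha<\omega$. I will describe the coequalizer $q\colon M\arr Q$ of the kernel pair of $f$ as a partial Horn closure:
\begin{itemize}
\item first take $M/\ker f$;
\item if the image of $c_\alpha$ now equals $b$ (which happens exactly when $c'_\alpha=b'$ in $M'$ with $\alpha>\alpha'$), the sequent $\syn{c}_\alpha=\syn{b}\seq{()}\syn{c}_{\alpha-1}=\syn{c}_{\alpha-1}$ forces $\syn{c}_{\alpha-1}$ to be defined, and freely this adds a single fresh element;
\item nothing further fires in this step, since the fresh element is not equal to $b$.
\end{itemize}
So $Q$ has parameter $\alpha-1$. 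The induced $f_1\colon Q\arr M'$ sends the fresh element to $c'_{\alpha-1}$, and this is possible because $\alpha-1\ge\alpha'$. If instead $c'_\alpha\ne b'$, then $\alpha=\alpha'$ by the model condition on $M'$, and we are back in the previous case.

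By induction, the $k$-th stage $A_k$ has parameter $\max(\alpha',\alpha-k)$. Each step lowers the parameter by exactly one until it reaches $\alpha'$, and one more step yields a monomorphism. This gives $\cdecnum{f}\le\alpha-\alpha'+1<\omega$, and together with the case $\alpha=\omega$ it proves \cref{claim:decnum_f_finite}. The step I expect to need care is verifying this explicit description of the coequalizer of the kernel pair in $\PMod\theory{T}$. This includes checking that the free closure adds exactly one new element per step and that strong epicity is not even needed for the bound. I would verify it directly from the universal property, using the explicit description of models and morphisms given just above the claim.
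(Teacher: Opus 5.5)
Your strategy is the paper's: compute the coequalizer of the kernel pair explicitly, observe that it is either $M/\ker f$ (so $f_1$ is monic) or $M/\ker f$ plus one fresh element realizing $\syn{c}_{\alpha-1}$, and conclude from the decreasing parameter. The reduction to strong epimorphisms is harmless but unnecessary.

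However, one step of your case analysis is false: ``if instead $c'_\alpha\neq b'$, then $\alpha=\alpha'$ by the model condition on $M'$.'' The model condition on $M'$ only says $c'_{\alpha'}\neq b'$ when $0<\alpha'<\omega$. It gives $\alpha=\alpha'\Rightarrow c'_\alpha\neq b'$, not the converse. For example, suppose $c_1\neq b$ in a set $X$. Then the identity map $(X,b,1,(c_n)_{n\ge 1})\to(X,b,0,(c_n)_{n\ge 0})$ is a morphism with $\alpha=1>0=\alpha'$ and $c'_1\neq b'$. It is injective, so its canonical decomposition stops at once, and the first stage has parameter $1$, not $\max(\alpha',\alpha-1)=0$. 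So your formula for the parameter of $A_k$ is wrong, as is the claim that each step lowers the parameter until it reaches $\alpha'$. Moreover, the case ``$\alpha>\alpha'$, $\alpha$ finite, $c'_\alpha\neq b'$'' is left untreated.

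The repair is to split, as the paper does, on whether $c'_\alpha=b'$ rather than on whether $\alpha=\alpha'$.
\begin{itemize}
\item If $\alpha\in\{0,\omega\}$ or $c'_\alpha\neq b'$: the only sequent that could fail in $M/\ker f$ is $\syn{c}_\alpha=\syn{b}\vdash\syn{c}_{\alpha-1}\downarrow$, and it does not fire. So $M/\ker f$ is already the coequalizer, and $f_1$ is injective, hence monic.
\item If $0<\alpha<\omega$ and $c'_\alpha=b'$: you get exactly your fresh-element description, with parameter $\alpha-1\ge\alpha'$.
\end{itemize}
Thus at each stage the parameter either drops by exactly one or the next map is monic. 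Since every stage maps to $M'$, its parameter stays $\ge\alpha'$, so it can drop at most $\alpha-\alpha'$ times. Your bound $\cdecnum{f}\le\alpha-\alpha'+1<\omega$ therefore still holds.
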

    \begin{since}
        In what follows, we will calculate the coequalizer $N$ of the kernel pair of an arbitrary morphism $f\colon M=(M,b,\alpha,c_\bullet)\arr[] (M',b',\alpha',c'_\bullet)=M'$ in $\PMod\theory{T}$:
        \begin{equation*}
            \begin{tikzcd}[small]
                M\ar[dr,two heads,"q"']\ar[rr,"f"] && M' \\
                & N\ar[ru,"f_1"'] &
            \end{tikzcd}\incat{\PMod\theory{T}}.
        \end{equation*}
        As noted in \cref{rem:canonical_predecomp_for_GRegEpi}, the canonical regular predecomposition of $f$ is obtained by repeating this factorization.
        The calculation is split into two cases:

        \noindent\textbf{Case I: either $\alpha=0$, $\alpha=\omega$, or both $0<\alpha<\omega$ and $c'_\alpha\neq b'$ hold.}
        Let $f(M)\subseteq M'$ be the image of $M$ under the map $f$.
        Then, a tuple $N=(f(M),b',\alpha,(c'_n)_{\alpha\le n<\omega})$ becomes a model of $\theory{T}$, and the morphisms $q$ and $f_1$ are induced by $f$ naturally.
        Since the morphism $f_1$ is monic, $\decnum{f}=\cdecnum{f}\le 1$ holds in this case.

        \noindent\textbf{Case II: both $0<\alpha<\omega$ and $c'_\alpha = b'$ hold.}
        Let $f(M)\subseteq M'$ be the image of $M$ under the map $f$ as well.
        In this case, the same construction as in the previous case does not work, because in order for $f(M)$ to be a model of $\theory{T}$, the constant $\syn{c}_{\alpha-1}$ must be defined in $f(M)$.
        Thus, we add a new element $\bar{c}_{\alpha-1}$ to the set $f(M)$, and then we find that $N = (f(M)+\{\bar{c}_{\alpha-1}\}, b', \alpha-1, \bar{c}_\bullet)$ becomes a model of $\theory{T}$, where $\bar{c}_n\coloneq c'_n$ for $\alpha\le n<\omega$.
        Then, the morphism $f$ factors through $N$.
        Indeed, the assumption $c'_\alpha=b'$ implies that $c'_{\alpha-1}$ is already defined in $M'$, and hence the inclusion $f(M)\arr[hook]M'$ uniquely extends to the morphism $f_1\colon N\arr M'$ in $\PMod\theory{T}$.
        The morphism $q$ is given by restricting $f$.

        Since the ordinal associated with $N$ is strictly smaller than that associated with $M$ in the case II, iterated application of the factorization  that described above stops after at most finitely many steps, which shows $\decnum{f}=\cdecnum{f}<\omega$.
    \end{since}

    \begin{claim}\label{claim:decnum_f_unbounded}
        For every natural number $n$, there is a morphism $f$ in $\PMod\theory{T}$ such that $n\le\decnum{f}$.
    \end{claim}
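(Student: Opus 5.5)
The plan is to realize Case II of \cref{claim:decnum_f_finite} exactly $n$ times in a row. Fix $n\ge 1$ (the case $n=0$ is trivial) and take the domain
\[
M\coloneq(\{0,1\},\,0,\,n,\,c_\bullet),\qquad c_n\coloneq 1,\quad c_k\coloneq 0\ (k>n).
\]
This is a model because $c_n=1\neq 0=b$. For the codomain take the terminal model $T=(\{\ast\},\ast,0,\ast)$, and let $f\colon M\arr T$ be the unique morphism. In $T$ every $c'_k$ is defined and equals $b'$.

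We then run the factorization described in the proof of \cref{claim:decnum_f_finite}, which computes the canonical regular predecomposition (\cref{rem:canonical_predecomp_for_GRegEpi}), and track the ordinal attached to each stage. At stage $0$ the ordinal is $n$ with $0<n<\omega$, and $c'_n=\ast=b'$, so we are in Case II. The quotient is therefore $N_1=(\{\ast\}+\{\bar c_{n-1}\},\ast,n-1,\bar c_\bullet)$, carrying the induced map $f_1\colon N_1\arr T$. We argue inductively that stage $k$ (for $k\le n$) is a model with ordinal $n-k$, its image in $T$ is $\{\ast\}$, and it has a freshly adjoined element $\bar c_{n-k}$ distinct from $b$ when $k\ge1$. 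Whenever $n-k>0$, the codomain $T$ again satisfies $c'_{n-k}=b'$, so Case II applies again and the ordinal drops by exactly $1$. Hence each of the steps $0\to1\to\cdots\to n$ is produced by Case II and lowers the ordinal by one.

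Next, by \cref{cor:simple_criterion_of_cdec_stabilizing}, the predecomposition stabilizes at stage $k$ only if $A_{k,k+1}$ is an isomorphism. For $k<n$ this fails, because the ordinals of $N_k$ and $N_{k+1}$ differ, and a morphism of models exists only from a larger ordinal to a smaller one, so an isomorphism must preserve the ordinal. The canonical decomposition therefore does not stabilize before stage $n$, so $\cdecnum{f}\ge n$. By \cref{thm:correspondence_decnum_and_cdecnum}, since regular epimorphisms are epic, we conclude $\decnum{f}=\cdecnum{f}\ge n$.

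The main obstacle is confirming that the Case II description is exactly the coequalizer of the kernel pair at each stage, so that the ordinal really drops by only $1$ per step and not more. The earlier claim asserts this formula, so I would rely on it and check only that its hypotheses hold at every stage: $0<n-k<\omega$ and $c'_{n-k}=b'$ in $T$. Both are immediate because $T$ is terminal.
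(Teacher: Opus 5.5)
Your proposal is correct and follows the paper's proof. It uses the same morphism $!_n\colon M_n\to T$ and the same canonical regular decomposition $M_n\to M_{n-1}\to\cdots\to M_0\to T$ (your stage $k$ is the paper's $M_{n-k}$), and you justify each step via Case~II of \cref{claim:decnum_f_finite} and rule out isomorphisms by the ordinal drop. The only difference is the final count: the paper also observes that the last map $M_0\to T$ is a non-invertible regular epimorphism, giving the exact value $\decnum{!_n}=n+1$. Your bound $\decnum{!_n}\ge n$ is weaker but already suffices for the claim.
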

    \begin{since}
        For a natural number $m\ge 0$, let $M_m\coloneq(\{0,1\}, 0, m, c^m_\bullet)$ be a model of $\theory{T}$ defined as follows: $\{0,1\}$ is a two-element set; $c^m_m\coloneq 1$ and $c^m_{l}\coloneq 0$ for $l\ge m+1$.
        Let $T$ denote the terminal model of $\theory{T}$, that is, $T\coloneq(\{\ast\}, \ast, 0, (\ast)_{n})$.
        Let $!_m$ denote the unique morphism $M_m\arr T$ in $\PMod\theory{T}$.
        Then, the following diagram exhibits the canonical regular decomposition of $!_n$:
        \begin{equation*}
            \begin{tikzcd}
                M_n\ar[d,two heads,"q_n"']\ar[rrrrr,"!_n"]
                    &
                        &
                            &
                                &
                                    &
                                    T
                \\
                M_{n-1}\ar[rd,two heads,"q_{n-1}"']\ar[rrrrru,"!_{n-1}"{description}]
                    &
                        &
                            &
                                &
                                    &
                \\
                    &
                    M_{n-2}\ar[r,two heads,"q_{n-2}"']\ar[rrrruu,"!_{n-2}"{description}]
                        &
                        \cdots\ar[r,two heads,"q_2"']
                            &
                            M_1\ar[r,two heads,"q_1"']\ar[rruu,"!_1"{description}]
                                &
                                M_0\ar[ruu,two heads,"!_0"']
                                    &
            \end{tikzcd}\incat{\PMod\theory{T}}.
        \end{equation*}
        Here, each $q_m$ is the coequalizer of the kernel pair of $!_m$, which sends all elements to $0$.
        Since the last morphism $!_0$ is actually regularly epic, it follows that $\decnum{!_n}=\cdecnum{!_n}=n+1$.
    \end{since}
    By \cref{claim:decnum_f_finite,claim:decnum_f_unbounded}, we now conclude that $\decnum{\PMod\theory{T}}=\omega$.
\end{example}

\begin{example}\label{eg:iterated_localization_decnum_in_Cat}
    We show that the category $\Cat$ of small categories has the global $\bfE$-decomposition number $\decnum[\bfE]{\Cat}=\omega+1$ where $\bfE$ is the class of all strict localizations.
    Strict localizations are precisely $\epclass$-regular epimorphisms for the class $\epclass$ as in \cref{eg:strict_localizations_in_Cat}.
    Since $\Cat$ is locally finitely presentable and the unique morphism in $\epclass$ has a finitely presentable domain and codomain, we have $\decnum[\bfE]{\Cat}\le\omega+1$ by \cref{thm:GREpi_decomposition_in_lp}.
    In what follows, we give an example of a morphism $f$ in $\Cat$ with $\decnum[\bfE]{f}=\omega$.
    Let $\C_0$ and $\D$ be the categories as below. 
        \begin{equation*}
        \C_0\coloneq\left(
        \begin{tikzcd}
            A_{-1}
            \ar[d,"h_{-1}"']
                &
                A_0
                \ar[dl,"s_0"{above left=-1,pos=0.1}]
                    &
                    A_1
                    \ar[dl,"s_1"{above left=-1,pos=0.1}]
                        &
                        \cdots
                        \ar[dl,"s_2"{above left=-1,pos=0.1}]
            \\
            B_{-1}
                &
                B_0
                    &
                    B_1
                        &
                        \cdots
            \arrow[from=1-1,to=2-2,crossing over,"r_0"{below left=-1,pos=0.9}]
            \arrow[from=1-2,to=2-3,crossing over,"r_1"{below left=-1,pos=0.9}]
            \arrow[from=1-3,to=2-4,crossing over,"r_2"{below left=-1,pos=0.9}]
        \end{tikzcd}
        \right)
    \end{equation*}
    \begin{equation*}
        \D\coloneq\left(
        \begin{tikzcd}
            X_{-1}
            \ar[r,"r_0",shift left=1]
                &
                X_0
                \ar[r,"r_1",shift left=1]
                \ar[l,"s_0",shift left=1]
                    &
                    X_1
                    \ar[r,"r_2",shift left=1]
                    \ar[l,"s_1",shift left=1]
                        &
                        \cdots
                        \ar[l,"s_2",shift left=1]
        \end{tikzcd}
        \right)
        \quad
        \text{with $r_n\circ s_n = \id$ $(n\ge 0)$}
    \end{equation*}
    Let $F_0\colon \C_0\arr \D$ be the functor that sends $A_k$ and $B_k$ to $X_k$, $h_{-1}$ to the identity on $X_{-1}$, and is identity on the others.
    Then, the canonical $\bfE$-decomposition $F_\bullet$ of $F_0$ is given as follows:
    \begin{equation*}
        \begin{tikzcd}
            \C_0
            \ar[r, "\C_{0,1}"]
            \ar[rrrrd, "F_0"{below left=-2}, bend right=15, near start]
                &
                \C_1
                \ar[r, "\C_{1,2}"]
                \ar[rrrd, "F_1"{below left=-2}, bend right=8, near start]
                    &
                    \C_2
                    \ar[r, "\C_{2,3}"]
                    \ar[rrd, "F_2"{below left=-2,pos=0.4}]
                        &
                        \cdots
                            &[-30pt]
                            \C_\omega
                            \ar[d,"F_\omega"]
            \\
                &
                    &
                        &
                            &
                            \D
        \end{tikzcd}
    \end{equation*}
    Here, each $\C_\alpha$ $(1\le \alpha\le \omega)$ is the category obtained by adding morphisms $h_k\colon A_k\arr B_k$ $(0\le k<\alpha)$ to $\C_0$, subject to the conditions that for every $0\le k<\alpha$, $h_{k-1}$ is an isomorphism and $r_k\circ h_{k-1}^{-1} \circ s_k = h_k$ holds.
    For instance, each category $\C_n$ for $n<\omega$ can be illustrated as below.
    \begin{gather*}
        \C_n\coloneq\left(
        \begin{tikzcd}[hugecolumn,largerow,ampersand replacement=\&]
            A_{-1}
            \ar[d,"h_{-1}"',"\cong"{sloped}]
                \&
                \cdots
                \ar[dl,"s_0"{above left=-1,pos=0.1}]
                    \&
                    A_{n-2}
                    \ar[dl,"s_{n-2}"{above=4,pos=0.2}]
                    \ar[d,"h_{n-2}","\cong"'{sloped}]
                        \&
                        A_{n-1}
                        \ar[d,"h_{n-1}"{right=-1}]
                        \ar[dl,"s_{n-1}"{above=4,pos=0.2}]
                            \&
                            A_n
                            \ar[dl,"s_n"{above left=-1,pos=0.1}]
                                \&
                                \cdots
                                \ar[dl,"s_{n+1}"{above=4,pos=0.2}]
            \\
            B_{-1}
                \&
                \cdots
                    \&
                    B_{n-2}
                        \&
                        B_{n-1}
                            \&
                            B_n
                                \&
                                \cdots
            \arrow[from=1-1,to=2-2,crossing over,"r_0"{below left=-1,pos=0.9}]
            \arrow[from=1-2,to=2-3,crossing over,"r_{n-2}"{below=4,pos=0.8}]
            \arrow[from=1-3,to=2-4,crossing over,"r_{n-1}"{below=4,pos=0.8}]
            \arrow[from=1-4,to=2-5,crossing over,"r_n"{below left=-1,pos=0.9}]
            \arrow[from=1-5,to=2-6,crossing over,"r_{n+1}"{below=4,pos=0.8}]
        \end{tikzcd}
        \right)
    \end{gather*}
    The functors $F_\alpha$ and the strict localizations $\C_{n,n+1}$ are defined accordingly.
    Since $\C_\omega$ is the colimit of the chain $(\C_n)_{n<\omega}$ and $F_\omega$ is conservative, $F_\bullet$ is an $\bfE$-decomposition that stabilizes exactly at $\omega$.
    Moreover, this is canonical by \cref{eg:canonical_predecomposition_for_strict_localizations}.
    Indeed, the morphism $h_{n-1}$ is the only non-isomorphism in $\C_n$ that is inverted by $F_n$, and hence the localization of $\C_n$ by $h_{n-1}$ becomes the next step in the $\bfE$-decomposition, which coincides with $\C_{n,n+1}$.
    Therefore, $\decnum[\bfE]{F_0}=\cdecnum[\bfE]{F_0}=\omega$ follows.
\end{example}
\section{An approach via partial Horn theories}\label{sec:the_partial-Horn-theoretic_approach}
We now present the first syntactic approach to determine the global decomposition numbers of locally presentable categories.
This approach is based on the framework of \ac{PHT}, for which the reader can find a self-contained summary in \cref{sec:partial_horn_theories}.
\subsection{The stratification of a partial Horn theory}\label{subsec:stratification_partialhorn}
We introduce a parameter that specifies a class of generalized regular epimorphisms:
\begin{definition}[Scales]
    Let $\Sigma =(S,\Sigma)$ be a $\kappa$-ary signature.
    A \emph{scale} over $\Sigma$ consists of the following data:
    \begin{itemize}
        \item
            a small set $\Lambda$;
        \item
            for each $\lambda\in\Lambda$, a finitary context $\tup{\vz}^\lambda\ofsort\tup{\syn{s}}^\lambda = (\vz^\lambda_1\ofsort\syn{s}^\lambda_1,\dots,\vz^\lambda_{n^\lambda}\ofsort\syn{s}^\lambda_{n^\lambda})$ over $S$;
        \item
            for each $\lambda\in\Lambda$, a Horn formula $\tup{\vz}^\lambda.\syn{\zeta}^\lambda$ over $\Sigma$ in the context $\tup{\vz}^\lambda$.
    \end{itemize}
    As long as there is no risk of confusion, we simply write $\Lambda$ for a scale.
\end{definition}

\begin{example}[The equational scale]
    Let $\Sigma =(S,\Sigma)$ be a $\kappa$-ary signature.
    There is a scale over $\Sigma$ consisting of the following:
    \begin{itemize}
        \item
            $\Lambda\coloneq S$;
        \item
            for each $\syn{s}\in S$, the context $(\vz^\syn{s}_1\ofsort\syn{s},\vz^\syn{s}_2\ofsort\syn{s})$;
        \item
            for each $\syn{s}\in S$, the Horn formula $(\vz^\syn{s}_1,\vz^\syn{s}_2).\vz^\syn{s}_1 = \vz^\syn{s}_2$.
    \end{itemize}
    This is called the \emph{equational scale}.
\end{example}

A scale is identified with the following small set of epimorphisms:
\begin{notation}\label{note:morphism_class_from_scale}
    Let $\theory{T}=(S,\Sigma,\theory{T})$ be a $\kappa$-ary \ac{PHT}.
    For a scale $\Lambda$ over $\Sigma$, we write its bold font, $\bfLambda$, for the following small set of epimorphisms in $\PMod\theory{T}$:
    \begin{equation*}
        \bfLambda\coloneq
        \left(
        \begin{tikzcd}
            \repn{\tup{\vz}^\lambda.\top}_\theory{T} \ar[r,"\repn{\tup{\vz}^\lambda}_\theory{T}"]
                & \repn{\tup{\vz}^\lambda.\syn{\zeta}^\lambda}_\theory{T}
        \end{tikzcd}
        \right)_{\lambda\in\Lambda}
    \end{equation*}
\end{notation}


\begin{remark}
    Given a scale $\Lambda$, we immediately obtain the class $\Reg(\bfLambda)$ of $\bfLambda$-regular epimorphisms.
    Therefore, a scale can be regarded as a parameter that specifies a class of generalized regular epimorphisms.
    However, not all the classes of generalized regular epimorphisms can be specified by some scale.
    For instance, there is no natural way to present the class $\epclass$ defined in \cref{eg:antiLipschitz_in_Met} in the form of \cref{note:morphism_class_from_scale}.
\end{remark}

\begin{example}[The equational scale specifies the class of regular epimorphisms]
    For a \ac{PHT} $\theory{T}=(S,\Sigma,\theory{T})$, the equational scale is identified with the following class of epimorphisms in $\PMod\theory{T}$:
    \begin{equation*}
        \bfLambda=
        \left(
        \begin{tikzcd}
            \repn{(\vz^\syn{s}_1,\vz^\syn{s}_2).\top}_\theory{T} \ar[r,"\repn{\vz^\syn{s}_1,\vz^\syn{s}_2}_\theory{T}"]
                &
                \repn{(\vz^\syn{s}_1,\vz^\syn{s}_2).\vz^\syn{s}_1 = \vz^\syn{s}_2}_\theory{T}
        \end{tikzcd}
        \right)_{\syn{s}\in S}
    \end{equation*}
    Each morphism in $\bfLambda$ is isomorphic to the codiagonal on the object $\repn{\vz^\syn{s}_1.\top}_\theory{T}$.
    Since the objects $\repn{\vz^\syn{s}_1.\top}_\theory{T}$ $(\syn{s}\in S)$ form a generator in $\PMod\theory{T}$, \cref{eg:regular_epi_with_generator} implies that the $\bfLambda$-regular epimorphisms are precisely the regular epimorphisms in $\PMod\theory{T}$.
\end{example}

\begin{example}\label{eg:scale_specifying_surjections_in_Pos}
    Let $\phtpos=(\{\ast\},\Sigma_\pos,\phtpos)$ be the one-sorted \ac{PHT} for posets, i.e., the signature $\Sigma_\pos$ only contains the binary relation symbol $\le$, and $\phtpos$ consists of the Horn sequents that represent reflexivity, antisymmetry, and transitivity.
    Let $\Lambda$ denote the scale over $\Sigma_\pos$ given by the single Horn formula $(\vz_1,\vz_2).\vz_1\le\vz_2$.
    Then, the induced class $\bfLambda$ coincides with the one considered in \cref{eg:surjections_in_Pos}, and hence the scale $\Lambda$ specifies the class of surjections in $\Pos\cong\PMod\phtpos$.
\end{example}

\begin{definition}[The minimum context]
    Let $\Sigma =(S,\Sigma)$ be a $\kappa$-ary signature.
    Let $\ttau$ be a raw term over $\Sigma$.
    Then, the set $\fv(\ttau)$ of all free variables appearing in $\ttau$ can be regarded as a $\kappa$-ary context $\tup{\vx}=\fv(\ttau)$; hence, $\ttau$ becomes a $\kappa$-ary term $\tup{\vx}.\ttau$.
    The context $\tup{\vx}$ is called \emph{the minimum context of $\ttau$}.
\end{definition}

We now introduce the key concept, called \textit{gauges}, which measures the stratification of partial terms and calculates an upper bound for the global decomposition numbers.
\begin{definition}[$\Lambda$-gauges]
    Let $\theory{T}=(S,\Sigma,\theory{T})$ be a $\kappa$-ary \ac{PHT}, and let $\Lambda$ be a scale over $\Sigma$.
    A \emph{$\Lambda$-gauge for $\theory{T}$} consists of:
    \begin{itemize}
        \item
            for each raw term $\ttau$ over $\Sigma$, a small ordinal $\height\ttau$;
        \item
            for each raw term $\ttau$ over $\Sigma$, a small set $T(\ttau)$;
        \item
            for each $t\in T(\ttau)$, an element $\lambda(t)\in\Lambda$;
        \item
            for each $t\in T(\ttau)$, raw terms $\tup{\tsigma}^t\ofsort \tup{\syn{s}}^{\lambda(t)} = (\tsigma^t_k\ofsort \syn{s}^{\lambda(t)}_k)_{1\le k\le n^{\lambda(t)}}$ in the minimum context of $\ttau$
    \end{itemize}
    such that:
    \begin{itemize}
        \item
            For any $t\in T(\ttau)$ and $1\le k\le n^{\lambda(t)}$, the inequality $\height\tsigma^t_k < \height\ttau$ holds.
        \item
            For any raw term $\ttau$, the following Horn sequent is derivable from $\theory{T}$:
            \begin{equation*}
                \ttau\defined
                \biseq{\tup{\vx}}
                \bigwedge_{t\in T(\ttau)}
                \left(
                    \syn{\zeta}^{\lambda(t)} \subst{\tup{\tsigma}^t}{\tup{\vz}^{\lambda(t)}}
                    \wedge
                    \bigwedge_{1\le k\le n^{\lambda(t)}} \tsigma^t_k \defined
                \right)
            \end{equation*}
            Here, $\tup{\vx}$ is the minimum context of $\ttau$.\qedhere
    \end{itemize}
\end{definition}

\begin{remark}[Equational gauges]
    In the case when $\Lambda$ is the equational scale, we refer to a $\Lambda$-gauge simply as a \emph{gauge}.
    Then, the data consists of the following:
    \begin{itemize}
        \item
            for each raw term $\ttau$ over $\Sigma$, a small ordinal $\height\ttau$;
        \item
            for each raw term $\ttau$ over $\Sigma$, a small set $T(\ttau)$;
        \item
            for each $t\in T(\ttau)$, two raw terms $(\tsigma^t_1, \tsigma^t_2)$ of the same sort $\syn{s}(t)$ in the minimum context of $\ttau$
    \end{itemize}
    such that:
    \begin{itemize}
        \item
            For each $t\in T(\ttau)$, the inequality $\height\tsigma^t_1, \height\tsigma^t_2 < \height\ttau$ holds.
        \item
            For any raw term $\ttau$, the following Horn sequent is derivable from $\theory{T}$:
            \begin{equation*}
                \ttau\defined
                \biseq{\tup{\vx}}
                \bigwedge_{t\in T(\ttau)}
                \tsigma^t_1 = \tsigma^t_2
            \end{equation*}
            Here, $\tup{\vx}$ is the minimum context of $\ttau$.\qedhere
    \end{itemize}
\end{remark}

\begin{remark}
    If we are given a gauge, the equality $\height\ttau = 0$ forces the (raw) term $\ttau$ to be total, since the small set $T(\ttau)$ can only be the empty set.
    Intuitively, terms with $\height\ttau = 1$ are those whose definability can be characterized by equations between total terms, whereas terms with $\height\ttau = 2$ are those whose definability can be characterized by equations between terms with $\height\ttau \le 1$, and so on.
    In this way, gauges bring stratifications in the terms of a \ac{PHT}.
\end{remark}

We now state the main theorem in this section:
\begin{theorem}[Upper bound theorem by gauges]\label{thm:gauge}
    Let $\theory{T}=(S,\Sigma,\theory{T})$ be a $\kappa$-ary \ac{PHT}, and let $\Lambda$ be a scale over $\Sigma$.
    Suppose that we are given a $\Lambda$-gauge for $\theory{T}$ such that there is a small ordinal $\gamma$ satisfying $\height\ttau<\gamma$ for any raw term $\ttau$ over $\Sigma$.
    Then, we have $\decnum[\Reg(\bfLambda)]{f}\le\gamma$ for every morphism $f$ in $\PMod\theory{T}$; therefore, $\decnum[\Reg(\bfLambda)]{\PMod\theory{T}}\le\gamma+1$ also holds.
\end{theorem}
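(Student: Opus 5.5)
The plan is to work with the canonical $\Reg(\bfLambda)$-predecomposition and show that it stabilizes at $\gamma$. The category $\PMod\theory{T}$ is locally presentable and $\bfLambda$ is a small set of epimorphisms, so \cref{thm:GREpi_admissibility} and \cref{prop:canonical_predecomposition} give, for any $f\colon A_0\arr X$, its canonical $\Reg(\bfLambda)$-predecomposition $(A_\bullet,f_\bullet,X)$. By \cref{thm:correspondence_decnum_and_cdecnum} it suffices to prove that this predecomposition stabilizes at $\gamma$. By \cref{cor:simple_criterion_of_cdec_stabilizing}, it is enough to show either $\bfLambda\orth f_\gamma$ or that $A_{\gamma,\gamma+1}$ is an isomorphism. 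The construction in the proof of \cref{thm:GREpi_admissibility} describes each step $A_\beta\arr A_{\beta+1}$ concretely. It is the multiple pushout that freely imposes $\syn{\zeta}^\lambda(\tup b)$ on every tuple $\tup b$ of $A_\beta$ whose image under $f_\beta$ satisfies $\syn{\zeta}^\lambda$ in $X$. In particular, if $\tup b$ lies in $A_\beta$ and $X\models\syn{\zeta}^\lambda(f_\beta\tup b)$, then $A_{\beta+1}\models\syn{\zeta}^\lambda(A_{\beta,\beta+1}\tup b)$.

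The key step is an induction over the gauge. Writing $\tau(\tup a)$ for the interpretation of a raw term $\ttau$ at a tuple $\tup a$, I would prove:

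\emph{Claim.} For every raw term $\ttau$ and every tuple $\tup a$ in $A_0$ (indexed by the minimum context of $\ttau$), if $\tau(f\tup a)$ is defined in $X$, then $\tau(A_{0,\beta}\tup a)$ is defined in $A_\beta$ for every $\beta\ge\height\ttau$.

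The proof is by induction on $\height\ttau$. Assume $\tau(f\tup a)\defined$ in $X$. The gauge sequent, read left to right in $X$, gives $\sigma^t_k(f\tup a)\defined$ and $X\models\syn{\zeta}^{\lambda(t)}(\tup\sigma^t(f\tup a))$ for every $t\in T(\ttau)$. Each $\tsigma^t_k$ has $\height\tsigma^t_k<\height\ttau$. Let $\beta'\coloneq\sup_{t,k}\height\tsigma^t_k$ if this supremum is attained, and otherwise the predecessor argument is handled as follows; in all cases $\beta'+1\le\height\ttau$ fails only at limit values, and there I use that definedness is preserved along the chain. By the induction hypothesis, all $\sigma^t_k(A_{0,\beta'}\tup a)$ are defined in $A_{\beta'}$. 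The description of the canonical step above then makes each $\syn{\zeta}^{\lambda(t)}(\tup\sigma^t)$ hold at stage $\beta'+1\le\height\ttau$, and these facts are transported forward along $A_{\bullet}$. Finally, the right-to-left direction of the gauge sequent, applied in $A_{\height\ttau}$, yields $\tau\defined$ there. One subtlety is that each $t$ may need its own stage, since the set $T(\ttau)$ is only small. I would take the supremum of these stages, which is still $\le\height\ttau$ because every individual stage is at most $\height\ttau$.

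Next I would use the Claim to analyze $A_{\gamma,\gamma+1}$. Since $A_{0,\gamma+1}$ is strongly epic and models of a \ac{PHT} are generated by terms, every element of $A_{\gamma+1}$ has the form $\tau(A_{0,\gamma+1}\tup a)$ with $\tau(f\tup a)\defined$. Because $\height\ttau<\gamma$, the Claim shows this element already exists in $A_\gamma$, so $A_{\gamma,\gamma+1}$ is surjective on every sort. The main obstacle is to show that no new equalities or relations appear at stage $\gamma+1$. That is, if $X\models\syn{\zeta}^\lambda(f_\gamma\tup b)$, I must show that $A_\gamma$ already satisfies $\syn{\zeta}^\lambda(\tup b)$, and not merely that its terms are defined. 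My first attempt would be to write $\tup b=\tup\sigma(A_{0,\gamma}\tup a)$. I would then apply the gauge to a suitable auxiliary raw term whose definedness is governed by $\syn{\zeta}^\lambda(\tup\sigma)$, obtaining it for instance from the gauge data attached to terms of rank just above those of $\tup\sigma$, so that the Claim again delivers the required atomic facts at a stage below $\gamma$. If no such term exists, the fallback is to strengthen the Claim so that it tracks, alongside definedness, every instance $\syn{\zeta}^\lambda(\tup\sigma)$ with $\tup\sigma$ of gauge below $\beta$ that holds in $X$, and to verify that stage $\beta+1$ imposes exactly these.

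Once $A_{\gamma,\gamma+1}$ is shown to be an isomorphism, \cref{cor:simple_criterion_of_cdec_stabilizing} gives $\cdecnum[\Reg(\bfLambda)]{f}\le\gamma$. The global bound $\decnum[\Reg(\bfLambda)]{\PMod\theory{T}}\le\gamma+1$ then follows directly from the definition of the global number as the least strict upper bound.
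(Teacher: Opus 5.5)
There is a gap at exactly the step you call the main obstacle, although it closes with what you already have. Your set-up (canonical $\Reg(\bfLambda)$-predecomposition, stabilization at $\gamma$) and your definedness Claim are sound, subject to the caveat at the end. In the Claim's proof, the per-$t$ stages $\max_k\height\tsigma^t_k+1\le\height\ttau$ already suffice, so no supremum is needed. Surjectivity of $A_{\gamma,\gamma+1}$ does not give an isomorphism, however. Your first idea for the rest cannot work in general: a gauge only controls definedness of terms, and an instance of $\syn{\zeta}^\lambda$ forced by $f$ need not be equivalent to the definedness of any term. For example, in the theory of sets with the equational scale every raw term is a variable, yet equations $\vx=\vy$ must be imposed.

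What you are missing is that the Claim already yields the statement about instances, so no strengthened induction is needed. Let $u\colon\repn{\tup{\vz}^\lambda.\top}_\theory{T}\to A_\gamma$, i.e.\ a tuple $\tup{b}$ in $A_\gamma$, satisfy $X\models\syn{\zeta}^\lambda(f_\gamma\tup{b})$. Write $b_i=\sigma_i(A_{0,\gamma}\tup{a})$ for raw terms $\tsigma_1,\dots,\tsigma_{n^\lambda}$, and put $\beta_0\coloneq\max_i\height\tsigma_i<\gamma$. By the Claim, $\tup{\sigma}(A_{0,\beta_0}\tup{a})$ is defined in $A_{\beta_0}$, and $f_{\beta_0}$ sends it to $f_\gamma\tup{b}$. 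Hence stage $\beta_0+1\le\gamma$ imposes $\syn{\zeta}^\lambda$ on it, and transporting forward gives $A_\gamma\models\syn{\zeta}^\lambda(\tup{b})$. So $u$ factors through $\repn{\tup{\vz}^\lambda}_\theory{T}$, i.e.\ $\bfLambda\orth f_\gamma$. By \cref{cor:criterion_local_orth_for_GEEpi,cor:simple_criterion_of_cdec_stabilizing} this is stabilization at $\gamma$, and surjectivity is not needed at all.

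Two further points. First, the representation $b_i=\sigma_i(A_{0,\gamma}\tup{a})$, like your surjectivity argument, needs $A_{0,\gamma}$ to be $\theory{T}$-dense, i.e.\ term-generated. ``Strongly epic'' does not give this: $A_{0,\gamma}$ is only $\bfLambda$-strongly epic, which need not be an ordinary strong epimorphism (think of surjections in $\Pos$). Density holds instead because every map in $\bfLambda$ is $\theory{T}$-dense and dense maps form the left class of a factorization system, as used in the paper.

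Second, with this repair your route genuinely differs from the paper's. The paper first reduces to a $\bfLambda$-strong epimorphism and presents it as $\repn{\tup{\vx}}_\theory{T}\colon\repn{\tup{\vx}.\fphi}_\theory{T}\to\repn{\tup{\vx}.\fpsi}_\theory{T}$ via an external lemma on dense morphisms. It then builds the canonical predecomposition syntactically and proves its Claim 2 directly for the sets $K^\lambda_\alpha$ of imposed instances. Your semantic version works for arbitrary $f$, avoids that presentation lemma, and lets the definedness Claim carry the induction.

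Finally, both arguments tacitly need $n^\lambda\ge 1$. If some $\syn{\zeta}^\lambda$ has empty context, say $\syn{c}\defined$ for a partial constant $\syn{c}$, a gauge may give $\syn{c}$ height $0$ even though $\syn{c}$ only appears at stage $1$. Then your Claim fails at height $0$, as does the paper's Claim 2 at $\beta=0$. The bound itself also fails. Take a total constant $\syn{a}$, a constant $\syn{e}$ defined iff $\syn{c}=\syn{a}$, and the equational scale together with $().\syn{c}\defined$. All heights are below $2$, yet the map from the initial to the terminal model needs three steps: add $\syn{c}$; identify $\syn{c}=\syn{a}$, which creates a new $\syn{e}$; then identify $\syn{e}=\syn{a}$.
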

\begin{proof}
    Let $f$ be a morphism in $\PMod\theory{T}$.
    Since $\PMod\theory{T}$ admits an orthogonal factorization system $(\Stg(\bfLambda),\rorth(\bfLambda))$, $f$ can be decomposed as $f=me$, where $e$ is a $\bfLambda$-strong epimorphism and $m\in\rorth(\bfLambda)$.
    Since \cref{prop:decnum_of_composite}\cref{prop:decnum_of_composite-postcompose} implies $\decnum[\Reg(\bfLambda)]{f}=\decnum[\Reg(\bfLambda)]{e}$, it suffices to show $\decnum[\Reg(\bfLambda)]{e}\le\gamma$.

    We now consider another orthogonal factorization system on $\PMod\theory{T}$ consisting of \textit{$\theory{T}$-dense morphisms} and \textit{$\theory{T}$-closed monomorphisms} that are studied in \cite{Kawase2026relativized}.
    Since each morphism $\repn{\tup{\vz}^\lambda}$ in $\bfLambda$ as in \cref{note:morphism_class_from_scale} factors through no proper $\theory{T}$-closed submodel of its codomain, every morphism in $\bfLambda$ is $\theory{T}$-dense.
    Thus, every $\bfLambda$-strong epimorphism is also $\theory{T}$-dense, in particular, so is $e$.
    For a sufficiently large regular cardinal $\mu\in[\kappa,\inacc)$, the domain and codomain of $e$ are $\mu$-presentable in $\PMod\theory{T}$.
    Then, by \cite[3.17.\ Lemma]{Kawase2026relativized}, the morphism $e$ is isomorphic to a morphism of the following form:
    \begin{equation*}
        \repn{\tup{\vx}.\fphi}_\theory{T}
        \arr(\repn{\tup{\vx}}_\theory{T})[][2]
        \repn{\tup{\vx}.\fpsi}_\theory{T}
        \incat{\PMod\theory{T}},
    \end{equation*}
    where $\tup{\vx}$, $\fphi$, and $\fpsi$ are all $\mu$-ary.
    In what follows, we will regard $e$ as the morphism above.

    For each $\lambda\in\Lambda$, by simultaneous recursion, we define a family $(K^\lambda_\alpha)_{\alpha<\inacc}$ of small sets of terms that are in the context $\tup{\vx}$ and whose sorts are compatible with the finitary context $\tup{\vz}^\lambda$:
    \begin{gather}
        \notag
        K^\lambda_0\coloneq\emptyset;
        \\
        \notag
        K^\lambda_\alpha\coloneq\bigcup_{\alpha'<\alpha}K^\lambda_{\alpha'}
            \quad (\alpha\colon\text{of limit type});
        \\
        \notag
        K^\lambda_{\alpha+1}\coloneq K^\lambda_\alpha\cup
            \left\{
                (\tup{\vx}.\ttau_i)_{1\le i\le n^\lambda}
                    \mid
                        \text{The following \cref{eq:first_sequent_of_K} and \cref{eq:second_sequent_of_K} are derivable from $\theory{T}.$}
            \right\}
        \\
        \label{eq:first_sequent_of_K}
        \fphi
        \wedge
        \syn{\epsilon}_\alpha
            \seq{\tup{\vx}}
                \bigwedge_{1\le i\le n^\lambda} \ttau_i \defined
        \\
        \label{eq:second_sequent_of_K}
        \fpsi
            \seq{\tup{\vx}}
                \syn{\zeta}^\lambda\subst{\ttau_i}{\vz^\lambda_i}_{1\le i\le n^\lambda}
    \end{gather}
    Here, $\syn{\epsilon}_\alpha$ denotes the following Horn formula in the context $\tup{\vx}$:
    \begin{equation*}
        \bigwedge_{\lambda'\in\Lambda}\bigwedge_{(\tsigma_i)_i\in K^{\lambda'}_\alpha} \syn{\zeta}^{\lambda'}\subst{\tsigma_i}{\vz^{\lambda'}_i}_{1\le i\le n^{\lambda'}}
    \end{equation*}
    We now consider the objects $A_\alpha\coloneq\repn{\tup{\vx}.\fphi\wedge\syn{\epsilon}_\alpha}_\theory{T}\in\PMod\theory{T}$.
    Then, for every small ordinals $\alpha\le\beta$, the inclusions $K^\lambda_\alpha\subseteq K^\lambda_\beta$ for $\lambda$ implies that the Horn sequent
    \[
        \fphi
        \wedge
        \syn{\epsilon}_\beta
            \seq{\tup{\vx}}
                \fphi
                \wedge
                \syn{\epsilon}_\alpha
    \]
    is derivable from $\theory{T}$.
    Thus, we obtain the morphisms $A_{\alpha,\beta}\coloneq\repn{\tup{\vx}}\colon A_\alpha\arr A_\beta$.
    In fact, for a limit ordinal $\alpha$, $A_\alpha$ is the colimit of chains $(A_{\alpha'})_{\alpha'\in\bbalpha}$ because the set $K^\lambda_\alpha$ is defined as the union of $(K^\lambda_{\alpha'})_{\alpha'<\alpha}$.
    Thus, we obtain a transfinite sequence $A_\bullet=(A_\alpha,A_{\alpha,\beta})_{\alpha\le\beta}$.

    Let $X\coloneq\repn{\tup{\vx}.\fpsi}_\theory{T}$.
    By transfinite induction, we will show that for every small ordinal $\alpha$, the morphism $e_\alpha\coloneq\repn{\tup{\vx}}_\theory{T}\colon A_\alpha\arr X$ is well-defined, that is, the Horn sequent
    \[
        \fpsi
            \seq{\tup{\vx}}
                \fphi
                \wedge
                \syn{\epsilon}_\alpha
    \]
    is derivable from $\theory{T}$.
    The case when $\alpha$ is $0$ or a limit ordinal is trivial.
    Let $\alpha$ be an arbitrary small ordinal, and suppose that $e_\alpha$ is defined.
    Then, for each index $\lambda\in\Lambda$ of the scale, an element of the set $K^\lambda_{\alpha+1}\backslash K^\lambda_\alpha$ corresponds to a morphism $\repn{\tup{\vz}^\lambda.\top}_\theory{T} \arr A_\alpha$ whose composite with $e_\alpha$ factors through the morphism $\repn{\tup{\vz}^\lambda.\top}_\theory{T} \arr \repn{\tup{\vz}^\lambda.\syn{\zeta}^\lambda}_\theory{T}$ in $\bfLambda$ described in \cref{note:morphism_class_from_scale}.
    Indeed, the first Horn sequent \cref{eq:first_sequent_of_K} ensures that $\tup{\ttau}=(\tup{\vx}.\ttau_i)_i \in K^\lambda_{\alpha+1}\backslash K^\lambda_\alpha$ exhibits a morphism $\repn{\tup{\vz}^\lambda.\top}_\theory{T} \arr A_\alpha$, while the second one \cref{eq:second_sequent_of_K} is precisely the condition about factorization.
    \begin{equation*}
        \begin{tikzcd}
            &
            A_\alpha\ar[d,phantom,sloped,"="]
            &[15]
            X\ar[d,phantom,sloped,"="]
            \\[-15]
            \repn{\tup{\vz}^\lambda.\top}_\theory{T} \ar[d,"\repn{\tup{\vz}^\lambda}_\theory{T}"']\ar[r,"\repn{\tup{\ttau}}_\theory{T}"]
            &
            \repn{\tup{\vx}.\fphi\wedge\syn{\epsilon}_\alpha}_\theory{T} \ar[r,"e_\alpha=\repn{\tup{\vx}}_\theory{T}"]
            &
            \repn{\tup{\vx}.\fpsi}_\theory{T}
            \\
            \repn{\tup{\vz}^\lambda.\syn{\zeta}^\lambda}_\theory{T} \ar[rru,dotted,"\repn{\tup{\ttau}}_\theory{T}"']
            &
            &
        \end{tikzcd}\incat{\PMod\theory{T}}
    \end{equation*}
    Then, the object $A_{\alpha+1}$ is the multiple pushout of the morphisms $\repn{\tup{\vz}^\lambda}_\theory{T}$ in $\bfLambda$ along the morphisms $\repn{\tup{\ttau}}_\theory{T}$ exhibited by elements of $K^\lambda_{\alpha+1}\backslash K^\lambda_\alpha$, and hence the morphism $e_{\alpha+1}$ is induced by the universal property of the multiple pushout:
    \begin{equation*}
        \begin{tikzcd}
            \repn{\tup{\vz}^\lambda.\top}_\theory{T} \ar[d,"\repn{\tup{\vz}^\lambda}_\theory{T}"']\ar[r,"\repn{\tup{\ttau}}_\theory{T}"]
                &
                A_\alpha \ar[r,"e_\alpha"]\ar[d,"A_{\alpha,\alpha+1}"']
                &[15]
                X
                \\
                \repn{\tup{\vz}^\lambda.\syn{\zeta}^\lambda}_\theory{T} \ar[r]
                &
                A_{\alpha+1}\ar[ru,dotted,"e_{\alpha+1}"']
                &
        \end{tikzcd}\incat{\PMod\theory{T}}.
    \end{equation*}
    This shows that $e_{\alpha+1}$ is well-defined.

    According to the construction in the proof of \cref{thm:GREpi_admissibility}, the above argument also shows that $A_{\alpha,\alpha+1}\in\Reg(\bfLambda)$ and $\Reg(\bfLambda)\orth[A_{\alpha,\alpha+1}] e_{\alpha+1}$.
    Therefore, the tuple $(A_\bullet,e_\bullet,X)$ becomes a canonical $\Reg(\bfLambda)$-predecomposition of the morphism $e=e_0$.
    To determine where $(A_\bullet,e_\bullet,X)$ stabilizes, we now show the following claims:
    \begin{claim}\label{claim:gauge_mainthm_firstclaim}
        Let $\tup{\vx}.\tup{\ttau}\in K^\lambda_\alpha$.
        Then, the sequent \cref{eq:second_sequent_of_K} for $\tup{\vx}.\tup{\ttau}$ is derivable from $\theory{T}$, and there is an ordinal $\alpha'<\alpha$ such that the sequent \cref{eq:first_sequent_of_K} at $\alpha'$ for $\tup{\vx}.\tup{\ttau}$ is derivable from $\theory{T}$.
    \end{claim}
    \begin{since}
        This follows from straightforward transfinite induction.
    \end{since}
    
    \begin{claim}\label{claim:gauge_mainthm_secondclaim}
        Let $\lambda\in\Lambda$, and let $\tup{\vx}.\tup{\ttau}\coloneq (\tup{\vx}.\ttau_i)_i$ be a family of terms such that $\tup{\vx}.\tup{\ttau}\in K^\lambda_\alpha$ for some $\alpha<\inacc$.
        If there is an ordinal $\beta<\inacc$ such that $\height\ttau_i<\beta$ holds for every $1\le i\le n^\lambda$, then $\tup{\vx}.\tup{\ttau}\in K^\lambda_\beta$ holds.
    \end{claim}
    \begin{since}
        We show this by transfinite induction on $\beta$.
        Fix an ordinal $\beta<\inacc$, and suppose that the statement holds for any ordinal less than $\beta$.
        Let $\lambda\in\Lambda$, and let $\tup{\vx}.\tup{\ttau}\coloneq (\tup{\vx}.\ttau_i)_i$ be a family of terms satisfying $\tup{\vx}.\tup{\ttau}\in K^\lambda_\alpha$ for some $\alpha$ and $\height\ttau_i<\beta$ for all $i$.
        Since the range of the index $i$ is finite, there is an ordinal $\beta'<\beta$ such that for every $i$, $\height\ttau_i\le\beta'$ holds.
        By $\tup{\vx}.\tup{\ttau}\in K^\lambda_\alpha$ and \cref{claim:gauge_mainthm_firstclaim}, regardless of whether $\alpha$ is a successor or not, there is an ordinal $\alpha'<\alpha$ such that the following Horn sequent, obtained by applying \cref{eq:first_sequent_of_K} to $\alpha'$, is derivable from $\theory{T}$.
        \begin{equation*}
            \label{eq:first_sequent_of_K_alpha-dash}
            \fphi
            \wedge
            \syn{\epsilon}_{\alpha'}
                \seq{\tup{\vx}}
                    \bigwedge_{1\le i\le n^\lambda} \ttau_i \defined
        \end{equation*}
        Then, by the definition of $\Lambda$-gauges and the context weakening, the following is derivable from $\theory{T}$:
        \begin{equation*}
            \fphi
            \wedge
            \syn{\epsilon}_{\alpha'}
                \seq{\tup{\vx}}
                    \bigwedge_{1\le i\le n^\lambda}
                    \bigwedge_{t\in T(\ttau_i)}
                    \left(
                        \syn{\zeta}^{\lambda(t)} \subst{\tup{\tsigma}^t}{\tup{\vz}^{\lambda(t)}}
                        \wedge
                        \bigwedge_{1\le k\le n^{\lambda(t)}} \tsigma^t_k \defined
                    \right)
        \end{equation*}
        Since it has already shown that $\fpsi\seq{\tup{\vx}}\fphi\wedge\syn{\epsilon}_{\alpha'}$ is derivable from $\theory{T}$, so are the following sequents for each $1\le i\le n^\lambda$ and $t\in T(\ttau_i)$:
        \begin{gather*}
            \fphi\wedge\syn{\epsilon}_{\alpha'}
                \seq{\tup{\vx}}
                    \bigwedge_{1\le k\le n^{\lambda(t)}}
                    \tsigma^t_k\defined
            \\
            \fpsi
                \seq{\tup{\vx}}
                    \syn{\zeta}^{\lambda(t)} \subst{\tup{\tsigma}^t}{\tup{\vz}^{\lambda(t)}}
        \end{gather*}
        These imply that for every $1\le i\le n^\lambda$ and $t\in T(\ttau_i)$, the tuple $\tup{\vx}.\tup{\tsigma}^t \coloneq (\tup{\vx}.\tsigma^t_k)_{1\le k\le n^{\lambda(t)}}$ belongs to $K^{\lambda(t)}_{\alpha'+1}$.
        Then, by $\height\tsigma^t_k < \height\ttau_i \le \beta'$ and the induction hypothesis, $\tup{\vx}.\tup{\tsigma}^t$ belongs to $K^{\lambda(t)}_{\beta'}$
        This implies that the first sequent below is derivable from $\theory{T}$, and so is the second one by the definition of $\syn{\epsilon}_{\beta'}$.
        \begin{gather*}
            \fphi\wedge\syn{\epsilon}_{\beta'}
                \seq{\tup{\vx}}
                    \bigwedge_{1\le i\le n^\lambda}
                    \bigwedge_{t\in T(\ttau_i)}
                    \bigwedge_{1\le k\le n^{\lambda(t)}}
                    \tsigma^t_k\defined
            \\
            \fphi\wedge\syn{\epsilon}_{\beta'}
                \seq{\tup{\vx}}
                    \bigwedge_{1\le i\le n^\lambda}
                    \bigwedge_{t\in T(\ttau_i)}
                    \syn{\zeta}^{\lambda(t)} \subst{\tup{\sigma}^t}{\tup{\vz}^{\lambda(t)}}
        \end{gather*}
        Thus, by the definition of $\Lambda$-gauges and the context weakening again, the following is derivable from $\theory{T}$:
        \begin{equation}\label{eq:first_sequent_of_K_beta-dash}
            \fphi\wedge\syn{\epsilon}_{\beta'}
                \seq{\tup{\vx}}
                    \bigwedge_{1\le i\le n^\lambda}
                    \ttau_i\defined
        \end{equation}
        On the other hand, the sequent \cref{eq:second_sequent_of_K} for $(\tup{\vx}.\ttau_i)_i$ is also derivable from $\theory{T}$ as stated in \cref{claim:gauge_mainthm_firstclaim}.
        Combining this with \cref{eq:first_sequent_of_K_beta-dash} implies $\tup{\vx}.\tup{\ttau} \in K^\lambda_{\beta'+1}\subseteq K^\lambda_\beta$, which proves the claim.
    \end{since}
    By \cref{claim:gauge_mainthm_secondclaim}, the sequences $(K^\lambda_\alpha)_{\alpha<\inacc}$ stabilize at (or before) the ordinal $\gamma$, which also means that the predecomposition $(A_\bullet,e_\bullet,X)$ stabilizes at $\gamma$.
    Thus, we have $\decnum[\Reg(\bfLambda)]{f}=\cdecnum[\Reg(\bfLambda)]{f}=\abs{e_\bullet}\le\gamma$.
\end{proof}

\begin{example}\label{eg:toyexample_of_gauge}
    Consider the finitary \ac{PHT} $\theory{T}$ defined in \cref{eg:monadic_adj_plustwo}.
    Since its signature $\Sigma'=\{\syn{a},\syn{b},\syn{c},\syn{d}\}$ contains no symbols other than constants, the raw terms over $\Sigma'$ are exactly $\syn{a}$, $\syn{b}$, $\syn{c}$, $\syn{d}$, and variables $\vx$.
    Then, we can define a gauge for $\theory{T}$ as follows:
    \begin{gather*}
        \height\syn{a}=\height\syn{b}=\height\vx\coloneq 0,
        \quad
        \height\syn{c}\coloneq 1,
        \quad
        \height\syn{d}\coloneq 2;
        \\
        T(\syn{a})=T(\syn{b})\coloneq\emptyset,
        \quad
        T(\syn{c})\coloneq\{i\},
        \quad
        T(\syn{d})\coloneq\{j\};
        \\
        \tsigma^i_1 = \tsigma^j_1 \coloneq \syn{a},\quad
        \tsigma^i_2 \coloneq \syn{b},\quad
        \tsigma^j_2 \coloneq \syn{c}.
    \end{gather*}
    Using \cref{thm:gauge}, we obtain $\decnum{\PMod\theory{T}} \le 3+1 = 4$.
    Combining this with the argument in \cref{eg:monadic_adj_plustwo}, we conclude $\decnum{\PMod\theory{T}} = 4$.
\end{example}

\begin{example}
    Consider the scale $\Lambda$ that specifies the class of surjections in $\Pos$ as in \cref{eg:scale_specifying_surjections_in_Pos}.
    Since there are no non-trivial terms over the signature $\Sigma_\pos$ for posets, letting $\height\ttau\coloneq 0$ and $T(\ttau)\coloneq\emptyset$ yields a $\Lambda$-gauge for the \ac{PHT} for posets.
    Then, \cref{thm:gauge} implies $\decnum[\bfE]{\Pos}\le 2$ for the class $\bfE$ of surjections in $\Pos$.
    Since equality is immediate, we actually have $\decnum[\bfE]{\Pos} = 2$, as already mentioned in \cref{eg:global_minimum_decnum}\cref{eg:global_minimum_decnum-surjections}.
\end{example}

\begin{remark}
    For a given locally presentable category, the existence of gauges depends on the choice of a \ac{PHT} whose models form that category.
    Indeed, the finitary \ac{PHT} $\theory{S}$ considered in \cref{eg:monadic_adj_plustwo} does not admit gauges because the definability of the partial constant $\syn{c}$ cannot be characterized by any equation between other terms.
    However, as explained in \cref{eg:monadic_adj_plustwo}, $\theory{S}$ can be rewritten into a two-sorted equational theory without changing its models, and the latter clearly admits a gauge because all terms are totally defined there.
\end{remark}

\subsection{Example: strict $n$-categories}\label{subsec:Example_strict_n-categories}
This subsection is devoted to illustrating the use of gauges to determine the global regular-decomposition number, taking the category of strict $n$-categories as an example.
We will revisit this result from a different perspective in \cref{sec:for_models_of_dependent_algebraic_theories}.
\begin{notation}
    For terms $\tup{\vx}.\ttau_0,\dots,\tup{\vx}.\ttau_n$ of the same sort, the (raw) Horn formula $\bigwedge_{0\le i<n}\ttau_i=\ttau_{i+1}$ is also denoted by $\ttau_0=\ttau_1=\cdots=\ttau_n$.
\end{notation}

We recall the one-sort definition of strict $n$-categories due to Street \cite{Street1987oriented}:
\begin{example}[Strict $n$-categories]
    Let $\theory{T}_\ncat = (\{\syn{*}\},\Sigma_\ncat,\theory{T}_\ncat)$ be the one-sorted finitary \ac{PHT} defined as follows:
    \begin{equation*}
        \Sigma_\ncat \coloneq
        \left\{
        \begin{gathered}
            \syn{d}_k,\syn{c}_k\colon\syn{*}\to\syn{*},
            \quad
            \ccomp[k]\colon \syn{*}\sqcap\syn{*}\to\syn{*}
        \end{gathered}
        \right\}_{1\le k\le n}.
    \end{equation*}
    The reader may find it helpful to see the sort $\syn{*}$ as the sort of $n$-cells, including lower-dimensional cells as identities, and $\syn{d}_k$ and $\syn{c}_k$ as the $(k-1)$-th domain and codomain operators, respectively.
    In this viewpoint, $\vx\ccomp[k]\vy$ represents, if it exists, the composite of $\vx$ and $\vy$ in the direction of $k$-cells along the common $(k-1)$-cell $\syn{d}_k(\vx)=\syn{c}_k(\vy)$.
    The \ac{PHT} $\theory{T}_\ncat$ consists of the following Horn sequents for $1\le k\le n$:
    \begin{gather}
        \label{eq:ncataxiom-rightmostboundary_d}
        \top
            \seq{\vx}
                \syn{d}_k\syn{d}_k(\vx) = \syn{d}_k(\vx) = \syn{c}_k\syn{d}_k(\vx)
        \\
        \label{eq:ncataxiom-rightmostboundary_c}
        \top
            \seq{\vx}
                \syn{c}_k\syn{c}_k(\vx) = \syn{c}_k(\vx) = \syn{d}_k\syn{c}_k(\vx)
        \\
        \label{eq:ncataxiom-definability_of_composition}
        \syn{d}_k(\vx)=\syn{c}_k(\vy)
            \biseq{\vx,\vy}
                \vx\ccomp[k]\vy\defined
        \\
        \label{eq:ncataxiom-boundary_of_composite}
        \syn{d}_k(\vx)=\syn{c}_k(\vy)
            \seq{\vx,\vy}
                \syn{d}_k(\vx\ccomp[k]\vy) = \syn{d}_k(\vy)
                \wedge
                \syn{c}_k(\vx\ccomp[k]\vy) = \syn{c}_k(\vx)
        \\
        \label{eq:ncataxiom-identity_law}
        \top
            \seq{\vx}
                \vx\ccomp[k]\syn{d}_k(\vx) = \vx = \syn{c}_k(\vx)\ccomp[k]\vx
        \\
        \label{eq:ncataxiom-associativity}
        \syn{d}_k(\vx)=\syn{c}_k(\vy)
        \wedge
        \syn{d}_k(\vy)=\syn{c}_k(\vz)
            \seq{\vx,\vy,\vz}
                (\vx\ccomp[k]\vy)\ccomp[k]\vz = \vx\ccomp[k](\vy\ccomp[k]\vz)
    \end{gather}
    and the following Horn sequents for $1\le i<j\le n$:
    \begin{gather}
        \label{eq:ncataxiom-lowerboundary_d}
        \top
            \seq{\vx}
                \syn{d}_j\syn{d}_i(\vx)
                =\syn{d}_i\syn{d}_j(\vx)
                =\syn{d}_i\syn{c}_j(\vx)
                =\syn{c}_j\syn{d}_i(\vx)
                =\syn{d}_i(\vx)
        \\
        \label{eq:ncataxiom-lowerboundary_c}
        \top
            \seq{\vx}
                \syn{c}_j\syn{c}_i(\vx)
                =\syn{c}_i\syn{c}_j(\vx)
                =\syn{c}_i\syn{d}_j(\vx)
                =\syn{d}_j\syn{c}_i(\vx)
                =\syn{c}_i(\vx)
        \\
        \label{eq:ncataxiom-functoriality_of_boundary}
        \syn{d}_i(\vx)=\syn{c}_i(\vy)
            \seq{\vx,\vy}
                \syn{d}_j(\vx\ccomp[i]\vy)
                =
                \syn{d}_j(\vx)\ccomp[i]\syn{d}_j(\vy)
                \wedge
                \syn{c}_j(\vx\ccomp[i]\vy)
                =
                \syn{c}_j(\vx)\ccomp[i]\syn{c}_j(\vy)
    \end{gather}
    \begin{multline}\label{eq:ncataxiom-interchange_law}
        \syn{d}_j(\vx)=\syn{c}_j(\vy)
        \wedge
        \syn{d}_j(\vx')=\syn{c}_j(\vy')
        \wedge
        \syn{d}_i(\vx)=\syn{c}_i(\vx') \\
            \longseq{\vx,\vx',\vy,\vy'}
                (\vx\ccomp[j]\vy)\ccomp[i](\vx'\ccomp[j]\vy')
                    =
                (\vx\ccomp[i]\vx')\ccomp[j](\vy\ccomp[i]\vy').
    \end{multline}
    Then, $\PMod\theory{T}_\ncat\simeq\nCat$ follows, where $\nCat$ denotes the category of strict $n$-categories and strict $n$-functors.
\end{example}

Our immediate goal is to give a gauge for $\theory{T}_\ncat$.
To this end, in \cref{lem:normalization_term_ncat}, we will show that every term can be rewritten in a form that represents the result of iterated composition, starting from lower to higher dimensions, or in other words, from less vertical to more vertical compositions.
We use the symbols $\termle{}$ and $\termeq{}$ defined in \cref{subsec:reduction_of_partialterm}.
\begin{lemma}\label{lem:termrewriting_ncat}\quad
    \begin{enumerate}
        \item\label{lem:termrewriting_ncat-1}
            Let $\bdry_{k_i}$ denote either $\syn{d}_{k_i}$ or $\syn{c}_{k_i}$ for $1\le i\le m$.
            Let $j\in\{1,\dots,m\}$ be the largest one such that $k_j=\min\{k_1,\dots,k_m\}$.
            Then, we have $\bdry_{k_1}\bdry_{k_2}\cdots\bdry_{k_m}(\vx)\termeq{\vx}[\theory{T}_\ncat]\bdry_{k_j}(\vx)$.
        \item\label{lem:termrewriting_ncat-2}
            $(\vx\ccomp[k]\vy)\ccomp[k]\vz \termeq{\vx,\vy,\vz}[\theory{T}_\ncat] \vx\ccomp[k](\vy\ccomp[k]\vz)$.
        \item\label{lem:termrewriting_ncat-3}
            If $i\le j$, $\syn{d}_i(\vx\ccomp[j]\vy)\termle{\vx,\vy}[\theory{T}_\ncat]\syn{d}_i(\vy)$ and $\syn{c}_i(\vx\ccomp[j]\vy)\termle{\vx,\vy}[\theory{T}_\ncat]\syn{c}_i(\vx)$ hold.
        \item\label{lem:termrewriting_ncat-4}
            If $i>j$, $\syn{d}_i(\vx\ccomp[j]\vy)\termeq{\vx,\vy}[\theory{T}_\ncat]\syn{d}_i(\vx)\ccomp[j]\syn{d}_i(\vy)$ and $\syn{c}_i(\vx\ccomp[j]\vy)\termeq{\vx,\vy}[\theory{T}_\ncat]\syn{c}_i(\vx)\ccomp[j]\syn{c}_i(\vy)$ hold.
        \item\label{lem:termrewriting_ncat-5}
            If $i<j$,
            $
            (\vx\ccomp[j]\vy)\ccomp[i]\vz
                \termle{\vx,\vy,\vz}[\theory{T}_\ncat]
                    (\vx\ccomp[i]\vz)\ccomp[j](\vy\ccomp[i]\syn{d}_j(\vz))
            $ and
            $
            \vx\ccomp[i](\vy\ccomp[j]\vz)
                \termle{\vx,\vy,\vz}[\theory{T}_\ncat]
                    (\syn{c}_j(\vx)\ccomp[i]\vy)\ccomp[j](\vx\ccomp[i]\vz)
            $ hold.
    \end{enumerate}
\end{lemma}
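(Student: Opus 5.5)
Throughout, I read $\ttau\termle{\tup{\vx}}[\theory{T}_\ncat]\tsigma$ as the derivability of $\ttau\defined\seq{\tup{\vx}}\ttau=\tsigma$, so that $\tsigma$ is defined and equal to $\ttau$ whenever $\ttau$ is. I read $\termeq{}$ as the two-sided version (Kleene equality), as in \cref{subsec:reduction_of_partialterm}. With this reading, each item comes down to deriving a few Horn sequents from the axioms of $\theory{T}_\ncat$. The only real work is the definedness bookkeeping, because the axioms giving equalities between composites presuppose that the relevant composites are defined.

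For \cref{lem:termrewriting_ncat-1}, all terms involved are total, so Kleene equality is just derivable equality. I first handle a pair $\bdry_a\bdry_b(\vx)$. If $a=b$, axioms \cref{eq:ncataxiom-rightmostboundary_d,eq:ncataxiom-rightmostboundary_c} give $\bdry_a\bdry_b(\vx)=\bdry_b(\vx)$, the rightmost operator. If $a\neq b$, axioms \cref{eq:ncataxiom-lowerboundary_d,eq:ncataxiom-lowerboundary_c} give that the result is the operator with the smaller index. Then I induct on $m$: reduce $\bdry_{k_2}\cdots\bdry_{k_m}(\vx)$ to $\bdry_{k_{j'}}(\vx)$ by the hypothesis and combine it with $\bdry_{k_1}$ using the pair rule. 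This keeps the rightmost minimal index; the one case to watch is $k_1=\min_i k_i$ occurring also later, where the pair rule with $a=b$ correctly chooses the inner operator.

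For \cref{lem:termrewriting_ncat-2}, I use \cref{eq:ncataxiom-definability_of_composition} together with \cref{eq:ncataxiom-boundary_of_composite}. The left-hand side is defined iff $\syn{d}_k(\vx)=\syn{c}_k(\vy)$ and $\syn{d}_k(\vx\ccomp[k]\vy)=\syn{c}_k(\vz)$, and the latter is $\syn{d}_k(\vy)=\syn{c}_k(\vz)$. Symmetrically, via $\syn{c}_k(\vy\ccomp[k]\vz)=\syn{c}_k(\vy)$, the right-hand side has the same definedness condition. Under that condition, \cref{eq:ncataxiom-associativity} gives the equality.

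For \cref{lem:termrewriting_ncat-3}, the case $i=j$ is \cref{eq:ncataxiom-boundary_of_composite}. For $i<j$, I rewrite $\syn{d}_i(\vx\ccomp[j]\vy)=\syn{d}_i\syn{d}_j(\vx\ccomp[j]\vy)=\syn{d}_i\syn{d}_j(\vy)=\syn{d}_i(\vy)$ using \cref{eq:ncataxiom-lowerboundary_d,eq:ncataxiom-boundary_of_composite}, and argue dually for $\syn{c}_i$. Only $\termle{}$ holds, since the right-hand side is total. For \cref{lem:termrewriting_ncat-4}, \cref{eq:ncataxiom-functoriality_of_boundary} gives the equality whenever $\vx\ccomp[j]\vy$ is defined. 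For the converse definedness, $\syn{d}_i(\vx)\ccomp[j]\syn{d}_i(\vy)$ is defined iff $\syn{d}_j\syn{d}_i(\vx)=\syn{c}_j\syn{d}_i(\vy)$, which by \cref{lem:termrewriting_ncat-1} is $\syn{d}_j(\vx)=\syn{c}_j(\vy)$, which is exactly definedness of $\vx\ccomp[j]\vy$. This gives $\termeq{}$.

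For \cref{lem:termrewriting_ncat-5}, assume $(\vx\ccomp[j]\vy)\ccomp[i]\vz$ is defined, and rewrite $\vz=\vz\ccomp[j]\syn{d}_j(\vz)$ by \cref{eq:ncataxiom-identity_law}. I then apply the interchange law \cref{eq:ncataxiom-interchange_law} with $\vx'=\vz$ and $\vy'=\syn{d}_j(\vz)$, checking its three hypotheses.
\begin{itemize}
    \item $\syn{d}_j(\vx)=\syn{c}_j(\vy)$ holds because $\vx\ccomp[j]\vy$ is defined.
    \item $\syn{d}_j(\vz)=\syn{c}_j\syn{d}_j(\vz)$ is \cref{eq:ncataxiom-rightmostboundary_d}.
    \item For $\syn{d}_i(\vx)=\syn{c}_i(\vz)$: we have $\syn{c}_i(\vz)=\syn{d}_i(\vx\ccomp[j]\vy)=\syn{d}_i(\vy)$ by \cref{lem:termrewriting_ncat-3}, and $\syn{d}_i(\vy)=\syn{d}_i\syn{c}_j(\vy)=\syn{d}_i\syn{d}_j(\vx)=\syn{d}_i(\vx)$ by \cref{eq:ncataxiom-lowerboundary_d}.
\end{itemize}
Interchange then yields the right-hand side, defined and equal. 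The second inequality is symmetric, writing $\vx=\syn{c}_j(\vx)\ccomp[j]\vx$. This verification of the interchange hypotheses is the step I expect to need the most care.
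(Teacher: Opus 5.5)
Your proposal is correct and follows essentially the same route as the paper. Items (1)--(4) come straight from the boundary, definability, boundary-of-composite and functoriality axioms, and item (5) is obtained by padding $z$ as $z\circ_j d_j(z)$ via the identity law and then applying the interchange law. Your additional definedness bookkeeping fills in details that the paper leaves implicit: the converse direction in (4) via (1), and the explicit check of the three interchange hypotheses in (5).
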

\begin{proof}
    The statement \cref{lem:termrewriting_ncat-1} follows from basic axioms about boundary operators \cref{eq:ncataxiom-rightmostboundary_d,eq:ncataxiom-rightmostboundary_c,eq:ncataxiom-lowerboundary_d,eq:ncataxiom-lowerboundary_c}.
    \cref{lem:termrewriting_ncat-2} follows from \cref{eq:ncataxiom-definability_of_composition,eq:ncataxiom-associativity}.
    For \cref{lem:termrewriting_ncat-3,lem:termrewriting_ncat-4,lem:termrewriting_ncat-5}, it suffices to prove only the left-hand statements by symmetry.
    \cref{lem:termrewriting_ncat-3} can be shown by the following calculation:
    \[
        \syn{d}_i(\vx\ccomp[j]\vy)
            \termeq{\vx,\vy}[\theory{T}_\ncat] \syn{d}_i\syn{d}_j(\vx\ccomp[j]\vy)
            \termle{\vx,\vy}[\theory{T}_\ncat] \syn{d}_i\syn{d}_j(\vy)
            \termeq{\vx,\vy}[\theory{T}_\ncat] \syn{d}_i(\vy).
    \]
    Here, the first and the last reductions follow from \cref{lem:termrewriting_ncat-1}, and the second one follows from the axiom \cref{eq:ncataxiom-boundary_of_composite}.
    The statement \cref{lem:termrewriting_ncat-4} follows directly from \cref{eq:ncataxiom-functoriality_of_boundary}.
    Then, \cref{lem:termrewriting_ncat-5} can be shown as follows:
    \[
        (\vx\ccomp[j]\vy)\ccomp[i]\vz
            \termeq{\vx,\vy,\vz}[\theory{T}_\ncat] (\vx\ccomp[j]\vy) \ccomp[i] (\vz\ccomp[j]\syn{d}_j(\vz))
            \termle{\vx,\vy,\vz}[\theory{T}_\ncat] (\vx\ccomp[i]\vz)\ccomp[j](\vy\ccomp[i]\syn{d}_j(\vz)).
    \]
    Here, the first reduction is by \cref{eq:ncataxiom-identity_law}, and the last one follows from the interchange law \cref{eq:ncataxiom-interchange_law}.
\end{proof}

\begin{remark}
    We can apply \cref{lem:substitution_into_reduction} to all relations $\termle{}$ obtained in \cref{lem:termrewriting_ncat}, because no new variable appears when going from the left-hand side to the right-hand side in those relations.
\end{remark}

\begin{definition}\label{def:gauge_for_ncat}
    For a raw term $\ttau$ over $\Sigma_\ncat$, we define $\height\ttau$ as the largest natural number $k$ such that the function symbol $\ccomp[k]$ appears in $\ttau$.
    If $\ttau$ does not contain any $\ccomp[k]$, $\height\ttau$ is defined as $0$.
\end{definition}

\begin{definition}
    A raw term over $\Sigma_\ncat$ is called \emph{normal} if it can be constructed by the following inductive rules:
    \begin{itemize}
        \item
            For every $k\le n$ and every variable $\vx$, the raw terms $\vx,\syn{d}_k(\vx),\syn{c}_k(\vx)$ are normal.
        \item
            Let $1\le k\le n$ and $l\ge 1$ be natural numbers.
            Let $\ttau_0,\ttau_1,\dots,\ttau_l$ be normal raw terms such that $\height\ttau_i < k$ for $0\le i\le l$.
            Then, the raw term
            \begin{equation*}
                ( \cdots ((\ttau_0 \ccomp[k] \ttau_1)\ccomp[k] \ttau_2) \cdots ) \ccomp[k] \ttau_l
            \end{equation*}
            is normal.
    \end{itemize}
    A term $\tup{\vx}.\ttau$ is called \emph{normal} if so is $\ttau$.
\end{definition}

\begin{definition}\label{def:normalizable_terms}
    A term $\tup{\vx}.\ttau$ over $\Sigma_\ncat$ is called \emph{normalizable} if there exists a normal term $\tup{\vx}.\tnu$ such that $\ttau\termle{\tup{\vx}}[\theory{T}_\ncat] \tnu$ and $\height\ttau\ge\height\tnu$ hold.
\end{definition}

\begin{lemma}\label{lem:normalization_term_ncat}
    Every term over $\Sigma_\ncat$ is normalizable.
\end{lemma}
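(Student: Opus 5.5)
We proceed by structural induction on the raw term $\ttau$, using the substitution property of the relation $\termle{}$ (\cref{lem:substitution_into_reduction}) together with transitivity and congruence of $\termle{}$ under function symbols. Variables are normal, so the base case is immediate.

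\textbf{Boundary operators.} For $\ttau=\bdry_k(\ttau')$, the induction hypothesis gives a normal $\tnu'$ with $\ttau'\termle{}\tnu'$ and $\height\tnu'\le\height\ttau'$. By congruence, $\ttau\termle{}\bdry_k(\tnu')$. It therefore suffices to prove an auxiliary claim: for every normal $\tnu$, the term $\bdry_k(\tnu)$ is normalizable with height at most $\height\tnu$. We prove this claim by induction on the construction of $\tnu$.
\begin{itemize}
    \item If $\tnu$ is $\vx$, $\syn{d}_l(\vx)$ or $\syn{c}_l(\vx)$, then \cref{lem:termrewriting_ncat}\cref{lem:termrewriting_ncat-1} collapses $\bdry_k(\tnu)$ to a single boundary of $\vx$.
    \item If $\tnu=(\cdots(\tnu_0\ccomp[m]\tnu_1)\cdots)\ccomp[m]\tnu_l$ with $k\le m$, then \cref{lem:termrewriting_ncat}\cref{lem:termrewriting_ncat-3} reduces $\bdry_k(\tnu)$ to $\bdry_k$ of an end component $\tnu_0$ or $\tnu_l$. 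These have smaller height, so the inner induction applies.
    \item If $k>m$, then \cref{lem:termrewriting_ncat}\cref{lem:termrewriting_ncat-4} distributes $\bdry_k$ over each $\ccomp[m]$. Each $\bdry_k(\tnu_i)$ normalizes with height $<m$ by induction, and an iterated $\ccomp[m]$ of such terms is normal of height $m$.
\end{itemize}

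\textbf{Compositions.} For $\ttau=\ttau_1\ccomp[k]\ttau_2$, we first normalize both sides by induction. Again it suffices to show that $\tnu_1\ccomp[k]\tnu_2$ is normalizable with height at most $\max(k,\height\tnu_1,\height\tnu_2)$ for normal $\tnu_1,\tnu_2$. We argue by induction on the pair of heights, or on the total size of the normal forms, in three cases.
\begin{itemize}
    \item If both heights are $<k$, the term is already normal.
    \item If a top-level component has head $\ccomp[k]$, we reassociate to the left using \cref{lem:termrewriting_ncat}\cref{lem:termrewriting_ncat-2}. Since that reduction is an equivalence $\termeq{}$, this flattens the term into a single normal $k$-chain.
    \item If, say, $\tnu_1=\trho\ccomp[j]\trho'$ with $j>k$, we apply the interchange rewriting \cref{lem:termrewriting_ncat}\cref{lem:termrewriting_ncat-5} (left-hand form). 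This pushes $\ccomp[k]$ inside the $\ccomp[j]$: we obtain a $\ccomp[j]$ of the terms $\trho\ccomp[k]\tnu_2$ and $\trho'\ccomp[k]\syn{d}_j(\tnu_2)$. These new composites are handled by the induction hypothesis, and $\syn{d}_j(\tnu_2)$ is handled by the boundary claim above. The case where $\tnu_2$ has the higher head is symmetric, using the right-hand form of \cref{lem:termrewriting_ncat-5}. The result is a $\ccomp[j]$-chain whose components have height $<j$, so its height is $j$ and does not exceed the original.
\end{itemize}
Throughout, heights never increase, since \cref{lem:termrewriting_ncat}\cref{lem:termrewriting_ncat-1,lem:termrewriting_ncat-3,lem:termrewriting_ncat-4,lem:termrewriting_ncat-5} introduce no $\ccomp$ of higher index.

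\textbf{Main obstacle.} The hardest step is the termination of the interchange case. Applying \cref{lem:termrewriting_ncat-5} duplicates subterms: $\tnu_2$ appears again inside $\syn{d}_j(\tnu_2)$, and the chain of $\tnu_1$ may have many components. A naive size measure can therefore increase. My first attempt is a lexicographic induction: primarily on the maximum height $j$ of the top-level operator, and secondarily on the number of $\ccomp[j]$-components of the higher-height argument. Rewriting only splits off pieces of strictly smaller height or fewer components, and the inner composites $\trho\ccomp[k]\tnu_2$ involve arguments of height $<j$, so the primary measure drops. A secondary worry is definedness bookkeeping: we must check that the $\termle{}$ direction (the right-hand side is defined whenever the left is) composes correctly under congruence. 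I will take this to be part of the properties of $\termle{}$ from \cref{subsec:reduction_of_partialterm}.
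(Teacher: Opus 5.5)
Your overall architecture is the paper's: a boundary claim for normal terms, a composition claim for pairs of normal terms, and then structural induction using congruence and transitivity of $\termle{\tup{\vx}}[\theory{T}_\ncat]$. Your boundary claim is fine. The gap is the one you flagged yourself: the measure you propose for the interchange case does not decrease, and part of your justification for it is false.

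Write $\tnu_1=\ttau_0\ccomp[j]\cdots\ccomp[j]\ttau_l$ (left-associated, $\height\ttau_s<j$). Splitting off only the last component leaves $\trho=\ttau_0\ccomp[j]\cdots\ccomp[j]\ttau_{l-1}$, which still has height $j$ whenever $l\ge 2$. So the call $\trho\ccomp[k]\tnu_2$ does not have arguments of height $<j$, and the primary measure does not drop.

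The secondary count would then have to carry the induction, but it can increase when $\height\tnu_1=\height\tnu_2=j$. For $l=1$ the recursive call is $\ttau_0\ccomp[k]\tnu_2$. Its higher-height argument is now $\tnu_2$, which may have many more $\ccomp[j]$-components than $\tnu_1$ had. Likewise, your boundary claim bounds the normal form of $\syn{d}_j(\tnu_2)$ only by $\height\tnu_2$, which may equal $j$. So the call $\trho'\ccomp[k]\syn{d}_j(\tnu_2)$ is not shown to decrease either. It does in fact drop, by \cref{lem:termrewriting_ncat}\cref{lem:termrewriting_ncat-3}, but your claim as stated does not record this.

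The paper avoids the problem by distributing over the whole chain at once. Iterating \cref{lem:termrewriting_ncat}\cref{lem:termrewriting_ncat-5} under congruence gives
\[
    \tnu_1\ccomp[k]\tnu_2
    \termle{\tup{\vx}}[\theory{T}_\ncat]
    (\ttau_0\ccomp[k]\tnu_2)\ccomp[j](\ttau_1\ccomp[k]\trho)\ccomp[j]\cdots\ccomp[j](\ttau_l\ccomp[k]\trho),
\]
where $\trho$ is a normal form of $\syn{d}_j(\tnu_2)$, so $\height\trho\le\height\tnu_2$. Every new composite now pairs a single component of height $<j$ with a term of height at most $\height\tnu_2$. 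Hence plain induction on $\height\tnu_1+\height\tnu_2$ suffices, always decomposing an argument of maximal height and using the right-hand form of \cref{lem:termrewriting_ncat}\cref{lem:termrewriting_ncat-5} when that argument is $\tnu_2$. This needs only the non-strict bound on $\trho$. A final flattening by \cref{lem:termrewriting_ncat}\cref{lem:termrewriting_ncat-2} then yields a normal $j$-chain.

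If you prefer to peel off one component at a time, the following lexicographic measure does decrease on every recursive call:
\begin{itemize}
    \item first, the sum of the two heights;
    \item second, the number of top-level components of the argument of larger height, with ties resolved to the left.
\end{itemize}
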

\begin{proof}
    We fix a context $\tup{\vx}$ throughout the proof.
    \begin{claim}\label{claim:normalization-d}
        If $\tup{\vx}.\tnu$ is a normal term, then for every $i\ge 1$, $\tup{\vx}.\syn{d}_i(\tnu)$ is normalizable.
    \end{claim}
    \begin{since}
        We show this by induction on $\height\tnu$.
        If $\height\tnu=0$, the terms $\tup{\vx}.\syn{d}_i(\tnu)$ $(i\ge 1)$ are clearly normalizable by \cref{lem:termrewriting_ncat}\cref{lem:termrewriting_ncat-1}.

        Suppose $k\coloneq\height\tnu\ge 1$.
        Then, by the definition of normal terms, $\tnu$ can be written as
        $
            ( \cdots ((\ttau_0 \ccomp[k] \ttau_1)\ccomp[k] \ttau_2) \cdots ) \ccomp[k] \ttau_l
        $
        with $l\ge 1$ and normal raw terms $\ttau_0,\dots,\ttau_l$ such that $\height\ttau_s<k$ for $0\le s\le l$.
        The proof is now split into two cases.

        \noindent\textbf{Case I: $i\le k$.}
        In this case, using \cref{lem:termrewriting_ncat}\cref{lem:termrewriting_ncat-3}, we have $\syn{d}_i(\tnu) \termle{\tup{\vx}}[\theory{T}_\ncat] \syn{d}_i(\ttau_l)$.
        Since $\height\ttau_l<k$, we can apply the induction hypothesis for the latter term, which shows that $\tup{\vx}.\syn{d}_i(\tnu)$ is normalizable.

        \noindent\textbf{Case II: $i>k$.}
        Using \cref{lem:termrewriting_ncat}\cref{lem:termrewriting_ncat-4} repeatedly, we have
        \begin{equation}\label{eq:swapping_d}
            \syn{d}_i(\tnu)
            \termeq{\tup{\vx}}[\theory{T}_\ncat]
            ( \cdots ((\syn{d}_i(\ttau_0) \ccomp[k] \syn{d}_i(\ttau_1))\ccomp[k] \syn{d}_i(\ttau_2)) \cdots ) \ccomp[k] \syn{d}_i(\ttau_l).
        \end{equation}
        Applying the induction hypothesis to the (normal) terms $\ttau_s$ $(0\le s\le l)$, for each $s$, we obtain a normal term $\tup{\vx}.\tnu_s$ such that $\syn{d}_i(\ttau_s) \termle{\tup{\vx}}[\theory{T}_\ncat] \tnu_s$ and $\height\ttau_s \ge \height\tnu_s$ hold.
        Combining this with \cref{eq:swapping_d}, we have
        \[
            \syn{d}_i(\tnu)
            \termle{\tup{\vx}}[\theory{T}_\ncat]
            ( \cdots ((\tnu_0 \ccomp[k] \tnu_1)\ccomp[k] \tnu_2) \cdots ) \ccomp[k] \tnu_l.
        \]
        Then, by $\height\tnu_s\le\height\ttau_s<k$, the right-hand term above is normal, which proves the claim.
    \end{since}

    \begin{claim}\label{claim:normalization-c}
        If $\tup{\vx}.\tnu$ is a normal term, then for every $i\ge 1$, $\tup{\vx}.\syn{c}_i(\tnu)$ is normalizable.
    \end{claim}
    \begin{since}
        This can be proved in the same way as \cref{claim:normalization-d}.
    \end{since}

    \begin{claim}\label{claim:normalization-comp}
        If $\tup{\vx}.\tnu$ and $\tup{\vx}.\tnu'$ are normal terms, then for every $i\ge 1$, $\tup{\vx}.(\tnu\ccomp[i]\tnu')$ is normalizable.
    \end{claim}
    \begin{since}
        We show this by induction on the sum $\height\tnu + \height\tnu'$.
        The case $\height\tnu=\height\tnu'=0$ is trivial since $\tnu\ccomp[i]\tnu'$ is already normal.

        Let $k\coloneq\height\tnu$ and $k'\coloneq\height\tnu'$, and suppose $k+k'\ge 1$.
        The proof is split into three cases.

        \noindent\textbf{Case I: $\max\{k,k'\} < i$.}
        This case is trivial since $\tnu\ccomp[i]\tnu'$ is already normal.

        \noindent\textbf{Case II: $\max\{k,k'\} = i$.}
        In this case, the associativity of the operator $\ccomp[k]$ (\cref{lem:termrewriting_ncat}\cref{lem:termrewriting_ncat-2}) shows that $\tnu\ccomp[i]\tnu'$ is normalizable.

        \noindent\textbf{Case III: $\max\{k,k'\} > i$.}
        In what follows, we only give the proof for the case $k\ge k'$; the other case is analogous by symmetry and will be omitted.
        Since $\tnu$ is normal, it can be written as
        $
            ( \cdots ((\ttau_0 \ccomp[k] \ttau_1)\ccomp[k] \ttau_2) \cdots ) \ccomp[k] \ttau_l
        $
        with $l\ge 1$ and normal raw terms $\ttau_0,\dots,\ttau_l$ such that $\height\ttau_s<k$ for $0\le s\le l$.
        Since $\height\ttau_0 + \height\tnu' < k + k'$, applying the induction hypothesis to $\ttau_0\ccomp[i]\tnu'$, we obtain a normal term $\tup{\vx}.\tnu_0$ such that $\ttau_0\ccomp[i]\tnu' \termle{\tup{\vx}}[\theory{T}_\ncat] \tnu_0$ and $\height(\ttau_0\ccomp[i]\tnu') \ge \height\tnu_0$ hold.
        By \cref{claim:normalization-d}, the term $\tup{\vx}.\syn{d}_k(\tnu')$ is normalizable; hence there is a normal term $\tup{\vx}.\trho$ such that $\syn{d}_k(\tnu') \termle{\tup{\vx}}[\theory{T}_\ncat] \trho$ and $k'=\height\syn{d}_k(\tnu') \ge \height\trho$ hold.
        Since $\height\ttau_s + \height\trho < k+k'$, applying the induction hypothesis to each $\ttau_s\ccomp[i]\trho$ for $1\le s\le l$, we also obtain normal terms $\tup{\vx}.\tnu_s$ such that $\ttau_s\ccomp[i]\trho \termle{\tup{\vx}}[\theory{T}_\ncat] \tnu_s$ and $\height(\ttau_s\ccomp[i]\trho) \ge \height\tnu_s$ hold.
        Then, we have the following.
        Note that we omit parentheses for the operation $\ccomp[k]$, which is to be understood as the left-associated expression.
        \begin{align*}
            \tnu\ccomp[i]\tnu'
                & \termle{\tup{\vx}}[\theory{T}_\ncat]
                    ((\ttau_0\ccomp[k]\ttau_1\ccomp[k]\cdots\ccomp[k]\ttau_{l-1}) \ccomp[i] \tnu')
                    \ccomp[k]
                    (\ttau_l\ccomp[i]\syn{d}_k(\tnu'))
                \\
                & \termle{\tup{\vx}}[\theory{T}_\ncat] \cdots \termle{\tup{\vx}}[\theory{T}_\ncat]
                    (\ttau_0\ccomp[i]\tnu')
                    \ccomp[k]
                    (\ttau_1\ccomp[i]\syn{d}_k(\tnu'))
                    \ccomp[k]\cdots\ccomp[k]
                    (\ttau_l\ccomp[i]\syn{d}_k(\tnu'))
                \\
                & \termle{\tup{\vx}}[\theory{T}_\ncat]
                    (\ttau_0\ccomp[i]\tnu')
                    \ccomp[k]
                    (\ttau_1\ccomp[i]\trho)
                    \ccomp[k]\cdots\ccomp[k]
                    (\ttau_l\ccomp[i]\trho)
                \\
                & \termle{\tup{\vx}}[\theory{T}_\ncat]
                    \tnu_0\ccomp[k]\tnu_1\ccomp[k]\cdots\ccomp[k]\tnu_l
        \end{align*}
        Here, the first and second rows follow from \cref{lem:termrewriting_ncat}\cref{lem:termrewriting_ncat-5}, and the third and last rows follow from the choice of $\trho$ and $\tnu_0,\dots,\tnu_l$.
        We now have $\height\tnu_0 \le \height(\ttau_0\ccomp[i]\tnu') = \max\{\height\ttau_0, i, \height\tnu'\} \le k$ and $\height\tnu_s \le \height(\ttau_s\ccomp[i]\trho) = \max\{\height\ttau_s, i , \height\trho\} \le k$ for $1\le s\le l$.
        Therefore, the associativity of $\ccomp[k]$ (\cref{lem:termrewriting_ncat}\cref{lem:termrewriting_ncat-2}) shows that the term $\tup{\vx}.(\tnu_0\ccomp[k]\tnu_1\ccomp[k]\cdots\ccomp[k]\tnu_l)$ is normalizable, which proves the claim.
    \end{since}
    Using \cref{claim:normalization-d,claim:normalization-c,claim:normalization-comp}, we can show that for any normalizable terms $\tup{\vx}.\ttau, \tup{\vx}.\ttau'$ and any $1\le i\le n$, the terms $\tup{\vx}.\syn{d}_i(\ttau)$, $\tup{\vx}.\syn{c}_i(\ttau')$, and $\tup{\vx}.(\ttau\ccomp[i]\ttau')$ are normalizable, which implies that every term is normalizable.
\end{proof}

\begin{theorem}\label{thm:verification_gauge_for_ncat}
    The function $\height$ in \cref{def:gauge_for_ncat} gives a gauge for $\theory{T}_\ncat$.
\end{theorem}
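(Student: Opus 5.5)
We must supply, for each raw term $\ttau$ over $\Sigma_\ncat$, a small set $T(\ttau)$ and pairs of raw terms $(\tsigma^t_1,\tsigma^t_2)$ in the minimum context of $\ttau$ with $\height\tsigma^t_j<\height\ttau$, such that $\ttau\defined\biseq{\tup{\vx}}\bigwedge_t\tsigma^t_1=\tsigma^t_2$ is derivable. The natural choice is to record every composability condition arising while $\ttau$ is evaluated. We proceed by structural recursion on $\ttau$. For a variable, or $\syn{d}_k(\ttau')$, $\syn{c}_k(\ttau')$, we reduce to the subterm. For a composite $\ttau=\ttau_1\ccomp[k]\ttau_2$ the conditions are those of $\ttau_1$ and $\ttau_2$ together with the single equation $\syn{d}_k(\ttau_1)=\syn{c}_k(\ttau_2)$, by \cref{eq:ncataxiom-definability_of_composition}.

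The naive choice breaks the height requirement, because $\syn{d}_k(\ttau_1)$ may still contain $\ccomp[\height\ttau]$. This is where \cref{lem:normalization_term_ncat} enters. Rather than recurse on the syntax of $\ttau$, we define the data on arbitrary raw terms via normal forms, by induction on $\height\ttau$. If $\height\ttau=0$, then $\ttau$ is built from $\syn{d},\syn{c}$ and variables only; it is total by \cref{eq:ncataxiom-rightmostboundary_d,eq:ncataxiom-rightmostboundary_c}, so we take $T(\ttau)=\emptyset$.

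If $\height\ttau=k\ge 1$, we first use \cref{lem:normalization_term_ncat} to obtain a normal $\tnu=(\cdots(\ttau_0\ccomp[k]\ttau_1)\cdots)\ccomp[k]\ttau_l$ with $\ttau\termle{}\tnu$ and $\height\tnu\le k$. Since $\termle{}$ can move in one direction only, we also need $\tnu$ defined to imply $\ttau$ defined. Reading the definition of $\termle{}$ in \cref{subsec:reduction_of_partialterm} as "$\ttau\defined$ entails $\ttau=\tnu$", we need the converse. We would therefore refine the argument: we check that each rewriting step in \cref{lem:termrewriting_ncat} is in fact an equivalence of definedness in the relevant context, or else we gauge $\ttau$ by recursion on its syntax using normal forms of its immediate subterms.

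Concretely, for $\ttau=\ttau_1\ccomp[i]\ttau_2$ we put $T(\ttau)=T(\ttau_1)\sqcup T(\ttau_2)\sqcup\{\ast\}$. The pair for $\ast$ is taken to be normal forms of $\syn{d}_i(\ttau_1)$ and $\syn{c}_i(\ttau_2)$ whenever these have height $<\height\ttau$. When $i<\height\ttau$, \cref{lem:termrewriting_ncat}\cref{lem:termrewriting_ncat-3} shows that $\syn{d}_i$ of a normal term of top composition $k\ge i$ reduces to $\syn{d}_i$ of its last factor, of height $<k$. Iterating this, and using \cref{lem:termrewriting_ncat}\cref{lem:termrewriting_ncat-1}, gives a term of height $<i$, hence a fortiori of height $<\height\ttau$. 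When $i=\height\ttau$ the same descent lowers the height below $i$ as well. In both cases the definedness of these reduced terms follows from that of $\ttau_1$ and $\ttau_2$, which are themselves gauged by induction.

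The main obstacle is exactly the bookkeeping that the replacement terms are defined and provably equal to the originals under $\ttau\defined$, and conversely that the conjunction of equations yields $\ttau\defined$. For the converse direction, the induction hypothesis gives $\ttau_1\defined$ and $\ttau_2\defined$ from their own conditions, whose heights are $\le\height\ttau_j$. The reduced boundary terms are then defined (being boundaries of defined terms, or totally defined), and we use \cref{lem:substitution_into_reduction} to transport $\syn{d}_i(\ttau_1)\termle{}\tsigma_1$ into an equality. This yields $\syn{d}_i(\ttau_1)=\syn{c}_i(\ttau_2)$ and hence $\ttau\defined$ by \cref{eq:ncataxiom-definability_of_composition}. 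Finally we must check that all the inherited pairs from $T(\ttau_j)$ already satisfy the strict height bound, since $\height\ttau_j\le\height\ttau$.
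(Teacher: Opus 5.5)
Your final construction is correct and is essentially the paper's proof. You use structural recursion with $T(\ttau_1\ccomp[i]\ttau_2)=T(\ttau_1)\sqcup T(\ttau_2)\sqcup\{\ast\}$, and the conditions in $T(\ttau_j)$ supply $\ttau_j\defined$, so the one-directional relations $\ttau_j\termle{}\tnu_j$ to normal forms suffice. The only cosmetic difference is that you push $\syn{d}_i(\tnu_1)$ and $\syn{c}_i(\tnu_2)$ all the way below height $i$ by iterating \cref{lem:termrewriting_ncat}\cref{lem:termrewriting_ncat-3}, whereas the paper does a three-way case split on $\height\tnu_1,\height\tnu_2$ versus $i$ and strips the outermost top-level composition only where necessary.

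Drop the first alternative in your second paragraph, though. The rewriting steps are genuinely not definedness-equivalences (e.g.\ $\syn{d}_i(\vx\ccomp[j]\vy)\termle{}\syn{d}_i(\vy)$ has a total right-hand side), so only the route you actually carry out works.
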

\begin{proof}
    By induction on the structure of raw terms $\ttau$, we show existence of the ``defining sets'' $T(\ttau)$ in the definition of gauges.
    For a variable $\vx$, the empty set $T(\syn{\vx})\coloneq\emptyset$ clearly becomes a defining set of $\vx$.
    For a raw term $\ttau$ with $T(\ttau)$, the raw term $\syn{d}_k(\ttau)$ also admits a defining set by $T(\syn{d}_k(\ttau))=T(\ttau)$, since $\height\syn{d}_k(\ttau)=\height\ttau$ and $\syn{d}_k(\ttau)\defined\biseq{\tup{\vx}}\ttau\defined$ is derivable from $\theory{T}_\ncat$, where $\tup{\vx}$ is the minimum context of $\ttau$.
    We can also construct $T(\syn{c}_k(\ttau))$ similarly.

    Let $\ttau_1,\ttau_2$ be raw terms with $T(\ttau_1),T(\ttau_2)$.
    In what follows, we show that the raw term $\ttau_1\ccomp[k]\ttau_2$ also admits a defining set.
    Let $\tup{\vx}\coloneq\fv(\ttau_1)\cup\fv(\ttau_2)$.
    By \cref{lem:normalization_term_ncat}, we can take normal terms $\tup{\vx}.\tnu_1$ and $\tup{\vx}.\tnu_2$ for $\tup{\vx}.\ttau_1$ and $\tup{\vx}.\ttau_2$, respectively.
    Then, the following are derivable from $\theory{T}_\ncat$:
    \begin{align}
        \notag
        (\ttau_1\ccomp[k]\ttau_2)\defined
            &\biseq{\tup{\vx}}
                \syn{d}_k(\ttau_1)=\syn{c}_k(\ttau_2)
        \\
        \notag
        \syn{d}_k(\ttau_1) = \syn{c}_k(\ttau_2)
            &\biseq{\tup{\vx}}
                \ttau_1\defined
                \wedge
                \ttau_2\defined
                \wedge
                \syn{d}_k(\tnu_1)=\syn{c}_k(\tnu_2)
    \end{align}
    Then, by the defining sets $T(\ttau_1)$ and $T(\ttau_2)$, the following is derivable from $\theory{T}_\ncat$:
    \begin{equation}\label{eq:domain_of_tau1tau2}
        (\ttau_1\ccomp[k]\ttau_2)\defined
            \biseq{\tup{\vx}}
                \left(
                \bigwedge_{t\in T(\ttau_1)}
                \tsigma^t_1 = \tsigma^t_2
                \right)
                \wedge
                \left(
                \bigwedge_{t'\in T(\ttau_2)}
                \tsigma^{t'}_1 = \tsigma^{t'}_2
                \right)
                \wedge
                \syn{d}_k(\tnu_1)=\syn{c}_k(\tnu_2)
    \end{equation}
    Here, we have $\tsigma^t_1,\tsigma^t_2 < \height\ttau_1$ and $\tsigma^{t'}_1,\tsigma^{t'}_2 < \height\ttau_2$.
    Now, we split the proof into three cases.
    
    \noindent\textbf{Case I: $\max\{\height\tnu_1,\height\tnu_2\}< k$.}
    In this case, the Horn formula on the right in \cref{eq:domain_of_tau1tau2} directly yields the desired set $T(\ttau_1\ccomp[k]\ttau_2)$ since $\height\syn{d}_k(\tnu_1)=\height\tnu_1 < k$, $\height\syn{c}_k(\tnu_2)=\height\tnu_2 < k$, and $\height(\ttau_1\ccomp[k]\ttau_1)=\max\{\height\ttau_1,\height\ttau_2,k\}$.

    \noindent\textbf{Case II: $k\le\max\{\height\tnu_1,\height\tnu_2\}$ and $\height\tnu_1\neq\height\tnu_2$.}
    By symmetry, we can additionally assume $\height\tnu_1 < \height\tnu_2$.
    By $k\le\height\tnu_2$ and \cref{lem:termrewriting_ncat}\cref{lem:termrewriting_ncat-3}, when we apply $\syn{c}_k$ to $\tnu_2$, we can eliminate all outermost $\ccomp[\height\tnu_2]$.
    Therefore, there is a new term $\tup{\vx}.\trho_2$ such that $\syn{c}_k(\tnu_2)\termle{\tup{\vx}}[\theory{T}_\ncat]\trho_2$ and $\height\tnu_2>\height\trho_2$ hold.
    Then, by \cref{eq:domain_of_tau1tau2}, the bisequent below is derivable from $\theory{T}_\ncat$.
    Indeed, the sequent from left to right is obvious; moreover, the right-hand Horn formula implies that $\ttau_2$ is defined, and so is $\syn{c}_k(\tnu_2)$, which implies the right-hand Horn formula in \cref{eq:domain_of_tau1tau2}, hence the left-hand one is derived.
    \begin{equation*}
        (\ttau_1\ccomp[k]\ttau_2)\defined
            \biseq{\tup{\vx}}
                \left(
                \bigwedge_{t\in T(\ttau_1)}
                \tsigma^t_1 = \tsigma^t_2
                \right)
                \wedge
                \left(
                \bigwedge_{t'\in T(\ttau_2)}
                \tsigma^{t'}_1 = \tsigma^{t'}_2
                \right)
                \wedge
                \syn{d}_k(\tnu_1)=\trho_2
    \end{equation*}
    This yields the desired set $T(\ttau_1\ccomp[k]\ttau_2)$ since $\height\syn{d}_k(\tnu_1)=\height\tnu_1 < \height\tnu_2 \le \height\ttau_2$, $\height\trho_2 < \height\tnu_2 \le \height\ttau_2$, and $\height(\ttau_1\ccomp[k]\ttau_2)=\max\{\height\ttau_1,\height\ttau_2\}$.

    \noindent\textbf{Case III: $k\le\height\tnu_1=\height\tnu_2$.}
    As well as the previous case, there are new terms $\tup{\vx}.\trho_i$ $(i=1,2)$ such that $\syn{d}_k(\tnu_1)\termle{\tup{\vx}}[\theory{T}_\ncat]\trho_1$, $\syn{c}_k(\tnu_2)\termle{\tup{\vx}}[\theory{T}_\ncat]\trho_2$, and $\height\tnu_i>\height\trho_i$ hold.
    Then, by $\cref{eq:domain_of_tau1tau2}$, the following is also derivable from $\theory{T}_\ncat$:
    \begin{equation*}
        (\ttau_1\ccomp[k]\ttau_2)\defined
            \biseq{\tup{\vx}}
                \left(
                \bigwedge_{t\in T(\ttau_1)}
                \tsigma^t_1 = \tsigma^t_2
                \right)
                \wedge
                \left(
                \bigwedge_{t'\in T(\ttau_2)}
                \tsigma^{t'}_1 = \tsigma^{t'}_2
                \right)
                \wedge
                \trho_1=\trho_2
    \end{equation*}
    This yields the desired set $T(\ttau_1\ccomp[k]\ttau_2)$ since $\height\trho_i < \height\tnu_i \le \height\ttau_i$ $(i=1,2)$, and $\height(\ttau_1\ccomp[k]\ttau_2)=\max\{\height\ttau_1,\height\ttau_2\}$.
\end{proof}

\begin{example}[A lower bound for $\decnum{\Cat}$]\label{eg:functor_whose_decnum_is_2}
    We present an example for a functor that has regular-decomposition number $2$ in $\Cat$.
    Consider the following functor $\Phi\colon\C_0\arr \C_2$:
    \begin{equation*}
        \C_0\coloneq\left(
        \begin{tikzcd}[tiny]
                &
                B
                    &
                    B'
                    \ar[rd,"g"]
                        &
            \\
            A
            \ar[ru,"f"]
            \ar[rrr,"h"']
                &
                    &
                        &
                        C
        \end{tikzcd}
        \right)
        \arr(\Phi)[][2]
        \left(
        \begin{tikzcd}[row sep=tiny, column sep=small]
                &
                B
                \ar[rd,"g"]
                    &
            \\
            A
            \ar[ru,"{f}"]
            \ar[rr,"{h}"']
            \ar[rr,shift left=3,"\rotatebox{90}{$=$}",phantom]
                &
                    &
                    C
        \end{tikzcd}
        \right)
        \eqcolon\C_2
        \quad
        \incat{\Cat}.
    \end{equation*}
    Here, the functor $\Phi$ sends $B'$ to $B$ and is identity on the other objects and morphisms.
    As mentioned in the introduction, the coequalizer of the kernel pair of $\Phi$ can be exhibited as below:
    \begin{equation*}
        \C_1\coloneq
        \left(
        \begin{tikzcd}[row sep=tiny, column sep=small]
                &
                B
                \ar[rd,"g"]
                    &
            \\
            A
            \ar[ru,"{f}"]
            \ar[rr,"{h}"']
            \ar[rr,shift left=3,"\rotatebox{90}{$\neq$}",phantom]
                &
                    &
                    C
        \end{tikzcd}
        \right)
    \end{equation*}
    Then, the canonical functor $\C_1\arr \C_2$ given by the universal property of the coequalizer is itself a regular epimorphism in $\Cat$, which implies $\decnum{\Phi}=\cdecnum{\Phi}=2$.
\end{example}

\begin{example}[A lower bound for $\decnum{\nCat}$]\label{eg:nfunctor_whose_decnum_is_nplus1}
    For $n\ge 2$, we present an example for a strict $n$-functor whose regular-decomposition number equals to $n+1$.
    In what follows, we identify strict $n$-categories with models of the partial Horn theory $\theory{T}_\ncat$, in which the interpretations of $\syn{d}_k,\syn{c}_k,\ccomp[k]$ are denoted by $d_k,c_k,\ccomp[k]$, and elements with a subscript $k$ are supposed to be \textit{$k$-cells}; for example, an element denoted by $x_k$ must satisfy the equality $d_i(x_k)=c_i(x_k)= x_k$ for $k<i\le n$.

    We first define a strict $n$-category $\X^{(n)}_m$ for each $0\le m\le n-1$ as follows:
    \begin{multline*}
        \X^{(n)}_m\coloneq
            \{ x_i \mid 2\le i \le n\}
            \cup
            \{ y^0_1, y^+_1 \mid 1\le i \le n-1 \}
            \cup
            \{ z^-_i, z^0_i, z^+_i \mid 0\le i\le n-1\}
            \cup
            \{ y^0_n, z^0_n\}
            \\
            \cup
            \{ t_i \mid 1\le i\le m \}
            \cup
            \{ w^0_m, w^+_m, u_{m+1} \};
    \end{multline*}
    \begin{gather}
        \label{eq:boundary_of_x}
        d_i(x_i)\coloneq y^0_{i-1},
            \quad
            c_i(x_i)\coloneq y^+_{i-1},
        \\
        \label{eq:boundary_of_y_z_t}
        d_i(y^j_i)=d_i(z^j_i)\coloneq z^-_{i-1},
            \quad
            c_i(y^j_i)=d_i(t_i)\coloneq z^0_{i-1},
                \quad
                c_i(z^j_i)=c_i(t_i)\coloneq z^+_{i-1},
        \\
        \label{eq:boundary_of_w}
        d_m(w^j_m)\coloneq z^-_{m-1},
            \quad
            c_m(w^j_m)\coloneq z^+_{m-1},
        \\
        \notag
        d_{m+1}(u_{m+1})\coloneq w^0_m,
            \quad
            c_{m+1}(u_{m+1})\coloneq w^+_m,
        \\
        \notag
        t_i\ccomp[i]y^j_i\coloneq
        \begin{cases}
            z^j_i & \text{if $1\le i<m$} \\
            w^j_i & \text{if $i=m$},
        \end{cases}
        \qquad
        t_i\ccomp[i]x_{i+1}\coloneq
        \begin{cases}
            t_{i+1} & \text{if $1\le i<m$} \\
            u_{m+1} & \text{if $i=m$}.
        \end{cases}
    \end{gather}
    We further define strict $n$-categories $\X^{(n)}_n$ and $\X^{(n)}_{n+1}$ as follows:
    \begin{gather*}
        \X^{(n)}_{n+1}\coloneq
            \{ x_i \mid 2\le i \le n\}
            \cup
            \{ y^0_1, y^+_1 \mid 1\le i \le n-1 \}
            \cup
            \{ z^-_i, z^0_i, z^+_i \mid 0\le i\le n-1\}
            \cup
            \{ y^0_n, z^0_n\}
            \\
            \cup
            \{ t_i \mid 1\le i\le n \},
        \\
        \X^{(n)}_n\coloneq \X^{(n)}_{n+1}\cup\{ w^0_n \}.
    \end{gather*}
    In both $\X^{(n)}_n$ and $\X^{(n)}_{n+1}$, the values of the boundary operators are defined by the equations of the same form as \cref{eq:boundary_of_x,eq:boundary_of_y_z_t,eq:boundary_of_w}.
    In $\X^{(n)}_n$, the values of the composition operators are defined by
    \begin{gather*}
        t_i\ccomp[i]y^j_i\coloneq
        \begin{cases}
            z^j_i & \text{if $1\le i\le n-1$} \\
            w^j_i & \text{if $i=n$},
        \end{cases}
        \quad
        t_{i-1}\ccomp[i-1]x_i\coloneq t_i.
    \end{gather*}
    On the other hand, in $\X^{(n)}_{n+1}$, the values of the composition operators are defined by
    \begin{gather*}
        t_i\ccomp[i]y^j_i\coloneq z^j_i,
        \quad
            t_{i-1}\ccomp[i-1]x_i\coloneq t_i.
    \end{gather*}

    The values of the boundary operators in $\X^{(n)}_m$ $(0\le m\le n+1)$ can be illustrated as below.
    Here, dashed and solid lines indicate the domain and the codomain, respectively.
    \begin{equation*}
        \X^{(n)}_0\colon\quad
        \begin{tikzcd}[globular]
                &
                    &
                    x_2\ar[dl,dom]\ar[ddl,cod]
                        &
                        \cdots\ar[dl,dom]\ar[ddl,cod]
                            &
                            x_{n-1}\ar[dl,dom]\ar[ddl,cod]
                                &
                                x_n\ar[dl,dom]\ar[ddl,cod]
            \\
                &
                y^0_1\ar[ddl,dom]\ar[dddl,cod]
                    &
                    y^0_2\ar[ddl,dom]\ar[dddl,cod]
                        &
                        \cdots\ar[ddl,dom]\ar[dddl,cod]
                            &
                            y^0_{n-1}\ar[ddl,dom]\ar[dddl,cod]
                                &
                                y^0_n\ar[ddl,dom]\ar[dddl,cod]
            \\
                &
                y^+_1\ar[dl,dom]\ar[ddl,cod]
                    &
                    y^+_2\ar[dl,dom]\ar[ddl,cod]
                        &
                        \cdots\ar[dl,dom]\ar[ddl,cod]
                            &
                            y^+_{n-1}\ar[dl,dom]\ar[ddl,cod]
                                &
            \\
            z^-_0
                &
                z^-_1\ar[l,dom]\ar[ddl,cod]
                    &
                    z^-_2\ar[l,dom]\ar[ddl,cod]
                        &
                        \cdots\ar[l,dom]\ar[ddl,cod]
                            &
                            z^-_{n-1}\ar[l,dom]\ar[ddl,cod]
                                &
            \\
            z^0_0
                &
                z^0_1\ar[ul,dom]\ar[dl,cod]
                    &
                    z^0_2\ar[ul,dom]\ar[dl,cod]
                        &
                        \cdots\ar[ul,dom]\ar[dl,cod]
                            &
                            z^0_{n-1}\ar[ul,dom]\ar[dl,cod]
                                &
                                z^0_n\ar[ul,dom]\ar[dl,cod]
            \\
            z^+_0
                &
                z^+_1\ar[uul,dom]\ar[l,cod]
                    &
                    z^+_2\ar[uul,dom]\ar[l,cod]
                        &
                        \cdots\ar[uul,dom]\ar[l,cod]
                            &
                            z^+_{n-1}\ar[uul,dom]\ar[l,cod]
                                &
            \\
            w^0_0
            \\
            w^+_0
            \\
                &
                u_1\ar[uul,dom]\ar[ul,cod]
        \end{tikzcd}
    \end{equation*}
    \begin{equation*}
        \X^{(n)}_1\colon\quad
        \begin{tikzcd}[globular]
                &
                    &
                    x_2\ar[dl,dom]\ar[ddl,cod]
                        &
                        \cdots\ar[dl,dom]\ar[ddl,cod]
                            &
                            x_{n-1}\ar[dl,dom]\ar[ddl,cod]
                                &
                                x_n\ar[dl,dom]\ar[ddl,cod]
            \\
                &
                y^0_1\ar[ddl,dom]\ar[dddl,cod]
                    &
                    y^0_2\ar[ddl,dom]\ar[dddl,cod]
                        &
                        \cdots\ar[ddl,dom]\ar[dddl,cod]
                            &
                            y^0_{n-1}\ar[ddl,dom]\ar[dddl,cod]
                                &
                                y^0_n\ar[ddl,dom]\ar[dddl,cod]
            \\
                &
                y^+_1\ar[dl,dom]\ar[ddl,cod]
                    &
                    y^+_2\ar[dl,dom]\ar[ddl,cod]
                        &
                        \cdots\ar[dl,dom]\ar[ddl,cod]
                            &
                            y^+_{n-1}\ar[dl,dom]\ar[ddl,cod]
                                &
            \\
            z^-_0
                &
                z^-_1\ar[l,dom]\ar[ddl,cod]
                    &
                    z^-_2\ar[l,dom]\ar[ddl,cod]
                        &
                        \cdots\ar[l,dom]\ar[ddl,cod]
                            &
                            z^-_{n-1}\ar[l,dom]\ar[ddl,cod]
                                &
            \\
            z^0_0
                &
                z^0_1\ar[ul,dom]\ar[dl,cod]
                    &
                    z^0_2\ar[ul,dom]\ar[dl,cod]
                        &
                        \cdots\ar[ul,dom]\ar[dl,cod]
                            &
                            z^0_{n-1}\ar[ul,dom]\ar[dl,cod]
                                &
                                z^0_n\ar[ul,dom]\ar[dl,cod]
            \\
            z^+_0
                &
                z^+_1\ar[uul,dom]\ar[l,cod]
                    &
                    z^+_2\ar[uul,dom]\ar[l,cod]
                        &
                        \cdots\ar[uul,dom]\ar[l,cod]
                            &
                            z^+_{n-1}\ar[uul,dom]\ar[l,cod]
                                &
            \\
                &
                w^0_1\ar[uuul,dom]\ar[ul,cod]
            \\
                &
                w^+_1\ar[uuuul,dom]\ar[uul,cod]
            \\
                &
                t_1\ar[uuuul,dom]\ar[uuul,cod]
                    &
                    u_2\ar[uul,dom]\ar[ul,cod]
        \end{tikzcd}
    \end{equation*}
    \begin{equation*}
        \X^{(n)}_{n-1}\colon\quad
        \begin{tikzcd}[globular]
                &
                    &
                    x_2\ar[dl,dom]\ar[ddl,cod]
                        &
                        \cdots\ar[dl,dom]\ar[ddl,cod]
                            &
                            x_{n-1}\ar[dl,dom]\ar[ddl,cod]
                                &
                                x_n\ar[dl,dom]\ar[ddl,cod]
            \\
                &
                y^0_1\ar[ddl,dom]\ar[dddl,cod]
                    &
                    y^0_2\ar[ddl,dom]\ar[dddl,cod]
                        &
                        \cdots\ar[ddl,dom]\ar[dddl,cod]
                            &
                            y^0_{n-1}\ar[ddl,dom]\ar[dddl,cod]
                                &
                                y^0_n\ar[ddl,dom]\ar[dddl,cod]
            \\
                &
                y^+_1\ar[dl,dom]\ar[ddl,cod]
                    &
                    y^+_2\ar[dl,dom]\ar[ddl,cod]
                        &
                        \cdots\ar[dl,dom]\ar[ddl,cod]
                            &
                            y^+_{n-1}\ar[dl,dom]\ar[ddl,cod]
                                &
            \\
            z^-_0
                &
                z^-_1\ar[l,dom]\ar[ddl,cod]
                    &
                    z^-_2\ar[l,dom]\ar[ddl,cod]
                        &
                        \cdots\ar[l,dom]\ar[ddl,cod]
                            &
                            z^-_{n-1}\ar[l,dom]\ar[ddl,cod]
                                &
            \\
            z^0_0
                &
                z^0_1\ar[ul,dom]\ar[dl,cod]
                    &
                    z^0_2\ar[ul,dom]\ar[dl,cod]
                        &
                        \cdots\ar[ul,dom]\ar[dl,cod]
                            &
                            z^0_{n-1}\ar[ul,dom]\ar[dl,cod]
                                &
                                z^0_n\ar[ul,dom]\ar[dl,cod]
            \\
            z^+_0
                &
                z^+_1\ar[uul,dom]\ar[l,cod]
                    &
                    z^+_2\ar[uul,dom]\ar[l,cod]
                        &
                        \cdots\ar[uul,dom]\ar[l,cod]
                            &
                            z^+_{n-1}\ar[uul,dom]\ar[l,cod]
                                &
            \\
                &
                    &
                        &
                            &
                            w^0_{n-1}\ar[uuul,dom]\ar[ul,cod]
                                &
            \\
                &
                    &
                        &
                            &
                            w^+_{n-1}\ar[uuuul,dom]\ar[uul,cod]
                                &
            \\
                &
                t_1\ar[uuuul,dom]\ar[uuul,cod]
                    &
                    t_2\ar[uuuul,dom]\ar[uuul,cod]
                        &
                        \cdots
                            &
                            t_{n-1}\ar[uuuul,dom]\ar[uuul,cod]
                                &
                                u_n\ar[uul,dom]\ar[ul,cod]
        \end{tikzcd}
    \end{equation*}
    \begin{equation*}
        \X^{(n)}_n\colon\quad
        \begin{tikzcd}[globular]
                &
                    &
                    x_2\ar[dl,dom]\ar[ddl,cod]
                        &
                        \cdots\ar[dl,dom]\ar[ddl,cod]
                            &
                            x_{n-1}\ar[dl,dom]\ar[ddl,cod]
                                &
                                x_n\ar[dl,dom]\ar[ddl,cod]
            \\
                &
                y^0_1\ar[ddl,dom]\ar[dddl,cod]
                    &
                    y^0_2\ar[ddl,dom]\ar[dddl,cod]
                        &
                        \cdots\ar[ddl,dom]\ar[dddl,cod]
                            &
                            y^0_{n-1}\ar[ddl,dom]\ar[dddl,cod]
                                &
                                y^0_n\ar[ddl,dom]\ar[dddl,cod]
            \\
                &
                y^+_1\ar[dl,dom]\ar[ddl,cod]
                    &
                    y^+_2\ar[dl,dom]\ar[ddl,cod]
                        &
                        \cdots\ar[dl,dom]\ar[ddl,cod]
                            &
                            y^+_{n-1}\ar[dl,dom]\ar[ddl,cod]
                                &
            \\
            z^-_0
                &
                z^-_1\ar[l,dom]\ar[ddl,cod]
                    &
                    z^-_2\ar[l,dom]\ar[ddl,cod]
                        &
                        \cdots\ar[l,dom]\ar[ddl,cod]
                            &
                            z^-_{n-1}\ar[l,dom]\ar[ddl,cod]
                                &
            \\
            z^0_0
                &
                z^0_1\ar[ul,dom]\ar[dl,cod]
                    &
                    z^0_2\ar[ul,dom]\ar[dl,cod]
                        &
                        \cdots\ar[ul,dom]\ar[dl,cod]
                            &
                            z^0_{n-1}\ar[ul,dom]\ar[dl,cod]
                                &
                                z^0_n\ar[ul,dom]\ar[dl,cod]
            \\
            z^+_0
                &
                z^+_1\ar[uul,dom]\ar[l,cod]
                    &
                    z^+_2\ar[uul,dom]\ar[l,cod]
                        &
                        \cdots\ar[uul,dom]\ar[l,cod]
                            &
                            z^+_{n-1}\ar[uul,dom]\ar[l,cod]
                                &
            \\
                &
                    &
                        &
                            &
                                &
                                w^0_n\ar[uuul,dom]\ar[ul,cod]
            \\
                &
                t_1\ar[uuul,dom]\ar[uul,cod]
                    &
                    t_2\ar[uuul,dom]\ar[uul,cod]
                        &
                        \cdots
                            &
                            t_{n-1}\ar[uuul,dom]\ar[uul,cod]
                                &
                                t_n\ar[uuul,dom]\ar[uul,cod]
        \end{tikzcd}
    \end{equation*}
    \begin{equation*}
        \X^{(n)}_{n+1}\colon\quad
        \begin{tikzcd}[globular]
                &
                    &
                    x_2\ar[dl,dom]\ar[ddl,cod]
                        &
                        \cdots\ar[dl,dom]\ar[ddl,cod]
                            &
                            x_{n-1}\ar[dl,dom]\ar[ddl,cod]
                                &
                                x_n\ar[dl,dom]\ar[ddl,cod]
            \\
                &
                y^0_1\ar[ddl,dom]\ar[dddl,cod]
                    &
                    y^0_2\ar[ddl,dom]\ar[dddl,cod]
                        &
                        \cdots\ar[ddl,dom]\ar[dddl,cod]
                            &
                            y^0_{n-1}\ar[ddl,dom]\ar[dddl,cod]
                                &
                                y^0_n\ar[ddl,dom]\ar[dddl,cod]
            \\
                &
                y^+_1\ar[dl,dom]\ar[ddl,cod]
                    &
                    y^+_2\ar[dl,dom]\ar[ddl,cod]
                        &
                        \cdots\ar[dl,dom]\ar[ddl,cod]
                            &
                            y^+_{n-1}\ar[dl,dom]\ar[ddl,cod]
                                &
            \\
            z^-_0
                &
                z^-_1\ar[l,dom]\ar[ddl,cod]
                    &
                    z^-_2\ar[l,dom]\ar[ddl,cod]
                        &
                        \cdots\ar[l,dom]\ar[ddl,cod]
                            &
                            z^-_{n-1}\ar[l,dom]\ar[ddl,cod]
                                &
            \\
            z^0_0
                &
                z^0_1\ar[ul,dom]\ar[dl,cod]
                    &
                    z^0_2\ar[ul,dom]\ar[dl,cod]
                        &
                        \cdots\ar[ul,dom]\ar[dl,cod]
                            &
                            z^0_{n-1}\ar[ul,dom]\ar[dl,cod]
                                &
                                z^0_n\ar[ul,dom]\ar[dl,cod]
            \\
            z^+_0
                &
                z^+_1\ar[uul,dom]\ar[l,cod]
                    &
                    z^+_2\ar[uul,dom]\ar[l,cod]
                        &
                        \cdots\ar[uul,dom]\ar[l,cod]
                            &
                            z^+_{n-1}\ar[uul,dom]\ar[l,cod]
                                &
            \\
                &
                t_1\ar[uul,dom]\ar[ul,cod]
                    &
                    t_2\ar[uul,dom]\ar[ul,cod]
                        &
                        \cdots
                            &
                            t_{n-1}\ar[uul,dom]\ar[ul,cod]
                                &
                                t_n\ar[uul,dom]\ar[ul,cod]
        \end{tikzcd}
    \end{equation*}

    Let $F^{(n)}_m\colon \X^{(n)}_m\arr \X^{(n)}_{n+1}$ be the strict $n$-functor that satisfies $F^{(n)}_m(w^j_m)=z^j_m$ and $F^{(n)}_m(u_{m+1})=t_{m+1}$, and is identity on the other elements.
    We now show that for each $0\le m\le n-2$, $F^{(n)}_{m+1}$ coincides with the canonical morphism from the coequalizer of the kernel pair of $F^{(n)}_m$.
    Since $F^{(n)}_m$ only collapses the pairs $(z^0_m,w^0_m)$ and $(z^+_m,w^+_m)$, the coequalizer is obtained from $\X^{(n)}_m$ by collapsing them.
    However, collapsing $(z^0_m,w^0_m)$ generates three non-trivial composites $w^j_{m+1}=u_{m+1}\ccomp[m+1]y^j_{m+1}$ $(j=0,+)$ and $u_{m+2}=u_{m+1}\ccomp[m+1]x_{m+2}$ as new elements.
    Then, renaming $u_{m+1}$ to $t_{m+1}$, we have shown that the coequalizer is exactly $\X^{(n)}_{m+1}$, and $F^{(n)}_{m+1}$ is the canonical morphism from it.

    Similarly, we can verify that $\X^{(n)}_{n}$ is the coequalizer of the kernel pair of $F^{(n)}_{n-1}$, and $F^{(n)}_n$ coincides with the canonical morphism from it.
    Moreover, $F^{(n)}_n$ is itself a regular epimorphism, because collapsing $z^0_n$ and $w^0_n$ in $\X^{(n)}_n$ does not cause any non-trivial composition.
    This shows that the canonical regular decomposition of $F^{(n)}_0$ stabilizes exactly at $n+1$, and $\decnum{F^{(n)}_0}=\cdecnum{F^{(n)}_0}=n+1$.

    For convenience, we illustrate $\X^{(n)}_m$ for the case $n=2$ below.
    \begin{equation*}
        \X^{(2)}_0=\left(
        \begin{tikzcd}[huge]
            w^+_0
                &[-30pt]
                z^+_0
                    &
                    z^-_0
                    \ar[d,bend left=30,"y^+_1"{right}]
                    \ar[d,bend right=30,"y^0_1"{left}]
                    \ar[l,bend right=50,"z^+_1"{above}]
                    \ar[l,"z^-_1"{description}]
                    \ar[l,bend left=50,"z^0_1"{below}]
            \\
            w^0_0\ar[u,"u_1"]
                &
                    &
                    z^0_0
            \rtwocell(x_2){1-3}{2-3}
            \dtwocell(y^0_2){1-2}{1-3}[shift right=4]
            \utwocell(z^0_2){1-2}{1-3}[shift left=4]
        \end{tikzcd}
        \right)
        \qquad
        \X^{(2)}_1=\left(
        \begin{tikzcd}[huge]
            z^+_0
                &
                z^-_0
                \ar[d,bend left=30,"y^+_1"{right}]
                \ar[d,bend right=30,"y^0_1"{left}]
                \ar[l,bend right=50,"z^+_1"{above}]
                \ar[l,"z^-_1"{description}]
                \ar[l,bend left=50,"z^0_1"{below}]
            \\
                &
                z^0_0
                \ar[ul,bend left=50,"t_1"]
            \rtwocell(x_2){1-2}{2-2}
            \dtwocell(y^0_2){1-1}{1-2}[shift right=4]
            \utwocell(z^0_2){1-1}{1-2}[shift left=4]
        \end{tikzcd}
        \right)
    \end{equation*}
    \begin{equation*}
        \X^{(2)}_2=\left(
        \begin{tikzcd}[smallrow]
            z^+_0
                &[-20pt]
                    &
                    {}
                        &
                        z^-_0
                        \ar[lll,bend right=40,"z^-_1"{above},""{name=Z}]
                        \ar[lld,bend right=10,"y^0_1"{above,pos=0.2},""{name=Y0}]
                        \ar[lld,bend left=30,"y^+_1",""{name=Y1,pos=0.8}]
            \\
                &
                z^0_0\ar[ul,bend left=10,"t_1"]
                    &
                    {}
                        &
            \arrow[from=Z,to=Y0,Rightarrow,"y^0_2",shorten=3]
            \arrow[from=Y0,to=Y1,Rightarrow,"x_2"{right},shorten=3]
            \arrow[from=Z,to=Y1,crossing over,Rightarrow,bend right=30,"z^0_2"{left},shorten=3]
        \end{tikzcd}
        \right)
        \qquad
        \X^{(2)}_3=\left(
        \begin{tikzcd}[smallrow]
            z^+_0
                &[-20pt]
                    &
                    {}
                        &
                        z^-_0
                        \ar[lll,bend right=40,"z^-_1"{above},""{name=Z}]
                        \ar[lld,bend right=10,"y^0_1"{above,pos=0.2},""{name=Y0}]
                        \ar[lld,bend left=30,"y^+_1",""{name=Y1,pos=0.8}]
            \\
                &
                z^0_0\ar[ul,bend left=10,"t_1"]
                    &
                    {}
                        &
            \arrow[from=Z,to=Y0,Rightarrow,"y^0_2",shorten=3]
            \arrow[from=Y0,to=Y1,Rightarrow,"x_2"{right},shorten=3]
        \end{tikzcd}
        \right)
    \end{equation*}
    Here, in the 2-category $\X^{(2)}_1$, the composite 1-cell $t_1\ccomp[1]y^j_1$ does not equal to $z^j_1$ for each $j=0,+$; in the 2-category $\X^{(2)}_2$, the composite 2-cell $(t_1\ccomp[1]x_2)\ccomp[2]y^0_2$ does not equal to $z^0_2$.
\end{example}

\begin{theorem}\label{thm:global_decnum_of_nCat}
    $\decnum{\nCat}=n+2$.
\end{theorem}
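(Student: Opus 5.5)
The equality splits into an upper bound and a lower bound. Both come from results already established in this section, using the identification $\PMod\theory{T}_\ncat\simeq\nCat$.

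For the upper bound, I apply \cref{thm:gauge} with the equational scale, so that $\Reg(\bfLambda)$ is exactly the class of regular epimorphisms. The gauge is the one from \cref{thm:verification_gauge_for_ncat}. By \cref{def:gauge_for_ncat}, $\height\ttau$ is the largest $k$ such that $\ccomp[k]$ occurs in $\ttau$, or $0$ if none occurs, so $\height\ttau\le n$ for every raw term. Taking $\gamma\coloneq n+1$ in \cref{thm:gauge} gives $\decnum{f}\le n+1$ for every strict $n$-functor $f$, and therefore $\decnum{\nCat}\le n+2$.

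For the lower bound, it suffices to exhibit a single strict $n$-functor $f$ with $\decnum{f}=n+1$, since then the least strict upper bound is at least $n+2$.
\begin{itemize}
\item For $n\ge 2$, \cref{eg:nfunctor_whose_decnum_is_nplus1} provides $F^{(n)}_0$ with $\decnum{F^{(n)}_0}=\cdecnum{F^{(n)}_0}=n+1$. Here \cref{thm:correspondence_decnum_and_cdecnum} applies because regular epimorphisms are epic.
\item For $n=1$, \cref{eg:functor_whose_decnum_is_2} gives the functor $\Phi$ with $\decnum{\Phi}=2=n+1$.
\item For $n=0$, $\nCat=\Set$ and $\decnum{\Set}=2$ by \cref{eg:global_minimum_decnum_for_regular_epi}. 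Alternatively, any non-injective map has decomposition number $1$.
\end{itemize}

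Combining the two bounds gives $\decnum{\nCat}=n+2$.

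The substantive work is already contained in the earlier results, namely \cref{thm:verification_gauge_for_ncat} and the verification in \cref{eg:nfunctor_whose_decnum_is_nplus1}. The only care needed here is the bookkeeping: using the strict supremum convention of \cref{rem:why_strict_supremum}, the bound $\height\ttau<n+1$ must be turned into $\decnum{\nCat}\le n+2$ rather than $n+1$, and the small cases $n\le1$ must be handled separately.
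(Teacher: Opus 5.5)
Your proposal is correct and matches the paper's proof: the upper bound comes from feeding the gauge $\height$ of \cref{thm:verification_gauge_for_ncat} into \cref{thm:gauge} with $\gamma=n+1$, and the lower bound from the explicit functors of \cref{eg:functor_whose_decnum_is_2,eg:nfunctor_whose_decnum_is_nplus1}. Your separate treatment of $n=0$ is a harmless addition that the paper leaves implicit.
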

\begin{proof}
    Since $\height\ttau < n+1$ holds for every raw term $\ttau$ over $\Sigma_\ncat$, \cref{thm:verification_gauge_for_ncat,thm:gauge} imply that for every strict $n$-functor $f$, $\decnum{f}\le n+1$ holds.
    Since we have constructed an example with $\decnum{f}=n+1$ in \cref{eg:functor_whose_decnum_is_2,eg:nfunctor_whose_decnum_is_nplus1}, it follows that $n+1$ is the maximum.
\end{proof}

\begin{remark}\label{rem:global_decnum_of_omegaCat}
    Let $\omegaCat$ be the category of small strict $\omega$-categories and strict $\omega$-functors.
    According to \cite[Lemma 2.1.0.2]{Goldthorpe2024higher}, $\omegaCat$ is the limit of the sequence $\twoCat\rra(\pi_{\le 2})\threeCat\rra(\pi_{\le 3}) \fourCat\rra \cdots$ in the category of large categories, where $\pi_{\le n}$ denotes the functor that sends an $(n+1)$-category to the $n$-category obtained by taking the connected components of $n$-cells.
    Since the strict $(n+1)$-functor $F^{(n+1)}_0$ constructed in \cref{eg:nfunctor_whose_decnum_is_nplus1} is transferred to $F^{(n)}_0$ by $\pi_{\le n}$, the sequence $(F^{(n)}_0)_n$ gives a strict $\omega$-functor $F^{(\omega)}_0$.
    Then, a similar argument to \cref{eg:nfunctor_whose_decnum_is_nplus1} shows $\decnum{F^{(\omega)}_0}=\cdecnum{F^{(\omega)}_0}=\omega$.
    Since $\decnum{\omegaCat}$ is bounded by \cref{cor:upperbound_for_global_regdecnum_of_lp} as $\omegaCat$ is locally finitely presentable, we conclude $\decnum{\omegaCat}=\omega+1$.
\end{remark}
\section{An approach via generalized algebraic theories}\label{sec:for_models_of_dependent_algebraic_theories}
\subsection{Reflective full subclans and regular-decomposition numbers}\label{subsec:reflective_subclans_and_decomposition_numbers}
We start by recalling the notion of clans, following \cite{Joyal2017notes,Frey2025duality}.
\begin{definition}[Clans]\label{def:clan}
    A \emph{clan} is a pair $(\C,\Disp)$ where $\C$ is a category and $\Disp$ is a class of morphisms in $\C$, whose elements are called \emph{display morphisms}, such that the following conditions hold:
    \begin{enumerate} 
        \item\label{def:clan-2} $\Disp$ is closed under composition and contains all identity morphisms.
        \item\label{def:clan-1} $\Disp$ contains all isomorphisms in $\C$.
        \item\label{def:clan-3} $\C$ has pullbacks of any morphism along any morphism in $\Disp$.
        \item\label{def:clan-4} $\Disp$ is closed under pullbacks along a morphism.
        \item\label{def:clan-5} $\C$ has a terminal object $1$.
        \item\label{def:clan-6} $\Disp$ contains the unique morphism from any object to the terminal object $1$.
    \end{enumerate}
    We also call the class $\Disp$ the \emph{clan structure} of the clan.
\end{definition}

\begin{notation}
    Display morphisms in a clan are often denoted by the special arrow $\darr$.
\end{notation}

\begin{example}[{Syntactic clans, cf.\ \cite[Section 14]{Cartmell1986generalised} and \cite[\textsection 10.\ Appendix]{Frey2025duality}}]
    The central example of clans comes from \acfp{GAT}.
    The syntax of \ac{GAT} that we use is presented in \cref{sec:GAT}.
    For a \ac{GAT} $\theory{T}$, we have the \emph{syntactic category} $\clfy{\theory{T}}$ as in \cref{def:syntactic-category} and it has a clan structure which we call the \emph{syntactic clan structure on $\clfy{\theory{T}}$}.
    See \cref{def:syntactic-category,prop:syntactic-category-is-clan} for details.
\end{example}

From now on, we often denote a clan $(\C,\Disp)$ simply by $\C$ where $\Disp$ is clear from context, and refer to its clan structure $\Disp$ as $\Disp[\C]$ when necessary.
\begin{definition}[Clan morphisms and models]
    Given clans $\C$ and $\D$, a \emph{clan morphism} from $\C$ to $\D$ is a functor $\Phi\colon \C \arr \D$ that preserves display morphisms, terminal objects, and pullbacks of any morphisms along display morphisms.
    
    A \emph{model} of a small clan $\C$ is a clan morphism from $\C$ to the (large) clan $\Set$ of small sets and maps with all morphisms being display morphisms.
\end{definition}

\begin{remark}
    The category of models of a small clan $\C$ is locally finitely presentable as it is a model of a finite-limit sketch; see, \cite[Remark 2.9(a)]{Frey2025duality} for details.
\end{remark} 

\begin{definition}[Full subclans]\label{def:full_inclusion_clan}
    A \emph{full inclusion} of clans is a clan morphism $I\colon \C' \arr[hook] \C$ such that the underlying functor is the inclusion functor of a full subcategory $\C'$ of $\C$ and a morphism in $\C'$ is a display morphism if and only if it is a display morphism in $\C$.\footnote{The last condition is not necessary for our general development, but we include it here because it is satisfied in all examples we consider in this paper.}
    A \emph{full subclan} of a clan $\C$ is a clan $\C'$ together with a full inclusion $I\colon \C' \arr[hook] \C$.
\end{definition}

\begin{definition}
    Given clan morphisms $\Phi$ and $\Psi$ from $\C$ to $\D$, a \emph{display transformation} from $\Phi$ to $\Psi$ is a natural transformation whose components are display morphisms.
    By merely saying a natural transformation between clan morphisms, we mean a natural transformation without any restriction on its components.
\end{definition}

\begin{notation}[2-categories of clans]
    We denote the 2-category of small clans, clan morphisms, and natural transformations by $\Clan$, and that of large clans by $\CLAN$.
    For a small clan $\C$, we write $\ClanMod\C$ for the category of models of $\C$ and natural transformations between them, i.e., the hom-category $\CLAN(\C, \Set)$.
    When we restrict 2-cells to display transformations, we denote the resulting 2-categories by $\Cland$ and $\CLANd$, respectively.
\end{notation}

\begin{remark}
    The category of models is equivalently obtained as the hom-category either of $\CLAN$ or $\CLANd$ because $\Set$ has all morphisms as display morphisms.
    They also have the same underlying $(2,1)$-category structure since all natural isomorphisms are componentwise display morphisms.
\end{remark}

\begin{example}\label{exa:models-of-syntactic-clan}
    For a \ac{GAT} $\theory{T}$, the category of models of the syntactic clan $\clfy{\theory{T}}$ is equivalent to the category of models of the \ac{GAT} $\theory{T}$ in the usual sense \cite{Cartmell1986generalised}.
    For instance, the category of models of the syntactic clan $\clfy{\theory{T}_\setg}$ of the \ac{GAT} for sets is equivalent to the category $\Set$ of small sets, and the category of models of the syntactic clan $\clfy{\theory{T}_\catg}$ of the \ac{GAT} for categories is equivalent to the category $\Cat$ of small categories.
    See \cref{eg:set-theory,eg:category-theory} for the definitions of these \acp{GAT}.
\end{example}

\begin{example}[Basic examples of clans and their models]
    \label{prop:finite-product-clan-model}
    \ 
    \begin{enumerate}
    \item There is a unique way to make the terminal category $\terminalcat$ a clan, which we call the \emph{trivial clan}.
    It is also isomorphic to the syntactic clan for the empty \ac{GAT}.
    This clan is the biinitial and 2-terminal object in the 2-category $\Clan$.
    In particular, there is essentially only one model of the trivial clan, i.e., $\ClanMod\terminalcat \simeq \terminalcat$.
    \item\label{prop:finite-product-clan-model-finprod} A clan $\C$ is called a \emph{finite-product clan} \cite[Example 2.3(b)]{Frey2025duality} if its display morphisms are all finite-product projections.
    Given a category with finite products, the finite-product clan structure is the smallest clan structure thereon.
    Models of a small finite-product clan $\C$ are equivalent to finite-product preserving functors from $\C$ to $\Set$, which have been studied classically as models of many-sorted equational (algebraic) theories, see, e.g., \cite[Section 1]{AdamekRosickyVitale2011algebraic}.
    In particular, the category of models of a finite-product clan is known to be regular, and its global regular-decomposition number is at most $2$.
    \item A clan $\C$ is called a \emph{finite-limit clan} \cite[Example 2.3(a)]{Frey2025duality} if all morphisms are display morphisms.
    For example, $\Set$ is a finite-limit clan.
    Given a category with finite limits, the finite-limit clan structure is the largest clan structure thereon.
    Models of a small finite-limit clan $\C$ are equivalent to finite-limit preserving functors from $\C$ to $\Set$.
    \qedhere
    \end{enumerate}
\end{example}

\begin{definition}[Reflective full subclans]\label{def:reflective-full-subclan}
    A \emph{clan reflection} of a clan full inclusion $I\colon \C' \arr[hook] \C$ is a clan morphism $\Lambda\colon \C \arr \C'$ such that the functor $\Lambda$ is a left adjoint of $I$ with the unit being pointwise display morphisms, or equivalently, it is a left adjoint of $I$ in the 2-category $\CLANd$.
    A \emph{reflective full subclan} is a full subclan whose inclusion admits a clan reflection.
\end{definition}

\begin{example}[The trivial clan as a reflective full subclan]\label{ex:trivial-clan-reflective-subclan}
    The canonical clan morphism from the trivial clan $\terminalcat$ to any clan $\C$ is fully faithful, and has a left adjoint given by the unique functor from $\C$ to $\terminalcat$.
    Viewing $\terminalcat$ as a full subclan of $\C$, the left adjoint is a clan reflection.
\end{example}

When a copresheaf $M\colon \C \arr \Set$ on a small clan $\C$ is given, we can consider the discrete opfibration $\pi\colon\El M \arr \C$ corresponding to $M$ through the Grothendieck construction.
Note that a morphism from an object $(C,x)$ to another object $(C',x')$ in the category of elements $\El M$ is given by a morphism $f\colon C \arr C'$ in $\C$ such that $f\cdot x \coloneq M(f)(x) = x'$.
\begin{notation}
    We write $\Dopfib_{\C}$ for the category of discrete opfibrations from a small category to $\C$ and morphisms between them that strictly commute with the discrete opfibrations.
\end{notation}

\begin{remark}
    Starting from a model $M\colon \C \arr \Set$, the domain category $\El M$ of the corresponding discrete opfibration has a clan structure consisting of morphisms whose underlying morphisms are display morphisms in $\C$.
    This is indeed shown to be a clan using \cref{lem:limits-and-discrete-fibrations} below; we can further characterize discrete opfibrations arising from clan models as in the definition following it.
\end{remark}

\begin{lemma}
    \label{lem:limits-and-discrete-fibrations}
    Let $M \colon \C \arr \Set$ be a copresheaf and $\pi\colon \El M \arr \C$ be the corresponding discrete opfibration.
    Take a diagram $D \colon \J \arr \C$ and suppose it has a limit in $\C$.
    Then, the following are equivalent:
    \begin{enumerate}
        \item\label{item:limits-and-discrete-fibrations-1} The functor $M$ preserves the limit of $D$.
        \item\label{item:limits-and-discrete-fibrations-2} The functor $\pi \colon \El M \arr \C$ strictly creates the limit of $D$.
    \end{enumerate}
\end{lemma}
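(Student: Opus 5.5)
We need to prove: for a copresheaf $M\colon\C\arr\Set$ and a diagram $D\colon\J\arr\C$ with limit $(L,\lambda_j)$ in $\C$, $M$ preserves this limit iff $\pi\colon\El M\arr\C$ strictly creates it. The plan is to unwind both conditions in terms of elements and compare them directly.

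\textbf{Unwinding (i).} Cones over $\pi$-lifts of $D$ correspond to compatible families. A lift $\tilde D\colon\J\arr\El M$ of $D$ amounts to a family $(x_j\in MD_j)_{j\in\J}$ with $M(D u)(x_j)=x_{j'}$ for each $u\colon j\arr j'$, that is, an element of $\lim_j MD_j$ in $\Set$. A cone over $\tilde D$ with vertex $(C,y)$ and underlying cone $(c_j\colon C\arr D_j)$ in $\C$ is exactly a cone in $\C$ with $M(c_j)(y)=x_j$ for all $j$. The key observation is that $M$ preserving the limit means precisely that the comparison map $ML\arr\lim_j MD_j$, $z\mapsto(M(\lambda_j)(z))_j$, is a bijection.

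\textbf{(i)$\Rightarrow$(ii).} Given a lift $\tilde D=(x_j)_j$, bijectivity yields a unique $z\in ML$ with $M(\lambda_j)(z)=x_j$. So there is exactly one lift $(L,z)$ of the limit cone to a cone over $\tilde D$; this is the ``strict'' uniqueness part. To see it is a limit in $\El M$, take any cone $((C,y),c_j)$ over $\tilde D$. The factorization $h\colon C\arr L$ in $\C$ is a morphism $(C,y)\arr(L,z)$, because $M(h)(y)$ also maps to $(x_j)_j$ under the comparison map, and injectivity then gives $M(h)(y)=z$. Uniqueness of this factorization follows from faithfulness of $\pi$. The same argument applies to any limit cone of $D$ in $\C$ (any isomorphic vertex), if strict creation is read as ranging over all limit cones.

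\textbf{(ii)$\Rightarrow$(i).} Surjectivity of the comparison map: each $(x_j)_j\in\lim MD_j$ defines a lift $\tilde D$, and creation provides a lift $(L,z)$ of the limit cone, so $M(\lambda_j)(z)=x_j$. Injectivity: suppose $z,z'\in ML$ have the same image $(x_j)_j$. Then $(L,z)$ and $(L,z')$ are both lifts of the limit cone over the same $\tilde D$, and strict uniqueness of the lift forces $z=z'$. Alternatively, since the created lift $(L,z)$ is a limit, the cone $((L,z'),\lambda_j)$ factors through $(L,z)$ via a morphism over $\id_L$, which also gives $z=z'$.

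No step is a real obstacle. The only point needing care is fixing the precise meaning of ``strictly creates'': existence and uniqueness of a lift of a given limit cone, together with that lift being a limit. Once that convention is fixed, the argument reduces to the bijectivity of $ML\arr\lim_j MD_j$.
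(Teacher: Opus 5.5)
Your proof is correct and follows the paper's route. For (ii)$\Rightarrow$(i), the paper likewise shows that the comparison map $M(L)\to\lim_{J}M(D(J))$ is bijective: it reads a compatible family as a lift of $D$ to $\El M$, then uses existence and uniqueness of the lifted cone over $L$. For (i)$\Rightarrow$(ii), the paper only cites a standard exercise in Riehl's book, and your elementwise argument (injectivity of the comparison map for the induced factorization, faithfulness of $\pi$ for uniqueness) supplies exactly that omitted verification.
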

\begin{proof}
    \proofdirection{\cref{item:limits-and-discrete-fibrations-1}}{\cref{item:limits-and-discrete-fibrations-2}}
    This direction is well-known; see, e.g., an exercise in \cite[Section 3.4]{Riehl2016category}.

    \proofdirection{\cref{item:limits-and-discrete-fibrations-2}}{\cref{item:limits-and-discrete-fibrations-1}}
    Let $\lambda_J \colon L \arr D(J)$ be the limit cone in $\C$.
    We prove that the canonical map
    \begin{equation*}
        M(L) \arr \lim_{J \in \J} M(D(J));
        \quad x \mapsto (\lambda_J\cdot x)_{J \in \J}
    \end{equation*}
    is a bijection.
    Giving $y = (y_J)_{J \in \J} \in \lim_{J \in \J} M(D(J))$ is equivalent to giving a diagram $(D, y) \colon \J \arr \El M$ such that $\pi \circ (D,y) = D$.
    By the assumption, the limit of $(D,y)$ is strictly created by $\pi$, so there exists a unique $x \in M(L)$ such that $\lambda_J \cdot x = y_J$ for each $J \in \J$.
\end{proof}

\begin{definition}[Clan discrete opfibrations]
    \label{def:clan-discrete-opfibration}
    A clan morphism $\Phi\colon \C \arr \D$ is said to be a \emph{clan discrete opfibration} if the following conditions hold:
    \begin{enumerate}
        \item It is a discrete opfibration.
        \item It reflects display morphisms, that is, for any morphism $\varphi\colon C \arr C'$ in $\C$, if $\Phi(\varphi)$ is a display morphism in $\D$, then $\varphi$ is a display morphism in $\C$.
        \item It strictly creates terminal objects and pullbacks of any morphisms along display morphisms.
    \end{enumerate}
    Morphisms between clan discrete opfibrations are defined as clan morphisms between the domain clans that strictly commute with the clan discrete opfibrations.
    For a small clan $\D$, we write $\ClanDopfib_{\D}$ for the category consisting of clan discrete opfibrations from a small clan to $\D$ and morphisms between them.
\end{definition}

\begin{remark}\label{rem:clan-discrete-opfibration-closure}
    Clan discrete opfibrations are closed under composition and right cancellative, i.e., for every composable pair of a clan morphism $\Phi$ and a clan discrete opfibration $\Psi$, $\Phi$ is a clan discrete opfibration if and only if so is the composite $\Psi\circ\Phi$.
    Consequently, by \Cref{lem:limits-and-discrete-fibrations}, the category $\ClanDopfib_\D$ becomes a full subcategory of $\Dopfib_\D$.
\end{remark}

\begin{proposition}[The Grothendieck construction for clan models]
    \label{prop:clan-Grothendieck-construction}
    For a small clan $\C$, restricting the equivalence between the copresheaf category $\Set^{\C}$ and the category of discrete opfibrations over $\C$ through the Grothendieck construction, we obtain an equivalence between the category of clan discrete opfibrations over $\C$ and the category of models of the clan $\C$.
    \begin{equation*}
    \begin{tikzcd}
        \ClanMod\C \ar[r,"\int","\simeq"'] 
        \ar[d, hookrightarrow]
        &
        \ClanDopfib_{\C}
        \ar[d, hookrightarrow]
        \\
        \Set^{\C} \ar[r,"\int","\simeq"']
        &
        \Dopfib_{\C}
    \end{tikzcd}
    \end{equation*}
\end{proposition}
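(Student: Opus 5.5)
The plan is to restrict the classical Grothendieck equivalence $\El\colon\Set^{\C}\simeq\Dopfib_{\C}$ in both directions. By \cref{rem:clan-discrete-opfibration-closure}, $\ClanDopfib_{\C}$ is a full subcategory of $\Dopfib_{\C}$, and $\ClanMod\C$ is a full subcategory of $\Set^{\C}$ because morphisms of models are arbitrary natural transformations. It therefore suffices to prove two things on objects: that $\El M\arr\C$ is a clan discrete opfibration exactly when $M$ is a clan model, and that the inverse construction carries clan discrete opfibrations to clan models. Since a discrete opfibration over $\C$ is isomorphic to $\El$ of its fibre functor, both reduce to the single statement: for a copresheaf $M$ with $\pi\colon\El M\arr\C$, $M$ is a model if and only if $\pi$, with a suitable clan structure on $\El M$, is a clan discrete opfibration.

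For the implication from model to clan discrete opfibration, I will give $\El M$ the clan structure from the preceding remark: a morphism is display exactly when its image under $\pi$ is display. Because $M$ preserves the terminal object and pullbacks along display maps, \cref{lem:limits-and-discrete-fibrations} shows that $\pi$ strictly creates these limits. The clan axioms for $\El M$ then transfer from $\C$:
\begin{itemize}
\item existence of pullbacks along display maps and of the terminal object follows from strict creation;
\item closure of the display class under composition, identities and pullback follows because $\pi$ preserves pullbacks;
\item isomorphisms are display, since $\pi$ sends them to isomorphisms;
\item maps to the terminal object are display.
\end{itemize}
With this choice, $\pi$ is a clan morphism that reflects display morphisms by construction.

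For the converse, suppose $\pi$ is a clan discrete opfibration for some clan structure on $\El M$. Then $\pi$ strictly creates the terminal object and pullbacks along display maps, so \cref{lem:limits-and-discrete-fibrations} gives that $M$ preserves them. Models need nothing more, since every map in $\Set$ is display, so $M$ is a model. Reflection of display maps, together with preservation, forces the clan structure on $\El M$ to be the induced one. Hence the clan structure is determined and no extra data appears on the fibration side.

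The main obstacle is the converse. The lemma is stated for limits that exist in $\C$, and the relevant pullbacks do exist there because the cospans contain a display leg. The remaining care is to check that "strictly creates pullbacks along display morphisms" in $\El M$ covers every cospan in $\El M$ whose image has a display leg. Reflection of display morphisms is exactly what supplies this.
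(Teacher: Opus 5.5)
Your proposal is correct and follows essentially the paper's argument: give $\El M$ the clan structure pulled back from $\Disp[\C]$ (which reflection forces), use \cref{lem:limits-and-discrete-fibrations} to show that $M$ preserves the terminal object and pullbacks along display maps exactly when $\pi_M$ strictly creates them, and conclude because both subcategories are full. Your remark that reflection of display morphisms is what extends strict creation in $\El M$ to every cospan whose image has a display leg is a detail the paper leaves implicit, and you handle it correctly.
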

\begin{proof}
    The clan structure on the category of elements $\El M$ of a model $M\colon \C \arr \Set$ is uniquely determined to make the projection functor $\pi_M\colon\El M \arr \C$ a clan discrete opfibration: Display morphisms are those underlying display morphisms in $\C$.
    The standard argument on the Grothendieck construction shows that $M$ preserves pullbacks along display morphisms and terminal objects if and only if the projection functor strictly creates them, see \cref{lem:limits-and-discrete-fibrations}.
    Since both the categories are full subcategories, this proves the restricted equivalence.
\end{proof}

\begin{proposition}[Naturality of the Grothendieck construction for clan models]
    \label{prop:naturality-of-clan-Grothendieck-construction}
    For a clan morphism $\Phi\colon \C \arr \D$ between small clans, the following square commutes up to natural isomorphism:
    \begin{equation*}
    \begin{tikzcd}
        \ClanMod\D \ar[r, "\int_\D","\simeq"'] \ar[d, "\Phi^*"']
        &
        \ClanDopfib_{\D} \ar[d, "\Phi^\diamond"]
        \\
        \ClanMod\C \ar[r, "\int_\C","\simeq"']
        &
        \ClanDopfib_{\C}
        \cellsymb(\rotatebox{45}{$\cong$}){1-1}{2-2}
    \end{tikzcd}
    \end{equation*}
    where the left vertical functor $\Phi^*$ is given by precomposition with $\Phi$, and the right vertical functor $\Phi^\diamond$ is given by the pullback (in $\Cat$) of clan discrete opfibrations along $\Phi$.
\end{proposition}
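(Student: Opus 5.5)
We reduce the statement to the classical naturality of the Grothendieck construction for copresheaves, and then check that the restriction to clan models is compatible. For a functor $\Phi\colon\C\arr\D$ and a copresheaf $M\colon\D\arr\Set$, we have a canonical isomorphism of discrete opfibrations over $\C$:
\begin{equation*}
    \El(M\circ\Phi)\cong\C\times_{\D}\El M .
\end{equation*}
Concretely, an object $(C,x)$ with $x\in M(\Phi C)$ corresponds to the pair $(C,(\Phi C,x))$. A morphism $f\colon(C,x)\arr(C',x')$ with $\Phi(f)\cdot x=x'$ corresponds to the pair $(f,\Phi f)$. This identification is natural in $M$. Hence the unrestricted square with $\Set^{\D}\arr\Set^{\C}$ and the pullback functor $\Dopfib_\D\arr\Dopfib_\C$ commutes up to natural isomorphism. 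This uses only that discrete opfibrations are stable under pullback in $\Cat$.

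It then remains to check that both vertical functors restrict to the clan-model parts. On the left this is immediate: $M\circ\Phi$ is a clan morphism whenever $M$ and $\Phi$ are.

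On the right we must show that, for a clan discrete opfibration $\pi\colon\E\arr\D$, the pullback $\C\times_\D\E\arr\C$ carries a clan structure making it a clan discrete opfibration. We could verify this directly:
\begin{itemize}
    \item declare a morphism display iff its image in $\C$ is display;
    \item check that terminal objects and pullbacks along display maps are strictly created, using that $\Phi$ preserves them and $\pi$ strictly creates them.
\end{itemize}
It is cleaner, however, to avoid this. By the isomorphism above, $\C\times_\D\E\cong\El(M\circ\Phi)$ where $M$ corresponds to $\E$ under \cref{prop:clan-Grothendieck-construction}. Since $M\circ\Phi$ is a clan model, \cref{prop:clan-Grothendieck-construction} already equips $\El(M\circ\Phi)$ with the unique clan structure making its projection a clan discrete opfibration. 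We take $\Phi^\diamond$ to be defined by transporting this structure along the isomorphism.

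Finally, since $\ClanDopfib_\C$ is a full subcategory of $\Dopfib_\C$ (\cref{rem:clan-discrete-opfibration-closure}), the components of the natural isomorphism lie in $\ClanDopfib_\C$. Their naturality is inherited from the unrestricted square.

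The only real point of care is that the pullback in $\Cat$ becomes a clan discrete opfibration with a well-defined clan structure on its domain, so that $\Phi^\diamond$ genuinely lands in $\ClanDopfib_\C$. One must also check that morphisms in $\ClanDopfib_\D$ induce clan morphisms on pullbacks. Both follow because the clan structure is determined by the projection: it reflects display maps, so any functor over $\C$ between clan discrete opfibrations preserves display maps. By \cref{lem:limits-and-discrete-fibrations} and right cancellativity (\cref{rem:clan-discrete-opfibration-closure}), such a functor also preserves the relevant limits. I expect this step to be the only mild obstacle.
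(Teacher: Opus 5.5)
Your proposal is correct and follows the paper's route. You reduce to the classical isomorphism $\El(M\circ\Phi)\cong\C\times_\D\El M$ for copresheaves, and then observe that $M\circ\Phi$ is again a clan model because clan morphisms compose. Beyond the paper, you spell out why the pullback lands in $\ClanDopfib_\C$: its clan structure is forced to be ``display iff the image in $\C$ is display'', and the fullness of $\ClanDopfib_\C\subseteq\Dopfib_\C$ handles the morphisms. The paper's terse proof leaves both points implicit.
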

\begin{proof}
    On the level of copresheaves and discrete opfibrations, this is a well-known fact and easily verified.
    For the desired natural isomorphism, it suffices to check that the precomposition of $\Phi$ preserves models of clans, and this follows from the fact that clan morphisms compose.
\end{proof}

\begin{notation}\label{note:Grothendieck-pullback-square}
    By the previous proposition, for a clan morphism $\Phi\colon \C \arr \D$ between small clans and a clan model $N\colon \D \arr \Set$, we have the following pullback square in $\Cat$:
    \begin{equation*}
        \begin{tikzcd}
            \El (\Phi^* N) 
            \ar[r, "\int \Phi"] 
            \ar[d, "\pi_{\Phi^* N}"']
            \ar[rd, "\lrcorner", phantom, very near start]
            &
            \El N \ar[d, "\pi_N"]
            \\
            \C \ar[r, "\Phi"']
            &
            \D
        \end{tikzcd}\incat{\Cat}.
    \end{equation*}
    Note that $\int \Phi$ is a clan morphism.
    This follows from the fact that $\pi_{\Phi^* N}$ and $\pi_N$ are clan discrete opfibrations and $\Phi$ is a clan morphism.
\end{notation}

\begin{lemma}[{\cite[Proposition 3.6]{Frey2025duality}}]\label{lem:slice-clan-model}
    For a clan model $M\colon \C \arr \Set$, we have the following equivalence: 
    \begin{equation*}
        (\ClanMod\C)/M \simeq \ClanMod(\El M).
    \end{equation*}
\end{lemma}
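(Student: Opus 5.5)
The plan is to reduce the statement to a statement about clan discrete opfibrations, using \cref{prop:clan-Grothendieck-construction} twice, and to identify slices of $\ClanDopfib$ by the cancellation property in \cref{rem:clan-discrete-opfibration-closure}. Concretely, I will establish a chain of equivalences
\begin{equation*}
    (\ClanMod\C)/M
    \simeq
    \ClanDopfib_{\C}/\pi_M
    \simeq
    \ClanDopfib_{\El M}
    \simeq
    \ClanMod(\El M).
\end{equation*}

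First, I check that $\El M$ is indeed a small clan, with display morphisms those whose image under $\pi_M$ is display. This is the remark preceding \cref{lem:limits-and-discrete-fibrations}: $\pi_M$ strictly creates the terminal object and the pullbacks along display morphisms, by \cref{lem:limits-and-discrete-fibrations}. The clan axioms then transfer from $\C$ directly.

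The first equivalence is the restriction of the equivalence of \cref{prop:clan-Grothendieck-construction} to slices. An equivalence of categories induces an equivalence of slice categories over corresponding objects, and $\El$ sends $M$ to $\pi_M$.

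For the middle equivalence I define two functors.
\begin{itemize}
    \item $\ClanDopfib_{\El M}\arr\ClanDopfib_{\C}/\pi_M$ sends $\Psi\colon\D\arr\El M$ to $\pi_M\circ\Psi$, regarded as an object over $\pi_M$ via $\Psi$. The composite $\pi_M\circ\Psi$ is a clan discrete opfibration because clan discrete opfibrations are closed under composition.
    \item Conversely, an object of $\ClanDopfib_{\C}/\pi_M$ is a clan discrete opfibration $\pi_N\colon\D\arr\C$ together with a clan morphism $\Psi\colon\D\arr\El M$ such that $\pi_M\Psi=\pi_N$. By the right cancellativity in \cref{rem:clan-discrete-opfibration-closure}, $\Psi$ is itself a clan discrete opfibration, so it is an object of $\ClanDopfib_{\El M}$.
\end{itemize}
On morphisms, in both categories a morphism is a clan morphism between the domain clans that strictly commutes with the maps to $\El M$. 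Commuting over $\C$ is then automatic from commuting over $\El M$. The two constructions are therefore mutually inverse, which gives an isomorphism of categories.

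The last equivalence is \cref{prop:clan-Grothendieck-construction} applied to the small clan $\El M$.

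The main point requiring care is the right cancellativity invoked in the middle step, which I take from \cref{rem:clan-discrete-opfibration-closure}. If it had to be verified by hand, the delicate part would be showing that $\Psi$ strictly creates pullbacks along display morphisms and reflects display morphisms. This follows because $\pi_M$ is faithful on fibres, being a discrete opfibration, and strictly creates these limits, while $\pi_M\Psi=\pi_N$ also strictly creates them. A cone in $\D$ over a display-pullback diagram is then a limit exactly when its image under $\Psi$ is one, and uniqueness of lifts for $\pi_N$ yields uniqueness of lifts for $\Psi$.
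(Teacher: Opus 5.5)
Your proposal is correct and follows essentially the same route as the paper: it transports along the Grothendieck equivalence of \cref{prop:clan-Grothendieck-construction} to reduce the claim to $\ClanDopfib_{\C}/\pi_M \simeq \ClanDopfib_{\El M}$, and deduces that from the closure under composition and right cancellativity of clan discrete opfibrations in \cref{rem:clan-discrete-opfibration-closure}. You additionally write out the object- and morphism-level isomorphism (including the observation that commuting over $\C$ is automatic), which the paper leaves implicit.
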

\begin{proof}
    Through the equivalence in \cref{prop:clan-Grothendieck-construction}, the desired equivalence can be reformulated as the following equivalence:
    \begin{equation*}
        \ClanDopfib_{\C}/\pi_M \simeq \ClanDopfib_{\scriptstyle\int M},
    \end{equation*}
    which follows from \cref{rem:clan-discrete-opfibration-closure}.
\end{proof}

\begin{corollary}\label{cor:naturality-of-slice-clan-models}
    For a clan morphism $\Phi\colon \C \arr \D$ between small clans and a clan model $N\colon \D \arr \Set$, we have the following up-to-isomorphism commutative diagram:
    \begin{equation*}
        \begin{tikzcd}
            (\ClanMod\D)/N \ar[r, "\simeq"] 
            \ar[d, "\Phi^*/N"{left}]
            \ar[rd, "\rotatebox{45}{$\cong$}", phantom]
            &
            \ClanMod(\El N) 
            \ar[d, "(\int\Phi)^*"{right}]
            \\
            (\ClanMod\C)/\Phi^* N 
            \ar[r, "\simeq"]
            &
            \ClanMod(\El \Phi^* N),
        \end{tikzcd}
    \end{equation*}
    where $\int \Phi \colon \El\Phi^* N \arr \El N$ is the clan morphism obtained as in \cref{note:Grothendieck-pullback-square}, and the functor $\Phi^*/N$ is induced by applying $\Phi^*$ to morphisms into $N$.
\end{corollary}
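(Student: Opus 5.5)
We transport the statement through the equivalences of \cref{lem:slice-clan-model} and \cref{prop:clan-Grothendieck-construction}. Everything then reduces to the naturality square of \cref{prop:naturality-of-clan-Grothendieck-construction}, applied to the clan morphism $\int\Phi$.

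First, recall the shape of the slice equivalence $(\ClanMod\D)/N\simeq\ClanMod(\El N)$ from the proof of \cref{lem:slice-clan-model}. It is the composite
\[
(\ClanMod\D)/N \simeq \ClanDopfib_\D/\pi_N \simeq \ClanDopfib_{\El N}\simeq \ClanMod(\El N).
\]
The first equivalence is the slice of the Grothendieck equivalence. The middle one sends a clan discrete opfibration $\Psi\colon\E\arr\El N$ over $\pi_N$ to $\Psi$ itself, which is a clan discrete opfibration by \cref{rem:clan-discrete-opfibration-closure}; its inverse is composition with $\pi_N$. The same description applies to $(\ClanMod\C)/\Phi^*N$ with $\pi_{\Phi^*N}$.

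Next, we compute both composites on an object $\alpha\colon M\arr N$ of $(\ClanMod\D)/N$.

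Going right then down: $\alpha$ first becomes $\int\alpha\colon\El M\arr\El N$. Applying $(\int\Phi)^*$ corresponds, by \cref{prop:naturality-of-clan-Grothendieck-construction} for the clan morphism $\int\Phi$, to pulling back $\int\alpha$ along $\int\Phi$ in $\Cat$.

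Going down then right: we obtain $\int\Phi^*\alpha\colon\El\Phi^*M\arr\El\Phi^*N$. By \cref{note:Grothendieck-pullback-square}, $\El\Phi^*M$ is the pullback of $\pi_M$ along $\Phi$. Since $\pi_M=\pi_N\circ\int\alpha$, and $\El\Phi^*N$ is the pullback of $\pi_N$ along $\Phi$, the pasting lemma for pullbacks identifies $\El\Phi^*M$, canonically and up to isomorphism over $\El\Phi^*N$, with the pullback of $\int\alpha$ along $\int\Phi$. The two composites therefore agree on objects, up to a canonical isomorphism.

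The canonical isomorphisms come from universal properties of pullbacks in $\Cat$, so they are natural in $\alpha$; a morphism of $(\ClanMod\D)/N$ induces the same map under both routes. We must also check that these isomorphisms are isomorphisms of clans, not merely of categories. This holds because, in each domain, the display morphisms are exactly those lying over display morphisms of $\C$, a condition preserved by canonical comparison maps that commute with the projections to $\C$.

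The main obstacle is bookkeeping: making sure the clan structures and the ``up to isomorphism'' identifications of the Grothendieck equivalence line up coherently. I would handle this by working entirely on the discrete opfibration side, where all the statements are strict pullback pastings.
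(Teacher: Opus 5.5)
Your proposal is correct and follows essentially the same route as the paper, whose proof is the one-line remark that, via \cref{prop:clan-Grothendieck-construction}, the statement reduces to the fact that pullback squares compose. You spell out what the paper leaves implicit: the object-level pasting computation, naturality in $\alpha$, and the observation that the comparison isomorphisms automatically respect the clan structures because those structures are determined by the projections to $\C$.
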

\begin{proof}
    Again by \cref{prop:clan-Grothendieck-construction}, this follows from the fact that pullback squares compose.
\end{proof}

We now investigate the way to evaluate the category of models of a clan from that of its reflective full subclan.
This will be useful when we have a sequence of reflective full subclans and want to analyze the category of models of the largest clan step by step.
\begin{lemma}\label{lem:reflective-subclan-adjoint}
    Let $\C'$ be a reflective full subclan of a small clan $\C$ with the inclusion $I\colon \C' \arr[hook] \C$ and a clan reflection $\Lambda\colon \C \arr \C'$.
    Then, we have the following adjunctions between the categories of models:
    \begin{equation*}
        \begin{tikzcd}
            \ClanMod\C' & \ClanMod\C
            \arrow[from=1-1,to=1-2,hook,shift left=6]
            \arrow[from=1-1,to=1-2,phantom,"{\scriptstyle \perp}",shift left=3]
            \arrow[from=1-2,to=1-1,"I^*"{description}]
            \arrow[from=1-1,to=1-2,phantom,"{\scriptstyle \perp}",shift right=3]
            \arrow[from=1-1,to=1-2,hook,"\Lambda^*"',shift right=6]
        \end{tikzcd}
    \end{equation*}
    Furthermore, the right adjoint $\Lambda^*$ of $I^*$ is fully faithful, and hence so is the left adjoint of $I^*$.
\end{lemma}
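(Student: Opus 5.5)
The strategy is to use the fact that precomposition with clan morphisms is 2-functorial. The construction $\C\mapsto\ClanMod\C=\CLAN(\C,\Set)$ sends a clan morphism $\Phi$ to the restriction functor $\Phi^*$ and a natural transformation $\theta\colon\Phi\Rightarrow\Psi$ to a natural transformation $\theta^*\colon\Phi^*\Rightarrow\Psi^*$. It is contravariant on 1-cells and covariant on 2-cells. Concretely, for a model $M$ of $\C$ we set $(\theta^*)_M\coloneq M\theta\colon M\Phi\Rightarrow M\Psi$, a natural transformation between clan models, hence a morphism in $\ClanMod$. The triangle identities are preserved strictly by this construction.

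The reflection $\Lambda\dashv I$ in $\CLAN$ has unit $\eta\colon\id_\C\Rightarrow I\Lambda$ and counit $\epsilon\colon\Lambda I\Rightarrow\id_{\C'}$. Since $I$ is fully faithful, $\epsilon$ is an isomorphism. Applying $(-)^*$ gives
\[
    \eta^*\colon\id\Rightarrow\Lambda^*I^*,
    \qquad
    \epsilon^*\colon I^*\Lambda^*\Rightarrow\id,
\]
because $(I\Lambda)^*=\Lambda^*I^*$ and $(\Lambda I)^*=I^*\Lambda^*$. Reversing 1-cells turns $\Lambda\dashv I$ into $I^*\dashv\Lambda^*$. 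I will check the triangle identities componentwise: for a model $M$ of $\C$, the composite $\Lambda^*\epsilon^*_{\cdot}\circ\eta^*_{\Lambda^* N}$ evaluates to $N\epsilon\Lambda\circ N\Lambda\eta=N(\epsilon\Lambda\circ\Lambda\eta)=\id$, and the other identity is symmetric. I must also verify that $\Lambda^*$ and $I^*$ land in clan models. This holds because both are clan morphisms (the paper's Naturality proposition already uses that precomposition preserves models). Moreover, $\epsilon^*$ is invertible since $\epsilon$ is, so the counit of $I^*\dashv\Lambda^*$ is an isomorphism and $\Lambda^*$ is fully faithful. Note that the display condition on $\eta$ is not needed for this step.

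The main obstacle is constructing a left adjoint $L$ of $I^*$ and showing it is fully faithful. I would build it by left Kan extension. Since $\ClanMod\C'$ and $\ClanMod\C$ are locally finitely presentable (they are models of finite-limit sketches) and $I^*$ preserves limits and filtered colimits (both are computed pointwise in $\Set$), the adjoint functor theorem for locally presentable categories gives a left adjoint $L\dashv I^*$.

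Full faithfulness of $L$ then comes for free. For any adjoint triple $L\dashv I^*\dashv\Lambda^*$, the right adjoint $\Lambda^*$ is fully faithful if and only if the left adjoint $L$ is: the unit of $L\dashv I^*$ is invertible exactly when the counit of $I^*\dashv\Lambda^*$ is, via the mate correspondence $\ClanMod\C(LN,LN')\cong\ClanMod\C'(N,I^*LN')$ compared with the isomorphism $I^*\Lambda^*\cong\id$. I would cite this standard lemma and conclude the proof.
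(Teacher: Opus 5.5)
Your proposal is correct and follows the paper's proof: you obtain $I^*\dashv\Lambda^*$ by applying precomposition to $\Lambda\dashv I$, with invertible counit because $I$ is fully faithful. The left adjoint of $I^*$ then comes from the adjoint functor theorem for locally presentable categories, using that limits (and filtered colimits) of models are computed pointwise, and full faithfulness of $L$ is the standard adjoint-triple fact that the paper leaves implicit. Your passing remark about building $L$ by left Kan extension is unnecessary, since $\mathrm{Lan}_I$ on copresheaves is not a priori guaranteed to land in clan models, but your argument never actually relies on it.
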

\begin{proof}
    The adjunction $\Lambda \dashv I$ leads to the adjunction $I^* \dashv \Lambda^*$ between the categories of models, with the counit being an isomorphism, and hence $\Lambda^*$ is fully faithful.
    For the existence of left adjoint of $I^*$, it suffices to check that it preserves all small limits since the categories of models are locally presentable and $I^*$ as a left adjoint preserves all small colimits; see \cite[1.66{}]{AdamekRosicky1994locally}.
    This follows from the fact that limits in $\ClanMod\C$ and $\ClanMod\C'$ are computed pointwise.
\end{proof}

\begin{notation}
    For a pair of functors $F\colon \C \arr \D$ and $G\colon \E \arr \D$, we denote by $F \isocom G$ the full subcategory of the comma category $F \downarrow G$ consisting of isomorphisms from $F(C)$ to $G(E)$ for objects $C$ in $\C$ and $E$ in $\E$.
    We call $F\isocom G$ the \emph{iso-comma category} of $F$ and $G$.
    \begin{equation*}
        \begin{tikzcd}
            F\isocom G\ar[d,"\Pi_1"']\ar[r,"\Pi_2"]
                &
                \E\ar[d,"G"]
            \\
            \C\ar[r,"F"']
                &
                \D
            \rutwocell{2-1}{1-2}["{\rotatebox{45}{$\cong$}}"{below right=-3}]
        \end{tikzcd}
    \end{equation*}
    In particular, when $G$ is the identity functor on $\D$, we write $F \isocom \D \coloneq F \isocom \id_{\D}$, and when $G$ is the functor $\const{D}\colon \terminalcat \arr \D$ selecting an object $D$ in $\D$, we write $F \isocom D$ for the iso-comma category.

    We refer the first and second projection functors from $F\isocom G$ to $\C$ and $\E$ by $\Pi_1$ and $\Pi_2$, respectively.
    In the case of $F\isocom D$ for an object $D$ in $\D$, we simply call the functor $\Pi_1\colon F\isocom D \arr \C$ the \emph{projection}.
\end{notation}

Now, we construct a clan from a model of a reflective full subclan.
In the construction, we use the bipushout in the 2-category $\Clan$, whose proof of existence is postponed to \cref{sec:bicocompletenessofClan}.
\begin{construction}[The clan $\Fix{\C}{M}$]\label{const:fixclan}
    Let $\C'$ be a reflective full subclan of a small clan $\C$ and $M\colon \C' \arr \Set$ be a model.
    Define a small clan $\Fix{\C}{M}$ as follows.
    First, by \cref{prop:naturality-of-clan-Grothendieck-construction}, we have the following pullback of categories:
    \begin{equation*}
        \begin{tikzcd}
            \int M 
            \ar[r,"T"]
            \ar[d,"\pi_M"']
            \pullbackcorner
            & 
            \El \Lambda^*M
            \ar[d,"\pi_{\Lambda^*M}"']
            \ar[r,"\int\Lambda"]
            \pullbackcorner
            &
            \int M  
            \ar[d,"\pi_M"]
            \\
            \C'
            \ar[r,"I"']
            &
            \C
            \ar[r,"\Lambda"']
            &
            \C',
        \end{tikzcd}
    \end{equation*}
    where $\Lambda$ is the left adjoint of the inclusion $I\colon \C' \arr[hook] \C$.
    Taking the bipushout of $T$ along the unique clan morphism $!\colon \El M \arr \terminalcat$ in $\Clan$, we obtain a small clan $\Fix{\C}{M}$.
    \begin{equation*}
        \begin{tikzcd}
            \El M\ar[d,"!"']\ar[r,"T"]\ar[rd, "\rotatebox{45}{$\cong$}", phantom]
                &
                \El \Lambda^*M\ar[d]
            \\
            \terminalcat\ar[r]
                &
                \Fix{\C}{M}\pushoutcorner
        \end{tikzcd}\incat{\Clan}
    \end{equation*}
\end{construction}

\begin{proposition}\label{prop:construct-clan}
    In the situation of \cref{const:fixclan}, we have the following bipullback square in $\CAT$:
    \begin{equation*}
        \begin{tikzcd}
            \ClanMod(\Fix{\C}{M})
            \ar[rr]
            \ar[d]
            \pullbackcorner
            \ar[rrd, "\rotatebox{45}{$\cong$}", phantom]
            &[-30pt]
            &
            \ClanMod(\El \Lambda^*M)
            \ar[d, "T^*"]
            \\
            \terminalcat
            \ar[r, "\simeq"]
            &
            \ClanMod\terminalcat
            \ar[r, "!^*"]
            &
            \ClanMod(\El M)
        \end{tikzcd}
    \end{equation*}
    Moreover, the category $\ClanMod(\Fix{\C}{M})$ is equivalent to the iso-comma category $I^*\isocom M$, where $I^*\colon \ClanMod\C \arr \ClanMod\C'$ is the functor induced by the inclusion $I$.
\end{proposition}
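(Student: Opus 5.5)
The plan is to derive the bipullback square from the bipushout defining $\Fix{\C}{M}$ by applying the 2-functor $\ClanMod(-)=\CLAN(-,\Set)$. Then we identify the resulting bipullback with $I^*\isocom M$ using the slice equivalences of \cref{lem:slice-clan-model} and \cref{cor:naturality-of-slice-clan-models}.

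\textbf{First step.} Since $\Fix{\C}{M}$ is a bipushout in $\Clan$ of $T$ along $!\colon\El M\arr\terminalcat$, its universal property (as established in \cref{sec:bicocompletenessofClan}) gives the following for every large clan $\E$, and in particular for $\E=\Set$:
\[
\CLAN(\Fix{\C}{M},\E)\simeq\CLAN(\terminalcat,\E)\times^{\mathrm{ps}}_{\CLAN(\El M,\E)}\CLAN(\El\Lambda^*M,\E).
\]
Here the right-hand side is the pseudo-pullback, i.e.\ the iso-comma category. This is exactly the asserted bipullback square, using $\ClanMod\terminalcat\simeq\terminalcat$ from \cref{prop:finite-product-clan-model}. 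The one point to check is that the bipushout is taken in $\Clan$, whose 2-cells are all natural transformations. Its universal property against the large clan $\Set$ must therefore be available, so that hom-categories, and not just groupoids, are turned into pseudo-pullbacks. I expect this to be provided by the construction in the appendix; if not, it can be reduced to the $(2,1)$-version plus the observation that the bipushout is built by generators and relations, and hence tests correctly against any clan.

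\textbf{Second step.} We identify the pieces of the square. By \cref{lem:slice-clan-model}, $\ClanMod(\El\Lambda^*M)\simeq(\ClanMod\C)/\Lambda^*M$ and $\ClanMod(\El M)\simeq(\ClanMod\C')/M$. Since $T$ is $\int I$ applied to $N=\Lambda^*M$, and $I^*\Lambda^*M\cong M$ because $\Lambda^*$ is fully faithful (\cref{lem:reflective-subclan-adjoint}), \cref{cor:naturality-of-slice-clan-models} identifies $T^*$ with the functor
\[
(\ClanMod\C)/\Lambda^*M\arr(\ClanMod\C')/M,\qquad (X\arr\Lambda^*M)\mapsto(I^*X\arr I^*\Lambda^*M\cong M).
\]
Under the same equivalence, $!^*$ sends the point to the terminal object $\id_M$. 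So $\ClanMod(\Fix{\C}{M})$ is the category of pairs consisting of $p\colon X\arr\Lambda^*M$ and an isomorphism $(I^*X\arr M)\cong\id_M$ in the slice. Such an isomorphism amounts to the statement that the transpose $I^*p$ (composed with the counit) is an isomorphism.

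\textbf{Third step (the main obstacle).} It remains to see that this category is equivalent to $I^*\isocom M$. By the adjunction $I^*\dashv\Lambda^*$, a map $p\colon X\arr\Lambda^*M$ corresponds bijectively to a map $\bar p\colon I^*X\arr M$. The condition in the second step says exactly that $\bar p$ is an isomorphism, so the objects correspond. Morphisms over $\Lambda^*M$ correspond to maps $X\arr X'$ commuting with the $\bar p$'s, which are precisely the morphisms of $I^*\isocom M$, since the second component of that iso-comma category is the identity on $\terminalcat$. The delicate part is bookkeeping: one has to track that the coherence isomorphisms in \cref{cor:naturality-of-slice-clan-models} and the counit $I^*\Lambda^*\cong\id$ are compatible, so that "isomorphic to $\id_M$ in the slice" really becomes "$\bar p$ invertible". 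I would verify this directly on discrete opfibrations via \cref{prop:clan-Grothendieck-construction}, where all the squares are strict pullbacks.
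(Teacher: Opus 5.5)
Your second and third steps are correct, and they take a more hands-on route than the paper's proof. You compute the pseudo-pullback explicitly: an object is a map $p\colon X\to\Lambda^*M$ whose transpose $\bar p\colon I^*X\to M$ is invertible, and the isomorphism with $\id_M$ in the slice is then forced to be $\bar p^{-1}$. You then transpose along $I^*\dashv\Lambda^*$. The paper instead pastes the square with the slice equivalences of \cref{cor:naturality-of-slice-clan-models}. It observes that $(\ClanMod\C')/M\to\ClanMod\C'$ is an isofibration, so the strict pullback there is already a bipullback, and it reads off $I^*\isocom M$ because the bottom row picks out $M$.

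The gap is in your first step, and it is exactly the issue the paper's proof deals with first. The bipushout defining $\Fix{\C}{M}$ lives in $\Clan$, the 2-category of \emph{small} clans. \cref{sec:bicocompletenessofClan} obtains it abstractly, from weak local presentability of $\Clan$, not by generators and relations. Its universal property therefore identifies $\Clan(\Fix{\C}{M},\E)$ with the pseudo-pullback of hom-categories only for small clans $\E$. But $\ClanMod(-)=\CLAN(-,\Set)$ tests against the large clan $\Set$. Neither the appendix nor a reduction to a $(2,1)$-version supplies this. Your concern about 2-cells is a red herring: $\Clan$ already has all natural transformations as 2-cells, so for small targets you do get equivalences of full hom-categories.

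What is missing is a size reduction. Every clan morphism from a small clan into $\Set$ factors through some small full subclan $\Set_{<\kappa}$, for instance sets of rank below a suitable small limit ordinal, with all maps displayed. This subclan contains a terminal object and is closed under pullbacks in $\Set$. Hence $\Clan(\D,\Set_{<\kappa})$ is a full subcategory of $\CLAN(\D,\Set)$ for each small clan $\D$, compatibly with restriction along $T$ and $!$.

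No single $\kappa$ bounds all models, so let $\kappa$ depend on the data. To show the comparison functor into the pseudo-pullback is fully faithful at a given pair of models, or essentially surjective at a given object, choose $\kappa$ large enough that everything involved lands in $\Set_{<\kappa}$. Then apply the bipushout's universal property to that small clan. With this inserted, the rest of your argument goes through.
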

\begin{proof}
    The first part is not immediate from the universal property of the bipushout in $\Clan$ because the universal property is only for small clans, while $\Set$ is a large clan.
    However, since the clans $\El M$, $\El \Lambda^*M$, and $\terminalcat$ are small clans, clan morphisms from them to $\Set$ uniquely factor through a small full subclan $\Set_{<\kappa}$ of sets of cardinality less than a sufficiently large regular cardinal $\kappa$ with its display morphisms being all morphisms.
    Therefore, applying the universal property to this small full subclan, we obtain the desired bipullback square.

    Extending the above bipullback square and using \cref{cor:naturality-of-slice-clan-models}, we have the following:
    \begin{equation*}
        \begin{tikzcd}[row sep=small]
            \ClanMod(\Fix{\C}{M})
            \ar[rr]
            \ar[dd]
            \ar[rrdd, "\lrcorner", phantom, very near start]
            \ar[rrdd, "\rotatebox{45}{$\cong$}", phantom]
            &[-30pt]
            &
            \ClanMod(\El \Lambda^*M)
            \ar[d, "(\int I)^*"]
            \ar[r, "\simeq"]
            \ar[rd, "\rotatebox{45}{$\cong$}", phantom]
            &
            (\ClanMod\C)/\Lambda^*M
            \ar[d, "{I^*/\Lambda^*M}"]
            \ar[r]
            &
            \ClanMod\C
            \ar[dd, "I^*"]
            \\
            &
            &
            \ClanMod(\El I^*\Lambda^*M)
            \ar[d, "\cong"]
            \ar[r, "\simeq"]
            \ar[rd, "\rotatebox{45}{$\cong$}", phantom]
            &
            (\ClanMod\C')/I^*\Lambda^*M
            \ar[d, "\cong"]
            &
            \\
            \terminalcat
            \ar[r, "\simeq"]
            &
            \ClanMod\terminalcat
            \ar[r, "!^*"]
            &
            \ClanMod(\El M)
            \ar[r, "\simeq"]
            &
            (\ClanMod\C')/M
            \ar[r]
            &
            \ClanMod\C'
        \end{tikzcd}
    \end{equation*}
    Here, the bottom middle square is obtained by the counit component $I^*\Lambda^*M \arr M$ which is an isomorphism.
    The rightmost square is also a bipullback square, as it is the pullback square in $\CAT$ and the projection functor $(\ClanMod\C')/M \arr \ClanMod\C'$ is an isofibration.
    Tracing the composite of the functors at the bottom row, which is the functor selecting the object $M$ in $\ClanMod\C'$, we see that the category $\ClanMod(\Fix{\C}{M})$ is equivalent to the iso-comma category $I^*\isocom M$ as it gives the bipullback in $\CATtwo$ \cite[Section 6]{Kelly1989elementary}.
\end{proof}

\begin{lemma}\label{lem:iso-fiber}
    Let $F\colon \C \arr \D$ be a functor and $D$ be an object in $\D$, and take a connected category $\J$.
    When $\C$ has limits (resp.\ colimits) of shape $\J$ and $F$ preserves them, the projection $\Pi_1\colon F\isocom D \arr \C$ strictly creates limits (resp.\ colimits) of shape $\J$.
    In particular, it preserves and reflects monomorphisms, kernel pairs, regular epimorphisms, and colimits of chains, whenever $\C$ has them and $F$ preserves them.
\end{lemma}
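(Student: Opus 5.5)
We work directly with the explicit description of $F\isocom D$: its objects are pairs $(C,\theta)$ with $C\in\C$ and $\theta\colon F(C)\arr D$ an isomorphism, and a morphism $(C,\theta)\arr(C',\theta')$ is a morphism $c\colon C\arr C'$ with $\theta'\circ F(c)=\theta$. We treat the limit case; the colimit case is the formal dual, obtained by reversing the cone and the structure isomorphisms.

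\textbf{Step 1 (lifting the limit).} Take a diagram $K\colon\J\arr F\isocom D$, $J\mapsto(C_J,\theta_J)$, and let $(\lambda_J\colon L\arr C_J)_J$ be a limit cone of $\Pi_1K$ in $\C$. Since $F$ preserves it, $(F\lambda_J\colon FL\arr FC_J)_J$ is a limit cone in $\D$. The family $(\theta_J\inv\colon D\arr FC_J)_J$ is a cone, because the morphisms of $K$ commute with the $\theta$'s. Hence it induces a unique $\psi\colon D\arr FL$ with $F\lambda_J\circ\psi=\theta_J\inv$. Connectedness of $\J$ (and non-emptiness) now shows that $\theta\coloneq\theta_{J}\circ F\lambda_{J}$ is independent of $J$: for a morphism $u\colon J\arr J'$ in $\J$ we have $\theta_{J'}F\lambda_{J'}=\theta_{J'}F(Ku)F\lambda_J=\theta_J F\lambda_J$, and we propagate this along zigzags.

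\textbf{Step 2 ($\theta$ is an isomorphism).} We check that $\theta$ is inverse to $\psi$. First, $\theta\psi=\theta_J\theta_J\inv=\id_D$. Second, $F\lambda_J\circ\psi\theta=\theta_J\inv\theta_J F\lambda_J=F\lambda_J$ for all $J$, so $\psi\theta=\id_{FL}$ by the uniqueness part of the limit property. Thus $(L,\theta)$ is an object of $F\isocom D$, each $\lambda_J$ is a morphism of $F\isocom D$, and $\theta$ is the unique structure isomorphism making all the $\lambda_J$ morphisms (this is where connectedness is used). This gives strict uniqueness of the lift.

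\textbf{Step 3 (universality of the lift).} Given a cone $(c_J\colon(E,\eta)\arr(C_J,\theta_J))_J$ in $F\isocom D$, let $c\colon E\arr L$ be the factorization in $\C$. Then $\theta\circ Fc=\theta_J F\lambda_J Fc=\theta_J Fc_J=\eta$, so $c$ lies in $F\isocom D$. Uniqueness of $c$ is inherited from $\C$, since $\Pi_1$ is faithful. For strict creation we also need the converse: any limit cone over $\Pi_1K$ with this property lifts uniquely, and Step 2 gives exactly that.

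\textbf{Step 4 (the ``in particular'').} This part is routine apart from one point. Kernel pairs, chains (which are connected) and coequalizers of parallel pairs all have connected shapes, and $\Pi_1$ is faithful. Monomorphisms are characterized by their kernel pair being trivial, so they are preserved and reflected by the creation of kernel pairs. Regular epimorphisms are coequalizers, and a regular epimorphism in $F\isocom D$ coequalizes its kernel pair, which is again computed in $\C$, so these are handled by creation of coequalizers and kernel pairs. The main obstacle is to state the hypotheses precisely: in each case $F$ must preserve the relevant limit or colimit so that Step 1 applies. Modulo that bookkeeping, the only real content is the connectedness argument in Step 1.
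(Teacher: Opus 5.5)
Your proof is correct and follows the paper's approach. The paper also forces the lift to be $(L,z)$ with $z=k_J\circ F\lambda_J$, and obtains $z$ as the comparison isomorphism between the limit cone $(F\lambda_J)_J$ and the identity cone exhibiting $D$ as the limit of the constant diagram (this is where connectedness enters); that is exactly your $\theta$, whose invertibility you verify by hand via zigzags and the inverse $\psi$. One small slip is in Step 4: preservation of regular epimorphisms needs only creation of coequalizers, and it is reflection that needs the kernel pair, because a regular epimorphism $\Pi_1 q$ in $\C$ is the coequalizer of its kernel pair in $\C$, which lifts to $F\isocom D$, so that $q$ is a coequalizer there.
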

\begin{proof}
    The proof is parallel for limits and colimits, so we only show the case of limits.
    Take a diagram $(K, k) \colon \J \arr F\isocom D$ and take the limit cone $(\lambda_J\colon L\arr KJ)_{J\in\J}$ over $K\colon \J \arr \C$.
    If $(\zeta_J\colon (Z,z)\arr[] (KJ, k_J))_J$ is a lift of the cone $(\lambda_J)_J$ along $\Pi_1$, then $Z=L$ and $\zeta_J=\lambda_J$ should hold, and the following diagram must commute for every $J\in\J$:
    \begin{equation}\label{eq:unique_lift_of_limit_cone}
        \begin{tikzcd}
            FL\ar[d,"z"',"\cong"]\ar[r,"F\lambda_J"]
                &
                FKJ\ar[d,"k_J","\cong"'] \\
            D\ar[r,equal]
                &
                D
        \end{tikzcd}\incat{\D}.
    \end{equation}
    This shows that such a lift of $(\lambda_J)_J$ is unique.
        
    We now show the existence of a lift of $(\lambda_J)_J$ along $\Pi_1$.
    Since $F$ preserves limits of shape $\J$, the morphism on the top row in \cref{eq:unique_lift_of_limit_cone} is a leg of a limit cone.
    In addition, since $\J$ is connected, the cone consisting of the identity morphisms exhibits $D$ as a limit of the constant functor $\J\arr \D$ taking the unique value $D$, whose leg we put on the bottom row in \cref{eq:unique_lift_of_limit_cone}.
    Then, the universal property of limits implies that there is a unique isomorphism $z$ such that for every $J\in\J$, the square \cref{eq:unique_lift_of_limit_cone} commutes.
    This shows the existence of the lift.

    Moreover, the lift $(\lambda_J\colon (L,x)\arr[] (KJ, k_J))_J$ is a limit cone of the diagram $(K, k)\colon \J \arr F\isocom D$. 
    Indeed, the uniqueness is clear from the universal property of the limit cone $(\lambda_J)_J$ in $\C$.
    For the existence, take a cone $(\xi_J\colon (X,x)\arr {(KJ,k_J)})_J$, and by the universal property of $L$, we have a unique morphism $f\colon X \arr L$ such that $\lambda_J\circ f = \xi_J$ for each $J \in \J$.
    Then, $f$ defines a morphism in $F\isocom D$ because $z\circ Ff= k_J\circ F\lambda_J \circ Ff = k_J\circ F\xi_J = x$ by taking some $J \in \J$.
    Therefore, the projection $\Pi_1\colon F\isocom D \arr \C$ strictly creates limits of shape $\J$.
\end{proof}

\begin{theorem}[Comparison formula I]\label{thm:bound-of-decomposition-length}
    Let $I\colon\C'\arr[hook]\C$ be a reflective full subclan of a small clan.
    Suppose $\decnum{\ClanMod\C'}\le \alpha+1$ and $\decnum{\ClanMod(\Fix{\C}{I^* N})}\le \beta$ for every model $N\colon\C\arr\Set$ for some small ordinals $\alpha$ and $\beta$, where $\Fix{\C}{I^* N}$ is the clan constructed in \cref{const:fixclan}.
    Then, we have $\decnum{\ClanMod\C} \le \alpha + \beta$.
\end{theorem}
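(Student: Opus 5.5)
Fix a morphism $f\colon N_0\arr N$ in $\ClanMod\C$; we must show $\decnum{f}<\alpha+\beta$. Since $\ClanMod\C$ is locally finitely presentable, it has the (strong epi, mono) factorization. By \cref{prop:decnum_of_composite}\cref{prop:decnum_of_composite-postcompose} we may therefore assume $f$ is a strong epimorphism. The plan is to split $f$ into a part that is handled downstairs in $\ClanMod\C'$ and a part that lives in the fibre over the fixed model $M\coloneq I^*N$.

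\textbf{Step 1 (downstairs part).} Apply $I^*$, which is both a left and a right adjoint by \cref{lem:reflective-subclan-adjoint}. Hence $I^*$ preserves colimits, regular epimorphisms and monomorphisms, and by adjointness also strong epimorphisms, because its left adjoint $L$ is fully faithful and preserves monomorphisms being a left adjoint of a limit-preserving... more concretely, I will check that $I^*$ preserves strong epis via orthogonality against monos, using that $\Lambda^*$ preserves monos. Then $I^*f$ has $\decnum{I^*f}\le\alpha$. Take the canonical regular decomposition $(A_\bullet,h_\bullet,M)$ of $I^*f$, which stabilizes at some $\alpha'\le\alpha$ with $h_{\alpha'}$ an isomorphism. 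Lift it to $\ClanMod\C$ by pushing out along the counit $\epsilon\colon LI^*N_0\arr N_0$ of $L\dashv I^*$, exactly as in the proof of \cref{prop:adj_and_decnum}. This produces a regular transfinite sequence $B_\bullet$ from $N_0$ of length $\alpha'$ with a cocone to $N$. Its last map $c\colon B_{\alpha'}\arr N$ is the pushout comparison $\pocomp{f}$, and \cref{prop:adj_and_decnum}\cref{prop:adj_and_decnum-general} (with $\epclass$'s the codiagonals) gives $\decnum{f}\le\alpha'+\decnum{c}$. Since $I^*$ preserves pushouts and $I^*L\cong\id$, applying $I^*$ to the pushout square shows that $I^*c\cong h_{\alpha'}$ is an isomorphism.

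\textbf{Step 2 (fibre part).} Because $I^*c$ is invertible, $c$ lifts to a morphism in the iso-comma category $I^*\isocom M$, which is equivalent to $\ClanMod(\Fix{\C}{M})$ by \cref{prop:construct-clan}. By hypothesis its regular decomposition number there is $<\beta$. By \cref{lem:iso-fiber}, applied to the connected shapes of kernel pairs, coequalizers and chains and using that $I^*$ preserves them as a left and right adjoint, the projection $\Pi_1$ preserves and reflects regular epis, monos and chain colimits. The canonical regular decomposition of $c$ in the fibre, being built from coequalizers of kernel pairs, is therefore carried by $\Pi_1$ to a regular decomposition of $c$ in $\ClanMod\C$ of the same length. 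Hence $\decnum{c}<\beta$.

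\textbf{Step 3 (ordinal bookkeeping).} The two steps give $\decnum{f}\le\alpha'+\decnum{c}<\alpha'+\beta\le\alpha+\beta$, because ordinal addition is strictly increasing in its right argument and weakly monotone in its left. The global bound is a strict upper bound, so we conclude $\decnum{\ClanMod\C}\le\alpha+\beta$. The main obstacle I expect is Step 1's lifting: checking that the pushed-out sequence really yields $I^*c\cong h_{\alpha'}$, which requires $I^*$ to commute with the chain colimits and pushouts (it does, being a left adjoint). A second subtle point is that \cref{prop:adj_and_decnum} requires $Gf=I^*f$ to be strongly epic; if preservation of strong epis fails, I would instead use \cref{prop:decnum_of_composite}\cref{prop:decnum_of_composite-postcompose} to pass to the strong-epi part of $I^*f$ directly.
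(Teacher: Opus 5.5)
Your proposal is correct and follows essentially the same route as the paper: reduce to a strong epimorphism $f$, apply \cref{cor:adj_and_decnum_regularepi}\cref{cor:adj_and_decnum_regularepi-general} to $L\dashv I^*$ to get $\decnum{f}\le\decnum{I^*f}+\decnum{\pocomp{f}}$, observe that $I^*\pocomp{f}$ is invertible so that $\pocomp{f}$ lives in $I^*\isocom I^*N\simeq\ClanMod(\Fix{\C}{I^*N})$, transfer the bound $<\beta$ back along the projection via \cref{lem:iso-fiber}, and finish with ordinal arithmetic. The differences are cosmetic. The paper gets invertibility of $I^*\pocomp{f}$ directly from $I^*\epsilon$ being invertible, rather than identifying $I^*\pocomp{f}$ with $h_{\alpha'}$. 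Also, preservation of strong epimorphisms by $I^*$ is exactly your final argument ($I^*\dashv\Lambda^*$ with $\Lambda^*$ preserving monomorphisms), so the fallback you mention is unnecessary.
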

\begin{proof}
    By \cref{lem:reflective-subclan-adjoint}, there is a sequence of adjoints $L \dashv I^* \dashv \Lambda^*$, where $\Lambda$ denotes the clan reflection of $I$.
    Now, the classes of strong epimorphisms and monomorphisms form an orthogonal factorization system on the category $\ClanMod\C$ as it is locally finitely presentable.
    Thus, by \cref{prop:decnum_of_composite}\cref{prop:decnum_of_composite-postcompose}, it suffices to evaluate the regular-decomposition number of an arbitrary strong epimorphism $f\colon N \arr N'$ in $\ClanMod\C$.

    Applying \cref{cor:adj_and_decnum_regularepi}\cref{cor:adj_and_decnum_regularepi-general} to the adjunction $L \dashv I^*$, we have $\decnum{f} \le \decnum{I^*f} + \decnum{\pocomp{f}}$, where $\pocomp{f}$ is the morphism introduced in \cref{note:pushout_component_c_f}.
    Here, note that $I^*f$ is a strong epimorphism because $I^*$ is preserves strong epimorphisms by being a left adjoint, and $\decnum{f}$ and $\decnum{I^*f}$ are defined since $\ClanMod\C$ and $\ClanMod\C'$ are locally finitely presentable.
    \begin{claim}
        $\decnum{\pocomp{f}} < \beta$.
    \end{claim}
    \begin{since}
    Note that we have the equivalence $\ClanMod(\Fix{\C}{I^*N'}) \simeq I^*\isocom I^*N'$ by \cref{prop:construct-clan}.
    Let $\epsilon$ be the counit of the adjunction $L \dashv I^*$.
    Since $I^*$ preserves pushouts and that $I^*\epsilon_N$ and $I^*\epsilon_{N'}$ are isomorphisms because $I^*$ is a reflection of $\Lambda^*$, the morphism $I^*\pocomp{f}$ is also an isomorphism.
    \begin{equation*}
        \begin{tikzcd}
            I^*LI^*N\ar[r,"I^*LI^*f"]\ar[d,"I^*\epsilon_N"',"\cong"]
                &
                I^*LI^*N'\ar[rd,"I^*\epsilon_{N'}"{above right=-3},"\cong"{left=2}]\ar[d,"\cong"']
                    &
            \\
            I^*N\ar[r]\ar[rr,"I^*f"',bend right=20]
                &
                I^*C_f\ar[r,"I^*{\pocomp{f}}"{above=-1},pos=0.3]\pushoutcorner
                    &
                    I^*N'
        \end{tikzcd}\incat{\ClanMod\C'}
    \end{equation*}
    Here, the notations $C_f$ and $\pocomp{f}$ are as in \cref{note:pushout_component_c_f}.
    Note that the two categories are locally finitely presentable because they are the categories of models of clans, and the projection $I^*\isocom I^*N' \arr \ClanMod\C$ (strictly) creates monomorphisms, kernel pairs, regular epimorphisms, and colimits of chains by \cref{lem:iso-fiber} and the fact that $I^*$ preserves them as a left and right adjoint.
    Thus, the projection preserves canonical regular decomposition numbers.
    Meanwhile, the regular-decomposition number of the morphism $\pocomp{f}\colon (C_f, I^*\pocomp{f}) \arr[] (N', \id_{I^*N'})$ in the iso-comma category $I^*\isocom I^*N'$ is less than $\beta$ by assumption and the equivalence $\ClanMod(\Fix{\C}{I^*N'}) \simeq I^*\isocom I^*N'$.
    This shows that $\decnum{\pocomp{f}}=\cdecnum{\pocomp{f}} < \beta$.
    \end{since}
    On the other hand, $\decnum{I^*f} < \alpha + 1$, or equivalently, $\decnum{I^*f} \le \alpha$ by assumption.
    Therefore, we have $\decnum{f} \le \decnum{I^*f} + \decnum{\pocomp{f}} < \alpha + \beta$, as desired.
\end{proof}

\subsection{The generation of clan structures}\label{subsec:generation_of_clan_structures}
In order to measure the regular-decomposition number of the category of models of a clan, in particular that of $\Fix{\C}{M}$ in the previous discussion, we here present some preliminary results on a notion of generation of clan structures.
\begin{lemma}\label{lem:smallest-clan-structure}
    Let $\C$ be a category with finite products and $\bfDelta$ be a class of morphisms that admit pullbacks along arbitrary morphisms.
    Then, for a class of morphisms $\bfDelta'$ including $\bfDelta$, the following conditions are equivalent:
    \begin{enumerate}
        \item\label{lem:smallest-clan-structure-1} $\bfDelta'$ is the smallest clan structure on $\C$ containing $\bfDelta$.
        \item\label{lem:smallest-clan-structure-2} $\bfDelta'$ is a clan structure on $\C$, and for any functor $\Phi\colon \C \arr \D$ into a clan $\D$, if it sends every morphism in $\bfDelta$ to a display morphism in $\D$ and preserves finite products and pullbacks of morphisms in $\bfDelta$ along arbitrary morphisms, then $\Phi$ is a clan morphism when we regard $\C$ as a clan with the clan structure $\bfDelta'$.
        \item\label{lem:smallest-clan-structure-3} $\bfDelta'$ is the class of morphisms that can be presented as the composite of a sequence (of positive finite length) of morphisms obtained by pulling back either an isomorphism, a morphism in $\bfDelta$, or a morphism into the terminal object.
    \end{enumerate}
    \begin{equation}
        \label{eq:relatively-non-dependent}
        \begin{tikzcd}
            X \ar[r,"\phi",disp]
            &
            X'
        \end{tikzcd}
         =
        \begin{tikzcd}
            X_0 \ar[r,"\phi_1",disp]
            &
            \cdots \ar[r,"\phi_n",disp]
            &
            X_n 
        \end{tikzcd}
        (n\ge 1)
        \quad
        \text{s.t.}
        \quad
        \begin{tikzcd}
            X_i \ar[d,"\phi_i"',disp]
            \ar[r]
            \pullbackcorner
            &
            Y_{i}
            \ar[d, disp, "{\in\  \bfDelta_{\mathrm{isoproj}}}","\exists"']
            \\
            X_{i+1}
            \ar[r]
            &
            Y_{i+1}
        \end{tikzcd}
    \end{equation}
    Here, we write $\bfDelta_{\mathrm{isoproj}}$ for the union of the class of all isomorphisms in $\C$, the class $\bfDelta$, and the class of all morphisms into the terminal object $1$ in $\C$.
\end{lemma}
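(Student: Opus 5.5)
The plan is to let the class described in \cref{lem:smallest-clan-structure-3} (call it $\bfDelta''$) play the central role. We first show that $\bfDelta''$ is a clan structure containing $\bfDelta$ and contained in every clan structure containing $\bfDelta$; this gives \cref{lem:smallest-clan-structure-1}$\iff$\cref{lem:smallest-clan-structure-3}. Afterwards we show that $\bfDelta''$ has the universal property \cref{lem:smallest-clan-structure-2}, and that this property pins down the clan structure uniquely.

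\textbf{Step 1: $\bfDelta''$ is a clan structure.} Every morphism in $\bfDelta_{\mathrm{isoproj}}$ admits pullbacks along arbitrary morphisms. For isomorphisms this is trivial, for $\bfDelta$ it is the hypothesis, and for morphisms $Y\arr 1$ it follows from finite products (the pullback is a product projection). Pulling back such a pullback again is again a pullback of the same morphism of $\bfDelta_{\mathrm{isoproj}}$, by the pasting lemma. Hence each factor $\phi_i$ in \cref{eq:relatively-non-dependent} admits pullbacks along anything, and so do finite composites of such factors, by pulling back one factor at a time. The same computation shows closure under pullback, since the pullback of the chain is a chain of pullbacks, each still a pullback of an element of $\bfDelta_{\mathrm{isoproj}}$. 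Closure under composition is concatenation of chains, identities and isomorphisms are length-one chains, and $X\arr 1$ lies in $\bfDelta_{\mathrm{isoproj}}$. Thus all axioms of \cref{def:clan} hold.

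\textbf{Step 2: minimality.} Let $\bfDelta'$ be any clan structure containing $\bfDelta$. It contains isomorphisms and maps to $1$ by the clan axioms, so it contains $\bfDelta_{\mathrm{isoproj}}$. By pullback-stability and composition-closure it then contains $\bfDelta''$. This gives \cref{lem:smallest-clan-structure-1}$\iff$\cref{lem:smallest-clan-structure-3}.

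\textbf{Step 3: the universal property.} We show that $\bfDelta''$ satisfies \cref{lem:smallest-clan-structure-2}. Let $\Phi$ be as in \cref{lem:smallest-clan-structure-2}.
\begin{itemize}
\item $\Phi$ preserves the terminal object, since that is the empty product.
\item $\Phi$ sends morphisms of $\bfDelta_{\mathrm{isoproj}}$ to display morphisms. For $X\arr 1$ we use $\Phi1\cong1$, so its image is a map to the terminal object and hence display.
\item $\Phi$ preserves pullbacks of morphisms of $\bfDelta_{\mathrm{isoproj}}$. For maps to $1$ these pullbacks are binary products, which $\Phi$ preserves by hypothesis. For isomorphisms it is trivial. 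For $\bfDelta$ it is the hypothesis.
\end{itemize}
Since $\Phi$ preserves a pullback square of $\psi\in\bfDelta_{\mathrm{isoproj}}$, the image of each factor $\phi_i$ is a pullback of the display morphism $\Phi\psi$, hence display. Therefore $\Phi$ sends $\bfDelta''$ into display morphisms. A pullback along a chain is computed factor by factor, and $\Phi$ preserves each of these pullbacks, so it preserves pullbacks along any morphism of $\bfDelta''$. Hence $\Phi$ is a clan morphism.

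\textbf{Step 4: \cref{lem:smallest-clan-structure-2}$\implies$\cref{lem:smallest-clan-structure-1}.} Suppose $\bfDelta'$ satisfies \cref{lem:smallest-clan-structure-2}. Apply the property to $\Phi=\id\colon\C\arr(\C,\bfDelta'')$. This is allowed because $\bfDelta''$ is a clan structure containing $\bfDelta$ by Step 1, and $\id$ trivially preserves products and pullbacks. Since $\Phi$ then becomes a clan morphism, $\bfDelta'\subseteq\bfDelta''$. By Step 2, $\bfDelta''\subseteq\bfDelta'$, so the two classes coincide. Conversely, \cref{lem:smallest-clan-structure-1} gives $\bfDelta'=\bfDelta''$, which satisfies \cref{lem:smallest-clan-structure-2} by Step 3.

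\textbf{Main obstacle.} The only delicate point is the bookkeeping in Step 3. We must justify that "preserving pullbacks along each factor of a chain" yields preservation of pullbacks along the composite; this is the pasting lemma applied to both the pullbacks in $\C$ and their images in $\D$. We must also check that the intermediate pullback squares $X_i\arr Y_i$ in $\C$ are genuinely among those $\Phi$ is assumed to preserve, which holds because they are pullbacks of morphisms of $\bfDelta_{\mathrm{isoproj}}$.
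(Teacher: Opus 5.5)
Your proof is correct and takes essentially the same route as the paper. You show that the class in (iii) is the smallest clan structure containing $\bfDelta$, you verify the universal property by pasting the pullbacks of each factor, and you get (ii)$\implies$(i) by applying the property to an identity functor. The differences are cosmetic: you apply the identity into the specific structure from (iii) and then use minimality for the reverse inclusion, whereas the paper applies it into an arbitrary clan structure containing $\bfDelta$; and you spell out, via preservation of finite products, why pullbacks of maps into $1$ are preserved, which the paper leaves implicit.
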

\begin{proof}
    \proofdirection[\iff]{\cref{lem:smallest-clan-structure-1}}{\cref{lem:smallest-clan-structure-3}}
    We show that the class described in \cref{lem:smallest-clan-structure-3} indeed satisfies the axioms of clan structures and contains $\bfDelta$.
    The conditions not involving pullbacks are clearly satisfied.
    If $\phi_i$ in \cref{eq:relatively-non-dependent} is a pullback of a morphism $\psi_i$ along $\alpha_i$, we can present the pullback of $\phi$ along a morphism as the composite of the pullback of the morphisms $\psi_i$ by using the pullback lemma step by step.
    This shows the existence of and stability under pullbacks.

    On the other hand, every morphism in this class has to belong to any clan structure containing $\bfDelta$ by the stability and closure conditions.
    Therefore, this class is the smallest clan structure containing $\bfDelta$.

    \proofdirection{\cref{lem:smallest-clan-structure-3}}{\cref{lem:smallest-clan-structure-2}}
    $\bfDelta'$ being a clan structure is shown above.
    For any functor $\Phi$ satisfying the condition in \cref{lem:smallest-clan-structure-2}, the image of a morphism of the form \cref{eq:relatively-non-dependent} is presented as the composite of a sequence of morphisms arising as pullbacks of display morphisms in $\D$, hence it is a display morphism in $\D$.
    Since pullbacks along morphisms of this form are obtained by composing pullbacks along morphisms in $\bfDelta_{\mathrm{isoproj}}$, which $\Phi$ preserves by assumption, $\Phi$ preserves pullbacks along morphisms in $\bfDelta'$ and hence is a clan morphism.

    \proofdirection{\cref{lem:smallest-clan-structure-2}}{\cref{lem:smallest-clan-structure-1}}
    Take any clan structure $\bfDelta''$ on $\C$ containing $\bfDelta$.
    Applying to the identity functor on $\C$ with the clan structure $\bfDelta''$ on the codomain, we see that $\bfDelta'$ is included in $\bfDelta''$.
\end{proof}

\begin{definition}[Generated clan structure]\label{def:generated-clan-structure}
    For a class $\bfDelta$ of morphisms in a category $\C$ that satisfies the assumption of \cref{lem:smallest-clan-structure}, we write $\overline{\bfDelta}$ for the class $\bfDelta'$ that satisfies the equivalent conditions therein, and call it the \emph{clan structure generated by $\bfDelta$}.
    We simply say a clan $\C$ is \emph{generated} by a class of morphisms $\bfDelta$ if its clan structure $\Disp[\C]$ is generated by $\bfDelta$.
\end{definition}

\begin{definition}[Display density and coarse isomorphism]\label{def:clan-dense}
    \ 
    \begin{enumerate}
    \item
    A clan morphism $\Phi\colon \C \arr \D$ is said to be \emph{display dense} if the clan structure of $\D$ is generated by the images of all display morphisms in $\C$ under $\Phi$, i.e., $\Disp[\D]=\overline{\Phi(\Disp[\C])}$.
    \item
    A clan morphism $\Phi\colon \C \arr \D$ is said to be \emph{a coarse (clan) isomorphism} if its underlying functor $\C \arr \D$ is an isomorphism of categories.\qedhere
    \end{enumerate}
\end{definition}

\begin{remark}\label{rem:display-dense-characterization}
    Because of \cref{lem:smallest-clan-structure}\cref{lem:smallest-clan-structure-2}, a clan morphism $\Phi\colon \C \arr \D$ is display dense if and only if for any clan $\E$ and any functor $\Psi\colon \D \arr \E$, if the composite $\Psi \circ \Phi \colon \C \arr \E$ is a clan morphism and $\Psi$ preserves finite products and pullbacks along morphisms in $\Phi(\Disp[\C])$, then $\Psi$ is also a clan morphism.
    This is because morphisms in $\Phi(\Disp[\C])$ are sent to display morphisms in $\E$ only when $\Psi \circ \Phi$ preserves display morphisms.
\end{remark}

\begin{example}
    The unique clan morphism $\C \arr \terminalcat$ is always display dense.
    The canonical clan morphism $\terminalcat \arr \C$, which is unique up to isomorphism, is display dense if and only if $\C$ is a finite-product clan \mbox{because the finite-product clan structure is the smallest clan structure on $\C$.}
\end{example}

\begin{example}
    For a category with finite limits, if it has a morphism that is not a product projection, then the two clan structures, the finite-product and finite-limit clan structures (\cref{prop:finite-product-clan-model}), are different.
    In this case, the identity functor on the category whose domain is equipped with the finite-product clan structure and whose codomain is equipped with the finite-limit clan structure is a coarse clan isomorphism.
\end{example}

\begin{example}\label{ex:non-display-dense}
    Consider the two \acp{GAT} $\theory{T}_\setg$ and $\theory{T}_\catg$ for sets and categories, respectively, as in \cref{eg:set-theory,eg:category-theory}.
    Restricting the clan $\clfy{\theory{T}_\catg}$ to contexts only involving the sort symbol $\syn{Ob}$, we obtain a full subclan which is isomorphic to $\clfy{\theory{T}_\setg}$.
    However, the inclusion $\clfy{\theory{T}_\setg} \arr[hook] \clfy{\theory{T}_\catg}$ is not display dense because the display morphism\vspace{-0.2em}
    \begin{equation}\label{eq:dependent-morphism-in-CAT}
        \left(\syn{x}:\syn{Ob}, \syn{y}:\syn{Ob}, \syn{f}:\syn{Mor}(\syn{x},\syn{y})\right) \darr \left(\syn{x}:\syn{Ob}, \syn{y}:\syn{Ob}\right)
    \end{equation}
    is not included in the clan structure generated by those coming from $\theory{T}_\setg$.
\end{example}

\begin{proposition}\label{prop:properties-of-generation}
    Let $\C$ be a clan whose clan structure is generated by a class of \mbox{morphisms $\bfDelta$.}
    \begin{enumerate}
        \item\label{prop:properties-of-generation-image} For a display dense clan morphism $\Phi\colon \C \arr \E$, the image $\Phi(\bfDelta)$ generates the clan structure of $\E$.
        \item\label{prop:properties-of-generation-preimage} For a clan discrete opfibration $\Psi\colon \D \arr \C$, the class $\Psi^{-1}(\bfDelta)$ generates the clan structure of $\D$.
    \end{enumerate}
\end{proposition}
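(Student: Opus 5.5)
For \cref{prop:properties-of-generation-image}, I will use the characterization of \cref{lem:smallest-clan-structure}\cref{lem:smallest-clan-structure-3} together with display density. Since $\Disp[\C]=\overline{\bfDelta}$, every display morphism of $\C$ is a finite composite of pullbacks of morphisms in $\bfDelta_{\mathrm{isoproj}}$. The clan morphism $\Phi$ preserves these pullbacks, as well as isomorphisms and terminal objects. Hence $\Phi(\Disp[\C])\subseteq\overline{\Phi(\bfDelta)}$, and taking closures gives $\overline{\Phi(\Disp[\C])}\subseteq\overline{\Phi(\bfDelta)}$. Conversely, $\Phi(\bfDelta)\subseteq\Phi(\Disp[\C])\subseteq\Disp[\E]$. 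Display density says $\Disp[\E]=\overline{\Phi(\Disp[\C])}$, so the two closures coincide. The lemma also requires that morphisms of $\Phi(\bfDelta)$ admit pullbacks along arbitrary morphisms in $\E$; this holds because they are display morphisms of $\E$.

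For \cref{prop:properties-of-generation-preimage}, the inclusion $\overline{\Psi^{-1}(\bfDelta)}\subseteq\Disp[\D]$ follows because $\Psi$ reflects display morphisms, so $\Psi^{-1}(\bfDelta)\subseteq\Disp[\D]$. For the reverse inclusion, take a display morphism $\phi\colon X\arr X'$ in $\D$. Then $\Psi\phi$ is display in $\C$, so by \cref{lem:smallest-clan-structure-3} it factors as $\Psi X=C_0\to\cdots\to C_n=\Psi X'$, where each step $C_i\to C_{i+1}$ is a pullback of some $\psi_i\colon Y_i\to Y_{i+1}$ in $\bfDelta_{\mathrm{isoproj}}$ along some $\alpha_i\colon C_{i+1}\to Y_{i+1}$.

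I will lift this factorization backwards along $\Psi$, starting from $X_n\coloneq X'$. At step $i$, given $X_{i+1}$ over $C_{i+1}$, I first lift $\alpha_i$ by the discrete opfibration property: this gives a unique morphism $X_{i+1}\to\tilde Y_{i+1}$ over $\alpha_i$. I then need $\tilde Y_i$ over $Y_i$ together with a morphism $\tilde\psi_i\colon\tilde Y_i\to\tilde Y_{i+1}$ over $\psi_i$; this is the delicate step. Once $\tilde\psi_i$ is available, it is display because $\Psi$ reflects display morphisms. The strictly created pullback of $\tilde\psi_i$ along the lift of $\alpha_i$ then gives $X_i\to X_{i+1}$ lying over $C_i\to C_{i+1}$. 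Finally, the composite $X_0\to X_n$ lies over $\Psi\phi$ and has codomain $X'$. I want to identify it with $\phi$, and opfibration uniqueness controls lifts with a prescribed domain, so I will need to match the domain $X_0$ with $X$.

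Two subcases of $\psi_i$ need care. If $\psi_i$ is an isomorphism or lies in $\bfDelta$, the lift $\tilde\psi_i$ belongs to $\Psi^{-1}(\bfDelta)$, or is an isomorphism, as required. If $\psi_i$ is a map $Y_i\to1$, its pullback is a product projection; I will handle it with the terminal object of $\D$, which $\Psi$ strictly creates, and the created products.

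I expect the main obstacle to be the lifting direction. Discrete opfibrations lift morphisms forward from a given domain, but here the codomain is fixed and a preimage must be found. I will first try the following: reorganize by pulling everything back to the slice over $X'$, and use that $\D/X'\to\C/\Psi X'$ is again a clan discrete opfibration, so that the terminal object $\id_{X'}$ is strictly created. Every object of the form $\Psi X\to\Psi X'$ then has a unique lift over $X'$, namely $\phi$ itself, and this resolves the ambiguity in the choice of $X_0$. Concretely, I will choose the points of each $\tilde Y_i$ by pushing forward from $X$ along the composite maps $X\to C_i\to Y_i$. At the end I will check by opfibration uniqueness that the resulting $X_0$ is $X$ and that the composite is $\phi$.
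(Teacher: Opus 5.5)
Your part (i) is essentially the paper's argument. The paper obtains $\Phi(\Disp[\C])\subseteq\overline{\Phi(\bfDelta)}$ by observing that $\Phi^{-1}(\overline{\Phi(\bfDelta)})\cap\Disp[\C]$ is a clan structure containing $\bfDelta$, which is the same point you make via the explicit description of generated structures. In (ii), the construction you finally settle on, fixing every lifted object by pushing forward from $X$, is exactly the paper's proof.

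One step of your justification in (ii) is wrong and should be dropped. It is not true that every object $\Psi X\to\Psi X'$ of $\C/\Psi X'$ has a unique lift over $X'$. Write $\D=\El M$ and $X'=(\Psi X',x')$. The lifts of $(\Psi X,\Psi\phi)$ to $\D/X'$ correspond to the elements of $M(\Psi\phi)^{-1}(x')$, which need not be a singleton; for example, take $\phi$ a product projection and a model with two elements. Strict creation of the terminal object $\id_{X'}$ of the slice says nothing about its other objects. So the slice reformulation does not determine $X_0$, and a backward induction with arbitrary choices of the $\tilde Y_i$ can genuinely end at some $X_0\neq X$.

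The lifting-direction obstacle disappears if you never go backwards. Take the whole factorization diagram: the chain $\Psi X=C_0\to\cdots\to C_n=\Psi X'$ together with the squares exhibiting each step as a pullback of $\psi_i$ along $\alpha_i$. It is a commutative diagram all of whose objects receive compatible maps from $C_0$, so it is a diagram in $\Psi X/\C$. Since $\Psi$ is a discrete opfibration, $X/\D\to\Psi X/\C$ is an isomorphism, and the diagram lifts uniquely to a diagram under $X$. This gives the rest of the proof directly:
\begin{itemize}
\item The lifted chain is the lift of $\Psi\phi$ starting at $X$, so it ends at $X'$ and its composite is $\phi$.
\item The lifted $\tilde\psi_i$ are display because $\Psi$ reflects display morphisms.
\item They lie in $\Psi^{-1}(\bfDelta)_{\mathrm{isoproj}}$ because discrete opfibrations reflect isomorphisms and the unique object over $1$ is terminal; so the case $Y_i\to 1$ needs no separate treatment.
\item Each lifted square is a cone over a cospan with a display leg, lying over a pullback cone, so it is the strictly created pullback.
\end{itemize}
Your closing check is this argument in disguise. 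Note, though, that identifying your backward-built $X_i$ with the forward lift from $X$ uses uniqueness of the created pullback cone, not opfibration uniqueness alone.
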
\vspace{-1.0em}
\begin{proof}
    \
    \begin{enumerate}
        \item The class $\Phi\inv(\overline{\Phi(\bfDelta)})\cap\Disp[\C]$ gives a clan structure on the underlying category of $\C$ and contains $\bfDelta$.
        Indeed, it is closed under composition, contains isomorphisms, and is stable under pullbacks along arbitrary morphisms because $\overline{\Phi(\bfDelta)}$ is a clan structure on $\E$ and $\Phi$ preserves pullbacks along morphisms in $\Disp[\C]$.
        Therefore, by the assumption that $\C$ is generated by $\bfDelta$, we have $\Disp[\C]=\Phi\inv(\overline{\Phi(\bfDelta)})\cap\Disp[\C]$.
        Thus, $\Phi(\Disp[\C])=\Phi(\Phi\inv(\overline{\Phi(\bfDelta)})\cap\Disp[\C]) \subseteq \Phi(\Phi\inv(\overline{\Phi(\bfDelta)})) \subseteq \overline{\Phi(\bfDelta)}$.
        Since $\Phi$ is display dense, this leads to $\Disp[\E]=\overline{\Phi(\Disp[\C])}=\overline{\Phi(\bfDelta)}$.
        \item We make use of the third characterization of clan generation in \cref{lem:smallest-clan-structure}.
        Take an arbitrary display morphism $\phi\colon X \darr Y$ in $\D$.
        Then, since $\Psi(\phi)$ is a display morphism as $\Psi$ is a clan discrete opfibration, we can find a factorization of $\Psi(\phi)$ as a composite of display morphisms, each arising as a pullback of a morphism in $\bfDelta_{\mathrm{isoproj}}$, as in \cref{lem:smallest-clan-structure}.
        Since $\Psi$ is a clan discrete opfibration, we can lift the whole diagram in $\C$ as below along it, and we find that the lifted squares are pullback squares of morphisms in $\Psi^{-1}(\bfDelta_{\mathrm{isoproj}})=\Psi^{-1}(\bfDelta)_{\mathrm{isoproj}}$.
        The equality holds because $\Psi$ is a clan discrete opfibration and thus reflects isomorphisms and strictly creates terminal objects.\vspace{-0.4em}
        \begin{equation*}
            \begin{tikzcd}[column sep=1.2em, row sep=1.3em]
                X
                \ar[rr,"\phi_1"',disp]
                \ar[d]
                \pullbackcorner[rrd]
                \ar[rrrrrrrr,"\phi",disp, bend left=10, shift left=0.5em]
                \ar[rrrrrrrr, phantom, "\rotatebox{90}{$=$}", shift left=1em]
                    &
                        &
                        X_1
                        \ar[rr,disp]
                        \ar[d]
                        \ar[dl]
                        \pullbackcorner[rrd]
                            &
                                &
                                \cdots
                                \ar[rr,disp]
                                \ar[ld]
                                    &
                                        &
                                        X_{n-1}
                                        \ar[rr,"\phi_n"',disp]
                                        \ar[d]
                                        \pullbackcorner[rrd]
                                            &
                                                &
                                                Y
                                                \ar[dl]
                                                    &
                                                    {}
                \\
                Z_1
                \ar[r,disp]
                    &
                    W_1
                        &
                        Z_2
                        \ar[r,disp]
                            &
                            W_2
                                &
                                {}
                                    &
                                        &
                                        Z_n
                                        \ar[r,disp]
                                            &
                                            W_n
                                                &
                                                {}
                                                    &
                                                    {}
                                                    \ar[u,phantom,"\incat{\D}"]
                \\
                \\
                \Psi(X) 
                \ar[rr,"\phi_1'"',disp]
                \ar[d]
                \pullbackcorner[rrd]
                \ar[rrrrrrrr,"\Psi(\phi)",disp, bend left=10, shift left=0.5em]
                \ar[rrrrrrrr, phantom, "\rotatebox{90}{$=$}", shift left=1em]
                    &
                        &
                        X_1'
                        \ar[rr,disp]
                        \ar[d]
                        \ar[dl]
                        \pullbackcorner[rrd]
                            &
                                &
                                \cdots
                                \ar[rr,disp]
                                \ar[ld]
                                    &
                                        &
                                        X_{n-1}'
                                        \ar[rr,"\phi_n'"',disp]
                                        \ar[d]
                                        \pullbackcorner[rrd]
                                            &
                                                &
                                                \Psi(Y)
                                                \ar[dl]
                                                    &
                                                    {}
                \\
                Z_1'
                \ar[%
                    r,%
                    disp,%
                    "\begin{smallmatrix}%
                        \rotatebox{-90}{$\in$}\\%
                        \bfDelta_{\mathrm{isoproj}}%
                    \end{smallmatrix}"'%
                ]
                    &
                    W_1'
                        &
                        Z_2'
                        \ar[%
                            r,%
                            disp,%
                            "\begin{smallmatrix}%
                                \rotatebox{-90}{$\in$}\\%
                                \bfDelta_{\mathrm{isoproj}}%
                            \end{smallmatrix}"'%
                        ]
                            &
                            W_2'
                                &
                                {}
                                    &
                                        &
                                        Z_n'
                                        \ar[%
                                            r,%
                                            disp,%
                                            "\begin{smallmatrix}%
                                                \rotatebox{-90}{$\in$}\\%
                                                \bfDelta_{\mathrm{isoproj}}%
                                            \end{smallmatrix}"'%
                                        ]
                                            &
                                            W_n'
                                                &
                                                {}
                                                    &
                                                    {}
                                                    \ar[u,phantom,"\incat{\C}"]
            \end{tikzcd}
        \end{equation*}
        Thus, the display morphism $\phi$ belongs to the smallest clan structure containing $\Psi^{-1}(\bfDelta)$.\qedhere
    \end{enumerate}
\end{proof}

We will show that the class of display dense morphisms are stable under bipushout in the 2-category of (small) clans.
We focus on the 2-category $\Clan$, but in most parts, the same argument works for $\Cland$ as they share the same iso-2-cells.
\begin{definition}
A 1-cell $f\colon X \arr X'$ in a 2-category has the \emph{left lifting property up to iso-2-cells} to a 1-cell $g\colon Y \arr Y'$ if for any iso-2-cell $\alpha$ on the left below, there exists a filler $h$ with iso-2-cells $\alpha_1$ and $\alpha_2$ making the following equation hold:
        \begin{equation}\label{eq:up_to_iso_lifting_property}
        \begin{tikzcd}
            X
            \ar[r,"u"]
            \ar[d,"f"']
                &
                Y
                \ar[d,"g"]
            \\
            X'
            \ar[r,"u'"']
                &
                Y'
            \dltwocell(\alpha){1-2}{2-1}["\scriptstyle\cong"{sloped,below}]
        \end{tikzcd}
        =
        \begin{tikzcd}[efs]
            X
            \ar[rr,"u"]
            \ar[dd,"f"']
            &&
            Y
            \ar[dd,"g"]
            \\
            &\!
            \\
            X'
            \ar[rr,"u'"']
            \ar[rruu, "h"{below right=-2}, very near start]
            &&
            Y'
            \arrow[from = 1-1, to = 2-2, "\rotatebox{-90}{$\Rightarrow$}"{description}, phantom, "\rotatebox{90}{$\scriptstyle\cong$}"{left}, "\scriptstyle \alpha_1"{right = 2pt}]%
            \arrow[from = 2-2, to = 3-3, pos=0.6, "\rotatebox{-90}{$\Rightarrow$}"{description}, phantom, "\rotatebox{90}{$\scriptstyle\cong$}"{left}, "\scriptstyle \alpha_2"{right = 1pt}]
        \end{tikzcd}
    \end{equation}
\end{definition}

When the morphism on the right has a certain property, the left lifting property up to iso-2-cells reduces to the usual left orthogonal property.
We review some properties on 1-cells in a 2-category we use here.

\begin{definition}
    A functor is called \emph{amnestic} if every isomorphism sent to an identity is itself an identity.
\end{definition}

One can show that an injective-on-objects functor is amnestic if and only if it is faithful on isomorphisms in the domain category.

\begin{definition}
    A 1-cell $f\colon X\arr Y$ in a 2-category $\mathcal{K}$ \emph{representably} satisfies a property of functors if for every object $Z$ in $\mathcal{K}$, the induced functor $\mathcal{K}(Z,X) \arr \mathcal{K}(Z,Y)$ satisfies the property.
\end{definition}

\begin{lemma}\label{lem:reduction_orthogonality}
    Let $f$ and $g$ be 1-cells in a 2-category $\mathcal{K}$.
    If $g$ is representably an isofibration, injective-on-objects, and amnestic, then the following conditions are equivalent:
    \begin{enumerate}
        \item\label{lem:reduction_orthogonality-orth}
            $f\orth g$ holds in the underlying (1-)category $\mathcal{K}_0$.
        \item\label{lem:reduction_orthogonality-uptoisoLlp}
            $f$ has the left lifting property up to iso-2-cells to $g$.
    \end{enumerate}
\end{lemma}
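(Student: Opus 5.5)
The plan is to prove both implications directly, using each of the three representable properties of $g\colon Y\arr Y'$ once. For every object $Z$ of $\mathcal{K}$, write $g_*\colon\mathcal{K}(Z,Y)\arr\mathcal{K}(Z,Y')$ for the functor given by postcomposition with $g$.

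\proofdirection{\cref{lem:reduction_orthogonality-uptoisoLlp}}{\cref{lem:reduction_orthogonality-orth}}
Take a strictly commuting square $gu=u'f$ in $\mathcal{K}_0$ and let $\alpha$ be the identity 2-cell, which is in particular invertible. The lifting property gives a 1-cell $h\colon X'\arr Y$ together with isos $\alpha_1\colon u\cong hf$ and $\alpha_2\colon gh\cong u'$, whose pasting equals $\mathrm{id}$.

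First we make the lower triangle strict. Since $g_*$ is an isofibration on $\mathcal{K}(X',-)$, we lift $\alpha_2$ to an iso $\theta\colon h\cong h'$ with $g\theta=\alpha_2$. Then $gh'=u'$ holds strictly, and we replace $h$ by $h'$ and $\alpha_1$ by $(\theta f)\circ\alpha_1$. The pasting condition now says that $g$ applied to the new $\alpha_1\colon u\cong h'f$ is an identity.

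Next we make the upper triangle strict. Injectivity on objects of $g_*$ on $\mathcal{K}(X,Y)$ gives $u=h'f$ from $gu=gh'f$. Amnesticity then forces $\alpha_1=\mathrm{id}$.

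Finally, for uniqueness, suppose $h_1,h_2$ are two strict fillers. Then $gh_1=u'=gh_2$ in $\mathcal{K}(X',Y')$, and injectivity on objects of $g_*$ gives $h_1=h_2$. So in this case uniqueness comes directly from injectivity, without needing $f$.

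\proofdirection{\cref{lem:reduction_orthogonality-orth}}{\cref{lem:reduction_orthogonality-uptoisoLlp}}
Given an iso-2-cell $\alpha\colon gu\cong u'f$, we use the isofibration property of $g_*$ on $\mathcal{K}(X,Y)$ to lift $\alpha$ to an iso $\beta\colon u\cong \tilde u$ with $g\beta=\alpha$. Then $g\tilde u=u'f$ holds strictly. Strict orthogonality now provides $h$ with $hf=\tilde u$ and $gh=u'$. We set $\alpha_1\coloneq\beta$ and $\alpha_2\coloneq\mathrm{id}$, and the pasting equals $g\beta=\alpha$, which is the required equation \cref{eq:up_to_iso_lifting_property}.

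The step needing most care is the bookkeeping of the pasting equation in \cref{lem:reduction_orthogonality-uptoisoLlp}$\Rightarrow$\cref{lem:reduction_orthogonality-orth}, in particular checking that after the isofibration replacement the whiskered 2-cell $g\alpha_1$ is indeed an identity so that amnesticity applies. The rest is routine.
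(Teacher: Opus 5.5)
Your proof is correct and follows the paper's argument. For (i)$\Rightarrow$(ii) you lift $\alpha$ along the isofibration $g_*$ and apply strict orthogonality; for (ii)$\Rightarrow$(i) you lift $\alpha_2$ to make $gh'=u'$ strict, and uniqueness comes from injectivity on objects. The one difference is that you obtain $h'f=u$ directly from injectivity of $g_*$ on $\mathcal{K}(X,Y)$, since $gh'f=u'f=gu$. The paper instead invokes amnesticity to conclude that the pasted iso-2-cell $u\cong h'f$ is an identity, so your closing amnesticity step is harmless but not needed to produce the strict filler.
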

\begin{proof}
    \proofdirection{\cref{lem:reduction_orthogonality-orth}}{\cref{lem:reduction_orthogonality-uptoisoLlp}}
    Suppose we have 1-cells $u,u'$ and an iso-2-cell $\alpha$ as in \cref{eq:up_to_iso_lifting_property}.
    Since $g$ is representably an isofibration, we have a 1-cell $v\colon X\arr Y$ together with an iso-2-cell $\alpha_1\colon u\arr[Rightarrow] v$ such that $g\circ v = u'\circ f$ and $g\circ\alpha_1=\alpha$.
    Applying the orthogonality $f\orth g$ to the commutative square $g\circ v = u'\circ f$, we obtain a filler showing the desired lifting property.

    \proofdirection{\cref{lem:reduction_orthogonality-uptoisoLlp}}{\cref{lem:reduction_orthogonality-orth}}
    Note that the uniqueness in the definition of orthogonality holds automatically when a 1-cell on the right is representably injective-on-objects.
    Suppose that we are given a commutative square on the leftmost below.
    By assumption, we have a filler $h$ with iso-2-cells $\alpha_1$ and $\alpha_2$ as in the second diagram below.
    Since $g$ is representably an isofibration, we have a 1-cell $h'$ together with an iso-2-cell $\widetilde{\alpha}_2$ such that $g\circ h'=u'$ and $g\circ\widetilde{\alpha}_2=\alpha_2$ as in the third diagram below.
    However, $g$ being representably amnestic implies that the iso-2-cell obtained by pasting $\alpha_1$ with $\widetilde{\alpha}_2$ is the identity, hence $h'$ is the desired diagonal filler.
    \begin{gather*}
        \begin{tikzcd}[large,ampersand replacement=\&]
            X\ar[d,"f"']\ar[r,"u"]
                \&
                Y\ar[d,"g"]
            \\
            X'\ar[r,"u'"']
                \&
                Y'
            \cellsymb(\rotatebox{45}{$\scriptstyle =$}){1-1}{2-2}
        \end{tikzcd}
        =
        \begin{tikzcd}[large,ampersand replacement=\&]
            X\ar[d,"f"']\ar[r,"u"]
                \&
                Y\ar[d,"g"]
            \\
            X'\ar[r,"u'"']\ar[ru,"h"{pos=0.15,below right=-2}, ""{pos=0.5,name=MID}]
                \&
                Y'
            \dtwocell(\alpha_1){1-1}{MID}["\vcong"{left}]
            \dtwocell(\alpha_2){MID}{2-2}["\vcong"{left},pos=0.6]
        \end{tikzcd}
        =
        \begin{tikzcd}[large,ampersand replacement=\&]
            X\ar[d,"f"']\ar[r,"u"]
                \&
                Y\ar[d,"g"]
            \\
            X'\ar[r,"u'"']\ar[ru,bend left=20,"h"{above left=-2,pos=0.15},""{pos=0.5,name=MID}]\ar[ru,bend right=20,"h'"{above=-1,pos=0.15},""{pos=0.5,name=MID2}]
                \&
                Y'
            \dtwocell(\alpha_1){1-1}{MID}["\vcong"{left}]
            \drtwocell(\widetilde{\alpha}_2){2-1}{1-2}["\rotatebox{135}{$\scriptstyle\cong$}"{below left=-4}]
            \cellsymb(\veq)[pos=0.6]{MID2}{2-2}
        \end{tikzcd}
        =
        \begin{tikzcd}[large,ampersand replacement=\&]
            X\ar[d,"f"']\ar[r,"u"]
                \&
                Y\ar[d,"g"]
            \\
            X'\ar[r,"u'"']\ar[ru,"h'"{above left=-2},""{name=MID}]
                \&
                Y'
            \cellsymb(\veq){1-1}{MID}
            \cellsymb(\veq)[pos=0.6]{MID}{2-2}
        \end{tikzcd}
    \end{gather*}
\end{proof}

\begin{corollary}\label{cor:leftclass_stable_under_bipushout}
    Let $\mathcal{K}$ be a 2-category.
    Let $(\bfE,\bfM)$ be an orthogonal factorization system on the underlying (1-)category $\mathcal{K}_0$, and suppose that every 1-cell in $\bfM$ is representably an isofibration, injective-on-objects, and amnestic in $\mathcal{K}$.
    Then, the class $\bfE$ is stable under bipushout, if it exists.
\end{corollary}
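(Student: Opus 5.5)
The plan is to characterize the left class of an orthogonal factorization system by a lifting property, and then show that this lifting property is preserved by bipushouts.

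\textbf{Step 1 (characterization of $\bfE$).} For an orthogonal factorization system $(\bfE,\bfM)$ on $\mathcal{K}_0$ we have $\bfE=\lorth(\bfM)$. By \cref{lem:reduction_orthogonality}, since every $g\in\bfM$ is representably an isofibration, injective-on-objects, and amnestic, a 1-cell $f$ lies in $\bfE$ if and only if $f$ has the left lifting property up to iso-2-cells to every $g\in\bfM$.

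\textbf{Step 2 (setup).} Let $f\colon X\arr X'$ be in $\bfE$, and let $k\colon X\arr Z$ be arbitrary. Take a bipushout square with legs $f'\colon Z\arr Z'$ and $k'\colon X'\arr Z'$, together with an iso-2-cell $\theta\colon f'k\cong k'f$. By Step 1, it suffices to show that $f'$ has the left lifting property up to iso-2-cells to every $g\colon Y\arr Y'$ in $\bfM$.

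\textbf{Step 3 (the lift).} Suppose we are given $u\colon Z\arr Y$, $u'\colon Z'\arr Y'$, and an iso-2-cell $\alpha\colon gu\cong u'f'$.
\begin{itemize}
\item Paste $\alpha$ with $\theta$ to obtain an iso-2-cell $g(uk)\cong (u'k')f$.
\item Since $f\in\bfE$, Step 1 gives a filler $h\colon X'\arr Y$ with iso-2-cells $\beta_1\colon uk\cong hf$ and $\beta_2\colon gh\cong u'k'$.
\item The pair $(u,h)$ together with $\beta_1$ (combined with $\theta$ as needed) forms a pseudo-cocone over the span $(k,f)$. The universal property of the bipushout then yields $\ell\colon Z'\arr Y$ and iso-2-cells $\gamma_1\colon \ell f'\cong u$ and $\gamma_2\colon \ell k'\cong h$, compatible with $\theta$ and $\beta_1$.
\end{itemize}
The required iso-2-cells are $\gamma_1^{-1}\colon u\cong \ell f'$ and some $\delta\colon g\ell\cong u'$.

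\textbf{Step 4 (constructing $\delta$ and checking the pasting).} This is the main obstacle. The idea is to use the 2-dimensional part of the bipushout universal property: an invertible modification between pseudo-cocones induces a unique iso-2-cell between the induced 1-cells.

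We compare the two pseudo-cocones $g\ell$ and $u'$. Their components are:
\begin{itemize}
\item on $Z$: the composite $g\ell f'\cong gu\cong u'f'$, obtained from $g\gamma_1$ followed by $\alpha$;
\item on $X'$: the composite $g\ell k'\cong gh\cong u'k'$, obtained from $g\gamma_2$ followed by $\beta_2$.
\end{itemize}
We must verify that these components are compatible with the cocone 2-cells. This reduces to the defining equation of the filler from Step 3, namely that $\alpha$ pasted with $\theta$ equals $\beta_1$ pasted with $\beta_2$, together with the compatibility of $\gamma_1,\gamma_2$ with $\theta$ and $\beta_1$.

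The bipushout then gives a unique $\delta\colon g\ell\cong u'$ whose whiskering along $f'$ is the $Z$-component. This is exactly the equation $\alpha=\delta\cdot g\gamma_1^{-1}$ required in \cref{eq:up_to_iso_lifting_property}. Hence $f'$ has the lifting property, and $f'\in\bfE$ by Step 1.

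The only real care needed is in the 2-cell bookkeeping of Step 4, specifically matching the orientations of $\theta$, $\beta_1$ and $\gamma_i$. Everything else is formal.
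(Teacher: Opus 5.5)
Your proof is correct and follows the paper's route. The paper's one-line proof cites \cref{lem:reduction_orthogonality} to identify $\bfE=\lorth(\bfM)$ with the class of 1-cells having the left lifting property up to iso-2-cells against $\bfM$, and leaves implicit the standard fact that such a class is stable under bipushout. Your Steps 3--4 verify exactly that fact: the modification check in Step 4 does go through, and $\delta$ is invertible because the comparison functor into pseudo-cocones is fully faithful and hence reflects isomorphisms.
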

\begin{proof}
    This follows from \cref{lem:reduction_orthogonality}.
\end{proof}

\begin{lemma}\label{lem:coarse-isomorphism-property}
    Every coarse isomorphism in $\Clan$ is representably an isofibration, injective-on-objects, and fully faithful, hence amnestic.
\end{lemma}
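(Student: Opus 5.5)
Fix a coarse isomorphism $\Phi\colon\C\arr\D$ in $\Clan$, so the underlying functor is an isomorphism of categories and $\Disp[\C]\subseteq\Phi^{-1}(\Disp[\D])$. For each small clan $\E$ we must study the post-composition functor $\Phi_*\colon\Clan(\E,\C)\arr\Clan(\E,\D)$. Since $\Clan$ has all natural transformations as 2-cells and the underlying functor of $\Phi$ is invertible, the first step is to observe that $\Clan(\E,\C)$ and $\Clan(\E,\D)$ are full subcategories of $\Cat(\E,\C)$ and $\Cat(\E,\D)$. The functor $\Phi_*$ is the restriction of the isomorphism $\Cat(\E,\C)\arr\Cat(\E,\D)$, $F\mapsto\Phi F$. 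Consequently $\Phi_*$ is injective on objects, because $\Phi F=\Phi G$ forces $F=G$, and fully faithful, because natural transformations $F\Rightarrow G$ correspond bijectively to natural transformations $\Phi F\Rightarrow\Phi G$ by whiskering with $\Phi$ and $\Phi^{-1}$. Amnesticity follows at once: a fully faithful functor reflects identities, and injectivity on objects ensures that an isomorphism sent to an identity has equal endpoints, so it must be the identity.

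The remaining step, and the only substantive one, is the isofibration property. Let $F\colon\E\arr\C$ be a clan morphism and $\theta\colon\Phi F\cong G'$ a natural isomorphism with $G'\colon\E\arr\D$ a clan morphism. The lift must be $G\coloneq\Phi^{-1}G'$, with the isomorphism $\Phi^{-1}\theta\colon F\cong G$. What has to be checked is that $G$ is a clan morphism into $\C$, and not merely a functor. For this we use that $G\cong F$ naturally and that $F$ is a clan morphism. Preservation of terminal objects and of pullbacks along display morphisms is invariant under natural isomorphism, so $G$ inherits these properties from $F$, using that display morphisms of $\E$ go to display morphisms of $\C$ under $F$ and the pullback squares are transported along $\theta$.

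For preservation of display morphisms, take a display morphism $d$ of $\E$. Then $Gd$ is isomorphic in the arrow category of $\C$ to $Fd$, via the components of $\Phi^{-1}\theta$. Now $Fd\in\Disp[\C]$, and a clan structure is closed under composition with isomorphisms, since it contains all isomorphisms and is closed under composition. Writing $Gd=\psi\circ Fd\circ\chi^{-1}$ with $\psi,\chi$ isomorphisms then gives $Gd\in\Disp[\C]$. Hence $G$ is a clan morphism, $\Phi_*G=G'$, and $\Phi_*(\Phi^{-1}\theta)=\theta$, which establishes the isofibration property and completes the proof. The point to be careful about is that the lift is forced to be $\Phi^{-1}G'$, and clan-morphism-ness is obtained from $F$ via the isomorphism rather than from $G'$. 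This matters because $\Phi^{-1}$ need not preserve display morphisms.
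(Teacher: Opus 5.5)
Your proof is correct and follows the same route as the paper. You view $\Phi\circ-$ as the restriction of the isomorphism $\Cat(\E,\C)\to\Cat(\E,\D)$ to the full subcategories of clan morphisms, and you get the isofibration property from the invariance of being a clan morphism under natural isomorphism. This is exactly the paper's argument that the forgetful 2-functor $\Clan\to\Cattwo$ is locally fully faithful and locally an isofibration; your remark that the lift $\Phi^{-1}G'$ inherits its clan-morphism structure from $F$ rather than from $G'$ usefully spells out the step the paper compresses.
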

\begin{proof}
    The forgetful 2-functor $\Clan \arr \Cattwo$ is faithful and locally fully faithful; it is also locally an isofibration as the property of being a clan morphism is preserved under isomorphisms.
    Given a coarse isomorphism $\Phi\colon \C \arr \D$ between small clans, the induced functor $\Phi\circ - \colon \Clan(\E,\C) \arr \Clan(\E,\D)$ fits into the following commutative diagram for any small clan $\E$.
    \begin{equation*}
        \begin{tikzcd}
            \Clan(\E,\C)
            \ar[r,"\Phi\circ -"]
            \ar[d, hook]
            &
            \Clan(\E,\D)
            \ar[d, hook]
            \\
            \Cattwo(\E,\C)
            \ar[r,"\Phi\circ -", "\cong"']
            &
            \Cattwo(\E,\D)
        \end{tikzcd}
    \end{equation*}
    Here, the vertical arrows are the functors given by the forgetful 2-functor, and they are isofibrations, injective on objects, and fully faithful by the properties of the forgetful 2-functor.
    Since the bottom horizontal arrow is an isomorphism of categories because $\Phi$ is a coarse isomorphism, the top horizontal arrow is also fully faithful and injective on objects, and it is also an isofibration by the faithfulness.
\end{proof}

\begin{lemma}\label{lem:dispdense_coarseiso_OFS}
    The two classes, display dense morphisms and coarse isomorphisms, give an orthogonal factorization system on the underlying (1-)category of $\Clan$.
\end{lemma}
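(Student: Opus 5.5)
To prove \cref{lem:dispdense_coarseiso_OFS}, I will first construct the factorization explicitly and then check orthogonality directly, as for an ordinary (epi, mono)-type system.

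\textbf{Factorization.} Given a clan morphism $\Phi\colon\C\arr\D$ between small clans, let $\D'$ denote the underlying category of $\D$ equipped with the class $\overline{\Phi(\Disp[\C])}$. This is the clan structure generated in the sense of \cref{def:generated-clan-structure}. The hypotheses of \cref{lem:smallest-clan-structure} hold because $\D$ has finite products (it is a clan) and because the morphisms of $\Phi(\Disp[\C])$ admit pullbacks along arbitrary morphisms, being display morphisms of $\D$. Since $\Disp[\D]$ is a clan structure containing $\Phi(\Disp[\C])$, we have $\overline{\Phi(\Disp[\C])}\subseteq\Disp[\D]$. Moreover, pullbacks along morphisms of $\overline{\Phi(\Disp[\C])}$ in $\D'$ are the same as those in $\D$.

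It follows that the identity functor $\D'\arr\D$ is a clan morphism, and it is a coarse isomorphism. For the first factor $\Phi\colon\C\arr\D'$, I will use characterization \cref{lem:smallest-clan-structure-2}. Alternatively, one can check directly: $\Phi$ sends display morphisms into $\overline{\Phi(\Disp[\C])}$, preserves the terminal object, and preserves pullbacks along display morphisms, since these are computed in $\D$. Hence $\Phi\colon\C\arr\D'$ is a clan morphism, and it is display dense by construction.

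\textbf{Isomorphism closure.} Both classes contain the isomorphisms and are closed under composition with them. This is routine; for display density, use that an isomorphism of clans transports generated clan structures.

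\textbf{Orthogonality.} Let $\Phi\colon\C\arr\D$ be display dense and $\Psi\colon\E\arr\E'$ a coarse isomorphism, and suppose $\Psi U=V\Phi$ with $U\colon\C\arr\E$ and $V\colon\D\arr\E'$. The only candidate filler is the functor $H\coloneq\Psi^{-1}V\colon\D\arr\E$, where $\Psi^{-1}$ is the inverse of the underlying functor. It satisfies $\Psi H=V$ and $H\Phi=\Psi^{-1}\Psi U=U$. Uniqueness is immediate because $\Psi$ is injective on underlying functors. So the remaining, and main, step is to show that $H$ is a clan morphism. Here I invoke \cref{rem:display-dense-characterization}: $H\circ\Phi=U$ is a clan morphism, so it suffices to check that $H$ preserves finite products and pullbacks along morphisms in $\Phi(\Disp[\C])$.

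This check is the obstacle, because $\Psi^{-1}$ need not be a clan morphism. However, $\Psi^{-1}$ is an isomorphism of underlying categories, so it preserves every limit that exists, in particular the terminal object and finite products. Take a pullback in $\D$ of $\Phi(d)$ along some morphism, where $d\in\Disp[\C]$. Its image under $V$ is a pullback in $\E'$ of $V\Phi(d)=\Psi U(d)$, and $\Psi U(d)$ is a display morphism, so the clan morphism $V$ preserves this pullback. Since $\Psi^{-1}$ is an isomorphism of categories, applying it yields a pullback in $\E$. Hence $H$ preserves the required limits, and it is a clan morphism by \cref{rem:display-dense-characterization}.

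Combining the factorization, isomorphism closure and orthogonality gives the orthogonal factorization system on the underlying 1-category of $\Clan$.
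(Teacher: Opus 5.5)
Your proof is correct, but it is organized differently from the paper's. The paper never constructs a diagonal filler. It applies \cref{lem:existence_proper_OFS} to the class $\bfM$ of coarse isomorphisms and checks three things: that coarse isomorphisms are monomorphisms in the underlying category of $\Clan$ (via \cref{lem:coarse-isomorphism-property}); that the pullback of a coarse isomorphism along any clan morphism exists and is again a coarse isomorphism, obtained by shrinking the clan structure of the other leg's domain; and that arbitrary, even large, wide pullbacks of them exist, obtained by intersecting clan structures. The left class then appears abstractly as the morphisms that factor through no non-invertible coarse isomorphism. Its coincidence with the display dense morphisms, and the explicit factorization of \cref{rem:dispdense_coarseiso_factorization_explicit}, are left implicit.

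You instead write down the factorization and check unique lifting by hand. Your key input is the universal property of generated clan structures (\cref{lem:smallest-clan-structure}\cref{lem:smallest-clan-structure-2}, in the form of \cref{rem:display-dense-characterization}), applied to $H=\Psi^{-1}V$. Your route is more elementary, and it identifies the left class exactly without appealing to Cassidy--H\'ebert--Kelly or to large wide pullbacks in $\Clan$. The paper's route avoids any lifting computation and reuses \cref{lem:coarse-isomorphism-property}, which it needs anyway for \cref{cor:dispdense_stable_under_bipushout}.

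Two small points should be tightened.

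First, you prove only iso-closure, whereas the paper's definition of an orthogonal factorization system also asks for closure under composition. This follows from factorization, unique lifting and iso-closure by the usual retract argument, which yields $\bfE=\lorth(\bfM)$ and $\bfM=\rorth(\bfE)$. You should say so explicitly.

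Second, $V$ preserves the pullback along $\Phi(d)$ because $\Phi(d)$ is display in $\D$, not because $V\Phi(d)$ is display in the codomain. Likewise, you need $V$ as well as $\Psi^{-1}$ to preserve finite products. The genuine use of display density is that $H$ sends $\Phi(\Disp[\C])$ into $\Disp[\E]$ because $H\Phi=U$; generation then propagates this to all of $\Disp[\D]$.
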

\begin{proof}
    We use \cref{lem:existence_proper_OFS}.
    Let $\bfM$ be the class of all coarse isomorphisms between small clans.
    In particular, by \cref{lem:coarse-isomorphism-property}, every morphism in $\bfM$ is representably injective-on-objects, hence a monomorphism in the underlying (1-)category of $\Clan$.
    It is straightforward to see that this class is closed under composition and contains all isomorphisms by definition.

    We now show the second condition in \cref{lem:existence_proper_OFS}.
    Let $\Phi\colon\C\arr \D$ be a coarse isomorphism and $\Psi\colon\D'\arr \D$ be an arbitrary clan morphism between small clans.
    One can easily verify that the class $\Disp[\D']\cap\Psi^{-1}(\Phi(\Disp[\C]))$ gives a clan structure on the underlying category of $\D'$, and the resulting clan $\C'$ becomes a pullback of $\Phi$ and $\Psi$.
    Then, the projection $\C'\arr \D'$ is a coarse isomorphism because it's underlying functor is the identity.

    We next show the third condition in \cref{lem:existence_proper_OFS}.
    Let $\Phi_i\colon\C_i \arr \C$ $(i\in I)$ be a (not necessarily small) family of coarse isomorphisms.
    For simplicity, we assume the underlying functors of $\Phi_i$ are the identity on $\C$.
    Then, the intersection $\bigcap_{i\in I} \Disp[\C_i]$ gives a clan structure on the underlying category, and the resulting clan $\D$ actually gives the desired wide pullback.
    Then, the associated clan morphism $\D\arr \C$ is clearly a coarse isomorphism because it's underlying functor is still the identity.
    This completes the proof.
\end{proof}

\begin{remark}\label{rem:dispdense_coarseiso_factorization_explicit}
    For a clan morphism $\Phi\colon\C\arr \D$, we obtain the clan whose underlying category is the same as $\D$ and whose clan structure is generated by the image $\Phi(\Disp[\C])$.
    This clan gives a (display dense, coarse iso)-factorization of $\Phi$.
\end{remark}

\begin{example}
    Taking the factorization of the canonical clan morphism $\terminalcat \arr \C$ as in \cref{rem:dispdense_coarseiso_factorization_explicit}, we obtain the finite-product clan on the same underlying category as $\C$ in the middle.
\end{example}

\begin{corollary}\label{cor:dispdense_stable_under_bipushout}
    Display dense clan morphisms are stable under bipushout in $\Clan$.
\end{corollary}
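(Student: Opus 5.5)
The plan is to obtain the statement directly from \cref{cor:leftclass_stable_under_bipushout}, applied to the 2-category $\mathcal{K}\coloneq\Clan$. For this we need an orthogonal factorization system $(\bfE,\bfM)$ on the underlying 1-category $\Clan_0$ whose right class consists of 1-cells that are representably isofibrations, injective-on-objects, and amnestic. We take $\bfE$ to be the display dense clan morphisms and $\bfM$ the coarse isomorphisms.

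The steps are as follows.

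\begin{enumerate}
    \item By \cref{lem:dispdense_coarseiso_OFS}, the pair (display dense, coarse isomorphisms) is an orthogonal factorization system on $\Clan_0$.
    \item By \cref{lem:coarse-isomorphism-property}, every coarse isomorphism is representably an isofibration, injective-on-objects, and fully faithful, hence amnestic. Amnesticity follows because a fully faithful functor reflects identities: if $\Phi\circ\alpha$ is an identity 2-cell, then $\alpha$ is an identity by faithfulness on 2-cells.
    \item \cref{cor:leftclass_stable_under_bipushout} then gives stability of $\bfE$ under bipushouts in $\Clan$, whenever these exist. Their existence is postponed to \cref{sec:bicocompletenessofClan}.
\end{enumerate}

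The substantive content lies in the proof of \cref{cor:leftclass_stable_under_bipushout}, which I would expand as follows. Let $\Phi\colon\C\arr\D$ be display dense and let $\Phi'\colon\C'\arr\D'$ be its bipushout along some $u\colon\C\arr\C'$. Factor $\Phi'=m\circ e$ with $e\in\bfE$ and $m\in\bfM$. The bipushout square provides an iso-2-cell witnessing $\Phi'\circ u\cong v\circ\Phi$, where $v\colon\D\arr\D'$ is the other leg.

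By \cref{lem:reduction_orthogonality}, since $\Phi\orth m$ holds in $\Clan_0$, $\Phi$ has the left lifting property up to iso-2-cells against $m$. Applying this to the square built from $e\circ u$, $v$ and the iso-2-cell yields a lift $h\colon\D\arr(\text{middle clan})$ together with compatible iso-2-cells. The universal property of the bipushout then produces $s\colon\D'\arr(\text{middle clan})$ with $s\circ\Phi'\cong e$ and $m\circ s\cong\id$.

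Now $m$ is representably fully faithful and injective-on-objects, and $m\circ s\circ m\cong m$. From this, $s\circ m\cong\id$, and since $m$ is representably an isofibration and amnestic, these isomorphisms can be rectified to equalities. Hence $m$ is an isomorphism, and therefore $\Phi'\in\bfE$.

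The main obstacle is this rectification of iso-2-cells into strict equalities, which is exactly where amnesticity and the isofibration property of coarse isomorphisms are used. The 2-dimensional uniqueness in the universal property of the bipushout must also be invoked to compare $s\circ m$ with the identity.
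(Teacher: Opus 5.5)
Your argument is correct and matches the paper's proof, which is exactly the combination of \cref{lem:dispdense_coarseiso_OFS}, \cref{lem:coarse-isomorphism-property}, and \cref{cor:leftclass_stable_under_bipushout}. Your expansion of the last result differs slightly from the paper's one-line appeal to \cref{lem:reduction_orthogonality}: you factor $\Phi'=m\circ e$ and show that $m$ is invertible, and this is valid. On your route, once $m\circ s\cong\id$ is established, the isofibration property gives $s'$ with $m\circ s'=\id$, and injectivity on objects then gives $s'\circ m=\id$, so amnesticity is not actually needed.
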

\begin{proof}
    This follows from \cref{lem:dispdense_coarseiso_OFS,lem:coarse-isomorphism-property,cor:leftclass_stable_under_bipushout}.
\end{proof}

\subsection{Simple clan morphisms}\label{subsec:simple_clan_morphisms}
\begin{definition}[Simple clan morphism]\label{def:simple-clan-morphism}
    A clan morphism $\Phi\colon \C \arr \D$ is said to be \emph{simple} if the clan structure of $\D$ is generated by display morphisms of the form $Y\darr \Phi(X)$ for $X\in\C$ and $Y\in\D$.
\end{definition}

\begin{example}\label{exa:trivial-case-non-dependence}
    The unique clan morphism $\C \arr \terminalcat$ is always simple.
    The canonical clan morphism $\terminalcat \arr \C$ is simple if and only if the clan $\C$ is finite-product because the morphisms of the form $Y \darr \Phi(X)$ are precisely the morphisms $Y \darr 1$ in this case, which generate the finite-product clan structure.
\end{example}

\begin{example}[Simple clan morphisms between syntactic clans]\label{exa:syntactic-simple}
    For a \ac{GAT} $\theory{T}$ with a subtheory $\theory{T}'$, the canonical inclusion of clans $\clfy{\theory{T}'} \arr[hook] \clfy{\theory{T}}$ is simple if for any sort symbol $\syn{A}$ in $\theory{T}$, the context $\syn{\Gamma}_{\syn{A}}$ in which the sort symbol is defined belongs to $\theory{T}'$.
    Intuitively, this means that the theory $\theory{T}$ is simple relative to $\theory{T}'$ as a simple type theory is relative to the empty theory.

    For instance, consider the morphism $\clfy{\theory{T}_\setg} \arr \clfy{\theory{T}_\catg}$ in \cref{ex:non-display-dense}.
    The clan structure of $\clfy{\theory{T}_\catg}$ is generated by the display morphism \cref{eq:dependent-morphism-in-CAT} because $\syn{Mor}$ is the only sort dependent on another sort.
    Since the codomain of the morphism is in the image of the functor, the morphism is simple.
    The same argument holds for the clan morphisms that satisfy the above condition.
\end{example}

\begin{lemma}
    \label{lem:clan-dopfib-pullback-non-dependency}
    Let $\Phi\colon \C \arr \D$ be a simple clan morphism and $\Psi\colon \D' \arr \D$ be a clan discrete opfibration between small clans.
    Then, as in \cref{note:Grothendieck-pullback-square}, we have the pullback square
    \begin{equation*}
        \begin{tikzcd}
            \C'
            \ar[r,"\Phi'"]
            \ar[d]
            \pullbackcorner
            &
            \D'
            \ar[d,"\Psi"]
            \\
            \C
            \ar[r,"\Phi"]
            &
            \D
        \end{tikzcd}
        \incat{\Cat}.
    \end{equation*}
    Then, the clan morphism $\Phi'\colon \C' \arr \D'$ is also simple.
\end{lemma}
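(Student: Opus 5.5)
The plan is to use \cref{prop:properties-of-generation}\cref{prop:properties-of-generation-preimage} together with the explicit description of pullbacks of clan discrete opfibrations. Two preliminary facts are needed first. The functor $\C'\arr\C$ (call it $\Psi'$) is a clan discrete opfibration, being the pullback of $\Psi$ as in \cref{note:Grothendieck-pullback-square}. The functor $\Phi'$ is a clan morphism, by the same note. The clan structure on $\C'$ consists of the morphisms lying over display morphisms of $\C$. By simplicity of $\Phi$, $\Disp[\D]$ is generated by the class $\bfDelta$ of display morphisms of the form $Y\darr\Phi(X)$ with $X\in\C$ and $Y\in\D$.

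Applying \cref{prop:properties-of-generation}\cref{prop:properties-of-generation-preimage} to the clan discrete opfibration $\Psi$, we get that $\Disp[\D']$ is generated by $\Psi^{-1}(\bfDelta)$. It therefore suffices to show that every morphism $\phi\colon Y'\darr Z'$ in $\Psi^{-1}(\bfDelta)$ has codomain of the form $\Phi'(X')$ for some $X'\in\C'$. Such a $\phi$ is a display morphism in $\D'$, since $\Psi$ reflects display morphisms. Then $\Psi(Z')=\Phi(X)$ for some $X\in\C$. Because $\C'$ is the pullback $\C\times_\D\D'$ in $\Cat$, the pair $X'\coloneq(X,Z')$ is an object of $\C'$ with $\Phi'(X')=Z'$. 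Hence $\Psi^{-1}(\bfDelta)$ is contained in the class of display morphisms of $\D'$ of the form $Y'\darr\Phi'(X')$.

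The remaining step is the elementary observation that if a class $\bfDelta_1$ generates a clan structure $\Disp$ and $\bfDelta_1\subseteq\bfDelta_2\subseteq\Disp$, then $\bfDelta_2$ also generates $\Disp$. This holds by the characterization as the smallest clan structure in \cref{lem:smallest-clan-structure}. The only hypothesis of that lemma to check is that the morphisms in question admit pullbacks, and display morphisms of $\D'$ do. Hence $\Disp[\D']$ is generated by the display morphisms $Y'\darr\Phi'(X')$, so $\Phi'$ is simple.

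The main point to double-check is the use of \cref{lem:smallest-clan-structure}, which assumes the ambient category has finite products. A clan need not have all finite products, only products with objects whose maps to $1$ are display, and those are all objects. So products exist via pullbacks over $1$ of display morphisms, and this assumption is harmless. I would also verify that $\Psi$ is small, so that $\D'$ is a small clan as the statement requires.
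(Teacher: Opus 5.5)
Your proof is correct and matches the paper's argument. The paper likewise pulls the generating class $\bfDelta$ back along $\Psi$ via \cref{prop:properties-of-generation}\cref{prop:properties-of-generation-preimage}, reads off from the pullback square that each such morphism has codomain of the form $\Phi'(X')$, and concludes; you also make explicit two steps the paper leaves implicit (that these preimages are display because $\Psi$ reflects display morphisms, and that enlarging a generating class inside $\Disp[\D']$ still generates it), and you handle both correctly.
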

\begin{proof}
    By assumption, morphisms in $\D$ of the form $Y \darr \Phi(X)$ for $X\in\C$ and $Y\in\D$ generate the clan structure of $\D$.
    By \cref{prop:properties-of-generation}\cref{prop:properties-of-generation-preimage}, the inverse image of the class by $\Psi$ generates the clan structure of $\D'$.
    Since morphisms in this class have the form $Y' \darr \Phi'(X')$ for $X'\in\C'$ and $Y'\in\D'$ because of the pullback square, we conclude that the clan structure of $\D'$ is generated by morphisms of the form $Y' \darr \Phi'(X')$.
    This means that $\Phi'$ is simple.
\end{proof}

\begin{proposition}\label{prop:preservation-of-simple}
\
\begin{enumerate}
    \item\label{prop:preservation-of-simple-1} If the composite $\Psi\circ\Phi$ is simple for clan morphisms $\Phi\colon \C \arr \D$ and $\Psi\colon \D \arr \E$, then so is $\Psi$.
    \item\label{prop:preservation-of-simple-2} If $\Phi\colon \C \arr \D$ is simple and $\Psi\colon \D \arr \E$ is a display dense morphism, then $\Psi\circ\Phi$ is also simple.
\end{enumerate}
\end{proposition}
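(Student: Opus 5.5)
For \cref{prop:preservation-of-simple-1}, I will compare generating classes directly. Simplicity of $\Psi\circ\Phi$ says that $\Disp[\E]$ is generated by the class $\bfDelta_{\Psi\Phi}$ of display morphisms $Z\darr\Psi\Phi(X)$ with $X\in\C$ and $Z\in\E$. Each such morphism is of the form $Z\darr\Psi(Y)$ with $Y\coloneq\Phi(X)\in\D$. Hence $\bfDelta_{\Psi\Phi}$ is contained in the class $\bfDelta_\Psi$ of display morphisms $Z\darr\Psi(Y)$, and $\bfDelta_\Psi$ is in turn contained in $\Disp[\E]$. Since generation is monotone, with $\overline{\bfDelta}$ the smallest clan structure containing $\bfDelta$ (\cref{lem:smallest-clan-structure}), we get
\[
\Disp[\E]=\overline{\bfDelta_{\Psi\Phi}}\subseteq\overline{\bfDelta_\Psi}\subseteq\Disp[\E].
\]
Therefore $\Psi$ is simple. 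This part is routine.

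For \cref{prop:preservation-of-simple-2}, let $\bfDelta$ denote the class of display morphisms $Y\darr\Phi(X)$ in $\D$. By simplicity of $\Phi$, $\bfDelta$ generates $\Disp[\D]$. Because $\Psi$ is display dense, \cref{prop:properties-of-generation}\cref{prop:properties-of-generation-image} applies to $\Psi$ with this generating class. It gives that $\Psi(\bfDelta)$ generates $\Disp[\E]$.

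Every element of $\Psi(\bfDelta)$ has the form $\Psi(Y)\darr\Psi\Phi(X)$. It is a display morphism because $\Psi$ preserves display morphisms, and its codomain is $\Psi\Phi(X)$. So $\Psi(\bfDelta)$ is contained in the class of display morphisms $Z\darr\Psi\Phi(X)$ with $Z\in\E$, and that class is contained in $\Disp[\E]$. Applying monotonicity of generation as in the first part shows that the latter class generates $\Disp[\E]$. Hence $\Psi\circ\Phi$ is simple.

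The one point to check is that the hypothesis of \cref{lem:smallest-clan-structure} holds for these classes. That hypothesis is the existence of finite products together with pullbacks of morphisms in the class along arbitrary morphisms. Both follow because the classes consist of display morphisms in a clan. Finite products exist since every $X\darr1$ is a display morphism, so products are pullbacks over $1$. I do not expect any real obstacle beyond this remark.
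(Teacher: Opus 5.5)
Your proposal is correct and follows essentially the same route as the paper. For (i) you use the inclusion of generating classes, and for (ii) you apply \cref{prop:properties-of-generation}\cref{prop:properties-of-generation-image} and note that $\Psi(\bfDelta)$ lies among the display morphisms $Z\darr\Psi\Phi(X)$. Your remark verifying the hypotheses of \cref{lem:smallest-clan-structure} is a small extra check that the paper leaves implicit.
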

\begin{proof}\
\begin{enumerate}
    \item The class of morphisms of the form $ Z \darr \Psi(\Phi(X))$ for $X\in\C$ is a subclass of that of morphisms of the form $Z \darr \Psi(Y)$ for $Y\in\D$, hence the latter class generates the clan structure of $\E$ as the former class does.
    \item By \cref{prop:properties-of-generation}\cref{prop:properties-of-generation-image}, the images of morphisms $Y\darr\Phi(X)$ for $X\in\C$ by $\Psi$ generate the clan structure of $\E$.
    However, this image is a subclass of the class of morphisms of the form $Z \darr \Psi(\Phi(X))$ for $X\in\C$, which therefore also generates the clan structure of $\E$.
    \qedhere
\end{enumerate}
\end{proof}

\begin{corollary}\label{cor:non-dependency-clan-dense}
    Consider a bipushout square
    \begin{equation*}
        \begin{tikzcd}
            \C
            \ar[r,"F"]
            \ar[d," \Phi"']
            \ar[rd, "\rotatebox{45}{$\cong$}", phantom]
            &
            \C'
            \ar[d," \Phi'"]
            \\
            \D
            \ar[r," G"']
            &
            \D'
            \ar[lu,phantom, "\ulcorner", very near start]
        \end{tikzcd}\incat{\Clan}
    \end{equation*}
    where $F$ is display dense and $\Phi$ is simple.
    Then the morphism $\Phi'$ is also simple.
\end{corollary}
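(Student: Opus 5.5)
The plan is to factor $G$ through its (display dense, coarse iso)-factorization and reduce to the two preservation results in \cref{prop:preservation-of-simple}. Concretely, I write $G = M\circ E$ with $E\colon\D\arr\D''$ display dense and $M\colon\D''\arr\D'$ a coarse isomorphism, as in \cref{lem:dispdense_coarseiso_OFS} and \cref{rem:dispdense_coarseiso_factorization_explicit}. Since $\Phi$ is simple and $E$ is display dense, \cref{prop:preservation-of-simple}\cref{prop:preservation-of-simple-2} shows that $E\circ\Phi$ is simple.

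Next I want to see that $M$ is an isomorphism of clans. By \cref{cor:dispdense_stable_under_bipushout}, the bipushout $G$ of the display dense morphism $F$ along $\Phi$ is display dense. A display dense morphism factored through a coarse isomorphism forces that coarse isomorphism to be invertible, by uniqueness of factorizations in \cref{lem:dispdense_coarseiso_OFS}. Concretely, $\Disp[\D'] = \overline{G(\Disp[\D])} = \overline{M(E(\Disp[\D]))}$, and this is contained in $M(\Disp[\D''])$, which is a clan structure because $M$ is an isomorphism of underlying categories. Hence $M$ reflects displays and is a clan isomorphism. Therefore $G\circ\Phi \cong M\circ(E\circ\Phi)$ is simple. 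Simplicity is clearly invariant under postcomposition with a clan isomorphism, and also under natural isomorphism of clan morphisms, since objects of the form $\Phi(X)$ are then only replaced up to iso. So $\Phi'\circ F\cong G\circ\Phi$ is simple.

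Finally, \cref{prop:preservation-of-simple}\cref{prop:preservation-of-simple-1}, applied to the composite $\Phi'\circ F$, gives that $\Phi'$ is simple.

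The step I expect to need the most care is the invariance of simplicity under the iso-2-cell filling the bipushout square. If $\Phi'F\cong G\Phi$ via $\theta$, any display $Y\darr G\Phi(X)$ composed with the isomorphism $\theta_X^{-1}$ becomes a display $Y\darr\Phi'F(X)$, because displays contain isomorphisms and are closed under composition. So the generating classes generate the same clan structure, and the argument goes through.
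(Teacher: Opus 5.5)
Your proof is correct and follows the paper's argument: $G$ is display dense by \cref{cor:dispdense_stable_under_bipushout}, so $G\circ\Phi$ is simple by \cref{prop:preservation-of-simple}\cref{prop:preservation-of-simple-2}; then $\Phi'\circ F\cong G\circ\Phi$ is simple, and \cref{prop:preservation-of-simple}\cref{prop:preservation-of-simple-1} finishes. The factorization $G=M\circ E$ and the check that $M$ is invertible are redundant, since $G$ is already display dense and \cref{prop:preservation-of-simple}\cref{prop:preservation-of-simple-2} applies to it directly; on the other hand, your explicit verification that simplicity survives the iso-2-cell fills in a step the paper only asserts.
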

\begin{proof}
    By \cref{cor:dispdense_stable_under_bipushout}, the morphism $G$ is display dense, and thus, by \cref{prop:preservation-of-simple}\cref{prop:preservation-of-simple-2}, the composite $G\circ \Phi$ is simple.
    Since a clan morphism isomorphic to a simple clan morphism is also simple, $\Phi'\circ F$ is simple.
    By \cref{prop:preservation-of-simple}\cref{prop:preservation-of-simple-1}, the morphism $\Phi'$ is simple.
\end{proof}

\begin{lemma}\label{lem:non-dependent-reflective-subclan-fix}
    Let $\C'$ be a reflective full subclan of a small clan $\C$ whose inclusion $I\colon\C' \arr[hook] \C$ is simple. Then, the clan $\Fix{\C}{M}$ constructed in \cref{const:fixclan} is a finite-product clan for any model $M\colon \C' \arr \Set$.
\end{lemma}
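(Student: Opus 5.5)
We show that the bottom map $\terminalcat\arr\Fix{\C}{M}$ of the bipushout in \cref{const:fixclan} is simple. By \cref{exa:trivial-case-non-dependence}, a clan admitting a simple clan morphism from $\terminalcat$ is exactly a finite-product clan, so this suffices. The argument runs along the two sides of the bipushout square, using \cref{lem:clan-dopfib-pullback-non-dependency} and \cref{cor:non-dependency-clan-dense}.

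First, consider the pullback square in \cref{const:fixclan} whose horizontal edges are $T\colon\El M\arr\El\Lambda^*M$ and $I\colon\C'\arr\C$. Its right vertical edge $\pi_{\Lambda^*M}$ is a clan discrete opfibration by \cref{prop:clan-Grothendieck-construction}, and $I$ is simple by hypothesis. \Cref{lem:clan-dopfib-pullback-non-dependency} then shows that $T$ is simple. Here we also check that the left vertical edge $\pi_M$ is the pullback of $\pi_{\Lambda^*M}$ along $I$, as required by the lemma. This holds by \cref{note:Grothendieck-pullback-square} applied to $\Phi=I$ and $N=\Lambda^*M$, together with $I^*\Lambda^*M\cong M$, the counit being an isomorphism.

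Second, the morphism $!\colon\El M\arr\terminalcat$ is display dense, since every clan morphism into $\terminalcat$ is. Now apply \cref{cor:non-dependency-clan-dense} to the bipushout square with $F\coloneq\,!$ (display dense) and $\Phi\coloneq T$ (simple). It yields that the opposite edge $\terminalcat\arr\Fix{\C}{M}$ is simple. \Cref{exa:trivial-case-non-dependence} then shows that $\Fix{\C}{M}$ is a finite-product clan.

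The main obstacle is the bookkeeping in the first step. The pullback there is taken in $\Cat$ with $\El M$ rather than $\El I^*\Lambda^*M$, so we must confirm that replacing $I^*\Lambda^*M$ by the isomorphic $M$ preserves the pullback and the clan structure. Simplicity is invariant under isomorphism, so transporting along the isomorphism of clan discrete opfibrations should settle this. We should also check that \cref{cor:non-dependency-clan-dense} is being applied with the square oriented correctly, with $F$ along the top and $\Phi$ on the left.
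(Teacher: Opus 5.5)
Your proposal is correct and follows the paper's own proof: $T$ is simple by \cref{lem:clan-dopfib-pullback-non-dependency}, $!\colon\El M\arr\terminalcat$ is display dense, and \cref{cor:non-dependency-clan-dense} applied to the transposed bipushout square ($F\coloneq\,!$, $\Phi\coloneq T$) makes $\terminalcat\arr\Fix{\C}{M}$ simple, so \cref{exa:trivial-case-non-dependence} gives the conclusion. Your bookkeeping about $I^*\Lambda^*M\cong M$ and the orientation of the square is sound; it spells out points the paper leaves implicit.
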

\begin{proof}
    When the inclusion $I \colon \C' \arr[hook] \C$ is simple, so is the morphism $T$ as in \cref{const:fixclan} because of \cref{lem:clan-dopfib-pullback-non-dependency}.
    Since the unique clan morphism $!\colon\El M\arr \terminalcat$ is always display dense, applying \cref{cor:non-dependency-clan-dense} to the bipushout square, we conclude that the canonical morphism $\terminalcat\arr\Fix{\C}{M}$ is simple, which means that $\Fix{\C}{M}$ is a finite-product clan as described in \cref{exa:trivial-case-non-dependence}.
\end{proof}

\begin{theorem}[Comparison formula II]\label{thm:difference-of-decomposition}
    Let $\C'$ be a reflective full subclan of a small clan $\C$ whose inclusion $\C' \arr[hook] \C$ is simple.
    Then, $\decnum{\ClanMod\C} \le \decnum{\ClanMod\C'} + 1$.
\end{theorem}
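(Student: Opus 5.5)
The plan is to combine \cref{thm:bound-of-decomposition-length} with \cref{lem:non-dependent-reflective-subclan-fix} and the fact, recorded in \cref{prop:finite-product-clan-model}\cref{prop:finite-product-clan-model-finprod}, that categories of models of finite-product clans are regular. First, if $\decnum{\ClanMod\C'}$ is not already of the form $\alpha+1$, we still want to write it that way. Since $\ClanMod\C'$ is non-empty (it contains the terminal model), $\decnum{\ClanMod\C'}\ge 1$. It is also small, by \cref{cor:upperbound_for_global_regdecnum_of_lp}, since the category is locally finitely presentable. Set $\alpha$ so that $\decnum{\ClanMod\C'}\le\alpha+1$ with $\alpha+1\le\decnum{\ClanMod\C'}+1$. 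Concretely, take $\alpha\coloneq\decnum{\ClanMod\C'}$ in general; when $\decnum{\ClanMod\C'}$ is a successor $\alpha_0+1$, take $\alpha\coloneq\alpha_0$ for the sharper bound.

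Second, for every model $N\colon\C\arr\Set$, the clan $\Fix{\C}{I^*N}$ is a finite-product clan by \cref{lem:non-dependent-reflective-subclan-fix}, because the inclusion is simple. Hence $\ClanMod(\Fix{\C}{I^*N})$ is regular. Every morphism there then factors as a regular epimorphism followed by a monomorphism, and so $\decnum{\ClanMod(\Fix{\C}{I^*N})}\le 2$. We therefore set $\beta\coloneq 2$.

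Third, \cref{thm:bound-of-decomposition-length} gives $\decnum{\ClanMod\C}\le\alpha+2$. In the successor case $\decnum{\ClanMod\C'}=\alpha_0+1$ this is exactly $\decnum{\ClanMod\C'}+1$.

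In the limit case (or the value $0$, excluded above) the choice $\alpha=\decnum{\ClanMod\C'}$ gives only $\decnum{\ClanMod\C'}+2$, one too many. This is the main obstacle. To handle it, I would rerun the proof of \cref{thm:bound-of-decomposition-length} pointwise instead of using the global statement. For a strong epimorphism $f$, it shows $\decnum{f}\le\decnum{I^*f}+\decnum{\pocomp{f}}$ with $\decnum{\pocomp{f}}<\beta=2$, that is, $\decnum{\pocomp{f}}\le 1$. Since $\decnum{I^*f}<\decnum{\ClanMod\C'}$, we get $\decnum{I^*f}+1<\decnum{\ClanMod\C'}+1$. So $\decnum{f}<\decnum{\ClanMod\C'}+1$ for every $f$, with no case split needed. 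Finally, \cref{prop:decnum_of_composite}\cref{prop:decnum_of_composite-postcompose} and the (strong epi, mono) factorization extend the bound from strong epimorphisms to all morphisms, which yields the claim.
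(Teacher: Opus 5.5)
Your proof is correct. Its ingredients are the paper's: \cref{lem:non-dependent-reflective-subclan-fix} together with regularity of models of finite-product clans gives $\beta=2$, and in the successor case you apply \cref{thm:bound-of-decomposition-length} with $\alpha+1=\decnum{\ClanMod\C'}$, exactly as the paper does.

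The difference lies in the non-successor case. The paper does not reopen the proof of \cref{thm:bound-of-decomposition-length}. It invokes the a priori bound \cref{cor:upperbound_for_global_regdecnum_of_lp}: both $\ClanMod\C$ and $\ClanMod\C'$ are locally finitely presentable, so both global numbers are at most $\omega+1$. If $\decnum{\ClanMod\C'}\in\{\omega,\omega+1\}$, the inequality is then trivial. Otherwise $\decnum{\ClanMod\C'}$ is a finite successor, and the theorem applies as a black box.

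You instead run the argument pointwise and use $\decnum{I^*f}<\decnum{\ClanMod\C'}$ directly. This gives $\decnum{f}\le\decnum{I^*f}+\decnum{\pocomp{f}}\le\decnum{I^*f}+1\le\decnum{\ClanMod\C'}$ for every strong epimorphism $f$. The bound is uniform in the ordinal, so your first three paragraphs (the choice of $\alpha$ and the case split) become redundant once this is in place.

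The paper's route is shorter in the finitary setting, where the $\omega+1$ ceiling comes for free. Yours depends only on the decomposition numbers being defined, not on that ceiling, so it would carry over to settings without such an a priori bound. The price is that \cref{thm:bound-of-decomposition-length} is used through its proof rather than through its statement.
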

\begin{proof}
    By \cref{lem:non-dependent-reflective-subclan-fix} and \cref{prop:finite-product-clan-model}\cref{prop:finite-product-clan-model-finprod}, the global regular-decomposition number of $\ClanMod(\Fix{\C}{M})$ is at most $2$.
    As $\ClanMod\C$ and $\ClanMod\C'$ are locally finitely presentable, their global regular-decomposition numbers are at most $\omega+1$ by \cref{cor:upperbound_for_global_regdecnum_of_lp}.
    When $\decnum{\ClanMod\C'}$ is either $\omega$ or $\omega+1$, the desired inequality holds trivially.
    When it is a finite number, we can apply \cref{thm:bound-of-decomposition-length} to obtain the desired inequality.
\end{proof}

\begin{example}
    We continue discussing with the example $\clfy{\theory{T}_\setg}$ and $\clfy{\theory{T}_\catg}$ in \cref{ex:non-display-dense,exa:syntactic-simple}.
    The full inclusion $\clfy{\theory{T}_\setg} \arr[hook] \clfy{\theory{T}_\catg}$ is simple by \cref{exa:syntactic-simple}, and has a left adjoint, which sends each context by cutting down the $\syn{Mor}$-part.
    The unit component at each context is a morphism that forgets the variables of sort $\syn{Mor}$, which is a display morphism.
    Applying \cref{thm:difference-of-decomposition}, we obtain $\decnum{\Cat}\le \decnum{\Set}+1=3$.
\end{example}

We now provide a sufficient condition under which a \ac{GAT} has a category of models whose global regular-decomposition number is finite.
The conventions and notations we adopt here are summarized in \cref{sec:GAT}.
For a type judgment $\syn{\Gamma}\vdash \syn{T}\type$ in a \ac{GAT} $\theory{T}$, there is a unique sort symbol $\syn{A}$ such that $\syn{T}$ is obtained by substituting terms into $\syn{A}(\syn{x}_1,\dots,\syn{x}_n)$.
We call this sort symbol the \emph{associated sort} of the type judgment.
Since this does not depend on the context $\syn{\Gamma}$, we also say that $\syn{A}$ is the associated sort of the type $\syn{T}$ in a judgment $\syn{\Gamma}\vdash \syn{T}\type$.

\begin{definition}[Dependency rank]
    For a \ac{GAT} $\theory{T}$, we inductively assign to each sort symbol $\syn{A}$ in $\theory{T}$ a finite number, which we call \emph{dependency rank} and write $\depran{\syn{A}}$,  as follows:
    \begin{equation*}
    \depran{\syn{A}} = \max \left\{\,\depran{\syn{B}}\mid \syn{B} \text{ appears in }\syn{\Gamma}_{\syn{A}}\,\right\} +1
    \end{equation*}
    where we set $\max\emptyset = 0$.
    This is well-defined because there is no circular dependency between sort symbols in a \ac{GAT}.

    For a type judgment $\syn{\Gamma}\vdash \syn{T}\type$, we define the \emph{dependency rank of the type $\syn{T}$}, denoted by $\depran{\syn{T}}$, to be that of its associated sort.
    For a context $\syn{\Gamma}$, we denote by $\depran{\syn{\Gamma}}$ the maximum dependency rank of any sort symbol appearing in the context $\syn{\Gamma}$ and call it the \emph{dependency rank of the context $\syn{\Gamma}$}.
\end{definition}

\begin{definition}[Non-descending \acp{GAT}]
    A \ac{GAT} $\theory{T}$ is said to be \emph{non-descending} if 
    \begin{enumerate}
        \item for each operator symbol $\syn{f}$ in $\theory{T}$, the dependency rank of the type of its value $\syn{T}_\syn{f}$ is larger than or equal to the dependency rank of the context $\syn{\Gamma}_\syn{f}$, and
        \begin{equation*}
            \syn{\Gamma}_{\syn{f}}\vdash \syn{f}(\syn{x}_1,\dots,\syn{x}_n) : \syn{T}_\syn{f}
            \quad \in \theory{T}
            \quad \Longrightarrow \quad
            \depran{\syn{\Gamma}_\syn{f}} \le \depran{\syn{T}_\syn{f}} 
        \end{equation*}
        \item for each axiom of $\theory{T}$, the dependency rank of the type $\syn{T}$ where we take the equality into account is larger than or equal to that of the context $\syn{\Gamma}$.
        \begin{equation*}
            \syn{\Gamma} \vdash \syn{t_1} = \syn{t_2} : \syn{T}
            \quad \in \theory{T}
            \quad \Longrightarrow \quad
            \depran{\syn{\Gamma}} \le \depran{\syn{T}}\qedhere
        \end{equation*}
    \end{enumerate}
\end{definition}

A theory being non-descending intuitively means that fixing a natural number $n$ and cutting out the sorts of dependency rank greater than $n$ and other things defined using these sorts does not affect the remaining theory.
When this is the case for $\theory{T}$, we can stratify contexts in $\theory{T}$ according to their dependency ranks, by decomposing the canonical clan morphism $\terminalcat\arr\clfy{\theory{T}}$ into a series of clan reflective simple inclusions, as shown below.
\begin{definition}
    For a non-descending \ac{GAT} $\theory{T}$ and a natural number $n$, we define by ${\clfy{\theory{T}}}_{\le n}$ the full subcategory of $\clfy{\theory{T}}$ spanned by the contexts of dependency rank at most $n$.
    The clan structure on ${\clfy{\theory{T}}}_{\le n}$ is given by the same display morphisms as in $\clfy{\theory{T}}$ between these contexts.
    This is indeed a clan structure because pullbacks along display morphisms between contexts of dependency rank at most $n$ again have dependency rank at most $n$, so this subcategory is closed under pullbacks along display morphisms.
\end{definition}

\begin{lemma}\label{lem:reflective-subclan-dependency-rank-syntactic}
    Let $\theory{T}$ be a non-descending \ac{GAT}.
    Then, for each natural number $n$, the full subclan ${\clfy{\theory{T}}}_{\le n}$ of $\clfy{\theory{T}}$ is reflective.
\end{lemma}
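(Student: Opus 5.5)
To prove that ${\clfy{\theory{T}}}_{\le n}$ is reflective, I would construct the clan reflection $\Lambda_n\colon\clfy{\theory{T}}\arr{\clfy{\theory{T}}}_{\le n}$ syntactically, by \emph{truncation of contexts}. Given a context $\syn{\Gamma}=(\vx_1\ofsort\syn{T}_1,\dots,\vx_m\ofsort\syn{T}_m)$, let $\syn{\Gamma}_{\le n}$ be the subsequence of declarations $\vx_i\ofsort\syn{T}_i$ with $\depran{\syn{T}_i}\le n$. The first point to check is that $\syn{\Gamma}_{\le n}$ is again a well-formed context.

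Each $\syn{T}_i$ of rank $\le n$ has the form $\syn{A}(\tup{\syn{t}})$ with $\depran{\syn{A}}\le n$. Every term $\syn{t}_j$ has a type whose sort occurs in $\syn{\Gamma}_\syn{A}$, so that type has rank $<\depran{\syn{A}}\le n$. By the non-descending condition on operator symbols, the output rank of an operator is at least the rank of its input context. An induction on terms then shows that a term whose type has rank $\le n$ only mentions variables of rank $\le n$: every subterm sits in a context whose rank is bounded by the output rank. Hence $\syn{T}_i$ only involves variables in $\syn{\Gamma}_{\le n}$.

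The same reasoning, applied to derivations and using the axiom clause of non-descendingness, should show a conservativity statement. If a judgment (term, type, or equality) in context $\syn{\Gamma}$ has rank $\le n$, then it is derivable already in $\syn{\Gamma}_{\le n}$. Concretely, each equality axiom used at rank $\le n$ has its context at rank $\le n$, so substitution instances only involve low-rank variables. I expect this to be the main obstacle. Derivations may pass through intermediate judgments, for instance by transitivity through terms of higher rank or by conversion rules, so one must check that every derivation can be pruned to rank $\le n$. I would first attempt an induction on derivations, showing that deleting all high-rank variables from the context (weakening in reverse, i.e.\ strengthening) preserves derivability of all judgments whose type has rank $\le n$. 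For the equality rules this uses that both sides of an equation have the same type, hence the same rank.

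Granting this, the projection $\eta_{\syn{\Gamma}}\colon\syn{\Gamma}\arr\syn{\Gamma}_{\le n}$, which forgets the high-rank variables, is a context morphism and a display morphism (a composite of projections). For $\syn{\Delta}$ of rank $\le n$, a context morphism $\syn{\Gamma}\arr\syn{\Delta}$ is a tuple of terms of types of rank $\le n$. By the strengthening lemma these terms, and the equalities identifying such tuples, live in $\syn{\Gamma}_{\le n}$. Therefore the morphism factors uniquely through $\eta_{\syn{\Gamma}}$, which is the universal property of the unit.

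This gives a left adjoint $\Lambda_n$ with display unit. It remains to verify that $\Lambda_n$ is a clan morphism. It preserves the terminal (empty) context. It sends display projections to display projections, because truncating a projection again yields a projection. It preserves pullbacks along display morphisms, because such pullbacks are computed by substitution into the extra variables, and substitution commutes with truncation thanks to the same rank analysis.
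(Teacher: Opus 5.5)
Your proposal is correct and follows the paper's proof. Both truncate contexts by rank and prove a strengthening lemma by induction on derivations; the paper states it as five simultaneous claims for context, type, term, and the two equality judgments, invoking the non-descending clauses exactly at the operator and axiom rules. Both then take the unit to be the display projection, through which every morphism into a context of rank at most $n$ factors uniquely.

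The one case you did not single out is substitution for a variable of rank greater than $n$. The paper handles it by noting that such a variable cannot occur in the strengthened second premise. Your check that the truncation functor is itself a clan morphism is required by the definition of a clan reflection, and the paper leaves it implicit.
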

\begin{proof}
    First, we define a precontext $\syn{\Gamma}\discard{n}$ for each context $\syn{\Gamma}$ in $\theory{T}$ by removing all pairs $\syn{x}:\syn{T}$ such that $\depran{\syn{T}}>n$.
    We will show the following five statements concerning judgments of different forms:
    \begin{enumerate}
        \item 
            If $\theory{T} \drv \vdash \syn{\Gamma} \ctx$, then $\theory{T} \drv \vdash \syn{\Gamma}\discard{n} \ctx$.
        \item
            If $\theory{T} \drv \syn{\Gamma} \vdash \syn{T} \type$ and the associated sort of $\syn{T}$ has a dependency rank at most $n$, then $\theory{T} \drv \syn{\Gamma}\discard{n} \vdash \syn{T}\type$.
        \item
            If $\theory{T} \drv \syn{\Gamma} \vdash \syn{t} : \syn{T}$ and the associated sort of $\syn{T}$ has a dependency rank at most $n$, then $\theory{T} \drv \syn{\Gamma}\discard{n} \vdash \syn{t} : \syn{T}$.
        \item
            If $\theory{T} \drv \syn{\Gamma} \vdash \syn{T_1} = \syn{T_2} \type$ and the associated sorts of $\syn{T}_1,\syn{T}_2$ have dependency ranks at most $n$, then $\theory{T} \drv \syn{\Gamma}\discard{n} \vdash \syn{T_1} = \syn{T_2} \type$.
        \item\label{item:reflectivity_fifth_statement}
            If $\theory{T} \drv \syn{\Gamma} \vdash \syn{t_1} = \syn{t_2} : \syn{T}$ and the associated sort of $\syn{T}$ has a dependency rank at most $n$, then $\theory{T} \drv \syn{\Gamma}\discard{n} \vdash \syn{t_1} = \syn{t_2} : \syn{T}$.
    \end{enumerate}

    We prove these altogether by induction on the derivation rule of $\theory{T} \drv \syn{J}$.
    This means that for each derivation rule, we assume that the five statements above on a judgment $\syn{J}$ hold for all the premises of the rule and show that they also hold for the conclusion.
    See \cref{def:derivation-rules-of-GAT} for the list of the derivation rules.

    The first eleven rules, \cref{rule:refl-ty,rule:refl-tm,rule:sym-ty,rule:sym-tm,rule:trans-ty,rule:trans-tm,rule:conv-tm,rule:conv-tm-eq,rule:alpha-ctx,rule:alpha,rule:empty}, are straightforward.

    For \cref{rule:ctx-extend}, the context $(\syn{\Gamma}, \syn{x}:\syn{T})\discard{n}$ is 
    $(\syn{\Gamma}\discard{n}, \syn{x}:\syn{T})$ when $\syn{T}$ has a dependency rank at most $n$, and $\syn{\Gamma}\discard{n}$ when $\syn{T}$ has a dependency rank higher than $n$.
    In both cases, the desired judgment is derived from $\theory{T}$ by the induction hypothesis.

    \cref{rule:var} is only applied when $\syn{T}$ has a dependency rank at most $n$, in which case $(\syn{\Gamma}, \syn{x}:\syn{T})\discard{n}=\syn{\Gamma}, \syn{x}:\syn{T}$, so there is nothing to prove.

    For \cref{rule:weak}, we can suppose that $\syn{K}$ only has sorts with a dependency rank at most $n$, because otherwise the assumption in the statement does not hold.
    If $\syn{T}$ has a dependency rank at most $n$, then $(\syn{\Gamma}, \syn{x}:\syn{T}, \syn{\Delta})\discard{n}=(\syn{\Gamma}\discard{n}, \syn{x}:\syn{T}, \syn{\Delta}\discard{n})$.
    By the induction hypothesis, we have $\theory{T} \drv \syn{\Gamma}\discard{n}\vdash \syn{T} \type$ in this case.
    Thus, we obtain the desired judgment by the same rule.
    If $\syn{T}$ has a dependency rank higher than $n$, then $(\syn{\Gamma}, \syn{x}:\syn{T}, \syn{\Delta})\discard{n}$ is $(\syn{\Gamma}\discard{n}, \syn{\Delta}\discard{n})$.
    By the induction hypothesis for the second premise, we have the desired judgment.

    For \cref{rule:subst}, we can suppose that $\syn{K}$ only has sorts with a dependency rank at most $n$ for the same reason as above.
    If $\syn{T}$ has a dependency rank at most $n$, then we have $\theory{T} \drv \syn{\Gamma}\discard{n} \vdash \syn{t} : \syn{T}$ by the induction hypothesis for the first premise.
    Thus, with the induction hypothesis for the second premise, we obtain the desired judgment by the same rule.
    If $\syn{T}$ has a dependency rank higher than $n$, then by the induction hypothesis for the second premise, we have $\theory{T} \drv \syn{\Gamma}\discard{n}, \syn{\Delta}\discard{n} \vdash \syn{K}$.
    This means that $\syn{x}$ does not appear either in $\syn{\Delta}$ or in $\syn{K}$.
    Thus, we have the desired judgment by the induction hypothesis for the second premise.

    \cref{rule:eq-subst-ty} and \cref{rule:eq-subst-tm} are proved in the same way as \cref{rule:subst}.

    For \cref{rule:sort}, by the assumption, $\syn{A}$ has a dependency rank at most $n$, which by definition means that all the sorts appearing in $\syn{\Gamma}_\syn{A}$ have a dependency rank at most $n-1$.
    Therefore, $\syn{\Gamma}_\syn{A}\discard{n}=\syn{\Gamma}_\syn{A}$, and we have nothing to prove.

    For \cref{rule:oper}, again by the assumption, $\syn{T}_\syn{f}$ has a dependency rank at most $n$.
    Since $\theory{T}$ is non-descending, all the sorts appearing in $\syn{\Gamma}_\syn{f}$ have a dependency rank at most $\depran{\syn{\Gamma}_\syn{f}} \le \depran{\syn{T}_\syn{f}} \le n$, so again we have nothing to prove.

    \cref{rule:eq-axiom} is similarly proved by the assumption that $\theory{T}$ is non-descending.

    Given a morphism $\phi\colon \syn{\Gamma} \arr \syn{\Delta}$ in $\clfy{\theory{T}}$ with $\depran{\syn{\Delta}}\le n$, we can define a morphism $\phi'\colon \syn{\Gamma}\discard{n} \arr \syn{\Delta}$ by taking the same terms as $\phi$, which is well-defined by the third item above.
    This morphism $\phi'$ composed with the canonical projection $\syn{\Gamma} \arr \syn{\Gamma}\discard{n}$ gives $\phi$.
    The uniqueness of such a morphism $\phi'$ also follows from \cref{item:reflectivity_fifth_statement}.
    Thus, the inclusion ${\clfy{\theory{T}}}_{\le n} \arr[hook] \clfy{\theory{T}}$ has a left adjoint, where the unit component on each context $\syn{\Gamma}$ is given by the canonical projection $\syn{\Gamma} \arr \syn{\Gamma}\discard{n}$, which is a display morphism.
\end{proof}

\begin{lemma}\label{lem:simple-subclan-dependency-rank-syntactic}
    Let $\theory{T}$ be a non-descending \ac{GAT}.
    Then, the full inclusion ${\clfy{\theory{T}}}_{\le n} \arr[hook] {\clfy{\theory{T}}}_{\le n+1}$ is a simple clan morphism for each natural number $n$.
\end{lemma}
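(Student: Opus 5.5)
We want to show that the clan structure of ${\clfy{\theory{T}}}_{\le n+1}$ is generated by display morphisms of the form $\syn{\Delta}\darr \syn{\Gamma}$ with $\depran{\syn{\Gamma}}\le n$. We use the description of generated clan structures in \cref{lem:smallest-clan-structure}\cref{lem:smallest-clan-structure-3}. By the syntactic clan structure (\cref{def:syntactic-category,prop:syntactic-category-is-clan}), every display morphism of $\clfy{\theory{T}}$ is, up to isomorphism, a composite of single-variable projections $(\syn{\Gamma},\syn{x}:\syn{T})\darr\syn{\Gamma}$. Each such projection is a pullback of a generic projection
\[
(\syn{\Gamma}_\syn{A},\syn{y}:\syn{A}(\syn{x}_1,\dots,\syn{x}_m))\darr\syn{\Gamma}_\syn{A}
\]
along the substitution $\syn{\Gamma}\arr\syn{\Gamma}_\syn{A}$ given by the arguments of $\syn{T}$, where $\syn{A}$ is the associated sort of $\syn{T}$. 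So it suffices to work with single-variable projections inside ${\clfy{\theory{T}}}_{\le n+1}$.

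Let $\phi\colon \syn{\Delta}\darr\syn{\Gamma}$ be a display morphism between contexts of dependency rank at most $n+1$. Write it as a composite of single-variable extensions $(\syn{\Gamma}',\syn{x}:\syn{T})\darr\syn{\Gamma}'$. Each intermediate context $\syn{\Gamma}'$ lists variables that already appear in $\syn{\Delta}$, so it has rank at most $n+1$ and lies in the subclan. Hence it suffices to show that each single projection lies in the generated clan structure. Since $\depran{\syn{A}}\le n+1$, every sort appearing in $\syn{\Gamma}_\syn{A}$ has rank at most $n$, so $\syn{\Gamma}_\syn{A}$ lies in ${\clfy{\theory{T}}}_{\le n}$. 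The generic projection onto $\syn{\Gamma}_\syn{A}$ is therefore one of the generators, namely of the form $Y\darr I(X)$. The pullback square exhibiting $(\syn{\Gamma}',\syn{x}:\syn{T})\darr\syn{\Gamma}'$ lives entirely in ${\clfy{\theory{T}}}_{\le n+1}$ and remains a pullback there, because the subcategory is full. Consequently the single projection is a pullback of a generator, and $\phi$, as a finite composite of such pullbacks, is in the generated clan structure. Isomorphisms and maps to the terminal object are handled by \cref{lem:smallest-clan-structure} directly.

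The main technical point is the claim that single-variable display morphisms are pullbacks of generic sort projections along substitutions. This is standard for syntactic categories of \acp{GAT}. I would cite the explicit description of pullbacks in $\clfy{\theory{T}}$ from \cref{prop:syntactic-category-is-clan}, namely that the pullback of $(\syn{\Delta},\syn{y}:\syn{S})\darr\syn{\Delta}$ along a substitution $\sigma$ is $(\syn{\Gamma},\syn{y}:\syn{S}[\sigma])\darr\syn{\Gamma}$. One should also check that display morphisms in $\clfy{\theory{T}}$ agree with the context projections up to isomorphism. Note that non-descendingness is not needed here; it was used only for reflectivity in \cref{lem:reflective-subclan-dependency-rank-syntactic}.
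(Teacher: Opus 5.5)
Your outline is the paper's: the syntactic clan structure is generated by the generic projections $(\syn{\Gamma}_\syn{A},\syn{y}:\syn{A}(\syn{x}_1,\dots,\syn{x}_m))\to\syn{\Gamma}_\syn{A}$, and $\depran{\syn{A}}\le n+1$ forces $\depran{\syn{\Gamma}_\syn{A}}\le n$.

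\textbf{The gap.} The step that fails is ``write $\phi$ as a composite of single-variable extensions; each intermediate context lists variables that already appear in $\syn{\Delta}$''. By \cref{prop:syntactic-category-is-clan}, a display morphism $\phi\colon\syn{\Delta}\to\syn{\Gamma}$ is in general only an isomorphism $\theta\colon\syn{\Delta}\cong\syn{\Xi}$ followed by the projection of an extension $\syn{\Xi}$ of $\syn{\Gamma}$ onto $\syn{\Gamma}$. The context $\syn{\Xi}$ can contain types of rank $>n+1$ even when $\syn{\Delta}$ and $\syn{\Gamma}$ do not.

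For example, let $\theory{T}'$ be $\theory{T}_\catg$ plus the (non-descending) axiom $\syn{x}:\syn{Ob},\syn{f}:\syn{Mor}(\syn{x},\syn{x})\vdash\syn{f}=\syn{id}(\syn{x}):\syn{Mor}(\syn{x},\syn{x})$. Then $[\syn{x},\syn{id}(\syn{x})]$ is an isomorphism $(\syn{x}:\syn{Ob})\cong(\syn{x}:\syn{Ob},\syn{f}:\syn{Mor}(\syn{x},\syn{x}))$. So the display morphism $(\syn{x}:\syn{Ob})\to()$ of ${\clfy{\theory{T}'}}_{\le 1}$ is presented as this isomorphism followed by a projection whose first step is a pullback of the generic $\syn{Mor}$-projection, with corners outside ${\clfy{\theory{T}'}}_{\le 1}$. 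Nothing in your argument selects a presentation whose steps stay in the subcategory. Without that, your pullback squares are not squares in ${\clfy{\theory{T}}}_{\le n+1}$, and \cref{lem:smallest-clan-structure} cannot be applied inside the subclan.

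\textbf{The repair.} This is exactly where non-descendingness enters.
By \cref{lem:reflective-subclan-dependency-rank-syntactic}, the projection $\syn{\Xi}\to\syn{\Xi}\discard{n+1}$ is the unit of a reflection onto ${\clfy{\theory{T}}}_{\le n+1}$. Since $\syn{\Xi}$ is isomorphic to the object $\syn{\Delta}$ of this reflective full subcategory, the unit is invertible.
All types in $\syn{\Gamma}$ have rank $\le n+1$, so $\syn{\Gamma}$ is a prefix of $\syn{\Xi}\discard{n+1}$. Hence $\syn{\Xi}\to\syn{\Gamma}$ factors as the unit followed by the literal projection $\syn{\Xi}\discard{n+1}\to\syn{\Gamma}$.
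The intermediate contexts of that projection are truncations of prefixes of $\syn{\Xi}$. By the statements established in that proof (truncations of contexts are contexts, and low-rank types survive truncation), they are contexts of rank $\le n+1$.

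So $\phi$ is an isomorphism in ${\clfy{\theory{T}}}_{\le n+1}$ followed by single-variable projections lying in ${\clfy{\theory{T}}}_{\le n+1}$, and from there your pullback argument goes through. Consequently your closing remark is mistaken as far as your proof is concerned. Only the bound $\depran{\syn{\Gamma}_\syn{A}}\le n$ comes from the definition of dependency rank alone; the paper's attribution of that bound to non-descendingness is loose. The reduction above, which the paper leaves implicit in its ``Thus'', is what genuinely uses the hypothesis.
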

\begin{proof}
    Note that the syntactic clan structure on the syntactic category ${\clfy{\theory{T}}}$ is generated by the display morphisms of the form $\syn{\Gamma}_{\syn{A}},\syn{y}:\syn{A}(\syn{x}_1,\dots,\syn{x}_n)\darr \syn{\Gamma}_{\syn{A}}$ for each sort symbol $\syn{A}$ in $\theory{T}$.
    Thus, the clan structure on ${\clfy{\theory{T}}}_{\le n+1}$ is generated by the display morphisms of the form $\syn{\Gamma}_{\syn{A}},\syn{y}:\syn{A}(\syn{x}_1,\dots,\syn{x}_n)\darr \syn{\Gamma}_{\syn{A}}$ for sort symbols $\syn{A}$ with $\depran{\syn{A}}\le n+1$.
    By the non-descending condition, the context $\syn{\Gamma}_{\syn{A}}$ has a dependency rank at most $n$, which belongs to the image of the inclusion ${\clfy{\theory{T}}}_{\le n} \arr[hook] {\clfy{\theory{T}}}_{\le n+1}$.
    Therefore, the inclusion is simple.
\end{proof}

\begin{theorem}[Main theorem in the \acs{GAT} approach]\label{thm:global-decomposition-by-clan-approach}
    Let $\theory{T}$ be a non-descending \ac{GAT} with the maximum dependency rank $n$ of its sort symbols.
    Then, $\decnum{\ClanMod(\clfy{\theory{T}})} \le n+2$.
\end{theorem}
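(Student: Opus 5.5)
We argue by induction on $k$ that $\decnum{\ClanMod({\clfy{\theory{T}}}_{\le k})} \le k+2$ for $0\le k\le n$, using the chain of full subclans
\[
    \terminalcat \simeq {\clfy{\theory{T}}}_{\le 0} \subseteq {\clfy{\theory{T}}}_{\le 1} \subseteq \cdots \subseteq {\clfy{\theory{T}}}_{\le n} = \clfy{\theory{T}} .
\]
The last equality holds because every context has dependency rank at most $n$, the maximum dependency rank of the sort symbols.

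\textbf{Base case.} ${\clfy{\theory{T}}}_{\le 0}$ consists only of contexts with no variables, that is, the empty context up to isomorphism. It is therefore the trivial clan, and $\ClanMod({\clfy{\theory{T}}}_{\le 0}) \simeq \terminalcat$, so its global regular-decomposition number is $1 \le 2$.

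\textbf{Inductive step.} To pass from $k$ to $k+1$, I would apply \cref{thm:difference-of-decomposition} to the inclusion ${\clfy{\theory{T}}}_{\le k} \hookrightarrow {\clfy{\theory{T}}}_{\le k+1}$. Its two hypotheses are handled as follows.
\begin{itemize}
    \item Simplicity of the inclusion is \cref{lem:simple-subclan-dependency-rank-syntactic}.
    \item Reflectivity needs more care, because \cref{lem:reflective-subclan-dependency-rank-syntactic} gives reflectivity inside the whole $\clfy{\theory{T}}$, not inside ${\clfy{\theory{T}}}_{\le k+1}$. I would restrict: the reflection $\syn{\Gamma}\mapsto\syn{\Gamma}\discard{k}$ sends ${\clfy{\theory{T}}}_{\le k+1}$ into ${\clfy{\theory{T}}}_{\le k}$. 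Its restriction is still left adjoint to the restricted inclusion, since both subcategories are full. The units are still the display projections $\syn{\Gamma}\to\syn{\Gamma}\discard{k}$.
\end{itemize}
One further point must be checked: the restricted reflection should be a clan morphism of ${\clfy{\theory{T}}}_{\le k+1}$. The full reflection is already a clan morphism on $\clfy{\theory{T}}$, as the definition of a clan reflection requires (or as the construction in the proof of \cref{lem:reflective-subclan-dependency-rank-syntactic} shows). The display morphisms and pullbacks of ${\clfy{\theory{T}}}_{\le k+1}$ are those of $\clfy{\theory{T}}$, so preservation restricts. Granting both hypotheses, \cref{thm:difference-of-decomposition} yields
\[
    \decnum{\ClanMod({\clfy{\theory{T}}}_{\le k+1})} \le \decnum{\ClanMod({\clfy{\theory{T}}}_{\le k})} + 1 \le k+3 .
\]

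\textbf{Conclusion.} Taking $k=n$ gives $\decnum{\ClanMod(\clfy{\theory{T}})} \le n+2$. The claim for $n=0$ is the base case.

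\textbf{Main obstacle.} The step needing the most care is verifying that the restricted reflection is a clan reflection of ${\clfy{\theory{T}}}_{\le k+1}$ onto ${\clfy{\theory{T}}}_{\le k}$. If it is not immediate that the reflection preserves pullbacks along display morphisms, I would check it directly. A pullback of a display morphism in $\clfy{\theory{T}}$ is computed by substitution into context extensions. Discarding the variables of rank greater than $k$ commutes with that substitution; this is the non-descending condition, used as in the proof of \cref{lem:reflective-subclan-dependency-rank-syntactic}.
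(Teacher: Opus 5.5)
Your proof is correct and follows the paper's route: the chain of reflective full subclans ${\clfy{\theory{T}}}_{\le k}$ with simple inclusions, and \cref{thm:difference-of-decomposition} applied at each step. You also make explicit the restriction of the reflection to ${\clfy{\theory{T}}}_{\le k+1}$, and check that it is a clan morphism, both of which the paper leaves tacit.

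The only difference is at the bottom of the chain. The paper starts at $\terminalcat$ with $\decnum{\ClanMod\terminalcat}=1$ and takes $n+1$ steps. You start at ${\clfy{\theory{T}}}_{\le 0}$. That subclan is trivial under the literal convention $\max\emptyset=0$, and then your count actually gives $n+1$. Under the convention the paper's examples use, where sorts over the empty context have rank $0$, it is instead the finite-product clan on those sorts. In that case your base bound $2$ still holds, but it must be justified by \cref{prop:finite-product-clan-model} rather than by triviality.
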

\begin{proof}
    By \cref{ex:trivial-clan-reflective-subclan,lem:reflective-subclan-dependency-rank-syntactic}, we have a sequence of reflective full subclans
    \begin{equation*}
        \terminalcat \arr[hook] {\clfy{\theory{T}}}_{\le 0} \arr[hook] {\clfy{\theory{T}}}_{\le 1} \arr[hook] \cdots \arr[hook] {\clfy{\theory{T}}}_{\le n} = \clfy{\theory{T}},
    \end{equation*}
    where each inclusion is simple by \cref{exa:trivial-case-non-dependence,lem:simple-subclan-dependency-rank-syntactic}.
    Note that the category $\ClanMod\terminalcat$ is equivalent to the terminal category, and hence $\decnum{\ClanMod\terminalcat}=1$.
    Applying \cref{thm:difference-of-decomposition} repeatedly to this sequence, we have
    \begin{align*}
        \decnum{\ClanMod(\clfy{\theory{T}})} &= \decnum{\ClanMod({\clfy{\theory{T}}}_{\le n})} \\
        &\le \decnum{\ClanMod({\clfy{\theory{T}}}_{\le n-1})} + 1 \\
        &\le \cdots \le \decnum{\ClanMod\terminalcat} + (n+1) = n + 2.
        \qedhere
    \end{align*}
\end{proof}

\begin{example}[Global regular-decomposition numbers of categories of categorical structures]\quad\label{eg:global_decnum_by_clanapproach}
    \begin{enumerate}
        \item\label{eg:global_decnum_by_clanapproach-nCat} 
            Let $\theory{T}_\ncatg$ be the \ac{GAT} for (small) strict $n$-categories for $n\ge 1$, which has sort symbols for each $k$-cells ($0\le k \le n$) and operator symbols for compositions and identities.
            One can verify that this theory is non-descending and the maximum dependency rank of its sort symbols is $n$.
            Therefore, by \cref{thm:global-decomposition-by-clan-approach}, the global regular-decomposition number of the category $\nCat\simeq \ClanMod\theory{T}_\ncatg$ of its models is at most $n+2$.
            Note that the sequence of reflective full subclans obtained in \cref{lem:reflective-subclan-dependency-rank-syntactic} is identified with the following sequence:
            \begin{equation*}
                \theory{T}_\setg \subseteq \theory{T}_\catg \subseteq \theory{T}_\twocatg \subseteq \cdots \subseteq \theory{T}_\ncatg.
            \end{equation*}
            This partially recovers the result in \cref{thm:global_decnum_of_nCat}, although the lower bound is not shown here.
        \item
            The \ac{GAT} for (small) strict monoidal categories $\theory{T}_\moncatg$ is defined by adding operator symbols for tensor products and unit objects to that for (small) categories $\theory{T}_\catg$ with appropriate axioms.
            One can verify that this theory is still non-descending and the maximum dependency rank of its sort symbols is unchanged.
            Therefore, by \cref{thm:global-decomposition-by-clan-approach}, the global regular-decomposition number of the category of small strict monoidal categories and strict monoidal functors $\StrMonCat\simeq \ClanMod\theory{T}_\moncatg$ is at most $3$.

            The free-strict-monoidal-category functor $\Cat\arr\StrMonCat$ preserves regular epimorphisms because it is a left adjoint, preserves kernel pairs by direct verification,\footnote{%
                According to \cite[Example 4.1.15]{Leinster2004higher}, the free-strict-monoidal-category monad on $\Cat$ is cartesian, which implies that the functor we consider here preserves pullbacks because monadic functors create limits.%
            } and it also preserves monomorphisms.
            Take a morphism in $\Cat$ whose regular-decomposition number is $2$ and send it by the functor to $\StrMonCat$.
            Then, the canonical regular decomposition is preserved because of the above properties.
            Thus, $\StrMonCat$ has a morphism whose regular-decomposition number is $2$, which shows $\decnum{\StrMonCat}=3$.
        \item
            The \ac{GAT} for (small) multicategories $\theory{T}_\multicatg$ is defined by adding sort symbols for $n$-ary morphisms for $n\neq 1$ and operator symbols for their compositions to that for categories $\theory{T}_\catg$ with appropriate axioms.
            This has countably many operator symbols but is still non-descending, and the maximum dependency rank of its sort symbols is still $1$.
            Therefore, by \cref{thm:global-decomposition-by-clan-approach}, the global regular-decomposition number of the category of small multicategories $\MultiCat\simeq \ClanMod\theory{T}_\multicatg$ is at most $3$.

            A category is seen as a multicategory with only unary morphisms, which gives a functor $\Cat\arr\MultiCat$.
            Similarly to the case of strict monoidal categories, this functor preserves regular epimorphisms, kernel pairs, and monomorphisms.
            Thus, the same argument shows that $\decnum{\MultiCat}=3$.
        \item
            The \ac{GAT} for (small) strict double categories $\theory{T}_\dblcatg$ is defined as a \ac{GAT} with sort symbols for objects, horizontal morphisms, vertical morphisms, and cells, and operator symbols for their compositions and identities with appropriate axioms.
            It is again a simple verification to see that this theory is non-descending and the maximum dependency rank of its sort symbols is $2$.
            Therefore, by \cref{thm:global-decomposition-by-clan-approach}, the global regular-decomposition number of the category of small strict double categories $\DblCat\simeq \ClanMod\theory{T}_\dblcatg$ is at most $4$.
            
            A 2-category can be seen as a strict double category whose horizontal morphisms are all identities, which leads to a functor $\twoCat\arr\DblCat$.
            This functor again has the same properties we used above, and thus the same argument together with \cref{eg:nfunctor_whose_decnum_is_nplus1} shows $\decnum{\DblCat}=4$.\qedhere
    \end{enumerate}
\end{example}

\begin{example}
    It is easy to construct a \ac{GAT} that is not non-descending.
    Let $\syn{X}$ be a sort symbol defined in the empty context, and $\syn{Y}$ be another sort symbol defined in the context $\syn{x}:\syn{X}$.
    When we add the following axioms, the resulting theory is not non-descending.
    Here, $\syn{Y}$ represents a ``partial constant'' valued in $\syn{X}$.
    \begin{equation}\label{eq:non-non-descending}
        \syn{x}:\syn{X}, \syn{y}:\syn{Y}(\syn{x}), \syn{y'}:\syn{Y}(\syn{x})\vdash \syn{y} = \syn{y'} : \syn{Y}(\syn{x}),
        \quad
        \syn{x}:\syn{X}, \syn{x'}:\syn{X}, \syn{y}:\syn{Y}(\syn{x}), \syn{y'}:\syn{Y}(\syn{x'}) \vdash \syn{x} = \syn{x'} : \syn{X}.
    \end{equation}
    This theory fails to be non-descending because of the second axiom.

    In general, a \ac{GAT} that is not non-descending may have a syntactic category whose clan structure can not be stratified as in the proof of \cref{thm:global-decomposition-by-clan-approach}, and thus the global regular-decomposition number of its category of models may not be bounded.
    We can even construct a \ac{GAT} whose category of models has an infinite global regular-decomposition number, based on \cref{eg:global_decnum_is_omegaplusone}.
    Take a sort $\syn{X}$ defined in the empty context, for each natural number $n$, take another sort $\syn{C}_n$ defined in the context $\syn{x}:\syn{X}$, and another sort $\syn{B}$ defined in the same context.
    We ask the same axioms as \cref{eq:non-non-descending} for each $\syn{C}_n$ and $\syn{B}$.
    We also add operator symbols $\syn{f}_n, \syn{g}_n$ for each natural number $n$ as follows:
    \begin{equation*}
        \begin{aligned}
            &\syn{x}:\syn{X}, \syn{c}:\syn{C}_n(\syn{x}), \syn{b}:\syn{B}(\syn{x}) \vdash \syn{f}_n(\syn{x},\syn{c},\syn{b}):\syn{X}, \\
            &\syn{x}:\syn{X}, \syn{c}:\syn{C}_n(\syn{x}), \syn{b}:\syn{B}(\syn{x}) \vdash \syn{g}_n(\syn{x},\syn{c},\syn{b}):\syn{C}_{n+1}(\syn{f}_n(\syn{x},\syn{c},\syn{b})). \\
        \end{aligned}
    \end{equation*}
    This theory has the same models as that in \cref{eg:global_decnum_is_omegaplusone}, and thus its category of models has an infinite global regular-decomposition number.
\end{example}
\addtocontents{toc}{\protect\setcounter{tocdepth}{0}}
\section*{Acknowledgments}
    We would like to thank Nathanael Arkor, Keisuke Hoshino, V\'it Jel\'inek, and Yuki Maehara for helpful discussions.
    We are also grateful to the workshop ``Categories in Tokyo'' for the opportunity to discuss the decomposition number for monotone quotient maps.
    We would like to thank those who joined the discussion; in particular, we would like to express our deep gratitude to Soichiro Fujii and Junnosuke Koizumi for providing their idea, and to Junnosuke Koizumi for his permission to include the construction of the example in \cref{eg:monotone_example_by_koizumi} in this paper.
    We are sincerely thankful to our supervisor, Masahito Hasegawa, and the second author's supervisor, Dorette Pronk, for their continuous support and encouragement.
    The first author is supported by JSPS KAKENHI Grant Number JP24KJ1462, and the second author was supported by JSPS KAKENHI Grant Number JP25KJ1474.
\addtocontents{toc}{\protect\setcounter{tocdepth}{1}}
\begin{appendix}
\section{Orthogonal and locally orthogonal factorizations}\label[appendix]{sec:local_orthogonality}
We recall the notion of \textit{locally orthogonal factorizations} from \cite{MacdonaldTholen1982decomposition,Tholen1983factorizations}.
\begin{definition}
    Let $\C$ be a category.
    \begin{enumerate}
        \item
            Morphisms $X\arr(p)[][1]Y$ and $A\arr(m)[][1]B$ in $\C$ are called \emph{locally orthogonal with respect to $A'\arr(e)[][1]A$} (written $p\orth[e] m$) if, for any morphisms $X\arr(u)[][1]A'$ and $Y\arr(v)[][1]B$ such that $meu=vp$, there uniquely exists a morphism $Y\arr(h)[][1]A$ making the following diagram commute.\vspace{-0.5em}
            \begin{equation*}
                \begin{tikzcd}
                    X\ar[d,"p"']\ar[r,"u"] & A'\ar[r,"e"] & A\ar[d,"m"] \\
                    Y\ar[rr,"v"']\ar[urr,"h",dotted] && B
                \end{tikzcd}\incat{\C}
            \end{equation*}
        \item
            Morphisms $X\arr(p)[][1]Y$ and $A\arr(m)[][1]B$ in $\C$ are called \emph{orthogonal} (written $p\orth m$) if they are locally orthogonal with respect to the identity on $A$.\qedhere
    \end{enumerate}
\end{definition}

\begin{notation}\label{note:local_orthogonality}
    For a class $\mathbf{E}$ of morphisms, we write $\mathbf{E}\orth[e] m$ if for every $p\in\mathbf{E}$, $p\orth[e] m$ holds.
    We also use a notation $\mathbf{E}\orth m$ similarly.
\end{notation}

\begin{definition}\label{def:locally_orthogonal_factorization}
    Let $\bfE$ be an iso-closed class of morphisms in a category $\C$.
    \begin{enumerate}
        \item
            We say \emph{$\C$ admits locally orthogonal $\bfE$-factorizations} if every morphism $f$ in $\C$ has a factorization $f=me$ such that $e\in\bfE$ and $\bfE\orth[e] m$.
        \item
            We say \emph{$\C$ admits orthogonal $\bfE$-factorizations} if every morphism $f$ in $\C$ has a factorization $f=me$ such that $e\in\bfE$ and $\bfE\orth m$.
    \end{enumerate}
    Each pair $(e,m)$ above is called a \emph{(locally) orthogonal $\bfE$-factorization of $f$}.
\end{definition}

We now recall the connection with orthogonal factorization systems.
\begin{definition}[Factorization systems] Let $\C$ be a category.
    \begin{enumerate}
        \item
        Given a class of morphisms $\mathbf{\Lambda}$, denote by $\lorth(\mathbf{\Lambda})$ and $\rorth(\mathbf{\Lambda})$ the classes
        \begin{align*}
            \lorth(\mathbf{\Lambda})&\coloneq\{ e \mid e\perp m\text{ for all }m\in\mathbf{\Lambda}\}, \\
            \rorth(\mathbf{\Lambda})&\coloneq\{ m \mid e\perp m\text{ for all }e\in\mathbf{\Lambda}\}.
        \end{align*}
        \item
        An \emph{orthogonal factorization system} on $\C$ is a pair $(\bfE,\bfM)$ of classes of morphisms in $\C$ that satisfies the following conditions:
        \begin{itemize}
            \item $\bfE$ and $\bfM$ are closed under composition and contain all isomorphisms in $\C$;
            \item Every morphism $f$ in $\C$ has a factorization $f=me$ with $e\in\bfE$ and $m\in\bfM$;
            \item $\bfE\perp\bfM$ holds, i.e., for any $e\in\bfE$ and $m\in\bfM$, $e\perp m$ holds.
        \end{itemize}
        Given an orthogonal factorization system $(\bfE,\bfM)$, we have $\bfE=\lorth(\bfM)$ and $\bfM=\rorth(\bfE)$.
        These can be verified straightforwardly.\qedhere
    \end{enumerate}
\end{definition}

\begin{proposition}
    Let $\bfE$ be a class of morphisms in a category $\C$.
    Then, the following are equivalent:
    \begin{enumerate}
        \item
            $(\bfE,\rorth(\bfE))$ becomes an orthogonal factorization system on $\C$.
        \item
            $\bfE$ is iso-closed, and $\C$ admits orthogonal $\bfE$-factorizations.
    \end{enumerate}
\end{proposition}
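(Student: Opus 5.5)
I would prove the two implications separately. Throughout I use the definitions of $\lorth$, $\rorth$ and of an orthogonal factorization system as stated just above, together with the facts $\bfE=\lorth(\bfM)$ and $\bfM=\rorth(\bfE)$ for any orthogonal factorization system $(\bfE,\bfM)$.

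\proofdirection{(i)}{(ii)} Suppose $(\bfE,\rorth(\bfE))$ is an orthogonal factorization system. Then $\bfE$ contains all isomorphisms and is closed under composition. Composing an isomorphism with a member of $\bfE$ therefore stays in $\bfE$, so $\bfE$ is iso-closed. The factorization axiom gives, for every $f$, a decomposition $f=me$ with $e\in\bfE$ and $m\in\rorth(\bfE)$. This last condition is exactly $\bfE\orth m$, so we have an orthogonal $\bfE$-factorization.

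\proofdirection{(ii)}{(i)} Put $\bfM\coloneq\rorth(\bfE)$. Factorizations exist by hypothesis, and $\bfE\orth\bfM$ holds by definition of $\bfM$. The class $\bfM$ contains isomorphisms and is closed under composition, which is routine for any right orthogonal class. It remains to show that $\bfE$ is closed under composition; this is the only nontrivial step.

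The key lemma is $\bfE=\lorth(\bfM)$, and I would prove it by the retract argument. The inclusion $\bfE\subseteq\lorth\rorth(\bfE)$ is automatic. Conversely, let $g\in\lorth(\bfM)$ and factor $g=me$ with $e\in\bfE$ and $m\in\bfM$. Apply $g\orth m$ to the square with top edge $e$, left edge $g$, right edge $m$ and bottom edge $\id$. This yields a diagonal $h$ with $hg=e$ and $mh=\id$.

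Next, consider the square with left edge $e$, right edge $m$, top edge $e$ and bottom edge $m$. Both $\id$ and $hm$ are diagonal fillers for it: the identity clearly is, and $hm\circ e=hg=e$ while $m\circ hm=m$. Uniqueness, coming from $e\orth m$, forces $hm=\id$. Hence $m$ is an isomorphism, and $g=me\in\bfE$ because $\bfE$ is iso-closed.

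Since $\lorth(\bfM)$ is always closed under composition (fillers are built stepwise), so is $\bfE$, and all axioms of an orthogonal factorization system hold. The main obstacle is this identification $\bfE=\lorth\rorth(\bfE)$; iso-closedness is exactly what makes the retract argument go through.
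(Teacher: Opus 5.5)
Your proof is correct. The retract argument showing $\lorth\rorth(\bfE)\subseteq\bfE$ is sound and not circular: it uses only the given factorizations, the uniqueness in $e\orth m$, and iso-closedness. Closure of $\bfE$ under composition then follows from the corresponding closure of $\lorth(\bfM)$.

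The paper states this proposition without proof, as a standard fact recalled in the appendix. The nearby definition only remarks that $\bfE=\lorth(\bfM)$ and $\bfM=\rorth(\bfE)$ for any orthogonal factorization system ``can be verified straightforwardly''. Your argument is the standard verification the paper leaves implicit.
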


\begin{proposition}[{\cite[2.1.~Proposition]{Tholen1983factorizations}}]
    Let $\bfE$ be an iso-closed class of morphisms in a category $\C$.
    Then, the following are equivalent:
    \begin{enumerate}
        \item
            $\C$ admits orthogonal $\bfE$-factorizations.
        \item
            $\C$ admits locally orthogonal $\bfE$-factorizations, and $\bfE$ is closed under composition.
    \end{enumerate}
\end{proposition}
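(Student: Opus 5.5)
The plan is to prove the two implications separately. Both directions use three tools: uniqueness of diagonal fillers, the fact that orthogonality is closed under composition on the left, and iso-closedness of $\bfE$.

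\textbf{(i)$\Rightarrow$(ii).} First, an orthogonal $\bfE$-factorization $f=me$ is already locally orthogonal. Given $p\in\bfE$ and a square $meu=vp$, apply $p\orth m$ to the square with top side $eu$ and bottom side $v$. The resulting diagonal $h$ satisfies $hp=eu$ and $mh=v$, which is exactly what $p\orth[e]m$ asks for.

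Second, we show that $\bfE$ is closed under composition. Take $e_1,e_2\in\bfE$ composable and factor $e_2e_1=me$ with $e\in\bfE$ and $\bfE\orth m$. Orthogonality is closed under composition on the left, so $e_2e_1\orth m$. Applying this to the square $m\circ e=\id\circ e_2e_1$ gives $r$ with $re_2e_1=e$ and $mr=\id$. Now $rm$ and $\id$ are both diagonals for the square $m\circ e=m\circ e$. By uniqueness in $e\orth m$ we get $rm=\id$, so $m$ is an isomorphism. Hence $e_2e_1=me\in\bfE$ by iso-closedness.

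\textbf{(ii)$\Rightarrow$(i).} Take the locally orthogonal factorization $f=me$ with $e\colon A'\arr A$ in $\bfE$ and $\bfE\orth[e]m$; we must upgrade this to $\bfE\orth m$. Fix $p\colon X\arr Y$ in $\bfE$ and $u\colon X\arr A$, $v\colon Y\arr B$ with $mu=vp$. Uniqueness of a diagonal is immediate from the instance of local orthogonality used below. The difficulty is existence, because local orthogonality only handles tops $u$ that factor through $e$.

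The device I would use is to factor the morphism $m$ itself locally as $m=m'e'$ with $e'\in\bfE$ and $\bfE\orth[e']m'$, and then show $e'$ is an isomorphism.
\begin{itemize}
\item Since $e'e\in\bfE$ by closure under composition, local orthogonality $\bfE\orth[e]m$ applied to the square $me\,\id=m'(e'e)$ gives $h$ with $he'e=e$ and $mh=m'$.
\item $e'h$ and $\id$ are both fillers for $p=e'e$ relative to $\bfE\orth[e']m'$, so $e'h=\id$.
\item $he'$ and $\id$ are both fillers for $p=e$ relative to $\bfE\orth[e]m$, so $he'=\id$.
\end{itemize}
Thus $e$ admits no proper further $\bfE$-factorization through $m$. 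With this in hand, I would treat a general square $mu=vp$ by factoring $u=nd$ locally with $d\in\bfE$. I would then compare the two factorizations of $mn$, namely via $dp$-type composites and via $e$, using closure under composition to form the relevant composite in $\bfE$ and to route $u$ through $e$. Local orthogonality would then supply the filler.

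This routing of an arbitrary $u$ through $e$ is the main obstacle. If the direct comparison stalls, I would follow Tholen's original argument in \cite[2.1]{Tholen1983factorizations}, which performs exactly this reduction.
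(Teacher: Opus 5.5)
\textbf{Verdict: (i)$\Rightarrow$(ii) is correct; (ii)$\Rightarrow$(i) stops one step short.} The paper gives no proof of this proposition and only cites Tholen. In (ii)$\Rightarrow$(i), your three bullets are correct: with $m=m'e'$ the locally orthogonal factorization of $m$, they show $mh=m'$ and that $h$ is a two-sided inverse of $e'$. You then do not use this. Instead you propose an unspecified reduction: factoring $u=nd$, comparing ``$dp$-type composites'' (which do not compose, since $d$ and $p$ both have domain $X$), and ``routing $u$ through $e$''. You end by deferring to Tholen's argument. As submitted, you never establish the existence of a diagonal for a square whose top does not factor through $e$, which you correctly identify as the whole difficulty. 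Your claim that uniqueness is ``immediate'' is also only justified for tops that factor through $e$.

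\textbf{The missing step.} Local orthogonality relative to an isomorphism is just orthogonality. So you never need to route $u$ through $e$; route it through $e'$, which is automatic.
\begin{itemize}
\item Take $p\colon X\to Y$ in $\bfE$ and $u\colon X\to A$, $v\colon Y\to B$ with $mu=vp$. Then $m'(e'u)=mu=vp$, so $\bfE\orth[e']m'$ gives a unique $k$ with $kp=e'u$ and $m'k=v$.
\item Existence: $hk$ is a diagonal for the original square, since $hkp=he'u=u$ and $mhk=m'k=v$.
\item Uniqueness: suppose $dp=u$ and $md=v$. Then $e'd$ satisfies $e'dp=e'u$ and $m'e'd=md=v$, so $e'd=k$ and $d=he'd=hk$.
\end{itemize}
Hence $\bfE\orth m$, and $f=me$ is an orthogonal $\bfE$-factorization. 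This step uses only $mh=m'$ and $he'=\id$, i.e.\ your first and third bullets. Closure of $\bfE$ under composition enters only in your first bullet, to get $e'e\in\bfE$. Replacing your final paragraph with this short argument completes the proof.
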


As the following statements are used in the paper, we include them here:
\begin{lemma}\label{lem:leftclass_epic}
    For a class $\bfE$ of morphisms in a category $\C$, the implications \cref{lem:leftclass_epic-1}$\implies$\cref{lem:leftclass_epic-2}$\implies$\cref{lem:leftclass_epic-3}$\implies$\cref{lem:leftclass_epic-4} hold.
    \begin{enumerate}
        \item\label{lem:leftclass_epic-1}
            $\lorth\rorth(\bfE) \subseteq \Epi[\C]$.
        \item\label{lem:leftclass_epic-2}
            $\bfE \subseteq \Epi[\C]$.
        \item\label{lem:leftclass_epic-3}
            $\rorth(\bfE)$ has the \textit{strong cancellation property}: $gf\in\rorth(\bfE)$ implies $f\in\rorth(\bfE)$.
        \item\label{lem:leftclass_epic-4}
            Every split monomorphism belongs to $\rorth(\bfE)$.
    \end{enumerate}
    Moreover, if the category $\C$ has binary powers, then all the conditions above are equivalent.
\end{lemma}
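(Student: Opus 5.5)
We prove the chain of implications one at a time, then the converse under binary powers.

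\proofdirection{\cref{lem:leftclass_epic-1}}{\cref{lem:leftclass_epic-2}} This is immediate, since $\bfE\subseteq\lorth\rorth(\bfE)$.

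\proofdirection{\cref{lem:leftclass_epic-2}}{\cref{lem:leftclass_epic-3}} Suppose $gf\in\rorth(\bfE)$ with $f\colon A\arr B$ and $g\colon B\arr C$, and take $e\colon X\arr Y$ in $\bfE$ together with a commutative square $fu=ve$. Postcomposing with $g$ gives the square $(gf)u=(gv)e$, so orthogonality $e\orth gf$ yields $h$ with $he=u$ and $gfh=gv$. We still need $fh=v$. This holds because $fhe=fu=ve$ and $e$ is epic. Uniqueness of the filler also follows from $e$ being epic.

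\proofdirection{\cref{lem:leftclass_epic-3}}{\cref{lem:leftclass_epic-4}} If $m$ is a split monomorphism with retraction $r$, then $rm=\id$ lies in $\rorth(\bfE)$, since identities are orthogonal to everything. Strong cancellation then gives $m\in\rorth(\bfE)$.

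\proofdirection{\cref{lem:leftclass_epic-4}}{\cref{lem:leftclass_epic-1}}, assuming binary powers. Let $e\colon A\arr B$ lie in $\lorth\rorth(\bfE)$, and let $x,y\colon B\parr Z$ satisfy $xe=ye$. The diagonal $\Delta\colon Z\arr Z\times Z$ is split monic via a projection, so $\Delta\in\rorth(\bfE)$ by \cref{lem:leftclass_epic-4}. Hence $e\orth\Delta$. Now consider the square with top $xe\colon A\arr Z$ and bottom $(x,y)\colon B\arr Z\times Z$; it commutes because $\Delta xe=(xe,ye)=(x,y)e$. The diagonal filler $k$ satisfies $\Delta k=(x,y)$, which forces $x=k=y$. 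Therefore $e$ is epic.

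No step here is a real obstacle. The only point needing care is in the step \cref{lem:leftclass_epic-2}$\implies$\cref{lem:leftclass_epic-3}, where the lower triangle $fh=v$ must be checked using that $e$ is epic. In the final step the whole construction rests on recognizing the diagonal as a split monomorphism.
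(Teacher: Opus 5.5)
Your proof is correct and takes the same route as the paper. The forward chain is the routine verification the paper calls straightforward, and for the converse under binary powers you use the diagonal split monomorphism $Z\to Z\times Z$, exactly as the paper indicates.
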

\begin{proof}
    The implications \cref{lem:leftclass_epic-1}$\implies$\cref{lem:leftclass_epic-2}$\implies$\cref{lem:leftclass_epic-3}$\implies$\cref{lem:leftclass_epic-4} follow straightforwardly.
    If $\C$ has binary powers, the implication \cref{lem:leftclass_epic-4}$\implies$\cref{lem:leftclass_epic-1} additionally follows by considering the diagonal split monomorphisms $\Delta_X\colon X\arr X\times X$.
\end{proof}

\begin{lemma}\label{lem:existence_proper_OFS}
    Let $\bfM$ be a class of monomorphisms in a category $\C$ satisfying the following conditions:
    \begin{itemize}
        \item
            $\bfM$ is closed under composition and contains all isomorphisms.
        \item
            $\C$ has a pullback of any morphism in $\bfM$ along any morphism, and $\bfM$ is stable under such pullbacks.
        \item
            $\C$ has a wide pullback of any (not necessarily small) family of morphisms in $\bfM$, and $\bfM$ is stable under such wide pullbacks.
    \end{itemize}
    Let $\bfE$ be the class of all morphisms $f$ such that any factorization $f=m\circ g$ with $m\in\bfM$ implies $m$ is an isomorphism.
    Then, the pair $(\bfE,\bfM)$ becomes an orthogonal factorization system.
\end{lemma}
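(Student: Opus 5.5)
We construct the factorization directly. Given $f\colon X\to Y$, we take the wide pullback of all subobjects through which $f$ factors. Concretely, let $(m_i\colon M_i\to Y)_{i\in I}$ be the (not necessarily small) family of all morphisms in $\bfM$ with codomain $Y$ through which $f$ factors, say $f=m_i g_i$. By the third assumption the wide pullback $m\colon M\to Y$ of this family exists and lies in $\bfM$. Since each $m_i$ is monic, the factorizations $g_i$ are unique and form a cone, so they induce $g\colon X\to M$ with $f=mg$.

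Next we show $g\in\bfE$. Suppose $g=m'g'$ with $m'\in\bfM$. Then $mm'\in\bfM$, because $\bfM$ is closed under composition, and $f$ factors through $mm'$. Hence $mm'$ is one of the $m_i$, and the wide pullback projection gives $p\colon M\to M'$ with $mm'p=m$. Since $m$ is monic, $m'p=\id$. Thus $m'$ is a monomorphism with a retraction, and $m'pm'=m'$ gives $pm'=\id$, so $m'$ is an isomorphism. Isomorphisms belong to $\bfM$ by the first assumption, so the factorization exists with $g\in\bfE$ and $m\in\bfM$.

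For orthogonality, take a square $ve=mu$ with $e\colon A\to B$ in $\bfE$ and $m\colon C\to D$ in $\bfM$. Pull $m$ back along $v$ to get $m^*\colon P\to B$ in $\bfM$, using the second assumption. Then $e$ factors through $m^*$ via the induced map $A\to P$, so $m^*$ is an isomorphism by the definition of $\bfE$. The diagonal is the composite $B\cong P\to C$. Uniqueness holds because $m$ is monic. This step is the core of the argument, but it is routine.

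It remains to show that $\bfE$ is closed under composition and contains isomorphisms. Isomorphisms lie in $\bfE$ because any factorization $\mathrm{iso}=mg$ with $m$ monic makes $m$ split epic as well as monic, hence an isomorphism. For composition, both $\bfE$ and $\bfM$ are now characterized orthogonally: every $e\in\bfE$ is orthogonal to $\bfM$ by the previous paragraph, and conversely $\lorth(\bfM)\subseteq\bfE$. For the converse, if $e\perp\bfM$ and $e=mg$, lifting in the square $m\circ g=\id\circ e$ gives a section of $m$, so $m$ is an isomorphism. Hence $\bfE=\lorth(\bfM)$, which is closed under composition.

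The main obstacle is size: the family of all $\bfM$-subobjects through which $f$ factors may be large. The hypothesis that wide pullbacks of arbitrary families exist handles this. No further care is needed, since we never require the family to be small.
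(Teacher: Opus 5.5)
Your proof is correct. The paper gives no argument of its own here. It cites Lemma~3.1 of Cassidy--H\'ebert--Kelly together with the dual of the equivalence (1)$\Leftrightarrow$(7) in 14.11 of Ad\'amek--Herrlich--Strecker. Between them, these supply the factorization from $\bfM$-intersections and $\bfM$-inverse images, and identify the left class with the $\bfM$-extremal morphisms.

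Your proposal is a self-contained unpacking of that standard argument:
\begin{enumerate}
\item You factor $f$ through the wide pullback of all $\bfM$-morphisms into $Y$ through which $f$ factors. Monicity of the $m_i$ makes the cone canonical, so no choice over a large family is needed.
\item You get extremality of the first factor from closure of $\bfM$ under composition.
\item You build the diagonal filler from the pullback of $m$ along $v$.
\item You get closure of $\bfE$ under composition by showing $\bfE=\lorth(\bfM)$.
\end{enumerate}

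The direct route makes clear where each hypothesis enters. Wide pullbacks are used only for existence of the factorization. Pullback stability is used for the fillers, and hence for $\bfE=\lorth(\bfM)$. The citation, by contrast, buys brevity.

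A minor wording point: you say both classes are ``characterized orthogonally'', but you only prove $\bfE=\lorth(\bfM)$. Only that is needed.
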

\begin{proof}
    Combine \cite[Lemma 3.1]{CassidyHebertKelly1985reflective} with the dual of the equivalence between (1) and (7) in \cite[{}14.11]{AdamekHerrlichStrecker2006joy}.
\end{proof}\pagebreak
\section{Partial Horn theories}\label[appendix]{sec:partial_horn_theories}
For convenience, we summarize the notation and conventions for \textit{partial Horn theories}, following \cite{Kawase2026relativized} rather than the original ones \cite{PalmgrenVickers2007partial}.
\subsection{The syntax of partial Horn logic}\label[appendix]{subsec:the_syntax_of_partial_horn_logic}
\begin{definition}[Signatures]
    Let $\kappa$ be a small regular cardinal, and let $S$ be a small set.
    A \emph{($\kappa$-ary) $S$-sorted signature} $\Sigma$ consists of:
    \begin{itemize}
        \item
            a small set $\Sigma_\mathrm{f}$, whose elements are called \emph{function symbols};
        \item
            a small set $\Sigma_\mathrm{r}$, whose elements are called \emph{relation symbols};
        \item
            for each $\syn{f}\in\Sigma_\mathrm{f}$, an element $\syn{s}\in S$ and a family $(\syn{s}_i \in S)_{i<\alpha}$ indexed by a small ordinal $\alpha<\kappa$, which are often denoted by $\syn{f}\colon \sqcap_{i<\alpha}\syn{s}_i \to \syn{s}$;
        \item
            for each $\syn{R}\in\Sigma_\mathrm{r}$, a family $(\syn{s}_i \in S)_{i<\alpha}$ indexed by a small ordinal $\alpha<\kappa$, which is often denoted by $\syn{R}\colon \sqcap_{i<\alpha} \syn{s}_i$.
    \end{itemize}
    Elements of the small set $S$ are called \emph{sorts}.
\end{definition}


\begin{definition}[Variables]
    Given the small set $S$ of sorts, we fix an $S$-sorted set $\Var=(\Var_\syn{s})_{\syn{s}\in S}$ such that $\Var_\syn{s}$ has cardinal $\kappa$ for each $\syn{s}\in S$.
    We assume $\Var_\syn{s}\cap\Var_{\syn{s}'}=\emptyset$ if $\syn{s}\neq \syn{s}'$.
    An element $\vx\in\Var_\syn{s}$ is called a \emph{variable of sort $\syn{s}$}.
    The notation $\vx\ofsort\syn{s}$ means that $\vx$ is a variable of sort $\syn{s}$.
\end{definition}

\begin{definition}
    Let $\Sigma$ be an $S$-sorted $\kappa$-ary signature.
    \begin{enumerate}
        \item
        \emph{Raw terms} $\ttau$ (over $\Sigma$) and their \emph{sorts} $\syn{s}$, written $\ttau\ofsort\syn{s}$, are defined by the following inductive rules:
        \begin{itemize}
            \item
                Given a variable $\vx\ofsort\syn{s}$, $\vx$ is a raw term of sort $\syn{s}$;
            \item
                Given a function symbol $\syn{f}\colon\sqcap_{i<\alpha}\syn{s}_i\to\syn{s}$ in $\Sigma$ and raw terms $\ttau_i\ofsort\syn{s}_i$ $(i<\alpha)$,
                then the expression $\syn{f}(\ttau_i)_{i<\alpha}$ is a raw term of sort $\syn{s}$.
        \end{itemize}
        \item
        \emph{Raw ($\kappa$-ary) Horn formulas} (over $\Sigma$) are defined by the following inductive rules:
        \begin{itemize}
            \item
                Given a relation symbol $\syn{R}\colon\sqcap_{i<\alpha}\syn{s}_i$ in $\Sigma$ and raw terms $\ttau_i\ofsort\syn{s}_i$ $(i<\alpha)$, then the expression $\syn{R}(\ttau_i)_{i<\alpha}$ is a raw $\kappa$-ary Horn formula;
            \item
                Given two raw terms $\ttau$ and $\ttau'$ of the same sort $\syn{s}$,
            then $\ttau=\ttau'$ is a raw $\kappa$-ary Horn formula;
            \item
                The truth constant $\syn{\top}$ is a raw $\kappa$-ary Horn formula;
            \item
                Given raw $\kappa$-ary Horn formulas $\syn{\phi}_i$ $(i<\alpha)$ with $\alpha<\kappa$, the expression $\bigwedge_{i<\alpha}\syn{\phi}_i$ is a raw $\kappa$-ary Horn formula.
            When $\alpha=0$, $\bigwedge_{i<\alpha}\syn{\phi}_i$ expresses $\syn{\top}$.
        \end{itemize}
        \item
        A \emph{($\kappa$-ary) context} is a tuple $\tup{\vx}=(\syn{x}_i\ofsort\syn{s}_i)_{i<\alpha}$ of distinct variables with $\alpha<\kappa$.
        The empty context is denoted by the symbol $()$.
        \item
        A \emph{($\kappa$-ary) term} (over $\Sigma$) is a pair of a $\kappa$-ary context $\tup{\vx}$ and a raw term $\ttau$ (over $\Sigma$), written as $\tup{\vx}.\ttau$, where all variables appearing in $\ttau$ occur in $\tup{\vx}$.
        The \emph{sort} of $\tup{\vx}.\ttau$ is defined as that of $\ttau$.
        \item
        A \emph{($\kappa$-ary) Horn formula} (over $\Sigma$) is a pair of a $\kappa$-ary context $\tup{\vx}$ and a raw $\kappa$-ary Horn formula $\syn{\phi}$ (over $\Sigma$), written as $\tup{\vx}.\syn{\phi}$, where all variables appearing in $\syn{\phi}$ occur in $\tup{\vx}$.
        \item
        A \emph{($\kappa$-ary) Horn sequent} (over $\Sigma$) is a pair of two $\kappa$-ary Horn formulas $\tup{\vx}.\syn{\phi}$ and $\tup{\vx}.\syn{\psi}$ (over $\Sigma$) with the same context, written as
        \begin{equation*}
            \syn{\phi} \seq{\tup{\vx}} \syn{\psi}.
        \end{equation*}
        \item
        A \emph{$\kappa$-ary \acf{PHT}} $\theory{T}$ (over $\Sigma$) is a set of $\kappa$-ary Horn sequents (over $\Sigma$).\qedhere
    \end{enumerate}
\end{definition}

\begin{remark}
    Our terminologies raw terms and raw Horn formulas are simply called \textit{terms} and \textit{Horn formulas} in usual.
    Our terminologies terms and Horn formulas are ordinarily called \textit{terms-in-context} and \textit{Horn formulas-in-context} in the literature.
\end{remark}

\begin{remark}[The set of free variables]
    Let $\Sigma$ be a $\kappa$-ary $S$-sorted signature.
    For a raw term $\ttau$ over $\Sigma$, we write $\fv(\ttau)$ for the set of all free variables appears in $\ttau$.
    We also use a similar notation $\fv(\fphi)$ for a raw $\kappa$-ary Horn formula $\fphi$.
\end{remark}

\begin{remark}
    Note that we do not consider the equal sign ``$=$'' to be a relation symbol.
    We informally use the abbreviation $\syn{\phi}\biseq{\tup{\vx}}\syn{\psi}$ for ``$(\syn{\phi}\seq{\tup{\vx}}\syn{\psi})$ and $(\syn{\psi}\seq{\tup{\vx}}\syn{\phi})$,'' and $\ttau\defined$ for $\ttau=\ttau$.
\end{remark}

\begin{notation}[Substitution]
    If we are given a term $\tup{\vx}.\ttau\ofsort\syn{s}$ with $\tup{\vx_i}=(\vx\ofsort\syn{s}_i)_{i<\alpha}$ and a family of terms $\tup{\vy}.\tsigma_i\ofsort\syn{s}_i$ $(i<\alpha)$ with a common context $\tup{\vy}$, simultaneous substitution of $(\tsigma_i)_i$ into $(\vx_i)_i$ yields a new term of sort $\syn{s}$ in the context $\tup{\vy}$, which is denoted by $\tup{\vy}.\ttau\subst{\tsigma_i}{\vx_i}_{i<\alpha}\ofsort\syn{s}$.
\end{notation}

\subsection{The semantics of partial Horn logic}\label[appendix]{subsec:The_semantics_of_partial_Horn_logic}
\begin{definition}
    Let $\Sigma$ be an $S$-sorted $\kappa$-ary signature.
    A \emph{partial $\Sigma$-structure} $M$ consists of:
    \begin{itemize}
        \item
            a set $M_\syn{s}$ for each sort $\syn{s}\in S$,
        \item
            a partial map
        \[
        \intpn{\syn{f}}{M}~(\text{or}~\intpn{\tup{\vx}.\syn{f}(\tup{\vx})}{M})\colon \prod_{i<\alpha}M_{\syn{s}_i}\pto M_\syn{s}
        \]
        for each function symbol $\syn{f}\colon \sqcap_{i<\alpha}\syn{s}_i\to\syn{s}$ in $\Sigma$,
        \item
            a subset $\intpn{\syn{R}}{M}$ (or $\intpn{\tup{\vx}.\syn{R}(\tup{\vx})}{M}$) $\subseteq \prod_{i<\alpha}M_{\syn{s}_i}$ for each relation symbol $\syn{R}\colon \sqcap_{i<\alpha}\syn{s}_i$ in $\Sigma$.\qedhere
    \end{itemize}
\end{definition}

We can extend the above definitions of $\intpn{\tup{\vx}.\syn{f}(\tup{\vx})}{M}$ and $\intpn{\tup{\vx}.\syn{R}(\tup{\vx})}{M}$ to arbitrary terms and Horn formulas:
\begin{definition}[Partial $\Sigma$-structures]
    Let $\Sigma$ be an $S$-sorted $\kappa$-ary signature, and let $M$ be a partial $\Sigma$-structure.
    Fix a context $\tup{\vx}=(\syn{x}_i\ofsort\syn{s}_i)_{i<\alpha}$.
    \begin{enumerate}
        \item
        For an arbitrary $\kappa$-ary term $\tup{\vx}.\ttau\ofsort\syn{s}$ over $\Sigma$, we define a partial map
        \begin{equation*}
            \intpn{\tup{\vx}.\ttau}{M}\colon \prod_{i<\alpha}M_{\syn{s}_i}\pto M_\syn{s}
        \end{equation*}
        as follows:
        \begin{itemize}
            \item
            For each $i<\alpha$, $\intpn{\tup{\vx}.\syn{x}_i}{M}\colon\prod_{i<\alpha}M_{\syn{s}_i}\to M_{\syn{s}_i}$ is the $i$-th projection;
            \item
            For a function symbol $\syn{f}\colon\sqcap_{j<\beta}\syn{s}_j\to\syn{s}$ in $\Sigma$ and terms $\tup{\vx}.\ttau_j\ofsort\syn{s}_j$, $\intpn{\tup{\vx}.\syn{f}(\ttau_j)_{j<\beta}}{M}(\tup{m})$ is defined if and only if all $\intpn{\tup{\vx}.\ttau_j}{M}(\tup{m})$ are defined and $\intpn{\syn{f}}{M}(\intpn{\tup{\vx}.\ttau_j}{M}(\tup{m}))_{j<\beta}$ is also defined,
            and then $\intpn{\tup{\vx}.\syn{f}(\ttau_j)_{j<\beta}}{M}(\tup{m})\coloneq\intpn{\syn{f}}{M}(\intpn{\tup{\vx}.\ttau_j}{M}(\tup{m}))_{j<\beta}.$ 
        \end{itemize}
        \item
        For an arbitrary $\kappa$-ary Horn formula $\tup{\vx}.\syn{\phi}$ over $\Sigma$, we define a subset
        \begin{equation*}
            \intpn{\tup{\vx}.\syn{\phi}}{M}\subseteq\prod_{i<\alpha}M_{\syn{s}_i}
        \end{equation*}
        as follows:
        \begin{itemize}
            \item
            For a relation symbol $\syn{R}\colon\sqcap_{j<\beta}\syn{s}_j$ in $\Sigma$ and terms $\tup{\vx}.\ttau_j\ofsort\syn{s}_j$,
            $\tup{m}$ belongs to $\intpn{\tup{\vx}.\syn{R}(\ttau_j)_{j<\beta}}{M}$ if and only if all $\intpn{\tup{\vx}.\ttau_j}{M}(\tup{m})$ are defined and $(\intpn{\tup{\vx}.\ttau_j}{M}(\tup{m}))_{j<\beta}$ belongs to $\intpn{\syn{R}}{M}$;
            \item
            For two terms $\tup{\vx}.\ttau$ and $\tup{\vx}.\ttau'$ of the same sort,
            $\tup{m}$ belongs to $\intpn{\tup{\vx}.\ttau=\ttau'}{M}$ if and only if both $\intpn{\tup{\vx}.\ttau}{M}(\tup{m})$ and $\intpn{\tup{\vx}.\ttau'}{M}(\tup{m})$ are defined and equal to each other;
            \item
            $\intpn{\tup{\vx}.\syn{\top}}{M}\coloneq\prod_{i<\alpha} M_{\syn{s}_i}$;
            \item
            For Horn formulas $(\tup{\vx}.\syn{\phi}_j)_{j<\beta}$,
            $\intpn{\tup{\vx}.\bigwedge_{j<\beta}\syn{\phi}_j}{M}\coloneq\bigcap_{j<\beta}\intpn{\tup{\vx}.\syn{\phi}_j}{M}$.\qedhere
        \end{itemize}
    \end{enumerate}
\end{definition}\pagebreak

\begin{definition}[Interpretations]
    Let $\Sigma$ be an $S$-sorted $\kappa$-ary signature, and let $M$ be a partial $\Sigma$-structure.
    \begin{enumerate}
        \item
            We say that a $\kappa$-ary Horn sequent $\syn{\phi}\seq{\tup{\vx}}\syn{\psi}$ over $\Sigma$ is \emph{valid} in $M$ and write
            \[
                M \vDash (\syn{\phi}\seq{\tup{\vx}}\syn{\psi})
            \]
            if $\intpn{\tup{\vx}.\syn{\phi}}{M}\subseteq\intpn{\tup{\vx}.\syn{\psi}}{M}$.
        \item
            $M$ is called a \emph{(partial) $\theory{T}$-model} for a $\kappa$-ary \ac{PHT} $\theory{T}$ over $\Sigma$ if all Horn sequents in $\theory{T}$ are valid in $M$.\qedhere
    \end{enumerate}
\end{definition}

\begin{definition}[Validity of a sequent]
    Let $\Sigma$ be an $S$-sorted $\kappa$-ary signature, and let $\theory{T}$ be a $\kappa$-ary \ac{PHT} over $\Sigma$.
    We say that a $\kappa$-ary Horn sequent $\syn{\phi}\seq{\tup{\vx}}\syn{\psi}$ over $\Sigma$ is a \emph{\ac{PHL}-theorem} of $\theory{T}$, or \emph{derivable} from $\theory{T}$, and write
    \[
        \theory{T} \vDash (\syn{\phi}\seq{\tup{\vx}}\syn{\psi})
    \]
    if it is valid in every partial $\theory{T}$-model.
\end{definition}

\begin{remark}
    An alternative definition of \ac{PHL}-theorem can also be given syntactically using axioms and inference rules.
    The above definition is equivalent to this by the completeness theorem.
    See \cite{PalmgrenVickers2007partial,Kawase2026relativized} for details.
\end{remark}

\begin{definition}[Morphisms of partial $\Sigma$-structures]
    Let $\Sigma$ be an $S$-sorted $\kappa$-ary signature.
    A \emph{$\Sigma$-homomorphism} $h\colon M\to N$ between partial $\Sigma$-structures is a family of a total map $h_\syn{s}\colon M_\syn{s}\to N_\syn{s}$ indexed by sorts $\syn{s}\in S$ such that for each function symbol $\syn{f}\colon \sqcap_{i<\alpha}\syn{s}_i\to\syn{s}$ in $\Sigma$ and relation symbol $\syn{R}\colon \sqcap_{j<\beta}\syn{s}_j$ in $\Sigma$, there exist (necessarily unique) total maps (denoted by dashed arrows) making the following diagrams commute:
    \begin{equation*}
        \begin{tikzcd}[large]
            \prod_{i<\alpha}M_{\syn{s}_i}\arrow[d,"\prod_{i<\alpha}h_{\syn{s}_i}"'] &[-10pt] \mathrm{Dom}(\intpn{\syn{f}}{M})\arrow[d,"\exists"',dashed]\arrow[l,hook']\arrow[r,"\intpn{\syn{f}}{M}"] &[20pt] M_\syn{s}\arrow[d,"h_\syn{s}"] \\
            \prod_{i<\alpha}N_{\syn{s}_i} & \mathrm{Dom}(\intpn{\syn{f}}{N})\arrow[l,hook']\arrow[r,"\intpn{\syn{f}}{N}"'] & N_\syn{s}
        \end{tikzcd}
        \quad
        \begin{tikzcd}[large]
            \prod_{j<\beta}M_{\syn{s}_j}\arrow[d,"{\prod_{j<\beta}h_{\syn{s}_j}}"'] &[-10pt] \intpn{\syn{R}}{M}\arrow[d,"\exists"',dashed]\arrow[l,hook'] \\
            \prod_{j<\beta}N_{\syn{s}_j} & {\intpn{\syn{R}}{N}} \arrow[l,hook']
        \end{tikzcd}
        \incat{\Set}.
    \end{equation*}
\end{definition}

\begin{notation}
    Let $\theory{T}$ be a $\kappa$-ary \ac{PHT} over an $S$-sorted $\kappa$-ary signature $\Sigma$.
    We will denote by $\PStr\Sigma$ the category of partial $\Sigma$-structures and $\Sigma$-homomorphisms and by $\PMod\theory{T}$ the full subcategory of $\PStr\Sigma$ consisting of all partial $\theory{T}$-models.
\end{notation}

\begin{theorem}[Characterization theorem]
    For a category $\C$, the following are equivalent:
    \begin{enumerate}
        \item
            $\C$ is locally $\kappa$-presentable.
        \item
            There exists a $\kappa$-ary \ac{PHT} $\theory{T}$ such that $\C\simeq\PMod\theory{T}$.
    \end{enumerate}
\end{theorem}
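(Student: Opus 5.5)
The plan is to prove the two implications separately. For (2)$\Rightarrow$(1) I would realise $\PMod\theory{T}$ as the category of models of a small $\kappa$-ary limit sketch and then invoke the classical theorem that such categories are locally $\kappa$-presentable. For (1)$\Rightarrow$(2) I would start from the Gabriel--Ulmer representation $\C\simeq\mathrm{Lex}_\kappa(\A^\op,\Set)$ and translate the limit-preservation conditions into Horn sequents.

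For (2)$\Rightarrow$(1), I would first handle $\PStr\Sigma$. A partial $\Sigma$-structure is determined by the following data:
\begin{itemize}
\item the sets $M_\syn{s}$;
\item for each $\syn{f}\colon\sqcap_{i<\alpha}\syn{s}_i\to\syn{s}$, a set $\mathrm{Dom}(\intpn{\syn{f}}{M})$ with a monomorphism into $\prod_{i<\alpha}M_{\syn{s}_i}$ and a map to $M_\syn{s}$;
\item for each relation symbol, a monic subset of the corresponding product.
\end{itemize}
These data form a $\kappa$-ary limit sketch: the $\alpha$-fold products have $\alpha<\kappa$, and monicity is the assertion that a kernel pair is trivial. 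The homomorphism condition in the definition of $\Sigma$-homomorphism is exactly naturality for this sketch, because the dashed maps are forced. Hence $\PStr\Sigma$ is locally $\kappa$-presentable.

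Next, each Horn formula $\tup{\vx}.\fphi$ is interpreted by a $\kappa$-small limit construction (a pullback or intersection of fewer than $\kappa$ subobjects) of the form $\intpn{\tup{\vx}.\fphi}{M}\subseteq\prod M_{\syn{s}_i}$. A sequent $\fphi\seq{\tup{\vx}}\fpsi$ then says that a specified monomorphism $\intpn{\fphi\wedge\fpsi}{M}\to\intpn{\fphi}{M}$ is invertible. Adding these conditions to the sketch keeps it $\kappa$-ary, so $\PMod\theory{T}$ is again sketchable by a $\kappa$-ary limit sketch. Alternatively, one can show that $\PMod\theory{T}$ is reflective in $\PStr\Sigma$ and closed under $\kappa$-filtered colimits, since satisfaction of $\kappa$-ary Horn sequents is preserved by $\kappa$-filtered colimits computed sortwise.

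For (1)$\Rightarrow$(2), let $\A$ be a small skeleton of the $\kappa$-presentable objects of $\C$. Then $\C\simeq\mathrm{Lex}_\kappa(\A^\op,\Set)$, the category of functors preserving $\kappa$-small limits. I would define the \ac{PHT} $\theory{T}$ as follows.
\begin{itemize}
\item \textbf{Sorts.} Take $S\coloneq\Ob\A$.
\item \textbf{Unary symbols.} For each morphism $u\colon A\to B$ of $\A$, take a total unary function symbol $\syn{u}\colon B\to A$, with axioms expressing functoriality: $\top\seq{\vx}\syn{id}(\vx)=\vx$ and $\top\seq{\vx}\syn{(vu)}(\vx)=\syn{u}(\syn{v}(\vx))$.
\item \textbf{Gluing symbols.} For each $\kappa$-small diagram $D\colon\J\to\A$ with a chosen colimit $L$ in $\A$ (this is a limit in $\A^\op$) and colimit injections $\iota_j$, take a partial function symbol $\syn{g}_D\colon\sqcap_{j\in\Ob\J}D(j)\to L$.
\item \textbf{Axioms for $\syn{g}_D$.} First, $\syn{g}_D(\tup{\vx})\defined\biseq{\tup{\vx}}\bigwedge_{e\colon j\to j'}\syn{D(e)}(\vx_{j'})=\vx_j$, a conjunction of fewer than $\kappa$ equations because $\J$ is $\kappa$-small. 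Second, $\syn{g}_D(\tup{\vx})\defined\seq{\tup{\vx}}\syn{\iota_j}(\syn{g}_D(\tup{\vx}))=\vx_j$ for each $j$. Third, $\top\seq{\vy}\syn{g}_D(\syn{\iota_j}(\vy))_j=\vy$, where the left-hand side is defined because the family $(\syn{\iota_j}(\vy))_j$ is compatible.
\end{itemize}

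A model of $\theory{T}$ is then a functor $\A^\op\to\Set$ for which each canonical map $M(L)\to\lim_j M(D(j))$ is bijective, with $\syn{g}_D$ as its inverse; that is, a $\kappa$-small-limit-preserving functor. A $\Sigma$-homomorphism is a natural transformation, because compatibility with $\syn{g}_D$ is automatic: the domains are equational and $\syn{g}_D$ is inverse to maps the transformation already commutes with. Hence $\PMod\theory{T}\simeq\mathrm{Lex}_\kappa(\A^\op,\Set)\simeq\C$.

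The step I expect to be most delicate is the bookkeeping of arities: every product, conjunction and context used must have size less than $\kappa$. This holds because $\A$ is small and the diagrams are $\kappa$-small, although $\A$ itself may have $\geq\kappa$ morphisms, which only affects the number of symbols and not their arity. A second point to check in (2)$\Rightarrow$(1) is that the interpretation of an iterated partial term is a genuine $\kappa$-small limit. I would handle this by induction on terms, building the domain of definedness of $\syn{f}(\ttau_j)_j$ as a pullback of $\mathrm{Dom}(\intpn{\syn{f}}{M})$ along the tuple of interpretations of the $\ttau_j$, restricted to their common domain.
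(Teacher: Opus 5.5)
Your argument is correct. Note that the paper gives no proof of this theorem: it is recalled in the appendix on partial Horn theories as background from the literature cited there, so there is no internal argument to match against.

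Your two directions form a standard sketch and Gabriel--Ulmer argument, and the delicate points you flag all go through. In (2)$\Rightarrow$(1), every cone in your sketch is $\kappa$-small. These cones are products of $\alpha<\kappa$ sorts, kernel-pair cones for monicity, intersections of fewer than $\kappa$ subobjects, equalizers, and one-object cones forcing $\intpn{\tup{\vx}.\fphi\wedge\fpsi}{M}\to\intpn{\tup{\vx}.\fphi}{M}$ to be invertible. In (1)$\Rightarrow$(2):
\begin{itemize}
\item only a small set of shapes $\J$ is needed;
\item each $\syn{g}_D$ has arity $|\Ob\J|<\kappa$, and the definedness axiom has fewer than $\kappa$ conjuncts;
\item compatibility of natural transformations with $\syn{g}_D$ is automatic: definedness is equational in total unary symbols, and $N(L)\to\lim_j N(D(j))$ is injective;
\item even the empty diagram behaves correctly, since it forces $M$ of the initial object to be a singleton.
\end{itemize}

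For comparison, the facts the appendix lists right after the theorem support a more intrinsic proof of (2)$\Rightarrow$(1). Limits and $\kappa$-filtered colimits in $\PMod\theory{T}$ are created by the forgetful functor to $\Set^S$. Each $\repn{\tup{\vx}.\fphi}_\theory{T}$ represents $\intpn{\tup{\vx}.\fphi}{\bullet}$, which commutes with $\kappa$-filtered colimits when $\fphi$ is $\kappa$-ary, so the representing models are $\kappa$-presentable. Once you check that every model is a $\kappa$-filtered colimit of them, $\PMod\theory{T}$ is a complete $\kappa$-accessible category, hence locally $\kappa$-presentable.

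That route is longer than citing the classical theorem on limit sketches. Its advantage is that it stays inside partial Horn logic and leads directly to the description of presentable objects as representing models, which the paper relies on in the proof of \cref{thm:gauge}. Your route is quicker. For (1)$\Rightarrow$(2) it also yields an explicit theory using only total unary symbols and one partial ``gluing'' symbol per $\kappa$-small colimit.
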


\begin{theorem}[Limits and colimits of $\theory{T}$-models]
    Let $\theory{T}$ be a $\kappa$-ary \ac{PHT} over an $S$-sorted $\kappa$-ary signature $\Sigma$.
    Then, the forgetful functor $\PMod\theory{T}\arr \Set^S$, defined by $M\mapsto (M_\syn{s})_{\syn{s}\in S}$, creates any (small) limits and any $\kappa$-filtered (small) colimits.
\end{theorem}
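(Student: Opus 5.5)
The plan is to prove the statement in two layers. First I show that the forgetful functor $\PStr\Sigma\arr\Set^S$ creates small limits and $\kappa$-filtered colimits. Then I show that the full subcategory $\PMod\theory{T}$ is closed under both constructions inside $\PStr\Sigma$. Creation then restricts, because $\PMod\theory{T}$ is full and validity of sequents is a property of an object.

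\textbf{Limits.} Take a diagram $D\colon\J\arr\PStr\Sigma$ and let $L_\syn{s}\coloneq\lim_J (DJ)_\syn{s}$ in $\Set$, with projections $p_J$.
\begin{itemize}
    \item For a function symbol $\syn{f}$, declare $\intpn{\syn{f}}{L}(\tup{m})$ defined iff $\intpn{\syn{f}}{DJ}(p_J\tup{m})$ is defined for every $J$. The values then form a compatible family, because homomorphisms preserve definedness and values, so they determine an element of $L_\syn{s}$.
    \item For a relation symbol $\syn{R}$, put $\tup{m}\in\intpn{\syn{R}}{L}$ iff $p_J\tup{m}\in\intpn{\syn{R}}{DJ}$ for all $J$.
\end{itemize}
This is forced: any structure making all $p_J$ homomorphisms has smaller domains and relations. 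Testing the universal property against the structure just described shows that only this maximal choice makes the cone a limit, which gives uniqueness of the lift.

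To see that $L$ is a $\theory{T}$-model when each $DJ$ is, I prove by induction on terms that $\intpn{\tup{\vx}.\ttau}{L}(\tup{m})$ is defined iff every $\intpn{\tup{\vx}.\ttau}{DJ}(p_J\tup{m})$ is defined, and that the value is then the corresponding compatible family. Since the $p_J$ are jointly monic, equalities, relations and conjunctions are all detected componentwise. Hence $\intpn{\tup{\vx}.\fphi}{L}=\bigcap_J p_J^{-1}\intpn{\tup{\vx}.\fphi}{DJ}$, and every sequent valid in all $DJ$ is valid in $L$.

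\textbf{$\kappa$-filtered colimits.} Take a $\kappa$-filtered $D$ and let $C_\syn{s}\coloneq\Colim_J (DJ)_\syn{s}$ in $\Set$, with coprojections $q_J$.
\begin{itemize}
    \item Arities are $<\kappa$, so by $\kappa$-filteredness any tuple in $\prod_i C_{\syn{s}_i}$ is represented at a single stage $J$.
    \item Declare $\syn{f}$ defined on a tuple iff it is defined on some representative $\tup{m}$ at some stage, with value $q_J\intpn{\syn{f}}{DJ}(\tup{m})$. Do the same for relations.
    \item Well-definedness: two representatives become equal at a later stage, and the transition maps are homomorphisms, so definedness and values transfer.
\end{itemize}
Uniqueness again follows because this is the least structure making all $q_J$ homomorphisms, and the universal property forces the least one.

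\textbf{Main obstacle: validity of the $\kappa$-ary Horn sequents in $C$.} I first prove by induction on terms that if $\intpn{\tup{\vx}.\ttau}{C}(\tup{c})$ is defined, then there is a stage $J$ and a representative $\tup{m}$ with $\intpn{\tup{\vx}.\ttau}{DJ}(\tup{m})$ defined and mapping to the value. In the inductive step, a term has $<\kappa$ immediate subterms, so the stages obtained for them have a common upper bound by $\kappa$-filteredness. The same argument handles a formula $\fphi$: an equality holding in $C$ holds at some later stage, and a conjunction has $<\kappa$ conjuncts, so a single stage witnesses $\tup{m}\in\intpn{\tup{\vx}.\fphi}{DJ}$. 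Validity of $\fphi\seq{\tup{\vx}}\fpsi$ in $DJ$ gives $\tup{m}\in\intpn{\tup{\vx}.\fpsi}{DJ}$, and pushing forward along the homomorphism $q_J$ preserves Horn formulas. This bookkeeping of stages is the delicate point; everything else is routine.
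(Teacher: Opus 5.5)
The paper gives no proof of this theorem. It appears in the appendix as a background fact taken from the cited work on partial Horn logic, so there is no route to compare against. Your argument is the standard direct verification, and it is correct. You build the limit (resp.\ $\kappa$-filtered colimit) in $\PStr\Sigma$ with the largest (resp.\ smallest) structure making the legs homomorphisms. You then show, by induction on terms and formulas, that $\PMod\theory{T}$ is closed under these constructions.

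Two points should be made explicit when you write it up.

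First, the uniqueness you prove is uniqueness among lifts that are limit (resp.\ colimit) cones. This is the only sense in which the theorem can hold. Take one unary partial symbol $\syn{f}$ and the empty theory. Let $M=\{a\}$ with $\intpn{\syn{f}}{M}(a)=a$, and let $N=\{a\}$ with $\syn{f}$ undefined. The identity map is then a bijective homomorphism $N\to M$ that is not an isomorphism. Consequently the limit cone $\id_{\{a\}}$ of the one-object diagram at $M$ lifts to two different cones. So the forgetful functor neither lifts cones uniquely nor reflects limits, and strict creation fails. With your reading, the passage from $\PStr\Sigma$ to the full subcategory $\PMod\theory{T}$ works. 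Any other lift that is a limit in $\PMod\theory{T}$ is isomorphic to the $\PStr\Sigma$-limit via a map whose underlying map is the identity, so the two structures coincide.

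Second, in the inductive step of your colimit term lemma, a common upper bound of the stages is not enough. You need a single stage at which two things hold:
\begin{itemize}
\item the fewer than $\kappa$ representatives of $\tup{c}$ obtained for the subterms coincide;
\item the transported values of the subterms coincide with the transported tuple on which $\syn{f}$ was found to be defined.
\end{itemize}
This requires that in a $\kappa$-filtered category any family of fewer than $\kappa$ parallel arrows is coequalized by a single arrow. Equivalently, $\kappa$-filtered colimits commute with $\kappa$-small limits in $\Set$. Your well-definedness step already relies on this fact. Once you have such a stage, the transition homomorphisms carry definedness and values forward, and the step goes through.
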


\begin{remark}[Replacement of arity]
    If we are given small regular cardinals $\kappa\le\mu<\inacc$, then $\kappa$-ary signatures, $\kappa$-ary contexts, $\kappa$-ary Horn formulas, $\kappa$-ary partial Horn theories, and so on can also be regarded as $\mu$-ary ones.
    Thus, for a partial model $M$ of a $\kappa$-ary \ac{PHT} $\theory{T}$, we can ask whether a $\mu$-ary Horn sequent is valid in $M$.
    We can likewise ask whether a $\mu$-ary Horn sequent is a \ac{PHL}-theorem of $\theory{T}$.
\end{remark}

\subsection{The representing models}\label[appendix]{subsec:the_representing_models}
\newcommand{\equivclass}[1]{\underline{#1}}
\begin{construction}[The representing models]
    Let $\kappa\le\mu$ be small regular cardinals.
    Let $\theory{T}$ be a $\kappa$-ary \ac{PHT} over an $S$-sorted $\kappa$-ary signature $\Sigma$.
    Let $\tup{\vx}.\fphi$ be a $\mu$-ary Horn formula over $\Sigma$.
    \begin{enumerate}
        \item
            A $\mu$-ary term $\tup{\vx}.\ttau$ over $\Sigma$ is called a \emph{$\theory{T}$-term generated by $\tup{\vx}.\fphi$} if $\fphi\seq{\tup{\vx}}\ttau = \ttau$ is a \ac{PHL}-theorem of $\theory{T}$.
            We write $\bT\-\Term(\tup{\vx}.\fphi)$ for the $S$-sorted set of all $\mu$-ary $\theory{T}$-terms generated by $\tup{\vx}.\fphi$.
        \item
            Define an equivalence relation $\sim_\theory{T}$ on $\theory{T}\-\Term(\tup{\vx}.\fphi)$ as follows:
            $\tup{\vx}.\ttau\sim_\theory{T} \tup{\vx}.\ttau'$ if and only if $\fphi\seq{\tup{\vx}}\ttau =\ttau'$ is a \ac{PHL}-theorem of $\theory{T}$.
            We write $\equivclass{\tup{\vx}.\ttau}_\theory{T}$ for the equivalence class of $\tup{\vx}.\ttau$ under $\sim_\theory{T}$.
        \item
            The quotient $S$-sorted set $\theory{T}\-\Term(\tup{\vx}.\fphi)/{\sim_\theory{T}}$ yields a (partial) $\theory{T}$-model, which is called the \emph{representing $\theory{T}$-model for $\tup{\vx}.\fphi$} and is denoted by $\repn{\tup{\vx}.\fphi}_\theory{T}$.
            Its $\Sigma$-structure is uniquely determined as follows:
            \begin{itemize}
                \newcommand{\repmodel}{\repn{\tup{\vx}.\fphi}_\theory{T}}
                \item
                    For a term $\tup{\vy}.\tsigma$ with $\tup{\vy}=(\vy_j)_{j<\beta}$ over $\Sigma$, the partial map $\intpn{\tup{\vy}.\tsigma}{\repmodel}$ sends a tuple $( \equivclass{\tup{\vx}.\ttau_j}_\theory{T} )_{j<\beta}$ to $\equivclass{\tup{\vx}.\tsigma\subst{\ttau_j}{\vy_j}_{j<\beta}}_\theory{T}$ if and only if
                    \[
                        \theory{T}
                        \vDash
                        \left(
                            \fphi
                                \seq{\tup{\vx}}
                                    \tsigma\subst{\ttau_j}{\vy_j}_{j<\beta} = \tsigma\subst{\ttau_j}{\vy_j}_{j<\beta}
                        \right).
                    \]
                \item
                    For a Horn formula $\tup{\vy}.\fpsi$ with $\tup{\vy}=(\vy_j)_{j<\beta}$ over $\Sigma$, a tuple $(\equivclass{\tup{\vx}.\ttau_j}_\theory{T})_{j<\beta}$ belongs to $\intpn{\tup{\vy}.\fpsi}{\repmodel}$ if and only if
                    \[
                        \theory{T}
                        \vDash
                        \left(
                            \fphi
                                \seq{\tup{\vx}}
                                    \fpsi\subst{\ttau_j}{\vy_j}_{j<\beta}
                        \right).
                    \]\qedhere
            \end{itemize}
    \end{enumerate}
\end{construction}

\begin{theorem}
    Let $\theory{T}$ be a $\kappa$-ary \ac{PHT}.
    For a $\mu(\ge\kappa)$-ary Horn formula $\tup{\vx}.\fphi$, the $\theory{T}$-model $\repn{\tup{\vx}.\fphi}_\theory{T}$ represents the interpretation functor $\intpn{\tup{\vx}.\fphi}{\bullet}\colon\PMod\theory{T}\ni M\mapsto\intpn{\tup{\vx}.\fphi}{M}\in\Set$,
    i.e., for every $\bT$-model $M$, we have the following natural isomorphism:
    \begin{equation*}
        \intpn{\tup{\vx}.\fphi}{M} \cong \PMod\theory{T}(\repn{\tup{\vx}.\fphi}_\theory{T},M).
    \end{equation*}
\end{theorem}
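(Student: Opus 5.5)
The plan is to reduce the statement to the universal property of the representing model $\repn{\tup{\vx}.\fphi}_\theory{T}$. The isomorphism $\intpn{\tup{\vx}.\fphi}{M}\cong\PMod\theory{T}(\repn{\tup{\vx}.\fphi}_\theory{T},M)$ will send a homomorphism $h$ to the tuple $(h(\equivclass{\tup{\vx}.\vx_i}_\theory{T}))_i$. In the other direction, a tuple $\tup{m}\in\intpn{\tup{\vx}.\fphi}{M}$ will be sent to the map
\[
\equivclass{\tup{\vx}.\ttau}_\theory{T}\mapsto\intpn{\tup{\vx}.\ttau}{M}(\tup{m}).
\]

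The first step is to check that the generic tuple $(\equivclass{\tup{\vx}.\vx_i}_\theory{T})_i$ lies in $\intpn{\tup{\vx}.\fphi}{\repn{\tup{\vx}.\fphi}_\theory{T}}$. By the description of the structure, this amounts to $\theory{T}\vDash(\fphi\seq{\tup{\vx}}\fphi)$, which is trivial. Since homomorphisms preserve the interpretation of Horn formulas (by induction on the formula, using the two commuting diagrams in the definition of $\Sigma$-homomorphism), each $h$ sends the generic tuple into $\intpn{\tup{\vx}.\fphi}{M}$. So the forward map is well defined.

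For the backward map, fix $\tup{m}\in\intpn{\tup{\vx}.\fphi}{M}$.
\begin{enumerate}
\item Well-definedness. If $\tup{\vx}.\ttau$ is a $\theory{T}$-term generated by $\tup{\vx}.\fphi$, then the sequent $\fphi\seq{\tup{\vx}}\ttau=\ttau$ is valid in $M$, so $\intpn{\tup{\vx}.\ttau}{M}(\tup{m})$ is defined. If moreover $\tup{\vx}.\ttau\sim_\theory{T}\tup{\vx}.\ttau'$, then validity of $\fphi\seq{\tup{\vx}}\ttau=\ttau'$ in $M$ gives equal values. This is where the semantic definition of derivability is convenient.
\item Homomorphism property. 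Suppose a function symbol is defined on $(\equivclass{\ttau_j})_j$ in $\repn{\tup{\vx}.\fphi}_\theory{T}$. Then $\theory{T}\vDash(\fphi\seq{}\syn{f}(\ttau_j)\defined)$, so the corresponding value is defined in $M$ at $\tup{m}$ and the interpretation commutes. Relation symbols are handled the same way.
\item The two maps are mutually inverse. One direction is immediate on the generic tuple. For the other, every element $\equivclass{\tup{\vx}.\ttau}$ equals $\intpn{\tup{\vx}.\ttau}{\repn{\tup{\vx}.\fphi}_\theory{T}}$ applied to the generic tuple. Since homomorphisms commute with interpretations of terms wherever these are defined, $h$ is determined by its value on the generic tuple.
\end{enumerate}

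Naturality in $M$ is clear, since both sides are functorial by postcomposition and evaluation.

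The main obstacle is the preliminary check that the stated $\Sigma$-structure on the quotient is well defined and is actually a $\theory{T}$-model. The well-definedness of the operations on equivalence classes follows from substitutivity of equality in PHL-theorems. Validity of each axiom $\fpsi\seq{\tup{\vy}}\fpsi'$ of $\theory{T}$ follows because substitution instances of axioms, conjoined with $\fphi$, remain theorems. I also need the compatibility lemma that the interpretation of a compound term or formula in $\repn{\tup{\vx}.\fphi}_\theory{T}$ is given by substitution, proved by induction on terms. If these syntactic facts are granted (via completeness, as the remark indicates), the rest is the routine Yoneda-style bookkeeping above.
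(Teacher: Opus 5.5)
Your proof is correct. Using the tuple of classes of the variables as the universal element, with inverse given by evaluating terms at $\tup{m}$, is the standard Yoneda-style argument, and each step you list goes through. The paper does not prove this statement itself; it recalls it from \cite{Kawase2026relativized}.

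One small remark. Derivability is defined semantically in this appendix, so the facts you call the main obstacle follow directly by induction together with the substitution lemma for partial interpretations. These are the well-definedness of the operations on classes, the substitution description of interpretations in $\repn{\tup{\vx}.\fphi}_{\theory{T}}$, and the validity of the axioms (which uses that every generated term is defined under $\fphi$). No appeal to completeness is needed. Such an appeal would in fact be circular in the usual development, where completeness is derived from this very model.
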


From the construction of the representing models, the theorem above leads to the following corollary:
\begin{corollary}\label{cor:morphism_between_repn}
    Let $\theory{T}$ be a $\kappa$-ary \ac{PHT} over an $S$-sorted signature.
    Let $\tup{\vx}.\fphi$ and $\tup{\vy}.\fpsi$ be $\mu(\ge\kappa)$-ary Horn formulas over $\Sigma$ with $\tup{\vx}=(\vx_i)_{i<\alpha}$.
    Then, the following data bijectively correspond to each other:
    \begin{enumerate}
        \item
            A $\Sigma$-homomorphism $h\colon\repn{\tup{\vx}.\fphi}_\theory{T}\arr \repn{\tup{\vy}.\fpsi}_\theory{T}$;
        \item
            A family of equivalence classes $(\equivclass{\tup{\vy}.\ttau_i}_\theory{T})_{i<\alpha}$ of $\theory{T}$-terms generated by $\tup{\vy}.\fpsi$ such that
            \[
                \theory{T}
                \vDash
                \left(
                    \fpsi
                        \seq{\tup{\vy}}
                            \fphi\subst{\ttau_i}{\vx_i}_{i<\alpha}
                \right).
            \]
    \end{enumerate}
\end{corollary}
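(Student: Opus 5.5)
The plan is to obtain the corollary by evaluating the representability theorem at the particular model $M\coloneq\repn{\tup{\vy}.\fpsi}_\theory{T}$, and then to unfold the explicit description of $M$ given in the construction of representing models. There is essentially no new mathematics: the work is in matching the two sides exactly, including definedness conditions and variable bookkeeping.

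\textbf{Step 1 (representability).} The theorem preceding the corollary gives a natural bijection $\PMod\theory{T}(\repn{\tup{\vx}.\fphi}_\theory{T},M)\cong\intpn{\tup{\vx}.\fphi}{M}$. By the Yoneda lemma this bijection sends $h$ to the image under $h$ of the generic element $(\equivclass{\tup{\vx}.\vx_i}_\theory{T})_{i<\alpha}$. So $\Sigma$-homomorphisms $\repn{\tup{\vx}.\fphi}_\theory{T}\arr\repn{\tup{\vy}.\fpsi}_\theory{T}$ correspond bijectively to elements of $\intpn{\tup{\vx}.\fphi}{M}$ with $M=\repn{\tup{\vy}.\fpsi}_\theory{T}$.

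\textbf{Step 2 (elements of $M$).} By construction, $M_{\syn{s}}$ is the set of $\sim_\theory{T}$-classes of $\theory{T}$-terms $\tup{\vy}.\ttau$ of sort $\syn{s}$ generated by $\tup{\vy}.\fpsi$, i.e.\ terms with $\theory{T}\vDash(\fpsi\seq{\tup{\vy}}\ttau\defined)$. Hence an element of $\prod_{i<\alpha}M_{\syn{s}_i}$ is exactly a family $(\equivclass{\tup{\vy}.\ttau_i}_\theory{T})_{i<\alpha}$ as in the second item of the statement.

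\textbf{Step 3 (membership in $\intpn{\tup{\vx}.\fphi}{M}$).} The construction states explicitly that such a tuple lies in $\intpn{\tup{\vx}.\fphi}{M}$ iff $\theory{T}\vDash(\fpsi\seq{\tup{\vy}}\fphi\subst{\ttau_i}{\vx_i}_{i<\alpha})$. This is precisely the required condition, so composing Steps 1–3 yields the bijection.

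The only points needing care are bookkeeping ones, and I expect the first to be the main (mild) obstacle:
\begin{itemize}
\item[(a)] The condition must be independent of the chosen representatives $\ttau_i$. This follows because $\sim_\theory{T}$-equivalent terms are provably equal under $\fpsi$, so substituting either into $\fphi$ gives provably equivalent formulas under $\fpsi$. I would verify this by checking validity in every model, via the semantic definition of derivability.
\item[(b)] The variables $\tup{\vx}$ and $\tup{\vy}$ should first be renamed apart, so that the simultaneous substitution is capture-free.
\item[(c)] Everything is $\mu$-ary. This is harmless by the remark on replacement of arity, since the representability theorem is stated for $\mu$-ary formulas.
\end{itemize}
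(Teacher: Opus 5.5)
Your proposal is correct and follows the paper's own route: the paper states the corollary as an immediate consequence of the representability theorem together with the construction of the representing models, i.e.\ instantiating at $M=\repn{\tup{\vy}.\fpsi}_{\theory{T}}$ and reading off membership in $\intpn{\tup{\vx}.\fphi}{M}$ from that construction, exactly as in your Steps 1--3. Your bookkeeping remarks are harmless. Point (b) is not even needed: Horn formulas have no binders, so simultaneous substitution cannot capture variables.
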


\begin{notation}[Morphisms between representing models]
    We write
    \[
        \repn{\tup{\tau}}_\bT\colon \repn{\tup{x}.\phi}_\bT\to\repn{\tup{y}.\psi}_\bT
    \]
    for the morphism corresponding to $\theory{T}$-terms $(\tup{\vy}.\ttau_i)_{i<\alpha}$ by \cref{cor:morphism_between_repn}.
\end{notation}

\begin{theorem}
    Let $\kappa\le\mu$ be small regular cardinals.
    Let $\theory{T}$ be a $\kappa$-ary \ac{PHT}.
    Then, for every $M\in\PMod\theory{T}$, the following are equivalent:
    \begin{enumerate}
        \item
            $M$ is $\mu$-presentable in $\PMod\theory{T}$.
        \item
            There exists a $\mu$-ary Horn formula $\tup{\vx}.\fphi$ such that $M\cong\repn{\tup{\vx}.\fphi}_\theory{T}$ in $\PMod\theory{T}$.
    \end{enumerate}
\end{theorem}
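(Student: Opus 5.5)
The plan is to prove the two implications separately. The direction from representing models to presentability goes through preservation of $\mu$-filtered colimits. The converse writes $M$ as a $\mu$-filtered colimit of representing models, extracts a retract, and shows that a retract of a representing model is again a representing model.

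\textbf{(ii)$\Rightarrow$(i).} By the representability theorem, $\PMod\theory{T}(\repn{\tup{\vx}.\fphi}_\theory{T},-)\cong\intpn{\tup{\vx}.\fphi}{\bullet}$, so it suffices to show that $\intpn{\tup{\vx}.\fphi}{\bullet}$ preserves $\mu$-filtered colimits. Since $\kappa\le\mu$, every $\mu$-filtered colimit is $\kappa$-filtered. The forgetful functor $\PMod\theory{T}\arr\Set^S$ creates $\kappa$-filtered colimits, so such colimits are computed sortwise in $\Set$, and a partial operation or relation is defined or holds in the colimit exactly when it does at some stage. By induction on terms and on Horn formulas, I will check that $\intpn{\tup{\vx}.\ttau}{\bullet}$, its domain of definition, and $\intpn{\tup{\vx}.\fphi}{\bullet}$ commute with $\mu$-filtered colimits. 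This rests on two facts in $\Set$: products indexed by fewer than $\mu$ factors commute with $\mu$-filtered colimits, and so do intersections of fewer than $\mu$ subobjects. Each of these is standard.

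\textbf{(i)$\Rightarrow$(ii).} Fix a $\theory{T}$-model $M$ and consider the category of pairs $(\tup{\vx}.\fphi, m)$, where $\fphi$ is $\mu$-ary and $m\in\intpn{\tup{\vx}.\fphi}{M}$. The morphisms are the morphisms $\repn{\tup{\ttau}}_\theory{T}$ compatible with the points, and the diagram sends each pair to $\repn{\tup{\vx}.\fphi}_\theory{T}$. I will first show that this category is $\mu$-filtered. Given fewer than $\mu$ objects, take the disjoint union of their contexts and the conjunction of their formulas. Given fewer than $\mu$ parallel morphisms, add the equations between the corresponding terms to the formula; these equations are satisfied at $m$. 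Next I will show that $M$ is the colimit, computing it in $\Set^S$ by the creation theorem. Every element of $M$ is the image of a variable, and any equality or definedness fact holding in $M$ is already recorded by some formula in the diagram. This step, verifying filteredness and that the colimit is exactly $M$, is the main technical obstacle. I would handle it by the standard canonical-diagram argument, relying on $\kappa$-filtered colimits being computed sortwise. If $M$ is $\mu$-presentable, $\id_M$ factors through some stage, so $M$ is a retract $r\colon\repn{\tup{\vx}.\fphi}_\theory{T}\arr M$, $s\colon M\arr\repn{\tup{\vx}.\fphi}_\theory{T}$ with $rs=\id$.

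\textbf{Retracts.} The idempotent $e\coloneq sr$ has the form $\repn{\tup{\ttau}}_\theory{T}$ for some terms $\tup{\vx}.\ttau_i$, by \cref{cor:morphism_between_repn}. Since $e$ splits through $M$, $r$ exhibits $M$ as the coequalizer of $e$ and $\id$. Hence $\PMod\theory{T}(M,N)$ is naturally isomorphic to the set of $f\colon\repn{\tup{\vx}.\fphi}_\theory{T}\arr N$ with $fe=f$. Under the representability isomorphism, such $f$ correspond to the elements of $\intpn{\tup{\vx}.\fphi\wedge\bigwedge_i \vx_i=\ttau_i}{N}$. The formula $\fphi\wedge\bigwedge_i \vx_i=\ttau_i$ is still $\mu$-ary, because the context has fewer than $\mu$ variables. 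By Yoneda, $M\cong\repn{\tup{\vx}.\fphi\wedge\bigwedge_i\vx_i=\ttau_i}_\theory{T}$.
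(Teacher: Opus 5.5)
Your proof is correct. The paper does not prove this statement: it appears in \cref{sec:partial_horn_theories} only as background, with the reader referred to \cite{Kawase2026relativized}. So there is no internal argument to compare against, and your proposal stands as a self-contained proof.

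A common alternative route goes through the general theory of locally presentable categories. One shows that the representing models of $\kappa$-ary formulas form a dense generator. One then shows that representing models are closed under $\mu$-small colimits: coproducts by juxtaposing contexts, coequalizers by adjoining equations. Finally one invokes the fact that $\mu$-presentable objects are retracts of $\mu$-small colimits of $\kappa$-presentable objects.

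Your argument uses the same two syntactic closure properties, but directly, to get $\mu$-filteredness of the canonical diagram of $\mu$-ary representing models over $M$. Your retract step is the one extra ingredient either route needs: the split idempotent $e=\repn{\tup{\ttau}}_\theory{T}$ is represented by $\fphi\wedge\bigwedge_i\vx_i=\ttau_i$. What your version buys is that it uses only the representability theorem, \cref{cor:morphism_between_repn}, and creation of $\kappa$-filtered colimits.

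Two small points to tidy:
\begin{itemize}
\item When you identify $M$ with the colimit of the canonical diagram, you must also record relation facts, not only equalities and definedness. That is, use stages of the form $(\tup{\vz}.\syn{R}(\tup{\vz}),\tup{a})$.
\item Renaming fewer than $\mu$ contexts apart presupposes that each sort has at least $\mu$ variables available. The ``replacement of arity'' remark has to be read this way. With only $\kappa$ variables per sort, as literally fixed in the appendix, the statement fails for $\mu$ sufficiently larger than $\kappa$.
\end{itemize}
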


\subsection{Reduction of partial terms}\label[appendix]{subsec:reduction_of_partialterm}
We define a preorder $\tsigma \termle{} \ttau$ on the set of terms, which informally means that if $\tsigma$ is defined, then $\ttau$ is defined and equals to $\tsigma$.
\begin{definition}
    Let $\theory{T}=(S,\Sigma,\theory{T})$ be a $\kappa$-ary \ac{PHT}.
    Let $\tup{\vx}.\tsigma,\tup{\vx}.\ttau$ be terms over $\Sigma$ in the same context.
    We write $\tsigma\termle{\tup{\vx}}[\theory{T}]\ttau$ if the following Horn sequent is derivable from $\theory{T}$:
    \begin{equation*}
        \tsigma\defined
            \seq{\tup{\vx}}
                \tsigma=\ttau
    \end{equation*}
    We write $\tsigma\termeq{\tup{\vx}}[\theory{T}]\ttau$ if $\tsigma\termle{\tup{\vx}}[\theory{T}]\ttau$ and $\ttau\termle{\tup{\vx}}[\theory{T}]\tsigma$ hold.
    It is easy to see that $\termle{\tup{\vx}}[\theory{T}]$ becomes a preorder, and $\termeq{\tup{\vx}}[\theory{T}]$ becomes an equivalence relation.
\end{definition}

\begin{lemma}
    Let $\theory{T}=(S,\Sigma,\theory{T})$ be a $\kappa$-ary \ac{PHT}.
    Let $\tup{\vx}\subseteq\tup{\vy}$ be contexts, and let $\tup{\vx}.\tsigma,\tup{\vx}.\ttau$ be terms.
    Then,
    \begin{equation*}
        \tsigma \termle{\tup{\vx}}[\theory{T}] \ttau
        \implies
        \tsigma \termle{\tup{\vy}}[\theory{T}] \ttau
    \end{equation*}
\end{lemma}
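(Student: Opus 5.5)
The statement asserts that if $\tsigma\termle{\tup{\vx}}[\theory{T}]\ttau$, then $\tsigma\termle{\tup{\vy}}[\theory{T}]\ttau$. By definition this means: from the validity of $\tsigma\defined\seq{\tup{\vx}}\tsigma=\ttau$ in every partial $\theory{T}$-model, deduce the validity of $\tsigma\defined\seq{\tup{\vy}}\tsigma=\ttau$. I would argue semantically, since derivability is defined in the paper as validity in all partial $\theory{T}$-models.

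Fix a partial $\theory{T}$-model $M$. Write $\tup{\vx}=(\vx_i)_{i<\alpha}$ and $\tup{\vy}=(\vy_j)_{j<\beta}$, where each $\vx_i$ is some $\vy_{j(i)}$. Let $\pi\colon\prod_{j<\beta}M_{\syn{s}_j}\to\prod_{i<\alpha}M_{\syn{s}_i}$ be the projection $\tup{m}\mapsto(m_{j(i)})_i$.

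The key step is the weakening lemma for interpretations: for any term $\tup{\vx}.\trho$, the partial map $\intpn{\tup{\vy}.\trho}{M}$ equals $\intpn{\tup{\vx}.\trho}{M}\circ\pi$. This includes the claim that the two sides have the same domain of definition. I would prove it by induction on the structure of the raw term $\trho$. For a variable $\vx_i$, both sides are the projection onto coordinate $j(i)$. For $\syn{f}(\trho_k)_k$, the defining clause of the interpretation depends only on the values $\intpn{\trho_k}{M}$, so the induction hypothesis applies directly. As a consequence, for a Horn formula in context $\tup{\vx}$, the interpretation in context $\tup{\vy}$ is the preimage under $\pi$; in particular this holds for $\tsigma=\tsigma$ and $\tsigma=\ttau$.

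Now suppose $\tup{m}\in\intpn{\tup{\vy}.\tsigma\defined}{M}$. Then $\pi(\tup{m})\in\intpn{\tup{\vx}.\tsigma\defined}{M}$. By the hypothesis, $\pi(\tup{m})\in\intpn{\tup{\vx}.\tsigma=\ttau}{M}$, and hence $\tup{m}\in\intpn{\tup{\vy}.\tsigma=\ttau}{M}$. Since $M$ was arbitrary, the sequent $\tsigma\defined\seq{\tup{\vy}}\tsigma=\ttau$ is derivable from $\theory{T}$.

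The only real obstacle is the bookkeeping in the structural induction: a variable of $\tup{\vy}$ that does not lie in $\tup{\vx}$ must not affect definedness. This holds because only the variables actually appearing in $\trho$ are ever evaluated.
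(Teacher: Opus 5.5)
Your proof is correct. The paper states this lemma without proof, as the routine context-weakening property. Your argument is exactly the standard justification given the paper's semantic definition of derivability (validity in every partial $\theory{T}$-model): the interpretation in the $\tup{\vy}$-context is the preimage under the coordinate projection $\pi$ of the interpretation in the $\tup{\vx}$-context.

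Only this preimage direction is used, so possibly empty carriers cause no trouble, unlike for the converse (strengthening).
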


\begin{lemma}\label{lem:substitution_into_reduction}
    Let $\theory{T}=(S,\Sigma,\theory{T})$ be a $\kappa$-ary \ac{PHT}.
    Let $\tup{\vx}.\tsigma,\tup{\vx}.\ttau$ be terms.
    Let $\tup{\vy}.\tup{\syn{\rho}}=(\tup{\vy}.\syn{\rho}_i)_i$ be terms, sort compatible with the context $\tup{\vx}$.
    Then,
    \begin{equation*}
        \tsigma\termle{\tup{\vx}}[\theory{T}] \ttau,
        \quad
        \fv(\tsigma)\supseteq\fv(\ttau)
        \implies
        \tsigma\subst{\tup{\syn{\rho}}}{\tup{\vx}}\termle{\tup{\vy}}[\theory{T}] \ttau\subst{\tup{\syn{\rho}}}{\tup{\vx}}.
    \end{equation*}
\end{lemma}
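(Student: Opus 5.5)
We will argue semantically, using the definition of derivability as validity in every partial $\theory{T}$-model, which the completeness remark makes equivalent to syntactic derivability. Write $\tup{\vx}=(\vx_i\ofsort\syn{s}_i)_i$ and $\tup{\vy}=(\vy_j\ofsort\syn{t}_j)_j$. The goal is the validity of the sequent
\begin{equation*}
    \tsigma\subst{\tup{\syn{\rho}}}{\tup{\vx}}\defined
        \seq{\tup{\vy}}
            \tsigma\subst{\tup{\syn{\rho}}}{\tup{\vx}}=\ttau\subst{\tup{\syn{\rho}}}{\tup{\vx}}
\end{equation*}
in an arbitrary model $M$. So we fix $\tup{m}\in\prod_j M_{\syn{t}_j}$ at which $\intpn{\tup{\vy}.\tsigma\subst{\tup{\syn{\rho}}}{\tup{\vx}}}{M}$ is defined.

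The first step is a routine substitution lemma, proved by induction on the raw term $\ttau'$. Let $\tup{a}\in\prod_i M_{\syn{s}_i}$ be any tuple with $a_i=\intpn{\tup{\vy}.\syn{\rho}_i}{M}(\tup{m})$ for every $\vx_i\in\fv(\ttau')$, these values all being defined. Then $\intpn{\tup{\vy}.\ttau'\subst{\tup{\syn{\rho}}}{\tup{\vx}}}{M}(\tup{m})$ is defined if and only if $\intpn{\tup{\vx}.\ttau'}{M}(\tup{a})$ is, and when defined the two values agree. Both directions use only strictness of the interpretation: an application $\syn{f}(\dots)$ is defined exactly when all its arguments are defined and $\intpn{\syn{f}}{M}$ is defined on them.

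Next we choose the tuple $\tup{a}$. Since $\tsigma\subst{\tup{\syn{\rho}}}{\tup{\vx}}$ is defined at $\tup{m}$, strictness shows that $\intpn{\syn{\rho}_i}{M}(\tup{m})$ is defined for every $\vx_i\in\fv(\tsigma)$, because each such $\syn{\rho}_i$ occurs as a subterm. For these $i$ we set $a_i\coloneq\intpn{\syn{\rho}_i}{M}(\tup{m})$. For the remaining $i$, where $\vx_i\notin\fv(\tsigma)$, we take $a_i\coloneq\intpn{\syn{\rho}_i}{M}(\tup{m})$ if this is defined and an arbitrary element of $M_{\syn{s}_i}$ otherwise. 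The step lemma applied to $\tsigma$ shows that $\intpn{\tup{\vx}.\tsigma}{M}(\tup{a})$ is defined and equals $\intpn{\tsigma\subst{\tup{\syn{\rho}}}{\tup{\vx}}}{M}(\tup{m})$. The hypothesis $\tsigma\termle{\tup{\vx}}[\theory{T}]\ttau$, read in the full context $\tup{\vx}$, then gives that $\intpn{\tup{\vx}.\ttau}{M}(\tup{a})$ is defined and equal to it. Since $\fv(\ttau)\subseteq\fv(\tsigma)$, every variable of $\ttau$ falls under the first case of the choice. The step lemma applied to $\ttau$ therefore transfers this back: $\intpn{\ttau\subst{\tup{\syn{\rho}}}{\tup{\vx}}}{M}(\tup{m})$ is defined and equal to $\intpn{\ttau}{M}(\tup{a})$, which gives the desired equation. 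This is exactly where the hypothesis $\fv(\tsigma)\supseteq\fv(\ttau)$ is needed. Without it, some $\syn{\rho}_i$ occurring only in $\ttau$ could be undefined at $\tup{m}$.

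The main obstacle is the choice of $a_i$ for $\vx_i\notin\fv(\tsigma)$ when $\intpn{\syn{\rho}_i}{M}(\tup{m})$ is undefined. This requires $M_{\syn{s}_i}\neq\emptyset$, and the hypothesis cannot be restricted to the smaller context $\fv(\tsigma)$, since in context $\tup{\vx}$ it says nothing about models in which some $M_{\syn{s}_i}$ is empty. We will handle this as follows. If $M_{\syn{s}_i}$ is inhabited, any element works. This holds in particular when $\syn{s}_i$ is the sort of a variable in $\fv(\tsigma)$, since then the corresponding $a_{i'}$ already provides an element. It also holds when $\syn{s}_i$ occurs among the sorts of $\tup{\vy}$, via $\tup{m}$, and it covers every one-sorted case with some variable present, such as the application to $\theory{T}_\ncat$ in the accompanying remark. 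In the residual many-sorted case with $M_{\syn{s}_i}=\emptyset$ the argument does not go through, and I expect the statement itself may fail there. A candidate counterexample: two sorts, a theory containing the axiom $\syn{c}\defined\seq{\vx\ofsort\syn{b}}\syn{c}=\syn{d}$, and a model with empty $\syn{b}$-carrier in which $\syn{c}$ is defined but $\syn{d}$ is not. So the write-up should record the inhabitation side condition explicitly, for instance that every sort of $\tup{\vx}$ not occurring in $\fv(\tsigma)$ occurs in $\tup{\vy}$, or that $\theory{T}$ is one-sorted and $\tup{\vy}$ is non-empty. All uses of the lemma in the paper satisfy this condition.
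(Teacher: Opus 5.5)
The paper states this lemma in the appendix without proof, so there is nothing to compare against. Judged on its own, your argument is correct, and so is your suspicion about the statement. The step lemma, the choice of $a_i$ for $\vx_i\in\fv(\tsigma)$, and the transfer back along $\fv(\ttau)\subseteq\fv(\tsigma)$ are all sound. The only arbitrary choice is at variables $\vx_i\notin\fv(\tsigma)$ whose $\syn{\rho}_i$ is undefined at $\tup{m}$, and there nonemptiness of $M_{\syn{s}_i}$ genuinely cannot be avoided.

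Your counterexample needs one more ingredient. As written, there is no admissible $\syn{\rho}$ of sort $\syn{b}$ other than a $\syn{b}$-variable. If $\tup{\vy}$ contains such a variable, the conclusion holds vacuously in every model with $M_{\syn{b}}=\emptyset$. The fix is as follows:
\begin{itemize}
\item Take $\syn{c},\syn{d}$ to be constants of the other sort $\syn{a}$, and add a partial constant $\syn{e}\colon\syn{b}$.
\item Take $\tup{\vy}=()$ and $\syn{\rho}=\syn{e}$.
\item The hypotheses hold: $\syn{c}\termle{\vx}[\theory{T}]\syn{d}$ is the axiom itself, and $\fv(\syn{c})=\emptyset=\fv(\syn{d})$.
\item The conclusion is the sequent $\syn{c}\defined\seq{()}\syn{c}=\syn{d}$. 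It fails in the model with $M_{\syn{a}}=\{0\}$, $M_{\syn{b}}=\emptyset$, $\syn{c}=0$, and $\syn{d},\syn{e}$ undefined. The axiom holds there vacuously because its context has empty product.
\end{itemize}
So the lemma as literally stated is false for many-sorted theories.

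Your side condition can be sharpened. By strictness, every subterm of $\tsigma\subst{\tup{\syn{\rho}}}{\tup{\vx}}$ is defined at $\tup{m}$. Hence the sort of each such subterm is inhabited in $M$, in particular the sort of $\tsigma$ itself. It follows that one-sorted theories need no side condition at all; your requirement that $\tup{\vy}$ be non-empty is superfluous.

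The cleanest uniform repair is to require $\tup{\vx}=\fv(\tsigma)$, that is, to state the hypothesis in the minimal context of $\tsigma$. Then no arbitrary choice is ever made, and the condition $\fv(\ttau)\subseteq\fv(\tsigma)$ becomes automatic. Every relation in \cref{lem:termrewriting_ncat} is stated in exactly this form, and $\theory{T}_\ncat$ is one-sorted, so the paper's applications are unaffected.
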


\begin{lemma}
    Let $\theory{T}=(S,\Sigma,\theory{T})$ be a $\kappa$-ary \ac{PHT}.
    Let $\tup{\tsigma}=(\tsigma_i)_i,\tup{\ttau}=(\ttau_i)_i$ be raw terms, sort compatible with the context $\tup{\vx}=(\vx_i)_i$, and whose free variables are among $\tup{\vy}$.
    If $\tsigma_i\termle{\tup{\vy}}[\theory{T}] \ttau_i$ for all $i$, the following hold:
    \begin{enumerate}
        \item
            For any term $\tup{\vx}.\syn{\rho}$, we have
            $
            \syn{\rho}\subst{\tup{\tsigma}}{\tup{\vx}}
                \termle{\tup{\vy}}[\theory{T}]
                    \syn{\rho}\subst{\tup{\ttau}}{\tup{\vx}}
            $.
        \item
            For any Horn formula $\tup{\vx}.\syn{\phi}$,
            $
            \syn{\phi}\subst{\tup{\tsigma}}{\tup{\vx}}
                \seq{\tup{\vy}}
                    \syn{\phi}\subst{\tup{\ttau}}{\tup{\vx}}
            $
            is derivable from $\theory{T}$.
    \end{enumerate}
\end{lemma}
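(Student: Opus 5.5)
Since derivability is defined semantically here (a sequent is derivable from $\theory{T}$ when it is valid in every partial $\theory{T}$-model), I will work inside an arbitrary partial $\theory{T}$-model $M$. Fix a tuple $\tup{m}$ of elements in the context $\tup{\vy}$. By definition of $\termle{\tup{\vy}}[\theory{T}]$, the hypothesis becomes the following statement about $M$: for each $i$, if $\intpn{\tup{\vy}.\tsigma_i}{M}(\tup{m})$ is defined, then $\intpn{\tup{\vy}.\ttau_i}{M}(\tup{m})$ is defined and equal to it. Both conclusions will then be proved by structural induction, on the raw term $\syn{\rho}$ for (1) and on the raw Horn formula $\syn{\phi}$ for (2).

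For (1) I will show the following, by induction on $\syn{\rho}$: whenever $\intpn{\tup{\vy}.\syn{\rho}\subst{\tup{\tsigma}}{\tup{\vx}}}{M}(\tup{m})$ is defined, so is $\intpn{\tup{\vy}.\syn{\rho}\subst{\tup{\ttau}}{\tup{\vx}}}{M}(\tup{m})$, and the two values agree.
\begin{itemize}
\item If $\syn{\rho}$ is a variable $\vx_i$, the two substituted terms are $\tsigma_i$ and $\ttau_i$, and the claim is exactly the hypothesis.
\item If $\syn{\rho}=\syn{f}(\syn{\rho}_j)_j$, then by the clause defining the interpretation of $\syn{f}(\dots)$, definedness of the $\tsigma$-version forces every $\syn{\rho}_j\subst{\tup{\tsigma}}{\tup{\vx}}$ to be defined at $\tup{m}$. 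The induction hypothesis then gives that every $\syn{\rho}_j\subst{\tup{\ttau}}{\tup{\vx}}$ is defined with the same value. Hence $\intpn{\syn{f}}{M}$ is applied to identical arguments in both cases, so it is defined on them and gives the same result.
\end{itemize}
Having this for all $M$ and $\tup{m}$ is precisely validity of $\syn{\rho}\subst{\tup{\tsigma}}{\tup{\vx}}\defined \seq{\tup{\vy}} \syn{\rho}\subst{\tup{\tsigma}}{\tup{\vx}}=\syn{\rho}\subst{\tup{\ttau}}{\tup{\vx}}$ in every model, which is (1).

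For (2) I will show, by induction on $\syn{\phi}$, that $\tup{m}\in\intpn{\tup{\vy}.\syn{\phi}\subst{\tup{\tsigma}}{\tup{\vx}}}{M}$ implies $\tup{m}\in\intpn{\tup{\vy}.\syn{\phi}\subst{\tup{\ttau}}{\tup{\vx}}}{M}$.
\begin{itemize}
\item $\syn{\top}$ is immediate.
\item For $\syn{R}(\syn{\rho}_j)_j$, membership requires all $\syn{\rho}_j\subst{\tup{\tsigma}}{\tup{\vx}}$ to be defined with values lying in $\intpn{\syn{R}}{M}$. By the pointwise form of (1) just proved, the $\tup{\ttau}$-substituted terms are defined with the same values, so the tuple again lies in $\intpn{\syn{R}}{M}$.
\item Equalities $\syn{\rho}=\syn{\rho}'$ are handled the same way: both sides are defined on the $\tup{\tsigma}$-side, so by (1) both are defined with unchanged values on the $\tup{\ttau}$-side.
\item Conjunctions $\bigwedge_j\syn{\phi}_j$, including infinitary ones, follow componentwise from the induction hypothesis.
\end{itemize}

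I do not expect a genuine obstacle. The point needing care is that I should not go through a general substitution lemma of the form ``$\intpn{\syn{\rho}\subst{\tup{\tsigma}}{\tup{\vx}}}{}=\intpn{\syn{\rho}}{}\circ\intpn{\tup{\tsigma}}{}$''. Under partiality that equation fails when some $\vx_i$ does not occur in $\syn{\rho}$: then $\tsigma_i$ may be undefined while the substituted term is still defined. The direct structural induction above avoids this, because it only ever uses the hypothesis $\tsigma_i\termle{\tup{\vy}}[\theory{T}]\ttau_i$ for variables that actually occur in $\syn{\rho}$.
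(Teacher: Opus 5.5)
Your proof is correct. The paper states this lemma without proof, and your pointwise structural induction (first on raw terms, then on raw Horn formulas using the pointwise form of (1), all in the single context $\tup{\vy}$ where the hypothesis already lives) is the natural argument; carrying it out semantically is legitimate because the paper defines derivability as validity in every partial $\theory{T}$-model. Your caution about the naive substitution equation is also well placed: definedness of $\syn{\rho}\subst{\tup{\tsigma}}{\tup{\vx}}$ gives no information about those $\tsigma_i$ whose variable does not occur in $\syn{\rho}$.
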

\section{Generalized algebraic theories}\label[appendix]{sec:GAT}
For the convenience of the reader and the clarity of notation, we recall the syntax of \acfp{GAT}.
The original definition is given in \cite{Cartmell1986generalised}, but we follow more recent presentations, \cite{Garner2015combinatorial}, with some modifications.
\subsection{Syntax of generalized algebraic theories}
In the following, we fix a small set $\Var$ of variables. 
\begin{definition}
    \quad
    \begin{enumerate}
        \item The small set $S^*$ of \emph{expressions} over a small set $S$ is defined inductively as follows:
        \begin{itemize}
            \item For each $\syn{x} \in \Var$, $\syn{x} \in S^*$.
            \item For each $n \in \bN$, $\syn{s} \in S$, and $\syn{e}_1, \ldots, \syn{e}_n \in S^*$, we have $\syn{s}(\syn{e}_1, \ldots, \syn{e}_n) \in S^*$.
        \end{itemize}
        \item The small set of \emph{free variables} of an expression $\syn{e} \in S^*$, denoted by $\fv(\syn{e})$, is defined inductively as follows:
        \begin{itemize}
            \item $\fv(\syn{x}) \coloneq \{\syn{x}\}$.
            \item $\fv(\syn{s}(\syn{e}_1, \ldots, \syn{e}_n)) \coloneq \fv(\syn{e}_1) \cup \cdots \cup \fv(\syn{e}_n)$.
        \end{itemize}
        \item For an expression $\syn{e} \in S^*$, its free variable $\syn{x} \in \fv(\syn{e})$, and another expression $\syn{t} \in S^*$, we define the \emph{substitution} of $\syn{t}$ for $\syn{x}$ in $\syn{e}$, denoted by $\syn{e}[\syn{t}/\syn{x}]$, as follows:
        \begin{itemize}
            \item If $\syn{e} = \syn{x}$, then $\syn{e}[\syn{t}/\syn{x}] \colonequiv \syn{t}$.
            \item If $\syn{e} = \syn{y}$ for $\syn{y} \neq \syn{x}$, then $\syn{e}[\syn{t}/\syn{x}] \colonequiv \syn{y}$.
            \item If $\syn{e} = \syn{s}(\syn{e}_1, \ldots, \syn{e}_n)$, then $\syn{e}[\syn{t}/\syn{x}] \colonequiv \syn{s}(\syn{e}_1[\syn{t}/\syn{x}], \ldots, \syn{e}_n[\syn{t}/\syn{x}])$.\qedhere
        \end{itemize}
    \end{enumerate}
\end{definition}
\begin{definition}
    \quad
    \begin{enumerate}
        \item A \emph{signature} $\Sigma$ is a pair $(\Sigma\sort,  \Sigma\oper)$ of small sets.
        Elements of $\Sigma\sort$ are called \emph{sort symbols}, and elements of $\Sigma\oper$ are called \emph{operator symbols}.
        We mean by an \emph{expression over a signature $\Sigma$} an expression $\syn{e} \in (\Sigma\sort \sqcup \Sigma\oper)^*$.
        \item A \emph{precontext} over a signature $\Sigma$ is a finite sequence of pairs $(\syn{x}, \syn{e})$, where $\syn{x} \in \Var$ is a variable and $\syn{e}$ is an expression over $\Sigma$, such that the first components of all pairs (i.e., the variables) are distinct.
        Precontexts are denoted by $\syn{\Gamma} = (\syn{x}_1 : \syn{e}_1, \ldots, \syn{x}_n : \syn{e}_n)$, for example.
        In particular, the empty sequence is a precontext, which is denoted by $()$.
        If clear, we drop the parentheses and write $\syn{x}_1 : \syn{e}_1, \ldots, \syn{x}_n : \syn{e}_n$ for $\syn{\Gamma}$.
        The set of all variables appearing as the first components of pairs in a precontext $\syn{\Gamma}$ is denoted by $\fv(\syn{\Gamma})$.
        \item A \emph{prejudgment} over a signature $\Sigma$ is a tuple of one of the following forms:
        \begin{itemize}
            \item (\textbf{Context prejudgment}) A precontext $\syn{\Gamma}$. To specify it as a prejudgment, we write $\vdash \syn{\Gamma} \ctx  $ for it;
            \item (\textbf{Type prejudgment}) A pair $(\syn{\Gamma}, \syn{T})$ for a precontext $\syn{\Gamma}$ and an expression $\syn{T}$ over $\Sigma$, which will be denoted by $\syn{\Gamma} \vdash \syn{T} \type$;
            \item (\textbf{Term prejudgment}) A triple $(\syn{\Gamma}, \syn{t}, \syn{T})$ for a precontext $\syn{\Gamma}$, and expressions $t$ and $\syn{T}$ over $\Sigma$, which will be denoted by $\syn{\Gamma} \vdash \syn{t} : \syn{T}$;
            \item (\textbf{Type equality prejudgment}) A triple $(\syn{\Gamma}, \syn{T}_1, \syn{T}_2)$ for a precontext $\syn{\Gamma}$, and expressions $\syn{T}_1$ and $\syn{T}_2$ over $\Sigma$, which will be denoted by $\syn{\Gamma} \vdash \syn{T}_1 = \syn{T}_2 \type$;
            \item (\textbf{Term equality prejudgment}) A quadruple $(\syn{\Gamma}, \syn{t}_1, \syn{t}_2, \syn{T})$ for a precontext $\syn{\Gamma}$, and expressions $\syn{t}_1$, $\syn{t}_2$, and $\syn{T}$ over $\Sigma$, which will be denoted by $\syn{\Gamma} \vdash \syn{t}_1 = \syn{t}_2 : \syn{T}$.
        \end{itemize}
        The empty precontext in a prejudgment is often omitted.
        By abuse of terminology, we also call the first component $\syn{\Gamma}$ of a prejudgment the \emph{context} of the prejudgment.
        \item A \emph{(generalized algebraic) pretheory} $\theory{T}$ over a signature $\Sigma$ consists of the following data:
        \begin{itemize}
            \item
                For each sort symbol $\syn{A} \in \Sigma\sort$, a type prejudgment 
                \begin{equation}\label{eq:sortsymbol}
                    \syn{x}_1:\syn{T}_1,\dots, \syn{x}_n:\syn{T}_n \vdash \syn{A}(\syn{x}_1,\dots,\syn{x}_n) \type.
                \end{equation}
                The context in this prejudgment is denoted by $\syn{\Gamma}_\syn{A}$.
            \item
                For each operator symbol $\syn{f} \in \Sigma\oper$, a term prejudgment
                \begin{equation}\label{eq:operatorsymbol}
                    \syn{x}_1:\syn{T}_1,\dots, \syn{x}_n:\syn{T}_n \vdash \syn{f}(\syn{x}_1,\dots,\syn{x}_n) : \syn{T}.
                \end{equation}
                The context $\syn{x}_1:\syn{T}_1,\dots,\syn{x}_n:\syn{T}_n$ in this prejudgment and the expression $\syn{T}$ are denoted by $\syn{\Gamma}_\syn{f}$ and $\syn{T}_\syn{f}$, respectively.
            \item
                A small set of term equality prejudgments over the signature $\Sigma$, whose elements are called \emph{axioms of $\theory{T}$}.
        \end{itemize}
        We write $\syn{J}\in \theory{T}$ for a prejudgment $\syn{J}$ if it is one of the forms \Cref{eq:sortsymbol,eq:operatorsymbol} for some $\syn{A}\in \Sigma\sort$ or $\syn{f}\in \Sigma\oper$, or $\syn{J}$ is an axiom of $\theory{T}$.\qedhere
    \end{enumerate}
\end{definition}

\begin{remark}
    There are some subtleties in our definition of \acp{GAT} compared to the standard one.
    The first thing is that we adopt contexts as one kind of judgments, similarly to the formulation in \cite{Pitts2000categorical}.
    The second thing is that we do not allow type equality judgments as axioms.
    This will certainly impair the expressive ability of the type theory.
    Nevertheless, most of our target examples are captured, and also the class of (the equivalence classes of) the categories of models does not diminish since one can always replace a type equality axiom with operator symbols that express the mutual inverses between the expectedly equal types and the axioms stating that those are indeed mutually inverses.
\end{remark}

\begin{definition}
    \label{def:derivation-rules-of-GAT}
    We define when a prejudgment $\syn{J}$ is \emph{derived from} a pretheory $\theory{T}$, for which we write $\theory{T}\drv \syn{J}$.
    This is defined inductively by the following derivation rules.
    \begin{mathparpagebreakable}
            \derule{rule:refl-ty}
            {\theory{T} \drv \syn{\Gamma} \vdash \syn{T} \type}
            {\theory{T} \drv \syn{\Gamma} \vdash \syn{T} = \syn{T} \type}
            
            \derule{rule:refl-tm}
                    {\theory{T} \drv \syn{\Gamma} \vdash \syn{t} : \syn{T}}
                    {\theory{T} \drv \syn{\Gamma} \vdash \syn{t} = \syn{t} : \syn{T}}
            
            \derule{rule:sym-ty}
                    {\theory{T} \drv \syn{\Gamma} \vdash \syn{T}_1 = \syn{T}_2 \type}
                    {\theory{T} \drv \syn{\Gamma} \vdash \syn{T}_2 = \syn{T}_1 \type}
            
            \derule{rule:sym-tm}
                    {\theory{T} \drv \syn{\Gamma} \vdash \syn{t}_1=\syn{t}_2 : \syn{T}}
                    {\theory{T} \drv \syn{\Gamma} \vdash \syn{t}_2=\syn{t}_1 : \syn{T}}
            
            \derule{rule:trans-ty}
                    {\theory{T} \drv \syn{\Gamma} \vdash \syn{T}_1=\syn{T}_2 \type \\ \theory{T} \drv \syn{\Gamma} \vdash \syn{T}_2=\syn{T}_3 \type}
                    {\theory{T} \drv \syn{\Gamma} \vdash \syn{T}_1=\syn{T}_3 \type}
            
            \derule{rule:trans-tm}
                    {\theory{T} \drv \syn{\Gamma} \vdash \syn{t}_1=\syn{t}_2 : \syn{T} \\ \theory{T} \drv \syn{\Gamma} \vdash \syn{t}_2=\syn{t}_3 : \syn{T}}
                    {\theory{T} \drv \syn{\Gamma} \vdash \syn{t}_1=\syn{t}_3 : \syn{T}}
            
            \derule{rule:conv-tm}
                    {\theory{T} \drv \syn{\Gamma} \vdash \syn{T}_1 = \syn{T}_2 \\ \theory{T} \drv \syn{\Gamma} \vdash \syn{t} : \syn{T}_1}
                    {\theory{T} \drv \syn{\Gamma} \vdash \syn{t} : \syn{T}_2}
            
            \derule{rule:conv-tm-eq}
                    {\theory{T} \drv \syn{\Gamma} \vdash \syn{T}_1 = \syn{T}_2 \\ \theory{T} \drv \syn{\Gamma} \vdash \syn{t} = \syn{t}' : \syn{T}_1}
                    {\theory{T} \drv \syn{\Gamma} \vdash \syn{t} = \syn{t}' : \syn{T}_2}
            
            \derule{rule:alpha-ctx}
                    {\theory{T} \drv \vdash \syn{\Gamma} \ctx \\ \sigma\colon \fv(\syn{\Gamma}) \arr(\cong) \fv(\syn{\Gamma})}
                    {\theory{T} \drv \vdash \sigma\cdot \syn{\Gamma} \ctx}
            
            \derule{rule:alpha}
                    {\theory{T} \drv \syn{\Gamma} \vdash \syn{K} \\ \sigma \colon \fv(\syn{\Gamma}) \arr(\cong) \fv(\syn{\Gamma})}
                    {\theory{T} \drv \sigma\cdot \syn{\Gamma} \vdash \sigma\cdot \syn{K}}
            
            \derule{rule:empty}
                    { }
                    {\theory{T} \drv \vdash () \ctx}
            
            \derule{rule:ctx-extend}
                    {\theory{T} \drv \syn{\Gamma} \vdash \syn{T} \type \\ \syn{x} \notin \fv(\syn{\Gamma})}
                    {\theory{T} \drv \vdash \syn{\Gamma}, \syn{x}: \syn{T} \ctx}
            
            \derule{rule:var}
                    {\theory{T} \drv \syn{\Gamma} \vdash \syn{T} \type \\ \syn{x} \notin \fv(\syn{\Gamma})}
                    {\theory{T} \drv \syn{\Gamma}, \syn{x} : \syn{T} \vdash \syn{x} : \syn{T}}
            
            \derule{rule:weak}
                    {\theory{T} \drv \syn{\Gamma} \vdash \syn{T} \type \\ \theory{T} \drv \syn{\Gamma}, \syn{\Delta} \vdash \syn{K} \\ \syn{x} \notin \fv(\syn{\Gamma})\cup \fv(\syn{\Delta})}
                    {\theory{T} \drv \syn{\Gamma}, \syn{x} : \syn{T}, \syn{\Delta} \vdash \syn{K}}
            
            \derule{rule:subst}
                    {\theory{T} \drv \syn{\Gamma} \vdash \syn{t} : \syn{T} \\ \theory{T} \drv \syn{\Gamma}, \syn{x} : \syn{T}, \syn{\Delta} \vdash \syn{K}}
                    {\theory{T} \drv \syn{\Gamma}, \syn{\Delta}[\syn{t}/\syn{x}] \vdash \syn{K}[\syn{t}/\syn{x}]}
            
            \derule{rule:eq-subst-ty}
                    {\theory{T} \drv \syn{\Gamma} \vdash \syn{t}_1 = \syn{t}_2 : \syn{T} \\ \theory{T} \drv \syn{\Gamma}, \syn{x} : \syn{T}, \syn{\Delta} \vdash \syn{T}'}
                    {\theory{T} \drv \syn{\Gamma}, \syn{\Delta}[\syn{t}_1/\syn{x}] \vdash \syn{T}'[\syn{t}_1/\syn{x}]=\syn{T}'[\syn{t}_2/\syn{x}]}
            
            \derule{rule:eq-subst-tm}
                    {\theory{T} \drv \syn{\Gamma} \vdash \syn{t}_1 = \syn{t}_2 : \syn{T} \\ \theory{T} \drv \syn{\Gamma}, \syn{x} : \syn{T}, \syn{\Delta} \vdash \syn{t}' : \syn{T}'}
                    {\theory{T} \drv \syn{\Gamma}, \syn{\Delta}[\syn{t}_1/\syn{x}] \vdash \syn{t}'[\syn{t}_1/\syn{x}]=\syn{t}'[\syn{t}_2/\syn{x}] : \syn{T}'}
            
            \derule{rule:sort}
                    {\syn{A} \in \Sigma\sort \\ \theory{T} \drv \vdash \syn{\Gamma}_\syn{A} \ctx}
                    {\theory{T} \drv \syn{\Gamma}_\syn{A} \vdash \syn{A}(\syn{x}_1, \ldots, \syn{x}_n) \type}
            
            \derule{rule:oper}
                    {\syn{f} \in \Sigma\oper \\ \theory{T} \drv \syn{\Gamma}_\syn{f} \vdash \syn{T}_\syn{f} \type}
                    {\theory{T} \drv \syn{\Gamma}_\syn{f} \vdash \syn{f}(\syn{x}_1, \ldots, \syn{x}_n) : \syn{T}_\syn{f}}
            
            \derule{rule:eq-axiom}
                    {\syn{\Gamma} \vdash \syn{t} = \syn{t}' : \syn{T} \in \theory{T} \\ \theory{T} \drv \syn{\Gamma} \vdash \syn{t} : \syn{T} \\ \theory{T} \drv \syn{\Gamma} \vdash \syn{t}' : \syn{T}}
                    {\theory{T} \drv \syn{\Gamma} \vdash \syn{t} = \syn{t}' : \syn{T}}
    \end{mathparpagebreakable}
    Here, $\syn{\Gamma}\vdash\syn{K}$ is one of the last for forms of prejudgments, that is, $\syn{K}$ is either $\syn{T} \type$, $\syn{t}:\syn{T}$, $\syn{T}_1=\syn{T}_2 \type$, or $\syn{t}_1=\syn{t}_2 : \syn{T}$.
    The substitution into $\syn{K}$ is defined entry-wise.
    $\sigma \cdot \syn{\Gamma}$ and $\sigma \cdot \syn{K}$ in the \cref{rule:alpha,rule:alpha-ctx} rule denotes the result of renaming (i.e., applying $\alpha$-conversion to) the variables in the prejudgment $\syn{K}$ according to the bijection $\sigma$.
    
    \cref{rule:refl-ty,rule:refl-tm,rule:sym-ty,rule:sym-tm,rule:trans-ty,rule:trans-tm} are the rules for equality being an equivalence relation, \cref{rule:conv-tm,rule:conv-tm-eq} are the type conversion rules, \cref{rule:alpha,rule:alpha-ctx} are the $\alpha$-conversion rules, \cref{rule:empty,rule:ctx-extend} are the rules for forming contexts, \cref{rule:var} is the variable rule, \cref{rule:weak} is the weakening rule, \cref{rule:subst,rule:eq-subst-ty,rule:eq-subst-tm} are the substitution rules, \cref{rule:sort,rule:oper} are the rules for sort and operator symbols, and \cref{rule:eq-axiom} is the axiom rule.
    \qedhere
\end{definition}

Note that, in general, we do not always have a prejudgment in $\theory{T}$ derived from $\theory{T}$, as the last three rules show.
\begin{definition}
    A \emph{\acf{GAT}} over a signature $\Sigma$ is a pretheory $\theory{T}$ such that all the prejudgments in $\theory{T}$ are derived from $\theory{T}$, i.e., for each $\syn{J} \in \theory{T}$, we have $\theory{T} \drv \syn{J}$.
    For a \ac{GAT} $\theory{T}$, we call a prejudgment derived from $\theory{T}$ a \emph{judgment of $\theory{T}$}.
    In particular, when we have a judgment $\vdash \syn{\Gamma} \ctx$ of $\theory{T}$, we say that $\syn{\Gamma}$ is a \emph{context of $\theory{T}$}.
    Similarly, when we have a judgment $\syn{\Gamma} \vdash \syn{T} \type$ (resp.\ $\syn{\Gamma} \vdash \syn{t} : \syn{T}$) of $\theory{T}$, we say that $\syn{T}$ is a \emph{type in the context $\syn{\Gamma}$ of $\theory{T}$} (resp.\ $\syn{t}$ is a \emph{term of type $\syn{T}$ in the context $\syn{\Gamma}$ of $\theory{T}$}).
\end{definition}

\begin{example}\label{eg:set-theory}
    The \ac{GAT} for (small) sets $\theory{T}_{\syn{Set}}$ consists of one sort symbol $\syn{El}$ and no operation symbols, with no axioms.
\end{example}

\begin{example}\label{eg:category-theory}
    The \ac{GAT} $\theory{T}_{\syn{Cat}}$ for (small) categories is given as follows.
    The signature $\Sigma_{\syn{Cat}}$ consists of two sort symbols $\syn{Ob}$ and $\syn{Mor}$, and two operator symbols $\syn{id}$ and $\circ$.
    \begin{equation*}
    \begin{aligned}
        &\vdash \syn{Ob}\ \type \\
        \syn{x} : \syn{Ob}, \syn{y} : \syn{Ob} & \vdash \syn{Mor}(\syn{x},\syn{y})\ \type \\
        \syn{x} : \syn{Ob} & \vdash \syn{id}(\syn{x}) : \syn{Mor}(\syn{x},\syn{x}) \\
        \syn{x} : \syn{Ob}, \syn{y} : \syn{Ob}, \syn{z} : \syn{Ob}, \syn{f} : \syn{Mor}(\syn{x},\syn{y}), \syn{g} : \syn{Mor}(\syn{y},\syn{z}) & \vdash \mathop{\circ}(\syn{x}, \syn{y}, \syn{z}, \syn{f},\syn{g}) : \syn{Mor}(\syn{x},\syn{z}) \\
        \syn{x} : \syn{Ob}, \syn{y} : \syn{Ob}, \syn{f} : \syn{Mor}(\syn{x},\syn{y}) & \vdash \mathop{\circ}(\syn{x}, \syn{y}, \syn{y}, \syn{f}, \syn{id}(\syn{y})) = \syn{f} : \syn{Mor}(\syn{x},\syn{y})\\
        \syn{x} : \syn{Ob}, \syn{y} : \syn{Ob}, \syn{f} : \syn{Mor}(\syn{x},\syn{y}) & \vdash \mathop{\circ}(\syn{x}, \syn{x}, \syn{y}, \syn{id}(\syn{x}), \syn{f}) = \syn{f} : \syn{Mor}(\syn{x},\syn{y})\\
        \begin{multlined}
        \syn{w} : \syn{Ob}, \syn{x} : \syn{Ob}, \syn{y} : \syn{Ob}, \syn{z} : \syn{Ob}, \\
        \syn{f} : \syn{Mor}(\syn{w},\syn{x}), \syn{g} : \syn{Mor}(\syn{x},\syn{y}), \syn{h} : \syn{Mor}(\syn{y},\syn{z})
        \end{multlined}
        & \vdash 
        \begin{multlined}
        \mathop{\circ}(\syn{w}, \syn{x}, \syn{z}, \syn{f}, \mathop{\circ}(\syn{x}, \syn{y}, \syn{z}, \syn{g}, \syn{h})) \\
        = \mathop{\circ}(\syn{w}, \syn{y}, \syn{z}, \mathop{\circ}(\syn{w}, \syn{x}, \syn{y}, \syn{f}, \syn{g}), \syn{h})
        \end{multlined}
        : \syn{Mor}(\syn{w},\syn{z})
    \end{aligned}
\end{equation*}
\end{example}

Morphisms between \acp{GAT} can be defined so that they form a category as defined in \cite{Cartmell1986generalised}, but here we confine ourselves to the case of full subtheories.
\begin{definition}
    A \emph{full sub-\ac{GAT}} $\theory{T}'$ of a \ac{GAT} $\theory{T}$ is a \ac{GAT} whose signature of $\theory{T}'$ consists of a subset of sort symbols and operation symbols of $\theory{T}$, and a prejudgment over the signature is derived from $\theory{T}'$ if and only if it is derived from $\theory{T}$ when we identify it as a prejudgment over the signature of $\theory{T}$ in the obvious way.
\end{definition}

\subsection{Syntactic categories of generalized algebraic theories}
\begin{definition}[{\cite[Section 14]{Cartmell1986generalised}}]
    \label{def:syntactic-category}
    The \emph{syntactic category} $\clfy{\theory{T}}$ of a \ac{GAT} $\theory{T}$ is the category defined as follows:
    \begin{itemize}
        \item Its objects are the contexts of $\theory{T}$.
        \item For contexts $\syn{\Gamma}$ and $\syn{\Delta} = (\syn{x}_1:\syn{T}_1, \ldots, \syn{x}_n:\syn{T}_n)$ of $\theory{T}$, a morphism from $\syn{\Gamma}$ to $\syn{\Delta}$ is given by an equivalence class of sequences of term judgments
        \begin{equation*}
            \begin{aligned}
            \syn{\Gamma} &\vdash \syn{t}_1 : \syn{T}_1,\\
            \syn{\Gamma} &\vdash \syn{t}_2 : \syn{T}_2[\syn{t}_1/\syn{x}_1],\\
            &\vdots\\
            \syn{\Gamma} &\vdash \syn{t}_n : \syn{T}_n[\syn{t}_1/\syn{x}_1, \ldots, \syn{t}_{n-1}/\syn{x}_{n-1}]
        \end{aligned}
        \end{equation*}
        of $\theory{T}$, where two such sequences $(\syn{t}_1, \ldots, \syn{t}_n)$ and $(\syn{t}'_1, \ldots, \syn{t}'_n)$ are considered equivalent if we have the following term equality judgment of $\theory{T}$:
        \begin{equation*}
            \syn{\Gamma} \vdash \syn{t}_i = \syn{t}'_i : \syn{T}_i[\syn{t}_1/\syn{x}_1, \ldots, \syn{t}_{i-1}/\syn{x}_{i-1}]
            \quad \text{for all } 1 \leq i \leq n.
        \end{equation*}
        Note that in the substitution of types above, we can safely replace $\syn{t}_j$ with $\syn{t}'_j$, due to the derivation rules for substitution and equality.
        We write the morphism represented by such a sequence $(\syn{t}_1, \ldots, \syn{t}_n)$ as $[\syn{t}_1, \ldots, \syn{t}_n]$.
        \item The composition and identities are defined in the obvious way using substitution.\qedhere
    \end{itemize}
\end{definition}

\begin{remark}
    In \cite{Cartmell1986generalised}, objects of the syntactic category are taken as $\alpha$-equivalence classes.
    However, the category defined here is equivalent to the original definition, primarily because $\alpha$-equivalent contexts are isomorphic in this category.
    Although the description of objects gets simpler in our definition, the drawback is that the category fails to be a contextual category in the sense of \cite[Section 14]{Cartmell1986generalised} in general.

    The originally defined syntactic category can also be obtained by taking the pushout of the inclusion of the wide subcategory consisting of ``$\alpha$-isomorphisms'' along the functor from this subcategory to the discrete category on the set of connected components of it.
    Since this subcategory is not just a groupoid but has contractible connected components, the functor into the discrete category is an equivalence, and the resulting category is equivalent to the one defined here because the inclusion is injective on objects, see \cite[Corollary 3]{JoyalStreet1993pullbacks}.
\end{remark}

\begin{proposition}
    \label{prop:syntactic-category-is-clan}
    The syntactic category $\clfy{\theory{T}}$ of a \ac{GAT} $\theory{T}$ has a clan structure by taking the set of display morphisms to consist of morphisms of the form
    \begin{equation}\label{eq:display-morphisms-in-syntactic-clan}
        \syn{\Delta}\arr(\cong)
        (\syn{x}_1 : \syn{T}_1, \ldots, \syn{x}_n : \syn{T}_n)
        \arr({[\syn{x}_1, \ldots, \syn{x}_k]})[][4]
        (\syn{x}_1 : \syn{T}_1, \ldots, \syn{x}_k : \syn{T}_k)
        \quad\text{with }0\le k\le n.
    \end{equation}
    We will denote this clan simply by $\clfy{\theory{T}}$, and call it the \emph{syntactic clan} of $\theory{T}$.
\end{proposition}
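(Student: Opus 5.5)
We need to verify that the class $\bfDelta$ of morphisms described in \cref{eq:display-morphisms-in-syntactic-clan} satisfies the six axioms of \cref{def:clan}. I would first record three basic syntactic facts. If $\syn{\Gamma}$ is a context and $\syn{\Gamma}=(\syn{x}_1:\syn{T}_1,\dots,\syn{x}_n:\syn{T}_n)$, then every prefix is again a context, by inversion on \cref{rule:ctx-extend}. The empty context $()$ is terminal, because a morphism into it is the empty sequence of terms. Every morphism $\syn{\Gamma}\arr(\syn{\Delta},\syn{x}_{k+1}:\syn{T}_{k+1},\dots)$ is determined by its projection to $\syn{\Delta}$ together with the remaining terms.

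With these in hand, most of the axioms are immediate. Identities are in $\bfDelta$ (take $k=n$ and the identity isomorphism). Isomorphisms are in $\bfDelta$ for the same reason, since the leading isomorphism in \cref{eq:display-morphisms-in-syntactic-clan} is arbitrary. The morphism $\syn{\Gamma}\arr()$ is in $\bfDelta$ (take $k=0$). For closure under composition, the key point is that a composite of the form $\text{proj}\circ\text{iso}\circ\text{proj}$ must be rewritten in the standard form. So I would show that a projection followed by an isomorphism can be commuted: given an iso $\theta\colon(\syn{x}_1:\syn{T}_1,\dots,\syn{x}_k:\syn{T}_k)\arr(\syn{y}_1:\syn{S}_1,\dots,\syn{y}_m:\syn{S}_m)$, extend it to the longer context by substituting $\theta$ into the remaining types. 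This is the same construction as for pullbacks, which I treat next.

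The substantive step is pullback along a display morphism. It suffices to pull back a single projection $p\colon(\syn{\Delta},\syn{y}:\syn{S})\arr\syn{\Delta}$ along an arbitrary $[\syn{t}_1,\dots,\syn{t}_k]\colon\syn{\Gamma}\arr\syn{\Delta}$. Multi-variable projections are composites of these, and isomorphisms pull back trivially, so pasting pullbacks reduces the general case to this one. The candidate pullback is $(\syn{\Gamma},\syn{y}':\syn{S}[\vec{\syn{t}}/\vec{\syn{x}}])$, with $\syn{y}'$ chosen fresh via \cref{rule:alpha}. It is a context by \cref{rule:subst} applied to $\syn{\Delta}\vdash\syn{S}\type$, followed by \cref{rule:ctx-extend}. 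It comes with the projection to $\syn{\Gamma}$ and the map $[\vec{\syn{t}},\syn{y}']$ to $(\syn{\Delta},\syn{y}:\syn{S})$.

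For the universal property, a cone from $\syn{\Theta}$ consists of $[\vec{\syn{u}}]\colon\syn{\Theta}\arr\syn{\Gamma}$ and $[\vec{\syn{v}},\syn{w}]\colon\syn{\Theta}\arr(\syn{\Delta},\syn{y}:\syn{S})$ with $\syn{v}_i=\syn{t}_i[\vec{\syn{u}}]$ judgmentally. The mediating map is $[\vec{\syn{u}},\syn{w}]$. This is well-typed because $\syn{S}[\vec{\syn{v}}]$ and $\syn{S}[\vec{\syn{t}}][\vec{\syn{u}}]$ are judgmentally equal by \cref{rule:eq-subst-ty} together with the substitution lemma, and \cref{rule:conv-tm} then applies. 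Uniqueness follows from the componentwise definition of morphisms. The resulting pullback projection lies in $\bfDelta$, and pullback of an arbitrary member of $\bfDelta$ is handled by composing these squares. This gives axioms \cref{def:clan-3,def:clan-4}.

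I expect the main obstacle to be the metatheory this step relies on. Specifically, I need the admissibility of simultaneous substitution, that is, the interaction $\syn{S}[\vec{\syn{t}}][\vec{\syn{u}}]=\syn{S}[\vec{\syn{t}}[\vec{\syn{u}}]]$, and the fact that judgmental equality of substituted terms propagates through types. My plan is to prove these first by induction on derivations, using \cref{rule:subst,rule:eq-subst-ty,rule:eq-subst-tm,rule:weak}, and to handle variable clashes with \cref{rule:alpha}. Once these lemmas are in place, the remaining verifications are routine.
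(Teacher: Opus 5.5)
Your proposal is correct and follows essentially the paper's direct argument: explicit pullbacks of projections obtained by substituting into the remaining types, the empty context as terminal object, and closure under composition deduced from pulling back a projection along an isomorphism (strictly, you substitute the components of $\theta^{-1}$, not $\theta$, into $\syn{T}_{k+1},\dots,\syn{T}_n$). The differences are cosmetic: you reduce to single-variable projections and paste, whereas the paper writes the multi-variable pullback square in one step, and it also mentions the alternative of transferring Frey's contextual-category clan structure along the equivalence with Cartmell's syntactic category.
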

\begin{proof}
    This follows from the contextual-category structure of the originally defined syntactic category and the clan structure induced by it as in \cite[Proposition B.3]{Frey2025duality}, transferred along the equivalence between the two syntactic categories mentioned in the previous remark.

    We outline a direct proof here for completeness.
    First of all, the following square is a pullback in $\clfy{\theory{T}}$ for each morphism $[\syn{t}_1, \ldots, \syn{t}_k]$ and an extended context of its codomain:
    \begin{equation*}
        \begin{tikzpicture}
        \node (A) at (0,0) {
        $\left(
        \begin{tabular}{c}
        $\syn{y}_1 : \syn{S}_1, \ldots, \syn{y}_m : \syn{S}_m$,\\
        $\syn{x}_{k+1} : \syn{T}_{k+1}[\tup{\syn{t}}/\tup{\syn{x}}], \ldots, \syn{x}_n : \syn{T}_n[\tup{\syn{t}}/\tup{\syn{x}}]$
        \end{tabular}
        \right)$
        };
        \node (B) at (11,0) {
        $\left(
        \begin{tabular}{c}
        $\syn{x}_1 : \syn{T}_1, \ldots, \syn{x}_k : \syn{T}_k$,\\
        $\syn{x}_{k+1} : \syn{T}_{k+1}, \ldots, \syn{x}_n : \syn{T}_n$
        \end{tabular}
        \right)$
        };
        \node (C) at (0,-2) {$(\syn{y}_1 : \syn{S}_1, \ldots, \syn{y}_m : \syn{S}_m)$};
        \node (D) at (11,-2) {$(\syn{x}_1 : \syn{T}_1, \ldots, \syn{x}_k : \syn{T}_k)$}; 
        \draw[->] (A) to node[above] {%
        ${[\syn{t}_1, \ldots, \syn{t}_k, \syn{x}_{k+1}, \ldots, \syn{x}_n]}$%
        } (B);
        \draw[->] (A) to node[left] {${[\syn{y}_1, \ldots, \syn{y}_m]}$} (C);
        \draw[->] (B) to node[right] {${[\syn{x}_1, \ldots, \syn{x}_k]}$} (D);
        \draw[->] (C) to node[below] {${[\tup{\syn{t}}]\coloneq[\syn{t}_1, \ldots, \syn{t}_k]}$} (D);
        \draw[draw=none] (A) to node[very near start] {$\lrcorner$} (D);  
        \end{tikzpicture}
    \end{equation*}
    Here, $[\tup{\syn{t}}/\tup{\syn{x}}]$ denotes the substitution of the sequence of terms $\syn{t}_1, \ldots, \syn{t}_k$ for the sequence of variables $\syn{x}_1, \ldots, \syn{x}_k$.
    In particular, from the case where $[\tup{\syn{t}}]$ is an isomorphism, one can show that the given class of morphisms is closed under composition.
    The empty context is a terminal object since for each context $\syn{\Gamma}$, there exists a unique morphism from $\syn{\Gamma}$ to the empty context given by the empty sequence of terms, which belongs to the specified class of morphisms.
    The stability of the class of morphisms under pullback also follows from the above presentation of pullback squares.
\end{proof}

\begin{proposition}
    The syntactic clan $\clfy{\theory{T}}$ of a \ac{GAT} $\theory{T}$ is generated, in the sense of \cref{def:generated-clan-structure}, by the display morphisms of the form $\syn{\Gamma}_\syn{A}, \syn{y}:\syn{A}(\syn{x}_1,\dots,\syn{x}_n)\arr\syn{\Gamma}_\syn{A}$ for each sort symbol $\syn{A}\in \Sigma\sort$.
\end{proposition}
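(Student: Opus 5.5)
We will use the characterization \cref{lem:smallest-clan-structure}\cref{lem:smallest-clan-structure-3}. Let $\bfDelta$ be the class of projections $p_\syn{A}\colon(\syn{\Gamma}_\syn{A},\syn{y}:\syn{A}(\syn{x}_1,\dots,\syn{x}_n))\arr\syn{\Gamma}_\syn{A}$, one for each $\syn{A}\in\Sigma\sort$. By \cref{prop:syntactic-category-is-clan} each $p_\syn{A}$ is a display morphism and admits pullbacks along arbitrary morphisms, so $\overline{\bfDelta}\subseteq\Disp[\clfy{\theory{T}}]$. It therefore remains to show the reverse inclusion: every syntactic display morphism is a finite composite of pullbacks of morphisms in $\bfDelta_{\mathrm{isoproj}}$.

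\textbf{Step 1 (reduction to one-step projections).} By \cref{eq:display-morphisms-in-syntactic-clan}, a display morphism is an isomorphism followed by a projection $(\syn{x}_1:\syn{T}_1,\dots,\syn{x}_n:\syn{T}_n)\arr(\syn{x}_1:\syn{T}_1,\dots,\syn{x}_k:\syn{T}_k)$. This projection factors as the composite of the one-variable projections that drop $\syn{x}_n,\syn{x}_{n-1},\dots,\syn{x}_{k+1}$ in turn. When $k=n$ it is the identity, which we handle as the pullback of an isomorphism. The leading isomorphism is itself in $\bfDelta_{\mathrm{isoproj}}$, and we may precompose it, since an isomorphism followed by a pullback of a $\bfDelta$-morphism is again such a pullback. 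Thus it suffices to treat a single projection $(\syn{\Gamma},\syn{x}:\syn{T})\arr\syn{\Gamma}$.

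\textbf{Step 2 (each one-step projection is a pullback of some $p_\syn{A}$).} Take the associated sort $\syn{A}$ of $\syn{T}$. Then $\syn{T}\equiv\syn{A}(\syn{s}_1,\dots,\syn{s}_m)$ for terms $\syn{s}_i$ with $\syn{\Gamma}\vdash\syn{s}_i:\syn{S}_i[\syn{s}_1/\syn{z}_1,\dots]$, where $\syn{\Gamma}_\syn{A}=(\syn{z}_1:\syn{S}_1,\dots,\syn{z}_m:\syn{S}_m)$. Establishing this inversion fact is the main obstacle. We plan to prove it by induction on derivations of type judgments: the types produced by \cref{rule:sort} are exactly of this form, and the shape is preserved by \cref{rule:weak}, \cref{rule:subst} and \cref{rule:alpha}, because the substitution lemma keeps the arguments well typed. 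A subtlety is type equality: because there are no type-equality axioms, every derivable $\syn{T}_1=\syn{T}_2$ should relate types with the same head whose arguments are judgmentally equal. This can be proved in the same simultaneous induction.

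Granting this, the sequence $[\syn{s}_1,\dots,\syn{s}_m]$ defines a morphism $\syn{\Gamma}\arr\syn{\Gamma}_\syn{A}$. The explicit pullback square in the proof of \cref{prop:syntactic-category-is-clan}, with $k=m$ and $n=m+1$, then identifies $(\syn{\Gamma},\syn{y}:\syn{A}(\tup{\syn{s}}))\arr\syn{\Gamma}$ as the pullback of $p_\syn{A}$ along $[\tup{\syn{s}}]$. Up to the $\alpha$-renaming isomorphism $\syn{y}\leftrightarrow\syn{x}$ (again from $\bfDelta_{\mathrm{isoproj}}$), this is the projection $(\syn{\Gamma},\syn{x}:\syn{T})\arr\syn{\Gamma}$.

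Combining the two steps, every display morphism has the form \cref{eq:relatively-non-dependent}, so $\Disp[\clfy{\theory{T}}]=\overline{\bfDelta}$. The projections to the terminal context cause no trouble: they are already of the shape treated in Step 1, and the empty context is terminal.
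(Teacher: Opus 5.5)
Your proposal is correct and follows essentially the same route as the paper: reduce, via the shape \cref{eq:display-morphisms-in-syntactic-clan} of syntactic display morphisms, to a one-variable projection $(\syn{\Gamma},\syn{x}:\syn{T})\to\syn{\Gamma}$, then recognize it as the pullback of the generating projection for the associated sort $\syn{A}$ of $\syn{T}$ along the substitution morphism $[\tup{\syn{s}}]\colon\syn{\Gamma}\to\syn{\Gamma}_{\syn{A}}$. The paper simply takes for granted the inversion fact that you propose to prove by induction, and the type-equality half of that induction is unnecessary: type judgments are only concluded by \cref{rule:sort,rule:weak,rule:subst,rule:alpha}, and these already preserve both the shape $\syn{A}(\tup{\syn{s}})$ and the typing of its arguments.
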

\begin{proof}
    Since these morphisms are clearly display morphisms in $\clfy{\theory{T}}$, we show that any display morphism in $\clfy{\theory{T}}$ necessarily belongs to the clan structure generated by them.
    As a morphism \cref{eq:display-morphisms-in-syntactic-clan} is a composite of an isomorphism and projections dropping only variables, it suffices to show for the case when $n=k+1$.
    This follows from the fact that any type judgment $\syn{\Gamma} \vdash \syn{T} \type$ in $\theory{T}$ is obtained by substitution of terms into some sort symbol $\syn{A}\in \Sigma\sort$, thus the morphism is a pullback of the display morphism $\syn{\Gamma}_\syn{A}, \syn{y}:\syn{A}(\syn{x}_1,\dots,\syn{x}_n)\arr\syn{\Gamma}_\syn{A}$ along the morphism representing the substitution.
\end{proof}

\begin{proposition}
    \label{prop:syntactic-clan-is-initial}
    A full sub-\ac{GAT} $\theory{T}'$ of a \ac{GAT} $\theory{T}$ induces a clan morphism $\clfy{\theory{T}'} \arr \clfy{\theory{T}}$, which is a full inclusion of clans in the sense of \cref{def:full_inclusion_clan}.
\end{proposition}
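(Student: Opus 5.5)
The plan is to check the three clauses of \cref{def:full_inclusion_clan}. First, $\clfy{\theory{T}'}\arr\clfy{\theory{T}}$ is a well-defined, injective-on-objects functor. Second, it is full and faithful. Third, it preserves the clan structure and reflects display morphisms. All three should follow from the defining property of a full sub-\ac{GAT}: a prejudgment over the smaller signature is derivable from $\theory{T}'$ if and only if it is derivable from $\theory{T}$.

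\textbf{Objects.} A context of $\theory{T}'$ is a precontext $\syn{\Gamma}$ with $\theory{T}'\drv\ \vdash\syn{\Gamma}\ctx$. Hence $\theory{T}\drv\ \vdash\syn{\Gamma}\ctx$, so $\syn{\Gamma}$ is literally a context of $\theory{T}$. The assignment on objects is therefore the identity on syntax and in particular injective.

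\textbf{Morphisms.} A morphism $\syn{\Gamma}\arr\syn{\Delta}$ in $\clfy{\theory{T}'}$ is a class of sequences $[\syn{t}_1,\dots,\syn{t}_n]$ of $\theory{T}'$-term judgments. These are also $\theory{T}$-judgments, and $\theory{T}'$-equal sequences are $\theory{T}$-equal, so we get a well-defined map. Composition is substitution, and identities are variables, so functoriality is immediate.

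Faithfulness is the reverse direction of the equivalence applied to equality judgments. Fullness is the step needing care. A $\theory{T}$-morphism $[\syn{t}_1,\dots,\syn{t}_n]$ between $\theory{T}'$-contexts may use terms containing operator symbols outside $\Sigma'$, so its judgments are not prejudgments over $\Sigma'$. To handle this, I would show that every $\theory{T}$-term $\syn{t}$ with $\syn{\Gamma}\vdash\syn{t}:\syn{T}$, where $\syn{\Gamma}$ and $\syn{T}$ lie over $\Sigma'$, is $\theory{T}$-provably equal to a term $\syn{t}'$ over $\Sigma'$. The judgment $\syn{\Gamma}\vdash\syn{t}':\syn{T}$ is then derivable in $\theory{T}$, hence in $\theory{T}'$. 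I expect this to be the main obstacle. It is not automatic from the definition as stated.

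My first attempt would be to read the definition of full sub-\ac{GAT} as closed under this replacement. The alternative is an induction on derivations showing that a term whose type has an associated sort in $\Sigma'$ can be rewritten over $\Sigma'$, in the spirit of the proof of \cref{lem:reflective-subclan-dependency-rank-syntactic}. If neither works, I would add as a standing convention that operator symbols of $\theory{T}$ whose output types lie in $\theory{T}'$-contexts already belong to $\Sigma'$. This is the situation in all the examples, such as ${\clfy{\theory{T}}}_{\le n}$.

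\textbf{Clan structure.} By \cref{prop:syntactic-category-is-clan}, display morphisms are isomorphisms followed by projections $[\syn{x}_1,\dots,\syn{x}_k]$. Projections are syntactically the same in both clans. Isomorphisms are preserved and, by fullness and faithfulness, reflected. Hence a morphism of $\clfy{\theory{T}'}$ is display if and only if its image is display.

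The terminal object is the empty context in both. For preservation of pullbacks along display morphisms, the explicit pullback square in the proof of \cref{prop:syntactic-category-is-clan} involves only substitution into types already over $\Sigma'$. So the same square is computed in both clans, and the inclusion preserves these pullbacks, which completes the check.
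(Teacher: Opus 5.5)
\textbf{Fullness fails as stated.} Your outline is the paper's: identify $\theory{T}'$-contexts with $\theory{T}$-contexts, transfer morphisms and their equalities by conservativity, and read off the clan structure from \cref{prop:syntactic-category-is-clan}. The paper simply asserts that hom-sets correspond bijectively ``since derivability coincides''. You are right that fullness is the weak point, but under the paper's definition of full sub-\ac{GAT} it is false, not merely unproved.

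Let $\theory{T}$ be $\theory{T}_\setg$ plus one operator $\syn{x}:\syn{El}\vdash\syn{f}(\syn{x}):\syn{El}$ and no axioms. Every set is a $\theory{T}$-model with $\syn{f}$ read as the identity, so no new prejudgment over $\{\syn{El}\}$ becomes derivable, and $\theory{T}_\setg$ is a full sub-\ac{GAT}. Yet $[\syn{f}(\syn{x})]\colon(\syn{x}:\syn{El})\to(\syn{y}:\syn{El})$ is a morphism of $\clfy{\theory{T}}$ different from $[\syn{x}]$, which is the only such morphism in $\clfy{\theory{T}_\setg}$.

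Hence your induction rewriting terms over $\Sigma'$ must fail, and ``reading the definition as closed under replacement'' changes the definition. Only your third option, an added hypothesis, can work, and it must take the strong form: every operator of $\theory{T}$ whose value type $\syn{T}_\syn{f}$ has associated sort in $\Sigma'$ lies in $\Sigma'$. Requiring this only of operators whose whole declaration $\syn{\Gamma}_\syn{f}\vdash\syn{T}_\syn{f}\type$ is over $\Sigma'$ is not enough. For example, take $\Sigma'=\{\syn{X}\}$ and add a sort $\syn{Y}$ with operators $\syn{x}:\syn{X}\vdash\syn{h}(\syn{x}):\syn{Y}$ and $\syn{y}:\syn{Y}\vdash\syn{g}(\syn{y}):\syn{X}$; this still produces the new endomorphism $[\syn{g}(\syn{h}(\syn{x}))]$. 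Under the strong form, induct on $\syn{t}$ in $\syn{\Gamma}\vdash\syn{t}:\syn{T}$. Use inversion for operator applications, and the fact that judgmental type equality preserves the associated sort (there are no type-equality axioms). This shows every operator in $\syn{t}$ lies in $\Sigma'$, and conservativity then gives fullness.

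\textbf{Display reflection is a separate gap.} Your claim that display morphisms are reflected ``by fullness and faithfulness'' does not hold. In an iso-then-projection factorization in $\clfy{\theory{T}}$, the intermediate context need not be a $\theory{T}'$-context. Take $\theory{T}'$ with one sort $\syn{X}$, and let $\theory{T}$ add:
\begin{itemize}
\item a sort $\syn{x}_1:\syn{X},\syn{x}_2:\syn{X}\vdash\syn{W}(\syn{x}_1,\syn{x}_2)\type$;
\item an operator $\syn{x}:\syn{X}\vdash\syn{r}(\syn{x}):\syn{W}(\syn{x},\syn{x})$;
\item axioms saying that any $\syn{w}:\syn{W}(\syn{x}_1,\syn{x}_2)$ forces $\syn{x}_1=\syn{x}_2:\syn{X}$, and that elements of $\syn{W}(\syn{x}_1,\syn{x}_2)$ are unique.
\end{itemize}
Reading $\syn{W}$ as equality gives conservativity, and the strong condition holds vacuously, so the inclusion is fully faithful. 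But the diagonal $[\syn{x},\syn{x}]\colon(\syn{x}:\syn{X})\to(\syn{x}_1:\syn{X},\syn{x}_2:\syn{X})$ equals the isomorphism $[\syn{x},\syn{x},\syn{r}(\syn{x})]$ (inverse $[\syn{x}_1]$) followed by the projection out of $(\syn{x}_1:\syn{X},\syn{x}_2:\syn{X},\syn{w}:\syn{W}(\syn{x}_1,\syn{x}_2))$. So it is display in $\clfy{\theory{T}}$, but not in $\clfy{\theory{T}'}$, where display maps are product projections up to isomorphism.

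You therefore also need the relative analogue of the second non-descending condition: every axiom whose type has associated sort in $\Sigma'$ has its context over $\Sigma'$. With both conditions, the proof of \cref{lem:reflective-subclan-dependency-rank-syntactic} adapts, deleting variables whose sort lies outside $\Sigma'$, to give a reflector. That reflector carries any iso-then-projection factorization of a morphism of $\clfy{\theory{T}'}$ into $\clfy{\theory{T}'}$. Both conditions hold in the paper's uses, such as $\theory{T}_\setg\subseteq\theory{T}_\catg$. Your preservation argument (display maps, terminal object, explicit pullbacks) is fine as written.
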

\begin{proof}
    The objects in $\clfy{\theory{T}'}$ are contexts of $\theory{T}'$, which are naturally identified with contexts of $\theory{T}$.
    Since the derivability of prejudgments in $\theory{T}'$ coincides with that in $\theory{T}$, morphisms between contexts in $\theory{T}'$ bijectively correspond to those in $\theory{T}$.
    It is straightforward to see that this indeed gives a full inclusion of clans $\clfy{\theory{T}'} \arr \clfy{\theory{T}}$.
\end{proof}
\section{Bicocompleteness of $\Clan$}\label[appendix]{sec:bicocompletenessofClan}
We prove that the 2-category $\Clan$ of clans is bicocomplete, following the argument in \cite{Jelinek2024typetheory}.
The idea is to reconstruct the 2-category $\Clan$ by some operations under which weak local presentability is preserved.
For the general theory of weakly locally presentable 2-categories, the reader is referred to \cite{Bourke2021accessible}.

First, we define some 2-categories related to $\Clan$.
\begin{definition}\label{def:marked_category}
    A \emph{marked category} is a pair $(\C,\Disp)$ of a category $\C$ and a class of morphisms $\Disp$ in $\C$ that is closed under composition and contains all identity morphisms, i.e., a pair satisfying \cref{def:clan-2} in \cref{def:clan}.
    A \emph{marked functor} between marked categories $(\C,\Disp_\C)$ and $(\D,\Disp_\D)$ is a functor $F\colon \C\arr \D$ preserving the marked morphisms, i.e., $F(\Disp_\C)\subseteq \Disp_\D$.
    We denote by $\MCat$ the 2-category of small marked categories, marked functors, and natural transformations between the underlying functors.
\end{definition}

\begin{notation}
    For a marked category $(\C,\Disp)$, we call the morphisms in $\Disp$ \emph{marked morphisms} and draw them with $\darr$.
\end{notation}

Here is the definition adopted in \cite{Jelinek2024typetheory}.
\begin{definition}[{\cite[Definition 3.1.3, Remark 3.1.5]{Jelinek2024typetheory}, \cite[Proposition 3.6 and the definition preceding it]{Bourke2021accessible}}]
    A 2-category is \emph{accessible} if it is locally small and has powers by the walking morphism category $\walkmorcat$, and for some small (infinite) regular cardinal $\kappa$, it underlies a $\kappa$-accessible (1-)category, and (conical) $\kappa$-filtered small 2-colimits commute with powers by $\walkmorcat$.
    The 2-category $\WLP$ consists of the following 0-cells, 1-cells, and 2-cells.
    \begin{itemize}
        \item
            A 0-cell is an accessible (large) 2-category with all flexible small 2-limits and (conical) filtered small 2-colimits.
        \item
            A 1-cell is a 2-functor preserving flexible small 2-limits and (conical) filtered small 2-colimits.
        \item
            A 2-cell is a 2-natural transformation between the underlying 2-functors of 1-cells.\qedhere
    \end{itemize}
\end{definition}

$\Cattwo$ is the 2-category of small categories, functors, and natural transformations, which is known to be in $\WLP$.
The following theorem guarantees some constructions inside $\WLP$.
Let $\Psd(\mathcal{K},\mathcal{L})$ denote the 2-category of 2-functors, pseudonatural transformations, and modifications between 2-categories $\mathcal{K}$ and $\mathcal{L}$.
Let $\arrowtwocat, \Comp, \Cosp, \freeAdj$ denote the walking 1-cell, the walking up-to-iso commutative triangle, the walking cospan of 1-cells, and the walking adjunction, respectively.
\begin{table}[h]
    \centering
    \begin{tabular}{|c|c|c|c|}
        \hline
        $\arrowtwocat$ & $\Comp$ & $\Cosp$ & $\freeAdj$
        \\
        \hline
        $\begin{tikzcd}[ampersand replacement=\&]
            0 \ar[r] \& 1
        \end{tikzcd}$
        &
        $\begin{tikzcd}[ampersand replacement=\&, small]
            0 \ar[rr] \ar[dr] \&{} \& 2 \\
            \& 1 \ar[ur]
            \dtwocell{1-2}{2-2}["{\rotatebox[origin=c]{-90}{$\scriptstyle\cong$}}"{right},pos=0.4]
        \end{tikzcd}$
        &
        $\begin{tikzcd}[ampersand replacement=\&, small]
            \! \& 0 \ar[d] \\
            1 \ar[r] \& 2
        \end{tikzcd}$
        &
        $\begin{tikzcd}[ampersand replacement=\&]
            0 \ar[r, bend left] \ar[r, phantom, "\perp"]
            \& 1 \ar[l, bend left]   
        \end{tikzcd}$
        \\
        \hline
    \end{tabular}
\end{table}

\begin{fact}[{\cite[Theorem 3.1.7]{Jelinek2024typetheory}}]\label{fact:WLP_constructions}
    \
    \begin{enumerate}
        \item\label{fact:WLP_constructions-1}
            $\WLP$ is closed under large bilimits in the 2-category $\twoCATtwo$ of large 2-categories, 2-functors, and 2-natural transformations.
        \item\label{fact:WLP_constructions-2}
            $\Psd(\arrowtwocat,\Cattwo)$, $\Psd(\Comp,\Cattwo)$, $\Psd(\Cosp,\Cattwo)$, and $\Psd(\freeAdj,\Cattwo)$ are in $\WLP$, in which flexible small 2-limits and filtered small 2-colimits are preserved and jointly reflected by the evaluation 2-functors at each 0-cell of the domain 2-categories.
        \item\label{fact:WLP_constructions-3}
            The precomposition 2-functor $F^*\colon \Psd(\mathcal{L},\Cattwo)\arr \Psd(\mathcal{K},\Cattwo)$ induced by a 2-functor $F\colon \mathcal{K}\arr \mathcal{L}$ between any of $\arrowtwocat,\Comp,\Cosp,\freeAdj$ that is injective on 0-cells and faithful on 1-cells is an isofibration.\footnote{%
                A 2-functor is called an \emph{isofibration} if so is the underlying (1-)functor.%
            }\,\footnote{%
                In \cite{Jelinek2024typetheory}, $\Comp$ is taken to be the walking commutative triangle, which does not have a non-trivial 2-cell, but it is informed by the author that in order to make the precomposition an isofibration, $\Comp$ should be taken to be the walking up-to-iso commutative triangle as above.%
            }\qedhere
    \end{enumerate}
\end{fact}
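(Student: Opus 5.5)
This is \cite[Theorem 3.1.7]{Jelinek2024typetheory}, cited as a Fact, so the plan is to sketch why it holds, following \cite{Bourke2021accessible}.

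\textbf{Part (1).} I would show that $\WLP$ is closed in $\twoCATtwo$ under products, inserters, equifiers and iso-inserters computed at the level of underlying 2-categories. These generate all large bilimits, up to equivalence, since they generate the flexible limits. For each construction one checks three things:
\begin{itemize}
\item the resulting 2-category is accessible;
\item it has flexible small 2-limits and filtered 2-colimits;
\item the projections preserve and jointly reflect them.
\end{itemize}
The main obstacle is accessibility of the inserter- and equifier-type constructions. For that I would invoke the accessible-category limit theorem (accessible categories are closed under pseudo-limits along accessible functors), applied to the underlying $1$-categories. The compatibility of filtered colimits with powers by $\walkmorcat$ is then inherited from the factors.

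\textbf{Part (2).} For a small 2-category $\mathcal{A}$, I would write $\Psd(\mathcal{A},\Cattwo)$ as a bilimit built from copies of $\Cattwo$, with one copy per 0-cell of $\mathcal{A}$. The 1-cells of $\mathcal{A}$ contribute via iso-inserters over $\Cattwo^{\walkmorcat}$, and the 2-cells and relations contribute via equifiers. Since $\Cattwo\in\WLP$, part (1) applies. Because each piece is computed pointwise, the evaluation 2-functors preserve and jointly reflect the relevant limits and colimits.

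\textbf{Part (3).} I would argue directly. Suppose we are given a pseudofunctor $X\colon\mathcal{L}\arr\Cattwo$ and a pseudonatural equivalence (in the $1$-categorical sense, an isomorphism in $\Psd(\mathcal{K},\Cattwo)$) $F^*X\cong Y$. We transport $X$ along this isomorphism on the image of $F$. This is possible because $F$ is injective on 0-cells, so the components at those 0-cells can be replaced independently. Faithfulness on 1-cells ensures that the coherence data of $Y$ extends consistently to $\mathcal{L}$.

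The subtle case is $\Comp$. There the walking triangle must carry an invertible 2-cell so that transported structure remains well defined; this is precisely the content of the footnote. I would check this case explicitly.
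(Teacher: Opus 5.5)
The paper gives no argument for this Fact; it simply quotes \cite[Theorem~3.1.7]{Jelinek2024typetheory}. Your sketch therefore has to stand on its own, and parts (2) and (3) do not.

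\textbf{Part (2).} Your decomposition cannot produce $\Psd(\mathcal{A},\Cattwo)$.

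In any 2-limit or bilimit $L$ in $\twoCATtwo$ (products, inserters, iso-inserters, equifiers, pseudo-limits), an object is a 2-functor $\terminalcat\to L$ and a 1-cell is a 2-functor $\arrowtwocat\to L$. By the universal property, these amount to cones of 2-functors $\terminalcat\to D_j$ (resp.\ $\arrowtwocat\to D_j$) together with 2-natural transformations between them. Those transformations are just 1-cells (resp.\ strictly commuting squares of 1-cells) of the factors $D_j$. So objects and 1-cells of $L$ are families of objects and 1-cells of the factors subject to strict equations, and no 2-cell of a factor ever becomes part of their data.

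Consequently, a bilimit of copies of $\Cattwo$ and of $\Cattwo^{\walkmorcat}$ has strict transformations as 1-cells, never pseudonatural ones. (This is for $\Cattwo^{\walkmorcat}$ in its strict sense; in the pseudo sense it is $\Psd(\arrowtwocat,\Cattwo)$ itself, which makes the argument circular.) For $\freeAdj$ such a bilimit cannot even carry the unit and counit of an object. Equifiers only constrain 2-cells that are already present. Encoding 2-cells as 1-cells into powers would require strict equations between 1-cells, and such equalizer-type conditions are not bilimits.

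Note that the paper itself takes $\Psd(\arrowtwocat,\Cattwo)$, $\Psd(\freeAdj,\Cattwo)$, etc.\ as primitive inputs in \cref{prop:MCatPbTer_in_WLP,prop:PreClan_in_WLP}. Part (2) needs a genuinely 2-dimensional ingredient. One option is to identify $\Psd(\mathcal{A},\Cattwo)$ with the 2-category $T\text{-}\mathrm{Alg}_p$ of strict algebras and pseudomorphisms for the finitary 2-monad on $\Cattwo^{\mathrm{ob}\,\mathcal{A}}$ induced by restriction along $\mathrm{ob}\,\mathcal{A}\hookrightarrow\mathcal{A}$; its forgetful 2-functor is joint evaluation. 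You would then invoke Bourke's accessibility results for such 2-categories \cite{Bourke2021accessible}.

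\textbf{Part (3).} Injectivity on 0-cells and faithfulness are not what make the transport work, and in that generality the claim is false.

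Objects are strict 2-functors, and you need $F^*X'=Y$ on the nose. So you must set $X'$ equal to $Y$ on 1-cells in the image of $F$ and conjugate $X$ by the components elsewhere. This yields a strict 2-functor only if no composite of other 1-cells of $\mathcal{L}$ is required to \emph{equal} an image 1-cell.

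Here is a counterexample. Write $f\colon 0\to1$, $g\colon 1\to2$, $h\colon 0\to2$ for the generating 1-cells of $\Comp$, and let $F\colon\arrowtwocat\to\Comp$ send the generator to the composite $gf$; this $F$ is injective on 0-cells and faithful. Define $X$ by:
\begin{itemize}
\item $X_0=\{x,x'\}$ discrete, $X_1=\terminalcat$, and $X_2$ the indiscrete category on $\{y,y'\}$;
\item $X(g)$ picks $y$, and $X(h)=X(g)X(f)$, with the structure 2-cell sent to an identity.
\end{itemize}
Let $Y\colon X_0\to X_2$ send $x\mapsto y$ and $x'\mapsto y'$. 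Then $Y\cong F^*X$ via identity components and the unique invertible 2-cell. But any lift $X'$ would satisfy $X'_1\cong\terminalcat$ and $X'(g)X'(f)=Y$, which is impossible since $Y$ is not constant.

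The maps the paper actually uses are $\arrowtwocat\to\freeAdj$ picking the left adjoint and $\Cosp\to\Comp$ picking $h$ and $g$. These work for a different reason: they send generating 1-cells to generating 1-cells of a 2-category whose 1-cells are freely generated. The remaining generators can then be conjugated freely, and every mismatch is absorbed into transported 2-cells. This is also why the triangle must commute only up to a specified isomorphism. Your argument should be rephrased in these terms and restricted to such $F$.
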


\begin{notation}
    For a marked category $\C$, we write $\arrowcat{\C}$ for the arrow category of the underlying category, and write $\marrowcat{\C}\subseteq\arrowcat{\C}$ for the full subcategory spanned by all marked morphisms.
    These assignments induce representable 2-functors $\arrowcat{(-)}\colon \MCat\arr \Cattwo$ and $\marrowcat{(-)}\colon \MCat\arr \Cattwo$.
\end{notation}

\begin{lemma}[{\cite[Section 3.2.1, 3.2.2]{Jelinek2024typetheory}}]\label{lemma:MCat_bicocomplete}
    Let $\MCatIso$ be the full sub-2-category of $\MCat$ spanned by small marked categories $(\C,\Disp)$ such that $\Disp$ contains all isomorphisms in $\C$.
    \begin{enumerate}
        \item\label{lemma:MCat_bicocomplete-1}
            $\MCat$, $\MCatIso$, and the forgetful 2-functors $\MCatIso\arr \MCat, \MCat \arr \Cattwo$ are in $\WLP$.
        \item\label{lemma:MCat_bicocomplete-2}
            The forgetful 2-functor $\MCatIso\arr \MCat$ is an isofibration.
        \item\label{lemma:MCat_bicocomplete-3}
            The 2-functors $\arrowcat{(-)}$ and $\marrowcat{(-)}$ from $\MCat$ to $\Cattwo$ are in $\WLP$.
    \end{enumerate}
\end{lemma}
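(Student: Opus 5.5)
The plan is to realise each 2-category as a bilimit, inside $\twoCATtwo$, of 0-cells already known to lie in $\WLP$, and then apply \cref{fact:WLP_constructions}\cref{fact:WLP_constructions-1}. Since $\WLP$ is closed under large bilimits, both the limit 2-categories and their projection 2-functors then lie in $\WLP$. The building blocks are $\Cattwo$, the 2-categories $\Psd(\arrowtwocat,\Cattwo)$, $\Psd(\Comp,\Cattwo)$ and $\Psd(\Cosp,\Cattwo)$, and their precomposition and evaluation 2-functors. By \cref{fact:WLP_constructions}\cref{fact:WLP_constructions-2,fact:WLP_constructions-3}, these lie in $\WLP$, and the precomposition functors along the relevant inclusions are isofibrations. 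Strict pullbacks along isofibrations are bipullbacks, so whenever I write a strict pullback below I will check that one leg is such a precomposition isofibration.

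\textbf{Presenting $\MCat$.} I encode a marked category $(\C,\Disp)$ by its underlying category $\C$ together with the full subcategory $\marrowcat{\C}\subseteq\arrowcat{\C}$. To keep 2-cells correct (a 2-cell of $\MCat$ is an arbitrary natural transformation of the underlying functors), the marking must be a \emph{property} of $\C$, not extra structure carrying its own 2-cells. I therefore consider the 2-category of categories $\C$ equipped with a replete full subcategory of $\arrowcat{\C}=\C^{\walkmorcat}$, with marked functors as 1-cells and all natural transformations as 2-cells.

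This 2-category should arise as follows.
\begin{enumerate}
\item Start from the bipullback of the cotensor 2-functor $\C\mapsto\C^{\walkmorcat}$ (which is in $\WLP$, since cotensors by $\walkmorcat$ commute with the relevant colimits) against the codomain-evaluation $\Psd(\arrowtwocat,\Cattwo)\arr\Cattwo$.
\item Cut this down to the full sub-2-category on fully faithful, injective-on-objects legs. This should again be a bilimit: ``fully faithful and injective-on-objects'' is the right class of the (bijective-on-objects, fully faithful) and (injective-on-objects, full-image) structure, and is expressible through $\Comp$ and $\Cosp$ shapes.
\item Impose the closure conditions (containing identities, closed under composition) by further bipullbacks. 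Identities are handled along the diagonal $\C\arr\C^{\walkmorcat}$. Composition is handled via the pullback $\C^{\walkmorcat}\times_{\C}\C^{\walkmorcat}$, which is a flexible limit, using $\Cosp$.
\end{enumerate}

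\textbf{The remaining items.} $\MCatIso$ is obtained by one further bipullback forcing $\C^{\cong}\subseteq\marrowcat{\C}$. Its forgetful 2-functor to $\MCat$ is then a projection of a bilimit, which gives \cref{lemma:MCat_bicocomplete-1} for both forgetful functors.

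For \cref{lemma:MCat_bicocomplete-2}, I argue directly. Given $\C\in\MCatIso$ and an isomorphism $\C\cong\D$ in $\MCat$, the marking on $\D$ is transported, so it again contains all isomorphisms. The lift is the same isomorphism.

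For \cref{lemma:MCat_bicocomplete-3}, $\arrowcat{(-)}$ is the composite of the forgetful 2-functor with the cotensor by $\walkmorcat$. The functor $\marrowcat{(-)}$ is the projection onto the subcategory component in the bilimit constructed above.

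\textbf{Main obstacle.} The hardest step is item 2 above: checking that the requirement that the marked part be a genuine full, replete subcategory of the arrow category is cut out by flexible bilimits, and that the resulting 2-cells coincide with plain natural transformations. Pseudo-natural transformations in $\Psd(\arrowtwocat,\Cattwo)$ carry a component on the marked part, and I must show this component is uniquely determined, because the leg is fully faithful and injective on objects. My first attempt will be to use the iso-comma/representably-amnestic reasoning as in \cref{lem:reduction_orthogonality}: show the relevant projection is representably an isofibration, injective on objects, and locally fully faithful, so that the bilimit agrees with the strict limit and with $\MCat$ itself.
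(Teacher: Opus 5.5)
\textbf{There is a genuine gap.} Your argument for \cref{lemma:MCat_bicocomplete-2} is the paper's, but for \cref{lemma:MCat_bicocomplete-1} and \cref{lemma:MCat_bicocomplete-3} the bilimit construction does not produce $\MCat$. The first problem is already at the level of objects. In \cref{def:marked_category}, $\Disp$ is only required to contain identities and be closed under composition; it need not be replete in $\arrowcat{\C}$. For such classes, repleteness is equivalent to containing all isomorphisms. Indeed, an isomorphism $\phi\colon a\to b$ is isomorphic to $\id_a$ in $\arrowcat{\C}$; conversely, $\beta f\alpha^{-1}$ is a composite of marked morphisms once isomorphisms are marked. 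So ``categories with a replete full subcategory of $\arrowcat{\C}$'' is exactly $\MCatIso$, not $\MCat$: the walking isomorphism with only identities marked is missing. Your extra bipullback producing $\MCatIso$ is then vacuous. Dropping the word ``replete'' does not rescue the plan. The building blocks $\Cattwo$, $\Psd(\mathcal{A},\Cattwo)$ and their bilimits only see the marked part up to isomorphism in $\arrowcat{\C}$. An object of $\MCat$, by contrast, carries an arbitrary subset of morphisms, which 1-cells must preserve strictly.

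This is exactly where the step you flag as the main obstacle breaks. In a bipullback against $\Psd(\arrowtwocat,\Cattwo)$, a 1-cell is a functor $F$, a functor $G$ between the marked parts, and an invertible $\phi\colon nG\cong F^{\walkmorcat}m$. Full faithfulness and injectivity on objects of the legs do \emph{not} determine $(G,\phi)$. They only force $F$ to send each marked $f$ to a morphism isomorphic in $\arrowcat{\D}$ to a marked one, and any such marked morphism and any such isomorphism may be chosen. Uniqueness holds only when $\phi$ is an identity, i.e.\ for strictly commuting squares, and that is not what the bipullback gives. So the comparison with $\MCat$ is at best a local equivalence, not a 2-equivalence, and membership in $\WLP$ does not transfer. \cref{lem:reduction_orthogonality} is not the relevant tool: it compares orthogonality with the up-to-iso lifting property, not strict limits with bilimits. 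Your step 2 has the same defect. Injectivity on objects is not invariant under isomorphism in $\Psd(\arrowtwocat,\Cattwo)$: on the walking isomorphism, the functor collapsing both objects is isomorphic to the identity. Hence that restriction is not a pullback along an isofibration, and ``expressible through $\Comp$ and $\Cosp$ shapes'' does not supply an argument.

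The paper does not attempt a bilimit presentation of $\MCat$ at all. It cites Jel\'inek, where $\MCat$ (and, per the footnote, $\MCatIso$) is checked directly. A direct check is the natural route:
\begin{itemize}
\item the underlying category is locally finitely presentable;
\item the power by $\walkmorcat$ is $\C^{\walkmorcat}$, with a square marked when both of its components are marked;
\item filtered colimits commute with this power;
\item $\arrowcat{(-)}$ and $\marrowcat{(-)}$ are then in $\WLP$ because they are represented by the finitely presentable walking morphism and walking marked morphism.
\end{itemize}
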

\begin{proof}
    The statements can be found in Sections 3.2.1 and 3.2.2 of \cite{Jelinek2024typetheory},\footnote{%
        $\MCat$ is denoted by $\mathbf{Cat}_{m}$ and $\MCatIso$ by $\mathbf{Cat}_{m^+}$ in \cite{Jelinek2024typetheory}. 
        The author informed us that the proof of the fact that $\MCatIso$ is in $\WLP$ has a gap in the thesis, but it is directly verified by a similar argument as the one for $\MCat$ in the thesis, which will be included in the published version of the thesis.%
} except for the second statement, which follows from the fact that the property that all isomorphisms are marked is preserved by isomorphisms in $\MCat$.
\end{proof}

Let $\mcosp$ be the walking cospan whose one leg is marked, i.e.,
\begin{equation*}
    \mcosp \coloneq 
    \begin{tikzcd}[column sep=small, row sep = small, ampersand replacement=\&]
        \! \& 0 \ar[d, disp] \\
        1 \ar[r] \& 2
    \end{tikzcd}
\end{equation*}

\begin{lemma}\label{lemma:mcosp_in_WLP}
    The representable 2-functor $\MCat(\mcosp,-)\colon \MCat\arr \Cattwo$ is in $\WLP$.
\end{lemma}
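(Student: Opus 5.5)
The plan is to exhibit $\MCat(\mcosp,-)$ as a composite or bilimit of 2-functors already known to lie in $\WLP$, so that \cref{fact:WLP_constructions}\cref{fact:WLP_constructions-1} applies. A marked functor $\mcosp\arr\C$ is a cospan $C_1\arr C_2\leftarrow C_0$ whose right leg is marked. Equivalently, it is an object of $\marrowcat{\C}$ (the marked leg) together with an object of $\arrowcat{\C}$ (the other leg), subject to the condition that their codomains agree. The 2-cells are pairs of natural transformation components compatible with this agreement. So $\MCat(\mcosp,\C)$ is the strict pullback of categories
\[
\marrowcat{\C}\xrightarrow{\;\mathrm{cod}\;}\C\xleftarrow{\;\mathrm{cod}\;}\arrowcat{\C},
\]
and this identification is 2-natural in $\C$.

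I will therefore first show that the three 2-functors $\marrowcat{(-)}$, $\arrowcat{(-)}$ and the forgetful $U\colon\MCat\arr\Cattwo$ lie in $\WLP$, using \cref{lemma:MCat_bicocomplete}\cref{lemma:MCat_bicocomplete-1,lemma:MCat_bicocomplete-3}. I will also check that the codomain 2-natural transformations $\marrowcat{(-)}\Rightarrow U$ and $\arrowcat{(-)}\Rightarrow U$ are 2-cells of $\WLP$; this is automatic, since 2-cells of $\WLP$ are arbitrary 2-natural transformations.

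Packaging this data, I obtain a 1-cell $\MCat\arr\Psd(\Cosp,\Cattwo)$ in $\WLP$. Its evaluations at the three objects of $\Cosp$ are in $\WLP$, and flexible limits and filtered colimits are jointly reflected by evaluations by \cref{fact:WLP_constructions}\cref{fact:WLP_constructions-2}. It takes values in strict cospans. I then compose with the strict pullback 2-functor $\Psd(\Cosp,\Cattwo)\arr\Cattwo$.

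The main obstacle is that a strict pullback is not a bilimit in general, so the last composite is not obviously a 1-cell of $\WLP$. I would handle this by noting that the legs $\mathrm{cod}\colon\arrowcat{\C}\arr\C$ (and likewise the marked version) are isofibrations, since any isomorphism at the codomain can be absorbed by postcomposition. It follows that the strict pullback agrees with the iso-comma, i.e. the pseudo-pullback, which is a flexible limit.

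Concretely, I would write $\MCat(\mcosp,-)$ as the bilimit in $\twoCATtwo$ (hence in $\WLP$, by \cref{fact:WLP_constructions}\cref{fact:WLP_constructions-1}) of the diagram of 2-categories
\[
\MCat\arr\Psd(\Cosp,\Cattwo)\leftarrow\Psd(\mathbf 1,\Cattwo).
\]
Alternatively, I would show directly that the pseudo-pullback 2-functor $\Psd(\Cosp,\Cattwo)\arr\Cattwo$ lies in $\WLP$. Its value is the flexible limit itself, so it preserves flexible limits; filtered 2-colimits commute with finite flexible limits in $\Cattwo$. The remaining check is that the comparison functor from the strict pullback to the iso-comma is an equivalence, and this follows from the isofibration property. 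Once that is verified, the lemma follows because $\WLP$ is closed under composition of 1-cells.
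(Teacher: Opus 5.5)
Your identification $\MCat(\mcosp,\C)\cong\marrowcat{\C}\times_{\C}\arrowcat{\C}$, 2-natural in $\C$, is correct. It is the Yoneda-dual of the paper's argument, which writes $\mcosp$ as the pushout of the walking marked morphism and the walking morphism over the walking object, and concludes that $\mcosp$ is finitely presentable.

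The gap is in how you finish. The detour through $\Psd(\Cosp,\Cattwo)$ only yields a 1-cell of $\WLP$ (the composite with the pseudo-pullback) that is pointwise \emph{equivalent} to $\MCat(\mcosp,-)$. You then invoke closure under composition as if that composite were $\MCat(\mcosp,-)$ itself. But 1-cells of $\WLP$ must preserve flexible limits and filtered 2-colimits up to \emph{isomorphism}, and this property is not invariant under pointwise equivalence. For example, let $E$ be the chaotic category on $\bN$. Evaluation at $0$ is a 2-natural pointwise equivalence $\Cattwo(E,-)\Rightarrow\mathrm{id}$. Yet $\Cattwo(E,-)$ does not preserve the colimit $E$ of the chaotic categories on $\{0,\dots,n\}$, since $\id_E$ factors through no stage.

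Two further points. First, the reason you cannot compose with a strict-pullback 2-functor on $\Psd(\Cosp,\Cattwo)$ is that strict pullbacks are not functorial in pseudonatural transformations; it is not a bilimit issue. Second, your first ``concrete'' route is ill-typed: \cref{fact:WLP_constructions}\cref{fact:WLP_constructions-1} concerns bilimits of 0-cells of $\WLP$, whereas $\MCat(\mcosp,-)$ is a 1-cell.

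The repair is to drop $\Psd(\Cosp,\Cattwo)$ altogether.
\begin{itemize}
\item Flexible limits are preserved simply because $\MCat(\mcosp,-)$ is representable.
\item A filtered 2-colimit in $\MCat$ is in particular a colimit in the underlying category. The forgetful 2-functor $U\colon\MCat\to\Cattwo$, together with $\arrowcat{(-)}$ and $\marrowcat{(-)}$, sends it to colimits in $\Cat$ by \cref{lemma:MCat_bicocomplete}.
\item Strict pullbacks commute with filtered colimits in the locally finitely presentable category $\Cat$. Hence the comparison functor for $\marrowcat{(-)}\times_{U}\arrowcat{(-)}$ is an isomorphism.
\item A colimit in $\Cat$ is automatically a conical 2-colimit in $\Cattwo$, because $-\times\walkmorcat$ is a left adjoint.
\end{itemize}
Read through Yoneda, this is exactly the paper's statement that a finite colimit of finitely presentable objects is finitely presentable.
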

\begin{proof}
    Since $\MCat$ has powers by $\walkmorcat$, it suffices to show that $\mcosp$ is finitely presentable in the underlying category of $\MCat$.
    However, as $\mcosp$ is obtained as a pushout of diagrams consisting of the walking object, morphism, and marked morphism in the underlying category of $\MCat$, which are finitely presentable by \cref{lemma:MCat_bicocomplete}\cref{lemma:MCat_bicocomplete-1,lemma:MCat_bicocomplete-3}, $\mcosp$ is also finitely presentable.
\end{proof}

\begin{lemma}\label{prop:MCatPbTer_in_WLP}
    Take the following 2-pullback diagram 
    \begin{equation*}
        \begin{tikzcd}
            \MCatPbTer\chosen 
            \ar[r]
            \ar[d]
            \ar[dr, phantom, "\lrcorner", very near start]
            & \Psd(\freeAdj,\Cattwo)^2
            \ar[d, "{(I^*)^2}"] \\
            \MCat
            \ar[r, "T"]
            & \Psd(\arrowtwocat,\Cattwo)^2
        \end{tikzcd}
        \incat{\twoCATtwo}.
    \end{equation*}
    Here, $I$ is the inclusion $\arrowtwocat\arr \freeAdj$ sending the non-trivial 1-cell to the left adjoint 1-cell in $\freeAdj$, and the 2-functor $T=(T_1,T_2)$ is defined by
    \begin{equation*}
        T(\C) \coloneq \left(T_1(\C), T_2(\C)\right)\coloneq\left( \C\arr(D_\C)[][1.5]\MCat(\mcosp,\C),\quad \C \arr(!) \terminalcat \right),
    \end{equation*}
    where $D_\C$ is the diagonal functor.
    $\MCatPbTer\chosen$ is the 2-category of small marked categories $\C$ together with the choice of pullback squares of any morphism along any marked morphism and a terminal object in $\C$, marked functors preserving these structures up to isomorphism, and natural transformations between the underlying functors.

    Moreover, $T$ and $(I^*)^2$ are in $\WLP$ and $ (I^*)^2$ is an isofibration,
    hence, $\MCatPbTer\chosen$ and the projections from it are in $\WLP$.
    The vertical 2-functor on the left is also an isofibration.
\end{lemma}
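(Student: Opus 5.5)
The plan is to establish, in order, the following claims: the pullback really describes the stated 2-category; $T$ lies in $\WLP$; $(I^*)^2$ lies in $\WLP$ and is an isofibration; and finally the conclusions follow from \cref{fact:WLP_constructions}.

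\textbf{Identifying the pullback.} An object of the 2-pullback is a marked category $\C$ together with, for each of $T_1(\C)$ and $T_2(\C)$, a choice of right adjoint extending it along $I$. A right adjoint $R$ of the diagonal $D_\C\colon\C\arr\MCat(\mcosp,\C)$ is exactly a choice of pullback for every cospan whose one leg is marked. A right adjoint of $\C\arr\terminalcat$ is a choice of terminal object. A 1-cell is a marked functor $F$ together with pseudonatural data on $\freeAdj$ restricting to the strict data on $\arrowtwocat$. That data amounts to invertible comparison cells $F R\cong R' F$ compatible with units and counits, which says precisely that $F$ preserves the chosen pullbacks and terminal object up to isomorphism. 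The 2-cells of a 2-pullback are pairs of 2-cells agreeing downstairs, and so are just natural transformations. This step is routine unpacking of $\Psd(\freeAdj,\Cattwo)$.

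\textbf{$T$ is in $\WLP$.} Since $\Psd(\arrowtwocat,\Cattwo)$ is in $\WLP$ with structure detected by evaluation at the two 0-cells (\cref{fact:WLP_constructions}\cref{fact:WLP_constructions-2}), it suffices to check that the components $\C\mapsto\C$, $\C\mapsto\MCat(\mcosp,\C)$ and $\C\mapsto\terminalcat$ preserve flexible limits and filtered colimits. The first is the forgetful 2-functor, handled by \cref{lemma:MCat_bicocomplete}\cref{lemma:MCat_bicocomplete-1}. The second is \cref{lemma:mcosp_in_WLP}. The third is constant at $\terminalcat$, which is preserved because flexible limits and filtered colimits of a constant terminal diagram are again terminal. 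The diagonal $D_\C$ is natural in $\C$, so $T$ is a 2-functor.

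\textbf{$(I^*)^2$ and the conclusion.} The 2-functor $I^*$ is a projection between objects of $\WLP$ that commutes with the evaluations, so it preserves the relevant limits and colimits and lies in $\WLP$. Since $I$ is injective on 0-cells and faithful on 1-cells, \cref{fact:WLP_constructions}\cref{fact:WLP_constructions-3} shows that $I^*$ is an isofibration. Products of such 2-functors are again in $\WLP$ and isofibrations, being bilimits via \cref{fact:WLP_constructions}\cref{fact:WLP_constructions-1}.

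The one real subtlety is matching the strict 2-pullback with a bilimit. Because $(I^*)^2$ is an isofibration, its strict pullback along $T$ is a bipullback. \cref{fact:WLP_constructions}\cref{fact:WLP_constructions-1} then places $\MCatPbTer\chosen$ in $\WLP$, with its projections being 1-cells of $\WLP$ as legs of a bilimit cone. Finally, a strict pullback of an isofibration is an isofibration: given an isomorphism $\C\cong\C'$ in $\MCat$ and an object over $\C$, we lift its image along $(I^*)^2$ and pair the lift with $\C'$. Hence the left vertical 2-functor is an isofibration.
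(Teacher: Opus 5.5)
Your proposal is correct and follows essentially the same route as the paper's sketch: you identify the 2-pullback by reading adjunctions in $\Psd(\freeAdj,\Cattwo)$ as chosen pullbacks and a chosen terminal object, you check $T$ componentwise using the lemmas for $\MCat$ and $\MCat(\mcosp,-)$, and you obtain that $I^*$ is in $\WLP$ and an isofibration from the cited Fact. You also spell out points the paper leaves implicit, and those arguments are sound: the constant component $\C\mapsto\terminalcat$, why the strict pullback along an isofibration is a bipullback (so that closure under bilimits applies), and why the left leg is an isofibration.
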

\begin{proof}[Sketch of proof]
    Choosing a right adjoint of $D_\C$ with the data of adjunction is equivalent to choosing, for each morphism $f\colon X\arr Y$ in $\C$ and each marked morphism $g\colon Z\darr Y$, a pullback square 
    \begin{equation*}
        \begin{tikzcd}
            {P(f,g)} \ar[r, "{p_2(f,g)}"] \ar[d, "{p_1(f,g)}"'] \ar[dr, phantom, "\lrcorner", very near start] & Z \dar[d, "g"] \\
            X \ar[r, "f"'] & Y
        \end{tikzcd}
        \incat{\C}.
    \end{equation*} 
    This can be strengthened to the statement that the pullback of $I^*$ along $T_1$ is the 2-category of marked categories with the choice of such pullbacks, marked functors preserving them up to isomorphism, and natural transformations between the underlying functors.
    We can similarly characterize the pullback of $I^*$ along $T_2$ in terms of choices of a terminal object $T$.
    Thus, the 2-category $\MCatPbTer\chosen$ given by the 2-pullback is described as in the statement.

    It follows from \cref{fact:WLP_constructions}\cref{fact:WLP_constructions-2,fact:WLP_constructions-3} that $I^*$ is an isofibration and in $\WLP$.
    As for $T$, the flexible limits and filtered colimits in $\Psd(\arrowtwocat,\Cattwo)$ are computed componentwise, so by \cref{lemma:MCat_bicocomplete}\cref{lemma:MCat_bicocomplete-1} and \cref{lemma:mcosp_in_WLP}, $T$ preserves them.
\end{proof}

From now on, we represent an object in $\MCatPbTer\chosen$ by $(\C, \Disp, P, T)$, where $P$ and $T$ are the choices of pullbacks and a terminal object, respectively.
\begin{proposition}\label{prop:PreClan_in_WLP}
    Take the following 2-pullback diagram 
    \begin{equation*}
        \begin{tikzcd}
            \PreClan\chosen
            \ar[r]
            \ar[d]
            \ar[dr, phantom, "\lrcorner", very near start]
            & \Psd(\Comp,\Cattwo)^2
            \ar[d, "{(J^*)^2}"] \\
            \MCatPbTer\chosen
            \ar[r, "S"]
            & \Psd(\Cosp,\Cattwo)^2
        \end{tikzcd}
        \incat{\twoCATtwo}.
    \end{equation*}
    Here, $J$ is the inclusion $\Cosp\arr \Comp$ characterized by the cospan consisting of $0\arr 2$ and $1\arr 2$, and $S$ is the 2-functor defined by
    \begin{equation*}
        S(\C) \coloneq
        \left(
        \begin{tikzcd}[column sep=small, ampersand replacement=\&]
            \&
            \MCat(\mcosp,\C)
            \ar[d, "\mathrm{pb}"] \\
            \marrowcat{\C}
            \ar[r, hookrightarrow]
            \& 
            \arrowcat{\C}
        \end{tikzcd}
        \raisebox{-26pt}{,\qquad}
        \begin{tikzcd}[column sep=small, ampersand replacement=\&]
            \& \C
            \ar[d, "\mathrm{unq}"] \\
            \marrowcat{\C}
            \ar[r, hookrightarrow]
            \&
            \arrowcat{\C}
        \end{tikzcd}
        \right)
    \end{equation*}
    where $\mathrm{pb}$ is a functor assigning, to each cospan with one leg marked, the chosen pullback of the marked morphism along the morphism, and $\mathrm{unq}$ is a functor assigning to each object the unique morphism from it to the chosen terminal object.
    The 2-category $\PreClan\chosen$ is described as follows:
    \begin{itemize}
        \item
            its objects are objects $(\C,\Disp,P,T)$ in $\MCatPbTer\chosen$ equipped with the choice of marked morphisms $p_1'(f,g)$ and an isomorphism $p_1'(f,g)\cong p_1(f,g)$ in $\arrowcat{\C}$ for each pair $(f,g)$ as above, and the choice of marked morphisms $t_X$ and an isomorphism $t_X \cong {!_X}$ for each object $X$ in $\C$ where $!_X$ is the unique morphism from $X$ to the chosen terminal object,
        \item
            its 1-cells are morphisms between the underlying marked categories in $\MCatPbTer\chosen$ equipped with the choice of isomorphisms in $\arrowcat{\C}$ between the images of $p_1'(f,g)$ and $t_X$ and the corresponding morphisms in the codomain, and
        \item
            its 2-cells are natural transformations between the underlying functors.
    \end{itemize}
    Moreover, $S$ and $(J^*)^2$ are in $\WLP$ and $ (J^*)^2$ is an isofibration, hence, $\PreClan\chosen$ and the projections from it are in $\WLP$.
    The vertical 2-functor on the left is also an isofibration.
\end{proposition}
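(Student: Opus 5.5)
The plan mirrors the proof sketch of \cref{prop:MCatPbTer_in_WLP}. There are three tasks: identify the 2-pullback explicitly, check that $S$ and $(J^*)^2$ lie in $\WLP$ with $(J^*)^2$ an isofibration, and then invoke \cref{fact:WLP_constructions}\cref{fact:WLP_constructions-1}.

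First, the description of $\PreClan\chosen$. An object of $\Psd(\Comp,\Cattwo)$ lying over a cospan $A\arr \C\leftarrow B$ in $\Psd(\Cosp,\Cattwo)$ is a functor $A\arr B$ together with an invertible 2-cell filling the triangle. For the first component of $S(\C)$ this is a functor $\MCat(\mcosp,\C)\arr\marrowcat{\C}$ with a natural isomorphism, in $\arrowcat{\C}$, between its composite with the inclusion and $\mathrm{pb}$. This amounts to choosing, functorially, a marked morphism $p_1'(f,g)$ isomorphic to $p_1(f,g)$. Functoriality is forced because $\marrowcat{\C}\subseteq\arrowcat{\C}$ is full and the isomorphisms are chosen. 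The same reasoning applied to the second component gives the marked $t_X\cong{!_X}$.

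On 1-cells, a pseudonatural transformation over a 1-cell $F$ of $\MCatPbTer\chosen$ contributes exactly the comparison isomorphisms between $F(p_1'(f,g))$ and $p_1'(Ff,Fg)$, and likewise for $t$. Its components at $0,1,2$ are forced to be the ones induced by $F$. Modifications reduce to natural transformations. I will check this only at the level of data, by unwinding the definition of a 2-pullback in $\twoCATtwo$ as was done for $\MCatPbTer\chosen$.

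Second, the $\WLP$ claims. By \cref{fact:WLP_constructions}\cref{fact:WLP_constructions-2,fact:WLP_constructions-3}, $J^*$ is in $\WLP$ and is an isofibration, since $J$ is injective on 0-cells and faithful on 1-cells. Hence so is $(J^*)^2$, using that products are bilimits. For $S$, flexible limits and filtered colimits in $\Psd(\Cosp,\Cattwo)$ are detected componentwise at $0,1,2$. The components are then $\MCat(\mcosp,-)$ or the underlying category, together with $\marrowcat{(-)}$ and $\arrowcat{(-)}$, each composed with the projection $\MCatPbTer\chosen\arr\MCat$. These are in $\WLP$ by \cref{lemma:MCat_bicocomplete}, \cref{lemma:mcosp_in_WLP} and \cref{prop:MCatPbTer_in_WLP}.

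The main obstacle is the 1-cell components $\mathrm{pb}$ and $\mathrm{unq}$. I must verify that these natural transformations are compatible with the chosen structure, so that $S$ is a genuine 2-functor: the 1-cells of $\MCatPbTer\chosen$ preserve chosen pullbacks only up to isomorphism, so $S$ sends them to pseudonatural transformations rather than strict ones. I must also verify that these components commute with flexible limits and filtered colimits. Here I would argue that limits and filtered colimits in $\MCatPbTer\chosen$ are created from $\MCat$ together with the $\freeAdj$ data, and that under this correspondence $\mathrm{pb}$ is exactly the right adjoint in the chosen adjunction. It is therefore preserved by the projection to $\Psd(\freeAdj,\Cattwo)^2$, which is in $\WLP$.

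Finally, \cref{fact:WLP_constructions}\cref{fact:WLP_constructions-1} makes the 2-pullback and its projections lie in $\WLP$. Since $(J^*)^2$ is an isofibration, the strict pullback is a bipullback, and the left vertical 2-functor, being a pullback of an isofibration, is itself an isofibration.
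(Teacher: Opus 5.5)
Your proposal is correct and follows essentially the same route as the paper's sketch. You identify fillers along $J$ with the chosen marked morphisms $p_1'(f,g)$, $t_X$ and their comparison isomorphisms, get that $(J^*)^2$ is an isofibration in $\WLP$ from \cref{fact:WLP_constructions}\cref{fact:WLP_constructions-2,fact:WLP_constructions-3}, check $S$ componentwise exactly as was done for $T$ in \cref{prop:MCatPbTer_in_WLP}, and conclude by \cref{fact:WLP_constructions}\cref{fact:WLP_constructions-1}. The separate check you flag for the 1-cell components $\mathrm{pb}$ and $\mathrm{unq}$ is not needed for the $\WLP$ claim, because by \cref{fact:WLP_constructions}\cref{fact:WLP_constructions-2} evaluation at the 0-cells of $\Cosp$ already jointly reflects flexible limits and filtered colimits.
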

\begin{proof}[Sketch of proof]
    By the definition of $\mathrm{pb}$ and $\mathrm{unq}$, the choice of filling the cospans with a 1-cell and an iso-2-cell along $J$ is equivalent to the choice of marked morphisms $p_1'(f,g)$ and $t_X$ as above together with the choice of isomorphisms in $\arrowcat{\C}$ between them.
    The correspondence between the 1-cells and 2-cells is similar.
    Since $J^*$ is an isofibration by \cref{fact:WLP_constructions}\cref{fact:WLP_constructions-3}, it is also a bipullback.
    To see the diagram belongs to $\WLP$, $J^*$ is in $\WLP$ by \cref{fact:WLP_constructions}\cref{fact:WLP_constructions-2}, and $S$ preserves flexible limits and filtered colimits by the same argument as in the proof of \cref{prop:MCatPbTer_in_WLP}.
\end{proof}

\begin{lemma}\label{lemma:Clan_chosen_in_WLP}
    Take the following 2-pullback diagram
    \begin{equation*}
        \begin{tikzcd}
            \Clan\chosen
            \ar[r]
            \ar[d]
            \ar[dr, phantom, "\lrcorner", very near start]
            & \PreClan\chosen
            \ar[d]
            \\
            \MCatIso
            \ar[r]
            & \MCat
        \end{tikzcd}
        \incat{\twoCATtwo}.
    \end{equation*}
    This 2-category $\Clan\chosen$ is given by the full sub-2-category of $\PreClan\chosen$ spanned by small marked categories $(\C,\Disp)$ such that $\Disp$ contains all isomorphisms in $\C$.
    This is also a bipullback diagram and $\Clan\chosen$ is in $\WLP$.
\end{lemma}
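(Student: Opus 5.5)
The plan has three steps: identify the strict 2-pullback with the full sub-2-category described in the statement, show that it is also a bipullback, and then conclude membership in $\WLP$ from closure under bilimits.

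\textbf{Identification.} Strict 2-pullbacks in $\twoCATtwo$ are computed componentwise on 0-, 1- and 2-cells. The bottom 2-functor $\MCatIso\arr\MCat$ is the inclusion of a full sub-2-category, and it is injective on objects. Hence an object of the pullback is simply an object of $\PreClan\chosen$ whose underlying marked category lies in $\MCatIso$. Likewise, 1-cells and 2-cells are exactly those of $\PreClan\chosen$ between such objects, because fullness of $\MCatIso\subseteq\MCat$ imposes no further condition. This gives the stated description of $\Clan\chosen$ as a full sub-2-category of $\PreClan\chosen$.

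\textbf{Bipullback.} I would use the standard fact that a strict 2-pullback in which one of the two legs being pulled back is an isofibration is also a bipullback (cf.\ the use of \cite{JoyalStreet1993pullbacks} and \cite[Section 6]{Kelly1989elementary} earlier in the paper). There are two available isofibrations:
\begin{itemize}
\item $\MCatIso\arr\MCat$, by \cref{lemma:MCat_bicocomplete}\cref{lemma:MCat_bicocomplete-2};
\item the forgetful 2-functor $\PreClan\chosen\arr\MCat$, which is the composite of the left vertical isofibrations in \cref{prop:PreClan_in_WLP,prop:MCatPbTer_in_WLP}.
\end{itemize}
Either one suffices. The argument I would check is the comparison with the iso-comma object: given a pair of objects together with an isomorphism between their images in $\MCat$, transport the $\MCatIso$-object along that isomorphism using the isofibration property. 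This shows that the canonical 2-functor from the strict pullback to the pseudo-pullback is a biequivalence.

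\textbf{Membership in $\WLP$.} By \cref{fact:WLP_constructions}\cref{fact:WLP_constructions-1}, $\WLP$ is closed under bilimits in $\twoCATtwo$. All three 2-categories in the cospan, namely $\MCatIso$, $\MCat$ and $\PreClan\chosen$, are in $\WLP$. The two legs are also in $\WLP$:
\begin{itemize}
\item $\MCatIso\arr\MCat$ by \cref{lemma:MCat_bicocomplete}\cref{lemma:MCat_bicocomplete-1};
\item $\PreClan\chosen\arr\MCat$ as the composite of the projections in \cref{prop:MCatPbTer_in_WLP,prop:PreClan_in_WLP}, which are in $\WLP$.
\end{itemize}
So the bipullback $\Clan\chosen$, together with its projections, lies in $\WLP$.

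The main obstacle is the bipullback step. In particular, one must confirm that closure under bilimits in \cref{fact:WLP_constructions}\cref{fact:WLP_constructions-1} applies to the strict model we computed, which should follow from the equivalence-invariance of membership in $\WLP$. Everything else is bookkeeping.
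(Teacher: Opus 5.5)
Your proposal is correct and follows essentially the same route as the paper. The paper also uses that both legs of the cospan are in $\WLP$ and are isofibrations (by \cref{lemma:MCat_bicocomplete,prop:MCatPbTer_in_WLP,prop:PreClan_in_WLP}), concludes that the strict 2-pullback is a bipullback lying in $\WLP$ via \cref{fact:WLP_constructions}\cref{fact:WLP_constructions-1}, and reads off the description of $\Clan\chosen$ directly from the pullback. Your additional remarks, on transporting pseudo-cones along the isofibration and on the invariance of membership in $\WLP$ under 2-equivalence, only make explicit what the paper leaves implicit.
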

\begin{proof}[Sketch of proof]
    Since the 2-functors that form the cospan are in $\WLP$ and isofibrations by \cref{lemma:MCat_bicocomplete,prop:MCatPbTer_in_WLP,prop:PreClan_in_WLP}, the 2-pullback of the cospan is in $\WLP$ and also a bipullback.
    The description of the 2-category $\Clan\chosen$ is straightforward from the definition of the pullback.
    Thus, the 2-category $\Clan\chosen$ is in $\WLP$ by \cref{fact:WLP_constructions}\cref{fact:WLP_constructions-1}.
\end{proof}

\begin{lemma}\label{lemma:Clan_equivalent_to_Clan_chosen}
    The 2-category $\Clan$ is 2-equivalent to $\Clan\chosen$.
\end{lemma}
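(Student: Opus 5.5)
We would prove the 2-equivalence by exhibiting the forgetful 2-functor $U\colon\Clan\chosen\arr\Clan$ and showing that it is 2-fully faithful and surjective on objects. On objects, $U$ sends $(\C,\Disp,P,T,p_1',t)$ to $(\C,\Disp)$. First we check that this is indeed a clan. $\Disp$ is closed under composition, contains identities, and contains all isomorphisms, since we work in $\MCatIso$. Pullbacks of arbitrary morphisms along display morphisms exist, because $P$ chooses them. A terminal object exists, namely $T$. For stability under pullback, note that the chosen projection $p_1(f,g)$ is isomorphic in $\arrowcat{\C}$ to the marked morphism $p_1'(f,g)$. Any other pullback of $g$ along $f$ differs from $p_1(f,g)$ by an isomorphism in $\arrowcat{\C}$. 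Since $\Disp$ contains isomorphisms and is closed under composition, it is closed under isomorphism in $\arrowcat{\C}$, so every pullback of a display morphism is display. The same argument applied to $t_X\cong{!_X}$ shows that every morphism into a terminal object is display.

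On 1-cells, a 1-cell of $\Clan\chosen$ is a marked functor $F$ that preserves the chosen pullbacks and terminal object up to isomorphism, together with chosen isomorphisms $F(p_1'(f,g))\cong p_1'(Ff,Fg)$ and $F(t_X)\cong t_{FX}$ in $\arrowcat{\D}$. Preserving a chosen limit up to isomorphism is the same as preserving that limit. So the underlying functor is a clan morphism, and $U$ acts on 2-cells as the identity on natural transformations.

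Surjectivity on objects uses the axiom of choice (available in our universe setting): for a clan, choose pullbacks along display morphisms and a terminal object, set $p_1'\coloneq p_1$ and $t_X\coloneq{!_X}$, and take the identity isomorphisms. Local 2-full faithfulness is immediate on 2-cells, since both sides use all natural transformations between the underlying functors. The main point is that the functor on hom-categories is an isomorphism, or at least an equivalence, on 1-cells.

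This step is the obstacle: a 1-cell of $\Clan\chosen$ carries extra data, namely the comparison isomorphisms. These are not unique a priori, because an isomorphism in $\arrowcat{\D}$ between two arrows need not be unique. The plan is to observe that the isomorphisms are not really free. The comparison isomorphism for the pullback is forced by the 2-pullback construction in \cref{prop:MCatPbTer_in_WLP}: it comes from the adjunction data, and there it is unique by the universal property of the pullback. The isomorphism $F(p_1'(f,g))\cong p_1'(Ff,Fg)$ in \cref{prop:PreClan_in_WLP} is then required to be coherent with the chosen iso $p_1'\cong p_1$, through the up-to-iso triangle in $\Comp$. We would unwind the iso-2-cell in $\Comp$ to see that it is determined by the other data. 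Should it turn out to be genuinely extra data, we would instead show that the hom-category functor is an equivalence: it is fully faithful on 2-cells because 2-cells ignore the extra data, and it is essentially surjective. A 2-equivalence in the bicategorical sense, that is a biequivalence, suffices for transferring bicocompleteness.
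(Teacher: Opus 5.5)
Your argument is correct and essentially the paper's. Both rest on two facts. First, once all isomorphisms are marked, the chosen pullback projections and the maps $X\to 1$ are display. Second, the comparison isomorphisms carried by a 1-cell of $\Clan\chosen$ are uniquely determined, so the hom-functors are isomorphisms. The only difference is direction: you use the forgetful 2-functor $\Clan\chosen\to\Clan$, strictly surjective on objects by choice, where the paper uses its choice-based 2-functor $\Clan\to\Clan\chosen$, which is essentially surjective.

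The step you hedge on does go through, so the fallback is unnecessary. The pseudonaturality isomorphism at the right adjoint is forced to be the inverse mate of the identity 2-cell at $D_\C$. Pseudonaturality at the invertible 2-cell of $\Comp$ then fixes the image of the $p_1'$-comparison (and likewise the $t$-comparison) under the fully faithful inclusion $\marrowcat{\D}\hookrightarrow\arrowcat{\D}$, which gives both its existence and its uniqueness.
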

\begin{proof}[Sketch of proof]
    By choosing pullbacks and a terminal object in each clan, and choosing the isomorphisms in $\arrowcat{\C}$ in \cref{prop:PreClan_in_WLP} to be the identities, we can construct a 2-functor $\Clan\arr \Clan\chosen$.
    This 2-functor is clearly fully faithful on 2-cells, and it is so on 1-cells because the choice of isomorphisms involved in the definition of 1-cells in $\Clan\chosen$ is uniquely determined by the universal properties when the 1-cells are between the images of $\Clan$.
    It is essentially surjective on objects up to isomorphism because for an object in $\PreClan\chosen$ with all isomorphisms marked, the chosen pullbacks of marked morphisms $p_1(f,g)$ and the unique morphisms $!_X$ to the chosen terminal object themselves are marked, so the underlying marked category can be seen as an object in $\Clan$.
    Thus, there is an isomorphism in $\Clan\chosen$ between the image of this object and the original object in $\Clan\chosen$, which replaces the choice of pullbacks, terminal objects, the morphisms $p_1'(f,g)$ and $t_X$ in the unique way.
    Therefore, the 2-functor $\Clan\arr \Clan\chosen$ is a 2-equivalence.
\end{proof}

\begin{corollary}
    The 2-category $\Clan$ is in $\WLP$ and thus bicocomplete.
\end{corollary}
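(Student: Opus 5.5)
The corollary follows by assembling the preceding lemmas. I would use two of them directly: \cref{lemma:Clan_chosen_in_WLP}, which says $\Clan\chosen$ is in $\WLP$, and \cref{lemma:Clan_equivalent_to_Clan_chosen}, which says $\Clan\simeq\Clan\chosen$ as 2-categories.

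The first step is to transfer membership in $\WLP$ along this 2-equivalence. Every defining condition of a 0-cell of $\WLP$ is invariant under 2-equivalence of 2-categories: local smallness, powers by $\walkmorcat$, $\kappa$-accessibility of the underlying 1-category, existence of flexible small 2-limits and filtered small 2-colimits, and the commutation of $\kappa$-filtered 2-colimits with powers by $\walkmorcat$. One small subtlety is that the underlying 1-categories of 2-equivalent 2-categories are only equivalent, not isomorphic. Accessibility is invariant under equivalence, so this causes no harm. Strictly speaking, flexible limits and conical colimits are transported only up to equivalence. However, a 2-category 2-equivalent to one admitting flexible limits admits them as well, since flexible limits are exactly those preserved and reflected up to equivalence; the same holds for conical filtered 2-colimits. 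Hence $\Clan$ lies in $\WLP$.

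The second step is bicocompleteness. I would invoke the general theory of \cite{Bourke2021accessible}: a weakly locally presentable 2-category, that is, an accessible 2-category with flexible limits, is bicocomplete. This is the 2-categorical analogue of the fact that accessible categories with limits are locally presentable and hence cocomplete. Bourke shows that such 2-categories admit all bicolimits, in fact flexible colimits, by realizing them via a bicategorical adjoint functor argument.

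The main obstacle is locating the precise form of this citation. The sentence ``0-cells of $\WLP$ are bicocomplete'' is the nontrivial input, and I would rely on \cite{Bourke2021accessible} (as used in \cite{Jelinek2024typetheory}) rather than reprove it. The invariance under 2-equivalence is routine.
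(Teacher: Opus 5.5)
Your argument matches the paper's: transfer membership in $\WLP$ from $\Clan\chosen$ along the 2-equivalence of \cref{lemma:Clan_equivalent_to_Clan_chosen}, then use that an accessible 2-category with flexible limits is bicocomplete. For that last step the paper cites \cite[Theorem 9.4]{LackRosicky2012enriched} (see also \cite[Section 9.3]{BourkeLackVokrinek2023adjoint}).

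One correction to your transfer step. The 2-equivalence there is locally an isomorphism of hom-categories and essentially surjective up to isomorphism, so all strict weighted limits and colimits transport exactly, and no appeal to flexibility is needed. Your stated reason, that flexible limits are ``preserved and reflected up to equivalence'', reads as invariance under biequivalence. That is false: strict flexible limits, even binary products, need not transfer along a biequivalence.
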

\begin{proof}
    By \cref{lemma:Clan_chosen_in_WLP,lemma:Clan_equivalent_to_Clan_chosen}, $\Clan$ is 2-equivalent to $\Clan\chosen$, which is in $\WLP$.
    An accessible 2-category with flexible limits \cite[Theorem 9.4]{LackRosicky2012enriched} is bicocomplete, see also \cite[Section 9.3]{BourkeLackVokrinek2023adjoint}.
\end{proof}
\end{appendix}
\addtocontents{toc}{\protect\setcounter{tocdepth}{2}}

\printbibliography

\end{document}